\newenvironment{proof}{\noindent{\textsc{Proof.}}}
{$\hfill\Box$\vspace{0.1 cm}\\}
\newtheorem{theorem}{Theorem}[chapter]
\newtheorem{obs}{Observation}[chapter]
\newtheorem{mydef}{Definition}[chapter]
\newtheorem{remark}{Remark}[chapter]
\newtheorem{cor}{Corollary}[chapter]
\newtheorem{prop}{Proposition}[chapter]
\newtheorem{lemma}{Lemma}[chapter]
\newtheorem{exmp}{Example}[section]
\begin{document}
\thispagestyle{empty}

\vspace{3cm}

\begin{center}
\begin{LARGE}
Università degli Studi di Milano-Bicocca
\end{LARGE}
\end{center}

\begin{center}
\begin{large}
Scuola di Scienze\\
Corso di Laurea Magistrale in Matematica\\
\end{large}

\vspace{2cm}
\begin{figure}[h]
\centering
\includegraphics[scale=0.7]{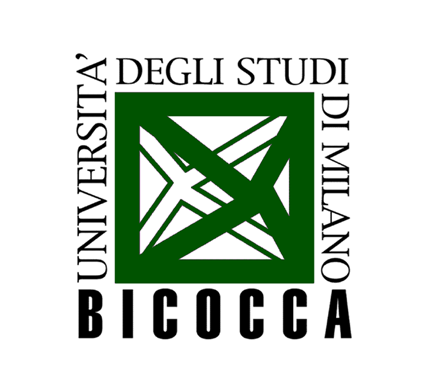} 
\end{figure}

\vspace{2cm}
{\Large \textbf{The Aw-Rascle-Zhang model with constraints}\\}
\vspace{0.7cm}

\begin{large}
Tesi di Laurea Magistrale in Matematica
\end{large}

\end{center}

\vspace{2cm}

\begin{flushleft}
Relatore:\\
\textbf{Dott. Mauro Garavello}\\
Correlatrice:\\
\textbf{Dott.ssa Paola Goatin}\\
\end{flushleft}

\begin{flushright}
Candidato:\\
\textbf{Stefano Villa\\
Matricola: 726465}
\end{flushright}

\vspace{2cm}

\begin{center}
Anno Accademico $2014/2015$
\end{center}

\vfill
\newpage
\pagenumbering{roman}
\tableofcontents
\newpage
\pagenumbering{arabic}
\chapter*{Introduction}
\addcontentsline{toc}{chapter}{Introduction}
The thesis deals with the Aw-Rascle-Zhang model for traffic, which was proposed by Aw and Rascle (see \cite{Aw-Rascle}) and, independently, by Zhang (see \cite{zhang}) in 2002. It is a hyperbolic system of two conservation laws and describes the traffic from a macroscopic point of view: it considers the evolution of macroscopic variables such as the average density or speed of the vehicles on a road.

During the '50s the first macroscopic model for traffic flows was introduced by Lighthill and Whitham and, independently, by Richards (LWR model; see \cite{LW,Richards}): the model is given by a single conservation law stating the conservation of the number of vehicles on a road:
\begin{equation}\label{LWR_equation}
\partial_t \rho+\partial_x (\rho\,v(\rho))=0,
\end{equation}
where $\rho$ is the density of the vehicles and $v(\rho)$ is a known decreasing function of $\rho$ which gives the velocity of the vehicles with respect to the density. A typical choice for this law is
$$v(\rho)=V\,\left(1-\dfrac{\rho}{\rho_{\max}}\right),$$
where $V$ and $\rho_{\max}$ are respectively the maximal speed and the maximal density of the vehicles allowed on the road.\\
The LWR model is effective to describe a situation of free traffic, i.e. a road where there is a small number of cars that can travel freely. When the density of the vehicles is over a certain threshold, many configurations are observed experimentally and the LWR model is not sufficient to capture them all; see \cite{colombo}.

To overcome this problem, second order models have been proposed. The Aw-Rascle-Zhang (ARZ) system is one of these models: it is a system of two partial differential equations in conservation form. Inspired by fluid dynamics models, the first equation states the conservation of the number of the vehicles on the road, while the second one imposes the conservation of a generalized momentum $z$:
$$\begin{cases}
\partial_t \rho + \partial_x (\rho\, v)=0,\\
\partial_t z + \partial_x (z\,v)=0.
\end{cases}
$$

Systems of conservation laws are widely applied to describe physical systems. It is well known that the Cauchy problem for a system of conservation laws with an integrable initial datum having sufficiently small bounded variation, admits a unique entropy-admissible solution. A possible tool to obtain the solution is the wave-front tracking method which is based on the Riemann problem, which is a Cauchy problem with a piecewise constant initial datum having only one jump discontinuity. A map which gives the solution to the Riemann problem is called ``Riemann solver''.

In 2011, Garavello and Goatin (see \cite{garavello_goatin}) introduced a model based on the ARZ system to describe the presence of a fixed constraint at some point of the road, corresponding for example to a toll gate or a traffic light. Two Riemann solvers have been proposed for the constrained Riemann problem. The solutions correspond to the real expected situation: the density is ``high'' before the constraint and ``low'' after it and, correspondingly, the velocity is reversed. The first solution conserves both density and momentum of the vehicles, while the second conserves only the density. These Riemann solvers are denoted by $\mathcal{RS}^q_1$ and $\mathcal{RS}^q_2$.\\

The present work is divided in three parts.

In the first part, we introduce the main concepts about systems of conservation laws and the ARZ model. In Chapter 1 we present the general theory of systems of conservation laws and we give the general solution to the Riemann problem, which consists in a combination of shocks, rarefaction waves and contact discontinuities. The solution is self-similar, i.e. constant on every line passing through the origin in the $(t,x)$ plane. In Chapter 2 we specialize the study of Chapter 1 to the ARZ model, showing the main properties of the system and defining its standard solution. We also define the invariant domains and give their characterization for the classical Riemann solver of the ARZ system.

In the second part we generalize the paper \cite{garavello_goatin} by Garavello and Goatin to the situation of a moving constraint corresponding for example to the presence of a large and slow vehicle on the road. The slow vehicle can be in turn influenced by the previous cars when a traffic jam is present. In Chapter 3 we give the mathematical model for this situation: we obtain a strongly coupled PDE-ODE system in which the main traffic is described with the ARZ system, while the trajectory of the slow vehicle is given by an ordinary differential equation. The presence of the constraint is traduced in a condition on the first component of the flux function. Two Riemann solvers corresponding to the ones proposed in \cite{garavello_goatin} are introduced, denoted by $\mathcal{RS}^\alpha_1$ and $\mathcal{RS}^\alpha_2$. We also characterize their invariant domains. In Chapter 3 we have applied numerical methods based on the Godunov's scheme to capture the solution given by $\mathcal{RS}^\alpha_1$ and $\mathcal{RS}^\alpha_2$ and to track the bus trajectory. The method for the first Riemann solver is based on its globally conservative character and captures exactly the solution to the Riemann problem for a general initial datum. For the second Riemann solver we have applied two numerical methods: the first is the same method used for $\mathcal{RS}^\alpha_1$ and the second method is based on a non-uniform mesh. Both methods succeed in computing the solution for special initial data. For general initial data the conservative method overestimates the density (and, correspondingly, underestimates the speed) after the constraint, while with the non-uniform mesh method the solution is exactly captured at least in the first cell after the constraint, but an oscillation appears.

The last part is discussed in Chapter 5 and contains the proof of the existence of the solution to a Cauchy problem for the Riemann solver $\mathcal{RS}^q_2$ in the case of an integrable initial datum with bounded variation, belonging to an invariant domain in which the characteristic waves of the first family have negative speed. The solution is obtained by applying the wave-front tracking method.
\chapter{Systems of Conservation Laws}
In this chapter we introduce the general theory of the systems of conservation laws and the general solution to the Riemann problem. We follow \cite{bressan}.\\
\section{Mathematical preliminaries}
Let $n$, $m$ and $p$ be positive integers and let us denote $\mathcal{C}^k$ the set of the $k$ times continuously differentiable functions.\\
First, we recall the implicit function theorem which gives a sufficient condition to traduce a relation having the form $F(x,y)=0$, in the graph of a function $y=\varphi(x)$, where $x$ and $y$ are vectors in $\mathbb{R}^n$ and $\mathbb{R}^m$.
\begin{theorem}[Implicit Function Theorem]
Let $U \subseteq \mathbb{R}^n$ and $V \subseteq \mathbb{R}^m$ be open sets and let $F:U\times V \to \mathbb{R}^m$ be a $\mathcal{C}^k$ function, with $k\geq 1$. If there exists a point $(\bar{x},\bar{y})\in U\times V$ such that $F(\bar{x},\bar{y})=0$ and the Jacobian matrix $D_y F(\bar{x},\bar{y})$ is invertible, then there exist a neighbourhood $\mathcal{N}\subseteq U$ of $\bar{x}$ and a $\mathcal{C}^k$ function $\varphi:\mathcal{N}\to V$ such that
$$\varphi(\bar{x}) = \bar{y} \; \text{ and } \; F(x,\varphi(x))=0 \; \text{ for every } \; x\in \mathcal{N}.$$
The derivative of $\varphi$ at the point $\bar{x}$ is the $m\times n$ Jacobian matrix
$$D\varphi (\bar{x}) = -[D_y F(\bar{x},\bar{y})]^{-1}\cdot D_x F(\bar{x},\bar{y}).$$
\end{theorem}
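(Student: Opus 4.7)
The plan is to reduce the equation $F(x,y)=0$ to a parametrised fixed-point problem and apply the Banach Contraction Principle, then upgrade the resulting implicit map to class $\mathcal{C}^k$ by a bootstrap argument. Setting $A := D_y F(\bar{x},\bar{y})$, which is invertible by hypothesis, I would rewrite $F(x,y)=0$ equivalently as $y = T_x(y)$, where
$$T_x(y) := y - A^{-1} F(x,y).$$
The key observation is that $D_y T_{\bar{x}}(\bar{y}) = I - A^{-1}A = 0$, so by continuity of $D_y F$ there exist closed balls $\overline{B_\delta(\bar{x})} \subseteq U$ and $\overline{B_r(\bar{y})} \subseteq V$ on which $\|D_y T_x(y)\| \leq 1/2$.

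Next I would shrink $\delta$ further so that $\|T_x(\bar{y}) - \bar{y}\| = \|A^{-1}F(x,\bar{y})\| \leq r/2$ for every $x \in \overline{B_\delta(\bar{x})}$, which is possible because $F(\bar{x},\bar{y})=0$ and $F$ is continuous. Then $T_x$ sends $\overline{B_r(\bar{y})}$ into itself and is a $1/2$-contraction in $y$, so Banach's theorem produces a unique $\varphi(x) \in \overline{B_r(\bar{y})}$ with $F(x,\varphi(x))=0$. Continuity of $\varphi$ on $\mathcal{N} := B_\delta(\bar{x})$ follows from a standard three-term estimate combining the contraction bound with the joint continuity of $(x,y) \mapsto T_x(y)$.

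To pin down $D\varphi(\bar{x})$ I would argue directly from the definition: starting from $F(x,\varphi(x)) - F(\bar{x},\bar{y}) = 0$ and the first-order Taylor expansion of $F$, together with the continuity of $\varphi$ at $\bar{x}$, one extracts
$$\varphi(x) - \bar{y} = -A^{-1}\, D_x F(\bar{x},\bar{y})\,(x-\bar{x}) + o(\|x-\bar{x}\|),$$
which identifies $D\varphi(\bar{x})$ as announced. Shrinking $\mathcal{N}$ so that $D_y F(x,\varphi(x))$ remains invertible along the graph of $\varphi$ (an open condition in the space of square matrices), the same argument applies at every point of $\mathcal{N}$ and yields the pointwise formula
$$D\varphi(x) = -\bigl[D_y F(x,\varphi(x))\bigr]^{-1} D_x F(x,\varphi(x)).$$

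The main obstacle I anticipate is the regularity bootstrap from mere continuity to class $\mathcal{C}^k$. Once $\varphi$ is continuous and the formula above is available, $D\varphi$ is expressed as a composition of $\varphi$ with smooth operations on the partial derivatives of $F$, since matrix inversion is $\mathcal{C}^\infty$ on the open set of invertible matrices. Induction on $k$ then lifts $\varphi \in \mathcal{C}^j$ to $\varphi \in \mathcal{C}^{j+1}$ until $j=k$. The delicate point throughout is to choose $\mathcal{N}$ small enough so that the contraction bound, the invariance of $\overline{B_r(\bar{y})}$, and the invertibility of $D_y F(x,\varphi(x))$ all hold simultaneously.
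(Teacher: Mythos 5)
Your contraction-mapping argument is sound, but there is nothing in the paper to compare it against: the thesis states the Implicit Function Theorem as a black-box result at the start of Chapter~1 (drawing on the cited references, e.g.\ Bressan) and provides no proof of it whatsoever; the first proof environment in the source belongs to Proposition~\ref{spectral_properties_continuity}, which merely \emph{applies} the theorem. As a standalone argument your plan is the standard modern one: fix the linear part $A = D_y F(\bar{x},\bar{y})$, recast $F(x,y)=0$ as a fixed point of $T_x(y) = y - A^{-1}F(x,y)$, use $D_y T_{\bar{x}}(\bar{y})=0$ and continuity of $D_y F$ to obtain a uniform $\tfrac{1}{2}$-contraction on a product of small balls, get existence and uniqueness of $\varphi$ from Banach, and then bootstrap regularity from the pointwise formula $D\varphi(x) = -[D_y F(x,\varphi(x))]^{-1} D_x F(x,\varphi(x))$ using the smoothness of matrix inversion. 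One small point worth tightening in the differentiability step: the remainder in the Taylor expansion of $F$ at $(\bar{x},\bar{y})$ is $o(\|(x,\varphi(x))-(\bar{x},\bar{y})\|)$, and to convert this into $o(\|x-\bar{x}\|)$ you need $\varphi$ to be Lipschitz near $\bar{x}$, not merely continuous. This is not a serious obstruction --- the Lipschitz estimate falls out of the contraction bound via
\[
\tfrac{1}{2}\|\varphi(x)-\bar{y}\| \le \|T_x(\bar{y}) - T_{\bar{x}}(\bar{y})\| = \|A^{-1}\bigl(F(x,\bar{y})-F(\bar{x},\bar{y})\bigr)\| \le C\|x-\bar{x}\|,
\]
but you should state it rather than lean on bare continuity of $\varphi$.
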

Assume that the function $F$ depends smoothly on a parameter $\eta$ defined on a neighbourhood of a compact $K$. In this case the neighbourhood $\mathcal{N}$ given by the implicit function theorem can be chosen uniformly with respect to $\eta \in K$.
\begin{theorem}\label{implicit_function_theorem_parametrized}
Let $U\subseteq \mathbb{R}^n$, $V\subseteq \mathbb{R}^m$ and $W\subseteq \mathbb{R}^p$ be open sets. Let $(x,y,\eta) \to F^\eta(x,y)$ be a $\mathcal{C}^k$ map from $U\times V \times W$ into $\mathbb{R}^m$, with $k\geq 1$. Let $\eta \to (x_\eta,y_\eta)$ be a $\mathcal{C}^k$ function from $W$ to $U\times V$ such that $F^\eta(x_\eta,y_\eta)=0$ for every $\eta$. If the Jacobian matrix $D_y F^\eta(x_\eta,y_\eta)$ is invertible for every $\eta$ in a compact set $K \subset W$, then there exist $\delta>0$ and a $\mathcal{C}^k$ function $(\eta,x) \to \phi^\eta(x)$ such that
$$\phi^\eta(x_\eta)=y_\eta \; \text{ and } \; F^\eta(x,\phi^\eta(x))=0 \; \text{ whenever } \; \eta \in K \; \text{ and } \; |x-x_\eta|\leq \delta.$$
\end{theorem}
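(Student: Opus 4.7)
The plan is to reduce the parametrized statement to the unparametrized Implicit Function Theorem (Theorem~1.1) applied to the augmented map
$$\tilde{F}:U\times W \times V \to \mathbb{R}^m, \qquad \tilde{F}(x,\eta,y):=F^\eta(x,y),$$
viewed as a function in which $(x,\eta)\in U\times W$ plays the role of the independent variable and $y\in V$ plays the role of the unknown. The partial Jacobian $D_y\tilde{F}(x_\eta,\eta,y_\eta)=D_yF^\eta(x_\eta,y_\eta)$ is invertible for every $\eta\in K$ by hypothesis, so at each base point $(x_{\eta_0},\eta_0,y_{\eta_0})$ with $\eta_0\in K$ Theorem~1.1 supplies an open neighbourhood $\mathcal{N}_{\eta_0}\subseteq U\times W$ of $(x_{\eta_0},\eta_0)$ and a $\mathcal{C}^k$ map $\phi_{\eta_0}:\mathcal{N}_{\eta_0}\to V$ with $\phi_{\eta_0}(x_{\eta_0},\eta_0)=y_{\eta_0}$ and $\tilde{F}(x,\eta,\phi_{\eta_0}(x,\eta))=0$ throughout $\mathcal{N}_{\eta_0}$.

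Next I would glue these local solutions. The uniqueness clause of the Implicit Function Theorem (obtained by shrinking neighbourhoods so that the $\mathcal{C}^k$ graph over $\mathcal{N}_{\eta_0}$ is the only zero of $\tilde{F}$ near $(x_{\eta_0},\eta_0,y_{\eta_0})$) forces any two of the $\phi_{\eta_0}$ to agree on the intersection of their domains after a possible further restriction. Thus the family $\{\phi_{\eta_0}\}_{\eta_0\in K}$ assembles into a single $\mathcal{C}^k$ function
$$\Phi:\Omega\to V, \qquad \Omega:=\bigcup_{\eta_0\in K}\mathcal{N}_{\eta_0},$$
where $\Omega$ is an open subset of $U\times W$ containing the graph $\Gamma:=\{(x_\eta,\eta):\eta\in K\}$, and $\Phi(x_\eta,\eta)=y_\eta$, $\tilde{F}(x,\eta,\Phi(x,\eta))=0$ on $\Omega$.

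The final step, and the main technical point, is to extract the uniform $\delta$. The set $\Gamma$ is compact in $U\times W$, being the continuous image of the compact set $K$ under $\eta\mapsto(x_\eta,\eta)$, and $\Omega$ is an open neighbourhood of $\Gamma$. A standard compactness argument (e.g.\ a Lebesgue number lemma applied to a finite subcover of $\Gamma$ by sets of the form $\mathcal{N}_{\eta_0}$, or the observation that the continuous function $(x,\eta)\mapsto\mathrm{dist}((x,\eta),(U\times W)\setminus\Omega)$ attains a positive minimum on $\Gamma$) yields $\delta>0$ such that $(x,\eta)\in\Omega$ whenever $\eta\in K$ and $|x-x_\eta|\leq\delta$. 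Setting $\phi^\eta(x):=\Phi(x,\eta)$ then gives the required $\mathcal{C}^k$ map.

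The hard step is the last one: the uniformity of $\delta$ is not automatic from the pointwise application of Theorem~1.1, and producing it is where compactness of $K$ (together with the continuity of $\eta\mapsto x_\eta$) is essential. The rest of the argument is bookkeeping: reformulating the problem as an unparametrized implicit function problem and using uniqueness to reconcile the local solutions.
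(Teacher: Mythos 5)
The paper does not actually give a proof of this theorem: it is stated (following Bressan's text) as a background fact, so there is no argument in the paper to compare against. Judged on its own, your proposal is the natural and essentially correct route: fold the parameter $\eta$ into the "independent" variable, apply the unparametrized Implicit Function Theorem to $\tilde F(x,\eta,y)=F^\eta(x,y)$ at each base point $(x_{\eta_0},\eta_0,y_{\eta_0})$, glue the local implicit functions via uniqueness, and extract a uniform $\delta$ from compactness of the graph $\Gamma=\{(x_\eta,\eta):\eta\in K\}$. The compactness step is stated correctly, in either form you propose.

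The one place where the write-up is thinner than it should be is the gluing step. You invoke "the uniqueness clause ... after a possible further restriction," but uniqueness in the IFT is only \emph{local}: two local implicit functions $\phi_{\eta_0}$ and $\phi_{\eta_1}$ are guaranteed to agree on an overlap only after you know that, at some common point, their $y$-values are close. The overlaps $\mathcal N_{\eta_0}\cap\mathcal N_{\eta_1}$ need not be connected, and nothing a priori pins the two graphs to the same branch of $\{\tilde F=0\}$ on every component. The clean fix, which you should make explicit, is to shrink each $\mathcal N_{\eta_0}$ so that (i) its projection to the $\eta$-axis is a ball $B(\eta_0,s_0)$ on which $y_\eta$ stays close to $y_{\eta_0}$, and (ii) the local solution agrees with the reference curve: $\phi_{\eta_0}(x_\eta,\eta)=y_\eta$ for all $\eta\in B(\eta_0,s_0)\cap K$. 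Point (ii) follows from uniqueness plus continuity of $\eta\mapsto y_\eta$. Then any two $\phi_{\eta_0},\phi_{\eta_1}$ agree on the fibers $\{x_\eta\}\times\{\eta\}$ of the graph, and a standard continuation argument (uniqueness of the implicit function, applied along the line segment from $x_\eta$ to $x$ inside the ball $|x-x_\eta|\le\delta$, which is connected) propagates agreement to the whole slab $|x-x_\eta|\le\delta$. With that addition the argument is complete.
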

Let $A$ be a $n\times n$ matrix with $n$ real distinct eigenvalues $\lambda_1<...<\lambda_n$. Let $r_1,...,r_n$ be the $n$ linearly independent eigenvectors of $A$, defined by the relations
$$A\, r_i =\lambda_i \, r_i \; \text{ for } \; i = 1,...,n.$$
The basis $\lbrace r_1,...,r_n \rbrace$ determines a basis of left eigenvectors $\lbrace l_1,...,l_n\rbrace$, i.e. vectors satisfying the relations
$$l_i\, A =\lambda_i\, l_i \; \text{ for every }\; i=1,...,n.$$
These vectors are defined by
$$l_i\, \cdot r_j = \begin{cases}
1 &\text{if } i=j,\\
0 & \text{if } i\neq j,
\end{cases} \; \text{ for every }\; i,j \in \lbrace 1,...,n \rbrace.$$ 
\begin{prop}\label{spectral_properties_continuity}
Let $A=[a_{ij}]_{i,j=1}^n$ be a $n\times n$ matrix. Suppose that for every $i$ and $j$ in $\lbrace 1,...,n \rbrace$, the entry $a_{ij}$ is a $\mathcal{C}^k$ function of a parameter $\eta\in \mathbb{R}^m$ and suppose that there exists a value $\bar{\eta}$ for which the matrix $A(\bar{\eta})$ has $n$ distinct real eigenvalues
$$\lambda_1(\bar{\eta})<...<\lambda_n(\bar{\eta}).$$ Then there exists a neighbourhood $\mathcal{N}$ of $\bar{\eta}$ such that for all $\eta\in \mathcal{N}$ the matrix $A(\eta)$ has distinct real eigenvalues
$$\lambda_1(\eta)<...<\lambda_n(\eta).$$
Moreover, for every $i=1,...,n$, we have:
\begin{enumerate}
\item the function $\eta \to \lambda_i(\eta)$ is $\mathcal{C}^k$;
\item there is a $\mathcal{C}^k$ function $\eta \to r_i(\eta)$ defined in $\mathcal{N}$ such that $r_i(\eta)$ is a right eigenvector of $A(\eta)$;
\item there is a $\mathcal{C}^k$ function $\eta \to l_i(\eta)$ defined in $\mathcal{N}$ such that $l_i(\eta)$ is a left eigenvector of $A(\eta)$.
\end{enumerate}
\end{prop}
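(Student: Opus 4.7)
The plan is to deduce all three $\mathcal{C}^k$ regularity claims from the implicit function theorem. The eigenvalues arise from applying Theorem \ref{implicit_function_theorem_parametrized} (or its non-parametric version) to the characteristic polynomial; their distinctness and real nature on a whole neighborhood of $\bar\eta$ follow by continuity from the strict ordering at $\bar\eta$. The right eigenvectors are recovered from the implicit function theorem applied to an augmented eigenvector equation, and the left eigenvectors then follow from the same argument applied to $A(\eta)^\top$.

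For the eigenvalues, consider $F(\eta,\lambda) = \det(A(\eta) - \lambda I)$, which is jointly $\mathcal{C}^k$ because the entries of $A(\eta)$ are. Since $\lambda_i(\bar\eta)$ is a simple root of $F(\bar\eta,\cdot)$,
$$\partial_\lambda F(\bar\eta,\lambda_i(\bar\eta)) \;=\; -\prod_{j\neq i}\bigl(\lambda_j(\bar\eta) - \lambda_i(\bar\eta)\bigr) \;\neq\; 0,$$
and the implicit function theorem yields a $\mathcal{C}^k$ function $\lambda_i$ defined on a neighborhood $\mathcal{N}_i$ of $\bar\eta$, taking the prescribed value $\lambda_i(\bar\eta)$ and satisfying $F(\eta,\lambda_i(\eta)) = 0$. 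Setting $\mathcal{N} = \bigcap_{i=1}^n \mathcal{N}_i$ and shrinking if necessary, continuity preserves both reality and the strict ordering $\lambda_1(\eta) < \cdots < \lambda_n(\eta)$ on $\mathcal{N}$.

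For the right eigenvectors, fix $v \in \mathbb{R}^n$ with $v \cdot r_i(\bar\eta) = 1$ (the choice $v = l_i(\bar\eta)$ is convenient) and set
$$G(\eta, r, \mu) \;=\; \bigl(\,(A(\eta) - \lambda_i(\eta)I)\,r \,+\, \mu\, r_i(\bar\eta)\,,\; v \cdot r - 1\,\bigr) \in \mathbb{R}^{n+1},$$
so that $G(\bar\eta, r_i(\bar\eta), 0) = 0$. A short linear-algebra check shows that $D_{(r,\mu)}G$ is invertible at this base point: any null vector $(w,s)$ satisfies $(A - \lambda I)w = -s\, r_i(\bar\eta)$ and $v \cdot w = 0$, and the simplicity of $\lambda_i(\bar\eta)$ gives $\mathrm{Im}(A-\lambda I) = l_i(\bar\eta)^\perp$, which excludes $r_i(\bar\eta)$ because $l_i(\bar\eta) \cdot r_i(\bar\eta) = 1$; hence $s = 0$ and $w \in \ker(A - \lambda I) = \mathbb{R}\,r_i(\bar\eta)$, and the normalization then forces $w = 0$. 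The implicit function theorem therefore furnishes $\mathcal{C}^k$ maps $r_i(\eta),\mu(\eta)$ on a neighborhood of $\bar\eta$ with $G = 0$; the main (still routine) technical step I anticipate is verifying $\mu \equiv 0$, which I would obtain by pairing $(A(\eta)-\lambda_i(\eta)I)r_i(\eta) = -\mu(\eta)\,r_i(\bar\eta)$ with a continuously chosen nonzero left null vector $\tilde l(\eta)$ of $A(\eta)-\lambda_i(\eta)I$ (constructible from a cofactor of $A(\eta)-\lambda_i(\eta)I$ that is nonvanishing at $\bar\eta$): this yields $\mu(\eta)\,\tilde l(\eta)\cdot r_i(\bar\eta) = 0$, and $\tilde l(\eta)\cdot r_i(\bar\eta)$ stays nonzero near $\bar\eta$ by continuity. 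Finally, the left eigenvector claim is obtained by applying the same augmented-system argument to $A(\eta)^\top$, whose right eigenvectors are precisely the left eigenvectors of $A(\eta)$.
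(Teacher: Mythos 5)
Your argument for the eigenvalues is the same as the paper's: apply the implicit function theorem to the characteristic polynomial, compute $\partial_\lambda\det(A-\lambda I)$ at a simple root, and observe it is nonzero. For the eigenvectors, however, you take a genuinely different and valid route. The paper defines $r_i(\eta)$ by the normalization $|r_i|^2=1$ together with $n-1$ of the linear equations $(A(\eta)-\lambda_i(\eta)I)r_i = 0$, having first selected a set of $n-1$ rows of $A(\bar\eta)-\lambda_i(\bar\eta)I$ that are linearly independent (which exists because that matrix has rank $n-1$); the Jacobian of this $n\times n$ system is then shown to be invertible because the gradient of the norm constraint, $2r_i(\bar\eta)$, is orthogonal to the selected rows. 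You instead use a bordered (Lagrange-type) system $G(\eta,r,\mu) = ((A-\lambda_i I)r + \mu\,r_i(\bar\eta),\; v\cdot r - 1)$, adding a scalar unknown to make the underdetermined eigenvector equation square, proving invertibility of the $(n+1)\times(n+1)$ Jacobian, and then verifying $\mu\equiv 0$ by pairing with a smoothly varying left null vector obtained from a nonvanishing cofactor. Both are sound; the paper's approach avoids the extra variable and the $\mu=0$ check, but requires fixing a specific row selection and a norm constraint (with its implicit sign choice absorbed by the initial condition at $\bar\eta$), whereas your bordered system is coordinate-free and doesn't depend on choosing which row to drop — at the cost of the extra verification step you flagged, which is correctly sketched but not fully written out. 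For the left eigenvectors the divergence continues: the paper defines the $l_i(\eta)$ once and for all via the biorthogonality system $l_i\cdot r_j = \delta_{ij}$ using the already-constructed right eigenvectors, while you rerun the same bordered-system argument on $A(\eta)^\top$. Again both work; the paper's route is a one-line linear solve once all $r_j$ are in hand, while yours is self-contained and does not require all $n$ right eigenvectors to be constructed first.
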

\begin{proof}
Consider the polynomial
$$P(\eta,\lambda):=\det[\lambda\,\mathbf{I}-A(\eta)],$$
where $\mathbf{I}$ is the $n\times n$ identity matrix. For every $i \in \lbrace 1,...,n\rbrace$, we have
$$P(\bar{\eta},\lambda_i(\bar{\eta}))=0.$$
Moreover
$$\dfrac{\partial P}{\partial \lambda}(\bar{\eta},\lambda_i(\bar{\eta}))\neq 0,$$
indeed, since $\lambda_1(\bar{\eta})<...<\lambda_n(\bar{\eta})$ are eigenvalues of $A(\bar{\eta})$, we can write
$$P(\bar{\eta},\lambda)=(\lambda-\lambda_1(\bar{\eta}))\cdot ...\cdot (\lambda-\lambda_n(\bar{\eta})).$$
Hence
$$\dfrac{\partial P}{\partial \lambda}(\bar{\eta},\lambda)=\sum_{k=1}^n \, \prod_{\shortstack{$\scriptstyle{j=1}$\\ $\scriptstyle{j \neq k}$}}^n(\lambda-\lambda_j(\bar{\eta})),$$
which implies
$$\dfrac{\partial P}{\partial \lambda}(\bar{\eta},\lambda_i(\bar{\eta}))=\prod_{\shortstack{$\scriptstyle{j=1}$\\ $\scriptstyle{j \neq i}$}}^n(\lambda_i(\bar{\eta})-\lambda_j(\bar{\eta})) \neq 0.$$
We find the thesis applying the implicit function theorem: there exists a neighbourhood $\mathcal{N}_1$ of $\bar{\eta}$ and a $\mathcal{C}^k$ function $\eta \to \lambda_i(\eta)$, such that
$$P(\eta,\lambda_i(\eta))=0,$$
which implies that $\lambda_i(\eta)$ is an eigenvector for the matrix $A(\eta)$.\\
Let $r_i(\bar{\eta})$ be the normalized eigenvector of $A(\bar{\eta})$ corresponding to $\lambda_i(\bar{\eta})$. For every $i$ the matrix $A(\bar{\eta}) - \lambda_i(\bar{\eta})\mathbf{I}$ has rank $n-1$. Indeed its kernel is one-dimensional, otherwise there would exist at least two linearly independent eigenvectors $u$ and $v$ of $A(\bar{\eta})$ for the eigenvalue $\lambda_i(\bar{\eta})$. Therefore the set $$\lbrace r_1(\bar{\eta}),... r_{i-1}(\bar{\eta}),u,v,r_{i+1}(\bar{\eta}),...,r_n(\bar{\eta}) \rbrace$$
would be a set of $n+1$ linearly independent vectors and this is absurd. Hence $n-1$ row vectors of the matrix  $A(\bar{\eta}) - \lambda_i(\bar{\eta})\mathbf{I}$ are linearly independent, namely:
$$v_j(\bar{\eta})=(a_{j1}(\bar{\eta}),...,a_{jj}(\bar{\eta})-\lambda_i(\bar{\eta}),...,a_{jn}(\bar{\eta})) \; \text{ for every } \; j\in \lbrace 1,...,n \rbrace, \; j\neq j^*.$$
For $\eta=\bar{\eta}$, the vector $r_i(\eta)$ is defined by the system of $n$ equations
\begin{equation}\label{proof_spectral_properties_system_def_r_i_eta_bar}
\begin{cases}
r_i(\eta)\cdot r_i(\eta)=1,\\
v_j(\eta)\cdot r_i(\eta)=\sum_{k=1}^n a_{jk}(\eta)r_{i,k}(\eta)-\lambda_i(\eta)r_{i,j}(\eta)=0 \; \text{ for every } \; j \neq j^*,
\end{cases}
\end{equation}
where $r_{i,k}$ and $v_{j,k}$ are the $k$-th component of the vectors $r_i$ and $v_j$.
Consider the function
$$(\eta, r_i)\to F(\eta,r_i)= \begin{bmatrix}
|r_i|^2-1\\
v_j(\eta)\cdot r_i
\end{bmatrix}=\begin{bmatrix}
 \sum_{k=1}^n(r_{i,k}^2)-1\\
 \sum_{k=1}^n\, v_{j,k}(\eta)\,r_{i,k}
\end{bmatrix}\; \text{ for } \; j \neq j^*.
$$
By the system (\ref{proof_spectral_properties_system_def_r_i_eta_bar}), we have
$$F(\bar{\eta},r_i(\bar{\eta}))=0.$$
Moreover the Jacobian matrix
$$
D_{r_i} F(\eta,r_i)=\begin{bmatrix}
\partial_{r_{i,1}} F_1 & \cdots & \partial_{r_{i,n}} F_1\\
\partial_{r_{i,1}} F_2 & \cdots & \partial_{r_{i,n}} F_2\\
\vdots & \ddots & \vdots \\
\partial_{r_{i,1}} F_n & \cdots & \partial_{r_{i,n}} F_n
\end{bmatrix}= \begin{bmatrix}
2\,r_{i,1} & \cdots & 2\, r_{i,n}\\
a_{11}(\eta)-\lambda_i(\eta) & \cdots & a_{1n}(\eta)\\
\vdots & \ddots & \vdots\\
a_{n1}(\eta) & \cdots & a_{nn}(\eta)-\lambda_i(\eta)
\end{bmatrix}
$$
computed in $(\bar{\eta},r_i(\bar{\eta}))$ has rank $n$, because the first row is orthogonal to each of the following $(n-1)$ lines by the system (\ref{proof_spectral_properties_system_def_r_i_eta_bar}) and the lines $v_j(\bar{\eta})$ for $j\neq j^*$ are linearly independent. Therefore
$$\det\, D_{r_i} F(\bar{\eta},r_i(\bar{\eta})) \neq 0$$
and we can apply the implicit function theorem: there exists a neighbourhood $\mathcal{N}_2$ of $\bar{\eta}$ and a $\mathcal{C}^k$ function $\eta \to r_i(\eta)$, such that
$$F(\eta, r_i(\eta))=0 \; \text{ for every } \; \eta \in \mathcal{N}_2.$$
The equation $F(\eta, r_i(\eta))=0$ defines the $i$-th right eigenvector for the value $\eta$ and the relations
$$l_i(\eta)\cdot r_j(\eta)=\begin{cases}
1 & \text{if } i=j,\\
0 & \text{if } i \neq j,
\end{cases} \; \text{ for every } i,j \in \lbrace 1,...,n\rbrace,
$$
define the $i$-th left eigenvector. We have the thesis taking $\mathcal{N}= \mathcal{N}_1 \cap \mathcal{N}_2$.
\end{proof}
\section{Basic definitions and results}
\begin{mydef}
Let $f: \mathbb{R}^n \to \mathbb{R}^n$ be a $\mathcal{C}^2$ function and let $u:\mathbb{R}^+ \times \mathbb{R}\to \mathbb{R}^n$ be a locally integrable function.\\
A system of conservation laws is the following partial differential equation:
\begin{equation}\label{system_of_cons_laws}
\partial_t u + \partial_x [f(u)]=0.
\end{equation}
The function $f$ is called flux function and $u$ is the conserved quantity. 
\end{mydef}
Suppose that $u=u(t,x)$ is a smooth function. Let $(a,b)$ be a bounded interval in $\mathbb{R}$.\\
Integrating the equation (\ref{system_of_cons_laws}) over $(a,b)$, we find
\begin{equation*}
\begin{split}
& \int_a^b \left[\partial_t u + \partial_x [f(u)]\right] \, dx=0 \; \Longleftrightarrow \;\dfrac{d}{dt} \int_a^b u \,dx = f(a)-f(b),
\end{split}
\end{equation*}
which means that the variation in time of the quantity of $u$ inside the interval $(a,b)$ is equal to the flux of $u$ passing through $a$ and $b$. Hence there is no production of $u$ inside $(a,b)$. This justifies the name of conservation law given to the system (\ref{system_of_cons_laws}).
\begin{mydef}
Let $\bar{u}\in L^1_\text{loc}(\mathbb{R},\mathbb{R}^n)$ be a fixed function. The Cauchy problem for the system (\ref{system_of_cons_laws}) is 
\begin{equation}\label{Cauchy_problem_general}
\begin{cases}
\partial_t u + \partial_x[f(u)]=0,\\
u(0,x)=\bar{u}(x).
\end{cases}
\end{equation}
\end{mydef}
\begin{prop}
A function $u \in \mathcal{C}^1(\mathbb{R}^+\times \mathbb{R},\mathbb{R}^n)$ is a solution to (\ref{Cauchy_problem_general}) if and only if it solves the problem
\begin{equation}\label{Cauchy_problem_quasilinear_form}
\begin{cases}
\partial_t u + A(u) \, \partial_x u=0,\\
u(0,x)=\bar{u}(x),
\end{cases}
\end{equation}
where $A(u)=Df(u)$ is the Jacobian matrix of the flux function, i.e.
$$[A(u)]_{ij}= \dfrac{\partial f_i(u)}{\partial u_j}.$$  
\end{prop}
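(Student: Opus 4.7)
The plan is to observe that the initial condition is identical in both Cauchy problems (\ref{Cauchy_problem_general}) and (\ref{Cauchy_problem_quasilinear_form}), so the whole statement reduces to the equivalence of the two PDEs on the domain $\mathbb{R}^+\times\mathbb{R}$ for functions $u\in \mathcal{C}^1(\mathbb{R}^+\times\mathbb{R},\mathbb{R}^n)$. The core of the argument is just the chain rule applied componentwise.

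First I would fix a point $(t,x)\in\mathbb{R}^+\times\mathbb{R}$ and expand the term $\partial_x[f(u(t,x))]$. Because $f\in\mathcal{C}^2(\mathbb{R}^n,\mathbb{R}^n)$ and $u\in\mathcal{C}^1$, the composition $f\circ u$ is differentiable with respect to $x$, and for each component $i\in\{1,\dots,n\}$ the chain rule gives
$$\partial_x[f_i(u(t,x))] = \sum_{j=1}^n \frac{\partial f_i}{\partial u_j}(u(t,x))\,\partial_x u_j(t,x).$$
Stacking these $n$ scalar identities into a vector equation, one recognizes the right-hand side as the matrix-vector product $A(u)\,\partial_x u$, where $[A(u)]_{ij} = \partial f_i/\partial u_j$ is exactly the Jacobian $Df(u)$ introduced in the statement. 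Hence pointwise
$$\partial_x[f(u)] = A(u)\,\partial_x u.$$

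Substituting this identity into (\ref{system_of_cons_laws}) shows that $\partial_t u + \partial_x[f(u)] = 0$ holds if and only if $\partial_t u + A(u)\,\partial_x u = 0$, which together with the common initial condition $u(0,x)=\bar u(x)$ gives the required equivalence in both directions. I would close by noting that the regularity assumptions are exactly what is needed: $u\in\mathcal{C}^1$ ensures $\partial_x u$ exists and is continuous, and $f\in\mathcal{C}^2$ makes $Df$ continuous, so $A(u)\,\partial_x u$ is well-defined and the chain rule applies classically.

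There is essentially no hard step in this proof — the only thing to be careful about is making the distinction between the \emph{divergence form} $\partial_x[f(u)]$ (in which the spatial derivative acts on the composed vector field) and the \emph{quasilinear form} $A(u)\partial_x u$ (a matrix acting on the vector of spatial derivatives of $u$), so I would write out the indices explicitly at least once to make clear that these two expressions are literally equal for $\mathcal{C}^1$ solutions. The reason to highlight this is that for weak (non-$\mathcal{C}^1$) solutions the two formulations are \emph{not} equivalent, which motivates the rest of the chapter.
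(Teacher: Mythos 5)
Your proof is correct and follows essentially the same route as the paper's: apply the chain rule componentwise to $\partial_x[f_i(u)]$, recognize the resulting sum as the $i$-th row of $A(u)\,\partial_x u$, and conclude the pointwise identity $\partial_x[f(u)]=A(u)\,\partial_x u$, with the common initial condition handled trivially. The closing remark about the divergence form and the quasilinear form not coinciding for weak solutions is a useful observation that goes slightly beyond the paper's one-line proof but is consistent with the spirit of the chapter.
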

\begin{proof}
The functions $f$ and $u$ are respectively of class $\mathcal{C}^2$ and $\mathcal{C}^1$. Therefore we can apply the chain rule and we find
\begin{equation*}
0=\partial_t u_i + \partial_x[f(u)]_i=\partial_t u_i +\sum_{j=1}^n\dfrac{\partial f_i}{\partial u_j}(u)\partial_x u_j \; \text{ for every } \; i \in \lbrace 1, ... , n \rbrace.
\end{equation*}
Hence
\begin{equation*}
\begin{pmatrix}
\partial_t u_1\\
\vdots\\
\partial_t u_n
\end{pmatrix}+ \begin{bmatrix}
\partial_{u_1} f_1(u) & \cdots & \partial_{u_n} f_1(u)\\
\vdots & \ddots & \vdots\\
\partial_{u_1} f_n(u) & \cdots & \partial_{u_n} f_n (u)
\end{bmatrix}\, \begin{pmatrix}
\partial_x u_1\\
\vdots\\
\partial_x u_n
\end{pmatrix}=0,
\end{equation*}
which is the first equation in (\ref{Cauchy_problem_quasilinear_form}).
\end{proof}
The solution to the Cauchy problem (\ref{Cauchy_problem_general}) is in general discontinuous.
\begin{exmp}
Let us consider the Cauchy problem for the Burgers' equation:
\begin{equation}\label{burgers_equation_cauchy_problem}
\begin{cases}
\partial_t u +u\, \partial_x u =0,\\
u(0,x)=\dfrac{1}{1+x^2}.
\end{cases}
\end{equation}
We are going to apply the method of characteristics to find a solution to the Cauchy problem. This method discovers the curves $\gamma(s)=(\gamma_1(s),\gamma_2(s))$ in the plane $(t,x)$ along which the partial differential equation reduces to a system of ordinary differential equations; see \cite{evans}.\\
Let us denote
$$z(s)=u(\gamma(s))\; \text{ and } \; p(s)=(p_1(s),p_2(s))=Du(\gamma(s)).$$
First rewrite the equation $\partial_t u+u \,\partial_x u =0$ in the form
$$F(\gamma,z,p)=0$$
where
$$F(\gamma,z,p)=F((t,x),z,(p_1,p_2))=p_1+z\,p_2.$$
Then we have to solve the system:
$$
\begin{cases}
\dot{\gamma}(s)=D_p F(\gamma(s),z(s),p(s)),\\
\dot{z}(s)=p(s)\cdot D_p F(\gamma(s),z(s),p(s)),\\
\dot{p}(s)=-D_x F(\gamma(s),z(s),p(s))- \partial_u F(\gamma(s),z(s),p(s))\, p(s).
\end{cases}
$$
By the second equation, we find
$$\dot{z}(s)=\begin{pmatrix}
p_1(s)\\
p_2(s)
\end{pmatrix}\cdot
\begin{pmatrix}
1\\
z(s)
\end{pmatrix}=
p_1(s)+z(s)\,p_2(s)=0.$$
Therefore we obtain $u(\gamma(s))=z(s)=z(0)$ for every $s$. For the curve $\gamma(s)$ we find:
$$\dot{\gamma}(s)=\begin{pmatrix}
\dot{\gamma}_1(s)\\
\dot{\gamma}_2(s)
\end{pmatrix}=\begin{pmatrix}
1\\
z(s)
\end{pmatrix}.$$
Integrating the equations $\dot{\gamma}_1(s)=1$ and $\dot{\gamma}_2(s)=z(s)=z(0)$, we obtain
$$\gamma_1(s)=s +c_1\; \text{ and } \; \gamma_2(s)=z(0)\, s + c_2.$$
Let $(0,\bar{x})$ be the initial point for a characteristic curve, i.e.
$$\gamma(0)=(0,\bar{x}).$$
Substituting in the equations for $\gamma_1$ and $\gamma_2$, we find $c_1=0$ and $c_2=\bar{x}$. Moreover, since $u(0,x)=(1+x^2)^{-1}$, we have
$$z(0)=u(\gamma(0))=\dfrac{1}{1+\bar{x}^2}.$$
Hence we find
$$\begin{cases}
\gamma_1(s)=s,\\
\gamma_2(s)=\dfrac{1}{1+\bar{x}^2}s+\bar{x},
\end{cases}
$$
which implies
$$\gamma_2(s)=\dfrac{1}{1+\bar{x}^2}\gamma_1(s)+\bar{x}.$$
Therefore the points $(t,x)$ of the characteristic curve passing through the point $(0,\bar{x})$ satisfy the equation
$$x=\bar{x}+\dfrac{1}{1+\bar{x}^2}\,t \Longleftrightarrow t=(x-\bar{x})(1+\bar{x}^2).$$
In the plane $(t,x)$ these are lines passing through $(0,\bar{x})$ with slope $(1+\bar{x}^2)^{-1}$. If we fix two points $\bar{x}_2>\bar{x}_1\geq 0$, then the slope of the line passing through $(0,\bar{x}_2)$ is minor than the one of the line passing through $(0,\bar{x}_1)$. Therefore these lines have an intersection.\\
Since the value of $u$ on the line passing through a point $(0,\bar{x})$ is constant, i.e.
$$u(\gamma(s))=u(\gamma(0))=\dfrac{1}{1+\bar{x}^2},$$
in the point of intersection we obtain a double-valued function. Hence the solution $u$ must be discontinuous.
\end{exmp}
Hence we have to introduce the weak solution to the problem.
\begin{mydef}
Let $\Omega$ be a subset of $\mathbb{R}^+\times \mathbb{R}$. Assume that $u:\Omega \to \mathbb{R}^n$ and $f(u):\Omega \to \mathbb{R}^n$ are in $L^1_\text{loc}(\Omega)$. The function $u$ is a weak solution to the system of conservation laws (\ref{system_of_cons_laws}), if for every $\phi \in \mathcal{C}^1_c(\Omega,\mathbb{R}^n)$, we have
\begin{equation}\label{weak_solution}
\int_\Omega [u \cdot \partial_t \phi +f(u) \cdot \partial_x \phi] \, dt\,dx=0. 
\end{equation}
\end{mydef}
\begin{prop}
Let $\Omega\subseteq \mathbb{R}^+\times \mathbb{R}$ be a bounded set with smooth boundary $\partial \Omega$. A function $u \in \mathcal{C}^1(\Omega)$ is a classical solution to (\ref{system_of_cons_laws}) if and only if it is a weak solution. 
\end{prop}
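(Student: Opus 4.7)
The plan is to establish both implications by integration by parts, using the key fact that compactly supported test functions kill all boundary contributions.

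For the forward implication, I would assume $u\in\mathcal{C}^1(\Omega)$ is a classical solution, so $\partial_t u+\partial_x[f(u)]=0$ holds pointwise in $\Omega$. Given any $\phi\in\mathcal{C}^1_c(\Omega,\mathbb{R}^n)$, I would take the dot product of the PDE with $\phi$ and integrate over $\Omega$, obtaining
$$\int_\Omega \phi\cdot\partial_t u\, dt\,dx+\int_\Omega\phi\cdot\partial_x[f(u)]\,dt\,dx=0.$$
Then I would apply the divergence theorem (or Green's identity) on the smooth-boundary domain $\Omega$: for the first integral, $\partial_t(u\cdot\phi)=\phi\cdot\partial_t u+u\cdot\partial_t\phi$, and analogously for the second. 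Because $\phi$ is compactly supported in the open set $\Omega$, its extension by zero is $\mathcal{C}^1_c$ on any larger domain, so the boundary integrals of $u\cdot\phi$ and $f(u)\cdot\phi$ over $\partial\Omega$ vanish. Rearranging gives the weak formulation (\ref{weak_solution}).

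For the reverse implication, I would start from the weak formulation and, since now $u\in\mathcal{C}^1(\Omega)$ and $f\in\mathcal{C}^2$ (so $f(u)\in\mathcal{C}^1$), I can integrate by parts in the other direction. For every $\phi\in\mathcal{C}^1_c(\Omega,\mathbb{R}^n)$, the same compact-support argument gives
$$\int_\Omega\phi\cdot\bigl[\partial_t u+\partial_x[f(u)]\bigr]\,dt\,dx=-\int_\Omega\bigl[u\cdot\partial_t\phi+f(u)\cdot\partial_x\phi\bigr]\,dt\,dx=0.$$
The function $g:=\partial_t u+\partial_x[f(u)]$ is continuous on $\Omega$, and its integral against every test function $\phi\in\mathcal{C}^1_c(\Omega,\mathbb{R}^n)$ vanishes. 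By the fundamental lemma of the calculus of variations (du Bois-Reymond), applied componentwise using bump functions concentrated near an arbitrary point of $\Omega$, this forces $g\equiv 0$ on $\Omega$, i.e.\ $u$ is a classical solution.

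The only mildly delicate point is justifying the integration by parts on $\Omega$ with nonsmooth $u$ extended only as a $\mathcal{C}^1$ function on $\Omega$; but since $\phi$ has compact support strictly inside the open set $\Omega$, I can replace $\Omega$ by a slightly smaller open set containing $\mathrm{supp}(\phi)$ and with smooth boundary, on which $u\cdot\phi$ and $f(u)\cdot\phi$ are $\mathcal{C}^1$ and vanish near the boundary, making the divergence theorem straightforward. No other obstacle arises; the argument is essentially the standard equivalence of weak and strong solutions for first-order systems.
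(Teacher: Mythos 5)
Your proof is correct and follows essentially the same route as the paper: apply the divergence theorem to the vector field $(\phi\, u, \phi\, f(u))$, kill the boundary term using the compact support of $\phi$, and read the resulting identity $\int_\Omega \phi\cdot[\partial_t u+\partial_x f(u)]\,dt\,dx = -\int_\Omega[u\cdot\partial_t\phi+f(u)\cdot\partial_x\phi]\,dt\,dx$ in both directions. You are merely more explicit than the paper about the reverse implication (invoking the fundamental lemma of the calculus of variations, which the paper leaves implicit under ``the thesis follows from the equality''), so there is nothing to add.
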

\begin{proof}
Consider $\phi \in \mathcal{C}^1_c(\Omega)$ and the vector field $(\phi\, u, \phi\, f(u))$. Applying the divergence theorem, we find
\begin{equation*}
\int_\Omega \text{div}(\phi\, u, \phi \, f(u)) \, dt\, dx= \int_\Omega [\partial_t (\phi\, u)+ \partial_x (\phi \, f(u))]\,dt\,dx = \int_{\partial \Omega}(\phi\, u, \phi \, f(u))\cdot \nu \, d\sigma,
\end{equation*} 
where $\nu$ is the outer normal of $\partial \Omega$ and $d\sigma$ is the surface measure. Since $\phi \in \mathcal{C}^1_c(\Omega)$, we have
$$ \int_{\partial \Omega}(\phi\, u, \phi \, f(u))\cdot \nu \, d\sigma=0,$$
which implies
$$\int_\Omega [\partial_t (\phi\, u)+ \partial_x (\phi \, f(u))]\,dt\,dx =0.$$
Therefore the thesis follows from the equality:
$$ \int_\Omega\phi\, [\partial_t u + \partial_x[f(u)]]\, dt\,dx= -\int_\Omega[u \, \partial_t \phi +f(u) \, \partial_x \phi]\, dt\, dx.$$
\end{proof}
\begin{mydef}
Fix $\bar{u}\in L^1_\text{loc}(\mathbb{R},\mathbb{R}^n)$. A function $u \in L^1_\text{loc}([0,T]\times \mathbb{R},\mathbb{R}^n)$ is a weak solution for the Cauchy problem
\begin{equation*}
\begin{cases}
\partial_t u +\partial_x[f(u)]=0,\\
u(0,x)=\bar{u}(x),
\end{cases}
\end{equation*}
if $u$ is a weak solution to (\ref{system_of_cons_laws}) and $u(0,x)=\bar{u}(x)$ a.e. on $\mathbb{R}$.
\end{mydef}
The next theorem gives the conditions which a discontinuous function must satisfy to be a solution to the system (\ref{system_of_cons_laws}).
\begin{theorem}[Rankine-Hugoniot conditions]
Let $\Omega$ be an open set in $\mathbb{R}^+\times \mathbb{R}$ and let $u:\Omega \to \mathbb{R}^n$ be a piecewise $\mathcal{C}^1$ function, with jumps along a finite number $N\in \mathbb{N}$ of differentiable curves $x=\xi_i(t)$ for $i=1,...,N$. Let us define the limits
$$u^+(t,\xi_i)=\lim_{x \to \xi_i(t)^+}u(t,x) \; \text{ and } \; u^-(t,\xi_i) =\lim_{x \to \xi_i(t)^-}u(t,x)$$
for $i=1,...,N$.\\
Then $u$ is a weak solution to the system (\ref{system_of_cons_laws}) if and only if:
\begin{enumerate}
\item $\partial_t u+ \partial_x [f(u)]=0\;$ for every $(t,x)$ such that $x\neq \xi_i(t)$ for every $i=1,...,N$;
\item for every $t\in \mathbb{R}^+$ and $i=1,...,N$ the following condition holds:
\begin{equation}\label{Rankine-Hugoniot_conditions}
f(u^+(t,\xi_i))-f(u^-(t,\xi_i))=\dot{\xi}_i(t)\left[u^+(t,\xi_i)-u^-(t,\xi_i)\right].
\end{equation}
\end{enumerate}
\end{theorem}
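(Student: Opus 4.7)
The plan is to prove both implications by localizing test functions and applying the divergence theorem (essentially the previous proposition) on each smooth piece, with the jumps contributing boundary terms along the curves $x=\xi_i(t)$. For any $t\geq 0$, the curves partition a neighbourhood of the solution into finitely many open regions $\Omega_1,\dots,\Omega_{N+1}$ on which $u$ is $\mathcal{C}^1$ up to the boundary, so standard integration by parts applies on each.

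For the direction ``(1) and (2) imply weak solution,'' I would take an arbitrary $\phi\in\mathcal{C}^1_c(\Omega,\mathbb{R}^n)$ and write
$$\int_\Omega [u\cdot\partial_t\phi+f(u)\cdot\partial_x\phi]\,dt\,dx=\sum_{k=1}^{N+1}\int_{\Omega_k}[u\cdot\partial_t\phi+f(u)\cdot\partial_x\phi]\,dt\,dx.$$
On each $\Omega_k$ the field $(\phi\cdot u,\phi\cdot f(u))$ is $\mathcal{C}^1$, and the divergence theorem converts the interior term (using (1), which makes $\partial_t u+\partial_x f(u)=0$ pointwise) into a boundary integral along $\partial\Omega_k$. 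The contributions on the ``outer'' boundary of $\Omega$ vanish because $\phi$ has compact support, and two adjacent regions $\Omega_k,\Omega_{k+1}$ sharing the curve $x=\xi_i(t)$ produce opposite outer normals $\pm(-\dot\xi_i(t),1)/\sqrt{1+\dot\xi_i^2(t)}$. Summing, the contribution along $x=\xi_i(t)$ is proportional to
$$\int \phi(t,\xi_i(t))\cdot\bigl[(f(u^+)-f(u^-))-\dot\xi_i(t)(u^+-u^-)\bigr]\,dt,$$
which vanishes by (2). Hence the weak formulation (\ref{weak_solution}) holds.

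For the converse, I would reverse the argument in two stages. First, take $\phi$ with support contained in a single $\Omega_k$ (i.e.\ avoiding every jump curve); the previous proposition on this subdomain shows that $\partial_t u+\partial_x f(u)=0$ classically there, giving (1). Then, for each fixed $i$, pick $\phi$ supported in a thin tube around $x=\xi_i(t)$ meeting no other jump curve. The computation above still applies, and since (1) already makes the interior integrals vanish, the weak equation reduces exactly to
$$\int \phi(t,\xi_i(t))\cdot\bigl[(f(u^+)-f(u^-))-\dot\xi_i(t)(u^+-u^-)\bigr]\,dt=0.$$
By the arbitrariness of $\phi$ along the curve, the bracketed quantity must vanish for a.e.\ $t$, and continuity in $t$ gives (\ref{Rankine-Hugoniot_conditions}) for every $t$.

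The main obstacle, beyond the straightforward localization, is the careful bookkeeping of outer normals and surface measures along each $\xi_i$: one must verify that the Jacobian factor $\sqrt{1+\dot\xi_i^2}$ from the surface measure $d\sigma$ cancels correctly with the norm of the non-normalized normal $(-\dot\xi_i,1)$, so that the boundary integral reduces cleanly to an ordinary integral $dt$ with the factor $\dot\xi_i(u^+-u^-)-(f(u^+)-f(u^-))$. Once this is set up, the arbitrariness argument at the end is routine, since the restrictions of test functions $\phi\in\mathcal{C}^1_c$ to each curve $\xi_i$ form a dense class in $L^1$ along that curve.
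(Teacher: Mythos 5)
Your proposal is correct and follows essentially the same route as the paper: partition $\Omega$ by the jump curves, apply the divergence theorem to the field $(\phi\,u,\phi\,f(u))$ on each piece, observe that the outer normals from adjacent regions along $\xi_i$ are opposite, and track the cancellation of the $\sqrt{1+\dot\xi_i^2}$ normalization against the arclength measure to obtain the boundary integrals carrying the jump term $\dot\xi_i(u^+-u^-)-(f(u^+)-f(u^-))$. The only (minor, welcome) difference is that you make the two implications explicit via the staged localization of test functions, whereas the paper derives a single master identity and leaves both directions to the reader.
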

\begin{proof}
Let us define the sets
\begin{equation*}
\begin{split}
& \Omega_0=\lbrace (t,x)\in\Omega : x<\xi_1(t) \rbrace,\; \; \; \Omega_{N}=\lbrace (t,x) \in \Omega : x \geq \xi_N(t)\rbrace \; \text{ and}\\
& \Omega_i= \lbrace (t,x)\in \Omega : \xi_i(t) \leq x < \xi_{i+1}(t) \rbrace \; \text{ for } i=1,...,N-1.
\end{split}
\end{equation*}
Let us consider the vector field $(\phi\, u, \phi\, f(u))$, where $\phi \in \mathcal{C}^1_c(\Omega)$. Applying the divergence theorem to each set $\Omega_i$, we find
\begin{equation*}
\begin{split}
\int_\Omega \text{div}(\phi\,u,\phi\, f(u) & )\,dt\,dx =  \sum_{i=0}^N \int_{\Omega_i} \text{div}(\phi\,u,\phi\, f(u))\,dt\,dx=\\
& = \sum_{i=0}^N\left[ \int_{\Omega_i} [\phi\, (\partial_t u+\partial_x f(u))] \, dt\,dx +\int_{\Omega_i} [u\,\partial_t \phi + f(u) \, \partial_x \phi]\, dt\, dx\right]=\\
& =\sum_{i=0}^N \int_{\partial \Omega_i}(\phi\,u,\phi\, f(u))\cdot\nu_i \, d\sigma,
\end{split}
\end{equation*}
where $\nu_i$ is the outer normal of the set $\Omega_i$ and $d\sigma$ is the surface measure.\\
The function $\phi$ is zero on $\partial \Omega_i$ except eventually for the points $(t,\xi_i(t))$ for $i=1,...,N$. Fix $i\in \lbrace 1,...,N \rbrace$ and take a parametrization
$$t\in[a,b]\to \xi_i(t)\in \Omega$$
of the points of the curve $\xi_i$ inside the domain $\Omega$. The curve $\xi_i$ is the intersection between $\Omega_{i-1}$ and $\Omega_i$ and its tangent vector is
$$\tau_i(t)= (1,\dot{\xi}_i(t)).$$
The outer normal of $\Omega_i$ on the curve $x=\xi_i(t)$ is
$$\nu_i(t)=\dfrac{1}{\sqrt{1+|\dot{\xi}_i(t)|^2}}(-\dot{\xi}_i(t),1),$$
while the outer normal of $\Omega_{i-1}$ on the same curve is
$$\nu_{i-1}(t)= \dfrac{1}{\sqrt{1+|\dot{\xi}_i(t)|^2}}(\dot{\xi}_i(t),-1).$$
Hence we find
\begin{equation*}
\begin{split}
\sum_{i=0}^N \int_{\partial \Omega_i} & (\phi\,u,\phi\, f(u))\cdot\nu_i \, d\sigma =\\ 
& = \sum_{i=1}^N \int_a^b \phi(s,\xi_i(s))\,[u_i^-\,\dot{\xi}_i(s)-f(u_i^-)-u_i^+\,\dot{\xi}_i(s)+f(u_i^+)]\,ds,
\end{split}
\end{equation*}
where the normalization factor $(1+|\dot{\xi}_i(t)|^2)^{-1/2}$ has disappeared in the product with the arclength. Therefore the thesis follows from the equation:
\begin{equation*}
\begin{split}
& \int_{\Omega} [u\,\partial_t \phi + f(u) \, \partial_x \phi]\, dt\, dx =\\
& = - \int_{\Omega} [\phi\, (\partial_t u+\partial_x f(u))] \, dt\,dx +\sum_{i=1}^N \int_a^b \phi(s,\xi_i)\,[(u_i^- -u_i^+)\,\dot{\xi}_i(s)+f(u_i^+)-f(u_i^-)]\,ds.
\end{split}
\end{equation*}
\end{proof}
\section{The Riemann problem}
In this section we introduce the Riemann problem for a system of conservation laws and we define its standard solution.
\begin{mydef}
Let $\Omega\in \mathbb{R}^n$ be an open set and let $f:\Omega \to \mathbb{R}^n$ be a vector field of class $\mathcal{C}^2$. Fix two points $u^l$ and $u^r$ in $\Omega$. The Riemann problem for the system of conservation laws (\ref{system_of_cons_laws}) is
\begin{equation}\label{Riemann_problem_general}
\begin{cases}
\partial_t u + \partial_x [f(u)]=0,\\
u(0,x)=\begin{cases}
u^l &\text{if } x\leq 0,\\
u^r & \text{if } x>0.
\end{cases}
\end{cases}
\end{equation}
\end{mydef}
\begin{mydef}
The system (\ref{system_of_cons_laws}) is strictly hyperbolic if for every $u \in \Omega$ the Jacobian matrix $Df(u)$ has $n$ real distinct eigenvalues $\lambda_1(u)< ... <\lambda_n(u)$.
\end{mydef}
If the system (\ref{system_of_cons_laws}) is strictly hyperbolic, then for every $u\in \Omega$ we can find two bases $\lbrace r_1(u),...,r_n(u) \rbrace$ and $\lbrace l_1(u),...,l_n(u) \rbrace$ respectively of right and left eigenvalues of $Df(u)$, i.e.
$$ l_i(u) \cdot Df(u)=\lambda_i(u) \, l_i(u) \; \text{ and } \;  Df(u) \cdot r_i(u) = \lambda_i(u)\,  r_i(u) \; \text{ for every } \; i=1,...,n.$$
Moreover we can choose the eigenvectors so that for every $u \in \Omega$ the following conditions are satisfied:
$$|r_i(u)|=1 \; \text{ and } \; l_i(u)\cdot r_j(u) = \begin{cases}
1 & \text{if } i=j,\\
0 & \text{if } i \neq j,
\end{cases} \; \text{ for every } \; i,j\in \lbrace 1,...,n \rbrace.$$
By Proposition \ref{spectral_properties_continuity}, since the entries $a_{ij}(u)$ of the matrix $A(u)$ are $\mathcal{C}^1$, the functions $u\to \lambda_i(u)$, $u\to r_i(u)$ and $u \to l_i(u)$ are of class $\mathcal{C}^1$.
\begin{mydef}
Fix $i \in \lbrace 1,...,n \rbrace $. The $i$-th characteristic field is the vector field
$$r_i(u)\Big|_{u\in\Omega}.$$
The $i$-th characteristic field is genuinely non-linear if
$$\nabla \lambda_i(u)\cdot r_i(u) \neq 0 \; \text{ for every } \;  u \in \Omega,$$
while it is linearly degenerate if
$$\nabla \lambda_i(u)\cdot r_i(u) =0 \; \text{ for every } \; u \in \Omega.$$
\end{mydef}
If the $i$-th characteristic field is genuinely non-linear, we can choose the sign of $r_i$ so that
\begin{equation}\label{sign_of_characteristic_field}
\nabla \lambda_i(u)\cdot r_i(u) >0 \; \text{ for every }\; u \in \Omega.
\end{equation}
Indeed, both functions $u \to r_i(u)$ and $u \to \nabla \lambda_i(u)$ are continuous. Hence $\lambda_i(u)$ is strictly decreasing or increasing along the direction of $r_i$, otherwise there would exist a point $u_0$ such that
$$\nabla \lambda_i(u_0)\cdot r_i(u_0)=0$$
and this is a contradiction of the hypothesis on the characteristic field.\\
The solution that we are going to define is self-similar, i.e. there exists a function $\psi:\mathbb{R}\to \mathbb{R}^n$ possibly discontinuous, such that
$$u(t,x)=\psi\left( \dfrac{x}{t}\right).$$
\subsection{Rarefaction waves}
Fix $i \in \lbrace 1,...,n \rbrace$. Suppose that the $i$-th characteristic field is genuinely non-linear.\\
Let us consider the Cauchy problem
\begin{equation}\label{Cauchy_problem_rarefaction_wave}
\begin{cases}
\dot{w}=r_i(w),\\
w(0)=w_0.
\end{cases}
\end{equation}
The function $w \to r_i(w)$ is $\mathcal{C}^1(\Omega)$, hence it is locally Lipschitz continuous. Therefore the system (\ref{Cauchy_problem_rarefaction_wave}) has a local unique solution $ \sigma \to R_i(\sigma)(w_0)$ which is the integral curve of the vector field $r_i(w)$ passing through $w_0$.
\begin{prop}\label{rarefaction_wave_properties}
Suppose that the system (\ref{system_of_cons_laws}) is strictly hyperbolic with smooth coefficients defined in an open set $\Omega\in \mathbb{R}^n$. Let $i \in \lbrace 1,...,n \rbrace$ be fixed and assume that the $i$-th characteristic field is genuinely non-linear. Fix two points $u^l$ and $u^r$ in $\Omega$. Suppose that there exists $\bar{\sigma}\geq 0$ such that
$$u^r=R_i(\bar{\sigma})(u^l),$$
where $\sigma \to R_i(\sigma)(u^l)$ is the solution to the Cauchy problem (\ref{Cauchy_problem_rarefaction_wave}) with initial datum $w(0)=u^l$.\\
Then the function $\sigma \to \lambda_i(R_i(\sigma)(u^l))$ is strictly increasing and
\begin{equation}\label{rarefaction_wave_def}
u(t,x)=\begin{cases}
u^l & \text{if } x<t\, \lambda_i(u^l),\\
R_i(\sigma)(u^l) & \text{if } x= t\, \lambda_i(R_i(\sigma)(u^l)) \; \text{ for } \; \sigma \in [0,\bar{\sigma}],\\
u^r & \text{if } x> t\, \lambda_i(u^r),
\end{cases}
\end{equation}
is a weak solution to the Riemann problem (\ref{Riemann_problem_general}).
\end{prop}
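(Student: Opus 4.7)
The plan is to split the proof into three parts: first establish the strict monotonicity of $\sigma\mapsto\lambda_i(R_i(\sigma)(u^l))$, then verify that the formula (\ref{rarefaction_wave_def}) defines a classical solution of the quasilinear system in each of the three open regions separated by the rays $x=t\lambda_i(u^l)$ and $x=t\lambda_i(u^r)$, and finally check that these pieces fit together into a weak solution by appealing to the Rankine-Hugoniot theorem.

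For the monotonicity, I would simply differentiate along the integral curve: since $R_i$ solves (\ref{Cauchy_problem_rarefaction_wave}), the chain rule gives
$$\frac{d}{d\sigma}\lambda_i(R_i(\sigma)(u^l))=\nabla\lambda_i(R_i(\sigma)(u^l))\cdot r_i(R_i(\sigma)(u^l))>0,$$
where the strict positivity comes from the genuine non-linearity assumption together with the sign convention (\ref{sign_of_characteristic_field}). Therefore $\sigma\mapsto\lambda_i(R_i(\sigma)(u^l))$ is a $\mathcal{C}^1$ increasing bijection from $[0,\bar{\sigma}]$ onto $[\lambda_i(u^l),\lambda_i(u^r)]$, and the middle branch of (\ref{rarefaction_wave_def}) defines $u(t,x)$ unambiguously on the fan $\{t\lambda_i(u^l)<x<t\lambda_i(u^r)\}$.

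Next I would check the PDE in each open region. The outer regions are trivial since $u$ is constant there. Inside the fan, introduce the self-similar variable $\xi=x/t$ and let $\sigma(\xi)$ be the inverse of $\sigma\mapsto\lambda_i(R_i(\sigma)(u^l))$, so that $u(t,x)=R_i(\sigma(\xi))(u^l)$. Differentiating the identity $\xi=\lambda_i(R_i(\sigma(\xi))(u^l))$ yields $\sigma'(\xi)\,\nabla\lambda_i(u)\cdot r_i(u)=1$, hence
$$\partial_t u=-\frac{\xi}{t}\,\frac{r_i(u)}{\nabla\lambda_i(u)\cdot r_i(u)},\qquad \partial_x u=\frac{1}{t}\,\frac{r_i(u)}{\nabla\lambda_i(u)\cdot r_i(u)}.$$
Since $r_i(u)$ is an eigenvector of $A(u)=Df(u)$ with eigenvalue $\lambda_i(u)=\xi$, applying $A(u)$ to $\partial_x u$ multiplies by $\xi$, giving $A(u)\,\partial_x u=-\partial_t u$ and so the quasilinear form $\partial_t u+A(u)\,\partial_x u=0$ holds in the fan. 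This is the computational heart of the argument and, in my view, the main obstacle: one must handle the implicit definition of $\sigma(\xi)$ carefully and use in an essential way that along a rarefaction the eigenvalue $\lambda_i(u)$ coincides with the self-similar variable $x/t$.

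To conclude, I would observe that $u$ is continuous across the two rays $x=t\lambda_i(u^l)$ and $x=t\lambda_i(u^r)$, because $R_i(0)(u^l)=u^l$ and $R_i(\bar{\sigma})(u^l)=u^r$. Consequently the Rankine-Hugoniot conditions (\ref{Rankine-Hugoniot_conditions}) are satisfied trivially along both curves with vanishing jump, and the Rankine-Hugoniot theorem stated earlier yields that $u$ is a weak solution of the conservation law on $\mathbb{R}^+\times\mathbb{R}$. The initial datum is recovered almost everywhere because the fan collapses to the single point $(0,0)$ at $t=0$, leaving $u(0,x)=u^l$ for $x<0$ and $u(0,x)=u^r$ for $x>0$.
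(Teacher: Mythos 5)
Your proof is correct and follows essentially the same route as the paper's: the monotonicity via $\nabla\lambda_i\cdot r_i>0$, the explicit self-similar computation showing $\partial_t u + A(u)\,\partial_x u=0$ in the fan, and a gluing argument. The only minor difference is at the end: you explicitly invoke the Rankine--Hugoniot theorem with vanishing jump to justify that the smooth pieces patch to a weak solution (which the paper leaves implicit), whereas the paper devotes its middle step to verifying $\lim_{t\to0^+}\|u(t,\cdot)-u(0,\cdot)\|_{L^1}=0$, a point you only sketch; combining your RH observation with the paper's $L^1$ estimate would give the most complete argument.
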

\begin{proof}
\textit{\textbf{Part 1.}} Consider the function
$$\sigma \in [0,\bar{\sigma}] \to \lambda_i(\sigma)=\lambda_i(R_i(\sigma)(u^l)).$$
By the chain rule and the definition of $R_i(\sigma)(u^l)$, we find
$$\dfrac{d\lambda_i(\sigma)}{d\sigma}= \nabla \lambda_i(R_i(\sigma)(u^l))\cdot r_i(R_i(\sigma)(u^l)).$$
By the condition (\ref{sign_of_characteristic_field}), we obtain
$$\dfrac{d\lambda_i(\sigma)}{d\sigma}>0,$$
which implies that the function $\sigma \to \lambda_i(\sigma)$ is strictly increasing, i.e.
$$\lambda_i(u^r)=\lambda_i(\bar{\sigma})>\lambda_i(\sigma)>\lambda_i(0)=\lambda_i(u^l) \; \text{ for every } \; \sigma \in (0,\bar{\sigma}).$$
Moreover there exists an inverse function
$$\lambda_i \to \sigma(\lambda_i).$$
\textit{\textbf{Part 2.}} Let us prove that
$$ \lim_{t\to 0^+} \parallel u(t,\cdot)-u(0,\cdot) \parallel_{L^1(\mathbb{R})}=0.$$
Indeed $t\,\lambda_i(u^l) \xrightarrow{t\to 0^+} 0$ and $t \, \lambda_i(u^r) \xrightarrow{t\to0^+} 0$. Moreover for $x=t\,\lambda_i(R_i(\sigma)(u^l))$, we find
\begin{equation*}
|u(t,x)-u(0,x)|=|R_i(\sigma)(u^l) -u(0,x)| \leq \sup_{\sigma\in[0,\bar{\sigma}]} |R_i(\sigma)(u^l)|+\max(|u^l|,|u^r|).
\end{equation*}
The function $\sigma \to R_i(\sigma)(u^l)$ is differentiable and defined in a compact, then it is bounded. Therefore
\begin{equation*}
\int_\mathbb{R}|u(t,x)-u(0,x)|\, dx = \int_{t\,\lambda_i(u^l)}^{t\,\lambda_i(u^r)}|R_i(\sigma)(u^l)-u(0,x)|\,dx \xrightarrow{t\to 0^+} 0,
\end{equation*}
because
\begin{equation*}
\begin{split}
|R_i(\sigma)(u^l) -u(0,x)| <+\infty.
\end{split}
\end{equation*}
Then, for a.e. $x \in \mathbb{R}$, we have
$$\lim_{t \to 0^+} u(t,x)=u(0,x).$$
\textit{\textbf{Part 3.}} Now, let us show that the function $u(t,x)$ defined in (\ref{rarefaction_wave_def}) satisfies the equation (\ref{system_of_cons_laws}).\\
The equation $\partial_t u+\partial_x[f(u)]=0$ is trivially satisfied in the sets
$$\lbrace (t,x) \in \mathbb{R}^+\times \mathbb{R}: x<t\, \lambda_i(u^l)\rbrace \; \text{ and } \; \lbrace (t,x) \in \mathbb{R}^+\times \mathbb{R}: x>t\, \lambda_i(u^r)\rbrace,$$
where the value of $u(t,x)$ is respectively $u^l$ and $u^r$. For every $\sigma \in [0,\bar{\sigma}]$, the function $u$ is constant in the set
$$\Lambda^\sigma=\lbrace (t,x)\in \mathbb{R}^+\times \mathbb{R}: x=t\,\lambda_i(\sigma)\rbrace.$$
Hence its directional derivative on the line $(t,t\,\lambda_i(\sigma))$ is zero, i.e.
$$
\nabla u \cdot \begin{pmatrix}
1\\
\lambda_i(\sigma)
\end{pmatrix} =\partial_t u +\lambda_i(\sigma)\, \partial_x u=0.
$$
The derivative $\partial_x u$ is parallel to $r_i(u)$ in the points $(t,x)$ such that $\lambda_i(R_i(\sigma)(u^l))=x/t$. Indeed by the chain rule we obtain
\begin{equation*}
\begin{split}
\dfrac{\partial u(t,x)}{\partial x} & = \dfrac{\partial R_i(\sigma)(u^l)}{\partial x} = \dfrac{d R_i(\sigma)(u^l)}{d \sigma} \, \dfrac{d\sigma(\lambda_i)}{d\lambda_i} \, \dfrac{d\lambda_i}{d x}= r_i(R_i(\sigma)(u^l)) \, \dfrac{d\sigma}{d\lambda_i} \,  \dfrac{d(x/t)}{d x}=\\
& = \dfrac{d\sigma}{d\lambda_i}\, t^{-1} \, r_i(R_i(\sigma)(u^l)).
\end{split}
\end{equation*}
Then $\partial_x u$ is an eigenvector of $Df(u)$ with eigenvalue $\lambda_i(\sigma)$, i.e.
$$\lambda_i(\sigma)\partial_x u =Df(u) \partial_x u.$$
Therefore
$$0=\partial_t u +\lambda_i(\sigma)\, \partial_x u=\partial_t u +Df(u) \partial_x u.$$
\end{proof}
\begin{mydef}\label{definition_rarefaction_wave}
The solution defined in (\ref{rarefaction_wave_def}) to the Riemann problem (\ref{Riemann_problem_general}) is called centred rarefaction wave. The solution $R_i(\sigma)(u^l)$ to the Cauchy problem (\ref{Cauchy_problem_rarefaction_wave}) with initial datum $w_0=u^l$ is called $i$-th rarefaction curve passing through $u^l$.
\end{mydef}
\begin{remark}
If $u^r=R_i(\bar{\sigma})(u^l)$ for $\bar{\sigma} <0$, then we find
$$\lambda_i(u^l)>\lambda_i(u^r).$$
Therefore the solution (\ref{rarefaction_wave_def}) is not admissible, because when $t\,\lambda_i(u^r)<x<t\,\lambda_i(u^l)$, we have a triple-valued function.
\end{remark}
\subsection{Shock waves}
\begin{prop}\label{properties_shock_waves}
Suppose that the system (\ref{system_of_cons_laws}) is strictly hyperbolic with smooth coefficients defined in an open set $\Omega\in \mathbb{R}^n$.\\
Fix a point $u^l$ in an open set $\Omega\in \mathbb{R}^n$. There exists a number $\bar{\sigma}>0$ and $n$ smooth curves $S_i(\cdot)(u^l):[-\bar{\sigma},\bar{\sigma}]\to \Omega$ together with $n$ scalar functions $\lambda_i:[-\bar{\sigma},\bar{\sigma}]\to \mathbb{R}$ such that
\begin{equation}\label{RH_conditions_shock_curve}
f(S_i(\sigma)(u^l))-f(u^l)=\lambda_i(\sigma)(S_i(\sigma)(u^l)-u^l) \; \text{ for every } \; \sigma \in [-\bar{\sigma},\bar{\sigma}].
\end{equation}
Moreover:
\begin{enumerate}
\item[(i)] the function
\begin{equation}\label{def_shock_wave}
u(t,x)=\begin{cases}
u^l & \text{if } x\leq t\, \lambda_i(\sigma),\\
S_i(\sigma)(u^l) & \text{if } x > t\, \lambda_i(\sigma),
\end{cases}
\end{equation}
is a weak solution to the system (\ref{system_of_cons_laws});
\item[(ii)] $\Big| \dfrac{dS_i(\sigma)(u^l)}{d\sigma}\Big|=1 \;$ for every $\sigma \in [-\bar{\sigma},\bar{\sigma}]$;
\item[(iii)] at $\sigma=0$ we have
\begin{equation*}
\begin{split}
& \lambda_i(0)=\lambda_i(u^l), \;\; \; \dfrac{d\lambda_i(\sigma)}{d\sigma}\Big|_{\sigma=0} = \dfrac{1}{2}\nabla \lambda_i(u^l)\cdot r_i(u^l), \;\;\; S_i(0)(u^l)=u^l \; \text{ and }\\
& \dfrac{dS_i(\sigma)(u^l)}{\sigma}\Big|_{\sigma=0}=r_i(u^l).
\end{split}
\end{equation*}
\end{enumerate}
\end{prop}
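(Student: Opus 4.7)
The natural strategy is to realise the Hugoniot locus through $u^l$ via an averaged Jacobian and an implicit function argument. First, I would introduce the matrix
$$A(u,v):=\int_0^1 Df(\theta u+(1-\theta)v)\,d\theta,$$
which by the fundamental theorem of calculus satisfies $f(u)-f(v)=A(u,v)(u-v)$. With $v=u^l$ fixed, the Rankine--Hugoniot condition \eqref{RH_conditions_shock_curve} is precisely the eigenvalue equation $A(u^l,u)(u-u^l)=\lambda(u-u^l)$; that is, the jump $u-u^l$ must be an eigenvector of $A(u^l,u)$ with eigenvalue $\lambda$. Since $A(u^l,u^l)=Df(u^l)$ has $n$ distinct real eigenvalues with eigenvectors $r_i(u^l)$ and left eigenvectors $l_i(u^l)$, Proposition \ref{spectral_properties_continuity} applied to $u\mapsto A(u^l,u)$ yields a neighbourhood of $u^l$ on which $A(u^l,u)$ has $n$ smooth simple eigenvalues $\mu_i(u)$ with smooth eigenvectors $\tilde r_i(u)$, and $\mu_i(u^l)=\lambda_i(u^l)$, $\tilde r_i(u^l)=r_i(u^l)$.

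Next I would parametrise the $i$-th Hugoniot curve by looking for $u$ such that $u-u^l$ is proportional to $\tilde r_i(u)$, applying the implicit function theorem to
$$\Phi(\sigma,u):=u-u^l-\sigma\,\tilde r_i(u)$$
at $(\sigma,u)=(0,u^l)$. Since $D_u\Phi(0,u^l)=\mathbf{I}$ is invertible, for some $\bar\sigma>0$ there is a smooth curve $\sigma\mapsto \tilde S_i(\sigma)(u^l)$ solving $\Phi=0$, with $\tilde S_i(0)(u^l)=u^l$. Differentiating the identity $\tilde S_i(\sigma)=u^l+\sigma\tilde r_i(\tilde S_i(\sigma))$ at $\sigma=0$ gives $d\tilde S_i/d\sigma(0)=r_i(u^l)$, which has unit norm. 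To secure $|dS_i/d\sigma|\equiv 1$ on the whole interval (conclusion (ii)), I would then reparametrise by arclength, possibly after shrinking $\bar\sigma$; because the tangent has unit length at $\sigma=0$, this only modifies the parameter to higher order and does not alter the derivative at $\sigma=0$. Setting $\lambda_i(\sigma):=\mu_i(S_i(\sigma)(u^l))$ ensures \eqref{RH_conditions_shock_curve} and $\lambda_i(0)=\lambda_i(u^l)$ by construction, while conclusion (i) then follows immediately from the Rankine--Hugoniot theorem: the function in \eqref{def_shock_wave} is $\mathcal{C}^1$ off the line $x=t\lambda_i(\sigma)$, satisfies \eqref{system_of_cons_laws} classically there, and crosses the line with jump $[u]=S_i(\sigma)(u^l)-u^l$ and speed $\lambda_i(\sigma)$ satisfying the jump relation \eqref{Rankine-Hugoniot_conditions}.

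The technical step I expect to be the most delicate is the formula $d\lambda_i/d\sigma(0)=\tfrac12\nabla\lambda_i(u^l)\cdot r_i(u^l)$. The chain rule gives $d\lambda_i/d\sigma(0)=\nabla\mu_i(u^l)\cdot r_i(u^l)$, so I need to relate $\nabla\mu_i(u^l)$ to $\nabla\lambda_i(u^l)$. Using the first-order perturbation formula for a simple eigenvalue of a smoothly varying matrix $M(t)$, namely $\lambda'(0)=l_0\cdot M'(0)\,r_0$ (where $l_0,r_0$ are the biorthogonal eigenvectors at $t=0$), applied to $t\mapsto Df(u^l+tw)$ on the one hand and to $t\mapsto A(u^l,u^l+tw)$ on the other, I get
$$\nabla\lambda_i(u^l)\cdot w=l_i(u^l)\cdot(D^2 f(u^l)\cdot w)\,r_i(u^l),\qquad \nabla\mu_i(u^l)\cdot w=l_i(u^l)\cdot\Big(\int_0^1(1-\theta)\,d\theta\Big)(D^2 f(u^l)\cdot w)\,r_i(u^l),$$
and the integral equals $\tfrac12$, giving $\nabla\mu_i(u^l)=\tfrac12\nabla\lambda_i(u^l)$. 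Taking $w=r_i(u^l)$ yields the claim. The remaining identities in (iii) are direct consequences of the constructions above, so no further computation is needed.
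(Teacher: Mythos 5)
Your argument is correct and sits on the same backbone as the paper's: introduce the averaged Jacobian $A(u,\bar u)$, reduce the Rankine--Hugoniot condition to an eigenvector equation for $A(u,u^l)$, and construct the Hugoniot curve by an implicit function argument before reparametrising by arclength. The two places where you depart from the paper are genuine and both work. First, you set up the implicit function theorem on $\Phi(\sigma,u)=u-u^l-\sigma\,\tilde r_i(u)$ with $D_u\Phi(0,u^l)=\mathbf I$, whereas the paper instead encodes the Hugoniot condition as the system $l_j(u,u^l)\cdot(u-u^l)=0$ for $j\neq i$ and checks the Jacobian of that system has rank $n-1$; your version avoids the left eigenvectors entirely and makes the tangent $\dot S_i(0)=r_i(u^l)$ drop out immediately from differentiating $\tilde S_i(\sigma)=u^l+\sigma\tilde r_i(\tilde S_i(\sigma))$, rather than being deduced from orthogonality to the $l_j(u^l)$. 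Second, and more substantively, your derivation of $\dot\lambda_i(0)=\tfrac12\nabla\lambda_i(u^l)\cdot r_i(u^l)$ is noticeably cleaner than the paper's. The paper differentiates the jump relation \eqref{RH_conditions_shock_curve} twice, differentiates the exact eigenvector identity for $Df(S_i(\sigma))$ once, and then eliminates $\ddot S_i(0)$ and $\dot r_i(u^l)$ by multiplying on the left by $l_i(u^l)$. You instead apply the first-order perturbation formula $\dot\Lambda=l\cdot\dot M\,r$ for a simple eigenvalue to both $u\mapsto Df(u)$ and $u\mapsto A(u^l,u)$; since $D_u A(u^l,u)\big|_{u=u^l}\cdot w=\int_0^1(1-\theta)\,d\theta\;D^2f(u^l)\cdot w=\tfrac12\,D^2f(u^l)\cdot w$, the identity $\nabla\mu_i(u^l)=\tfrac12\nabla\lambda_i(u^l)$ falls out directly and the $\tfrac12$ is transparently the weight $\int_0^1(1-\theta)\,d\theta$. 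Your route trades the paper's second-order Taylor manipulation for a conceptual appeal to eigenvalue perturbation theory, which also makes visible why the shock speed is, to first order, the arithmetic mean of the characteristic speeds on the two sides.
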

\begin{proof}\textbf{\textit{Part 1.}} Fix two points $u$ and $ \bar{u}$ in $\Omega$ such that for every $\xi \in [0,1]$, we have
$$\xi \, u+(1-\xi)\, \bar{u} \in \Omega.$$
Let us consider the matrix
$$A(u,\bar{u}) = \int_0^1 A(\xi \, u+(1-\xi)\, \bar{u})\, d\xi.$$
We claim that there exists a neighbourhood $\mathcal{N}$ of $\bar{u}$ for which, for every $u \in \mathcal{N}$, the matrix $A(u,\bar{u})$ has $n$ distinct eigenvalues
$$\lambda_1(u,\bar{u})<...<\lambda_n(u,\bar{u}).$$
Moreover the function $u \to \lambda_i(u,\bar{u})$ is a $\mathcal{C}^1$ function of $u$. We postpone the proof of this claim.\\
Call
$$\lbrace l_1(u,\bar{u}),...,l_n(u,\bar{u}) \rbrace \; \text{ and } \; \lbrace r_1(u,\bar{u}),..., r_n(u,\bar{u}) \rbrace$$
the left and right eigenvectors corresponding to $\lambda_1(u,\bar{u}),...,\lambda_n(u,\bar{u})$.\\
We can choose the eigenvectors normalized so that they satisfy
$$|r_i(u,\bar{u})|=1 \; \text{ and } \; l_i(u,\bar{u})\cdot r_j(u,\bar{u}) = \begin{cases}
1 & \text{if } i=j,\\
0 & \text{if } i \neq j,
\end{cases}\; \text{ for every } \, i,j=1,...,n.$$
By the fundamental theorem of calculus, we have
$$f(u)-f(u^l)=\int_0^1 Df(\sigma\,u+(1-\sigma)\,u^l)(u-u^l)\,d\sigma =A(u,u^l)(u-u^l),$$
which implies that the Rankine-Hugoniot conditions 
$$f(u)-f(u^l)=\lambda\,(u-u^l)$$
hold whenever $(u-u^l)$ is a right eigenvector of $A(u,u^l)$ with eigenvalue $\lambda=\lambda_i(u,u^l)$ for some $i\in \lbrace 1,...,n\rbrace$.\\
\textbf{\textit{Part 2.}} Fix $i\in \lbrace 1,...,n\rbrace$. The $i$-th eigenvector of $A(u,u^l)$ is the solution of
\begin{equation}\label{eigenvector_as_solution_orthogonal_to_left_eigenvectors}
\phi_j(u):=l_j(u,u^l)\cdot (u-u^l)=0 \; \text{ for every } \; j \neq i,
\end{equation}
which is a system of $n-1$ equations for the $n$ variables $u_1,...,u_n$ (the components of $u$).\\
We find:
\begin{equation*}
\begin{split}
& \dfrac{\partial \phi_j}{\partial u_k}(u)=\dfrac{\partial l_j}{\partial u_k}(u,u^l)\cdot (u-u^l) + l_{j,k}(u,u^l) \Longrightarrow \\
& \Longrightarrow \dfrac{\partial \phi_j}{\partial u_k}(u^l)=l_{j,k}(u^l).
\end{split}
\end{equation*}
The vectors $\lbrace l_j(u^l)\rbrace_{j=1,...,n}$ are linearly independent. Therefore the Jacobian matrix
$$\left[\dfrac{\partial \phi_j}{\partial u_k}\right]^{j=1,...,n-1}_{k=1,...,n}$$
has rank $n-1$.\\
Hence there exist $n-1$ linearly independent lines and, without loss of generality, we can assume that they are the first $n-1$. Therefore the Jacobian matrix
$$\left[\dfrac{\partial \phi_j}{\partial u_k}\right]_{j,k=1,...,n-1}$$
is invertible and we can apply the implicit function theorem: there exists a curve $\sigma \to S_i(\sigma)(u^l)$ defined in a neighbourhood $[-\bar{\sigma},\bar{\sigma}]$ of $\sigma=0$ such that
$$S_i(0)(u^l)=u^l \; \text{ and } \; \phi(S_i(\sigma)(u^l))=0 \; \text{ for every }\; \sigma \in [-\bar{\sigma},\bar{\sigma}].$$
Hence $S_i(\sigma)(u^l)$ is the right eigenvector of $A(S_i(\sigma)(u^l),u^l)$ with eigenvalue $\lambda_i(S_i(\sigma)(u^l),u^l)$. Calling $\lambda_i(\sigma)=\lambda_i(S_i(\sigma)(u^l),u^l)$, we obtain
$$f(S_i(\sigma)(u^l))-f(u^l)=\lambda_i(\sigma)\,(S_i(\sigma)(u^l)-u^l).$$
Therefore the function (\ref{def_shock_wave}) is a weak solution of the Riemann problem (\ref{Riemann_problem_general}), because $S_i(\sigma)(u^l)$ satisfies the Rankine-Hugoniot conditions with the propagation speed $\lambda_i(\sigma)$. This proves $(i)$.\\
\textbf{\textit{Part 3.}} To prove  $(ii)$, we can re-parametrize the curve $S_i(\sigma)$ by its arclength, because it is regular and we find the thesis.\\
For $(iii)$ we have already shown that
$$S_i(0)(u^l)=u^l \; \text{ and } \;\lambda_i(0)=\lambda_i(u^l,u^l)=\lambda_i(u^l).$$
Let us show that
\begin{equation}\label{length_of_the_shock_curve_derivative}
\dfrac{d S_i(\sigma)(u^l)}{d\sigma}\Big|_{\sigma=0}=r_i(u^l).
\end{equation}
By the equations (\ref{eigenvector_as_solution_orthogonal_to_left_eigenvectors}), at $\sigma=0$ the curve $S_i(\sigma)(u^l)$ must be orthogonal to all the vectors $l_j(u^l)$ for $j\neq i$. This is the case of $r_i(u^l)$. Hence we obtain the equation (\ref{length_of_the_shock_curve_derivative}).\\
Finally, let us denote $\dot{g}$ the derivative w.r.t. $\sigma$ of a function $g(\sigma)$, $A_i(\sigma)=A(S_i(\sigma))$ and $S_i(\sigma)(u^l)=S_i(\sigma)$.\\
Note that the $i$-th eigenvalue $\lambda_i(S_i(\sigma))$ of the matrix $A(S_i(\sigma))$ and the propagation speed $\lambda_i(\sigma)$ of the shock wave are in general different.\\
Differentiating the equation (\ref{RH_conditions_shock_curve}) two times w.r.t. $\sigma$, we obtain
\begin{equation*}
\begin{split}
& A_i(\sigma) \,\dot{S}_i(\sigma)= \dot{\lambda}_i(\sigma)\,(S_i(\sigma)-u^l)+\lambda_i(\sigma)\, \dot{S}_i(\sigma) \longrightarrow \\
& \longrightarrow \dot{A}_i(\sigma)\,\dot{S}_i(\sigma) + A_i(\sigma)\, \ddot{S}_i(\sigma)= \ddot{\lambda}_i(\sigma)\,(S_i(\sigma)-u^l)+2\, \dot{\lambda}_i(\sigma)\,\dot{S}_i(\sigma)+\lambda_i(\sigma)\, \ddot{S}_i(\sigma).
\end{split}
\end{equation*}
Since $S_i(0)=u^l$ and $\dot{S}_i(0)=r_i(u^l)$, at $\sigma=0$ we obtain
\begin{equation}\label{derivative_of_the_matrix_A_i_1}
\dot{A}_i(0)\, r_i(u^l) +A_i(0)\, \ddot{S}_i(0)=2 \, \dot{\lambda}_i(0)\, r_i(u^l)+\lambda_i(0)\,\ddot{S}_i(0).
\end{equation}
Let us now differentiate the relation
$$A_i(\sigma)\, r_i(S_i(\sigma)) =\lambda_i(S_i(\sigma))\, r_i(S_i(\sigma)).$$
We find
\begin{equation*}
\begin{split}
& \dot{A}_i(\sigma) \, r_i(S_i(\sigma)) +A_i(\sigma)\, \dot{r}_i(S_i(\sigma))=\dot{\lambda}_i(S_i(\sigma))\,r_i(S_i(\sigma))+\lambda_i(S_i(\sigma))\, \dot{r}_i(S_i(\sigma)) \Longrightarrow \\
& \begin{split} \dot{A}_i(\sigma)\, r_i(S_i(\sigma)) +A_i(\sigma)\, \nabla r_i(S_i(\sigma))\cdot \dot{S}_i(\sigma) = & \nabla \lambda_i(S_i(\sigma))\cdot \dot{S}_i(\sigma)\, r_i(S_i(\sigma))+\\
& + \lambda_i(S_i(\sigma)) \, \nabla r_i(S_i(\sigma))\cdot \dot{S}_i(\sigma),
\end{split}
\end{split}
\end{equation*}
which in $\sigma=0$ gives
\begin{equation}\label{derivative_of_the_matrix_A_i_2}
\dot{A}_i(0)\, r_i(u^l) = \nabla \lambda_i(u^l) \cdot r_i(u^l)\,r_i(u^l) + \lambda_i(u^l) \, \nabla r_i(u^l) \cdot r_i(u^l)  -  A_i(0) \, \dot{r}_i(u^l).
\end{equation}
Using this expression in the equation (\ref{derivative_of_the_matrix_A_i_1}), we obtain
\begin{equation*}
\begin{split}
A_i(0) \, \ddot{S}_i(0)+\nabla \lambda_i(u^l) \cdot r_i(u^l)\, r_i(u^l) & + \lambda_i(u^l)\, \nabla r_i(u^l) \cdot r_i(u^l) -  A_i(0)\, \dot{r}_i(u^l) =\\
& = 2\, \dot{\lambda}_i(0)\, r_i(u^l) +\lambda_i(0) \, \ddot{S}_i(0).
\end{split}
\end{equation*}
Multiplying on the left for $l_i(u^l)$, we find the thesis, indeed
\begin{equation*}
\begin{split}
l_i(u^l) \,A(u^l) \ddot{S}_i(0) & +\nabla \lambda_i(u^l) \, \cdot r_i(u^l)\,l_i(u^l)\,r_i(u^l) + l_i(u^l) (\lambda_i(u^l)-A(u^l))\, \nabla r_i(u^l) \cdot r_i(u^l) = \\
& = 2 \, l_i(u^l)\, \dot{\lambda}_i(0)\, r_i(u^l)+ l_i(u^l)\, \lambda_i(u^l)\, \ddot{S}_i(0),
\end{split}
\end{equation*}
which implies
$$\nabla \lambda_i \cdot r_i = 2\, \dot{\lambda}_i,$$
because $l_i(u^l)\,A(u^l)=\lambda_i(u^l)\,l_i(u^l)$ and $l_i(u^l) r_i(u^l)=1$.\\

\textit{\textbf{Proof of the claim.}}\\
\textit{Claim:} For every $u$ in a neighbourhood $\mathcal{N}$ of $\bar{u}$, the matrix $A(u,\bar{u})$ has $n$ real distinct eigenvalues $\lambda_1(u,\bar{u})$, ..., $\lambda_n(u,\bar{u})$. Moreover the function $u\to \lambda_i(u,\bar{u})$ is smooth for every $i=1,...,n$.\\
\textit{Proof.}
Fix $i \in \lbrace 1,...,n \rbrace$ and let us consider the polynomial
$$P(u,\lambda)=\det(A(u,\bar{u})-\lambda\, \mathbf{I}).$$
Since $A(\bar{u},\bar{u})=A(\bar{u})$ and $\lambda_i(\bar{u})$ is an eigenvalue of $A(\bar{u})$, we have
$$P(\bar{u},\lambda_i(\bar{u}))=0.$$
Moreover, we can write
$$P(\bar{u},\lambda)=(\lambda-\lambda_1(\bar{u}))\cdot ... \cdot (\lambda-\lambda_n(\bar{u})).$$
Hence
$$\dfrac{\partial P}{\partial \lambda}(\bar{u},\lambda_i(\bar{u}))=\prod_{\shortstack{$\scriptstyle{j=1}$\\ $\scriptstyle{j \neq i}$}}^n(\lambda_i(\bar{u})-\lambda_j(\bar{u})) \neq 0,$$
because the eigenvalues of $A(\bar{u})$ are distinct.\\
Therefore by the implicit function theorem, there exists a neighbourhood $\mathcal{N}$ of $\bar{u}$ and a smooth function $u \to \lambda_i(u,\bar{u})$ such that
$$P(u,\lambda_i(u,\bar{u}))=0,$$
which means that $\lambda_i(u,\bar{u})$ is an eigenvalue of the matrix $A(u,\bar{u})$.
\end{proof}
\begin{mydef}
The solution (\ref{def_shock_wave}) to the Riemann problem (\ref{Riemann_problem_general}) is called centred shock wave. The function $\sigma \to S_i(\sigma)(u^l)$ is called $i$-th shock curve passing through $u^l$.
\end{mydef}
\subsection{Contact discontinuities}
\begin{prop}
Suppose that the system (\ref{system_of_cons_laws}) is strictly hyperbolic with smooth coefficients defined in an open set $\Omega\in \mathbb{R}^n$. Fix $i \in \lbrace 1,...,n \rbrace$ and a point $u^l$ in an open set $\Omega\in \mathbb{R}^n$. Suppose that the $i$-th characteristic field is linearly degenerate. Then the $i$-th shock curve and the $i$-th rarefaction curve passing through $u^l$ coincide, i.e.
\begin{equation}\label{shock_and_rar_curve_coincide}
S_i(\sigma)(u^l)=R_i(\sigma)(u^l).
\end{equation}
Moreover, if $u^r= S_i(\bar{\sigma})(u^l)$ for some number $\bar{\sigma}\in \mathbb{R}$, then the function
\begin{equation}\label{def_contact_discontinuity}
u(t,x)=\begin{cases}
u^l & \text{if } x\leq t\, \lambda_i(u^l),\\
u^r & \text{if } x>t\, \lambda_i(u^r),
\end{cases}
\end{equation}
is a solution to the Riemann problem (\ref{Riemann_problem_general}).
\end{prop}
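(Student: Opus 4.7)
The plan is to exploit linear degeneracy to show that $\lambda_i$ is constant along the integral curves of $r_i$, and then to verify that the rarefaction curve itself satisfies the Rankine--Hugoniot relation with that constant speed; uniqueness of the shock curve from Proposition \ref{properties_shock_waves} then forces it to coincide with $R_i(\cdot)(u^l)$.

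First I would set $v(\sigma) := R_i(\sigma)(u^l)$ and compute
$$\frac{d}{d\sigma}\lambda_i(v(\sigma)) = \nabla \lambda_i(v(\sigma))\cdot \dot v(\sigma) = \nabla \lambda_i(v(\sigma))\cdot r_i(v(\sigma)) = 0,$$
using the definition of linear degeneracy. Hence $\lambda_i(v(\sigma)) \equiv \lambda_i(u^l)$ for every $\sigma$ in the interval where the integral curve is defined. In particular, if $u^r = R_i(\bar\sigma)(u^l)$, then $\lambda_i(u^r) = \lambda_i(u^l)$, so the two characteristic speeds appearing in (\ref{def_contact_discontinuity}) agree.

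Next I would verify the identity
$$f(v(\sigma)) - f(u^l) = \lambda_i(u^l)\,\bigl(v(\sigma) - u^l\bigr)$$
by differentiating both sides with respect to $\sigma$. The left-hand side gives $Df(v(\sigma))\cdot r_i(v(\sigma)) = \lambda_i(v(\sigma))\,r_i(v(\sigma)) = \lambda_i(u^l)\,r_i(v(\sigma))$ by Step 1, while the right-hand side gives $\lambda_i(u^l)\,\dot v(\sigma) = \lambda_i(u^l)\,r_i(v(\sigma))$. Both sides vanish at $\sigma = 0$, so the identity holds for every $\sigma$. This says precisely that $v(\sigma)$ satisfies the Rankine--Hugoniot condition (\ref{RH_conditions_shock_curve}) with propagation speed $\lambda_i(u^l)$.

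By Proposition \ref{properties_shock_waves} the curve $\sigma \to S_i(\sigma)(u^l)$ was constructed as the unique (locally) solution, through $u^l$, of the implicit system characterizing such Rankine--Hugoniot states tangent to $r_i(u^l)$; since $R_i(\cdot)(u^l)$ also satisfies this condition, passes through $u^l$ at $\sigma = 0$, and has unit-speed parametrization (its tangent is $r_i$, which has unit length by the normalization of the eigenvectors), uniqueness in the implicit function theorem forces $S_i(\sigma)(u^l) = R_i(\sigma)(u^l)$, giving (\ref{shock_and_rar_curve_coincide}). Finally, having $u^r = S_i(\bar\sigma)(u^l)$ with shock speed $\lambda_i(u^l) = \lambda_i(u^r)$, the Rankine--Hugoniot theorem applied to the single jump along $x = t\,\lambda_i(u^l)$ shows that (\ref{def_contact_discontinuity}) is a weak solution of the Riemann problem (\ref{Riemann_problem_general}). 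The main delicate point is the uniqueness step: one must make sure that the implicit-function-theorem construction of $S_i$ in Proposition \ref{properties_shock_waves} pins down a unique smooth curve once the direction at $\sigma = 0$ is fixed as $r_i(u^l)$, so that any other curve (here $R_i$) satisfying the same Rankine--Hugoniot relations with the same tangent must coincide with it.
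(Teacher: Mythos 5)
Your proof is correct and follows essentially the same strategy as the paper: use linear degeneracy to see that $\lambda_i$ is constant along the integral curve of $r_i$, deduce that $R_i(\cdot)(u^l)$ satisfies the Rankine--Hugoniot identity with constant speed $\lambda_i(u^l)$, and conclude $S_i = R_i$, so that the constant-speed jump is a weak solution. The only differences are cosmetic: the paper obtains the Rankine--Hugoniot identity by integrating $Df(R_i(\sigma))\,r_i(R_i(\sigma))$ via the fundamental theorem of calculus, whereas you verify it by differentiating both sides in $\sigma$ and matching initial values, and you spell out the uniqueness step (tangent $r_i(u^l)$ at $\sigma=0$ plus arclength parametrization pinning down $S_i$) which the paper leaves implicit.
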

\begin{proof}
To prove the condition (\ref{shock_and_rar_curve_coincide}), let us fix a number $\bar{\sigma}$. By the fundamental theorem of calculus and the definition of $\lambda_i$ and $r_i$, we have
\begin{equation*}
\begin{split}
f(R_i(\bar{\sigma})(u^l))-f(u^l) & = \int_0^{\bar{\sigma}}Df(R_i(\sigma)(u^l))\dfrac{dR_i(\sigma)(u^l)}{d \sigma}\, d\sigma =\\
& = \int_0^{\bar{\sigma}}Df(R_i(\sigma)(u^l)) \, r_i(R_i(\sigma)(u^l)) \, d\sigma=\\
& = \int_0^{\bar{\sigma}}\lambda_i(R_i(\sigma)(u^l)) \, r_i(R_i(\sigma)(u^l)) \, d\sigma.
\end{split}
\end{equation*}
Since the $i$-th characteristic field is linearly degenerate, $\lambda_i(u)$ is constant along the integral curve of $r_i$. Therefore
\begin{equation*}
\begin{split}
f(R_i(\sigma)(u^l))-f(u^l) & = \lambda_i(u^l)\int_0^{\bar{\sigma}}\dfrac{dR_i(\sigma)(u^l)}{d\sigma}\, d\sigma = \\
& = \lambda_i(u^l) \, [R_i(\bar{\sigma})(u^l)-u^l],
\end{split}
\end{equation*}
which means that $R_i(\bar{\sigma})(u^l)$ satisfies the Rankine-Hugoniot conditions with propagation speed $\lambda_i(u^l)$. Hence
$$S_i(\sigma)(u^l) = R_i(\sigma)(u^l)$$
and the function (\ref{def_contact_discontinuity}) is a weak solution to the Riemann problem (\ref{Riemann_problem_general}).
\end{proof}
\begin{mydef}
The solution (\ref{def_contact_discontinuity}) to the Riemann problem (\ref{Riemann_problem_general}) is called contact discontinuity.
\end{mydef}
\subsection{The general solution to the Riemann problem}
In general the weak solution to the Riemann problem (\ref{Riemann_problem_general}) is not unique. Therefore we have to introduce additional admissibility conditions. There are several approaches. We use the Lax entropy condition; see \cite{lax}.
\begin{mydef}
A shock joining two points $u^l$ and $u^r$ in an open set $\Omega\subseteq \mathbb{R}^n$, with propagation speed $\lambda$ is Lax-admissible if there exists an index $k \in \lbrace 1,...,n \rbrace$ such that
\begin{equation}\label{Lax_entropy_condition}
\lambda_k(u^l) \geq \lambda \geq \lambda_k(u^r).
\end{equation}
\end{mydef}
\begin{prop}[Admissibility of a shock] Fix a point $u^l$ in an open set $\Omega\subseteq \mathbb{R}^n$. There exists a positive number $\bar{\sigma}$ for which a shock joining $u^l$ to a point $u(\sigma)=S_i(\sigma)(u^l)$ is Lax-admissible if and only if $-\bar{\sigma} \leq \sigma \leq 0$.
\end{prop}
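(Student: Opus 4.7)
The plan is to exploit the Taylor expansions of the shock speed $\sigma\mapsto\lambda_i(\sigma)$ and of the characteristic speed at the right state $\sigma\mapsto\lambda_i(u(\sigma))$ near $\sigma=0$, using the derivative data computed in Proposition \ref{properties_shock_waves}(iii). Throughout I assume the $i$-th field is genuinely non-linear with the sign convention (\ref{sign_of_characteristic_field}), so that
\[
\dot\lambda_i(0)=\tfrac{1}{2}\nabla\lambda_i(u^l)\cdot r_i(u^l)>0,
\]
(if the field were linearly degenerate, the shock curve would coincide with the rarefaction curve and the statement would need to be interpreted differently).

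First I would analyse the Lax condition with the index $k=i$. By the chain rule and item (iii) of Proposition \ref{properties_shock_waves},
\[
\frac{d}{d\sigma}\lambda_i(u(\sigma))\bigg|_{\sigma=0}
=\nabla\lambda_i(u^l)\cdot \dot S_i(0)(u^l)
=\nabla\lambda_i(u^l)\cdot r_i(u^l)
=2\dot\lambda_i(0).
\]
Setting $h(\sigma):=\lambda_i(u^l)-\lambda_i(\sigma)$ and $g(\sigma):=\lambda_i(\sigma)-\lambda_i(u(\sigma))$, I then have $h(0)=g(0)=0$ and
\[
h'(0)=-\dot\lambda_i(0)<0,\qquad g'(0)=\dot\lambda_i(0)-2\dot\lambda_i(0)=-\dot\lambda_i(0)<0.
\]
By continuity of $h'$ and $g'$, I can shrink $\bar\sigma$ so that both $h'<0$ and $g'<0$ on $[-\bar\sigma,\bar\sigma]$; monotonicity then yields $h(\sigma)\ge 0$ iff $\sigma\le 0$ and $g(\sigma)\ge 0$ iff $\sigma\le 0$, which is exactly the Lax condition (\ref{Lax_entropy_condition}) with $k=i$.

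Next I would rule out the other indices $k\ne i$. By strict hyperbolicity the values $\lambda_1(u^l)<\dots<\lambda_n(u^l)$ are separated, and by continuity of $\sigma\mapsto\lambda_i(\sigma)$ and of $u\mapsto\lambda_k(u)$, after possibly shrinking $\bar\sigma$ again I can ensure that for every $\sigma\in[-\bar\sigma,\bar\sigma]$ the shock speed $\lambda_i(\sigma)$ lies strictly between $\lambda_{i-1}(u^l)$ and $\lambda_{i+1}(u^l)$ (adjusting the endpoints in the obvious way when $i=1$ or $i=n$), and each $\lambda_k(u(\sigma))$ lies in a small neighbourhood of $\lambda_k(u^l)$. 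Then for $k<i$ one has $\lambda_k(u^l)<\lambda_i(\sigma)$, violating the left inequality of (\ref{Lax_entropy_condition}); for $k>i$ one has $\lambda_i(\sigma)<\lambda_k(u(\sigma))$, violating the right inequality. Hence only $k=i$ can possibly satisfy the Lax condition, and by the first step this happens iff $\sigma\in[-\bar\sigma,0]$.

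The main obstacle, more bookkeeping than genuinely hard, is choosing a single $\bar\sigma$ that makes all the perturbative statements simultaneously valid, and in particular ensuring that the leading-order signs coming from $\dot\lambda_i(0)>0$ are not overturned by higher-order corrections in either $h$, $g$, or the interlacing estimates for $k\ne i$. All of this follows from the $\mathcal C^1$ regularity of $\sigma\mapsto\lambda_i(\sigma)$ and $\sigma\mapsto S_i(\sigma)(u^l)$ guaranteed by Proposition \ref{properties_shock_waves} and from strict hyperbolicity.
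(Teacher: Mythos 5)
Your proof is correct, and it starts from the same germ as the paper's argument: the relation $\dot\lambda_i(0)=\tfrac12\nabla\lambda_i(u^l)\cdot r_i(u^l)>0$ from Proposition~\ref{properties_shock_waves}(iii). What you do differently is fill in the parts of the Lax condition~(\ref{Lax_entropy_condition}) that the paper's proof leaves implicit. The paper only checks the half $\lambda_i(\sigma)\le\lambda_i(u^l)$ for $\sigma\le 0$ and then declares the rest of the admissibility criterion to follow; it never verifies the companion inequality $\lambda_i(\sigma)\ge\lambda_i(u(\sigma))$, never addresses the indices $k\ne i$, and never argues the ``only if'' direction (non-admissibility for $\sigma>0$). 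You handle all three: your auxiliary function $g(\sigma)=\lambda_i(\sigma)-\lambda_i(u(\sigma))$, with $g'(0)=-\dot\lambda_i(0)<0$ obtained from $\frac{d}{d\sigma}\lambda_i(u(\sigma))\big|_{\sigma=0}=\nabla\lambda_i(u^l)\cdot r_i(u^l)=2\dot\lambda_i(0)$, supplies the second half of the $k=i$ inequality; the separation of the eigenvalues under strict hyperbolicity plus continuity excludes every $k\ne i$ after a further shrinking of $\bar\sigma$; and the sign reversal of $h$ and $g$ for $\sigma>0$ gives the ``only if.'' So this is the same method, carried out completely rather than sketched; the bookkeeping cost of choosing a single $\bar\sigma$ is exactly the price of that completeness.
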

\begin{proof}
By Proposition \ref{properties_shock_waves}, the function $\sigma \to \lambda_i(\sigma)$, which gives the propagation speed of the shock, is $\mathcal{C}^1$ and by the condition (\ref{sign_of_characteristic_field}), we obtain
$$\dfrac{d\lambda_i(\sigma)}{d\sigma}\Big|_{\sigma=0}=\dfrac{1}{2}\nabla \lambda_i(u^l) \cdot r_i(u^l)>0.$$
Therefore $\sigma \to \lambda_i(\sigma)$ is increasing in a neighbourhood $[-\bar{\sigma},\bar{\sigma}]$ of $\sigma=0$. Therefore
$$\lambda_i(\bar{\sigma})\leq \lambda_i(\sigma) \leq \lambda_i(0)=\lambda_i(u^l) \; \text{ if } -\bar{\sigma}\leq \sigma \leq 0.$$
The thesis follows observing that by the Lax-entropy condition, a shock of the $i$-th family is admissible if its propagation speed $\lambda_i(\sigma)$ is lower than $\lambda_i(u^l)$.
\end{proof}
We are now ready to introduce the Lax-admissible solution to the Riemann problem (\ref{Riemann_problem_general}).\\
Fix $i \in \lbrace 1,...,n \rbrace$ and a point $u_0\in \mathbb{R}^n$. Either the $i$-th characteristic field is genuinely non-linear or linearly degenerate, we can define the map
\begin{equation}\label{Lax_curves_general}
\psi_i(\sigma)(u_0)=\begin{cases}
R_i(\sigma)(u_0) & \text{if } \sigma \geq 0,\\
S_i(\sigma)(u_0) & \text{if } \sigma <0.
\end{cases}
\end{equation}
\begin{mydef}
The curve defined in (\ref{Lax_curves_general}) is called $i$-th Lax curve.
\end{mydef}
Let us take $(\sigma_1,...,\sigma_n)\in \mathbb{R}^n$ in a neighbourhood of $0\in \mathbb{R}^n$ and let us define the points
\begin{equation}
w_0=u^l, \; \; \; w_i=\psi_i(\sigma_i)(w_{i-1})=\begin{cases}
R_i(\sigma_i)(w_{i-1}) & \text{if } \sigma_i \geq 0,\\
S_i(\sigma_i)(w_{i-1}) & \text{if } \sigma_i <0,
\end{cases}\; \text{ for every } i=1,...,n,
\end{equation}
so that
$$w_n=\psi_n(\sigma_n)\circ \cdots \circ \psi_1(\sigma_1)(u^l).$$
Assume that $u^r=w_n$.\\
Each Riemann problem
\begin{equation}\label{ith_Riemann_problem_def_general_solution}
\begin{cases}
\partial_t u+\partial_x[f(u)]=0,\\
u(0,x)=\begin{cases}
w_{i-1} & \text{if } x \leq 0,\\
w_i & \text{if } x>0,
\end{cases}
\end{cases}
\end{equation}
has a unique Lax-admissible solution consisting of a simple wave of the $i$-th family, i.e.:
\begin{enumerate}
\item[(i)] if the $i$-th characteristic field is genuinely non-linear and $\sigma_i\geq 0$, the solution to (\ref{ith_Riemann_problem_def_general_solution}) is a rarefaction wave propagating with speed ranging over the interval $[\lambda_i(w_{i-1}),\lambda_i(w_i)]$. In this case, let us call
$$\lambda_i^-:=\lambda_i(w_{i-1}) \; \text{ and }\; \lambda_i^+:=\lambda_i(w_i);$$
\item[(ii)] if the $i$-th characteristic field is genuinely non-linear and $\sigma_i<0$ or if it is linearly degenerate, the solution is respectively a shock or a contact discontinuity with propagation speed
$$\lambda_i(w_{i-1},w_i),$$
which is the eigenvalue of the average matrix
$$A(w_{i-1},w_i)=\int_0^1 A(\xi \, w_{i-1}+(1-\xi)\, w_i) \, d\xi.$$
By the Lax entropy condition we have
$$\lambda_i(w_{i-1},w_i)\in [\lambda_i(w_{i}),\lambda_i(w_{i-1})].$$
In this case let us define
$$\lambda_i^-=\lambda_i^+:=\lambda_i(w_{i-1},w_i).$$
\end{enumerate}
If $\sigma_1$, ..., $\sigma_n$ are small enough, by the continuity of $u\to \lambda_i(u)$ for every $i$, we have
$$\lambda_1^-\leq \lambda_1^+<\lambda_2^-\leq \lambda_2^+<...<\lambda_n^-\leq \lambda_n^+.$$
Therefore a piecewise smooth function $u:\mathbb{R}^+\times \mathbb{R}\to \mathbb{R}^n$ is well defined by the assignment (see Figure \ref{fig_example_general_solution_Riemann_problem}):
\begin{equation}\label{general_solution_Riemann_problem}
u(t,x)=\begin{cases}
u^- & \text{if } \frac{x}{t}\in (-\infty,\lambda_i^-),\\
R_i(\sigma)(w_{i-1}) & \text{if } \frac{x}{t}=\lambda_i(R_i(\sigma)(w_{i-1}))\in [\lambda_i^-,\lambda_i^+),\\
w_i & \text{if } \frac{x}{t}\in [\lambda_i^+,\lambda_{i+1}^-),\\
u^+ & \text{if } \frac{x}{t}\in [\lambda_n^+,+\infty).
\end{cases}
\end{equation}
\begin{figure}[hbtp]
\centering
\definecolor{fftttt}{rgb}{1.,0.2,0.2}
\definecolor{ttttff}{rgb}{0.2,0.2,1.}
\begin{tikzpicture}[line cap=round,line join=round,>=triangle 45,x=1.0cm,y=1.0cm]
\draw[->,color=black] (-4.5,0.) -- (5.,0.);
\draw[->,color=black] (0.,-0.4) -- (0.,6.4);
\clip(-4.5,-0.4) rectangle (5.,6.4);
\draw [line width=1.6pt,color=ttttff,domain=-4.5:0.0] plot(\x,{(-0.--5.14*\x)/-4.82});
\draw [line width=1.6pt,color=ttttff,domain=-4.5:0.0] plot(\x,{(-0.--6.62*\x)/-3.76});
\draw [line width=1.6pt,color=fftttt,domain=-4.5:0.0] plot(\x,{(-0.--6.32*\x)/-0.96});
\draw [line width=1.6pt,color=ttttff,domain=0.0:5.0] plot(\x,{(-0.--6.46*\x)/0.7});
\draw [line width=1.6pt,color=ttttff,domain=0.0:5.0] plot(\x,{(-0.--6.66*\x)/2.78});
\draw [line width=1.6pt,color=fftttt,domain=0.0:5.0] plot(\x,{(-0.--5.18*\x)/4.86});
\draw [color=ttttff,domain=-4.5:0.0] plot(\x,{(-0.--6.28*\x)/-5.3});
\draw [color=ttttff,domain=-4.5:0.0] plot(\x,{(-0.--6.28*\x)/-4.86});
\draw [color=ttttff,domain=-4.5:0.0] plot(\x,{(-0.--5.14*\x)/-3.56});
\draw [color=ttttff,domain=-4.5:0.0] plot(\x,{(-0.--7.4*\x)/-4.64});
\draw [color=ttttff,domain=0.0:5.0] plot(\x,{(-0.--7.36*\x)/1.12});
\draw [color=ttttff,domain=0.0:5.0] plot(\x,{(-0.--5.6*\x)/1.08});
\draw [color=ttttff,domain=0.0:5.0] plot(\x,{(-0.--4.48*\x)/1.04});
\draw [color=ttttff,domain=0.0:5.0] plot(\x,{(-0.--5.86*\x)/1.68});
\draw [color=ttttff,domain=0.0:5.0] plot(\x,{(-0.--5.3*\x)/1.84});
\draw (-3.82,2.18) node[anchor=north west] {$w_0=u^l$};
\draw (-1.8,4.35) node[anchor=north west] {$w_1$};
\draw (-0.55,4.63) node[anchor=north west] {$w_2$};
\draw (2.28,4.13) node[anchor=north west] {$w_3$};
\draw (3.33,1.86) node[anchor=north west] {$w_4=u^r$};
\draw (-4.4,4.17) node[anchor=north west] {$\lambda_1^-$};
\draw (-2.8,5.4) node[anchor=north west] {$\lambda_1^+$};
\draw (-1.9,5.96) node[anchor=north west] {$\lambda_2^-=\lambda_2^+$};
\draw (-0.05,5.59) node[anchor=north west] {$\lambda_3^-$};
\draw (2.1,5.3) node[anchor=north west] {$\lambda_3^+$};
\draw (3.1,5.46) node[anchor=north west] {$\lambda_4^-=\lambda_4^+$};
\draw (-0.4,6.52) node[anchor=north west] {$t$};
\draw (4.03,-0.) node[anchor=north west] {$x$};
\end{tikzpicture}
\caption{Example of solution to the Riemann problem (\ref{Riemann_problem_general}).}\label{fig_example_general_solution_Riemann_problem}
\end{figure}
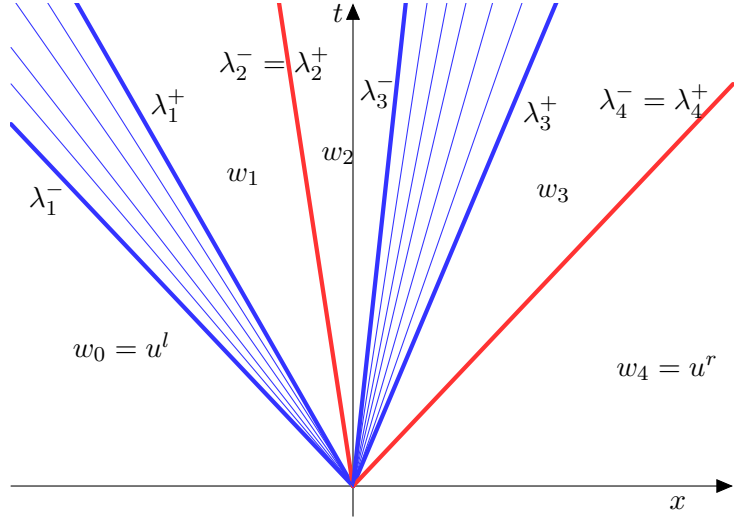

\begin{theorem}\label{theorem_general_solution_to_RP}
Let (\ref{system_of_cons_laws}) be a strictly hyperbolic system of conservation laws with smooth coefficients defined on an open set $\Omega\subseteq \mathbb{R}^n$. Suppose that for each $i \in \lbrace 1,...,n \rbrace$, the $i$-th characteristic field is linearly degenerate or genuinely non-linear.\\
Then for every compact $K \subset \Omega$, there exists $\delta>0$ such that the Riemann problem (\ref{Riemann_problem_general}) has a unique Lax-admissible solution of the form (\ref{general_solution_Riemann_problem}), whenever $u^l \in K$ and $|u^r-u^l|\leq \delta$.
\end{theorem}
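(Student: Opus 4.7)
\noindent The plan is to reduce the theorem to the parametrized implicit function theorem (Theorem \ref{implicit_function_theorem_parametrized}) applied to the $n$-fold composition of Lax curves
$$\Phi(u^l,\sigma_1,\ldots,\sigma_n) \;:=\; \psi_n(\sigma_n)\circ\cdots\circ\psi_1(\sigma_1)(u^l),$$
viewing $u^l\in K$ as the compact parameter and $\sigma=(\sigma_1,\ldots,\sigma_n)$ as the unknown. Each Lax curve $\psi_i$ is $\mathcal{C}^1$ across $\sigma_i=0$, because the shock curve $S_i(\sigma_i)(u^l)$ and the rarefaction curve $R_i(\sigma_i)(u^l)$ share the common tangent $r_i(u^l)$ at the origin, by Proposition \ref{properties_shock_waves}(iii) and the defining equation (\ref{Cauchy_problem_rarefaction_wave}). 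Hence $\Phi$ is $\mathcal{C}^1$ on $K\times \mathcal{N}$ for a neighbourhood $\mathcal{N}$ of $0\in\mathbb{R}^n$, and $\Phi(u^l,0)=u^l$ for every $u^l\in K$.

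\noindent The core computation is to evaluate $D_\sigma\Phi(u^l,0)$. By the chain rule, the $i$-th column is
$$\frac{\partial \Phi}{\partial \sigma_i}\bigg|_{\sigma=0}=\frac{d}{d\sigma_i}\psi_i(\sigma_i)(u^l)\bigg|_{\sigma_i=0}=r_i(u^l).$$
Strict hyperbolicity ensures that $\{r_1(u^l),\ldots,r_n(u^l)\}$ is a basis of $\mathbb{R}^n$, so $D_\sigma\Phi(u^l,0)$ is invertible for every $u^l\in K$. Theorem \ref{implicit_function_theorem_parametrized} then produces a uniform $\delta>0$ and a $\mathcal{C}^1$ map $(u^l,u^r)\mapsto\sigma(u^l,u^r)$ with $\Phi(u^l,\sigma(u^l,u^r))=u^r$ whenever $u^l\in K$ and $|u^r-u^l|\le\delta$. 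The intermediate states $w_0=u^l,w_1,\ldots,w_n=u^r$ are thus uniquely determined.

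\noindent Each pair $(w_{i-1},w_i)$ is connected by a Lax-admissible elementary wave of the $i$-th family: a centred rarefaction when $\sigma_i\ge 0$ in a genuinely non-linear field; an admissible shock when $\sigma_i<0$ in a genuinely non-linear field, by the admissibility proposition for shocks; or a contact discontinuity when the field is linearly degenerate. Gluing these elementary waves into the single function (\ref{general_solution_Riemann_problem}) requires the wave speeds to be strictly ordered, namely $\lambda_1^-\le\lambda_1^+<\lambda_2^-\le\lambda_2^+<\cdots<\lambda_n^+$, and this is the main technical point. At $\sigma=0$ all intermediate states collapse to $u^l$ and the speeds reduce to $\lambda_1(u^l)<\cdots<\lambda_n(u^l)$, strictly separated uniformly on the compact $K$ by Proposition \ref{spectral_properties_continuity}. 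Continuity of $\sigma\mapsto w_i$ and $\sigma\mapsto\lambda_i^{\pm}$ then lets one shrink $\delta$ so as to preserve the strict inequalities $\lambda_i^+<\lambda_{i+1}^-$ uniformly for $u^l\in K$. Uniqueness within the class (\ref{general_solution_Riemann_problem}) follows at once from the uniqueness of $\sigma(u^l,u^r)$ supplied by the implicit function theorem.
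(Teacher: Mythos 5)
Your proof is correct and follows essentially the same route as the paper: define the composed Lax-curve map $\psi_n(\sigma_n)\circ\cdots\circ\psi_1(\sigma_1)(u^l)$, compute that its differential in $\sigma$ at the origin has the linearly independent columns $r_1(u^l),\ldots,r_n(u^l)$, and invoke the parametrized implicit function theorem (Theorem \ref{implicit_function_theorem_parametrized}) with $u^l$ ranging over the compact $K$. Your write-up is slightly more explicit than the paper on two points the paper leaves implicit in the preceding discussion — the $\mathcal{C}^1$ matching of shock and rarefaction curves across $\sigma_i=0$ via Proposition \ref{properties_shock_waves}(iii), and the shrinking of $\delta$ to preserve the strict ordering $\lambda_i^+<\lambda_{i+1}^-$ uniformly on $K$ — but these are the same ingredients, not a different argument.
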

\begin{proof}
By the discussion in the previous sections, the function (\ref{general_solution_Riemann_problem}) is a Lax-admissible weak solution for the Riemann problem (\ref{Riemann_problem_general}).\\
Let $u^l\in \Omega$ be fixed. Let us define the $\mathcal{C}^1$ maps
$$\Psi(\sigma_1,...,\sigma_n)(u^l)=\psi_n(\sigma_n)\circ \cdots \circ \psi_1(\sigma_1)(u^l)$$
and
$$\Phi^{u^l}((\sigma_1,...,\sigma_n),u)=\Psi(\sigma_1,...,\sigma_n)(u^l)-u.$$
We find
$$\Phi^{u^l}(0,u^l)=\Psi(0,...,0)(u^l)-u^l=0.$$
Moreover
$$\dfrac{\partial \Phi^{u^l}}{\partial \sigma_i}(0,u^l)=\dfrac{\partial \Psi}{\partial \sigma_i}(\sigma_1,...,\sigma_n)\Big|_{\sigma_1=...=\sigma_n=0}=r_i(u^l),$$
indeed, by Propositions \ref{rarefaction_wave_properties} and \ref{properties_shock_waves}, we have $\partial_{\sigma_i} \psi_i(0)(u^l)=r_i(u^l)$. Therefore
\begin{equation*}
\begin{split}
\dfrac{\partial \Psi}{\partial \sigma_i}(\sigma_1,...,\sigma_n)\Big|_{\sigma_1=...=\sigma_n=0} & = \lim_{\varepsilon \to 0} \dfrac{\Psi(0,...,0,\varepsilon,0,...,0)-\Psi(0,...,0)}{\varepsilon}=\\
&= \lim_{\varepsilon \to 0} \dfrac{\psi_i(\varepsilon)(u^l)-u^l}{\varepsilon}=\\
&= r_i(u^l).
\end{split}
\end{equation*} 
The vectors $r_1(u^l)$, ..., $r_n(u^l)$ are linearly independent, hence the rank of the Jacobian matrix $D_{(\sigma_1,...,\sigma_n)} \Phi^{u^l}$ is $n$. By Theorem \ref{implicit_function_theorem_parametrized}, for every compact set $K \subset \Omega$ there exists $\zeta >0$ and a $\mathcal{C}^1$ function
$$(\sigma_1,...,\sigma_n) \to u^r(\sigma_1,...,\sigma_n)$$
such that
$$\Phi^{u^l}((\sigma_1,...,\sigma_n), u^r(\sigma_1,...,\sigma_n))=0 \Longleftrightarrow u^r(\sigma_1,...,\sigma_n)=\Psi(\sigma_1,...,\sigma_n)(u^l)$$
for every $u^l\in K$ and $|(\sigma_1,...,\sigma_n)|\leq \zeta$. Since the function $(\sigma_1,...,\sigma_n) \to \Psi(\sigma_1,...,\sigma_n)$ is an homomorphism of a neighbourhood of $0\in\mathbb{R}^n$ onto a neighbourhood $\mathcal{N}$ of $u^-$, the condition $|(\sigma_1,...,\sigma_n)|\leq \zeta$ is equivalent to $|u^r(\sigma_1,...,\sigma_n)-u^l|\leq \delta$ for some $\delta>0$.
\end{proof}
\chapter{The Aw-Rascle-Zhang model}
In this chapter we are going to introduce the Aw-Rascle-Zhang (ARZ) model and its main properties.\\
The Aw-Rascle-Zhang model (see \cite{Aw-Rascle, zhang}) is
\begin{equation}\label{ARZ_system}
\begin{cases}
\partial_t \rho+\partial_x  (\rho\,v)=0,\\
\partial_t z+\partial_x (z\,v)=0,
\end{cases}
\end{equation}
where $\rho$ and $v$ are respectively the density and the velocity of the vehicles on the road and $z=\rho\,(v+p(\rho))$ is the generalized momentum.\\
The first equation states the conservation of the density, while the second states the conservation of the momentum. The function $p\in \mathcal{C}^2([0,+\infty),[0,+\infty))$ plays the role of the pressure. We assume that $p$ satisfies the following hypotheses:
\begin{equation}\label{ipotesi_pressione}
\begin{cases}
p(0)=0,\\
p'(\rho)>0 & \text{for every } \rho >0,\\
\rho \to \rho\, p(\rho) & \text{is strictly convex.}
\end{cases}
\end{equation} 
The pressure function describes how a typical driver reacts to a spatial variation of the concentration of cars in front of him.
The next propositions state the main properties of the ARZ system respectively in the $(\rho,z)$ and in the $(\rho,v)$ plane.
\begin{prop}\label{ARZ_properties_rho_z}
The ARZ system
$$
\begin{cases}
\partial_t \rho+\partial_x  (\rho\,v)=0,\\
\partial_t z+\partial_x (z\,v)=0,
\end{cases}
$$
in the conserved variables $(\rho,z)=(\rho,\rho\,(v+p(\rho))$ has the following properties.
\begin{enumerate}
\item The flux function is
$$f(\rho,z) = \begin{pmatrix}
f_1(\rho,z)\\
f_2(\rho,z)
\end{pmatrix}= \begin{pmatrix}
\rho\, v\\
z\,v
\end{pmatrix}= \begin{pmatrix}
z-\rho\, p(\rho)\\
\dfrac{z^2}{\rho}-z\,p(\rho)
\end{pmatrix}.$$
\item The eigenvalues of the Jacobian matrix of $f$ are
\begin{equation}\label{eigenvalues_rho_z}
\lambda_1(\rho,z)=-p(\rho)+\dfrac{z}{\rho}-\rho\,p'(\rho) \; \text{ and } \; \lambda_2(\rho,z)=-p(\rho)+\dfrac{z}{\rho}
\end{equation}
and the corresponding eigenvectors are 
\begin{equation}\label{eigenvectors_rho_z}
r_1(\rho,z)=\begin{pmatrix}
-1\\
-\frac{z}{\rho}
\end{pmatrix} \; \text{ and } \; r_2(\rho,z)=\begin{pmatrix}
1\\
\frac{z}{\rho}+\rho\,p'(\rho)
\end{pmatrix}.
\end{equation}
Moreover, the first characteristic field is genuinely non-linear and the second is linearly degenerate.
\item The first shock and rarefaction curves coincide and the Lax curves passing through a point $(\rho_0,z_0)\in \mathbb{R}^+\times \mathbb{R}^+$ are
\begin{equation}\label{Lax_curves_rho_z}
L_1(\rho,\rho_0,z_0)=\dfrac{z_0}{\rho_0}\, \rho \; \text{ and } \; L_2(\rho,\rho_0,z_0)=\dfrac{z_0}{\rho_0}\, \rho+\rho\,(p(\rho)-p(\rho_0));
\end{equation}
see Figure \ref{fig_Lax_curves_a}.
\item The Riemann invariants are
\begin{equation}\label{Riemann_invariants_rho_z}
s=\dfrac{z}{\rho}-p(\rho) \; \text{ and } \; w=\dfrac{z}{\rho}.
\end{equation}
\end{enumerate}
\end{prop}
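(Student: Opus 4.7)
The plan is to verify the four items essentially by direct computation, using only that $v = z/\rho - p(\rho)$ (so $\rho v = z - \rho p(\rho)$ and $zv = z^2/\rho - z p(\rho)$) together with the hypotheses on $p$.

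First I would record the flux by substitution, which gives item (1) without any work. For item (2) I would write out $Df(\rho,z)$ explicitly, compute its characteristic polynomial $(\lambda+p(\rho)-z/\rho)(\lambda+p(\rho)-z/\rho+\rho p'(\rho))$ (after simplification), and read off the two eigenvalues in (\ref{eigenvalues_rho_z}); the corresponding eigenvectors in (\ref{eigenvectors_rho_z}) come from solving $(Df-\lambda_i I)r_i=0$. The genuine nonlinearity/linear degeneracy check then reduces to:
\begin{equation*}
\nabla\lambda_1\cdot r_1=2p'(\rho)+\rho p''(\rho)=(\rho\,p(\rho))''>0,\qquad \nabla\lambda_2\cdot r_2=0,
\end{equation*}
where the positivity of the first quantity is exactly the strict convexity of $\rho\mapsto \rho p(\rho)$ assumed in (\ref{ipotesi_pressione}), and the second equality is a routine cancellation.

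For item (3), I would first handle the rarefaction curves. Since $r_1=(-1,-z/\rho)$, the ODE (\ref{Cauchy_problem_rarefaction_wave}) gives $dz/d\rho=z/\rho$, whose solution through $(\rho_0,z_0)$ is the straight line $z=(z_0/\rho_0)\rho$, yielding $L_1$. For $L_2$, the ODE $dz/d\rho=z/\rho+\rho p'(\rho)$ is linear; dividing by $\rho$ and using the integrating factor $1/\rho$ gives $(z/\rho)'=p'(\rho)$, hence $z/\rho=p(\rho)-p(\rho_0)+z_0/\rho_0$, which is $L_2$. To see that the first shock curve coincides with $L_1$, I would take an arbitrary point $(\rho,z)$ on $L_1$ (so $z/\rho=z_0/\rho_0=:w$) and verify the Rankine–Hugoniot conditions (\ref{Rankine-Hugoniot_conditions}): the first component determines the shock speed $\lambda=w-(\rho p(\rho)-\rho_0 p(\rho_0))/(\rho-\rho_0)$, and the second component is then automatic because $zv=w\,\rho v$ and $z_0v_0=w\,\rho_0 v_0$, so the second jump equals $w$ times the first. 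For the second family there is nothing to prove: the field is linearly degenerate and the coincidence of shock and rarefaction curves is exactly the proposition proved in Section 1.3.

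Finally for item (4), I would check that $w=z/\rho$ is constant along integral curves of $r_1$ by computing $\nabla w\cdot r_1=(-z/\rho^2)(-1)+(1/\rho)(-z/\rho)=0$, and analogously $\nabla s\cdot r_2=0$, identifying $w$ as a $1$-Riemann invariant and $s$ as a $2$-Riemann invariant. The only mildly delicate step is the shock–rarefaction coincidence for the first family, since it requires identifying the correct jump speed and exploiting the factorization $z=w\rho$; everything else is bookkeeping with the chain rule.
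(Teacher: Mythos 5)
Your proof is correct, and in item (3) you take a somewhat different route than the paper. For $L_2$ the paper imposes the Rankine--Hugoniot conditions with propagation speed $\lambda_2$, whereas you integrate the second rarefaction ODE $\dot{u}=r_2$ (the integrating factor $1/\rho$ gives $(z/\rho)'=p'(\rho)$) and cite the general coincidence of shock and rarefaction curves for linearly degenerate fields; both yield $L_2$. The real difference is the first shock curve. The paper solves the Rankine--Hugoniot system directly, introducing $Q:=z-\rho\,(p(\rho)+\lambda)$ and splitting into $Q=0$ (contact discontinuities) and $Q\neq 0$, which forces $z/\rho=z_0/\rho_0$ — so it shows that the shock locus is contained in $L_1\cup L_2$. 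You instead prove the reverse inclusion: along $L_1$ one has $z/\rho\equiv w$, hence $zv-z_0v_0=w(\rho v-\rho_0 v_0)$ and $z-z_0=w(\rho-\rho_0)$, so once the first Rankine--Hugoniot equation fixes the shock speed the second is $w$ times the first and holds automatically. This is shorter and avoids the case analysis; the identification $S_1=L_1$ is then completed (as it implicitly is in the paper, too) by appealing to the uniqueness of the shock curve in Proposition \ref{properties_shock_waves}, since $L_1$ passes through $(\rho_0,z_0)$ with tangent $r_1$ and lies on the shock locus. Items (1), (2) and (4) follow the same computations as the paper.
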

\begin{proof}
Since $z= \rho\,(v+p(\rho))$, we find
$$v=\dfrac{z}{\rho}-p(\rho).$$
Therefore
$$\rho\, v= z-\rho\,p(\rho)\; \text{ and } \; z\, v = \dfrac{z^2}{\rho}-z\, p(\rho).$$
The Jacobian matrix for the flux function is
\begin{equation}
A(\rho,z)=Df(\rho,z)=\begin{bmatrix}
-p(\rho)-\rho\, p'(\rho) & 1\\[0.2cm]
-z\,p'(\rho)-\frac{z^2}{\rho^2} & \frac{2z}{\rho}-p(\rho)
\end{bmatrix}.
\end{equation}
Let $\mathbf{I}$ be the $2\times 2$ identity matrix. We have
\begin{equation*}
\begin{split}
\det[Df & -\lambda \mathbf{I}] =\det \begin{bmatrix}
-p(\rho)-\rho p'(\rho)-\lambda & 1\\[0.2cm]
-z\,p'(\rho)-\frac{z^2}{\rho^2} & \frac{2z}{\rho}-p(\rho)-\lambda
\end{bmatrix}=\\
&= (p(\rho)+\rho \, p'(\rho)+\lambda)\left(\lambda+p(\rho)-\frac{2z}{\rho}\right)+z\,p'(\rho)+\frac{z^2}{\rho^2}=\\
&=\lambda^2+\lambda\left(2p(\rho)+\rho\, p'(\rho)-\frac{2z}{\rho}\right)+p(\rho)\left(p(\rho)+\rho \, p'(\rho)-\frac{2z}{\rho}\right)-z\,p'(\rho)+\frac{z^2}{\rho^2}.
\end{split}
\end{equation*}
Let $\Delta$ be the discriminant for the characteristic equation
$$\det[Df -\lambda \mathbf{I}]=0.$$
We obtain $\Delta=\rho^2\, [p'(\rho)]^2$ and $\det[Df -\lambda \mathbf{I}] =0$ if and only if
$$\lambda_{1,2}=\dfrac{1}{2}\left(-2\, p(\rho)-\rho\, p'(\rho)+\frac{2z}{\rho}\pm \rho\, p'(\rho)\right)$$
which are the eigenvalues.\\
Let us denote $r_1=(v_1,v_2)^T$ the eigenvector corresponding to $\lambda_1=\frac{z}{\rho}-\rho\, p'(\rho)-p(\rho)$. We have
\begin{equation*}
\begin{split}
& Df\begin{pmatrix}
v_1\\[0.2cm]
v_2
\end{pmatrix}=\lambda_1\begin{pmatrix}
v_1\\[0.2cm]
v_2
\end{pmatrix}\Longleftrightarrow \begin{bmatrix}
-p(\rho)-\rho\, p'(\rho) & 1\\[0.2cm]
-z\,p'(\rho)-\frac{z^2}{\rho^2} & \frac{2z}{\rho}-p(\rho)
\end{bmatrix}\begin{pmatrix}
v_1\\[0.2cm]
v_2
\end{pmatrix}=\lambda_1\begin{pmatrix}
v_1\\[0.2cm]
v_2
\end{pmatrix}\Longleftrightarrow\\
& \Longleftrightarrow \begin{cases}
-p(\rho)\,v_1-\rho\, p'(\rho)\,v_1+v_2=\frac{z}{\rho}\,v_1-\rho\, p'(\rho)\,v_1-p(\rho)\,v_1,\\
-z\, p'(\rho)\,v_1- \frac{z^2}{\rho^2}\,v_1+\frac{2\,z}{\rho}\,v_2-p(\rho)\,v_2=\frac{z}{\rho}\,v_2-\rho\, p'(\rho)\,v_2-p(\rho)\,v_2.
\end{cases}\Longleftrightarrow\\
&\Longleftrightarrow \begin{cases}
v_2=\frac{z}{\rho}\,v_1,\\
-z\,p'(\rho)\,v_1-\frac{z^2}{\rho^2}\, v_1+\frac{z}{\rho}\, v_2=-\rho \, p'(\rho)\, v_2.
\end{cases}
\end{split}
\end{equation*}
These two equations are linearly dependent. Hence, choosing $v_1=-1$, we obtain
$$r_1=\begin{pmatrix}
-1\\
-\frac{z}{\rho}
\end{pmatrix}.$$
For the eigenvector $r_2=(v_1,v_2)^T$ corresponding to the eigenvalue $\lambda_2=\frac{z}{\rho}-p(\rho)$, we find
\begin{equation*}
\begin{split}
& Df\begin{pmatrix}
v_1\\[0.2cm]
v_2
\end{pmatrix}=\lambda_2\begin{pmatrix}
v_1\\[0.2cm]
v_2
\end{pmatrix}\Longleftrightarrow \begin{bmatrix}
-p(\rho)-\rho\, p'(\rho) & 1\\[0.2cm]
-z\,p'(\rho)-\frac{z^2}{\rho^2} & \frac{2\,z}{\rho}-p(\rho)
\end{bmatrix}\begin{pmatrix}
v_1\\[0.2cm]
v_2
\end{pmatrix}=\lambda_2\begin{pmatrix}
v_1\\[0.2cm]
v_2
\end{pmatrix}\Longleftrightarrow\\
& \Longleftrightarrow \begin{cases}
-p(\rho)\,v_1-\rho \, p'(\rho)\,v_1+v_2=\frac{z}{\rho}\, v_1-p(\rho)\,v_1,\\
-z\,p'(\rho)\,v_1-\frac{z^2}{\rho^2}\,v_1+\frac{2\,z}{\rho}\,v_2-p(\rho)\,v_2=\frac{z}{\rho}\, v_2-p(\rho)\,v_2.
\end{cases}\Longleftrightarrow\\
& \Longleftrightarrow \begin{cases}
v_2=(\frac{z}{\rho}+\rho \, p'(\rho))\,v_1,\\
\frac{z}{\rho}\, v_2=z\,p'(\rho)\,v_1+\frac{z^2}{\rho^2}\,v_1.
\end{cases}
\end{split}
\end{equation*}
The equations are linearly dependent. Therefore, choosing $v_1=1$, we obtain
$$r_2=\begin{pmatrix}
1\\
\frac{z}{\rho}+\rho\, p'(\rho)
\end{pmatrix}.$$
By the hypotheses (\ref{ipotesi_pressione}), we have
$$\nabla\lambda_1\cdot r_1=\begin{pmatrix}
-\frac{z}{\rho^2}-2\,p'(\rho)-\rho\, p''(\rho)\\[0.2cm]
\frac{1}{\rho}
\end{pmatrix}\cdot\begin{pmatrix}
-1\\[0.2cm]
-\frac{z}{\rho}
\end{pmatrix}=\,2p'(\rho)+\rho \, p''(\rho)=\dfrac{d^2}{\rho^2}(\rho \, p(\rho))>0$$
Therefore the first characteristic field is genuinely non-linear. For the second characteristic field, we find
$$\nabla\lambda_2\cdot r_2=\begin{pmatrix}
-\frac{z}{\rho^2}-p'(\rho)\\
\frac{1}{\rho}
\end{pmatrix}\cdot \begin{pmatrix}
1\\
\frac{z}{\rho}+\rho\,p'(\rho)
\end{pmatrix}=0.$$
Hence it is linearly degenerate.\\
Let us call $u=(\rho,z)$ the vector of the conserved variables.\\
Consider first the second characteristic field. Since it is linearly degenerate, the second rarefaction and shock curves coincide. Hence the second Lax curve consists of the points which satisfy the Rankine-Hugoniot conditions with propagation speed $\lambda_2$, i.e.
\begin{equation*}
\begin{split}
&f(\rho,z)-f(\rho_0,z_0) = \lambda_2(\rho,z)\left[(\rho,z)-(\rho_0,z_0)\right] \Longleftrightarrow\\
&\Longleftrightarrow \begin{cases}
z-\rho \,p(\rho) -z_0+\rho_0\,p(\rho_0)=(-p(\rho)+\frac{z}{\rho})(\rho-\rho_0),\\
\frac{z^2}{\rho}-z\,p(\rho)-\frac{z_0^2}{\rho_0}+z_0\,p(\rho_0)=(-p(\rho)+\frac{z}{\rho})(z-z_0).
\end{cases}
\end{split}
\end{equation*}
These two equations are linearly dependent and the solutions are the points $(\rho,z)$ such that
$$z=L_2(\rho,\rho_0,z_0)=\dfrac{z_0}{\rho_0}\,\rho+\rho\,(p(\rho)-p(\rho_0)).$$
Observe that a point $(\rho,z)$ connected to $(\rho_0,z_0)$ by a Lax curve of the second family satisfies
\begin{equation}\label{caracterization_contact_discontinuities_Riemann_invariant}
\dfrac{z}{\rho}-\rho\,p(\rho)=\dfrac{z_0}{\rho_0}-\rho_0\,p(\rho_0)=\lambda_2(\rho,z).
\end{equation}
The first characteristic field is genuinely non-linear. By Definition \ref{definition_rarefaction_wave} the first rarefaction curve passing through the point $(\rho_0,z_0)$ is the solution to the Cauchy problem
\begin{equation*}
\begin{cases}
\dot{u}=r_1,\\
u(0)=(\rho_0,z_0).
\end{cases}
\end{equation*}
Then
\begin{equation*}
\begin{cases}
\dot{\rho}=-1,\\
\dot{z}=-z/\rho.
\end{cases}\Longrightarrow \begin{cases}
\rho(\tau)=\rho_0-\tau,\\
\dot{z}(t)=z/(\tau-\rho_0).
\end{cases}\Longrightarrow
\begin{cases}
\tau=\rho_0-\rho,\\
z(\tau)=z_0\left(1-\tau/\rho_0\right).
\end{cases}
\end{equation*}
Substituting the first equation in the second, we find
$$L_1(\rho,\rho_0,z_0)=\dfrac{z_0}{\rho_0}\,\rho.$$
The shock curve of the first family is formed by the points $(\rho,z)$ which are the solutions to the system given by the Rankine-Hugoniot conditions:
\begin{equation*}
\begin{cases}
z-\rho\,p(\rho)-z_0+\rho_0\,p(\rho_0)=\lambda\,(\rho-\rho_0),\\
\frac{z^2}{\rho}-z\,p(\rho)-\frac{z_0^2}{\rho_0}+z_0\,p(\rho_0)=\lambda\,(z-z_0).
\end{cases}
\end{equation*} 
The first equation gives
$$Q:=z-\rho\,(p(\rho)+\lambda)=z_0-\rho_0\,(p(\rho_0)+\lambda).$$
We have to distinguish two cases:
\begin{itemize}
\item if $Q=0$, then
$$\lambda=\dfrac{z}{\rho}-p(\rho)=\dfrac{z_0}{\rho_0}-p(\rho_0).$$
By the equation (\ref{caracterization_contact_discontinuities_Riemann_invariant}), these are the points $(\rho,z)$ joined to $(\rho_0,z_0)$ by a contact discontinuity of the second family.
\item if $Q \neq 0$, then by the second equation we find
\begin{equation*}
\begin{split}
& \dfrac{z}{\rho}(z-\rho\,(p(\rho)+\lambda))=\dfrac{z_0}{\rho_0}\,(z_0-\rho_0\,(p(\rho_0)+\lambda)) \Longrightarrow \\
& \dfrac{z}{\rho}\, Q = \dfrac{z_0}{\rho_0}\,Q \Longrightarrow z=\dfrac{z_0}{\rho_0}\,\rho=L_1(\rho,\rho_0,z_0).
\end{split}
\end{equation*}
\end{itemize}
Hence the first rarefaction and shock curves coincide.\\
The Riemann invariants are the curves that are constant along the characteristic fields, i.e.
$$\nabla w\cdot r_1=0 \; \text{ and } \nabla s \cdot r_2 =0.$$
By the first equation, we find
$$-\partial_\rho  w-\dfrac{z}{\rho}\,\partial_z w=0 \Longrightarrow \rho \, \partial_\rho w+z \, \partial_z \, w=0 \Longrightarrow w=\dfrac{z}{\rho},$$
while the second equation implies
$$\partial_\rho s+\dfrac{z}{\rho}\, \partial_z s+\rho\,p'(\rho)\partial_z s=0\Longrightarrow s=\dfrac{z}{\rho}-p(\rho).$$
This completes the proof.
\end{proof}
\begin{lemma}\label{lemma_ARZ_equivalent_spectral}
Let us suppose that $\rho$ and $v$ are smooth. The ARZ system
\begin{equation}\label{ARZ_system_explicit}
\begin{cases}
\partial_t \rho +\partial_x (\rho \,v)=0,\\
\partial_t [\rho\,(v+p(\rho))]+\partial_x [\rho\,v\,(v+p(\rho))]=0,
\end{cases}
\end{equation}
is equivalent to the system
\begin{equation}\label{ARZ_spectral_properties}
\begin{cases}
\partial_t  \rho + \partial_x(\rho\,v)=0,\\
\partial_t v + (v-\rho\,p'(\rho))\, \partial_x v =0.
\end{cases}
\end{equation}
\end{lemma}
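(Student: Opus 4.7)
The plan is to show the two systems are equivalent component by component, working under the smoothness assumption so that the chain rule and product rule can be applied freely. The first equation $\partial_t\rho+\partial_x(\rho v)=0$ is common to both systems, so the task reduces to showing that, assuming the first equation holds, the second conservative equation in (\ref{ARZ_system_explicit}) is equivalent to $\partial_t v+(v-\rho p'(\rho))\partial_x v=0$.

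For the forward direction, I would set $w=v+p(\rho)$, so that the second equation becomes $\partial_t(\rho w)+\partial_x(\rho v w)=0$. Expanding with the product rule gives
$$w\bigl[\partial_t\rho+\partial_x(\rho v)\bigr]+\rho\bigl[\partial_t w+v\,\partial_x w\bigr]=0.$$
The first bracket vanishes by the density equation, so we are reduced to $\partial_t w+v\,\partial_x w=0$, i.e. $\partial_t(v+p(\rho))+v\,\partial_x(v+p(\rho))=0$. Expanding with the chain rule yields
$$\partial_t v+p'(\rho)\,\partial_t\rho+v\,\partial_x v+v\,p'(\rho)\,\partial_x\rho=0,$$
and substituting $\partial_t\rho=-v\,\partial_x\rho-\rho\,\partial_x v$ from the first equation makes the $v\,p'(\rho)\partial_x\rho$ terms cancel, leaving exactly $\partial_t v+(v-\rho p'(\rho))\partial_x v=0$.

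For the reverse direction, I would simply run the same chain of equalities backwards: starting from $\partial_t v+(v-\rho p'(\rho))\partial_x v=0$ together with the density equation, I recover $\partial_t w+v\,\partial_x w=0$ with $w=v+p(\rho)$, then multiply by $\rho$ and add $w$ times the density equation to reconstruct $\partial_t(\rho w)+\partial_x(\rho v w)=0$, which is the second equation of (\ref{ARZ_system_explicit}).

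There is no real obstacle here: the argument is a direct computation relying only on the $\mathcal{C}^2$ regularity of $p$ and the assumed smoothness of $\rho$ and $v$, which legitimize the use of the product and chain rules. The only point worth being careful about is making clear that the first equation is used on both directions to convert between $\partial_t\rho$ and $-\partial_x(\rho v)$, so the equivalence is really between the two \emph{systems} taken as a whole, not between the individual second equations in isolation.
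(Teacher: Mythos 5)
Your argument is correct and follows essentially the same route as the paper: first reduce the conservative momentum equation to $\partial_t(v+p(\rho))+v\,\partial_x(v+p(\rho))=0$ using the density equation, then eliminate the $\partial_t\rho$ and $\partial_x\rho$ terms (your substitution of $\partial_t\rho=-v\,\partial_x\rho-\rho\,\partial_x v$ is the same manipulation as the paper's subtraction of $p'(\rho)$ times the density equation). The only presentational difference is that you introduce the shorthand $w=v+p(\rho)$; the content is identical.
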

\begin{proof}
Since $\rho$ and $v$ are smooth, the second equation of (\ref{ARZ_system_explicit}) is equivalent to
\begin{equation}\label{conservation_momentum_equivalent_form}
\partial_t (v+p(\rho))+v\,\partial_x (v+p(\rho))=0,
\end{equation}
indeed
\begin{equation*}
\begin{split}
& (v+p(\rho)) \, \partial_t \rho +\rho\,\partial_t (v+p(\rho))+(v+p(\rho))\,\partial_x(\rho\,v)+\rho\,v \, \partial_x (v+p(\rho))=0 \Longleftrightarrow \\
& (v+p(\rho))\,[\partial_t \rho +\partial_x (\rho\,v)]+\rho\,[\partial_t (v+p(\rho))+v\,\partial_x (v+p(\rho))]=0 \Longleftrightarrow \\
& \partial_t (v+p(\rho))+v\,\partial_x (v+p(\rho))=0,
\end{split}
\end{equation*}
because, by the conservation of the density, we have
\begin{equation}\label{conservation_density}
\partial_t \rho +\partial_x (\rho\,v)=0.
\end{equation}
Let us multiply the factor $-p'(\rho)$ to the equation (\ref{conservation_density}) and add the result to the equation (\ref{conservation_momentum_equivalent_form}). Since
$$\partial_t (p(\rho))=p'(\rho)\,\partial_t \rho \; \text{ and } \; \partial_x (p(\rho))=p'(\rho)\,\partial_x \rho,$$
we obtain
\begin{equation*}
\begin{split}
& \partial_t (v+p(\rho))+v\,\partial_x (v+p(\rho)) - p'(\rho)[\partial_t \rho +\partial_x (\rho\,v)]=0 \Longleftrightarrow\\
& \partial_t v +p'(\rho)\,\partial_t \rho +v \,(\partial_x v +p'(\rho)\,\partial_x \rho)-p'(\rho)\,\partial_t \rho-p'(\rho)\,(v \, \partial_x \rho+\rho\,\partial_x v)=0 \Longleftrightarrow \\
& \partial_t v + (v-\rho\,p'(\rho))\,\partial_x v=0,
\end{split}
\end{equation*}
which is the second equation in (\ref{ARZ_spectral_properties}).
\end{proof}
\begin{prop}\label{ARZ_properties_rho_v}
The ARZ system
$$
\begin{cases}
\partial_t \rho +\partial_x (\rho \,v)=0,\\
\partial_t [\rho\,(v+p(\rho))]+\partial_x [\rho\,v\,(v+p(\rho))]=0,
\end{cases}
$$
in the non-conserved variables $(\rho,v)$ has the following properties.
\begin{enumerate}
\item The representation of the flux function in the variables $(\rho,v)$ is
$$f(\rho,v)=\begin{pmatrix}
f_1(\rho,v)\\
f_2(\rho,v)
\end{pmatrix}=\begin{pmatrix}
\rho\,v\\
\rho\,v \, (v+p(\rho))
\end{pmatrix}$$
\item The eigenvalues of the Jacobian matrix of the flux function are
\begin{equation}\label{eigenvalues_rho_v}
\lambda_1(\rho,v)=v-\rho\,p'(\rho) \; \text{ and }\; \lambda_2(\rho,v)=v
\end{equation}
and the corresponding eigenvectors are
\begin{equation}\label{eigenvectors_rho_v}
r_1(\rho,v)=\begin{pmatrix}
-1\\
p'(\rho)
\end{pmatrix} \; \text{ and } \; r_2(\rho,v)=\begin{pmatrix}
1\\
0
\end{pmatrix}.
\end{equation}
Moreover, the first characteristic field is genuinely non-linear, while the second is linearly degenerate.
\item The first rarefaction and shock curves coincide and the Lax curves passing through a point $(\rho_0,v_0)\in \mathbb{R}^+\times \mathbb{R}^+$ are
\begin{equation}\label{Lax_curves_rho_v}
L_1(\rho,\rho_0,v_0)=v_0+p(\rho_0)-p(\rho) \; \text{ and } \; L_2(\rho.\rho_0,v_0)=v_0;
\end{equation}
see Figure \ref{fig_Lax_curves_b}.
\item The Riemann invariants are 
$$s=v\; \text{ and }\; w=v+p(\rho).$$
\end{enumerate}
\end{prop}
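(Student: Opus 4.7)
\noindent
The plan is to follow the same four-item structure as the proof of Proposition \ref{ARZ_properties_rho_z}, transferring results from the $(\rho,z)$ setting via the change of variables $v = z/\rho - p(\rho)$ whenever this is shorter than a direct calculation. Item 1 is immediate: substituting $z = \rho(v+p(\rho))$ into the second flux component of the conservative form gives $zv = \rho\, v\,(v+p(\rho))$. For item 2 I will use Lemma \ref{lemma_ARZ_equivalent_spectral}: the system in non-conserved variables is equivalent to the quasilinear system with coefficient matrix $A(\rho,v)$ whose first row is $(v,\rho)$ and whose second row is $(0,\, v-\rho\,p'(\rho))$. Being upper triangular, $A$ has the claimed eigenvalues on its diagonal, and the corresponding eigenvectors are obtained by solving $(A-\lambda_i I)r_i = 0$. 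Genuine non-linearity of the first field amounts to $\nabla \lambda_1 \cdot r_1 = 2\,p'(\rho) + \rho\,p''(\rho) = \frac{d^2}{d\rho^2}(\rho\,p(\rho)) > 0$, which holds by the convexity hypothesis in (\ref{ipotesi_pressione}); linear degeneracy of the second field is immediate, since $\lambda_2 = v$ has gradient $(0,1)^T$, orthogonal to $r_2 = (1,0)^T$.

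\noindent
For item 3, the rarefaction curves are the integral curves of $r_1$ and $r_2$ through $(\rho_0,v_0)$. The system $\dot u = r_2 = (1,0)^T$ yields the horizontal line $v \equiv v_0$. The system $\dot u = r_1 = (-1,p'(\rho))^T$ reduces, after setting $\rho(\tau) = \rho_0 - \tau$, to $\dot v = p'(\rho_0-\tau) = -\tfrac{d}{d\tau}p(\rho_0-\tau)$, so $v(\tau) = v_0 + p(\rho_0) - p(\rho)$. For the shock curves, rather than reproving Rankine--Hugoniot in non-conservative variables, I invoke Proposition \ref{ARZ_properties_rho_z}: applying $v = z/\rho - p(\rho)$ to $L_1(\rho,\rho_0,z_0) = (z_0/\rho_0)\,\rho$ gives $v = z_0/\rho_0 - p(\rho) = v_0 + p(\rho_0) - p(\rho)$, which coincides with the first rarefaction curve (and therefore also with the first shock curve); applying the same change of variables to $L_2(\rho,\rho_0,z_0)$ yields $v = v_0$, as claimed.

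\noindent
Item 4 is then a direct verification, exactly as in the proof of Proposition \ref{ARZ_properties_rho_z}: with $s=v$ and $w=v+p(\rho)$ one computes $\nabla s \cdot r_2 = (0,1)\cdot(1,0)^T = 0$ and $\nabla w \cdot r_1 = (p'(\rho),1)\cdot(-1,p'(\rho))^T = -p'(\rho)+p'(\rho) = 0$. The main subtlety is that the $(\rho,v)$ formulation is not a system of conservation laws, so the phrase ``Jacobian of the flux'' in item 2 must be read as the coefficient matrix of the quasilinear form supplied by Lemma \ref{lemma_ARZ_equivalent_spectral}, and the Rankine--Hugoniot analysis for the first shock curve has to be pulled back through the change of variables $z = \rho(v+p(\rho))$ instead of being carried out directly in $(\rho,v)$. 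Everything else reduces to bookkeeping with $p$ and $p'$.
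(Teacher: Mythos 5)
Your proof is correct and follows essentially the same route as the paper: exploit Lemma \ref{lemma_ARZ_equivalent_spectral} to pass to the quasilinear matrix $\begin{bmatrix} v & \rho \\ 0 & v-\rho\,p'(\rho)\end{bmatrix}$, read off eigenvalues and eigenvectors, solve the integral-curve ODEs for the rarefaction curves, and invoke Proposition \ref{ARZ_properties_rho_z} for the shock/rarefaction coincidence. The only small differences are cosmetic: you reverify genuine nonlinearity and linear degeneracy (computing $\nabla\lambda_1\cdot r_1=2p'(\rho)+\rho\,p''(\rho)$ and $\nabla\lambda_2\cdot r_2=0$ directly) and you check the Riemann invariants by the gradient-orthogonality conditions, whereas the paper simply substitutes $z=\rho(v+p(\rho))$ into the expressions already obtained in $(\rho,z)$; both are equally valid and the overall structure is the same.
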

\begin{proof}
By Lemma (\ref{lemma_ARZ_equivalent_spectral}), the ARZ system is equivalent to the system
\begin{equation*}
\begin{cases}
\partial_t \rho + v\, \partial_x \rho+\rho\,\partial_x v=0,\\
\partial_t v + (v-\rho\,p'(\rho))\, \partial_x v =0,
\end{cases}
\end{equation*}
which can be written in the form
$$\begin{pmatrix}
\partial_t \rho\\
\partial_t v
\end{pmatrix}+
\begin{bmatrix}
v & \rho \\
0 & v-\rho\, p'(\rho)
\end{bmatrix}\, \begin{pmatrix}
\partial_x \rho\\
\partial_x v
\end{pmatrix}=0.$$
For this system, we have
$$Df(\rho,v)=\begin{bmatrix}
v & \rho\\
0 & v-\rho\,p'(\rho)
\end{bmatrix}.$$
Hence the eigenvalues are
$$\lambda_1(\rho,v)=v-\rho\,p'(\rho) \; \text{ and } \; \lambda_2(\rho,v)=v.$$
If $r_1=(\mu_1,\mu_2)^T$ is the eigenvector corresponding to $\lambda_1(\rho,v)$, we find
$$
\begin{cases}
v\,\mu_1+\rho\, \mu_2=(v-\rho\,p'(\rho))\,\mu_1.\\
0+(v-\rho\,p'(\rho))\,\mu_2=(v-\rho\,p'(\rho))\,\mu_2.
\end{cases}
$$ 
The second equation is trivial, while by the first equation we obtain
$$\mu_2= -p'(\rho)\mu_1 \Longrightarrow r_1(\rho,v)= \begin{pmatrix}
-1\\
p'(\rho)
\end{pmatrix}.$$
Similarly, if $r_2= (\mu_1,\mu_2)^T$ is the eigenvector corresponding to $\lambda_2(\rho,v)$, we find
$$ \begin{cases}
v\,\mu_1+\rho\, \mu_2=v\,\mu_1.\\
0+(v-\rho\,p'(\rho))\,\mu_2=v\,\mu_2.
\end{cases}
$$
The first equation gives
$$\mu_2=0,$$
then the second equation is satisfied and
$$r_2(\rho,v)= \begin{pmatrix}
1\\
0
\end{pmatrix}.$$
The shock and rarefaction curves of the first family coincide by Proposition \ref{ARZ_properties_rho_z}. Hence the Lax curve of the first family passing through the point $(\rho_0,v_0)$ is the solution to the Cauchy problem
$$
\begin{cases}
\dot{\rho}= -1,\\
\dot{v}= p'(\rho),\\
(\rho,v)(0)=(\rho_0,v_0).
\end{cases}
$$
By the first equation and the initial condition, we find
$$\rho(\tau) = \rho_0-\tau.$$
Substituting the result in the second equation, we find:
$$\dot{v}(\tau)=\dfrac{d}{d\rho}p(\rho(\tau))=\dfrac{d p(\rho(\tau))}{d\tau}\dfrac{d \tau}{d\rho}=-\dot{p}(\rho(\tau)).$$
Therefore
$$v(\tau)=c-p(\rho(\tau)).$$
By the initial condition, we obtain
$$v_0 = c-p(\rho_0) \Longrightarrow L_1(\rho,\rho_0,v_0) = v_0+p(\rho_0)-p(\rho).$$
For the Lax curve of the second family, we have:
$$
\begin{cases}
\dot{\rho}=1,\\
\dot{v}=0,\\
(\rho,v)(0)=(\rho_0,v_0).
\end{cases} \Longleftrightarrow \begin{cases}
\rho(\tau) = \rho_0-\tau,\\
v(\tau)=v_0.
\end{cases}
$$
Hence $L_2(\rho,\rho_0,v_0)=v_0$.\\
Substituting $z=\rho\,(v+p(\rho))$ in the expressions (\ref{Riemann_invariants_rho_z}), we find
$$s= \dfrac{z}{\rho}-p(\rho) = v \; \text{ and } \;  w = \dfrac{z}{\rho}= v+p(\rho).$$
\end{proof}
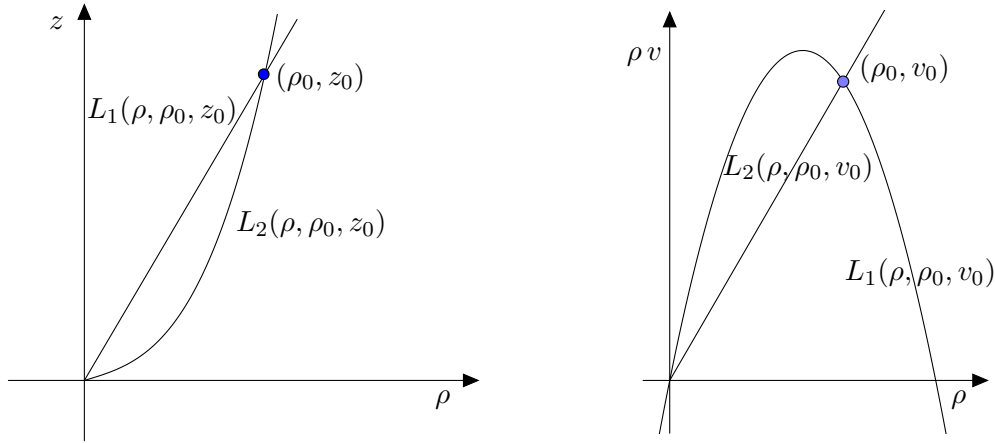
\begin{figure}[hbtp]
\centering
\begin{subfigure}[hbtp]{0.47\linewidth}
\centering
\definecolor{qqqqff}{rgb}{0.,0.,1.}
\begin{tikzpicture}[scale = 2., line cap=round,line join=round,>=triangle 45,x=1.0cm,y=1.0cm]
\draw[->,color=black] (-0.5,0.) -- (2.6,0.);
\draw[->,color=black] (0.,-0.4) -- (0.,2.5);
\clip(-0.5,-0.4) rectangle (2.6,2.5);
\draw (1.19,2.17) node[anchor=north west] {$(\rho_0,z_0)$};
\draw (-0.3,2.48) node[anchor=north west] {$z$};
\draw (2.24,0) node[anchor=north west] {$\rho$};
\draw (-0.05,1.95) node[anchor=north west] {$L_1(\rho,\rho_0,z_0)$};
\draw (0.93,1.2) node[anchor=north west] {$L_2(\rho,\rho_0,z_0)$};
\draw[smooth,samples=50,domain=0:1.27] plot(\x,{0.3*(\x)+(\x)^(3.0)});
\draw [domain=0:1.4] plot(\x,{(-0.--1.71*\x)/1.});
\begin{scriptsize}
\draw [fill=qqqqff] (1.18,2.03) circle (1.pt);
\end{scriptsize}
\end{tikzpicture}
\caption{Lax curves passing through the point $(\rho_0,z_0)$ in the $(\rho,z)$ plane.}\label{fig_Lax_curves_a}
\end{subfigure}
\quad
\begin{subfigure}[hbtp]{0.47\linewidth}
\centering
\definecolor{xdxdff}{rgb}{0.49,0.49,1.}
\begin{tikzpicture}[scale = 0.7,line cap=round,line join=round,>=triangle 45,x=1.0cm,y=1.0cm]
\draw[->,color=black] (-0.5,0.) -- (6.,0.);
\draw[->,color=black] (0.,-1.) -- (0.,7.);
\clip(-1.3,-1.) rectangle (6.5,7.);
\draw (3.37,6.38) node[anchor=north west] {$(\rho_0,v_0)$};
\draw (-1,6.5) node[anchor=north west] {$\rho\,v$};
\draw (5.1,0.) node[anchor=north west] {$\rho$};
\draw (3.1,2.5) node[anchor=north west] {$L_1(\rho,\rho_0,v_0)$};
\draw (0.8,4.5) node[anchor=north west] {$L_2(\rho,\rho_0,v_0)$};
\draw[smooth,samples=50,domain=-0.5:6.0] plot(\x,{5.0*(\x)-(\x)^(2.0)});
\draw [domain=0.0:6.0] plot(\x,{(-0.--5.66*\x)/3.26});
\begin{scriptsize}
\draw [fill=xdxdff] (3.26,5.66) circle (3pt);
\end{scriptsize}
\end{tikzpicture}
\caption{Lax curves passing through the point $(\rho_0,v_0)$ in the $(\rho,\rho\, v)$ plane.}\label{fig_Lax_curves_b}
\end{subfigure}
\caption{Representation of the Lax curves.}\label{fig_Lax_curves}
\end{figure}
\begin{remark}
As we have said, the conserved variables are the density $\rho$ and the generalized momentum $z$. Since $z$ has not a clear physical interpretation, whenever it is possible, we choose to work in the more significantly variables $(\rho,v)$.
\end{remark}
\section{The Riemann problem}
Let $(\rho^l,v^l)$ and $(\rho^r,v^r)$ be two points in $\mathbb{R}^+\times \mathbb{R}^+$. The Riemann problem for the ARZ model is
\begin{equation}\label{Riemann_problem_ARZ}
\begin{cases}
\partial_t\rho+\partial_x(\rho v)=0,\\
\partial_t[\rho\, (v+p(\rho))]+\partial_x[\rho \, v \, (v+p(\rho))]=0,\\
(\rho,v)(0,x)=\begin{cases}
(\rho^l,v^l) & \text{if } x\leq 0,\\
(\rho^r,v^r) & \text{if } x>0.
\end{cases}
\end{cases}
\end{equation}

\begin{prop}\label{eigenvalue_as_slope}
Fix a point $(\rho_0,v_0)\in\mathbb{R}^+\times \mathbb{R}^+$ and let $(\rho_1,v_1)$ be a point of the Lax curve of the first family passing through $(\rho_0,v_0)$, i.e.
$$v_1+p(\rho_1)=v_0+p(\rho_0).$$
The eigenvalue $\lambda_1(\rho_1,v_1)$ is the slope of the function $\rho \to \rho\, L_1(\rho,\rho_0,v_0)$ in the point $(\rho_1,v_1)$, i.e.
\begin{equation}
\lambda_1(\rho_1,v_1)=\dfrac{d}{d\rho}(\rho\, L_1(\rho,\rho_0,v_0))\Big|_{\rho=\rho_1}.
\end{equation}
Moreover the function $\rho \to \lambda_1(\rho,L_1(\rho,\rho^l,v^l))$ is strictly decreasing.
\end{prop}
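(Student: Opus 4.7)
The plan is to prove both statements by direct computation using the explicit formulas for $\lambda_1$ and $L_1$ established in Proposition \ref{ARZ_properties_rho_v}, namely $\lambda_1(\rho,v)=v-\rho\,p'(\rho)$ and $L_1(\rho,\rho_0,v_0)=v_0+p(\rho_0)-p(\rho)$, together with the hypotheses (\ref{ipotesi_pressione}) on the pressure.

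For the first claim, I would write $\rho\,L_1(\rho,\rho_0,v_0)=\rho\,v_0+\rho\,p(\rho_0)-\rho\,p(\rho)$ and differentiate with respect to $\rho$ using the product rule. This yields
$$\frac{d}{d\rho}\bigl(\rho\,L_1(\rho,\rho_0,v_0)\bigr)=v_0+p(\rho_0)-p(\rho)-\rho\,p'(\rho)=L_1(\rho,\rho_0,v_0)-\rho\,p'(\rho).$$
Evaluating at $\rho=\rho_1$ and using the assumption $v_1=L_1(\rho_1,\rho_0,v_0)$ (which is exactly the relation $v_1+p(\rho_1)=v_0+p(\rho_0)$) I recover $v_1-\rho_1\,p'(\rho_1)$, which equals $\lambda_1(\rho_1,v_1)$ by the formula (\ref{eigenvalues_rho_v}).

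For the second claim, I would substitute $v=L_1(\rho,\rho^l,v^l)=v^l+p(\rho^l)-p(\rho)$ into the expression for $\lambda_1$ to get
$$\lambda_1(\rho,L_1(\rho,\rho^l,v^l))=v^l+p(\rho^l)-p(\rho)-\rho\,p'(\rho).$$
Differentiating with respect to $\rho$ produces
$$\frac{d}{d\rho}\lambda_1(\rho,L_1(\rho,\rho^l,v^l))=-p'(\rho)-p'(\rho)-\rho\,p''(\rho)=-\bigl(2\,p'(\rho)+\rho\,p''(\rho)\bigr)=-\frac{d^2}{d\rho^2}\bigl(\rho\,p(\rho)\bigr).$$
The third hypothesis in (\ref{ipotesi_pressione}) states that $\rho\mapsto\rho\,p(\rho)$ is strictly convex, hence its second derivative is strictly positive, so the expression above is strictly negative. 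This gives strict monotonicity.

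There is no real obstacle here: both statements reduce to one-line differentiations once the correct formulas from Proposition \ref{ARZ_properties_rho_v} are plugged in. The only point to be careful about is keeping track of the chain rule when $v$ depends on $\rho$ through $L_1$, and to explicitly invoke the strict convexity of $\rho\mapsto\rho\,p(\rho)$ (which already appeared in the proof that the first characteristic field is genuinely non-linear) to conclude strict monotonicity in the second part.
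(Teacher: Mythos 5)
Your proof is correct and follows essentially the paper's own route. The first identity is the identical one-line differentiation, and for the second claim the paper argues that $\rho\mapsto\rho\,L_1(\rho,\rho^l,v^l)$ is strictly concave (linear minus strictly convex) and hence has a strictly decreasing derivative, which is the same content as your explicit computation of $-\frac{d^2}{d\rho^2}(\rho\,p(\rho))<0$, an inequality the paper itself already invokes when proving genuine nonlinearity in Proposition \ref{ARZ_properties_rho_z}.
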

\begin{proof}
We find:
\begin{equation*}
\begin{split}
\dfrac{d}{d\rho}(\rho \, L_1(\rho,\rho_0,v_0))\Big|_{\rho=\rho_1} & = v_0+p(\rho_0)-p(\rho_1)-\rho_1\,p'(\rho_1)=L_1(\rho_1,\rho_0,v_0)-\rho_1\,p'(\rho_1)\\
& =v_1-\rho_1\,p'(\rho_1)=\lambda_1(\rho_1,v_1).
\end{split}
\end{equation*}
By the hypotheses (\ref{ipotesi_pressione}), the function $\rho \to \rho\,p(\rho)$ is strictly convex, hence the function
$$\rho \to \rho\, L_1(\rho,\rho^l,v^l)=\rho\,(v^l+p(\rho)^l-p(\rho))$$
is strictly concave. Therefore its derivative is strictly decreasing.
\end{proof}
The next proposition gives a characterization of (Lax admissible) rarefaction waves, shock waves and contact discontinuities for the ARZ system.
\begin{prop}\label{prop_waves_conditions}
Fix $(\rho^l,v^l)$ and $(\rho^r,v^r)$ in $\mathbb{R}^+\times \mathbb{R}^+$ such that $\rho^l\neq 0$ and $\rho^r\neq 0$. Let us consider the Riemann problem (\ref{Riemann_problem_ARZ}). The following statements hold.
\begin{itemize}
\item[(i)] The points $(\rho^l,v^l)$ and $(\rho^r,v^r)$ are joined by a rarefaction wave if and only if
\begin{equation}\label{rarefaction_wave_conditions}
v^r+p(\rho^r)=v^l+p(\rho^l) \; \text{ and } \; \rho^l\geq \rho^r.
\end{equation}
\item[(ii)] The points $(\rho^l,v^l)$ and $(\rho^r,v^r)$ are joined by a shock wave if and only if
\begin{equation}\label{shock_wave_conditions}
v^r+p(\rho^r)=v^l+p(\rho^l) \; \text{ and } \; \rho^r>\rho^l.
\end{equation}
Moreover the shock speed is
\begin{equation}\label{shock_speed}
\lambda = \dfrac{\rho^r\,v^r-\rho^l\,v^l}{\rho^r-\rho^l}.
\end{equation}
\item[(iii)] The points $(\rho^l,v^l)$ and $(\rho^r,v^r)$ are joined by a contact discontinuity if and only if
\begin{equation}\label{contact_discontinuity_conditions}
v^r=v^l.
\end{equation}
\end{itemize}
\end{prop}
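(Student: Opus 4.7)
The plan is to identify each wave type with the corresponding Lax curve from Proposition \ref{ARZ_properties_rho_v} and then read off the admissibility condition. Throughout, the hypotheses $\rho^l,\rho^r>0$ keep us in the strictly hyperbolic regime so that all the machinery of the previous sections applies.

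For part (i), a rarefaction wave is necessarily of the first family since the second is linearly degenerate. By Proposition \ref{ARZ_properties_rho_v}, the first rarefaction curve through $(\rho^l,v^l)$ is $L_1(\rho,\rho^l,v^l)=v^l+p(\rho^l)-p(\rho)$, which yields the algebraic identity $v^r+p(\rho^r)=v^l+p(\rho^l)$. To get the sign constraint I would invoke Proposition \ref{eigenvalue_as_slope}: the map $\rho\mapsto \lambda_1(\rho,L_1(\rho,\rho^l,v^l))$ is strictly decreasing. Since Proposition \ref{rarefaction_wave_properties} requires $\lambda_1$ to increase strictly from the left state to the right state across a rarefaction wave, this forces $\rho^r\leq \rho^l$.

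For part (ii) the first shock and rarefaction curves coincide (Proposition \ref{ARZ_properties_rho_v}), so the algebraic condition $v^r+p(\rho^r)=v^l+p(\rho^l)$ is the same. Applying the Lax entropy condition $\lambda_1(\rho^l,v^l)\geq \lambda \geq \lambda_1(\rho^r,v^r)$ together with the monotonicity from Proposition \ref{eigenvalue_as_slope} gives $\rho^r\geq \rho^l$, with strict inequality to exclude the trivial case $u^l=u^r$. The shock speed formula then follows by writing the Rankine–Hugoniot condition for the first (density) component of the flux,
$$\lambda(\rho^r-\rho^l)=\rho^r v^r-\rho^l v^l,$$
and dividing by $\rho^r-\rho^l\neq 0$.

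Part (iii) is immediate from linear degeneracy of the second characteristic field: contact discontinuities travel along the second Lax curve $L_2(\rho,\rho^l,v^l)=v^l$, so $(\rho^l,v^l)$ and $(\rho^r,v^r)$ are joined by a contact discontinuity iff $v^r=v^l$. The main obstacle I anticipate is only the bookkeeping of signs in the parametrization, making sure that traversal from $(\rho^l,v^l)$ to $(\rho^r,v^r)$ along the first Lax curve corresponds to the correct inequality on $\rho$; once the monotonicity of Proposition \ref{eigenvalue_as_slope} is in hand, both (i) and (ii) reduce to elementary comparisons and (iii) is a one-line consequence of the form of $L_2$.
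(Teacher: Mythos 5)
Your proof matches the paper's argument essentially step by step: identifying rarefactions and shocks with the first Lax curve $L_1$ (using that shock and rarefaction curves coincide), deriving the monotonicity constraint on $\rho$ from Proposition~\ref{eigenvalue_as_slope} via Proposition~\ref{rarefaction_wave_properties} and the Lax condition respectively, reading off the shock speed from Rankine--Hugoniot, and handling contact discontinuities through linear degeneracy and $L_2$. The only cosmetic difference is that the paper writes out both Rankine--Hugoniot equations and notes their linear dependence before solving, whereas you go straight to the first component; the substance is the same.
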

\begin{proof}
Since the first characteristic field is genuinely non-linear and the first shock and rarefaction curves coincide, the solution to the Riemann problem is a shock or a rarefaction wave if and only if
$$v^r=L_1(\rho^r,\rho^l,v^l) \Longleftrightarrow v^r+p(\rho^r)=v^l+p(\rho^l).$$
On the contrary, since the second characteristic field is linearly degenerate, the solution to (\ref{Riemann_problem_ARZ}) is a contact discontinuity if and only if
$$v^r=L_2(\rho^r,\rho^l,v^l) \Longleftrightarrow v^r=v^l.$$
Let us consider the case $v^r+p(\rho^r)=v^l+p(\rho^l)$.\\
By Proposition \ref{rarefaction_wave_properties}, for a rarefaction wave we have
$$\lambda_1(\rho^l,v^l)\leq \lambda_1(\rho^r,v^r).$$
By Proposition \ref{eigenvalue_as_slope}, the condition
\begin{equation*}
\lambda_1(\rho^l,v^l)=\lambda_1(\rho^l,L_1(\rho^l,\rho^l,v^l))\leq \lambda_1(\rho^r,L_1(\rho^r,\rho^l,v^l))= \lambda_1(\rho^r,v^r)
\end{equation*}
holds if and only if
\begin{equation*}
\rho^l \geq \rho^r.
\end{equation*}
Let us now consider a shock wave with propagation speed $\lambda$, i.e.
\begin{equation}
(\rho,v)(t,x)=\begin{cases}
(\rho^l,v^l) & \text{if } x\leq \lambda\,t,\\
(\rho^r,v^r) & \text{if } x> \lambda\, t.
\end{cases}
\end{equation}
The speed $\lambda$ must satisfy the Rankine-Hugoniot condition, i.e.
\begin{equation}\label{Rankine_Hugoniot_for_ARZ}
\begin{cases}
\rho^r\,v^r-\rho^l\,v^l=\lambda\,(\rho^r-\rho^l),\\
\rho^r\,v^r \,(v^r+p(\rho^r))-\rho^l\,v^l\,(v^l+p(\rho^l))=\lambda\,[\rho^r\,(v^r+p(\rho^r))-\rho^l\,(v^l+p(\rho^l))].
\end{cases}
\end{equation}
Since $v^r+p(\rho^r)=v^l+p(\rho^l)$, the equations are linearly dependent and solving the first equation w.r.t. $\lambda$, we find
$$\lambda = \dfrac{\rho^r\,v^r-\rho^l\,v^l}{\rho^r-\rho^l}.$$
By the Lax entropy condition (\ref{Lax_entropy_condition}), the shock waves of the first family are Lax-admissible if and only if
$$\lambda_1(\rho^r,v^r)\leq \lambda \leq \lambda_1(\rho^l,v^l),$$
which implies
$$\lambda_1(\rho^r,v^r) \leq \lambda_1(\rho^l,v^l) \Longleftrightarrow \rho^r \geq\rho^l,$$
because the function $\rho \to \lambda_1(\rho,L_1(\rho,\rho^l,v^l))$ is strictly decreasing by Proposition \ref{eigenvalue_as_slope}.
\end{proof}
\begin{remark}\label{speed_of_the_waves}
By Definition \ref{definition_rarefaction_wave}, the propagation speed of a rarefaction wave varies between $\lambda_1(\rho^l,v^l)$ and $\lambda_1(\rho^r,v^r)$. The speed of a contact discontinuity is $\lambda_2(\rho^l,v^l)=\lambda_2(\rho^r,v^r)=v^r$. Finally for a shock wave the speed is given by the Rankine-Hugoniot conditions and by Proposition \ref{shock_wave_conditions}, we have
$$\lambda = \dfrac{\rho^r\,v^r-\rho^l\,v^l}{\rho^r-\rho^l}.$$
Geometrically:
\begin{itemize}
\item by Proposition \ref{eigenvalue_as_slope} the propagation speed of a rarefaction wave in a point $(\rho^\sigma,v^\sigma)$ of the rarefaction in the $(\rho,\rho\,v)$ plane is the slope of the tangent line to the function $\rho \to \rho\, L_1(\rho,\rho^l,v^l)$ in $(\rho^\sigma,v^\sigma)$;
\item the speed of a shock is the slope of the line passing through $(\rho^l,v^l)$ and $(\rho^r,v^r)$ in the $(\rho,\rho\,v)$ plane;
\item the speed of a contact discontinuity is the slope of the line passing through the origin and the point $(\rho^r,v^r)$ in the $(\rho,\rho\,v)$ plane.
\end{itemize}
\end{remark}
\begin{mydef}\label{def_riemann_solver}
A Riemann solver for the system (\ref{ARZ_system}) is a map
$$\mathcal{RS}:(\mathbb{R}^+\times\mathbb{R}^+)^2 \to L^1(\mathbb{R},\mathbb{R}^+\times\mathbb{R}^+)$$
that for every couple $((\rho^l,v^l),(\rho^r,v^r))\in(\mathbb{R}^+\times\mathbb{R}^+)^2$ gives a solution to (\ref{Riemann_problem_ARZ}): 
\begin{equation*}
\mathcal{RS}((\rho^l,v^l),(\rho^r,v^r))(\cdot): \; \mathbb{R}\to \mathbb{R}^+\times\mathbb{R}^+,
\end{equation*}
\begin{equation*}
\lambda\to (\rho,v)(\lambda).
\end{equation*} 
\end{mydef}
\begin{remark}
By the self-similarity of the solution to a Riemann problem, setting its value for some $\lambda\in\mathbb{R}$, we give the solution for every point $(t,x)$ such that $x/t=\lambda$.
\end{remark}
The next proposition gives the standard solution for a Riemann problem with a general initial datum.
\begin{theorem}\label{prop_standard_solution}
Fix $(\rho^l,v^l)$ and $(\rho^r,v^r)$ in $\mathbb{R}^+\times \mathbb{R}^+$ such that $\rho^l\neq 0$ and $\rho^r\neq 0$. Let us consider the Riemann problem (\ref{Riemann_problem_ARZ}). Let us define the set
$$L=\lbrace \rho \in \mathbb{R}^+:v^r=L_1(\rho,\rho^l,v^l)\rbrace.$$
Let $(\rho^m,v^m)\in \mathbb{R}^+\times \mathbb{R}^+$  be the point such that
\begin{equation}\label{middle_state}
\rho^m = \max L \; \text{ and } \; v^m=v^r.
\end{equation}
The standard solution to the Riemann problem (\ref{Riemann_problem_ARZ}) is given by a rarefaction or shock wave of the first family joining $(\rho^l,v^l)$ to $(\rho^m,v^m)$, followed by a contact discontinuity of the second family which connects $(\rho^m,v^m)$ to $(\rho^r,v^r)$; see Figures \ref{fig_example_standard_solution_1} and \ref{fig_example_standard_solution_2}.\\
Therefore the standard Riemann solver for the Riemann problem (\ref{Riemann_problem_ARZ}) is:
\begin{itemize}
\item[(i)] if $\rho^l\geq \rho^m$, then
\begin{equation}\label{ARZ_classic_solution_1}
\mathcal{RS}((\rho^l,v^l),(\rho^r,v^r))\left(\dfrac{x}{t}\right)= \begin{cases}
(\rho^l,v^l) & \text{if } \frac{x}{t}< \lambda_1(\rho^l,v^l),\\
(\rho^\sigma,v^\sigma) & \text{if } \frac{x}{t} = \lambda_1(\rho^\sigma,v^\sigma) \; \text{ for } \; \sigma\in [0,1],\\
(\rho^m,v^m) & \text{if } \lambda_1(\rho^m,v^m) < \frac{x}{t} \leq v^r,\\
(\rho^r,v^r) & \text{if } \frac{x}{t}>v^r,
\end{cases}
\end{equation}
where $(\rho^\sigma,v^\sigma)$ is a point of the rarefaction joining $(\rho^l,v^l)$ to $(\rho^m,v^m)$, i.e.
$$\rho^\sigma \in [\rho^m,\rho^l] \; \text{ and } \; v^\sigma=L_1(\rho^\sigma,\rho^l,v^l);$$
\item[(ii)] if $\rho^l <\rho^m$, then
\begin{equation}\label{ARZ_classic_solution_2}
\mathcal{RS}((\rho^l,v^l),(\rho^r,v^r))\left(\dfrac{x}{t}\right)= \begin{cases}
(\rho^l,v^l) & \text{if } \frac{x}{t}\leq \lambda,\\
(\rho^m,v^m) & \text{if } \lambda <\frac{x}{t}\leq v^r,\\
(\rho^r,v^r) & \text{if } \frac{x}{t} >v^r,
\end{cases}
\end{equation}
where $\lambda$ is the propagation speed of the shock joining $(\rho^l,v^l)$ to $(\rho^m,v^m)$, i.e.
$$\lambda = \dfrac{\rho^m\,v^m-\rho^l\,v^l}{\rho^m-\rho^l}.$$
\end{itemize}
\end{theorem}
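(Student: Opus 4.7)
The plan is to construct the standard solution as a concatenation of a first-family wave joining $(\rho^l,v^l)$ to the middle state $(\rho^m,v^m)$ with a second-family contact discontinuity connecting $(\rho^m,v^m)$ to $(\rho^r,v^r)$, and then to verify that the resulting piecewise self-similar assignment is admissible and well-posed, in particular that the two structures do not overlap in the $(t,x)$ plane.

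First I would observe that the middle state in (\ref{middle_state}) is well defined. The condition $v^r=L_1(\rho,\rho^l,v^l)$ reads $p(\rho)=v^l+p(\rho^l)-v^r$; since $p$ is strictly increasing and continuous by (\ref{ipotesi_pressione}), this equation has at most one positive solution, so whenever $L$ is nonempty it is a singleton, $\rho^m=\max L$ is its unique element, and $v^m=v^r$ by construction.

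Next I would identify each piece using Proposition \ref{prop_waves_conditions}. The relation $v^m=L_1(\rho^m,\rho^l,v^l)$ is equivalent to $v^m+p(\rho^m)=v^l+p(\rho^l)$, so parts (i) and (ii) of that proposition give that $(\rho^l,v^l)$ and $(\rho^m,v^m)$ are joined by a rarefaction wave when $\rho^l\geq\rho^m$ and by a shock wave otherwise, with speeds $\lambda_1(\rho^\sigma,v^\sigma)$ ranging over $[\lambda_1(\rho^l,v^l),\lambda_1(\rho^m,v^m)]$ in the first case and $\lambda=(\rho^m v^m-\rho^l v^l)/(\rho^m-\rho^l)$ in the second. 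Since $v^m=v^r$, part (iii) gives a contact discontinuity from $(\rho^m,v^m)$ to $(\rho^r,v^r)$ propagating with speed $v^r$, as recorded in Remark \ref{speed_of_the_waves}.

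The main obstacle is checking that the two waves do not overlap, so that the self-similar formulas (\ref{ARZ_classic_solution_1}) and (\ref{ARZ_classic_solution_2}) yield a consistent solution; equivalently, the rightmost speed of the first-family wave must be strictly less than $v^r$. In the rarefaction case the fastest characteristic has speed $\lambda_1(\rho^m,v^m)=v^m-\rho^m p'(\rho^m)<v^m=v^r$, since $\rho^m p'(\rho^m)>0$ by (\ref{ipotesi_pressione}). In the shock case, substituting $v^l=v^m+p(\rho^m)-p(\rho^l)$ into the Rankine--Hugoniot speed gives $\lambda=v^m-\rho^l\,(p(\rho^m)-p(\rho^l))/(\rho^m-\rho^l)<v^m=v^r$, because $\rho^m>\rho^l$ and $p$ is strictly increasing. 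Hence the first-family wave is followed by a constant region of state $(\rho^m,v^m)$ and then by the contact discontinuity, producing exactly the formulas in the statement; Lax admissibility of each individual wave is already part of Proposition \ref{prop_waves_conditions}, so no further verification is required.
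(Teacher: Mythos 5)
Your proof is correct, but it takes a somewhat different and more self-contained route than the paper's. The paper's own proof is essentially one sentence: it invokes the general existence-and-uniqueness result of Theorem~\ref{theorem_general_solution_to_RP} to conclude outright that the Lax-admissible solution must have the form (\ref{ARZ_classic_solution_1}) or (\ref{ARZ_classic_solution_2}), and then uses Proposition~\ref{prop_waves_conditions} only to name the two waves. You instead build the solution explicitly and verify by hand that the fastest speed of the first-family wave is strictly below the contact-discontinuity speed $v^r$ (in the rarefaction case via $\lambda_1(\rho^m,v^m)=v^m-\rho^m p'(\rho^m)<v^m$, in the shock case by substituting $v^l=v^m+p(\rho^m)-p(\rho^l)$ into the Rankine--Hugoniot speed). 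What your approach buys is that it does not require the initial data to be close, which Theorem~\ref{theorem_general_solution_to_RP} nominally assumes: the ARZ system has wave curves that are globally well ordered, and your direct speed comparison makes that visible. What the paper's approach buys is brevity and, implicitly, uniqueness of the Lax-admissible solution (your construction establishes existence but would need a further word to exclude other admissible solutions, which can also be obtained from Theorem~\ref{theorem_general_solution_to_RP}). Your remark that $L$ is in fact a singleton because $p$ is strictly increasing is a nice clarification that the paper omits.
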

\begin{proof}
Theorem \ref{theorem_general_solution_to_RP} ensures that the solution to the Riemann problem (\ref{Riemann_problem_ARZ}) has the form (\ref{ARZ_classic_solution_1}) or (\ref{ARZ_classic_solution_2}).\\
By definition $v^m+p(\rho^m)=v^l+p(\rho^l)$. Then by Proposition \ref{prop_waves_conditions}, a rarefaction or a shock wave joins $(\rho^l,v^l)$ to $(\rho^m,v^m)$. Similarly, since $v^m=v^r$, a contact discontinuity connects $(\rho^m,v^m)$ to $(\rho^r,v^r)$.
\end{proof}
\begin{figure}[hbtp]
\centering
\begin{subfigure}[hbtp]{0.48\linewidth}
\centering
\definecolor{xdxdff}{rgb}{0.49019607843137253,0.49019607843137253,1.}
\begin{tikzpicture}[scale = 0.9, line cap=round,line join=round,>=triangle 45,x=1.0cm,y=0.7cm]
\draw[->,color=black] (-0.5,0.) -- (5.5,0.);
\draw[->,color=black] (0.,-0.7) -- (0.,7.);
\clip(-1,-0.7) rectangle (6.5,7.);
\draw[smooth,samples=50,domain=-0.5:5.5] plot(\x,{5.0*(\x)-(\x)^(2.0)});
\draw [domain=0.0:5.5] plot(\x,{(-0.--4.48*\x)/3.82});
\draw (-0.8,6.5) node[anchor=north west] {$\rho\,v$};
\draw (5,0.) node[anchor=north west] {$\rho$};
\draw (4.7,2.31) node[anchor=north west] {$(\rho^l,v^l)$};
\draw (2.01,2.64) node[anchor=north west] {$(\rho^r,v^r)$};
\draw (3.9,4.9) node[anchor=north west] {$(\rho^m,v^m)$};
\draw (1,7.14) node[anchor=north west] {$L_1(\rho,\rho^l,v^l)$};
\draw (4.1,6.65) node[anchor=north west] {$L_2(\rho,\rho^r,v^r)$};
\begin{scriptsize}
\draw [fill=xdxdff] (3.82,4.49) circle (1.5pt);
\draw [fill=xdxdff] (4.63,1.7) circle (1.5pt);
\draw [fill=xdxdff] (1.93,2.27) circle (1.5pt);
\end{scriptsize}
\end{tikzpicture}
\end{subfigure}
\quad
\begin{subfigure}[hbtp]{0.48\linewidth}
\centering
\begin{tikzpicture}[scale = 0.4,line cap=round,line join=round,>=triangle 45,x=1.0cm,y=1.0cm]
\draw[->,color=black] (-7.5,0.) -- (8.,0.);
\draw[->,color=black] (0.,-0.5) -- (0.,11.5);
\clip(-7.5,-1) rectangle (8.,11.5);
\draw [domain=-7.5:0.0] plot(\x,{(-0.--7.3*\x)/-2.58});
\draw [domain=0.0:8.0] plot(\x,{(-0.--7.28*\x)/0.74});
\draw [domain=-7.5:0.0] plot(\x,{(-0.--7.24*\x)/-1.86});
\draw [domain=-7.5:0.0] plot(\x,{(-0.--7.2*\x)/-1.36});
\draw [domain=-7.5:0.0] plot(\x,{(-0.--7.2*\x)/-0.92});
\draw [domain=-7.5:0.0] plot(\x,{(-0.--7.16*\x)/-0.42});
\draw [domain=0.0:8.0] plot(\x,{(-0.--7.22*\x)/0.32});
\draw [domain=0.0:8.0] plot(\x,{(-0.--5.5*\x)/3.52});
\draw (-5.236,5) node[anchor=north west] {$(\rho^l,v^l)$};
\draw (0.4,8.154) node[anchor=north west] {$(\rho^m,v^m)$};
\draw (3,3) node[anchor=north west] {$(\rho^r,v^r)$};
\draw (6.558,0.) node[anchor=north west] {$x$};
\draw (-0.9,11.) node[anchor=north west] {$t$};
\draw (-7,9) node[anchor=north west] {$\lambda_1(\rho^l,v^l)$};
\draw (0.7,10.794) node[anchor=north west] {$\lambda_1(\rho^m,v^m)$};
\draw (3,6) node[anchor=north west] {$\lambda_2(\rho^r,v^r)$};
\end{tikzpicture}
\end{subfigure}
\caption{Example of classic solution to the Riemann problem (\ref{Riemann_problem_ARZ}). In the represented case $(\rho^l,v^l)$ and $(\rho^m,v^m)$ are joined by a rarefaction wave with propagations speed between $\lambda_1(\rho^l,v^l)$ (negative) and $\lambda_1(\rho^m,v^m)$ (positive). A contact discontinuity with speed $\lambda_2(\rho^r,v^r)=\lambda_2(\rho^m,v^m)=v^r$ connects $(\rho^m,v^m)$ to $(\rho^r,v^r)$.}\label{fig_example_standard_solution_1}
\end{figure}
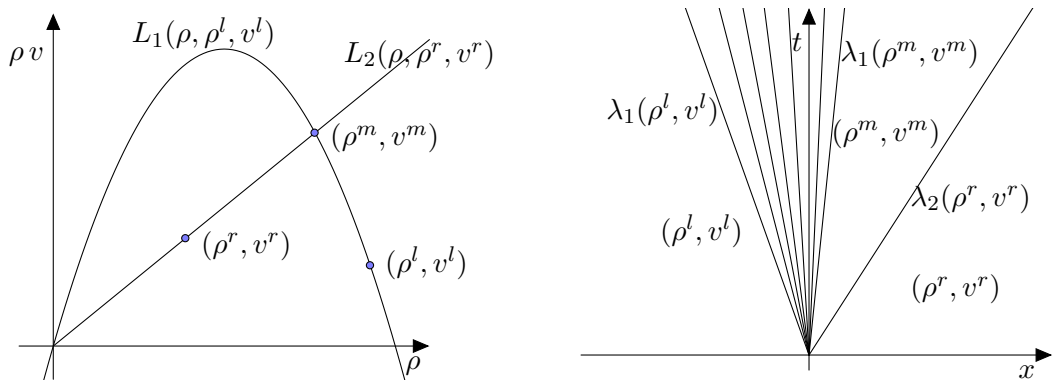
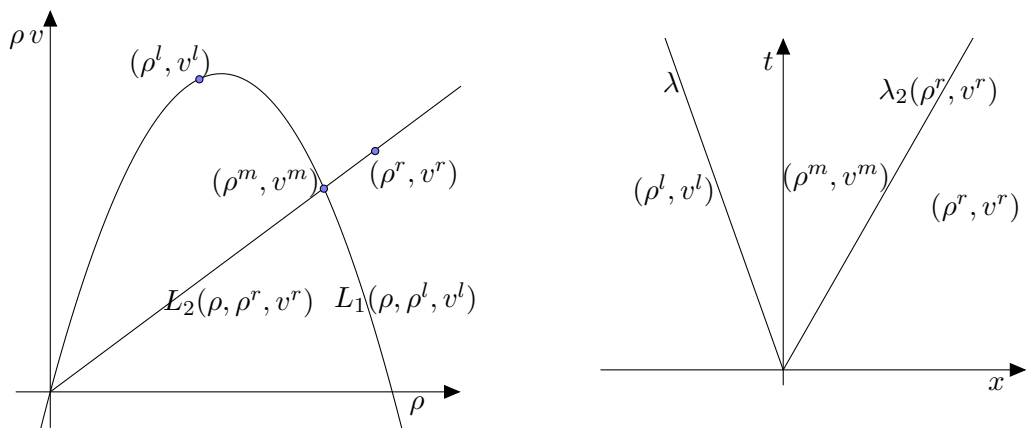
\begin{figure}[hbtp]
\centering
\begin{subfigure}[hbtp]{0.48\linewidth}
\centering
\definecolor{xdxdff}{rgb}{0.49019607843137253,0.49019607843137253,1.}
\begin{tikzpicture}[scale=0.9,line cap=round,line join=round,>=triangle 45,x=1.0cm,y=0.75cm]
\draw[->,color=black] (-0.5,0.) -- (6.,0.);
\draw[->,color=black] (0.,-0.7) -- (0.,7.5);
\clip(-1,-0.7) rectangle (6.5,7.5);
\draw[smooth,samples=50,domain=-0.5:6.0] plot(\x,{5.0*(\x)-(\x)^(2.0)});
\draw [domain=0.0:6.0] plot(\x,{(-0.--4*\x)/4.});
\draw (-0.75,7.3) node[anchor=north west] {$\rho\,v$};
\draw (5.1,0.1) node[anchor=north west] {$\rho$};
\draw (1.,7.) node[anchor=north west] {$(\rho^l,v^l)$};
\draw (4.5,4.75) node[anchor=north west] {$(\rho^r,v^r)$};
\draw (2.2,4.6) node[anchor=north west] {$(\rho^m,v^m)$};
\draw (4.0,2.36) node[anchor=north west] {$L_1(\rho,\rho^l,v^l)$};
\draw (1.5,2.23) node[anchor=north west] {$L_2(\rho,\rho^r,v^r)$};
\begin{scriptsize}
\draw [fill=xdxdff] (4.,3.991382190745906) circle (1.5pt);
\draw [fill=xdxdff] (2.18,6.14) circle (1.5pt);
\draw [fill=xdxdff] (4.75,4.73) circle (1.5pt);
\end{scriptsize}
\end{tikzpicture}
\end{subfigure}
\quad
\begin{subfigure}[hbtp]{0.48\linewidth}
\centering
\begin{tikzpicture}[scale = 0.4, line cap=round,line join=round,>=triangle 45,x=1.0cm,y=1.0cm]
\draw[->,color=black] (-6.,0.) -- (8.,0.);
\draw[->,color=black] (0.,-0.5) -- (0.,11.);
\clip(-6.,-1) rectangle (8.,11.);
\draw [domain=-6.0:0.0] plot(\x,{(-0.--7.3*\x)/-2.58});
\draw [domain=0.0:8.0] plot(\x,{(-0.--6.2*\x)/3.52});
\draw (-5.3,6.83) node[anchor=north west] {$(\rho^l,v^l)$};
\draw (-0.4,7.2) node[anchor=north west] {$(\rho^m,v^m)$};
\draw (4.5,6.21) node[anchor=north west] {$(\rho^r,v^r)$};
\draw (6.4,0.1) node[anchor=north west] {$x$};
\draw (-1,10.86) node[anchor=north west] {$t$};
\draw (-4.3,10.22) node[anchor=north west] {$\lambda$};
\draw (2.8,10.05) node[anchor=north west] {$\lambda_2(\rho^r,v^r)$};
\end{tikzpicture}
\end{subfigure}
\caption{Example of classic solution to the Riemann problem (\ref{Riemann_problem_ARZ}). In the represented case $(\rho^l,v^l)$ and $(\rho^m,v^m)$ are joined by a shock wave with negative propagations speed $\lambda=(\rho^m\,v^m-\rho^l\,v^l)/(\rho^m-\rho^l)$. A contact discontinuity with speed $\lambda_2(\rho^r,v^r)=\lambda_2(\rho^m,v^m)=v^r$ connects $(\rho^m,v^m)$ to $(\rho^r,v^r)$.}\label{fig_example_standard_solution_2}
\end{figure}
\section{The invariant domain}
\begin{mydef}
A set $\mathcal{D}\subseteq \mathbb{R}^+\times \mathbb{R}^+$ is an invariant domain (see \cite{hoff}) for the Riemann solver $\mathcal{RS}$ if for every $(\rho^l,v^l)$ and $(\rho^r,v^r)$ in $\mathcal{D}$, the solution $\mathcal{RS}((\rho^l,v^l),(\rho^r,v^r))(\lambda)$ to the Riemann problem (\ref{Riemann_problem_ARZ}) is in $\mathcal{D}$ for every $\lambda \in\mathbb{R}$. 
\end{mydef}
\begin{theorem}\label{standard_invariant_domain}
Fix $v_1$, $v_2$, $w_1$ and $w_2$ in $\mathbb{R}$ such that $0<v_1<v_2$, $0<w_1<w_2$ and $v_2<w_2$. The set
$$\mathcal{D}_{v_1,v_2,w_1,w_2}:=\lbrace (\rho,v) \in \mathbb{R}^+\times \mathbb{R}^+ : v_1\leq v \leq v_2, \; w_1 \leq v+p(\rho)\leq w_2 \rbrace$$
is invariant for the standard Riemann solver $\mathcal{RS}$ of the ARZ system; see Figure \ref{invariant_domain_standard}.
\end{theorem}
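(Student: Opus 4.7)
The plan is to exploit the fact that the Riemann invariants for the ARZ system are precisely $v$ and $w := v+p(\rho)$ (see Proposition~\ref{ARZ_properties_rho_v}), so the invariant domain can be rewritten as
$$\mathcal{D}_{v_1,v_2,w_1,w_2} = \{(\rho,v)\in\mathbb{R}^+\times\mathbb{R}^+ : v_1\le s(\rho,v)\le v_2,\; w_1\le w(\rho,v)\le w_2\},$$
which is exactly the rectangle $[v_1,v_2]\times[w_1,w_2]$ in the $(s,w)$ coordinates. Membership in $\mathcal{D}$ is therefore a conjunction of two linear inequalities, one on each Riemann invariant, and the standard solver acts very simply on them.

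Next I would use Theorem~\ref{prop_standard_solution} to decompose the solution into a 1-wave from $(\rho^l,v^l)$ to the middle state $(\rho^m,v^m)$ followed by a 2-contact discontinuity from $(\rho^m,v^m)$ to $(\rho^r,v^r)$. By the very construction of the middle state, $v^m = v^r$ and $v^m + p(\rho^m) = v^l + p(\rho^l)$, i.e.\ $s(\rho^m,v^m) = s(\rho^r,v^r)$ and $w(\rho^m,v^m) = w(\rho^l,v^l)$. Since both end states lie in $\mathcal{D}$, these two equalities immediately give $v_1\le v^m\le v_2$ and $w_1\le v^m+p(\rho^m)\le w_2$, so $(\rho^m,v^m)\in \mathcal{D}$.

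It remains to check that the entire connecting waves lie in $\mathcal{D}$, not just their endpoints. For the 2-contact discontinuity this is trivial since the solution is piecewise constant and takes only the two values $(\rho^m,v^m)$ and $(\rho^r,v^r)$. For the 1-wave I would distinguish the two subcases of Theorem~\ref{prop_standard_solution}. If it is a shock, again the solution only takes the two endpoint values, both already known to be in $\mathcal{D}$. If it is a rarefaction, every intermediate state $(\rho^\sigma,v^\sigma)$ satisfies $v^\sigma = L_1(\rho^\sigma,\rho^l,v^l)$ with $\rho^\sigma\in[\rho^m,\rho^l]$; hence along the wave $w$ is constant equal to $v^l+p(\rho^l)\in[w_1,w_2]$, while $v^\sigma = v^l + p(\rho^l) - p(\rho^\sigma)$ is a monotone function of $\rho^\sigma$ (by the monotonicity of $p$ from hypotheses~\eqref{ipotesi_pressione}), so $v^\sigma$ stays between $v^l$ and $v^m$, both of which lie in $[v_1,v_2]$.

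There is no real obstacle; the proof essentially amounts to observing that $\mathcal{D}$ is a coordinate rectangle in the Riemann invariants, that each elementary wave freezes one of the two invariants, and that the rarefaction parametrisation is monotone in the remaining invariant. The only mildly delicate step is verifying the monotonicity of $v$ along the rarefaction, which follows at once from $w = v+p(\rho)$ constant together with $p' > 0$.
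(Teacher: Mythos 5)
Your proposal is correct and follows essentially the same route as the paper's own proof: show the middle state $(\rho^m,v^m)$ lies in $\mathcal{D}_{v_1,v_2,w_1,w_2}$ because $v^m=v^r$ and $v^m+p(\rho^m)=v^l+p(\rho^l)$, then handle shock and contact-discontinuity waves by noting they take only endpoint values, and a rarefaction by using $p'>0$ to place $v^\sigma$ between $v^l$ and $v^m$. The only cosmetic difference is that you make the "coordinate rectangle in the Riemann invariants" picture explicit, which the paper leaves implicit.
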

\begin{proof}
Consider $(\rho^l,v^l)$ and $(\rho^r,v^r)$ in the domain $\mathcal{D}_{v_1,v_2,w_1,w_2}$. Each point $(\bar{\rho},\bar{v})$ connected to $(\rho^l,v^l)$ by a rarefaction or a shock wave satisfies
$$\bar{v}+p(\bar{\rho})=v^l+p(\rho^l).$$
Therefore, since $(\rho^l,v^l)\in \mathcal{D}_{v_1,v_2,w_1,w_2}$, we have
$$w_1\leq \bar{v}+p(\bar{\rho})\leq w_2.$$
Similarly, each point $(\tilde{\rho},\tilde{v})$ joined to $(\rho^r,v^r)$ by a contact discontinuity is such that, $$\tilde{v}=v^r.$$
Hence $v_1\leq \tilde{v}\leq v_2$.\\
In particular, the middle state $(\rho^m,v^m)$ defined in (\ref{middle_state}) satisfies both conditions
$$v_1\leq v^m\leq v_2 \; \text{ and } \; w_1\leq v^m+p(\rho^m)\leq w_2,$$
because $v^m+p(\rho^m)=v^l+p(\rho^l)$ and $v^m=v^r$. Therefore $(\rho^m,v^m) \in \mathcal{D}_{v_1,v_2,w_1,w_2}$.\\
By Theorem \ref{prop_standard_solution}, the points $(\rho^l,v^l)$ and $(\rho^m,v^m)$ are joined by a rarefaction or by a shock wave. In the second case no intermediate states appear between $(\rho^l,v^l)$ and $(\rho^m,v^m)$. Otherwise, let $(\rho^\sigma,v^\sigma)$ be a point of the rarefaction connecting $(\rho^l,v^l)$ to $(\rho^m,v^m)$. By the hypotheses (\ref{ipotesi_pressione}), we have
$$\rho^l>\rho^\sigma>\rho^m \Longrightarrow p(\rho^l)>p(\rho^\sigma)>p(\rho^m)$$
and since $v^l+p(\rho^l)=v^\sigma+p(\rho^\sigma)=v^m+p(\rho^m)$, we find
$$v^l<v^\sigma<v^m.$$
Since $(\rho^l,v^l)$ and $(\rho^m,v^m)$ are in $\mathcal{D}_{v_1,v_2,w_1,w_2}$, we obtain
$$v_1\leq v^\sigma\leq v_2.$$
Therefore each point $(\rho^\sigma,v^\sigma)$ of the rarefaction belongs to $\mathcal{D}_{v_1,v_2,w_1,w_2}$.\\
If the points $(\rho^m,v^m)$ and $(\rho^r,v^r)$ are connected by a contact discontinuity, then there are no intermediate states between them.
\end{proof}
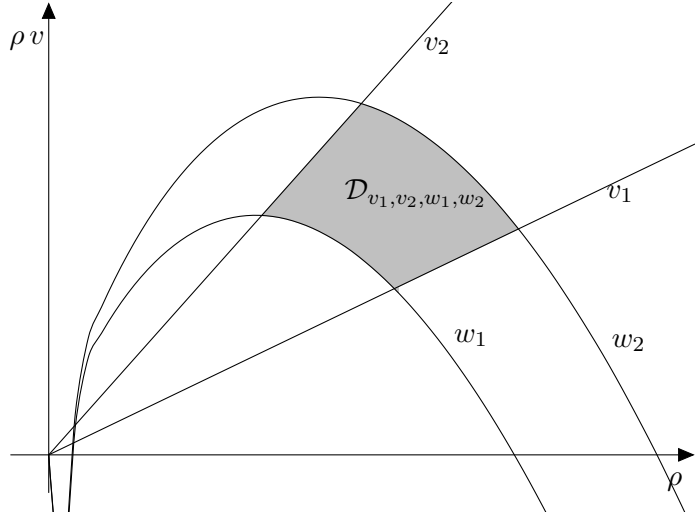
\begin{figure}[hbtp]
\centering
\definecolor{cqcqcq}{rgb}{0.7529411764705882,0.7529411764705882,0.7529411764705882}
\begin{tikzpicture}[scale = 0.5, line cap=round,line join=round,>=triangle 45,x=1.0cm,y=1.0cm]
\draw[->,color=black] (-1.,0.) -- (17.,0.);
\draw[->,color=black] (0.,-1.) -- (0.,12.);
\clip(-2.,-1.5) rectangle (17.,12.);
\draw[line width=0.pt,color=cqcqcq,fill=cqcqcq,fill opacity=1.0] {[smooth,samples=50,domain=5.61:8.22] plot(\x,{0-(-7.7/6.8)*\x-0.0/6.8})} -- (8.22,5.2) {[smooth,samples=50,domain=8.22:5.61] -- plot(\x,{3.5*\x-\x^(1.5)})} -- (5.61,6.3525) -- cycle;
\draw[line width=0.pt,color=cqcqcq,fill=cqcqcq,fill opacity=1.0] {[smooth,samples=50,domain=9.1:12.36] plot(\x,{4.0*\x-\x^(1.5)})} -- (12.36,6) {[smooth,samples=50,domain=12.36:9.1] -- plot(\x,{0-(-5.06/10.44)*\x-0.0/10.44})} -- (9.1,8.94) -- cycle;
\draw[line width=0.pt,color=cqcqcq,fill=cqcqcq,fill opacity=1.0] {[smooth,samples=50,domain=8.22:9.2] plot(\x,{4.0*\x-\x^(1.5)})} -- (9.2,4.3){[smooth,samples=50,domain=9.2:8.22] -- plot(\x,{3.5*\x-\x^(1.5)})} -- (8.22,9.3) -- cycle;
\draw[smooth,samples=50,domain=0.001:17.0] plot(\x,{4.0*(\x)-(\x)^(1.5)});
\draw[smooth,samples=50,domain=0.001:17.0] plot(\x,{3.5*(\x)-(\x)^(1.5)});
\draw [domain=0.0:17.0] plot(\x,{(-0.--7.7*\x)/6.8});
\draw [domain=0.0:17.0] plot(\x,{(-0.--5.06*\x)/10.44});
\draw (-1.3,11.5) node[anchor=north west] {$\rho\,v$};
\draw (16.,-0.2) node[anchor=north west] {$\rho$};
\draw (7.52,7.57) node[anchor=north west] {$\mathcal{D}_{v_1,v_2,w_1,w_2}$};
\draw (9.6,11.35) node[anchor=north west] {$v_2$};
\draw (14.41,7.41) node[anchor=north west] {$v_1$};
\draw (14.55,3.47) node[anchor=north west] {$w_2$};
\draw (10.4,3.6) node[anchor=north west] {$w_1$};
\end{tikzpicture}
\caption{The coloured area is an example of invariant domain for the standard Riemann solver.}\label{invariant_domain_standard}
\end{figure}
Let $(\rho_\text{min},v_\text{min})\in \mathcal{D}_{v_1,v_2,w_1,w_2}$ be the solution to the system
\begin{equation}\label{def_rho_v_min_1}
\begin{cases}
v+p(\rho)=w_1,\\
v=v_2.
\end{cases}
\end{equation}
Similarly, let $(\rho_\text{max},v_\text{max})\in \mathcal{D}_{v_1,v_2,w_1,w_2}$ be the solution to the system
\begin{equation}\label{def_rho_v_max_1}
\begin{cases}
v+p(\rho)=w_2,\\
v=v_1.
\end{cases}
\end{equation}
See Figure \ref{fig_lemma_curve_lipschitz_1}
\begin{prop}\label{prop_min_and_max_density_invariant_domain_RS_q_2}
The points $(\rho_\text{min},v_\text{min})$ and $(\rho_\text{max},v_\text{max})$ defined in (\ref{def_rho_v_min_1}) and (\ref{def_rho_v_max_1}) are the points of the invariant domain $\mathcal{D}_{v_1,v_2,w_1,w_2}$ respectively with minimal and maximal density.
\end{prop}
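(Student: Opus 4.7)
The plan is to exploit the strict monotonicity of $p$ (which follows from the standing hypothesis $p'(\rho)>0$ for $\rho>0$) together with the two linear inequalities that define $\mathcal{D}_{v_1,v_2,w_1,w_2}$. The point $(\rho_{\min},v_{\min})$ sits on the intersection of the \emph{lower} momentum line $v+p(\rho)=w_1$ and the \emph{upper} velocity line $v=v_2$, while $(\rho_{\max},v_{\max})$ sits on the intersection of the upper momentum line $v+p(\rho)=w_2$ and the lower velocity line $v=v_1$. Geometrically (see Figure~\ref{invariant_domain_standard}) these are precisely the left-most and right-most vertices of the curved quadrilateral $\mathcal{D}_{v_1,v_2,w_1,w_2}$, so the claim is believable; the proof is essentially a bookkeeping argument.

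First I would pick an arbitrary $(\rho,v)\in\mathcal{D}_{v_1,v_2,w_1,w_2}$ and combine the two defining inequalities $v+p(\rho)\geq w_1$ and $v\leq v_2$ to obtain
$$p(\rho)\geq w_1-v\geq w_1-v_2 = p(\rho_{\min}),$$
where the last equality uses the definition of $(\rho_{\min},v_{\min})$. Since $p$ is strictly increasing, its inverse $p^{-1}$ exists on the relevant interval and is strictly increasing as well, so applying it yields $\rho\geq\rho_{\min}$. Symmetrically, from $v+p(\rho)\leq w_2$ and $v\geq v_1$ I get
$$p(\rho)\leq w_2-v\leq w_2-v_1=p(\rho_{\max}),$$
and therefore $\rho\leq\rho_{\max}$.

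The only preliminary matter is to check that the systems (\ref{def_rho_v_min_1}) and (\ref{def_rho_v_max_1}) actually have a solution in $\mathcal{D}_{v_1,v_2,w_1,w_2}$. For (\ref{def_rho_v_min_1}) the condition $w_1-v_2\geq 0$ must hold for $\rho_{\min}$ to be well-defined (recall $p(0)=0$ and $p$ is strictly increasing on $[0,+\infty)$); this is implicit in the hypothesis that $\mathcal{D}_{v_1,v_2,w_1,w_2}$ is non-empty and intersects the line $v=v_2$. The analogous remark applies to $(\rho_{\max},v_{\max})$. With these two existence checks in place, the two chains of inequalities above constitute the entire argument. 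I do not expect any real obstacle: the whole proof is a one-variable monotonicity observation, and the main thing to be careful about is simply that the direction of the inequalities is preserved when passing from $p(\rho)$ to $\rho$, which follows from $p'>0$.
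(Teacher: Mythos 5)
Your argument is correct and is essentially identical to the paper's own proof: both combine the two defining inequalities of $\mathcal{D}_{v_1,v_2,w_1,w_2}$ to bound $p(\rho)$ between $p(\rho_{\min})$ and $p(\rho_{\max})$, and then invoke the strict monotonicity of $p$ from hypotheses~(\ref{ipotesi_pressione}) to transfer these bounds to $\rho$. The only extra material you add is the brief well-definedness remark, which the paper takes for granted; otherwise the two proofs coincide.
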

\begin{proof}
By definition $(\rho_\text{min},v_\text{min})$ and $(\rho_\text{max},v_\text{max})$ belong to $\mathcal{D}_{v_1,v_2,w_1,w_2}$. Let $(\rho,v)$ be another point of the domain $\mathcal{D}_{v_1,v_2,w_1,w_2}$. Since $v+p(\rho)\geq w_1$ and $v \leq v_2$, we have
$$p(\rho)\geq w_1-v \geq w_1 - v_2 =p(\rho_\text{min}).$$
By the hypotheses (\ref{ipotesi_pressione}), the function $\rho \rightarrow p(\rho)$ is (strictly) increasing. Therefore we find
$$\rho \geq \rho_{min}.$$
Similarly, since $(\rho,v)\in \mathcal{D}_{v_1,v_2,w_1,w_2}$, we have $v \geq v_1$ and $v+p(\rho)\leq w_2$. Therefore
$$p(\rho) \leq w_2-v_1= p(\rho_{max}).$$
Hence $\rho \leq \rho_{max}$.
\end{proof}
The next lemma states the conditions for the function $\psi(\rho)= \rho \, L_1(\rho,\rho_0,v_0)$ to be Lipschitz continuous inside the domain $\mathcal{D}_{v_1,v_2,w_1,w_2}$, when the pressure is a convex function.
\begin{lemma}\label{lemma_curve_lipschitz_1}
Let $(\rho_0,v_0)$ be a point in $\mathcal{D}_{v_1,v_2,w_1,w_2}$ and let $(\rho_{min},v_{min})$ and $(\rho_{max},v_{max})$ be the points defined in (\ref{def_rho_v_min_1}) and (\ref{def_rho_v_max_1}). If
\begin{equation}\label{ipotesi_lemma_lipschitz_1}
p''(\rho)\geq 0  \; \text{ for every } \rho \geq 0 \; \text{ and } \; \lambda_1(\rho_\text{min},v_\text{min})<0,
\end{equation}
then the function $\rho \to \rho\,L_1(\rho,\rho_0,v_0)$ is bi-Lipschitz in the domain $\mathcal{D}_{v_1,v_2,w_1,w_2}$, i.e
\begin{equation}\label{Lipschitz_inequality_1}
\lambda_1(\rho_\text{max},v_\text{max})\leq \dfrac{d}{d\rho}(\rho L_1(\rho,\rho_0,v_0)) \leq \lambda_1(\rho_\text{min},v_\text{min})
\end{equation}
for every $(\rho,v) = \left(\rho,L_1(\rho,\rho_0,v_0)\right)\in\mathcal{D}_{v_1,v_2,w_1,w_2}$.
\end{lemma}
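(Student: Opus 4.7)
The plan is to reduce the inequality to a monotonicity statement about the first eigenvalue $\lambda_1(\rho,v) = v - \rho\, p'(\rho)$ on the domain $\mathcal{D}_{v_1,v_2,w_1,w_2}$. By Proposition \ref{eigenvalue_as_slope}, for any $(\rho,v) = (\rho, L_1(\rho,\rho_0,v_0))$ we have the identity
$$\frac{d}{d\rho}\bigl(\rho\, L_1(\rho,\rho_0,v_0)\bigr) = \lambda_1(\rho,v).$$
So the required inequality (\ref{Lipschitz_inequality_1}) is equivalent to the claim that $\lambda_1$, viewed as a function of $(\rho,v)$, attains its minimum on $\mathcal{D}_{v_1,v_2,w_1,w_2}$ at $(\rho_\text{max},v_\text{max})$ and its maximum at $(\rho_\text{min},v_\text{min})$.

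To prove the monotonicity claim, I would first compute $\partial_v \lambda_1 = 1 > 0$ and $\partial_\rho \lambda_1 = -p'(\rho) - \rho\, p''(\rho)$, which is strictly negative by the hypothesis $p'(\rho) > 0$ from (\ref{ipotesi_pressione}) together with the assumption $p''(\rho) \geq 0$ from (\ref{ipotesi_lemma_lipschitz_1}). Hence $\lambda_1$ is strictly decreasing in $\rho$ and strictly increasing in $v$ throughout $\mathcal{D}_{v_1,v_2,w_1,w_2}$.

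Next, for any $(\rho,v)\in\mathcal{D}_{v_1,v_2,w_1,w_2}$, Proposition \ref{prop_min_and_max_density_invariant_domain_RS_q_2} yields $\rho_\text{min} \leq \rho \leq \rho_\text{max}$, while the definition of the invariant domain gives $v_1 \leq v \leq v_2$. Recalling from (\ref{def_rho_v_min_1}) and (\ref{def_rho_v_max_1}) that $v_\text{min} = v_2$ and $v_\text{max} = v_1$, a two-step chain argument using the partial monotonicities yields
$$\lambda_1(\rho,v) \leq \lambda_1(\rho_\text{min},v) \leq \lambda_1(\rho_\text{min},v_\text{min})$$
and symmetrically $\lambda_1(\rho,v) \geq \lambda_1(\rho_\text{max},v_\text{max})$, which is exactly (\ref{Lipschitz_inequality_1}).

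The statement is called "bi-Lipschitz" because the role of the extra assumption $\lambda_1(\rho_\text{min},v_\text{min}) < 0$ is to ensure the upper bound on the derivative is strictly negative, so that $\psi(\rho) := \rho\, L_1(\rho,\rho_0,v_0)$ is strictly decreasing with $|\psi'(\rho)|$ bounded both above by $|\lambda_1(\rho_\text{max},v_\text{max})|$ and below by $|\lambda_1(\rho_\text{min},v_\text{min})| > 0$, giving the bi-Lipschitz constants. The only mildly delicate point is matching the corner points $(\rho_\text{min},v_\text{min})$ and $(\rho_\text{max},v_\text{max})$ with the extrema of $\lambda_1$, and this is essentially transparent once one observes that the extremal configurations for $\rho$ and $v$ coincide at those two corners; no further technical difficulty is expected.
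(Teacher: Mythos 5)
Your proof is correct, and it is a slightly cleaner variant of the paper's argument. Both proofs start from the same two ingredients: the identity $\tfrac{d}{d\rho}\left(\rho L_1(\rho,\rho_0,v_0)\right)=\lambda_1(\rho,v)$ from Proposition~\ref{eigenvalue_as_slope}, and the computation $\partial_\rho\lambda_1=-p'(\rho)-\rho\,p''(\rho)<0$. They differ only in the intermediate point used for the two-step majorization: you move from $(\rho,v)$ to $(\rho_\text{min},v)$ along a constant-$v$ segment (using $\partial_\rho\lambda_1<0$) and then from $(\rho_\text{min},v)$ to $(\rho_\text{min},v_2)$ along a constant-$\rho$ segment (using $\partial_v\lambda_1=1>0$), whereas the paper moves from $(\rho,v)$ to the point $(\rho^*,v_2)$ on the same Lax curve $w=v+p(\rho)=\mathrm{const}$ (using the monotonicity of Proposition~\ref{eigenvalue_as_slope} along the curve), and then from there to $(\rho_\text{min},v_2)$ along $v=v_2$. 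Your decomposition has the small advantages of not needing the intermediate point to lie in the invariant domain $\mathcal{D}_{v_1,v_2,w_1,w_2}$, and of relying only on the coordinate-wise partial derivatives of $\lambda_1(\rho,v)=v-\rho p'(\rho)$ rather than on the invariance of the Lax-curve parametrization; the paper's version is more in the spirit of the geometric picture in Figure~\ref{fig_lemma_curve_lipschitz_1} where one slides along a level curve of $w$. Your closing observation that the hypothesis $\lambda_1(\rho_\text{min},v_\text{min})<0$ plays no role in establishing (\ref{Lipschitz_inequality_1}) itself, but only fixes the sign of the upper bound so that $\rho\mapsto\rho L_1(\rho,\rho_0,v_0)$ is strictly decreasing (hence invertible, hence ``bi-Lipschitz'' as a map rather than merely having a Lipschitz derivative), is accurate and is not stated explicitly in the paper.
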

\begin{proof}
Fix $\tilde{v}\in \mathbb{R}^+$. The function $\rho \to \lambda_1(\rho,\tilde{v})$ is strictly decreasing, indeed:
$$\dfrac{d}{d\rho}\lambda_1(\rho,\tilde{v}) = \dfrac{d}{d\rho}\left(\tilde{v}-\rho\,p'(\rho) \right)= -p'(\rho)-\rho\,p''(\rho)<0,$$
because by the hypotheses (\ref{ipotesi_pressione}) and (\ref{ipotesi_lemma_lipschitz_1}) we have $p'(\rho)>0$ and $p''(\rho)\geq 0$. Therefore
\begin{equation*}
\begin{split}
& \lambda_1(\rho_\text{min},v_2)\geq \lambda_1(\rho,v_2) \; \text{ for every } \; \rho \geq \rho_{\min} \; \text{ and}\\
& \lambda_1(\rho_{\max},v_1)\leq \lambda_1(\rho,v_1) \; \text{ for every } \; \rho\leq \rho_{\max}.
\end{split}
\end{equation*}
By Proposition \ref{eigenvalue_as_slope}, the function
$$\rho \to \lambda_1(\rho, L_1(\rho,\rho_0,v_0)) = \dfrac{d}{d\rho}\left(\rho\,L_1(\rho,\rho_0,v_0) \right)$$
is strictly decreasing.\\
Let us fix $(\rho,v)\in\mathcal{D}_{v_1,v_2,w_1,w_2}$. By the definition of $\mathcal{D}_{v_1,v_2,w_1,w_2}$ and Proposition \ref{prop_min_and_max_density_invariant_domain_RS_q_2}, we have
$$v_1\leq v \leq v_2 \; \text{ and } \; \rho_{\min}\leq \rho \leq \rho_{\max}.$$
Let $(\rho^*,v^*)\in\mathcal{D}_{v_1,v_2,w_1,w_2}$ be such that (see Figure \ref{fig_lemma_curve_lipschitz_1})
$$v^* = v_2 \; \text{ and } \; v^*+p(\rho^*)= v+p(\rho).$$
Then we have
$$\lambda_1(\rho,v)\leq \lambda_1(\rho^*,v^*)\leq \lambda_1(\rho_\text{min},v_\text{min}).$$
Similarly, let $(\rho^{**},v^{**})\in\mathcal{D}_{v_1,v_2,w_1,w_2}$ be such that (see Figure \ref{fig_lemma_curve_lipschitz_1})
$$v^{**} = v_2 \; \text{ and } \; v^{**}+p(\rho^{**})= v+p(\rho).$$
Then we have
$$\lambda_1(\rho,v)\geq \lambda_1(\rho^{**},v^{**})\geq \lambda_1(\rho_\text{max},v_\text{max}).$$
\end{proof}
\begin{figure}[hbtp]
\centering
\definecolor{cqcqcq}{rgb}{0.7529411764705882,0.7529411764705882,0.7529411764705882}
\definecolor{xdxdff}{rgb}{0.49019607843137253,0.49019607843137253,1.}
\definecolor{uququq}{rgb}{0.25098039215686274,0.25098039215686274,0.25098039215686274}
\begin{tikzpicture}[scale=0.5,line cap=round,line join=round,>=triangle 45,x=1.0cm,y=1.0cm]
\draw[->,color=black] (-1.,0.) -- (17.,0.);
\draw[->,color=black] (0.,-1.) -- (0.,12.);
\clip(-1.5,-1.5) rectangle (17.5,12.);
\draw[line width=0.pt,color=cqcqcq,fill=cqcqcq,fill opacity=1.0] {[smooth,samples=50,domain=4.45:6.75] plot(\x,{0-(-9.34/10.48)*\x-0.0/10.48})} -- (6.75,2.7) {[smooth,samples=50,domain=6.75:4.45] -- plot(\x,{3.0*\x-\x^(1.5)})} -- (4.45,3.96) -- cycle;
\draw[line width=0.pt,color=cqcqcq,fill=cqcqcq,fill opacity=1.0] {[smooth,samples=50,domain=6.75:9.66] plot(\x,{0-(-9.34/10.48)*\x-0.0/10.48})} -- (9.66,3.89) {[smooth,samples=50,domain=9.66:6.75] -- plot(\x,{0-(-4.5/11.18)*\x-0.0/11.18})} -- (6.75,6.) -- cycle;
\draw[line width=0.pt,color=cqcqcq,fill=cqcqcq,fill opacity=1.0] {[smooth,samples=50,domain=9.6:12.94] plot(\x,{4.0*\x-\x^(1.5)})} -- (12.94,5.2) {[smooth,samples=50,domain=12.94:9.6] -- plot(\x,{0-(-4.5/11.18)*\x-0.0/11.18})} -- (9.6,8.655487901127039) -- cycle;
\draw[smooth,samples=50,domain=0.001:17.0] plot(\x,{3.0*(\x)-(\x)^(1.5)});
\draw[smooth,samples=50,domain=0.001:17.0] plot(\x,{4.0*(\x)-(\x)^(1.5)});
\draw [domain=0.0:17.0] plot(\x,{(-0.--4.5*\x)/11.18});
\draw [domain=0.0:17.0] plot(\x,{(-0.--9.34*\x)/10.48});
\draw (13.,5.7449) node[anchor=north west] {$(\rho_{\max},v_{\max})$};
\draw (2.21218,5.10602) node[anchor=north west] {$(\rho_\text{min},v_\text{min})$};
\draw (11.1565,11.52144) node[anchor=north west] {$v_2$};
\draw (15.9481,7.66154) node[anchor=north west] {$v_1$};
\draw (4.5015,10.) node[anchor=north west] {$w_2$};
\draw (7.6,2.23106) node[anchor=north west] {$w_1$};
\draw (-1.2,11.4682) node[anchor=north west] {$\rho\,v$};
\draw (16.,0.1) node[anchor=north west] {$\rho$};
\draw[smooth,samples=50,domain=0.001:17.0] plot(\x,{3.7*(\x)-(\x)^(1.5)});
\draw (8.14844,7.82126) node[anchor=north west] {$(\rho^*,v^*)$};
\draw (9.50606,6.78308) node[anchor=north west] {$(\rho,v)$};
\draw (10.85,4.8) node[anchor=north west] {$(\rho^{**},v^{**})$};
\begin{scriptsize}
\draw [fill=uququq] (4.44,3.96) circle (2.5pt);
\draw [fill=uququq] (12.94,5.2) circle (2.5pt);
\draw [fill=xdxdff] (9.5,5.87) circle (2.5pt);
\draw [fill=uququq] (7.89,7.03) circle (2.5pt);
\draw [fill=uququq] (10.87,4.37) circle (2.5pt);
\end{scriptsize}
\end{tikzpicture}
\caption{Notations used in the proof of Lemma \ref{lemma_curve_lipschitz_1}.}\label{fig_lemma_curve_lipschitz_1}
\end{figure}
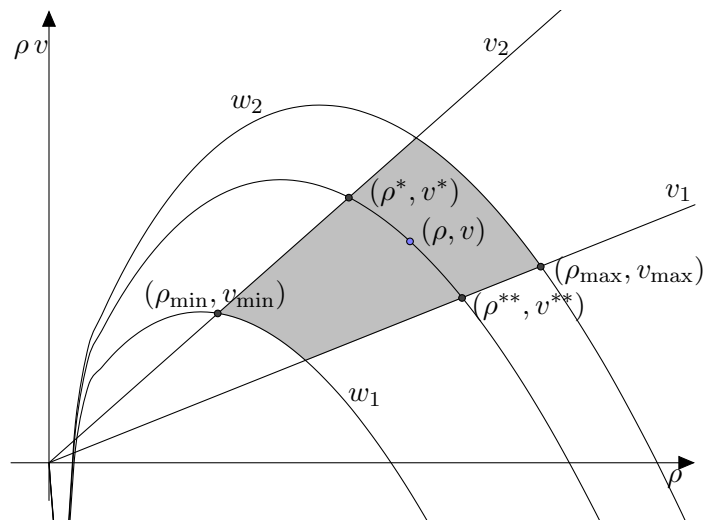
\chapter{Moving constraint}
A bus or a truck travelling on a road influences the traffic of the following vehicles and sometimes it is in turn influenced by the cars in front of it: the bus acts as a moving constraint on the flux of the vehicles. We want to describe this situation with the Aw-Rascle-Zhang system. See \cite{delle_monache_goatin} for the scalar case.\\
Let us denote
$$y(t)  \; \text{ and } \; \dot{y}(t) =\omega(\rho(t,y(t)+),v(t,y(t)+))$$
respectively the bus trajectory and speed, with $\omega:\mathbb{R}^+\times\mathbb{R}^+\to \mathbb{R}^+$ known. In the bus reference frame, the previous situation reduces to the case of a fixed constraint at $x=0$.
\begin{prop}\label{bus_reference_frame}
In the bus reference frame, the ARZ system is
\begin{equation}\label{Aw-Rascle_traslato}
\begin{cases}
\partial_{t}\rho+\partial_x \,(\rho(v-\dot{y}))=0,\\
\partial_t(\rho\, w)+\partial_x(\rho \,w\, (v-\dot{y}))=0.
\end{cases}
\end{equation}
\end{prop}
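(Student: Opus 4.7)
The plan is to carry out the change of variables that maps the laboratory frame to the bus frame and show that the two conservation equations transform into the stated form. Concretely, I introduce the new coordinates $\tau=t$ and $X=x-y(t)$, and denote by $\tilde\rho(\tau,X)=\rho(t,x)$, $\tilde v(\tau,X)=v(t,x)$, $\tilde w(\tau,X)=w(t,x)$ the profiles read in the bus frame. Since $y$ depends only on $t$, the chain rule gives
$$\partial_t=\partial_\tau-\dot y(\tau)\,\partial_X,\qquad \partial_x=\partial_X,$$
which is the only analytic ingredient I need.

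First I would substitute these expressions into the conservation of density $\partial_t\rho+\partial_x(\rho v)=0$. Using that $\dot y$ is independent of $X$, one obtains
$$\partial_\tau\tilde\rho-\dot y\,\partial_X\tilde\rho+\partial_X(\tilde\rho\tilde v)=\partial_\tau\tilde\rho+\partial_X\bigl(\tilde\rho(\tilde v-\dot y)\bigr)=0,$$
which is the first equation of \eqref{Aw-Rascle_traslato} after renaming $\tau\to t$ and $X\to x$. Next I would apply the same substitution to the generalized momentum equation $\partial_t(\rho w)+\partial_x(\rho w v)=0$, where $w=v+p(\rho)$ is unchanged by the frame shift since it depends only on the state variables. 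The identical manipulation yields
$$\partial_\tau(\tilde\rho\tilde w)-\dot y\,\partial_X(\tilde\rho\tilde w)+\partial_X(\tilde\rho\tilde w\tilde v)=\partial_\tau(\tilde\rho\tilde w)+\partial_X\bigl(\tilde\rho\tilde w(\tilde v-\dot y)\bigr)=0,$$
which is the second equation of \eqref{Aw-Rascle_traslato}.

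There is really no obstacle here: the argument is a direct Galilean change of variables in which the (time-dependent) translation speed $\dot y$ acts as a constant with respect to the spatial derivative, so it can be absorbed into the flux. The only point worth stressing in the write-up is that, although the transformation is not a Galilean transformation in the classical sense (because $\dot y$ is not constant in $t$), the derivatives $\partial_x$ are unchanged and the extra term $-\dot y\,\partial_X(\cdot)$ produced by $\partial_t$ combines cleanly with the spatial flux term, because $\dot y$ is independent of $X$. This observation is what allows the equations to remain in conservation form in the moving frame, and it is what justifies treating the bus trajectory subsequently as a fixed constraint at $X=0$.
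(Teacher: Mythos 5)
Your proof is correct and follows essentially the same route as the paper: both perform the change of variables $(t,x)\mapsto(t,x-y(t))$, apply the chain rule to get $\partial_t=\partial_\tau-\dot y\,\partial_X$ and $\partial_x=\partial_X$, and use that $\dot y$ is independent of the spatial variable to absorb the extra advection term into the flux. No gaps.
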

\begin{proof}
We have only to apply the chain rule.\\
Let $(\tilde{t},\tilde{x})$ be the coordinates in the bus reference frame, namely
\begin{equation*}
\begin{cases}
\tilde{t}=t,\\
\tilde{x}=x-y(t).
\end{cases}
\end{equation*}
Let $\tilde{\rho}(\tilde{t},\tilde{x})=\rho(t(\tilde{t},\tilde{x}),x(\tilde{t},\tilde{x}))$ and $\tilde{v}(\tilde{t},\tilde{x})=v(t(\tilde{t},\tilde{x}),x(\tilde{t},\tilde{x}))$ be respectively the density and the speed function in the new coordinates. First, we observe that
\begin{equation*}
\begin{split}
\dfrac{\partial \rho(t,x)}{\partial t} & =\dfrac{\partial \tilde{\rho}(\tilde{t},\tilde{x})}{\partial t}=\dfrac{\partial \tilde{\rho}(\tilde{t},\tilde{x})}{\partial \tilde{t}}\dfrac{\partial \tilde{t}}{\partial t}+\dfrac{\partial \tilde{\rho}(\tilde{t},\tilde{x})}{\partial \tilde{x}}\dfrac{\partial \tilde{x}}{\partial t}=\dfrac{\partial
\tilde{\rho}(\tilde{t},\tilde{x})}{\partial \tilde{t}}+(-\dot{y}(t(\tilde{t},\tilde{x}))\dfrac{\partial \tilde{\rho}(\tilde{t},\tilde{x})}{\partial \tilde{x}}.
\end{split}
\end{equation*}
Hence we have $\partial_{t} \rho = \partial_{\tilde{t}} \tilde{\rho}-\dot{y}\, \partial_{\tilde{x}} \tilde{\rho}$.\\
We find $\partial_x(\rho v)=\partial_{\tilde{x}} (\tilde{\rho} \tilde{v})$, indeed:
\begin{equation*}
\begin{split}
\dfrac{\partial (\rho v)}{\partial x}(t,x) & = \dfrac{\partial (\rho v)(t(\tilde{t},\tilde{x}),x(\tilde{t},\tilde{x}))}{\partial x}= \\
& = \dfrac{\partial (\rho v)(t(\tilde{t},\tilde{x}),x(\tilde{t},\tilde{x}))}{\partial \tilde{t}}\dfrac{\partial \tilde{t}}{\partial x}+ \dfrac{\partial (\rho v)(t(\tilde{t},\tilde{x}),x(\tilde{t},\tilde{x}))}{\partial \tilde{x}}\dfrac{\partial \tilde{x}}{\partial x}=\\
& = 0+\dfrac{\partial (\rho v)(t(\tilde{t},\tilde{x}),x(\tilde{t},\tilde{x}))}{\partial \tilde{x}}.
\end{split}
\end{equation*}
Finally,
$$ \dfrac{\partial \dot{y}(t)}{\partial \tilde{x}}=\dfrac{\partial \dot{y}(t(\tilde{t},\tilde{x}))}{\partial \tilde{x}}=\dfrac{\partial \dot{y}(\tilde{t})}{\partial \tilde{x}}=0.$$
Assembling these parts, we find
\begin{equation*}
\begin{split}
\partial_t \rho+\partial_x(\rho v) & =\partial_{\tilde{t}}\tilde{\rho}-\dot{y} \, \partial_{\tilde{x}}\tilde{\rho}+\partial_{\tilde{x}} (\tilde{\rho} \tilde{v})= \partial_{\tilde{t}}\tilde{\rho}+\partial_{\tilde{x}} (\tilde{\rho} \tilde{v})-\dot{y} \, \partial_{\tilde{x}}\tilde{\rho}-\tilde{\rho} \, \partial_{\tilde{x}}\dot{y}=\\
&=\partial_{\tilde{t}}\tilde{\rho}+\partial_{\tilde{x}}(\tilde{\rho} \, \tilde{v})-\partial_{\tilde{x}}(\dot{y} \, \tilde{\rho})
\end{split}
\end{equation*}
which is the first equation in (\ref{Aw-Rascle_traslato}).\\
Similarly for the second equation.
\end{proof}
By Proposition \ref{bus_reference_frame}, the flux function in the bus reference frame is
$$f(\rho,v)=\begin{pmatrix}
\rho (v-\dot{y})\\
\rho w (v-\dot{y})
\end{pmatrix}=\begin{pmatrix}
\rho(v-\dot{y})\\
\rho(v+p(\rho))(v-\dot{y})
\end{pmatrix}.$$
Let $V$ and $R$ be respectively the maximal speed and the maximal density of the vehicles in the considered road and let $\alpha \in (0,1)$ be the reduction rate of the road capacity at the bus position, i.e. $\alpha R$ is the density for which the velocity of the vehicles is $v=0$ in $x=y(t)$. Let $w_\alpha$ be the value of the Riemann invariant $w=v+p(\rho)$ at the point $(\rho,v)=(\alpha R,0)$, i.e.
$$w_\alpha := p(\alpha R).$$
\begin{prop}
The constraint on the first component of the flux at the bus position is given by
\begin{equation}\label{vincolo_sul_flusso}
\rho(t,y(t)) (v(t,y(t))-\dot{y}(t))\leq \rho_\alpha^2 \, p'(\rho_\alpha)=:F_\alpha,
\end{equation}
where $\rho_\alpha$ is the unique value of the density for which we have
$$p(\rho_\alpha)+\rho_\alpha \, p'(\rho_\alpha)=w_\alpha-\dot{y}(t).$$
\end{prop}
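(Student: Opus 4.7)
The plan is to derive (\ref{vincolo_sul_flusso}) as the assertion that $F_\alpha$ is the supremum, over the admissible states at the bus position $x=y(t)$, of the first flux component $\rho(v-\dot{y})$ in the bus reference frame, and then verify this by a one-variable optimization.

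First I would invoke the modelling ingredient: by definition of the reduction rate $\alpha$, the fully congested state at the bus is $(\rho,v)=(\alpha R,0)$, so the first Riemann invariant there equals $w_\alpha=p(\alpha R)$. Any admissible state $(\rho,v)$ at $x=y(t)$ is obtained from this congested state by waves of the first family that do not raise $w$, so the admissible region at the bottleneck is $\{(\rho,v):\ \rho\geq 0,\ v\geq 0,\ v+p(\rho)\leq w_\alpha\}$. Since $\rho(v-\dot{y})$ is non-decreasing in $v$ at fixed $\rho\geq 0$, its supremum over this region is attained on the upper boundary $v=w_\alpha-p(\rho)$, and the problem reduces to bounding
$$g(\rho):=\rho\bigl(w_\alpha-\dot{y}-p(\rho)\bigr)$$
from above by $F_\alpha$ for $\rho\geq 0$.

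Next I would analyze $g$ by ordinary calculus. Its derivative is
$$g'(\rho)=w_\alpha-\dot{y}-p(\rho)-\rho\,p'(\rho)=w_\alpha-\dot{y}-\frac{d}{d\rho}\bigl(\rho\,p(\rho)\bigr).$$
By the strict convexity of $\rho\mapsto \rho\,p(\rho)$ in hypothesis (\ref{ipotesi_pressione}), the function $\rho\mapsto p(\rho)+\rho\,p'(\rho)$ is continuous, vanishes at $\rho=0$ (since $p(0)=0$) and is strictly increasing to $+\infty$. Hence $g'$ has exactly one zero, which I would call $\rho_\alpha$, and it is characterized by $p(\rho_\alpha)+\rho_\alpha\,p'(\rho_\alpha)=w_\alpha-\dot{y}$; this is precisely the $\rho_\alpha$ of the statement, and its existence and uniqueness are thus established. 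The same monotonicity shows $g'>0$ on $(0,\rho_\alpha)$ and $g'<0$ on $(\rho_\alpha,+\infty)$, so $\rho_\alpha$ is a strict global maximum of $g$.

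Finally, I would substitute the defining relation for $\rho_\alpha$ inside $g(\rho_\alpha)$:
$$g(\rho_\alpha)=\rho_\alpha\bigl(w_\alpha-\dot{y}-p(\rho_\alpha)\bigr)=\rho_\alpha\cdot \rho_\alpha\,p'(\rho_\alpha)=\rho_\alpha^{2}\,p'(\rho_\alpha)=F_\alpha,$$
which closes the bound $\rho(t,y(t))\bigl(v(t,y(t))-\dot{y}(t)\bigr)\leq F_\alpha$. The delicate step is not the optimization itself, which is routine given (\ref{ipotesi_pressione}), but the initial identification of the admissible region $v+p(\rho)\leq w_\alpha$ as the set of states compatible with the reduced capacity $\alpha R$ at the bus; once that modelling step is accepted, the rest reduces to one-variable calculus driven by the strict convexity of $\rho\,p(\rho)$.
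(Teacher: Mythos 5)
Your proposal reproduces the paper's own argument: identify the admissible cone $v+p(\rho)\le w_\alpha$ at the bus, observe that the flux $\rho(v-\dot y)$ is maximized on the upper boundary $v=w_\alpha-p(\rho)$, reduce to the scalar function $\phi(\rho)=\rho(w_\alpha-\dot y-p(\rho))$, find its unique critical point via $p(\rho)+\rho p'(\rho)=w_\alpha-\dot y$ (which is strictly increasing by strict convexity of $\rho\mapsto\rho p(\rho)$), and substitute back to obtain $F_\alpha=\rho_\alpha^2 p'(\rho_\alpha)$. This is exactly the route the paper takes; if anything you spell out the existence and uniqueness of $\rho_\alpha$ slightly more carefully than the paper does.
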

\begin{proof}
The maximal density in $x=y(t)$ is $\alpha R$ and for this value the vehicles near the bus are not moving, so that we have $v=0$. Let us consider the set
$$U_\alpha=\lbrace (\rho,v): v+p(\rho)\leq w_\alpha\rbrace.$$
Given a density $\bar{\rho}\in [0,\alpha R]$, the maximum of the Lax curves of the first family passing through the point $(\bar{\rho},v)\in U_\alpha$ is obtained for 
$$\bar{v}=L_1(\bar{\rho},\alpha R,0)=w_\alpha-p(\bar{\rho}).$$
We are looking for a constant $F_\alpha>0$ such that for every $(\rho,v)=(\rho, w_\alpha - p(\rho))$, the inequality
$$\phi(\rho):=\rho v-\rho \dot{y}=\rho(w_\alpha-p(\rho))-\rho \dot{y}\leq F_\alpha$$
holds. We observe that the derivative
$$\phi'(\rho)=w_\alpha-p(\rho)-\rho\,  p'(\rho)-\dot{y}$$
is zero if
$$p(\rho)+\rho\, p'(\rho)=w_\alpha-\dot{y}$$
and it is positive, if
$$p(\rho)+\rho\, p'(\rho)\leq w_\alpha-\dot{y}.$$
Hence $\phi$ has its maximum in the point $\rho_\alpha$ such that
\begin{equation}\label{rho_alpha}
p(\rho_\alpha)+\rho_\alpha p'(\rho_\alpha)=w_\alpha-\dot{y}(t).
\end{equation}
In this point we have
$$\phi(\rho_\alpha)=\rho_\alpha(w_\alpha-p(\rho_\alpha))-\dot{y}\, \rho_\alpha=\rho_\alpha( \dot{y}+\rho_\alpha \, p'(\rho_\alpha))-\dot{y}\, \rho_\alpha,$$
where we have used the equation (\ref{rho_alpha}). Hence at the bus position, we find
$$\rho(t,y(t)) (v(t,y(t))-\dot{y}(t))\leq \rho_\alpha^2 \, p'(\rho_\alpha)=F_\alpha.$$
\end{proof}
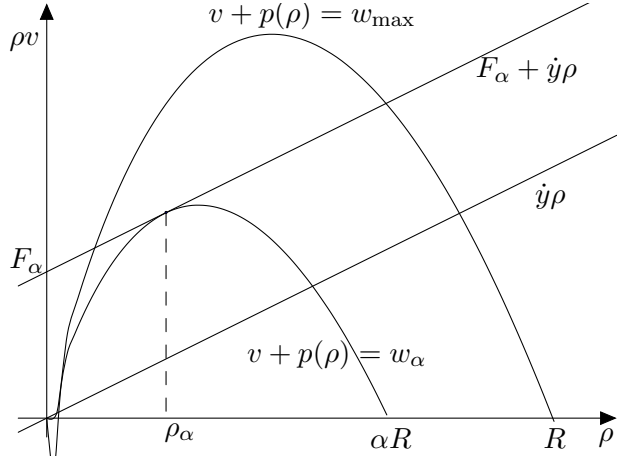
\begin{figure}[h]
\centering
\definecolor{qqqqff}{rgb}{0.,0.,1.}
\begin{tikzpicture}[scale = 0.25, line cap=round,line join=round,>=triangle 45,x=1.0cm,y=1.0cm]
\draw[->,color=black] (-1.5,0.) -- (30.,0.);
\draw[->,color=black] (0.,-1.) -- (0.,22.);
\clip(-2,-2.) rectangle (30.,22.);
\draw[smooth,samples=50,domain=0.001:26.7] plot(\x,{5.16*(\x)-(\x)^(1.5)});
\draw[smooth,samples=50,domain=0.001:17.9] plot(\x,{4.24*(\x)-(\x)^(1.5)});
\draw [domain=-1.5:30.] plot(\x,{(-0.--0.5*\x)/1.});
\draw [domain=-1.5:30.] plot(\x,{(--7.76--0.5*\x)/1.});
\draw [dash pattern=on 5pt off 5pt] (6.28,10.91)-- (6.28,0.);
\draw (5.65,0.27) node[anchor=north west] {$\rho_\alpha$};
\draw (16.49,0.) node[anchor=north west] {$\alpha R$};
\draw (25.60,0.) node[anchor=north west] {$R$};
\draw (28.5,0.) node[anchor=north west] {$\rho$};
\draw (25.16,13.) node[anchor=north west] {$\dot{y}\rho$};
\draw (22.05,19.67) node[anchor=north west] {$F_\alpha+\dot{y}\rho$};
\draw (-2.5,21.) node[anchor=north west] {$\rho v$};
\draw (10,4.7) node[anchor=north west] {$v+p(\rho)=w_\alpha$};
\draw (7.97,22.7) node[anchor=north west] {$v+p(\rho)=w_{\max}$};
\draw (-2.5,9.53) node[anchor=north west] {$F_\alpha$};
\begin{scriptsize}
\draw [fill=qqqqff] (6.28,10.91) circle (1.5pt);
\end{scriptsize}
\end{tikzpicture}
\caption{Representation of the flux in the fixed reference frame.}
\end{figure}
\begin{figure}[h]
\centering
\definecolor{qqqqff}{rgb}{0.,0.,1.}
\begin{tikzpicture}[scale = 0.25, line cap=round,line join=round,>=triangle 45,x=1.0cm,y=1.0cm]
\draw[->,color=black] (-5.,0.) -- (35.,0.);
\draw[->,color=black] (0.,-17.) -- (0.,19.);
\clip(-6.,-17.) rectangle (35.,19.);
\draw[smooth,samples=50,domain=0.001:35.0] plot(\x,{5.0*(\x)-(\x)^(1.5)});
\draw[smooth,samples=50,domain=0.001:35.0] plot(\x,{4.0*(\x)-(\x)^(1.5)});
\draw [domain=0.0:35.0] plot(\x,{(-0.-18.31*\x)/36.63});
\draw [domain=-5.:35.] plot(\x,{(--9.460137231507026-0.*\x)/1.});
\draw [dash pattern=on 8pt off 8pt] (7.02,9.46)-- (1.83,-0.91);
\draw (-2.5,9.5) node[anchor=north west] {$F_\alpha$};
\draw (-6,18) node[anchor=north west] {$\rho v -\dot y \rho$};
\draw (21,7) node[anchor=north west] {$v+p(\rho) = w_{\max}$};
\draw (6,7.2) node[anchor=north west] {$v+p(\rho) = w_{\alpha}$};
\draw (31,-0.) node[anchor=north west] {$\dot y \rho$};
\draw (32.73,-14.33) node[anchor=north west] {$\rho$};
\draw (28.,-14.5) node[anchor=north west] {$R$};
\draw (17.5,-10) node[anchor=north west] {$\alpha R$};
\draw (0.5,-0.7) node[anchor=north west] {$\rho_ \alpha$};
\begin{scriptsize}
\draw [fill=qqqqff] (7.02,9.46) circle (1.0pt);
\end{scriptsize}
\end{tikzpicture}
\caption{Representation of the flux in the bus reference frame.}
\end{figure}
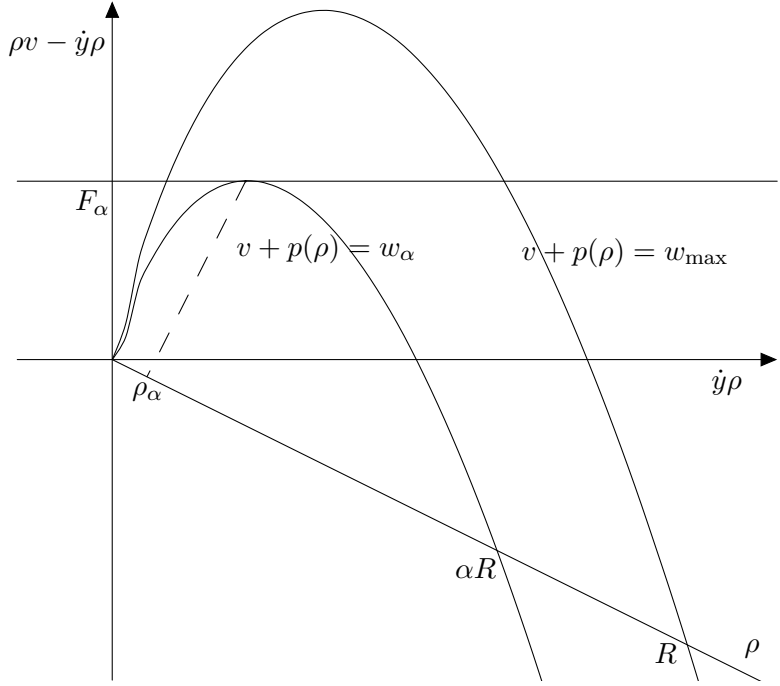
Let us consider a bus travelling in an empty road, where it can keep its own unperturbed velocity. If we represent the stops with a sequence $\lbrace x_i \rbrace _{i=1}^N$, the bus will proceed at its maximal speed $V_b$ ``far'' from each point $x_i$, i.e. its velocity profile is given by a sufficiently regular function $V(x)$ such that
\begin{equation}\label{velocita_autobus}
V(x)=\begin{cases}
V_b & \text{if } |x-x_i|>\delta \text{ for } i=1,...,N,\\
0   & \text{if } x=x_i \text{ for } i=1,...,N,
\end{cases}
\end{equation}
where $\delta$ is the space needed by the bus to stop, starting from its maximal speed.
Now, let us suppose that the bus remains at each stop for a constant time $\tau\in \mathbb{R}^+$ and let $t_i=\inf\lbrace t\in \mathbb{R}^+\,:\, y(t)=x_i\rbrace $ be the $i$-th stop instant, for $i=1,...,N$. The bus speed without traffic is a function $\dot{y}_{F}:\mathbb{R}^+\to [0,V_b]$ defined by
\begin{equation*}
\dot{y}_F(t)=\begin{cases}
V(y(t)) & \text{if } t \notin [t_i,t_i+\tau) \;\text{ for } \; i=1,...,N,\\
0 & \text{if } t\in[t_i,t_i+\tau) \;\text{ for } \; i=1,...,N.
\end{cases}
\end{equation*}
If we now introduce the traffic, the bus will travel with velocity $V(y(t))$ when there are no vehicles in front of him or when their speed $v(t,y(t)+)$ is higher than $V(y(t))$, otherwise it will adapt its velocity to the one of the traffic, namely
\begin{equation}
\dot{y}(t)=\begin{cases}
\dot{y}_F(t) & \text{if } \dot{y}_F(t)<v(t,y(t)+),\\
v(t,y(t)+) & \text{if } \dot{y}_F(t)\geq v(t,y(t)+).
\end{cases}
\end{equation}
\begin{remark}
If $V$ is the maximal speed of the cars on the street, we impose $v(t,y(t)+)=V$ if there are no vehicles in front of the bus, i.e. if $\rho(t,y(t)+)=0$. In this case the first condition is always satisfied, then we have $\dot{y}(t)=\dot{y}_F(t)$.
\end{remark}
The bus velocity is then given by a law depending on its position and on the speed of the preceding vehicles, namely
\begin{equation}
\dot{y}(t)=\omega(y(t),v(t,y(t)+)).
\end{equation}
Hence our model is given by the system
\begin{equation}\label{problema_vincolato}
\begin{cases}
\begin{cases}
\partial_{t}\rho+\partial_x (\rho(v-\dot{y}))=0,\\
\partial_t(\rho w)+\partial_x(\rho w(v-\dot{y}))=0,\\
\end{cases}\\
(\rho,v)(0,x)=(\rho_0,v_0)(x),\\
\rho(t,y(t)) (v(t,y(t))-\dot{y}(t))\leq \rho_\alpha^2 \, p'(\rho_\alpha),\\
\dot{y}(t)=\omega(y(t),v(t,y(t)+)),\\
y(0)=y_0,
\end{cases}
\end{equation}
where $(\rho_0,v_0)$ and $y_0$ are respectively the initial configuration of the density and of the speed of the cars and the initial bus position.
\section{The constrained Riemann problem}
Let $w_{max}:=p(R)$ be the value of the invariant $w=v+p(\rho)$ in the point $(R,0)$ of maximal density and velocity equal to zero.\\
Let $y_0=0$ be the initial position of the bus.\\
Let $(\rho^l,v^l)$ and $(\rho^r,v^r)$ be two data in the domain $\mathbb{R}^+\times\mathbb{R}^+$. Consider the Riemann problem
\begin{equation}\label{problema_riemann_bus_reference_frame}
\begin{cases}
\partial_{t}\rho+\partial_x (\rho(v-\dot{y}))=0,\\
\partial_t(\rho w)+\partial_x(\rho w(v-\dot{y}))=0,\\
(\rho,v)(0,x)=\begin{cases}
(\rho^l,v^l) & \text{ if } x\leq 0,\\
(\rho^r,v^r) & \text{ if } x>0,
\end{cases}
\end{cases}
\end{equation}
with the constraint
\begin{equation}\label{vincolo_1}
\rho(t,y(t)) (v(t,y(t))-\bar{V})\leq \rho_\alpha^2 \, p'(\rho_\alpha),
\end{equation}
where we assume that the speed $\dot{y}(t)$ is constant and its value is $\dot{y}(t)=\bar{V}\in[0,V_b]$ for every $t\in\mathbb{R}^+$.\\
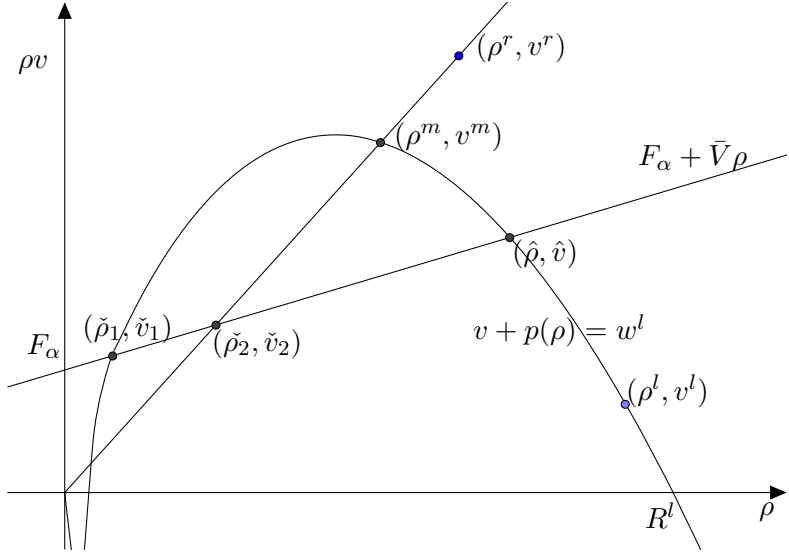
\begin{figure}[h]
\centering
\definecolor{qqqqff}{rgb}{0.,0.,1.}
\definecolor{xdxdff}{rgb}{0.49019607843137253,0.49019607843137253,1.}
\definecolor{uququq}{rgb}{0.25098039215686274,0.25098039215686274,0.25098039215686274}
\begin{tikzpicture}[scale = 0.5,line cap=round,line join=round,>=triangle 45,x=1.0cm,y=1.0cm]
\draw[->,color=black] (-1.5,0.) -- (19.,0.);
\draw[->,color=black] (0.,-1.5) -- (0.,13.);
\clip(-2,-1.5) rectangle (19.,13.);
\draw[smooth,samples=50,domain=0.001:19.0] plot(\x,{4.0*(\x)-(\x)^(1.5)});
\draw [domain=-1.5:19.] plot(\x,{(--3.25--0.3*\x)/1.});
\draw (15.,0.) node[anchor=north west] {$R^l$};
\draw (18.,0.) node[anchor=north west] {$\rho$};
\draw (14.82,9.58) node[anchor=north west] {$F_\alpha+\bar{V}\rho$};
\draw (-1.5,11.86) node[anchor=north west] {$\rho v$};
\draw (-1.27,4.48) node[anchor=north west] {$F_\alpha$};
\draw [domain=0.0:19.0] plot(\x,{(-0.--11.58*\x)/10.37});
\draw (10.55,12.45) node[anchor=north west] {$(\rho^r,v^r)$};
\draw (8.4,10.1) node[anchor=north west] {$(\rho^m,v^m)$};
\draw (3.6,4.6) node[anchor=north west] {$(\check{\rho_2},\check{v}_2)$};
\draw (0.2,5.) node[anchor=north west] {$(\check{\rho}_1,\check{v}_1)$};
\draw (11.5,7.) node[anchor=north west] {$(\hat{\rho},\hat{v})$};
\draw (14.5,3.4) node[anchor=north west] {$(\rho^l,v^l)$};
\draw (10.48,5.) node[anchor=north west] {$v+p(\rho)=w^l$};
\begin{scriptsize}
\draw [fill=uququq] (1.26,3.62) circle (3pt);
\draw [fill=uququq] (11.71,6.76) circle (3pt);
\draw [fill=xdxdff] (14.75,2.34) circle (3pt);
\draw [fill=qqqqff] (10.37,11.58) circle (3pt);
\draw [fill=uququq] (8.31,9.28) circle (3pt);
\draw [fill=uququq] (3.98,4.44) circle (3pt);
\end{scriptsize}
\end{tikzpicture}
\caption{Notations used: $(\hat{\rho},\hat{v})$ and $(\check{\rho}_1,\check{v}_1)$ are the points of the Lax curve of the first family passing through $(\rho^l,v^l)$ for which the constraint is satisfied with the equal; $(\check{\rho}_2,\check{v}_2)$ is the point of the Lax curve of the second family passing through $(\rho^r,v^r)$ for which the constraint is satisfied with the equal.}\label{notazioni_utilizzate_problema_Riemann}
\end{figure}
Let $I$ be the set
\begin{equation*}
\begin{split}
I & =\lbrace \rho \in [0,R]: \, \rho L_1(\rho,\rho^l,v^l)=\rho (v^l+p(\rho^l)-p(\rho))=F_\alpha+\rho \bar{V}\rbrace=\\
&=\lbrace \rho\in [0,R]:\, \rho(L_1(\rho,\rho^l,v^l)-\bar{V})=F_\alpha \rbrace.
\end{split}
\end{equation*}
If $I$ is not empty, let $(\hat{\rho},\hat{v})$ and $(\check{\rho}_1,\check{v}_1)$ be the points defined by 
$$\hat{\rho}=\max I, \; \; \hat{v}=\dfrac{F_\alpha}{\hat{\rho}}+\bar{V}, \; \; \check{\rho}_1 = \min I \; \mbox{ and } \; \check{v}_1=\dfrac{F_\alpha}{\check{\rho}_1}+\bar{V}.$$
These are respectively the points with maximal and minimal density of the Lax curve of the first family passing through $(\rho^l,v^l)$ for which the condition (\ref{vincolo_1}) on the flux is satisfied with the equal. Moreover, we define the point $(\check{\rho}_2,\check{v}_2)$ as
$$\check{\rho}_2=\dfrac{F_\alpha}{v^r-\bar{V}} \; \text{ and } \; \check{v}_2=v^r.$$
$(\check{\rho}_2,\check{v}_2)$ is the point of maximal density of the Lax curve of the second family passing through $(\rho^r,v^r)$ for which the condition (\ref{vincolo_1}) is satisfied with the equal. Finally we recall the definition (\ref{middle_state}) of the intermediate state $(\rho^m,v^m)$ of the classical solution: define the set
$$L=\lbrace \rho \in \mathbb{R}^+:v^r=L_1(\rho,\rho^l,v^l)\rbrace.$$
The middle state $(\rho^m,v^m)\in \mathbb{R}^+\times \mathbb{R}^+$ is the point defined by:
\begin{equation*}
\rho^m = \max L \; \text{ and } \; v^m=v^r;
\end{equation*}
see Figure (\ref{notazioni_utilizzate_problema_Riemann}).
\begin{lemma}\label{esistenza_rho_hat_rho_check_1}
Fix $(\rho^l,v^l)\in \mathbb{R}^+\times \mathbb{R}^+$. Let us suppose that the hypotheses (\ref{ipotesi_pressione}) hold. If there exists a point $(\rho^m,v^m)$ such that $v^m=L_1(\rho^m,\rho^l,v^l)$ and $\rho^m v^m>F_\alpha+\rho^m \bar{V}$, then $I=\lbrace \check{\rho}_1, \hat{\rho}\rbrace$.
\end{lemma}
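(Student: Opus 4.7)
The plan is to introduce the auxiliary function
\[
\phi(\rho) := \rho\bigl(L_1(\rho,\rho^l,v^l) - \bar V\bigr) = \rho\bigl(w^l - p(\rho) - \bar V\bigr), \qquad w^l := v^l + p(\rho^l),
\]
so that by definition $I = \lbrace \rho \in [0,R] : \phi(\rho) = F_\alpha \rbrace$. The statement $I = \lbrace \check{\rho}_1, \hat{\rho}\rbrace$ then reduces to showing that $\phi(\rho) = F_\alpha$ has exactly two solutions in $[0,R]$, since in that case these two solutions must automatically be $\min I = \check{\rho}_1$ and $\max I = \hat{\rho}$ by the very definitions of $\check{\rho}_1$ and $\hat{\rho}$.

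The first step is to prove that $\phi$ is strictly concave on $[0,R]$. Writing $\phi(\rho) = (w^l - \bar V)\rho - \rho\, p(\rho)$, the affine part is trivially concave, while $-\rho\, p(\rho)$ is strictly concave thanks to the strict convexity of $\rho \mapsto \rho\, p(\rho)$ assumed in (\ref{ipotesi_pressione}). Strict concavity of $\phi$ immediately yields that the equation $\phi(\rho) = F_\alpha$ has at most two solutions in $[0,R]$, which takes care of the uniqueness half of the claim.

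The second step is to exhibit at least two solutions via the intermediate value theorem. At the left endpoint $\phi(0) = 0 < F_\alpha$, since $F_\alpha = \rho_\alpha^2\, p'(\rho_\alpha) > 0$ by the hypotheses on $p$. The standing assumption $\rho^m v^m > F_\alpha + \rho^m \bar V$ rewrites exactly as $\phi(\rho^m) > F_\alpha$. At the right endpoint, $\phi(R) = R\bigl(w^l - w_{max} - \bar V\bigr) \leq 0 < F_\alpha$, using that the datum $(\rho^l,v^l)$ lies in the physically relevant region (so that $w^l \leq w_{max}$) and that $\bar V \geq 0$. Continuity of $\phi$ then produces one root $\check{\rho}_1 \in (0,\rho^m)$ and a second root $\hat{\rho} \in (\rho^m, R)$.

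The only mildly delicate point is the upper bound $\phi(R) \leq 0$, which rests on $w^l \leq w_{max}$; this is standard in the invariant-domain framework of the ARZ model and is implicit in the setup of the constrained Riemann problem. Combining the two steps one obtains $I = \lbrace \check{\rho}_1, \hat{\rho}\rbrace$ with $\check{\rho}_1 < \hat{\rho}$, as claimed.
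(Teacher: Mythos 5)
Your proof is correct and follows the same route as the paper's: it hinges on the strict concavity of $\rho\mapsto\rho\bigl(L_1(\rho,\rho^l,v^l)-\bar V\bigr)$, inherited from the strict convexity of $\rho\mapsto\rho\,p(\rho)$ in (\ref{ipotesi_pressione}), to cap the number of intersections with the constraint at two, and on the hypothesis $\rho^m v^m>F_\alpha+\rho^m\bar V$ to furnish both. You are somewhat more explicit than the paper about the existence half, spelling out the intermediate-value argument and the endpoint evaluation $\phi(R)\leq 0$ (which silently uses $w^l\leq w_{\max}$ and $\bar V\geq 0$), points the paper's proof leaves implicit when it asserts ``there will be exactly two points.''
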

\begin{proof}
Let us consider the plane $(\rho,\rho v)$ and let $w^l=v^l+p(\rho^l)$ be the value of the invariant $w$ in $(\rho^l,v^l)$. In these coordinates the set of points $(\rho, v)$ such that $v+p(\rho)=w^l$ is the graph of the function
$$\psi(\rho):=\rho w^l-\rho p(\rho).$$
This function is strictly concave, because the function $\rho \rightarrow \rho p(\rho)$ is strictly convex by the hypotheses (\ref{ipotesi_pressione}). Then the cardinality of the set $I$ is at most 2.\\
Since $\rho^m v^m>F_\alpha+\rho^m \bar{V}$, there will be exactly two points $(\hat{\rho},\hat{v})$ and $(\check{\rho}_1,\check{v}_1)$ belonging to the curve $\psi$ and such that
$$\check{\rho}_1<\rho^m, \;\; \hat{\rho}>\rho^m, \;\; \psi(\check{\rho}_1)=F_\alpha+\bar{V}\check{\rho}_1\; \text{ and } \; \psi(\hat{\rho})=F_\alpha+\bar{V}\hat{\rho}.$$
\end{proof}
\begin{lemma}\label{funzione_concava_e_retta}
Let $w^l\in\mathbb{R}^+$ be fixed and let $\rho v=F_\alpha+\bar{V}\rho$ be the constraint. Let us consider the function $\rho \to \psi(\rho)=\rho w^l-\rho p(\rho)$. Let $(\rho^\sigma,v^\sigma)$ be a point such that $\rho^\sigma v^\sigma=\psi(\rho^\sigma)$. Under the hypotheses of Lemma \ref{esistenza_rho_hat_rho_check_1}, we have
\begin{equation}
\rho^\sigma>\hat{\rho} \; \text{ or }\; \rho^\sigma<\check{\rho}_1 \; \text{ if and only if } \;\rho^\sigma v^\sigma< F_\alpha+\bar{V}\rho^\sigma.
\end{equation}
Equivalently
\begin{equation}\label{condizione_vincolo_soddisfatto_velocita}
v^\sigma<\hat{v} \; \text{ or }\; v^\sigma>\check{v}_1 \; \text{ if and only if } \;\rho^\sigma v^\sigma< F_\alpha+\bar{V}\rho^\sigma.
\end{equation}
\end{lemma}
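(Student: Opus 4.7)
The plan is to reduce the statement to a strict concavity argument for the difference $\Phi(\rho):=\psi(\rho)-F_\alpha-\bar V \rho$ between the Lax curve of the first family (viewed in the $(\rho,\rho v)$ plane) and the constraint line, and then to translate the density inequality into a velocity inequality via the first Riemann invariant.

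First I would observe that by the hypotheses (\ref{ipotesi_pressione}) the map $\rho\mapsto \rho\,p(\rho)$ is strictly convex, so $\psi(\rho)=\rho w^l-\rho\,p(\rho)$ is strictly concave. Subtracting the affine function $F_\alpha+\bar V \rho$ preserves strict concavity, hence $\Phi$ is strictly concave on $[0,R]$. By Lemma \ref{esistenza_rho_hat_rho_check_1}, the set of zeros of $\Phi$ in $[0,R]$ is exactly $\{\check\rho_1,\hat\rho\}$ with $\check\rho_1<\hat\rho$. Strict concavity then forces $\Phi(\rho)>0$ for $\rho\in(\check\rho_1,\hat\rho)$ and $\Phi(\rho)<0$ for $\rho\in[0,\check\rho_1)\cup(\hat\rho,R]$.

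For a point $(\rho^\sigma,v^\sigma)$ on the Lax curve, $\rho^\sigma v^\sigma=\psi(\rho^\sigma)$, so the inequality $\rho^\sigma v^\sigma<F_\alpha+\bar V\rho^\sigma$ is exactly $\Phi(\rho^\sigma)<0$. Combining this with the previous step yields the first equivalence: $\rho^\sigma v^\sigma<F_\alpha+\bar V\rho^\sigma$ if and only if $\rho^\sigma<\check\rho_1$ or $\rho^\sigma>\hat\rho$.

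For the equivalent formulation (\ref{condizione_vincolo_soddisfatto_velocita}), I would use that on the Lax curve $v^\sigma=w^l-p(\rho^\sigma)$. Since $p'>0$ by (\ref{ipotesi_pressione}), the map $\rho\mapsto w^l-p(\rho)$ is strictly decreasing, so $\rho^\sigma>\hat\rho \iff v^\sigma<w^l-p(\hat\rho)=\hat v$, and similarly $\rho^\sigma<\check\rho_1 \iff v^\sigma>\check v_1$. Substituting these into the first equivalence yields the velocity form.

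There is no real obstacle here: once the strict concavity of $\Phi$ and the monotonicity of $p$ are in hand, both equivalences are immediate. The only mild care needed is to make sure the strict inequalities are preserved at every step, which follows from the strict convexity of $\rho\mapsto\rho\,p(\rho)$ and the strict positivity of $p'$.
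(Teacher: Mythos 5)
Your proof is correct and takes essentially the same route as the paper: the paper proves the sign statement by writing the convex-combination inequalities for $\psi$ directly, which is precisely the verification of your abstract claim that a strictly concave function with exactly two zeros is positive strictly between them and negative strictly outside; your passage to the velocity form via the strict monotonicity of $p$ is also the paper's. The only difference is cosmetic — you package the argument through the auxiliary function $\Phi=\psi-F_\alpha-\bar V\rho$ rather than manipulating $\psi$ and the line separately.
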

\begin{proof}
By the hypotheses of Lemma \ref{esistenza_rho_hat_rho_check_1}, there is a point $(\rho^m,v^m)$ such that $v^m=L_1(\rho^m,\rho^l,v^l)$ and $\rho^m v^m>F_\alpha+\bar{V}\rho^m$. Hence we have $I=\lbrace \check{\rho}_1, \hat{\rho}\rbrace$.\\ 
If $\rho^\sigma>\hat{\rho}$, there exists $\gamma\in(0,1)$ such that
$$\hat{\rho}=\gamma \rho^\sigma+(1-\gamma)\check{\rho}_1.$$
If it was $\psi(\rho^\sigma)\geq F_\alpha+\bar{V}\rho^\sigma$, then by the strict concavity of $\psi$, we would have
\begin{equation*}
\begin{split}
\psi(\hat{\rho})&=\psi(\gamma\rho^\sigma+(1-\gamma)\check{\rho}_1)>\gamma\psi(\rho^\sigma)+(1-\gamma)\psi(\check{\rho}_1)\geq\\
&\geq \gamma(F_\alpha+\bar{V}\rho^\sigma)+(1-\gamma)(F_\alpha+\bar{V}\check{\rho}_1)=\\
& = \gamma F_\alpha+\gamma \bar{V}\rho^\sigma+F_\alpha+\bar{V}\check{\rho}_1-\gamma F_\alpha-\gamma\bar{V}\check{\rho}_1=\\
&=F_\alpha+\bar{V}(\gamma\rho^\sigma+(1-\gamma)\check{\rho}_1)=\\
& =F_\alpha+\bar{V}\hat{\rho} \Longrightarrow\\
&\Longrightarrow \psi(\hat{\rho})>F_\alpha+\bar{V}\hat{\rho}.
\end{split}
\end{equation*}
This is a contradiction of the definition of $(\hat{\rho},\hat{v})$. Similarly for $\rho^\sigma<\check{\rho}_1$.\\
Conversely, if $\rho^\sigma v^\sigma<F_\alpha+\bar{V}\rho^\sigma$ and $\rho^\sigma$ was in $[\check{\rho}_1,\hat{\rho}]$, there would exist $\gamma\in [0,1]$ such that $\rho^\sigma= \gamma\check{\rho}_1+(1-\gamma)\hat{\rho}$. Hence
\begin{equation*}
\begin{split}
\psi(\rho^\sigma)&> \gamma \psi(\check{\rho}_1)+(1-\gamma)\psi(\hat{\rho})=\gamma(F_\alpha+\check{\rho}_1\bar{V})+(1-\gamma)(F_\alpha+\hat{\rho}\bar{V})=\\
&=F_\alpha+\bar{V}(\gamma\check{\rho}_1+(1-\gamma)\hat{\rho})=F_\alpha+\bar{V}\rho^\sigma,
\end{split}
\end{equation*}
which is absurd.\\
The condition $\rho^\sigma >\hat{\rho}$ is equivalent to $v^\sigma <\hat{v}$. Indeed,
$$\psi(\rho^\sigma)=\rho^\sigma\,v^\sigma \Longleftrightarrow v^\sigma +p(\rho^\sigma)=v^l+p(\rho^l)=\hat{v}+p(\hat{\rho}).$$
Hence, by the hypotheses (\ref{ipotesi_pressione}), we find
\begin{equation*}
\rho^\sigma >\hat{\rho} \Longleftrightarrow p(\rho^\sigma) >p(\hat{\rho}) \Longleftrightarrow v^\sigma = \hat{v}+p(\hat{\rho})-p(\rho^\sigma) <\hat{v}.
\end{equation*}
\end{proof}
Let $(\rho^l,v^l)$ and $(\rho^r,v^r)$ be two fixed points in the domain $\mathbb{R}^+\times\mathbb{R}^+$. Let $\mathcal{RS}$ be the standard Riemann solver for the Riemann problem (\ref{Riemann_problem_ARZ}); see Proposition \ref{prop_standard_solution}.\\
Let us denote
$$\bar{\rho}((\rho^l,v^l),(\rho^r,v^r))(\cdot) \; \text{ and } \; \bar{v}((\rho^l,v^l),(\rho^r,v^r))(\cdot)$$
respectively the $\rho$ and $v$ components of the classical solution $\mathcal{RS}((\rho^l,v^l),(\rho^r,v^r))(\cdot)$.
\subsection{The first Riemann solver $\mathcal{RS}^\alpha_1$}
Let us introduce the first Riemann solver for the constrained Riemann problem (\ref{problema_riemann_bus_reference_frame}).\\
The Riemann solver
$$\mathcal{RS}^\alpha_1:(\mathbb{R}^+\times\mathbb{R}^+)^2\rightarrow L^1(\mathbb{R},\mathbb{R}^+\times\mathbb{R}^+)$$
is defined as follows.
\begin{enumerate}
\item If $f_1(\mathcal{RS}((\rho^l,v^l),(\rho^r,v^r))(\bar{V}))>F_\alpha+\bar{V} \bar{\rho}((\rho^l,v^l),(\rho^r,v^r))(\bar{V})$, then
\begin{equation*}
\begin{split}
&\mathcal{RS}^\alpha_1((\rho^l,v^l),(\rho^r,v^r))(x/t)=\begin{cases}
\mathcal{RS}((\rho^l,v^l),(\hat{\rho},\hat{v}))(x/t) & \text{ if } x < y(t),\\
\mathcal{RS}((\check{\rho}_1,\check{v}_1),(\rho^r,v^r))(x/t) & \text{ if } x\geq y(t),
\end{cases}\\
&\text{and } \; y(t)=\bar{V}t.
\end{split}
\end{equation*}
\item If $ f_1(\mathcal{RS}((\rho^l,v^l),(\rho^r,v^r))(\bar{V}))\leq F_\alpha+ \bar{V}\bar{\rho}((\rho^l,v^l),(\rho^r,v^r))(\bar{V})$ and\\ $\bar{V}<\bar{v}((\rho^l,v^l),(\rho^r,v^r))(\bar{V})$, then
\begin{equation*}
\mathcal{RS}^\alpha_1((\rho^l,v^l),(\rho^r,v^r))(x/t) = \mathcal{RS}((\rho^l,v^l),(\rho^r,v^r))(x/t) \; \text{ and } \; y(t)=\bar{V}t.
\end{equation*}
\item If $\bar{v}((\rho^l,v^l),(\rho^r,v^r))(\bar{V})\leq \bar{V}$, then
$$\mathcal{RS}^\alpha_1((\rho^l,v^l),(\rho^r,v^r))(x/t) = \mathcal{RS}((\rho^l,v^l),(\rho^r,v^r))(x/t) \; \text{ and } \; y(t)=v(t,y(t)+)t.$$
\end{enumerate}
The first case refers to a situation in which the traffic is influenced by the bus and the bus travels with its own velocity; in the second case the bus and the traffic do not influence each other; the third case represents a road where the traffic is congested and the bus travels with the speed of the previous cars.
\begin{remark}\label{nonclassical_shock}
In the first case, the solution given by $\mathcal{RS}^\alpha_1$ at $x=y(t)$ does not satisfy the Lax-entropy condition between the states $(\hat{\rho},\hat{v})$ and $(\check{\rho}_1,\check{v}_1)$, because
$$\check{\rho}_1<\hat{\rho} \Longleftrightarrow \lambda_1(\check{\rho}_1,\check{v}_1) >\lambda_1(\hat{\rho},\hat{v})$$
and the condition (\ref{Lax_entropy_condition}) for an entropy-admissible shock is the reverse. Therefore, we say that $(\hat{\rho},\hat{v})$ and $(\check{\rho}_1,\check{v}_1)$ are connected by a non-classical shock.
\end{remark}
\begin{remark}
Let us consider the bus reference frame. The representation of the flux function in the $(\rho,v)$ coordinates is
$$f(\rho,v)=\begin{pmatrix}
\rho(v-\bar{V})\\
\rho (v-\bar{V}) w
\end{pmatrix}.
$$
In this reference the non-classical shock travels with propagation speed $\lambda=0$.\\
The solution $\mathcal{RS}^\alpha_1$ is conservative for both density and momentum of the vehicles. Indeed, for the first component of the flux, we observe that
\begin{equation*}
\begin{split}
&\hat{\rho}(\hat{v}-\bar{V})-\check{\rho}_1(\check{v}_1-\bar{V})=0(\hat{\rho}-\check{\rho}_1) \Longleftrightarrow\\
&\Longleftrightarrow \hat{\rho}\hat{v}-\check{\rho}_1\check{v}_1-\bar{V}(\hat{\rho}-\check{\rho}_1)=0 \Longleftrightarrow\\
&\Longleftrightarrow \bar{V}= \dfrac{\hat{\rho}\hat{v}-\check{\rho}_1\check{v}_1}{\hat{\rho}-\check{\rho}_1}.
\end{split}
\end{equation*}
Hence the Rankine-Hugoniot condition holds for the first component if and only if $\bar{V}$ is the slope of the line passing through the points $(\hat{\rho},\hat{\rho}\hat{v})$ and $(\check{\rho}_1,\check{\rho}_1\check{v}_1)$ in the $(\rho,\rho v)$ coordinates which is true.\\
For the second component, let us denote $\hat{w}=\hat{v}+p(\hat{\rho})$ and $\check{w}_1=\check{v}_1+p(\check{\rho}_1)$. Since $\hat{w}=\check{w}_1=v^l+p(\rho^l)$, we find
\begin{equation*}
\begin{split}
&\hat{\rho}(\hat{v}-\bar{V})\hat{w}-\check{\rho}_1(\check{v}_1-\bar{V})\check{w}_1=0(\hat{v}-\check{v}_1) \Longleftrightarrow\\
&\Longleftrightarrow (v^l+p(\rho^l))(\hat{\rho}(\hat{v}-\bar{V})-\check{\rho}_1(\check{v}_1-\bar{V}))=0
\end{split}
\end{equation*}
and we can conclude as for the first component.\\
Therefore the Rankine-Hugoniot conditions hold along the line $x-y(t)=0$.
\end{remark}
\subsection{The second Riemann solver $\mathcal{RS}^\alpha_2$}
Let us introduce the second Riemann solver for the constrained Riemann problem (\ref{problema_riemann_bus_reference_frame}).\\
The Riemann solver 
$$\mathcal{RS}^\alpha_2:(\mathbb{R}^+\times\mathbb{R}^+)^2\rightarrow L^1(\mathbb{R},\mathbb{R}^+\times\mathbb{R}^+)$$
is defined as follows.
\begin{enumerate}
\item If $f_1(\mathcal{RS}((\rho^l,v^l),(\rho^r,v^r))(\bar{V}))>F_\alpha+\bar{V} \bar{\rho}((\rho^l,v^l),(\rho^r,v^r))(\bar{V})$, then
\begin{equation*}
\begin{split}
&\mathcal{RS}^\alpha_2((\rho^l,v^l),(\rho^r,v^r))(x/t)=\begin{cases}
\mathcal{RS}((\rho^l,v^l),(\hat{\rho},\hat{v}))(x/t) & \text{ if } x < y(t),\\
\mathcal{RS}((\check{\rho}_2,\check{v}_2),(\rho^r,v^r))(x/t) & \text{ if } x\geq y(t),
\end{cases}\\
&\text{and } \; y(t)=\bar{V}t.
\end{split}
\end{equation*}
\item If $ f_1(\mathcal{RS}((\rho^l,v^l),(\rho^r,v^r))(\bar{V}))\leq F_\alpha+ \bar{V}\bar{\rho}((\rho^l,v^l),(\rho^r,v^r))(\bar{V})$ and\\ $\bar{V}<\bar{v}((\rho^l,v^l),(\rho^r,v^r))(\bar{V})$, then
\begin{equation*}
\mathcal{RS}^\alpha_2((\rho^l,v^l),(\rho^r,v^r))(x/t) = \mathcal{RS}((\rho^l,v^l),(\rho^r,v^r))(x/t) \; \text{ and } \; y(t)=\bar{V}t.
\end{equation*}
\item If $\bar{v}((\rho^l,v^l),(\rho^r,v^r))(\bar{V})\leq \bar{V}$, then
$$\mathcal{RS}^\alpha_2((\rho^l,v^l),(\rho^r,v^r))(x/t) = \mathcal{RS}((\rho^l,v^l),(\rho^r,v^r))(x/t) \; \text{ and } \; y(t)=v(t,y(t)+)t.$$
\end{enumerate}
\begin{remark}
In the first case the points $(\check{\rho}_2,\check{v}_2)$ and $(\hat{\rho},\hat{v})$ are connected by a non-classical shock. Indeed, fix $(\rho^l,v^l)$ and $(\rho^r,v^r)$, let $(\rho^m,v^m)$ be the middle state of the classical solution and assume that the inequality $\check{\rho}_2>\hat{\rho}$ holds. This implies $\check{v}_2 <\hat{v}$, because $\hat{\rho}(\hat{v}-\bar{V})=\check{\rho}_2(\check{v}_2-\bar{V})$. Since $v^m=\check{v}_2$ and $v^m+p(\rho^m)=\hat{v}+p(\hat{\rho})$, we find $\rho^m>\hat{\rho}$. Therefore, by Lemma \ref{funzione_concava_e_retta}, we have
\begin{equation}\label{remark_RS_2_non_classical_shock_middle_state}
\rho^m\,v^m \leq F_\alpha+\rho^m \, \bar{V}.
\end{equation}
Consider the classical solution. Since $(\rho^m,v^m)$ and $(\rho^r,v^r)$ are connected with a contact discontinuity with propagation speed $v^r=v^m$, the solution $\mathcal{RS}((\rho^l,v^l),(\rho^r,v^r))$ in $\bar{V}$ is $(\rho^r,v^r)$ if and only if $v^r <\bar{V}$. In this case $(\rho^r,v^r)$ satisfies the constraint and the solution is classical. Let us assume $v^r \geq \bar{V}$. The classical solution in $\bar{V}$ is $(\rho^l,v^l)$, $(\rho^m,v^m)$ or a point $(\rho^\sigma,v^\sigma)$ of the rarefaction wave joining them. If $(\rho^l,v^l)$ and $(\rho^m,v^m)$ are connected by a rarefaction wave, all the points of the rarefaction satisfy the constraint, because $\hat{\rho}>\rho^m \geq \rho^\sigma$ for every $\sigma$. Finally, suppose that $(\rho^l,v^l)$ and $(\rho^m,v^m)$ are connected by a shock and that $(\rho^l,v^l)$ does not satisfy the constraint, i.e.
$$\rho^l\,(v^l-\bar{V})>F_\alpha.$$
The speed of the shock is
$$s =\dfrac{\rho^m\,v^m-\rho^l\,v^l}{\rho^m-\rho^l}.$$
If $\mathcal{RS}((\rho^l,v^l),(\rho^r,v^r))$ in $\bar{V}$ is $(\rho^m,v^m)$, then the solution given by $\mathcal{RS}^\alpha_2$ is classic. Otherwise we find
\begin{equation*}
\begin{split}
& \dfrac{\rho^m\,v^m-\rho^l\,v^l}{\rho^m-\rho^l}\geq \bar{V} \Longleftrightarrow\\
& \rho^m\,(v^m -\bar{V}) \geq \rho^l\,(v^l-\bar{V})>F_\alpha.
\end{split}
\end{equation*} 
This is a contradiction of the inequality (\ref{remark_RS_2_non_classical_shock_middle_state}).\\
Therefore if $\check{\rho}_2>\hat{\rho}$, the solution $\mathcal{RS}^\alpha_2((\rho^l,v^l),(\rho^r,v^r))(\lambda)$ satisfies the constraint for every $\lambda\in \mathbb{R}$ and the non-classical shock appears only when $\check{\rho}_2\geq \hat{\rho}$ which is against the Lax-entropy condition (\ref{Lax_entropy_condition}).
\end{remark}
\begin{remark}\label{the_second_Riemann_solver_is_not_conservative}
The Riemann solver $\mathcal{RS}^\alpha_2$ conserves only the density of the vehicles. Indeed, along the line $x-y(t)=0$ the Rankine-Hugoniot condition holds for the first component of the flux, because both $(\hat{\rho},\hat{v})$ and $(\check{\rho}_2,\check{v}_2)$ are on the line $\rho v=F_\alpha+\bar{V}\rho$, i.e.
\begin{equation*}
\begin{split}
&\hat{\rho}(\hat{v}-\bar{V})-\check{\rho}_2(\check{v}_2-\bar{V})=0 \Longleftrightarrow\\
&\Longleftrightarrow \bar{V}=\dfrac{\hat{\rho}\hat{v}-\check{\rho}_2\check{v}_2}{\hat{\rho}-\check{\rho}_2},
\end{split}
\end{equation*}
Calling $\hat{w}=\hat{v}+p(\hat{\rho})$ and $\check{w}_2=\check{v}_2+p(\check{v}_2)$, for the second component we have
\begin{equation*}
\begin{split}
&\check{\rho}_2(\check{v}_2-\bar{V})\check{w}_2-\hat{\rho}(\hat{v}-\bar{V})\hat{w}=0 \Longleftrightarrow\\
&\check{\rho}_2(\check{v}_2-\bar{V})-\dfrac{\hat{w}}{\check{w}_2}\hat{\rho}(\hat{v}-\bar{V})=0 \Longleftrightarrow\\
&\Longleftrightarrow \bar{V}=\dfrac{\check{\rho}_2\check{v}_2-\frac{\hat{w}}{\check{w}_2}\hat{\rho}\hat{v}}{\check{\rho}_2-\frac{\hat{w}}{\check{w}_2}\hat{\rho}}.
\end{split}
\end{equation*}
This condition is satisfied only if $\hat{w}=\check{w}_2$, but this is false in general. 
\end{remark}
\section{Invariant domains}
Let us modify the results of \cite{garavello_goatin} to characterize the invariant domains of the Riemann solvers $\mathcal{RS}^\alpha_1$ and $\mathcal{RS}^\alpha_2$.\\
Fixed $0<v_1<v_2$ and $0<w_1<w_2$ with $v_2<w_2$, by Proposition \ref{standard_invariant_domain}, the domain
\begin{equation}
\mathcal{D}_{v_1,v_2,w_1,w_2}=\lbrace (\rho,v)\in \mathbb{R}^+\times \mathbb{R}^+\, : \, v_1\leq v\leq v_2, \, w_1\leq v+p(\rho)\leq w_2 \rbrace
\end{equation}
is invariant for the standard Riemann solver $\mathcal{RS}$ for the Riemann problem (\ref{Riemann_problem_ARZ}); see Figure \ref{dominio_invariante}.\\
Let $\bar{V}\in[0,V_b]$ be the constant speed of the bus and let $\alpha\in(0,1)$ be fixed.\\
Let us define the function
\begin{equation}
\begin{split}
& h_\alpha:[\bar{V},+\infty) \rightarrow \mathbb{R}^+\cup \lbrace +\infty \rbrace,\\
& h_\alpha(v)=\begin{cases}
v+p\left(\dfrac{F_\alpha}{v-\bar{V}}\right) & \text{if } v>\bar{V},\\
+\infty & \text{if } v=\bar{V}.
\end{cases}
\end{split}
\end{equation}
Let $\underline{w}\in \mathbb{R}^+$ be fixed. We note that $h_\alpha(\underline{v})=\underline{w}$ if and only if
$$\underline{w}=\underline{v}+p\left(\underline{\rho}\right),$$
where the point $\left( \underline{\rho},\underline{v}\right)$ satisfies $\underline{\rho}(\underline{v}-\bar{V})=F_\alpha$. Therefore $h_\alpha$ gives the value of the Riemann invariant $w$ in the point $(\underline{\rho},\underline{v})\in(0,+\infty)\times(\bar{V},+\infty)$ for which we have $\underline{\rho} (\underline{v}-\bar{V})=F_\alpha$; see Figure \ref{dominio_invariante}.
\begin{figure}[hbtp]
\centering
\definecolor{cqcqcq}{rgb}{0.7529411764705882,0.7529411764705882,0.7529411764705882}
\definecolor{qqqqff}{rgb}{0.,0.,1.}
\begin{tikzpicture}[scale = 0.3, line cap=round,line join=round,>=triangle 45,x=1.0cm,y=1.0cm]
\draw[->,color=black] (-2.,0.) -- (30.,0.);
\draw[->,color=black] (0.,-1.) -- (0.,25.);
\clip(-2.,-1.5) rectangle (30.,25.);
\draw[line width=0.pt,color=cqcqcq,fill=cqcqcq,fill opacity=1.0] {[smooth,samples=50,domain=4.49:9.25] plot(\x,{0-(-17.3/8.15)*\x-0.0/8.15})} -- (9.25,11.11) {[smooth,samples=50,domain=9.25:4.49] -- plot(\x,{4.24*\x-\x^(1.5)})} -- (4.49,9.53) -- cycle;
\draw[line width=0.pt,color=cqcqcq,fill=cqcqcq,fill opacity=1.0] {[smooth,samples=50,domain=9.23:12.55] plot(\x,{5.16*\x-\x^(1.5)})} -- (12.55,8.78) {[smooth,samples=50,domain=12.55:9.23] -- plot(\x,{4.24*\x-\x^(1.5)})} -- (9.23,19.6) -- cycle;
\draw[line width=0.pt,color=cqcqcq,fill=cqcqcq,fill opacity=1.0] {[smooth,samples=50,domain=12.5:19.84] plot(\x,{5.16*\x-\x^(1.5)})} -- (19.84,14.03) {[smooth,samples=50,domain=19.84:12.5] -- plot(\x,{0-(-11.8/16.67)*\x-0.0/16.67})} -- (12.5,20.33) -- cycle;
\draw[smooth,samples=50,domain=0.001:30.0] plot(\x,{5.16227766*(\x)-(\x)^(1.5)});
\draw[smooth,samples=50,domain=0.001:30.0] plot(\x,{4.24*(\x)-(\x)^(1.5)});
\draw [domain=-2.:30.] plot(\x,{(--9.76--0.5*\x)/1.});
\draw (28.,0.) node[anchor=north west] {$\rho$};
\draw (20.86,21.2) node[anchor=north west] {$F_\alpha+\bar{V}\rho$};
\draw (-1.76,23.75) node[anchor=north west] {$\rho v$};
\draw (13.8,5.08) node[anchor=north west] {$w_1$};
\draw (23.2,9.7) node[anchor=north west] {$w_2$};
\draw (-1.93,11.75) node[anchor=north west] {$F_\alpha$};
\draw [domain=0.0:30.0] plot(\x,{(-0.--17.3*\x)/8.15});
\draw [domain=0.0:30.0] plot(\x,{(-0.--11.8*\x)/16.67});
\draw (10.74,23.81) node[anchor=north west] {$v_2$};
\draw (22.6,16.6) node[anchor=north west] {$v_1$};
\draw [domain=0.0:30.0] plot(\x,{(-0.--16.68*\x)/13.8});
\draw[smooth,samples=50,domain=0.001:30.0] plot(\x,{4.92*(\x)-(\x)^(1.5)});
\draw (17.5,6.08) node[anchor=north west] {$\underline{w}=h_\alpha(\underline{v})$};
\draw (14.,17.5) node[anchor=north west] {$(\underline{\rho},\underline{v})$};
\draw (17.75,22.3) node[anchor=north west] {$\underline{v}$};
\begin{scriptsize}
\draw [fill=qqqqff] (13.8,16.68) circle (5pt);
\end{scriptsize}
\end{tikzpicture}
\caption{The coloured area is the invariant domain $\mathcal{D}_{v_1,v_2,w_1,w_2}$. The point $\left(\underline{\rho},\underline{v}\right)$ satisfies $\underline{\rho} (\underline{v}-\bar{V})=F_\alpha$ and $\underline{v}+p\left(\underline{\rho}\right)=h_\alpha\left(\underline{v}\right)=\underline{w}$.}\label{dominio_invariante}
\end{figure}
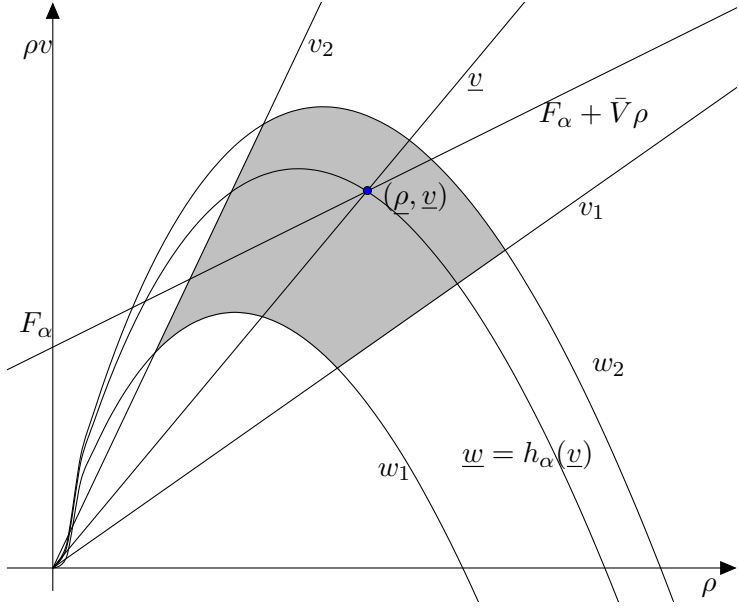
\begin{lemma}\label{lemma_monotonia_di_h_alfa}
Let us suppose that the hypotheses (\ref{ipotesi_pressione}) hold. Fixed $\alpha$, there exists $\bar{v}=\bar{v}(\alpha) \in [\bar{V},+\infty)$ such that $h_\alpha(v)$ is strictly decreasing in $[\bar{V},\bar{v})$ and strictly increasing in $(\bar{v},+\infty)$.
\end{lemma}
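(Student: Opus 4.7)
The plan is to compute $h_\alpha'$ explicitly and reduce the monotonicity question to the behaviour of a single scalar function $\phi(\rho) := \rho^2\, p'(\rho)$. For $v>\bar V$, set $\rho(v) := F_\alpha/(v-\bar V)$, so that $\rho(v)(v-\bar V) = F_\alpha$ and $(v-\bar V)^2 = F_\alpha^2/\rho(v)^2$. The chain rule then gives
$$h_\alpha'(v) = 1 - \frac{F_\alpha}{(v-\bar V)^2}\, p'\!\left(\frac{F_\alpha}{v-\bar V}\right) = 1 - \frac{\rho(v)^2\, p'(\rho(v))}{F_\alpha} = 1 - \frac{\phi(\rho(v))}{F_\alpha},$$
so the sign of $h_\alpha'(v)$ is entirely controlled by the comparison of $\phi(\rho(v))$ with $F_\alpha$.

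Next I would analyse $\phi$ on $(0,+\infty)$. Differentiating,
$$\phi'(\rho) = 2\rho\, p'(\rho) + \rho^2\, p''(\rho) = \rho\,\frac{d^2}{d\rho^2}\bigl(\rho\, p(\rho)\bigr),$$
which is strictly positive on $(0,+\infty)$ by the strict convexity of $\rho \mapsto \rho\, p(\rho)$ assumed in (\ref{ipotesi_pressione}), used in the same pointwise sense as in the verification of genuine non-linearity in Proposition \ref{ARZ_properties_rho_z}. Combined with $\phi(0)=0$, this shows that $\phi$ is a strictly increasing bijection of $(0,+\infty)$ onto its image in $(0,+\infty)$; therefore the equation $\phi(\rho) = F_\alpha$ admits at most one positive solution, and by the very definition of $\rho_\alpha$ and $F_\alpha = \rho_\alpha^2\, p'(\rho_\alpha)$ this solution is precisely $\rho = \rho_\alpha$.

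To finish I would set $\bar v := \bar V + F_\alpha/\rho_\alpha \in (\bar V,+\infty)$, which is the unique $v$ with $\rho(v) = \rho_\alpha$. Since $v \mapsto \rho(v)$ is strictly decreasing from $(\bar V,+\infty)$ onto $(0,+\infty)$, for $v \in (\bar V,\bar v)$ one has $\rho(v) > \rho_\alpha$, hence $\phi(\rho(v)) > F_\alpha$ and $h_\alpha'(v) < 0$; symmetrically, $v > \bar v$ yields $h_\alpha'(v) > 0$. The main obstacle is really just the pointwise strict inequality $(\rho\, p(\rho))'' > 0$, but, as noted, it is precisely the reading of the convexity hypothesis already employed throughout the thesis; once granted, everything else is sign-tracking through the explicit substitution $\rho = F_\alpha/(v-\bar V)$.
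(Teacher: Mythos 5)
Your proof is correct, and it takes a genuinely different route from the one in the thesis. The thesis shows $h_\alpha'' > 0$, establishes $\lim_{v\to\bar V^+}h_\alpha(v) = +\infty$ together with $\lim_{v\to+\infty}h_\alpha'(v) = 1$, and then invokes an existence--of--minimum argument (note, incidentally, that the thesis's closing sentence has the monotonicity intervals swapped by typo). You instead fold the whole question into the single substitution $\rho = F_\alpha/(v-\bar V)$, obtaining $h_\alpha'(v) = 1 - \phi(\rho(v))/F_\alpha$ with $\phi(\rho) = \rho^2 p'(\rho)$, and observe that $\phi'(\rho) = \rho\,\tfrac{d^2}{d\rho^2}(\rho p(\rho)) > 0$ so $\phi$ is strictly increasing; since $v\mapsto\rho(v)$ is a strictly decreasing bijection of $(\bar V,+\infty)$ onto $(0,+\infty)$, the derivative $h_\alpha'$ changes sign exactly once, at the unique $\rho$ with $\phi(\rho)=F_\alpha$. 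What your argument buys is elegance and information: you never need to differentiate twice, you avoid the limit computations entirely, and you identify $\bar v$ explicitly as $\bar V + F_\alpha/\rho_\alpha = \bar V + \rho_\alpha p'(\rho_\alpha)$, recognising that the transition point is exactly $\rho=\rho_\alpha$ because $F_\alpha = \rho_\alpha^2 p'(\rho_\alpha) = \phi(\rho_\alpha)$ by definition. The thesis's reasoning, by contrast, establishes only the abstract existence of a minimum. The one thing you should make explicit (rather than tacit) is that $\rho_\alpha > 0$, which guarantees $\bar v > \bar V$; this is immediate from how $\rho_\alpha$ is defined but is worth a word.
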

\begin{proof}
The function $\rho \rightarrow \rho p(\rho)$ is strictly convex, then its second derivative $2{p} '(\rho)+\rho {p}''(\rho)$ is strictly positive for every $\rho\in(0,+\infty)$. Hence the second derivative of ${h}_\alpha$, which is 
$$h''_\alpha(v)=\dfrac{F_\alpha}{(v-\bar{V})^3}\left[2{p} '\left(\dfrac{F_\alpha}{v-\bar{V}}\right)+\dfrac{F_\alpha}{v-\bar{V}}{p}''\left(\dfrac{F_\alpha}{v-\bar{V}}\right)\right],$$
is strictly positive for every $v>\bar{V}$ and the function 
\begin{equation*}
v \rightarrow {h} '_\alpha(v)=1-\dfrac{F_\alpha}{(v-\bar{V})^2}\, {p} '\left(\dfrac{F_\alpha}{v-\bar{V}}\right) \end{equation*}
is strictly increasing for every $v>\bar{V}$.\\
We have
$$\lim_{\rho\to +\infty}p(\rho)=+\infty,$$
otherwise there would be $M>0$ such that for every $\rho\in[0,+\infty)$ we have $p(\rho)<M$. Then $\rho p(\rho)< \rho M$ which is absurd, because the strictly convex function $\rho \rightarrow \rho p(\rho)$ cannot be less than a linear function. Therefore
\begin{equation*}
\begin{split}
& \lim_{v\to \bar{V}+}\left[ v+p\left(\dfrac{F_\alpha}{v-\bar{V}}\right)\right]=+\infty \; \text{ and }\\ & \lim_{v\to+\infty}h'_\alpha(v)=\lim_{v\to+\infty}\left[1-\dfrac{F_\alpha}{(v-\bar{V})^2}\, p'\left(\dfrac{F_\alpha}{v-\bar{V}}\right)\right]=1,
\end{split}
\end{equation*}
indeed, by the hypotheses (\ref{ipotesi_pressione}), we obtain
\begin{equation*}
\begin{split}
& p(\rho)+\rho \, p'(\rho)\geq p(0)+0=0 \Longrightarrow \rho \, p'(\rho)\geq -p(\rho) \Longrightarrow \\
& \Longrightarrow -\rho \, p'(\rho)\leq p(\rho) \Longrightarrow\\
&\Longrightarrow 0=\lim_{\rho\to 0}(-\rho\, p'(\rho))\leq \lim_{\rho \to 0} p(\rho) = 0. 
\end{split}
\end{equation*}
Hence the function $v\to h'_\alpha(v)$ is strictly increasing, when $v\to \bar{V}+$ its value is $-\infty$ and for $v\to +\infty$ is $1$.\\
On the other hand when $v\to \bar{V}+$ or $v \to +\infty$, the function $h_\alpha$ goes to $+\infty$. Therefore there must be a value $\bar{v}$ for which $h_\alpha$ is strictly increasing in $[\bar{V},\bar{v})$ and strictly decreasing in $(\bar{v},+\infty)$.
\end{proof}
\begin{prop}\label{coincidenza_risolutori}
Let $\bar{V}\leq v_1<v_2$, $0<w_1<w_2$, $v_2<w_2$ and $\alpha\in(0,1)$ be fixed. If $h_\alpha(v)\geq w_2$ for every $v\in [v_1,v_2]$, then the Riemann solvers $\mathcal{RS}^\alpha_1$ and $\mathcal{RS}^\alpha_2$ coincide with the Riemann solver standard $\mathcal{RS}$.
\end{prop}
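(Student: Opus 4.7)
The plan is to reduce the statement to showing that Case 1 in the definitions of $\mathcal{RS}^\alpha_1$ and $\mathcal{RS}^\alpha_2$ never occurs under the stated hypotheses. Indeed, in Case 2 and Case 3 both constrained Riemann solvers coincide by definition with $\mathcal{RS}$, so the proof reduces to showing that whenever $(\rho^l,v^l),(\rho^r,v^r)\in\mathcal{D}_{v_1,v_2,w_1,w_2}$ we have
$$
f_1\bigl(\mathcal{RS}((\rho^l,v^l),(\rho^r,v^r))(\bar{V})\bigr)\leq F_\alpha+\bar{V}\,\bar{\rho}((\rho^l,v^l),(\rho^r,v^r))(\bar{V}),
$$
i.e.\ that the classical solution evaluated along the bus line automatically fulfills the flux constraint (\ref{vincolo_1}).

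First I would invoke Theorem \ref{standard_invariant_domain}: since $(\rho^l,v^l),(\rho^r,v^r)\in\mathcal{D}_{v_1,v_2,w_1,w_2}$, the classical solution $(\bar{\rho},\bar{v})=\mathcal{RS}((\rho^l,v^l),(\rho^r,v^r))(\bar{V})$ also belongs to $\mathcal{D}_{v_1,v_2,w_1,w_2}$. In particular
$$
v_1\leq \bar{v}\leq v_2 \quad\text{and}\quad \bar{v}+p(\bar{\rho})\leq w_2.
$$
From the assumption $\bar{V}\leq v_1$ we get $\bar{v}\geq \bar{V}$, so the product $\bar{\rho}(\bar{v}-\bar{V})\geq 0$ is well defined and equals zero (hence trivially bounded by $F_\alpha$) whenever $\bar{v}=\bar{V}$.

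If instead $\bar{v}>\bar{V}$, I would use the hypothesis $h_\alpha(\bar{v})\geq w_2$, which reads
$$
\bar{v}+p\!\left(\frac{F_\alpha}{\bar{v}-\bar{V}}\right)\;\geq\; w_2 \;\geq\; \bar{v}+p(\bar{\rho}),
$$
so that $p(\bar{\rho})\leq p\!\left(F_\alpha/(\bar{v}-\bar{V})\right)$. Since $p$ is strictly increasing by (\ref{ipotesi_pressione}), this yields $\bar{\rho}\leq F_\alpha/(\bar{v}-\bar{V})$, i.e.\ $\bar{\rho}(\bar{v}-\bar{V})\leq F_\alpha$. Rewriting, we obtain $f_1(\bar{\rho},\bar{v})=\bar{\rho}\bar{v}\leq F_\alpha+\bar{V}\bar{\rho}$, which is precisely the negation of the Case 1 condition.

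Hence Case 1 can never be triggered, so we must fall in Case 2 or Case 3; in both instances the three Riemann solvers agree with $\mathcal{RS}$ by their very definitions. No serious obstacle is expected: the argument is essentially a geometric observation that the hypothesis $h_\alpha\geq w_2$ on $[v_1,v_2]$ forces the whole invariant domain to lie under the constraint curve $\rho(v-\bar{V})=F_\alpha$, and invariance of $\mathcal{D}_{v_1,v_2,w_1,w_2}$ under $\mathcal{RS}$ does the rest.
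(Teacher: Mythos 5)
Your proof is correct and follows essentially the same line as the paper's: both proofs rely on the invariance of $\mathcal{D}_{v_1,v_2,w_1,w_2}$ under $\mathcal{RS}$ and then use $h_\alpha(v)\geq w_2$ together with $v+p(\rho)\leq w_2$ and the monotonicity of $p$ to conclude $\rho(v-\bar V)\leq F_\alpha$. The only difference is one of presentation — you apply the inequality directly to the trace $(\bar\rho,\bar v)=\mathcal{RS}(\cdot)(\bar V)$ and organize the argument explicitly around ruling out Case~1 of the Riemann solver definitions, while the paper first establishes $\sup\{\rho(v-\bar V):(\rho,v)\in\mathcal{D}_{v_1,v_2,w_1,w_2}\}\leq F_\alpha$ for the whole domain; this is a harmless reordering, not a genuinely different route.
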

\begin{proof}
Let $(\rho,v)$ be a point in $\mathcal{D}_{v_1,v_2,w_1,w_2}$. Hence $v_1\leq v\leq v_2$ and $w_1\leq v+p(\rho)\leq w_2$. By the hypotheses, we have
\begin{equation*}
\begin{split}
& v+p(\rho)\leq w_2 \leq h_\alpha(v) = v+p\left(\dfrac{F_\alpha}{v-\bar{V}}\right) \Longrightarrow p(\rho)\leq p\left(\dfrac{F_\alpha}{v-\bar{V}}\right)\Longrightarrow\\
&\Longrightarrow \rho \leq \dfrac{F_\alpha}{v-\bar{V}}\Longrightarrow \rho(v-\bar{V})\leq F_\alpha.
\end{split}
\end{equation*}
By the arbitrary choice of $(\rho,v)$ in the domain, we obtain
$$\sup\lbrace \rho (v-\bar{V}) :\, (\rho,v)\in\mathcal{D}_{v_1,v_2,w_1,w_2}\rbrace\leq F_\alpha.$$
Hence, if we choose $(\rho^l,v^l)$ and $(\rho^r,v^r)$ in  $\mathcal{D}_{v_1,v_2,w_1,w_2}$, the classical solution
$$\mathcal{RS}((\rho^l,v^l),(\rho^r,v^r))$$
satisfies the constraint for every $\lambda \in[v_1,v_2]$. Therefore the solutions given by the two Riemann solvers $\mathcal{RS}^\alpha_1$ and $\mathcal{RS}^\alpha_2$ coincide with the classical solution.
\end{proof}
\begin{cor}
Let $\bar{V}<v_1<v_2$, $0<w_1<w_2$, $v_2<w_2$ and $\alpha\in(0,1)$ be fixed. If $h_\alpha(v)\geq w_2$ for every $v\in [v_1,v_2]$, the domain $\mathcal{D}_{v_1,v_2,w_1,w_2}$ is invariant for both $\mathcal{RS}^\alpha_1$ and $\mathcal{RS}^\alpha_2$.
\end{cor}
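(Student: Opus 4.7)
The corollary follows almost immediately by chaining together two results that have already been established in the excerpt, so there is no real obstacle to overcome; the plan is essentially a one-line observation followed by the verification that the hypotheses line up.

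First I would invoke Proposition \ref{coincidenza_risolutori} with the same parameters $v_1, v_2, w_1, w_2, \alpha$ and $\bar{V}$. The hypothesis $h_\alpha(v) \geq w_2$ for every $v \in [v_1, v_2]$ is exactly the assumption of that proposition, so its conclusion applies: for every choice of left and right states $(\rho^l, v^l), (\rho^r, v^r) \in \mathcal{D}_{v_1,v_2,w_1,w_2}$ one has the identity
\[
\mathcal{RS}^\alpha_1((\rho^l,v^l),(\rho^r,v^r))(\lambda) = \mathcal{RS}^\alpha_2((\rho^l,v^l),(\rho^r,v^r))(\lambda) = \mathcal{RS}((\rho^l,v^l),(\rho^r,v^r))(\lambda)
\]
for every $\lambda \in \mathbb{R}$, where $\mathcal{RS}$ denotes the standard Riemann solver for the ARZ system.

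Second I would appeal to Theorem \ref{standard_invariant_domain}, whose hypotheses are $0 < v_1 < v_2$, $0 < w_1 < w_2$ and $v_2 < w_2$: these are precisely the conditions assumed in the statement of the corollary. That theorem guarantees that $\mathcal{D}_{v_1,v_2,w_1,w_2}$ is invariant for the standard Riemann solver $\mathcal{RS}$.

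Combining the two steps, for any $(\rho^l,v^l),(\rho^r,v^r) \in \mathcal{D}_{v_1,v_2,w_1,w_2}$ and any $\lambda \in \mathbb{R}$, the value $\mathcal{RS}^\alpha_i((\rho^l,v^l),(\rho^r,v^r))(\lambda)$ (for $i = 1, 2$) equals $\mathcal{RS}((\rho^l,v^l),(\rho^r,v^r))(\lambda)$, which lies in $\mathcal{D}_{v_1,v_2,w_1,w_2}$ by invariance under $\mathcal{RS}$. This is exactly the definition of invariance for $\mathcal{RS}^\alpha_1$ and $\mathcal{RS}^\alpha_2$, which concludes the argument. The only thing worth double-checking is the slight strengthening of the hypothesis from $\bar{V} \leq v_1$ in Proposition \ref{coincidenza_risolutori} to $\bar{V} < v_1$ here; since the former is weaker, the proposition still applies and nothing else needs to be said.
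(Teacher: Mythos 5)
Your proof is correct and follows exactly the same route as the paper: apply Proposition \ref{coincidenza_risolutori} to identify $\mathcal{RS}^\alpha_1$ and $\mathcal{RS}^\alpha_2$ with the standard solver $\mathcal{RS}$ on this domain, then apply Theorem \ref{standard_invariant_domain} for invariance under $\mathcal{RS}$. Your remark that $\bar V < v_1$ is a strengthening of the proposition's $\bar V \le v_1$ is also correct and appropriately noted.
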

\begin{proof}
It is a direct application of Propositions \ref{coincidenza_risolutori} and \ref{invariant_domain_standard}.
\end{proof}
The next theorems characterize the invariant domains of the Riemann solvers $\mathcal{RS}^\alpha_1$ and $\mathcal{RS}^\alpha_2$ when they are different from the standard Riemann solver. Figures \ref{figura_dominio_invariante_primo_risolutore} and \ref{figura_dominio_invariante_secondo_risolutore} are two examples of these domains.
\begin{theorem}\label{Proposizione_Dominio_invariante_R_alfa1_definitiva}
Let $0<v_1<v_2$, $0<w_1<w_2$, $v_2<w_2$ and $\alpha \in (0,1)$ be fixed. Let us assume that there exists $\bar{v}\in[\bar{V},v_2]$ for which $h_\alpha(\bar{v})<w_2$.
\begin{enumerate}
\item[(i)] If $v_2\leq\bar{V}$, then the set $\mathcal{D}_{v_1,v_2,w_1,w_2}$ is invariant for $\mathcal{RS}^\alpha_1$.
\item[(ii)] If $v_1\geq\bar{V}$, then the set $\mathcal{D}_{v_1,v_2,w_1,w_2}$ is invariant for $\mathcal{RS}^\alpha_1$ if and only if
\begin{equation}\label{condizioni_dominio_invariante1}
h_\alpha(v_1)\geq w_2 \; \text{ and } \; h_\alpha(v_2)\geq w_2.
\end{equation}
\item[(iii)] If $v_1<\bar{V}<v_2$, then the set $\mathcal{D}_{v_1,v_2,w_1,w_2}$ is invariant for $\mathcal{RS}^\alpha_1$ if and only if
\begin{equation}\label{condizioni_dominio_invariante1}
h_\alpha(v_2)\geq w_2.
\end{equation}
\end{enumerate}
\end{theorem}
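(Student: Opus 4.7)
The plan is to reduce invariance of $\mathcal{D}_{v_1,v_2,w_1,w_2}$ under $\mathcal{RS}^\alpha_1$ to a verification concerning only the two non-classical states $(\hat\rho,\hat v)$ and $(\check\rho_1,\check v_1)$. In cases 2 and 3 of the definition of $\mathcal{RS}^\alpha_1$ the solver coincides with the standard Riemann solver $\mathcal{RS}$, which by Theorem~\ref{standard_invariant_domain} preserves $\mathcal{D}_{v_1,v_2,w_1,w_2}$. Hence I focus on case 1, where a non-classical shock appears. Both non-classical states lie on the first Lax curve through $(\rho^l,v^l)$, so they share the Riemann invariant $w^l := v^l + p(\rho^l)$; since $(\rho^l,v^l)\in\mathcal{D}_{v_1,v_2,w_1,w_2}$ forces $w^l\in[w_1,w_2]$, the bounds on $v+p(\rho)$ are automatic, and only the velocity bounds $v_1\leq\hat v$ and $\check v_1\leq v_2$ need to be checked.

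Next I reformulate these bounds through $h_\alpha$. Because $\hat\rho(\hat v-\bar V)=F_\alpha=\check\rho_1(\check v_1-\bar V)$, both $\hat v$ and $\check v_1$ are roots of $h_\alpha(v)=w^l$. Lemma~\ref{lemma_monotonia_di_h_alfa} supplies a unique $\bar v(\alpha)$ where $h_\alpha$ is minimal, with $h_\alpha$ strictly decreasing on $[\bar V,\bar v(\alpha)]$ and strictly increasing on $[\bar v(\alpha),+\infty)$. Combined with $\hat v\leq\check v_1$, this forces $\hat v\in[\bar V,\bar v(\alpha)]$ and $\check v_1\in[\bar v(\alpha),+\infty)$. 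Hence, provided $v_1$ lies on the decreasing branch and $v_2$ on the increasing branch, the uniform bounds $\hat v\geq v_1$ and $\check v_1\leq v_2$ for every admissible $w^l\in[w_1,w_2]$ are equivalent to $h_\alpha(v_1)\geq w_2$ and $h_\alpha(v_2)\geq w_2$ respectively.

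With this machinery each case is handled as follows. In (i), every state in $\mathcal{D}_{v_1,v_2,w_1,w_2}$ has velocity $\leq v_2\leq\bar V$, so the classical solution at $x/t=\bar V$ returns $(\rho^r,v^r)$ (which sits past the contact discontinuity of speed $v^r\leq\bar V$); therefore case 3 of the solver always applies and invariance reduces to the standard case. In (ii) both the decreasing- and increasing-branch arguments are needed, so both inequalities $h_\alpha(v_1)\geq w_2$ and $h_\alpha(v_2)\geq w_2$ are required. In (iii), the inequality $\hat v>\bar V>v_1$ holds automatically from $\hat\rho(\hat v-\bar V)=F_\alpha>0$, so the condition on $v_1$ disappears and only the condition on $v_2$ survives.

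The necessity direction is the main obstacle. For instance, if $h_\alpha(v_2)<w_2$ I must produce a pair $\bigl((\rho^l,v^l),(\rho^r,v^r)\bigr)\in\mathcal{D}_{v_1,v_2,w_1,w_2}^{\,2}$ that triggers case 1 of $\mathcal{RS}^\alpha_1$ and whose output contains a state with $\check v_1>v_2$. Choosing $(\rho^l,v^l)$ with $w^l$ slightly larger than $h_\alpha(v_2)$ but still at most $w_2$ yields $\check v_1>v_2$ automatically by strict monotonicity of $h_\alpha$ on the increasing branch; the delicate part is simultaneously selecting $(\rho^r,v^r)\in\mathcal{D}_{v_1,v_2,w_1,w_2}$ so that the flux-constraint violation $f_1(\mathcal{RS}(\cdot)(\bar V))>F_\alpha+\bar V\,\bar\rho(\bar V)$ actually holds—this requires, e.g., taking $v^r$ close to $\bar V$ and $\rho^r$ small enough that the middle state $(\rho^m,v^m)$ visible at $x/t=\bar V$ coincides with a violating point on the Lax curve. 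A symmetric construction exploiting the decreasing branch handles $h_\alpha(v_1)<w_2$ in (ii).
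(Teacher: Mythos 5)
Your proposal follows the paper's approach essentially exactly for the sufficiency direction: reduce invariance to membership of $(\hat\rho,\hat v)$ and $(\check\rho_1,\check v_1)$ in $\mathcal{D}_{v_1,v_2,w_1,w_2}$, note the $w$-bounds are automatic since both lie on $L_1(\cdot,\rho^l,v^l)$, and convert the residual velocity bounds via the unimodality of $h_\alpha$ provided by Lemma~\ref{lemma_monotonia_di_h_alfa}. Case (i) is handled exactly as in the paper. Where you diverge is in the treatment of (iii): the paper decomposes $\mathcal{D}_{v_1,v_2,w_1,w_2}$ into $\mathcal{D}\cap\mathcal{U}$ and $\mathcal{D}\cap\mathcal{V}$ and glues the two sub-results through Proposition~\ref{domini_invarianti_ultima_prop}, whereas you argue directly that $\hat v,\check v_1>\bar V\geq v_1$ is automatic from $F_\alpha>0$ — this is valid, arguably simpler, and makes the slot occupied by $v_1$ in hypothesis (\ref{condizioni_dominio_invariante1}) visibly vacuous.

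The one real weak point is the necessity direction, which you flag as ``the main obstacle'' and leave as a hand-wave (taking $v^r$ close to $\bar V$, tuning $\rho^r$ so the middle state violates the constraint, etc.). This is more complicated than needed and you never show the construction closes. The paper's trick, which your write-up misses, is to take the \emph{same} corner datum $(\rho^l,v^l)=(\rho^r,v^r)=(\rho^*,v^*)$ with $v^*=v_2$ and $v^*+p(\rho^*)=w_2$. The classical solution is then constant, so $\mathcal{RS}((\rho^*,v^*),(\rho^*,v^*))(\bar V)=(\rho^*,v^*)$ and the constraint check reduces to verifying $\rho^*(v^*-\bar V)>F_\alpha$, which follows directly from $h_\alpha(v_2)<w_2$ and monotonicity of $p$; case 1 of $\mathcal{RS}^\alpha_1$ then fires and $\check v_1>v_2$ follows from $h_\alpha(\check v_1)=w_2>h_\alpha(v_2)$ on the increasing branch (and symmetrically with $v^*=v_1$ for the other inequality, via $\hat v<v_1$). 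Replacing your sketch with this observation removes the ``delicate'' selection problem entirely.
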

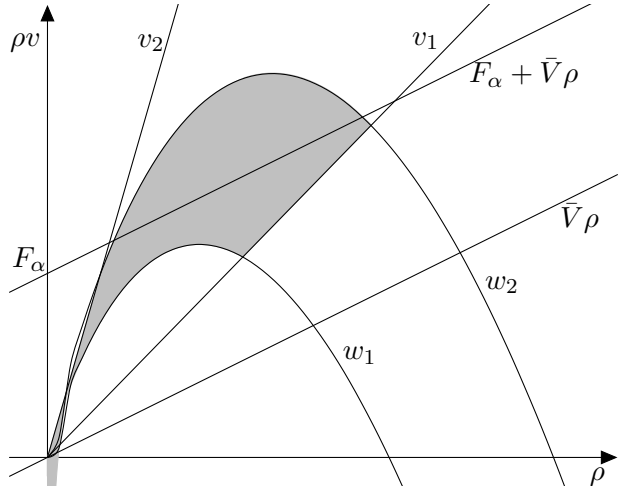
\begin{figure}[h]
\centering
\definecolor{cqcqcq}{rgb}{0.7529411764705882,0.7529411764705882,0.7529411764705882}
\begin{tikzpicture}[scale = 0.25,line cap=round,line join=round,>=triangle 45,x=1.0cm,y=1.0cm]
\draw[->,color=black] (-2.,0.) -- (30.,0.);
\draw[->,color=black] (0.,-1.) -- (0.,24.);
\clip(-2.5,-1.5) rectangle (30.,24.);
\draw[line width=0.pt,color=cqcqcq,fill=cqcqcq,fill opacity=1.0] {[smooth,samples=50,domain=2.81:10.32] plot(\x,{5.16*\x-\x^(1.5)})} -- (10.32,10.63) {[smooth,samples=50,domain=10.32:2.81] -- plot(\x,{4.24*\x-\x^(1.5)})} -- (2.81,9.8) -- cycle;
\draw[line width=0.pt,color=cqcqcq,fill=cqcqcq,fill opacity=1.0] {[smooth,samples=50,domain=10.32:17.05] plot(\x,{5.16227766*\x-\x^(1.5)})} -- (17.05,17.61) {[smooth,samples=50,domain=17.05:10.32] -- plot(\x,{0-(-14.6/14.12)*\x-0.0/14.12})} -- (10.32,20.12) -- cycle;
\draw[line width=0.pt,color=cqcqcq,fill=cqcqcq,fill opacity=1.0] {[smooth,samples=50,domain=0.0:2.83] plot(\x,{0-(-17.68/5.06)*\x-0.0/5.06})} -- (2.83,7.24) {[smooth,samples=50,domain=2.83:0.0] -- plot(\x,{4.242640687*\x-\x^(1.5)})} -- (0.,0.) -- cycle;
\draw[smooth,samples=50,domain=0.001:30.0] plot(\x,{5.16*(\x)-(\x)^(1.5)});
\draw[smooth,samples=50,domain=0.001:30.0] plot(\x,{4.24*(\x)-(\x)^(1.5)});
\draw [domain=-2.:30.] plot(\x,{(-0.--0.5*\x)/1.});
\draw [domain=-2.:30.] plot(\x,{(--9.76--0.5*\x)/1.});
\draw (28.,0.) node[anchor=north west] {$\rho$};
\draw (21.65,21.7) node[anchor=north west] {$F_\alpha+\bar{V}\rho$};
\draw (-2.5,23.) node[anchor=north west] {$\rho v$};
\draw (15.,6.4) node[anchor=north west] {$w_1$};
\draw (22.5,10.15) node[anchor=north west] {$w_2$};
\draw (-2.4,11.7) node[anchor=north west] {$F_\alpha$};
\draw [domain=0.0:30.0] plot(\x,{(-0.--17.68*\x)/5.06});
\draw [domain=0.0:30.0] plot(\x,{(-0.--14.6*\x)/14.12});
\draw (4.2,22.98) node[anchor=north west] {$v_2$};
\draw (18.66,23.08) node[anchor=north west] {$v_1$};
\draw (26.44,14.) node[anchor=north west] {$\bar{V}\rho$};
\end{tikzpicture}
\caption{Example of an invariant domain (the coloured area) for $\mathcal{RS}^\alpha_1$ for $v_1>\bar{V}$.}\label{figura_dominio_invariante_primo_risolutore}
\end{figure}
\begin{theorem}\label{Proposizione_Dominio_invariante_R_alfa2_definitiva}
Let $0<v_1<v_2$, $0<w_1<w_2$, $v_2<w_2$ and $\alpha\in(0,1)$ be fixed. Let us suppose that there exists $\bar{v}\in [\bar{V},v_2]$, such that $h_\alpha(\bar{v})<w_2$. 
\begin{enumerate}
\item[(i)] If $v_2\leq\bar{V}$, then the set $\mathcal{D}_{v_1,v_2,w_1,w_2}$ is invariant for $\mathcal{RS}^\alpha_2$.
\item[(ii)] If $v_1\geq \bar{V}$, then the set $\mathcal{D}_{v_1,v_2,w_1,w_2}$ is invariant for  $\mathcal{RS}^\alpha_2$ if and only if
\begin{equation}\label{condizioni_dominio_invariante_R_alfa_2}
h_\alpha(v_1)\geq w_2, \;\; h_\alpha(v_2)\leq w_2 \; \text{ and } \; h_\alpha(v)\geq w_1
\end{equation}
for every $v\in[v_1,v_2]$.
\item[(iii)] If $v_1<\bar{V}<v_2$, then the set $\mathcal{D}_{v_1,v_2,w_1,w_2}$ is invariant for  $\mathcal{RS}^\alpha_2$ if and only if
\begin{equation}\label{condizioni_dominio_invariante_R_alfa_2}
h_\alpha(v_2)\leq w_2 \; \text{ and } \; h_\alpha(v)\geq w_1
\end{equation}
for every $v\in[\bar{V},v_2]$.
\end{enumerate}
\end{theorem}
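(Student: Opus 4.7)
The overall plan is to exhaust the three clauses defining $\mathcal{RS}^\alpha_2$. Wherever the solver coincides with the classical $\mathcal{RS}$ (clauses 2 and 3), invariance of $\mathcal{D}_{v_1,v_2,w_1,w_2}$ follows from Theorem \ref{standard_invariant_domain}. The only thing genuinely new to verify is that the two intermediate states $(\hat\rho,\hat v)$ and $(\check\rho_2,\check v_2)$ produced by the non-classical clause belong to $\mathcal{D}_{v_1,v_2,w_1,w_2}$; the waves of $\mathcal{RS}((\rho^l,v^l),(\hat\rho,\hat v))$ and of $\mathcal{RS}((\check\rho_2,\check v_2),(\rho^r,v^r))$ will then stay inside the domain by a second application of Theorem \ref{standard_invariant_domain}. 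Case (i) $v_2\leq\bar V$ is immediate: by Theorem \ref{prop_standard_solution} the classical solution at $x/t=\bar V$ equals $(\rho^r,v^r)$, so $\bar v(\bar V)=v^r\leq\bar V$, selecting clause 3 and the classical fan.

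For case (ii) with the three stated inequalities, fix $(\rho^l,v^l),(\rho^r,v^r)\in\mathcal{D}_{v_1,v_2,w_1,w_2}$ activating the non-classical clause. The crucial preliminary fact, extracted from the Remark preceding this theorem, is that activation forces $v^r\geq\hat v$; otherwise the middle state $(\rho^m,v^m)$ would satisfy $\rho^m>\hat\rho$ and hence (by Lemma \ref{funzione_concava_e_retta}) the constraint, and a Rankine--Hugoniot computation would force the classical value $(\rho^l,v^l)$ at $\bar V$ to satisfy it as well. The identities $\hat v+p(\hat\rho)=v^l+p(\rho^l)=:w^l$ and $\hat\rho(\hat v-\bar V)=F_\alpha$ combine into $h_\alpha(\hat v)=w^l$, with $\hat v$ on the decreasing branch $[\bar V,\bar v]$ of Lemma \ref{lemma_monotonia_di_h_alfa} (the larger-density root of Lemma \ref{esistenza_rho_hat_rho_check_1}). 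Since $h_\alpha$ is decreasing there and $h_\alpha(v_1)\geq w_2\geq w^l$, one deduces $\hat v\geq v_1$; combined with $\hat v\leq\bar v\leq v_2$ and $w^l\in[w_1,w_2]$, this yields $(\hat\rho,\hat v)\in\mathcal{D}_{v_1,v_2,w_1,w_2}$. For $(\check\rho_2,\check v_2)$ one has $\check v_2=v^r\in[v_1,v_2]$ and $\check v_2+p(\check\rho_2)=h_\alpha(v^r)$; the lower bound $h_\alpha(v^r)\geq w_1$ is the third hypothesis, and the upper bound $h_\alpha(v^r)\leq w_2$ follows because $v^r\geq\hat v\geq v_2^\ast$ (where $v_2^\ast\in[v_1,\bar v]$ is the decreasing-branch root of $h_\alpha=w_2$, which exists precisely because $h_\alpha(v_1)\geq w_2>h_\alpha(\bar v)$), combined with $h_\alpha(v_2)\leq w_2$ and the U-shape of $h_\alpha$. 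Case (iii) proceeds in the same manner, except that $v_1<\bar V<\hat v$ is automatic, so no hypothesis on $h_\alpha(v_1)$ is needed.

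For necessity in (ii), for each failing inequality I would construct an explicit Riemann datum in $\mathcal{D}_{v_1,v_2,w_1,w_2}$ whose $\mathcal{RS}^\alpha_2$-solution exits the domain: if $h_\alpha(v_1)<w_2$, take $w^l$ with $h_\alpha(v_1)<w^l\leq w_2$ together with $(\rho^r,v^r)$ triggering the non-classical clause, so that $h_\alpha(\hat v)=w^l>h_\alpha(v_1)$ and thus $\hat v<v_1$; if $h_\alpha(v_2)>w_2$, take $v^r$ near $v_2$ and a matching $(\rho^l,v^l)$, so that $\check w_2=h_\alpha(v^r)>w_2$; if $h_\alpha(v_0)<w_1$ for some $v_0\in[v_1,v_2]$, take $v^r=v_0$ so that $\check w_2=h_\alpha(v_0)<w_1$. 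In each case one verifies with Lemma \ref{funzione_concava_e_retta} that the datum truly activates the non-classical clause. The same strategy handles (iii).

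The hard part will be establishing the key preliminary fact that the non-classical clause activates only when $v^r\geq\hat v$: this is where the strict concavity of $\rho\mapsto\rho L_1(\rho,\rho^l,v^l)$ from Lemma \ref{funzione_concava_e_retta}, the Rankine--Hugoniot computation for 1-shocks, and the case-split on whether the classical value at $\bar V$ is $(\rho^l,v^l)$, a rarefaction point, or $(\rho^m,v^m)$ must all be pieced together carefully. Once that preliminary is in hand, the rest reduces to monotonicity of $h_\alpha$ on its two branches and straightforward bookkeeping.
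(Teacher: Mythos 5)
Your proposal follows essentially the same route as the paper: split on the position of $\bar V$ relative to $[v_1,v_2]$, reduce invariance to membership of $(\hat\rho,\hat v)$ and $(\check\rho_2,\check v_2)$ in $\mathcal D_{v_1,v_2,w_1,w_2}$ together with Theorem~\ref{standard_invariant_domain}, establish membership via the U-shape of $h_\alpha$ (Lemma~\ref{lemma_monotonia_di_h_alfa}) and Lemma~\ref{funzione_concava_e_retta}, and prove necessity by the same three counterexample constructions the paper carries out in Lemmas~\ref{lemma_1_dominio_invariante_RS_2}--\ref{lemma_3_dominio_invariante_2}. Your sufficiency argument is organized a bit more cleanly than the paper's Proposition~\ref{proposizione_dominio_invariante_R_alfa_2}, which argues indirectly through Claims~(i)--(iii): you invoke the consequence $v^r\ge\hat v$ of the Remark up front, which the paper essentially re-derives inside those claims. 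Two cautions. First, the step ``$\hat v\le\bar v\le v_2$'' is imprecise: if $\bar v$ is the minimizer of $h_\alpha$ it need not lie below $v_2$ (the hypotheses for $\mathcal{RS}^\alpha_2$, unlike those for $\mathcal{RS}^\alpha_1$, allow $h_\alpha$ to be monotone decreasing on all of $[v_1,v_2]$), while if $\bar v$ is the hypothesis element you do not know $\hat v\le\bar v$; the bound you actually need, $\hat v\le v^r\le v_2$, is the one you already extracted from the Remark, so use that. Second, ``case (iii) proceeds in the same manner'' understates the work: the paper devotes Proposition~\ref{domini_invarianti_ultima_prop} to it, because when $(\rho^l,v^l)\in\mathcal U$ the first-family fan crosses the bus trajectory and one must check that the solution glued from $\mathcal{RS}((\rho^l,v^l),(\hat\rho,\hat v))$ on $x<\bar V t$ and $\mathcal{RS}((\check\rho_2,\check v_2),(\rho^r,v^r))$ on $x>\bar V t$ is well posed; your two-state membership check does carry over (the lower bound $\hat v\ge v_1$ is automatic since $\hat v>\bar V>v_1$), but this gluing verification still has to be made.
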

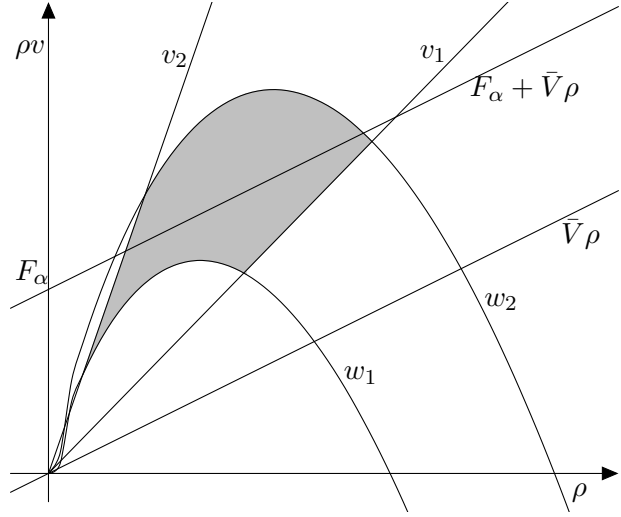
\begin{figure}[h]
\centering
\definecolor{cqcqcq}{rgb}{0.7529411764705882,0.7529411764705882,0.7529411764705882}
\begin{tikzpicture}[scale = 0.25,line cap=round,line join=round,>=triangle 45,x=1.0cm,y=1.0cm]
\draw[->,color=black] (-2.,0.) -- (30.,0.);
\draw[->,color=black] (0.,-1.5) -- (0.,25.);
\clip(-2.5,-2) rectangle (30.,25.);
\draw[line width=0.pt,color=cqcqcq,fill=cqcqcq,fill opacity=1.0] {[smooth,samples=50,domain=10.3:17.05] plot(\x,{5.16*\x-\x^(1.5)})} -- (17.05,17.61) {[smooth,samples=50,domain=17.05:10.3] -- plot(\x,{0-(-14.6/14.12)*\x-0.0/14.12})} -- (10.3,20.11) -- cycle;
\draw[line width=0.pt,color=cqcqcq,fill=cqcqcq,fill opacity=1.0] {[smooth,samples=50,domain=1.79:5.2] plot(\x,{0-(-17.72/6.1)*\x-0.0/6.1})} -- (5.2,10.2) {[smooth,samples=50,domain=5.2:1.79] -- plot(\x,{4.24*\x-\x^(1.5)})} -- (1.79,5.2) -- cycle;
\draw[line width=0.pt,color=cqcqcq,fill=cqcqcq,fill opacity=1.0] {[smooth,samples=50,domain=5.1:10.4] plot(\x,{5.16*\x-\x^(1.5)})} -- (10.4,10.58) {[smooth,samples=50,domain=10.4:5.1] -- plot(\x,{4.24*\x-\x^(1.5)})} -- (5.1,14.81) -- cycle;
\draw[smooth,samples=50,domain=0.001:30.0] plot(\x,{5.16*(\x)-(\x)^(1.5)});
\draw[smooth,samples=50,domain=0.001:30.0] plot(\x,{4.24*(\x)-(\x)^(1.5)});
\draw [domain=-2.:30.] plot(\x,{(-0.--0.5*\x)/1.});
\draw [domain=-2.:30.] plot(\x,{(--9.76--0.5*\x)/1.});
\draw (27.,0) node[anchor=north west] {$\rho$};
\draw (21.64,21.81) node[anchor=north west] {$F_\alpha+\bar{V}\rho$};
\draw (-2.3,23.5) node[anchor=north west] {$\rho v$};
\draw (15.,6.2) node[anchor=north west] {$w_1$};
\draw (22.4,10.1) node[anchor=north west] {$w_2$};
\draw (-2.3,11.87) node[anchor=north west] {$F_\alpha$};
\draw [domain=0.0:30.0] plot(\x,{(-0.--17.72*\x)/6.1});
\draw [domain=0.0:30.0] plot(\x,{(-0.--14.6*\x)/14.12});
\draw (5.4,22.98) node[anchor=north west] {$v_2$};
\draw (19.,23.24) node[anchor=north west] {$v_1$};
\draw (26.45,14.) node[anchor=north west] {$\bar{V}\rho$};
\end{tikzpicture}
\caption{Example of an invariant domain (the coloured area) for the Riemann solver $\mathcal{RS}^\alpha_2$ when $v_1>\bar{V}$.}\label{figura_dominio_invariante_secondo_risolutore}
\end{figure}
Let us define the sets $\mathcal{U}$ and $\mathcal{V}$ as
\begin{equation*}
\begin{split}
&\mathcal{U}:=\lbrace (\rho,v)\in (0,+\infty)\times(0,+\infty)\, : \, v \leq  \bar{V}\rbrace \; \text{ and}\\
& \mathcal{V}:= \lbrace (\rho,v)\in (0,+\infty)\times(0,+\infty)\, : \, v \geq \bar{V}\rbrace.
\end{split}
\end{equation*}
The proofs of Theorem \ref{Proposizione_Dominio_invariante_R_alfa1_definitiva} and Theorem \ref{Proposizione_Dominio_invariante_R_alfa2_definitiva} will be developed in the next sections and is divided in the following three parts:
\begin{enumerate}
\item the invariant domain is contained in $\mathcal{U}$;
\item the invariant domain is contained in $\mathcal{V}$;
\item the invariant domain has points in common with both $\mathcal{U}$ and $\mathcal{V}$.
\end{enumerate}
\subsection{The invariant domain is contained in $\mathcal{U}$}
If $v_2\leq \bar{V}$, then $\mathcal{D}_{v_1,v_2,w_1,w_2}\subseteq \mathcal{U}$; see Figure \ref{figura_dominio_invariante_caso_uno}. Indeed if $(\rho,v)$ belongs to $\mathcal{D}_{v_1,v_2,w_1,w_2}$, since $v\leq v_2$ and $ v_2 \leq \bar{V}$, we have $v \leq \bar{V}$.\\
Let $(\rho^l,v^l)$ and $(\rho^r,v^r)$ be two points in the domain $\mathcal{D}_{v_1,v_2,w_1,w_2}$. Since $\mathcal{D}_{v_1,v_2,w_1,w_2}$ is invariant for the standard Riemann solver $\mathcal{RS}$, we have 
$$f_1(\mathcal{RS}((\rho^l,v^l),(\rho^r,v^r))(\bar{V}))\leq \bar{V}\bar{\rho}((\rho^l,v^l),(\rho^r,v^r))(\bar{V}).$$
Therefore we are in the case in which the two Riemann solvers $\mathcal{RS}^\alpha_1$ and $\mathcal{RS}^\alpha_2$ coincide with $\mathcal{RS}$. Hence the domain $\mathcal{D}_{v_1,v_2,w_1,w_2}$ is invariant for both of them and this proves the points $(i)$ of Theorems \ref{Proposizione_Dominio_invariante_R_alfa1_definitiva} and \ref{Proposizione_Dominio_invariante_R_alfa2_definitiva}.
\begin{figure}[h]
\centering
\definecolor{cqcqcq}{rgb}{0.7529411764705882,0.7529411764705882,0.7529411764705882}
\begin{tikzpicture}[scale =0.25,line cap=round,line join=round,>=triangle 45,x=1.0cm,y=1.0cm]
\draw[->,color=black] (-2.,0.) -- (30.,0.);
\draw[->,color=black] (0.,-1.) -- (0.,23.);
\clip(-2.,-1.5) rectangle (30.,23.);
\draw[line width=0.pt,color=cqcqcq,fill=cqcqcq,fill opacity=1.0] {[smooth,samples=50,domain=15.16:17.0] plot(\x,{0-(-7.83/22.43)*\x-0.0/22.43})} -- (17.,2.03) {[smooth,samples=50,domain=17.0:15.16] -- plot(\x,{4.24*\x-\x^(1.5)})} -- (15.16,5.29) -- cycle;
\draw[line width=0.pt,color=cqcqcq,fill=cqcqcq,fill opacity=1.0] {[smooth,samples=50,domain=16.99:23.17] plot(\x,{0-(-7.83/22.43)*\x-0.0/22.43})} -- (23.17,2.76) {[smooth,samples=50,domain=23.17:16.99] -- plot(\x,{0-(-2.73/22.94)*\x-0.0/22.94})} -- (16.99,5.92) -- cycle;
\draw[line width=0.pt,color=cqcqcq,fill=cqcqcq,fill opacity=1.0] {[smooth,samples=50,domain=23.15:25.43] plot(\x,{5.16*\x-\x^(1.5)})} -- (25.43,3.03) {[smooth,samples=50,domain=25.43:23.15] -- plot(\x,{0-(-2.73/22.94)*\x-0.0/22.94})} -- (23.15,8.12) -- cycle;
\draw[smooth,samples=50,domain=0.001:30.0] plot(\x,{5.16*(\x)-(\x)^(1.5)});
\draw[smooth,samples=50,domain=0.001:30.0] plot(\x,{4.24*(\x)-(\x)^(1.5)});
\draw [domain=-2.:30.] plot(\x,{(-0.--0.5*\x)/1.});
\draw [domain=-2.:30.] plot(\x,{(--9.76--0.5*\x)/1.});
\draw (27.5,0.) node[anchor=north west] {$\rho$};
\draw (21.66,21.43) node[anchor=north west] {$F_\alpha+\bar{V}\rho$};
\draw (-2.3,22.5) node[anchor=north west] {$\rho v$};
\draw (9.49,12.4) node[anchor=north west] {$w_1$};
\draw (10.82,22.26) node[anchor=north west] {$w_2$};
\draw (-2.3,11.51) node[anchor=north west] {$F_\alpha$};
\draw [domain=0.0:30.0] plot(\x,{(-0.--7.83*\x)/22.43});
\draw [domain=0.0:30.0] plot(\x,{(-0.--2.73*\x)/22.94});
\draw (28.1,10.4) node[anchor=north west] {$v_2$};
\draw (27.68,3.7) node[anchor=north west] {$v_1$};
\draw (26.44,14.) node[anchor=north west] {$\bar{V}\rho$};
\end{tikzpicture}
\caption{Example of an invariant domain (the coloured area) for the Riemann solvers $\mathcal{RS}^\alpha_1$ and $\mathcal{RS}^\alpha_2$ when $\mathcal{D}_{v_1,v_2,w_1,w_2}$ is contained in $\mathcal{U}$.}\label{figura_dominio_invariante_caso_uno}
\end{figure}
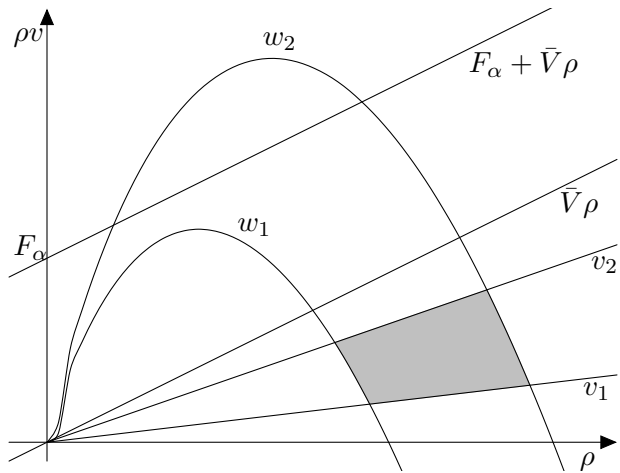
\subsection{The invariant domain is contained in $\mathcal{V}$}
If $v_1\geq\bar{V}$, then $\mathcal{D}_{v_1,v_2,w_1,w_2}\subseteq \mathcal{V}$. Indeed if $(\rho,v)$ belongs to $\mathcal{D}_{v_1,v_2,w_1,w_2}$, we have $v \geq \bar{V}$, because $v\geq v_1$ and $ v_1 \geq\bar{V}$; see Figures \ref{figura_dominio_invariante_primo_risolutore} and \ref{figura_dominio_invariante_secondo_risolutore}.
\subsubsection{The invariant domain for $\mathcal{RS}^\alpha_1$}
\begin{prop}\label{Proposizione_Dominio_invariante_R_alfa1}
Let $\bar{V}\leq v_1<v_2$, $0<w_1<w_2$, $v_2<w_2$ and $\alpha \in (0,1)$ be fixed. Let us assume that there exists $\bar{v}\in[v_1,v_2]$ such that $h_\alpha(\bar{v})<w_2$. The set $\mathcal{D}_{v_1,v_2,w_1,w_2}$ is invariant for $\mathcal{RS}^\alpha_1$ if and only if
\begin{equation}\label{condizioni_dominio_invariante1}
h_\alpha(v_1)\geq w_2 \; \text{ and } \; h_\alpha(v_2)\geq w_2.
\end{equation}
\end{prop}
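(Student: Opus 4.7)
The plan is to reduce invariance under $\mathcal{RS}^\alpha_1$ to a statement about which velocities can occur on the constraint curve, using the auxiliary function $h_\alpha$ and Lemma~\ref{lemma_monotonia_di_h_alfa}.

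First I would split into the three defining cases of $\mathcal{RS}^\alpha_1$. In cases (2) and (3) the constrained solver coincides with the standard solver $\mathcal{RS}$, so Theorem~\ref{standard_invariant_domain} already gives that every state stays in $\mathcal{D}_{v_1,v_2,w_1,w_2}$. The whole content of the proposition therefore concerns case (1), where the solution is $\mathcal{RS}((\rho^l,v^l),(\hat\rho,\hat v))$ for $x<y(t)$ followed by the non-classical shock to $(\check\rho_1,\check v_1)$ and then $\mathcal{RS}((\check\rho_1,\check v_1),(\rho^r,v^r))$ for $x\ge y(t)$. Invoking Theorem~\ref{standard_invariant_domain} twice, invariance reduces to the single question: are $(\hat\rho,\hat v)$ and $(\check\rho_1,\check v_1)$ in $\mathcal{D}_{v_1,v_2,w_1,w_2}$ whenever $(\rho^l,v^l),(\rho^r,v^r)$ are?

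For sufficiency, I would exploit that by construction both $(\hat\rho,\hat v)$ and $(\check\rho_1,\check v_1)$ lie on the first Lax curve through $(\rho^l,v^l)$, so they share the invariant $w^l:=v^l+p(\rho^l)\in[w_1,w_2]$; the $w$-constraint of the domain is thus automatic. For the velocity constraint, I use that both points also satisfy $\rho(v-\bar V)=F_\alpha$, so $\hat v$ and $\check v_1$ are exactly the velocities with $h_\alpha(v)=w^l$. By Lemma~\ref{lemma_monotonia_di_h_alfa}, $h_\alpha$ is strictly decreasing up to some $v^\star(\alpha)$ and strictly increasing after; combining this with the existence of $\bar v\in[v_1,v_2]$ where $h_\alpha(\bar v)<w_2$ and the two hypotheses $h_\alpha(v_1)\ge w_2$, $h_\alpha(v_2)\ge w_2$, the sub-level set $\{v:h_\alpha(v)\le w_2\}$ is a compact interval $[v^\sharp,v^\flat]\subseteq[v_1,v_2]$. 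Since $w^l\le w_2$, both $\hat v$ and $\check v_1$ lie in $[v^\sharp,v^\flat]\subseteq[v_1,v_2]$, concluding the sufficiency half.

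For necessity, I would produce explicit counterexamples: assuming $h_\alpha(v_1)<w_2$, pick $(\rho^l,v^l)$ on the Lax curve $w=w_2$ together with $(\rho^r,v^r)$ chosen so that case (1) is triggered (for instance $\rho^r$ small and $v^r$ close to $v_2$, which makes the classical flux at speed $\bar V$ exceed $F_\alpha+\bar V\rho$). By monotonicity of $h_\alpha$ on $[\bar V,v^\star(\alpha)]$ and the strict inequality $h_\alpha(v_1)<w_2=w^l$, the smaller intersection velocity $\check v_1$ must satisfy $\check v_1<v_1$, so $(\check\rho_1,\check v_1)\notin\mathcal{D}_{v_1,v_2,w_1,w_2}$; symmetrically, $h_\alpha(v_2)<w_2$ forces $\hat v>v_2$. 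The main obstacle will be verifying cleanly that such a Riemann datum really triggers case (1)---ensuring the classical flux at $\bar V$ exceeds the constraint---which I expect to handle by choosing $(\rho^l,v^l)$ exactly at the top of the Lax curve $w=w_2$ with $v^l$ close to where $h_\alpha$ dips below $w_2$, so that the middle state $(\rho^m,v^m)$ of the classical solution sits above the line $\rho v=F_\alpha+\bar V\rho$; the geometric picture in Figure~\ref{notazioni_utilizzate_problema_Riemann} should make this selection routine.
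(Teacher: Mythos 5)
Your sufficiency half is essentially the paper's proof, just repackaged: you observe that $\hat v$ and $\check v_1$ are the two solutions of $h_\alpha(v)=w^l$ with $w^l\leq w_2$, and that the unimodality of $h_\alpha$ from Lemma~\ref{lemma_monotonia_di_h_alfa} together with $h_\alpha(v_1)\geq w_2$, $h_\alpha(v_2)\geq w_2$ and the existence of $\bar v$ forces the sublevel set $\{h_\alpha\leq w_2\}$ to sit inside $[v_1,v_2]$. That is exactly the content of the paper's Part~1 and is fine.

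Your necessity half contains a concrete error: you have the two candidate states paired with the wrong hypotheses. With the paper's conventions $\hat\rho=\max I>\check\rho_1=\min I$, and since both points share the Riemann invariant $w^l$, one gets $\hat v=F_\alpha/\hat\rho+\bar V<F_\alpha/\check\rho_1+\bar V=\check v_1$; so $\hat v$, not $\check v_1$, is the \emph{smaller} intersection velocity. Consequently $h_\alpha(v_1)<w_2$ forces $\hat v<v_1$, so it is the \emph{left} trace $(\hat\rho,\hat v)$ that leaves the domain, and symmetrically $h_\alpha(v_2)<w_2$ forces $\check v_1>v_2$, so it is the \emph{right} trace $(\check\rho_1,\check v_1)$ that leaves. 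You wrote it the other way around. The conclusion (one of the two traces is outside $\mathcal{D}_{v_1,v_2,w_1,w_2}$) is still reached, but as written your argument literally applies the wrong hypothesis to the wrong state.

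One more remark on what you flagged as the ``main obstacle'' of arranging $(\rho^r,v^r)$ so that case (1) is triggered: the paper sidesteps this entirely by taking a \emph{constant} Riemann datum $(\rho^*,v^*)=(\rho^l,v^l)=(\rho^r,v^r)$ with $v^*=v_1$ (or $v_2$) and $v^*+p(\rho^*)=w_2$. Then the classical solution is constant, so $\mathcal{RS}(\cdot)(\bar V)=(\rho^*,v^*)$, and the inequality $h_\alpha(v^*)<w_2$ translates directly via monotonicity of $p$ into $\rho^*(v^*-\bar V)>F_\alpha$. Using constant data removes any need to track the middle state, and makes the ``which state escapes the domain'' question unambiguous: it is the one whose velocity lies outside the interval bounded by the two roots of $h_\alpha(v)=w_2$.
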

\begin{proof} We split the proof in three parts.
\begin{enumerate}
\item Let us suppose that $h_\alpha(v_1)\geq w_2$ and $h_\alpha(v_2)\geq w_2$.\\
Since $\mathcal{D}_{v_1,v_2,w_1,w_2}$ is invariant for $\mathcal{RS}$, to prove that it is invariant for $\mathcal{RS}^\alpha_1$ we have to show that, for every initial data $(\rho^l,v^l)$ and $(\rho^r,v^r)$ for the constrained Riemann problem (\ref{problema_riemann_bus_reference_frame}) in $\mathcal{D}_{v_1,v_2,w_1,w_2}$, the points $(\hat{\rho},\hat{v})$ and $(\check{\rho}_1,\check{v}_1)$ are in $\mathcal{D}_{v_1,v_2,w_1,w_2}$.\\
By definition, we have
$$\hat{\rho}\hat{v}=F_\alpha +\bar{V}\hat{\rho}\; \text{ and } \; \hat{v}+p(\hat{\rho})=v^l+p(\rho^l).$$
We claim that $(\hat{\rho},\hat{v})\in \mathcal{D}_{v_1,v_2,w_1,w_2}$. Indeed, since $\hat{v}+p(\hat{\rho})=v^l+p(\rho^l)$, we find
\begin{equation}\label{dim_dom_inv_RS1_cond}
w_1\leq \hat{v}+p(\hat{\rho})\leq w_2.
\end{equation}
We have only to show that $v_1\leq \hat{v}\leq v_2$. Since $\hat{\rho}\hat{v}=F_\alpha +\bar{V}\hat{\rho}>\bar{V}\hat{\rho}$, the inequality $\hat{v}>\bar{V}$ holds. Therefore the function $h_\alpha$ is well defined in $\hat{v}$ and its value is
$$h_\alpha(\hat{v}) =\hat{v}+p\left(\dfrac{F_\alpha}{\hat{v}-\bar{V}}\right)=\hat{v}+p(\hat{\rho}).$$
Let $\tilde{v}\in(\bar{V},+\infty)$ be the minimum of the function $h_\alpha$, which exists by Lemma \ref{lemma_monotonia_di_h_alfa}. Hence $h_\alpha$ is strictly decreasing before $\tilde{v}$ and strictly increasing after $\tilde{v}$.\\
If $v_1<v_2\leq\tilde{v}$, then $h_\alpha(v_1)>h_\alpha(v_2)\geq h_\alpha(\tilde{v})$. By hypothesis, there exists $\bar{v}\in[v_1,v_2]$ such that $h_\alpha(\bar{v})<w_2$, but, since $\bar{v}\leq v_2$, we find $w_2>h_\alpha(\bar{v})\geq h_\alpha(v_2)$ which is a contradiction of the hypotheses (\ref{condizioni_dominio_invariante1}).\\
Similarly, if $\tilde{v}<v_1<v_2$, $h_\alpha$ is increasing in $[v_1,v_2]$ and hence, since $\bar{v}\geq v_1$, we find $w_2>h_\alpha(\bar{v})\geq h_\alpha(v_1)$ which is absurd.\\
Then it must be $v_1\leq \tilde{v}\leq v_2$. We know that $w_1\leq h_\alpha(\hat{v}) \leq w_2$. If it was $\hat{v}<v_1\leq \tilde{v}$, then we would have $h_\alpha(\hat{v})>h_\alpha(v_1)\geq w_2$ which is a contradiction of (\ref{dim_dom_inv_RS1_cond}). Similarly, if $\hat{v}>v_2\geq \tilde{v}$, then $h_\alpha(\hat{v})>h_\alpha(v_2)\geq w_2$ which is absurd.\\
Hence $v_1 \leq \hat{v} \leq v_2$ and this proves the claim.\\
Since  $(\check{\rho}_1,\check{v}_1)$ satisfies the same hypotheses of $(\hat{\rho},\hat{v})$, the proof is similar.
\item Let us suppose that $\mathcal{D}_{v_1,v_2,w_1,w_2}$ is invariant for $\mathcal{RS}^\alpha_1$ and, by contradiction, that $h_\alpha(v_1)<w_2$. Let $(\rho^*,v^*)\in \mathcal{D}_{v_1,v_2,w_1,w_2}$ be the solution to the system
\begin{equation*}
\begin{cases}
v+p(\rho)=w_2,\\
v=v_1.
\end{cases}
\end{equation*}
The point $(\rho^*,v^*)$ satisfies the inequality $\rho^* v^*>F_\alpha+\bar{V}\rho^*$ (see Figure \ref{fig_dim_primo_ris_caso_a}), indeed by the hypotheses (\ref{ipotesi_pressione}) we find
\begin{equation*}
\begin{split}
& h_\alpha(v_1)=v_1+p\left(\dfrac{F_\alpha}{v_1-\bar{V}}\right)<w_2=v^*+p(\rho^*)=v_1+p(\rho^*) \Longrightarrow\\
& \Longrightarrow \dfrac{F_\alpha}{v_1-\bar{V}}<\rho^* \Longrightarrow \rho^*(v^*-\bar{V})>F_\alpha.
\end{split}
\end{equation*}
Therefore the left trace of $\mathcal{RS}^\alpha_1((\rho^*,v^*),(\rho^*,v^*))$ in $\lambda=\bar{V}$ is $(\hat{\rho},\hat{v})$. By definition 
\begin{equation*}
\begin{split}
&\hat{v}=v^*+p(\rho^*)-p(\hat{\rho}) \Longrightarrow \hat{v}+p(\hat{\rho})=v^*+p(\rho^*) \; \text{ and }\\
&\hat{\rho}\hat{v}=F_\alpha+\bar{V}\hat{\rho}\Longrightarrow h_\alpha(\hat{v})=\hat{v}+p(\hat{\rho})=w_2.
\end{split}
\end{equation*}
Since $(\rho^*,v^*) \in \mathcal{D}_{v_1,v_2,w_1,w_2}$, even the point $(\hat{\rho},\hat{v})$ is in $\mathcal{D}_{v_1,v_2,w_1,w_2}$, because the domain is invariant. Hence we have $\hat{v}\geq v_1=v^*$ and we find
$$\hat{v}+p(\hat{\rho})=v^*+p(\rho^*) \Longrightarrow \rho^*\geq\hat{\rho}.$$
The equal cannot hold, because $(\rho^*,v^*)$ does not satisfy the constraint while $(\hat{\rho},\hat{v})$ does. Since the curve $\rho \to \rho L_1(\rho,\rho^*,v^*)$ is strictly concave and $(\hat{\rho},\hat{v})$ is its point with maximal density for which it is satisfied the condition $\hat{\rho}\hat{v}=F_\alpha+\bar{V}\hat{\rho}$, by Lemma \ref{funzione_concava_e_retta} all the points $(\rho,v)$ with density bigger then $\hat{\rho}$ should satisfy $\rho v<F_\alpha+\bar{V}\rho$, but this is a contradiction of the conditions satisfied by $(\rho^*,v^*)$.\\
Therefore the inequality $h_\alpha(v_1)\geq w_2$ must hold.
\item Let us suppose that $\mathcal{D}_{v_1,v_2,w_1,w_2}$ is invariant for $\mathcal{RS}^\alpha_1$ and let us suppose by contradiction that $h_\alpha(v_2)<w_2$. Let $(\rho^*,v^*)\in \mathcal{D}_{v_1,v_2,w_1,w_2}$ be the solution to the system
\begin{equation*}
\begin{cases}
v+p(\rho)=w_2,\\
v=v_2.
\end{cases}
\end{equation*}
The point $(\rho^*,v^*)$ is such that $\rho^* v^*>F_\alpha+\bar{V}\rho^*$ (see Figure \ref{fig_dim_primo_ris_caso_b}), indeed by the hypotheses (\ref{ipotesi_pressione}) we find
\begin{equation*}
\begin{split}
& h_\alpha(v_2)=v_2+p\left(\dfrac{F_\alpha}{v_2-\bar{V}}\right)<w_2=v^*+p(\rho^*)=v_2+p(\rho^*) \Longrightarrow\\
& \Longrightarrow \dfrac{F_\alpha}{v_2-\bar{V}}<\rho^* \Longrightarrow \rho^*(v^*-\bar{V})>F_\alpha.
\end{split}
\end{equation*}
Hence the right trace of $\mathcal{RS}^\alpha_1((\rho^*,v^*),(\rho^*,v^*))$ in $\lambda=\bar{V}$ is $(\check{\rho}_1,\check{v}_1)$. We note that 
\begin{equation*}
\begin{split}
&\check{v}_1+p(\check{\rho}_1)=v^*+p(\rho^*) \; \text{ and } \; \check{\rho}_1\check{v}_1=F_\alpha+\bar{V}\check{\rho}_1 \Longrightarrow\\
&\Longrightarrow h_\alpha(\check{v}_1)=\check{v}_1+p(\check{\rho}_1)=w_2.
\end{split}
\end{equation*}
The point $(\check{\rho}_1,\check{v}_1)$ belongs to the invariant domain $\mathcal{D}_{v_1,v_2,w_1,w_2}$, then $\check{v}_1\leq v_2=v^*$. Therefore the relations
\begin{equation*} 
\check{v}_1\leq v^* \; \text{ and } \; \check{v}_1+p(\check{\rho}_1)=v^*+p(\rho^*)
\end{equation*}
imply $\rho^*\leq\check{\rho}_1$.\\
The equal is not possible, because $(\rho^*,v^*)$ does not satisfy the constraint, while $(\check{\rho}_1,\check{v}_1)$ does. Since the curve $\rho \to \rho L_1(\rho,\rho^*,v^*)$ is strictly concave and $(\check{\rho}_1,\check{v}_1)$ is its point with minimal density for which it is satisfied the condition $\check{\rho}_1\check{v}_1=F_\alpha+\bar{V}\check{\rho}_1$, by Lemma \ref{funzione_concava_e_retta} all the points $(\rho,v)$ with a lower density then $\check{\rho}_1$ should be such that $\rho v<F_\alpha+\bar{V}\rho$, but this is a contradiction of the conditions satisfied by $(\rho^*,v^*)$.\\
Hence the inequality $h_\alpha(v_2)\geq w_2$ must hold.
\end{enumerate}
\end{proof}
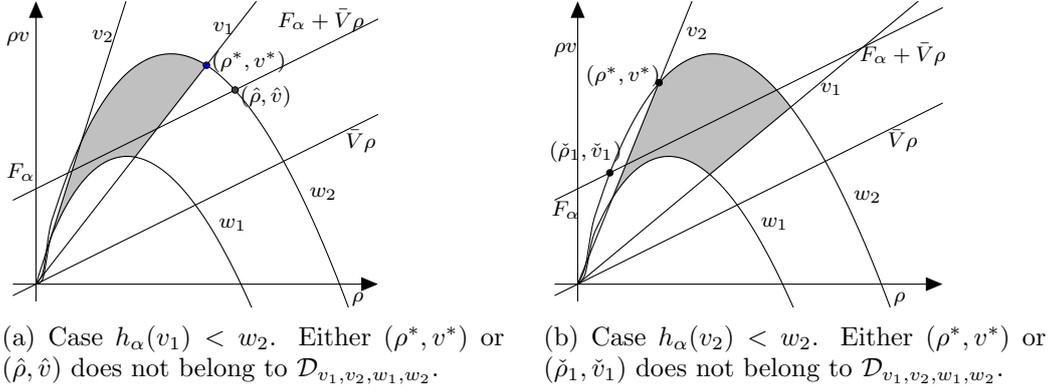
\begin{figure}
\centering
\begin{subfigure}[h]{0.45\linewidth}
\definecolor{cqcqcq}{rgb}{0.7529411764705882,0.7529411764705882,0.7529411764705882}
\definecolor{uququq}{rgb}{0.25098039215686274,0.25098039215686274,0.25098039215686274}
\definecolor{qqqqff}{rgb}{0.,0.,1.}
\begin{tikzpicture}[scale=0.15,line cap=round,line join=round,>=triangle 45,x=1.0cm,y=1.0cm]
\draw[->,color=black] (-2.,0.) -- (30.,0.);
\draw[->,color=black] (0.,-1.5) -- (0.,25.);
\clip(-3,-2) rectangle (30.,25.);
\draw[line width=0.pt,color=cqcqcq,fill=cqcqcq,fill opacity=1.0] {[smooth,samples=50,domain=8.7:14.96] plot(\x,{5.16*\x-\x^(1.5)})} -- (14.96,19.35) {[smooth,samples=50,domain=14.96:8.7] -- plot(\x,{0-(-19.35/14.96)*\x-0.0/14.96})} -- (8.7,19.25) -- cycle;
\draw[line width=0.pt,color=cqcqcq,fill=cqcqcq,fill opacity=1.0] {[smooth,samples=50,domain=1.16:4.1] plot(\x,{0-(-17.67/5.58)*\x-0.0/5.58})} -- (4.1,9.1) {[smooth,samples=50,domain=4.1:1.16] -- plot(\x,{4.24*\x-\x^(1.5)})} -- (1.16,3.67) -- cycle;
\draw[line width=0.pt,color=cqcqcq,fill=cqcqcq,fill opacity=1.0] {[smooth,samples=50,domain=3.99:8.8] plot(\x,{5.16*\x-\x^(1.5)})} -- (8.8,11.23) {[smooth,samples=50,domain=8.8:3.99] -- plot(\x,{4.24*\x-\x^(1.5)})} -- (3.99,12.62) -- cycle;
\draw[smooth,samples=50,domain=0.001:30.0] plot(\x,{5.16*(\x)-(\x)^(1.5)});
\draw[smooth,samples=50,domain=0.001:30.0] plot(\x,{4.24*(\x)-(\x)^(1.5)});
\draw [domain=-2.:30.] plot(\x,{(-0.--0.5*\x)/1.});
\draw [domain=-2.:30.] plot(\x,{(--8.43--0.5*\x)/1.});
\draw [domain=0.0:30.0] plot(\x,{(-0.--17.67*\x)/5.58});
\draw [domain=0.0:30.0] plot(\x,{(-0.--19.35*\x)/14.96});
\begin{scriptsize}
\draw (27.,0.) node[anchor=north west] {$\rho$};
\draw (20.5,25) node[anchor=north west] {$F_\alpha+\bar{V}\rho$};
\draw (-3.3,23.) node[anchor=north west] {$\rho v$};
\draw (15.35,6.52) node[anchor=north west] {$w_1$};
\draw (23.3,9.24) node[anchor=north west] {$w_2$};
\draw (-3.3,11.4) node[anchor=north west] {$F_\alpha$};
\draw (4.2,23.13) node[anchor=north west] {$v_2$};
\draw (14.8,23.9) node[anchor=north west] {$v_1$};
\draw (26.46,14.34) node[anchor=north west] {$\bar{V}\rho$};
\draw (14.7,21.32) node[anchor=north west] {$(\rho^*,v^*)$};
\draw (17.2,18.2) node[anchor=north west] {$(\hat{\rho},\hat{v})$};
\end{scriptsize}
\begin{scriptsize}
\draw [fill=qqqqff] (14.96,19.35) circle (8pt);
\draw [fill=uququq] (17.47,17.16) circle (8pt);
\end{scriptsize}
\end{tikzpicture}
\caption{Case $h_\alpha(v_1)<w_2$. Either $(\rho^*,v^*)$ or $(\hat{\rho},\hat{v})$ does not belong to $\mathcal{D}_{v_1,v_2,w_1,w_2}$.}\label{fig_dim_primo_ris_caso_a}
\end{subfigure}
\quad
\begin{subfigure}[h]{0.45\linewidth}
\definecolor{cqcqcq}{rgb}{0.7529411764705882,0.7529411764705882,0.7529411764705882}
\definecolor{qqqqff}{rgb}{0.,0.,1.}
\begin{tikzpicture}[scale =0.15,line cap=round,line join=round,>=triangle 45,x=1.0cm,y=1.0cm]
\draw[->,color=black] (-2.,0.) -- (32.,0.);
\draw[->,color=black] (0.,-1.5) -- (0.,25.);
\clip(-3.,-2) rectangle (35.,25.);
\draw[line width=0.pt,color=cqcqcq,fill=cqcqcq,fill opacity=1.0] {[smooth,samples=50,domain=11.6:18.68] plot(\x,{5.16*\x-\x^(1.5)})} -- (18.68,15.64) {[smooth,samples=50,domain=18.68:11.6] -- plot(\x,{0-(-15.63/18.67)*\x-0.0/18.67})} -- (11.6,20.37) -- cycle;
\draw[line width=0.pt,color=cqcqcq,fill=cqcqcq,fill opacity=1.0] {[smooth,samples=50,domain=3.07:7.25] plot(\x,{0-(-17.81/7.14)*\x-0.0/7.14})} -- (7.25,11.23) {[smooth,samples=50,domain=7.25:3.07] -- plot(\x,{4.24*\x-\x^(1.5)})} -- (3.07,7.64) -- cycle;
\draw[line width=0.pt,color=cqcqcq,fill=cqcqcq,fill opacity=1.0] {[smooth,samples=50,domain=7.15:11.7] plot(\x,{5.16*\x-\x^(1.5)})} -- (11.7,9.61) {[smooth,samples=50,domain=11.7:7.15] -- plot(\x,{4.24*\x-\x^(1.5)})} -- (7.15,17.79) -- cycle;
\draw[smooth,samples=50,domain=0.001:32.0] plot(\x,{5.16*(\x)-(\x)^(1.5)});
\draw[smooth,samples=50,domain=0.001:32.0] plot(\x,{4.24*(\x)-(\x)^(1.5)});
\draw [domain=-2.:32.] plot(\x,{(-0.--0.5*\x)/1.});
\draw [domain=-2.:32.] plot(\x,{(--8.43--0.5*\x)/1.});
\draw [domain=0.0:32.0] plot(\x,{(-0.--17.81*\x)/7.14});
\draw [domain=0.0:32.0] plot(\x,{(-0.--15.63*\x)/18.67});
\begin{scriptsize}
\draw (27,0.) node[anchor=north west] {$\rho$};
\draw (24,22) node[anchor=north west] {$F_\alpha+\bar{V}\rho$};
\draw (-2.8,22) node[anchor=north west] {$\rho v$};
\draw (15.36,6.63) node[anchor=north west] {$w_1$};
\draw (23.51,8.95) node[anchor=north west] {$w_2$};
\draw (-3,8) node[anchor=north west] {$F_\alpha$};
\draw (8.82,23.43) node[anchor=north west] {$v_2$};
\draw (20.56,18.51) node[anchor=north west] {$v_1$};
\draw (26.47,14.36) node[anchor=north west] {$\bar{V}\rho$};
\draw (0.,20.05) node[anchor=north west] {$(\rho^*,v^*)$};
\draw (-3.2,13.38) node[anchor=north west] {$(\check{\rho}_1,\check{v}_1)$};
\end{scriptsize}
\begin{scriptsize}
\draw [fill=black] (7.14,17.81) circle (8pt);
\draw [fill=black] (2.82,9.84) circle (8pt);
\end{scriptsize}
\end{tikzpicture}
\caption{Case $h_\alpha(v_2)<w_2$. Either $(\rho^*,v^*)$ or $(\check{\rho}_1,\check{v}_1)$ does not belong to $\mathcal{D}_{v_1,v_2,w_1,w_2}$.}\label{fig_dim_primo_ris_caso_b}
\end{subfigure}
\caption{Representation of the points used in the proof of Proposition \ref{Proposizione_Dominio_invariante_R_alfa1}. The invariant domain is the coloured area.}
\end{figure}
This proves the point $(ii)$ of Theorem \ref{Proposizione_Dominio_invariante_R_alfa1_definitiva}.
\subsubsection{The invariant domain of $\mathcal{RS}^\alpha_2$}
\begin{lemma}\label{lemma_1_dominio_invariante_RS_2}
Let $\bar{V}\leq v_1<v_2$, $0<w_1<w_2$, $v_2<w_2$ and $\alpha\in(0,1)$ be fixed. Let us suppose that there exists $\bar{v}\in[v_1,v_2]$ such that $h_\alpha(\bar{v})<w_2$. If the set $\mathcal{D}_{v_1,v_2,w_1,w_2}$ is invariant for $\mathcal{RS}^\alpha_2$, then $h_\alpha(v_1)\geq w_2$.
\end{lemma}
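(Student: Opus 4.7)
The plan is to argue by contradiction, exactly mirroring Part 2 of the proof of Proposition \ref{Proposizione_Dominio_invariante_R_alfa1} for $\mathcal{RS}^\alpha_1$. The crucial observation that makes this transfer possible is that $\mathcal{RS}^\alpha_2$ and $\mathcal{RS}^\alpha_1$ agree on the left of the bus: for initial data $(\rho^l,v^l),(\rho^r,v^r)$ triggering the non-classical shock, both solvers produce the state $(\hat{\rho},\hat{v})$ at $x<y(t)$, and differ only in the right state ($(\check{\rho}_1,\check{v}_1)$ versus $(\check{\rho}_2,\check{v}_2)$). Since the inequality $h_\alpha(v_1)\geq w_2$ is a condition forced by the \emph{left} part of the solution, the same argument will apply verbatim.

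Concretely, suppose for contradiction $h_\alpha(v_1)<w_2$ and let $(\rho^*,v^*)\in\mathcal{D}_{v_1,v_2,w_1,w_2}$ be determined by $v^*=v_1$ and $v^*+p(\rho^*)=w_2$. Using the hypotheses (\ref{ipotesi_pressione}) and the assumed inequality $h_\alpha(v_1)<w_2$, one obtains $p(F_\alpha/(v_1-\bar V))<p(\rho^*)$, hence $F_\alpha/(v_1-\bar V)<\rho^*$, i.e.\ $\rho^*(v^*-\bar V)>F_\alpha$. Thus $(\rho^*,v^*)$ violates the constraint, so the Riemann problem with data $(\rho^l,v^l)=(\rho^r,v^r)=(\rho^*,v^*)$ falls into the first case in the definition of $\mathcal{RS}^\alpha_2$, and its left trace at $\lambda=\bar V$ is $(\hat{\rho},\hat{v})$.

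By the definition of $(\hat{\rho},\hat{v})$ we have $\hat{v}+p(\hat{\rho})=v^*+p(\rho^*)=w_2$ and $\hat{\rho}\hat{v}=F_\alpha+\bar V\hat{\rho}$, whence $h_\alpha(\hat v)=w_2$ and $\hat v>\bar V$. Invariance of $\mathcal{D}_{v_1,v_2,w_1,w_2}$ under $\mathcal{RS}^\alpha_2$ forces $(\hat{\rho},\hat{v})\in\mathcal{D}_{v_1,v_2,w_1,w_2}$, so $\hat v\geq v_1=v^*$. Combined with $\hat v+p(\hat\rho)=v^*+p(\rho^*)$ and the monotonicity of $p$, this gives $\rho^*\geq\hat\rho$. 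Equality is impossible, because $(\hat\rho,\hat v)$ satisfies the constraint while $(\rho^*,v^*)$ does not; so $\rho^*>\hat\rho$.

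Now the points $(\rho^*,v^*)$ and $(\hat\rho,\hat v)$ lie on the same Lax curve of the first family (they share the Riemann invariant $w=w_2$), with $\hat\rho$ being, by construction, the point of maximal density on that curve satisfying $\rho(v-\bar V)=F_\alpha$. By Lemma \ref{funzione_concava_e_retta} applied on this Lax curve, any $\rho^\sigma>\hat\rho$ (still on the curve) must satisfy $\rho^\sigma v^\sigma<F_\alpha+\bar V\rho^\sigma$. Taking $\rho^\sigma=\rho^*$ yields $\rho^* v^*<F_\alpha+\bar V\rho^*$, contradicting the strict violation of the constraint established for $(\rho^*,v^*)$. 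I expect no real obstacle in this proof: the only subtlety is verifying that the hypothesis ``there exists $\bar v\in[\bar V,v_2]$ with $h_\alpha(\bar v)<w_2$'' together with $v_1\geq\bar V$ from the current case really ensures that $(\rho^*,v^*)$ sits in $\mathcal{D}_{v_1,v_2,w_1,w_2}$ and that $(\hat\rho,\hat v)$ is well defined via Lemma \ref{esistenza_rho_hat_rho_check_1}, but both follow immediately by applying Lemma \ref{lemma_monotonia_di_h_alfa} to locate the minimum $\tilde v$ of $h_\alpha$ inside $[v_1,v_2]$ as done in Proposition \ref{Proposizione_Dominio_invariante_R_alfa1}.
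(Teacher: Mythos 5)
Your proof is correct and takes essentially the same approach as the paper: the paper's entire proof is the one-line remark that it is the same as Part~2 of Proposition~\ref{Proposizione_Dominio_invariante_R_alfa1}, and you have carried out exactly that argument, correctly noting that it transfers because both Riemann solvers produce the same left state $(\hat{\rho},\hat{v})$ when the constraint is violated.
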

The proof is the same of part 2 of Proposition \ref{Proposizione_Dominio_invariante_R_alfa1}.
\begin{lemma}\label{lemma_2_dominio_invariante_RS2}
Let $\bar{V}\leq v_1<v_2$, $0<w_1<w_2$, $v_2<w_2$ and $\alpha\in(0,1)$ be fixed. Let us suppose that there exists $\bar{v}\in[v_1,v_2]$ such that $h_\alpha(\bar{v})<w_2$. If the set $\mathcal{D}_{v_1,v_2,w_1,w_2}$ is invariant for $\mathcal{RS}^\alpha_2$, then $h_\alpha(v)\geq w_1$ for every $v\in[v_1,v_2]$.
\end{lemma}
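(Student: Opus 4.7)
The plan is to argue by contradiction, mirroring the strategy used in part 2 of Proposition \ref{Proposizione_Dominio_invariante_R_alfa1} but exploiting the different definition of $(\check\rho_2,\check v_2)$ that characterizes $\mathcal{RS}^\alpha_2$. Suppose there exists $v^*\in[v_1,v_2]$ with $h_\alpha(v^*)<w_1$. Since by definition $h_\alpha(\bar V)=+\infty$, necessarily $v^*>\bar V$, so $\check\rho:=F_\alpha/(v^*-\bar V)$ is well defined and satisfies $v^*+p(\check\rho)=h_\alpha(v^*)<w_1<w_2$. By the strict monotonicity of $p$ given in (\ref{ipotesi_pressione}), one can pick $\rho^r>\check\rho$ with $w_1\leq v^*+p(\rho^r)\leq w_2$, so that $(\rho^r,v^*)\in\mathcal{D}_{v_1,v_2,w_1,w_2}$. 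This is the preparatory step: choose the boundary data so that the right state has $v$-component exactly equal to the offending velocity $v^*$.

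Next, I would choose the constrained Riemann problem (\ref{problema_riemann_bus_reference_frame}) with the constant initial datum $(\rho^l,v^l)=(\rho^r,v^r)=(\rho^r,v^*)$. Both states lie in $\mathcal{D}_{v_1,v_2,w_1,w_2}$, and the classical Riemann solver gives the constant value $(\rho^r,v^*)$. From $\rho^r>\check\rho$ it follows that
\[
\rho^r(v^*-\bar V)>\check\rho(v^*-\bar V)=F_\alpha,
\]
so the flux constraint is violated; we are therefore in case 1 of the definition of $\mathcal{RS}^\alpha_2$. By construction the auxiliary point $(\check\rho_2,\check v_2)$ associated to $(\rho^r,v^r)$ is precisely $(\check\rho,v^*)$, and the solution for $x\geq y(t)=\bar V t$ is $\mathcal{RS}((\check\rho_2,\check v_2),(\rho^r,v^r))$, which contains the constant state $(\check\rho,v^*)$ on the wedge $\bar V<x/t<v^*$ (the left trace of the contact discontinuity joining $(\check\rho,v^*)$ to $(\rho^r,v^*)$).

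Finally, I would close the contradiction by computing the Riemann invariant $w$ at this intermediate state: $\check v_2+p(\check\rho_2)=v^*+p(\check\rho)=h_\alpha(v^*)<w_1$, so $(\check\rho_2,\check v_2)\notin\mathcal{D}_{v_1,v_2,w_1,w_2}$. This contradicts the invariance of $\mathcal{D}_{v_1,v_2,w_1,w_2}$ under $\mathcal{RS}^\alpha_2$. The only subtle point is making sure that the auxiliary data $(\rho^r,v^*)$ can simultaneously lie in the invariant domain and violate the constraint; both requirements follow directly from the chain of inequalities $h_\alpha(v^*)<w_1<w_2$ combined with $p$ being strictly increasing, so there is enough room between $\check\rho$ and $p^{-1}(w_2-v^*)$ to place $\rho^r$.
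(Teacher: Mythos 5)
Your proof is correct and follows essentially the same argument as the paper: assume a velocity $v^*$ with $h_\alpha(v^*)<w_1$, plant a constant initial state in $\mathcal{D}_{v_1,v_2,w_1,w_2}$ at that velocity which violates the constraint, and observe that the resulting right trace $(\check\rho_2,\check v_2)$ has Riemann invariant $h_\alpha(v^*)<w_1$, hence leaves the domain. The only cosmetic difference is that the paper pins the test state on the boundary $v+p(\rho)=w_2$, whereas you allow any admissible $\rho^r$ with the same velocity; both choices yield the identical contradiction.
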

\begin{proof}
Assume by contradiction that there exists $\tilde{v}\in[v_1,v_2]$ such that $h_\alpha(\tilde{v})<w_1$. Let $(\rho^*,v^*)\in\mathcal{D}_{v_1,v_2,w_1,w_2}$ be the solution to the system
\begin{equation*}
\begin{cases}
v+p(\rho)=w_2,\\
v=\tilde{v}.
\end{cases}
\end{equation*}
We note that
\begin{equation*}
\begin{split}
& h_\alpha(\tilde{v})=\tilde{v}+p\left(\dfrac{F_\alpha}{\tilde{v}-\bar{V}}\right)<w_1<w_2=v^*+p(\rho^*) \Longrightarrow\\
&\Longrightarrow \tilde{v}+p\left(\dfrac{F_\alpha}{\tilde{v}-\bar{V}}\right)<v^*+p(\rho^*) \Longrightarrow \rho^*(v^*-\bar{V})>F_\alpha.
\end{split}
\end{equation*}
Hence the right trace of $\mathcal{RS}^\alpha_2((\rho^*,v^*),(\rho^*,v^*))$ in $\lambda=\bar{V}$ is $(\check{\rho}_2,\check{v}_2)$ which does not belong to $\mathcal{D}_{v_1,v_2,w_1,w_2}$; see Figure \ref{fig_dim_lemma_2_sec_ris}. Indeed, we have
$$\check{v}_2 =v^*=\tilde{v}\Longrightarrow h_\alpha(\check{v}_2)=h_\alpha(\tilde{v})<w_1.$$
This condition is a contradiction of the hypothesis of invariance of the domain $\mathcal{D}_{v_1,v_2,w_1,w_2}$ for $\mathcal{RS}^\alpha_2$, hence the inequality $h_\alpha(v)\geq w_1$ must hold for every $v\in[v_1,v_2]$.
\end{proof}
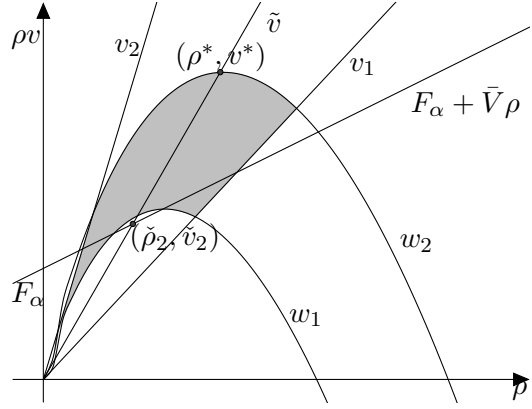
\begin{figure}[h]
\centering
\definecolor{cqcqcq}{rgb}{0.7529411764705882,0.7529411764705882,0.7529411764705882}
\definecolor{uququq}{rgb}{0.25098039215686274,0.25098039215686274,0.25098039215686274}
\begin{tikzpicture}[scale = 0.2, line cap=round,line join=round,>=triangle 45,x=1.0cm,y=1.0cm]
\draw[->,color=black] (-2.,0.) -- (32.,0.);
\draw[->,color=black] (0.,-1.5) -- (0.,25.);
\clip(-3.,-1.5) rectangle (32.,25.);
\draw[line width=0.pt,color=cqcqcq,fill=cqcqcq,fill opacity=1.0] {[smooth,samples=50,domain=10.01:16.68] plot(\x,{5.16*\x-\x^(1.5)})} -- (16.68,17.99) {[smooth,samples=50,domain=16.68:10.01] -- plot(\x,{0-(-16.76/15.54)*\x-0.0/15.54})} -- (10.01,20.) -- cycle;
\draw[line width=0.pt,color=cqcqcq,fill=cqcqcq,fill opacity=1.0] {[smooth,samples=50,domain=3.29:10.1] plot(\x,{5.16*\x-\x^(1.5)})} -- (10.1,10.75) {[smooth,samples=50,domain=10.1:3.29] -- plot(\x,{4.24*\x-\x^(1.5)})} -- (3.29,11.01) -- cycle;
\draw[line width=0.pt,color=cqcqcq,fill=cqcqcq,fill opacity=1.0] {[smooth,samples=50,domain=0.8:3.35] plot(\x,{0-(-17.22/5.14)*\x-0.0/5.14})} -- (3.35,8.08) {[smooth,samples=50,domain=3.35:0.8] -- plot(\x,{4.24*\x-\x^(1.5)})} -- (0.8,2.67) -- cycle;
\draw[smooth,samples=50,domain=0.001:32.0] plot(\x,{5.16*(\x)-(\x)^(1.5)});
\draw[smooth,samples=50,domain=0.001:32.0] plot(\x,{4.24*(\x)-(\x)^(1.5)});
\draw [domain=-2.:32.] plot(\x,{(--7.36--0.5*\x)/1.});
\draw (30.17,0.54) node[anchor=north west] {$\rho$};
\draw (23.5,20) node[anchor=north west] {$F_\alpha+\bar{V}\rho$};
\draw (-2.8,23.82) node[anchor=north west] {$\rho v$};
\draw (15.5,5.32) node[anchor=north west] {$w_1$};
\draw (22.77,10.26) node[anchor=north west] {$w_2$};
\draw (-2.8,7.12) node[anchor=north west] {$F_\alpha$};
\draw [domain=0.0:32.0] plot(\x,{(-0.--17.22*\x)/5.14});
\draw [domain=0.0:32.0] plot(\x,{(-0.--16.76*\x)/15.54});
\draw (4,23.24) node[anchor=north west] {$v_2$};
\draw (19.46,21.96) node[anchor=north west] {$v_1$};
\draw [domain=0.0:32.0] plot(\x,{(-0.--14.*\x)/8.});
\draw (14,25) node[anchor=north west] {$\tilde{v}$};
\draw (8.,23) node[anchor=north west] {$(\rho^*,v^*)$};
\draw (5.,11.) node[anchor=north west] {$(\check{\rho}_2,\check{v}_2)$};
\begin{scriptsize}
\draw [fill=uququq] (11.64,20.37) circle (5pt);
\draw [fill=uququq] (5.89,10.31) circle (5pt);
\end{scriptsize}
\end{tikzpicture}
\caption{Representation of the points used in the proof of Lemma \ref{lemma_2_dominio_invariante_RS2}. The invariant domain is the coloured area. If, by contradiction, there was $\tilde{v}\in[v_1,v_2]$ such that $h_\alpha(\tilde{v})<w_1$, the point $(\check{\rho}_2,\check{v}_2)$ would not be in the domain $\mathcal{D}_{v_1,v_2,w_1,w_2}$.}\label{fig_dim_lemma_2_sec_ris} 
\end{figure}
\begin{lemma}\label{lemma_3_dominio_invariante_2}
Let $\bar{V}\leq v_1<v_2$, $0<w_1<w_2$, $v_2<w_2$ and $\alpha\in(0,1)$ be fixed. Let us suppose that there exists $\bar{v}\in[v_1,v_2]$ such that $h_\alpha(\bar{v})<w_2$. If the set $\mathcal{D}_{v_1,v_2,w_1,w_2}$ is invariant for $\mathcal{RS}^\alpha_2$, then $h_\alpha(v_2)\leq w_2$.
\end{lemma}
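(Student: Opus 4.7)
My plan is to argue by contradiction, assuming $h_\alpha(v_2)>w_2$, and to exhibit Riemann data in $\mathcal{D}_{v_1,v_2,w_1,w_2}$ whose image under $\mathcal{RS}^\alpha_2$ escapes the domain. By Lemma~\ref{lemma_1_dominio_invariante_RS_2} the assumed invariance already forces $h_\alpha(v_1)\ge w_2$; combining this with the hypothesis $h_\alpha(\bar v)<w_2$ and the U-shape of $h_\alpha$ given by Lemma~\ref{lemma_monotonia_di_h_alfa}, the minimum $\tilde v$ of $h_\alpha$ must lie strictly inside $(v_1,v_2)$, and there exist $v_1^*\in[v_1,\tilde v)$ and $v_2^*\in(\tilde v,v_2)$ with $h_\alpha(v_i^*)=w_2$ and $h_\alpha<w_2$ on $(v_1^*,v_2^*)$; crucially $v_2^*<v_2$ strictly.

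Next I would locate, on the Lax curve $\{v+p(\rho)=w_2\}$, the unique point $(\rho^{**},v^{**})$ at which $\lambda_1=\bar V$: equivalently, $\rho^{**}$ is the unique root of $p(\rho)+\rho\,p'(\rho)=w_2-\bar V$ (unique by strict convexity of $\rho\mapsto\rho p(\rho)$), and $v^{**}=w_2-p(\rho^{**})=\bar V+\rho^{**}p'(\rho^{**})$. I would then verify that $(\rho^{**},v^{**})$ violates the constraint, by comparing with $\rho_{\tilde v}:=F_\alpha/(\tilde v-\bar V)$, the density saturating the constraint at the minimum point $\tilde v$. The critical-point relations for $h_\alpha$ at $\tilde v$ give $\rho_{\tilde v}^2\,p'(\rho_{\tilde v})=F_\alpha$ and $p(\rho_{\tilde v})+\rho_{\tilde v}\,p'(\rho_{\tilde v})=h_\alpha(\tilde v)-\bar V<w_2-\bar V$. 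Strict monotonicity of $\rho\mapsto p(\rho)+\rho\,p'(\rho)$ yields $\rho^{**}>\rho_{\tilde v}$, and since $\rho\mapsto\rho^2\,p'(\rho)$ is also strictly increasing (its derivative $\rho(2p'+\rho p'')$ is positive by the same convexity), we obtain $(\rho^{**})^2 p'(\rho^{**})>F_\alpha$, i.e.\ $\rho^{**}(v^{**}-\bar V)>F_\alpha$. Equivalently $h_\alpha(v^{**})<w_2$, so $v^{**}\in(v_1^*,v_2^*)\subset(v_1,v_2)$.

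Now I would pick the Riemann datum $(\rho^l,v^l):=(p^{-1}(w_2-v_1),v_1)$ and $(\rho^r,v^r):=(p^{-1}(w_2-v_2),v_2)$, both of which lie in $\mathcal{D}_{v_1,v_2,w_1,w_2}$ since their Riemann invariant $w$ equals $w_2\in[w_1,w_2]$. The middle state of the classical Riemann solution coincides with $(\rho^r,v^r)$, so the classical solution reduces to a single $1$-rarefaction joining $(\rho^l,v^l)$ to $(\rho^r,v^r)$ along the Lax curve $\{w=w_2\}$. By Proposition~\ref{rarefaction_wave_properties}, $\lambda_1$ increases strictly from $\lambda_1(\rho^l,v^l)$ to $\lambda_1(\rho^r,v^r)$ along the rarefaction and takes the value $\bar V$ precisely at $(\rho^{**},v^{**})$; since $v_1<v^{**}<v_2$, the speed $\bar V$ is interior to the fan, and hence $\mathcal{RS}((\rho^l,v^l),(\rho^r,v^r))(\bar V)=(\rho^{**},v^{**})$. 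This point violates the constraint, so the first case in the definition of $\mathcal{RS}^\alpha_2$ applies and the right trace of the non-classical shock is $(\check\rho_2,\check v_2)=(F_\alpha/(v_2-\bar V),\,v_2)$, whose Riemann invariant equals $h_\alpha(v_2)>w_2$. Consequently $(\check\rho_2,\check v_2)\notin\mathcal{D}_{v_1,v_2,w_1,w_2}$, contradicting the assumed invariance.

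The main obstacle is the second step: proving that the point $(\rho^{**},v^{**})$ on the Lax curve $\{w=w_2\}$ at which $\lambda_1=\bar V$ actually lies in the constraint-violating strip $v\in(v_1^*,v_2^*)$. Once this is in hand, identifying $(\rho^{**},v^{**})$ as the classical rarefaction value at $\bar V$ and reading off the non-admissible right state of the non-classical shock is routine. The verification itself rests on the strict monotonicity of both $\rho\mapsto p(\rho)+\rho\,p'(\rho)$ and $\rho\mapsto\rho^2\,p'(\rho)$, both of which follow from the standing convexity assumption \eqref{ipotesi_pressione} on $\rho\mapsto\rho\,p(\rho)$.
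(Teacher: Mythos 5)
Your proof is correct, and the overall scaffolding (argue by contradiction from $h_\alpha(v_2)>w_2$, take the Riemann data $(\rho^l,v^l)$ and $(\rho^r,v^r)$ on the upper Lax curve $\{w=w_2\}$ with $v^l=v_1$, $v^r=v_2$, exhibit the constraint violation at $\lambda=\bar V$, conclude that the resulting $(\check\rho_2,\check v_2)$ falls outside the domain) coincides with the paper's. Where you genuinely diverge is in \emph{how} you show the classical trace at $\lambda=\bar V$ violates the constraint. The paper first proves $\rho^l\geq\hat\rho$ and $\rho^r<\check\rho_1$ (via a case analysis on the position of the minimum $\tilde v$ of $h_\alpha$), then applies Lagrange's theorem to locate a rarefaction point with $\lambda_1=\bar V$ strictly between $\check\rho_1$ and $\hat\rho$, and finally invokes Lemma~\ref{funzione_concava_e_retta} to conclude that this point lies above the constraint line. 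You instead construct the trace point explicitly as the unique $\rho^{**}$ with $\varphi(\rho^{**})=w_2-\bar V$ where $\varphi(\rho)=p(\rho)+\rho p'(\rho)$, and verify $\rho^{**}(v^{**}-\bar V)>F_\alpha$ by a direct comparison with $\rho_{\tilde v}=F_\alpha/(\tilde v-\bar V)$ using the critical-point identity $\rho_{\tilde v}^2 p'(\rho_{\tilde v})=F_\alpha$ and the strict monotonicity of both $\varphi$ and $\rho\mapsto\rho^2 p'(\rho)$ (each a consequence of \eqref{ipotesi_pressione}). This bypasses Lemma~\ref{funzione_concava_e_retta} and Lagrange's theorem entirely, and has the side benefit of producing a closed-form description of the trace point and of the constraint-violating band $(v_1^*,v_2^*)$; the price is the slightly longer chain of monotonicity checks around $\tilde v$. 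Both are valid; yours is marginally more explicit, the paper's reuses lemmas already proved for other parts of Theorem~\ref{Proposizione_Dominio_invariante_R_alfa2_definitiva}.
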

\begin{proof}
Assume by contradiction that $h_\alpha(v_2)>w_2$. Let $(\rho^r,v^r)\in\mathcal{D}_{v_1,v_2,w_1,w_2}$ be the solution to the system (see Figure \ref{fig_dim_lemma_3_sec_ris})
\begin{equation*}
\begin{cases}
v+p(\rho)=w_2,\\
v=v_2,
\end{cases}
\end{equation*}
and let $(\rho^l,v^l)\in \mathcal{D}_{v_1,v_2,w_1,w_2}$ be the solution to
\begin{equation*}
\begin{cases}
v+p(\rho)=w_2,\\
v=v_1.
\end{cases}
\end{equation*}
The points $(\rho^l,v^l)$ and $(\rho^r,v^r)$ are connected by the standard Riemann solver with a rarefaction, because $v^l=v_1<v_2=v^r$.\\
We claim that $\rho^l \geq \hat{\rho}$ and $\rho^r<\check{\rho}_1$.\\
Indeed, let $\tilde{v}$ be the minimum of the function $h_\alpha$. We have $v^l\leq \tilde{v}$, otherwise, by Lemma \ref{lemma_2_dominio_invariante_RS2}, for every $v\in [v_1,v_2]$ we would have $h_\alpha(v)\geq h_\alpha(v_1)=w_2$, but this contradicts the hypothesis on $\bar{v}$.\\
Let us show that $\hat{v}\geq v^l$.\\
If $\hat{v}\geq \tilde{v}$, then we have $v^l\leq \hat{v}$. By Lemma \ref{lemma_1_dominio_invariante_RS_2}, the inequality $h_\alpha(v_1)\geq w_2$ holds and by the definition of $(\rho^l,v^l)$, we have $w_2=v^l+p(\rho^l)=\hat{v}+p(\hat{\rho})=h_\alpha(\hat{v})$. Hence if $\hat{v}<\tilde{v}$, we find
$$h_\alpha(\hat{v})=w_2\leq h_\alpha(v_1)=h_\alpha(v^l)\Longrightarrow \hat{v}\geq v^l.$$
In both these cases we have $\hat{v}\geq v^l$, which is equivalent to $\rho^l \geq \hat{\rho}$.\\
Similarly, if it were $v_2\leq\tilde{v}$, for every $v\in[v_1,v_2]$ we would have $h_\alpha(v)\geq h_\alpha(v_2)>w_2$ and this is a contradiction of the hypothesis on $\bar{v}$. Therefore $v_2>\tilde{v}$. If $\check{v}_1\leq \tilde{v}$, then clearly $v_2>\check{v}_1$, while if $\check{v}_1 >\tilde{v}$, since
$$h_\alpha(\check{v}_1)=\check{v}_1+p(\check{\rho}_1)=v^l+p(\rho^l)=w_2<h_\alpha(v_2),$$
we find $v^r=v_2 >\check{v}_1$ which is equivalent to $\rho^r<\check{\rho}_1$, because
$$v^r+p(\rho^r)=w_2=v^l+p(\rho^l)=\check{v}_1+p(\check{\rho}_1).$$
This proves the claim.\\
The line passing through the points $(\hat{\rho},\hat{v})$ and $(\check{\rho}_1,\check{v}_1)$ has slope $\bar{V}$. Therefore the trace of $\mathcal{RS}((\rho^l,v^l),(\rho^r,v^r))$ in $\lambda=\bar{V}$ is one of the points $(\rho^\sigma,v^\sigma)$ of the rarefaction linking $(\rho^l,v^l)$ with $(\rho^r,v^r)$, because by Lagrange Theorem, there is  a value $\rho^\sigma\in (\check{\rho}_1,\hat{\rho})\subset (\rho^r,\rho^l)$ such that
$$\dfrac{d}{d\rho}(\rho L_1(\rho,\rho^l,v^l))\big|_{\rho=\rho^\sigma}=\bar{V}.$$
By Lemma \ref{funzione_concava_e_retta}, this point does not satisfy the constraint. Hence we find that the right trace of $\mathcal{RS}^\alpha_2((\rho^l,v^l),(\rho^r,v^r))$ in $\lambda=\bar{V}$ is $(\check{\rho}_2,\check{v}_2)$.\\
Since $\check{v}_2=v^r=v_2$, we have
$$\check{v}_2+p(\check{\rho}_2)=h_\alpha(\check{v}_2)=h_\alpha(v_2)>w_2.$$
Therefore $(\check{\rho}_2,\check{v}_2)$ does not belong to $\mathcal{D}_{v_1,v_2,w_1,w_2}$, but this is a contradiction with the hypothesis of invariance of the domain for the Riemann solver $\mathcal{RS}^\alpha_2$.
\end{proof}
\begin{figure}[h]
\centering
\definecolor{cqcqcq}{rgb}{0.7529411764705882,0.7529411764705882,0.7529411764705882}
\definecolor{xdxdff}{rgb}{0.49019607843137253,0.49019607843137253,1.}
\definecolor{qqqqff}{rgb}{0.,0.,1.}
\definecolor{uququq}{rgb}{0.25098039215686274,0.25098039215686274,0.25098039215686274}
\begin{tikzpicture}[scale =0.25,line cap=round,line join=round,>=triangle 45,x=1.0cm,y=1.0cm]
\draw[->,color=black] (-2.,0.) -- (30.,0.);
\draw[->,color=black] (0.,-1.5) -- (0.,25.);
\clip(-3.,-2) rectangle (30.,25.);
\draw[line width=0.pt,color=cqcqcq,fill=cqcqcq,fill opacity=1.0] {[smooth,samples=50,domain=0.0:2.01] plot(\x,{0-(-17.4/4.72)*\x-0.0/4.72})} -- (2.01,5.68) {[smooth,samples=50,domain=2.01:0.0] -- plot(\x,{4.24*\x-\x^(1.5)})} -- (0.,0.) -- cycle;
\draw[line width=0.pt,color=cqcqcq,fill=cqcqcq,fill opacity=1.0] {[smooth,samples=50,domain=11.11:18.09] plot(\x,{5.16*\x-\x^(1.5)})} -- (18.09,16.44) {[smooth,samples=50,domain=18.09:11.11] -- plot(\x,{0-(-15.3/16.84)*\x-0.0/16.84})} -- (11.11,20.32) -- cycle;
\draw[line width=0.pt,color=cqcqcq,fill=cqcqcq,fill opacity=1.0] {[smooth,samples=50,domain=2.01:11.2] plot(\x,{5.16*\x-\x^(1.5)})} -- (11.2,10.03) {[smooth,samples=50,domain=11.2:2.01] -- plot(\x,{4.24*\x-\x^(1.5)})} -- (2.01,7.52) -- cycle;
\draw[smooth,samples=50,domain=0.001:30.0] plot(\x,{5.16*(\x)-(\x)^(1.5)});
\draw[smooth,samples=50,domain=0.001:30.0] plot(\x,{4.24*(\x)-(\x)^(1.5)});
\draw [domain=-2.:30.] plot(\x,{(--13.9--0.2*\x)/1.});
\draw (27,0.) node[anchor=north west] {$\rho$};
\draw (23.81,18.8) node[anchor=north west] {$F_\alpha+\bar{V}\rho$};
\draw (-2.5,23.92) node[anchor=north west] {$\rho v$};
\draw (13.2,5.6) node[anchor=north west] {$w_1$};
\draw (22.74,10.36) node[anchor=north west] {$w_2$};
\draw (-2.5,14.) node[anchor=north west] {$F_\alpha$};
\draw [domain=0.0:30.0] plot(\x,{(-0.--17.4*\x)/4.72});
\draw [domain=0.0:30.0] plot(\x,{(-0.--15.30*\x)/16.84});
\draw (4.1,23.39) node[anchor=north west] {$v_2$};
\draw (24.21,23.12) node[anchor=north west] {$v_1$};
\draw (18.,17.8) node[anchor=north west] {$(\rho^l,v^l)$};
\draw (16.1,19.9) node[anchor=north west] {$(\hat{\rho},\hat{v})$};
\draw (4.6,15.1) node[anchor=north west] {$(\check{\rho}_1,\check{v}_1)$};
\draw (2.05,8.95) node[anchor=north west] {$(\rho^r,v^r)$};
\draw (-0.5,17.14) node[anchor=north west] {$(\check{\rho}_2,\check{v}_2)$};
\draw (8.63,23.) node[anchor=north west] {$(\rho^\sigma,v^\sigma)$};
\begin{scriptsize}
\draw [fill=uququq] (18.09,16.44) circle (5pt);
\draw [fill=uququq] (17.3,17.35) circle (5pt);
\draw [fill=uququq] (5.17,14.93) circle (5pt);
\draw [fill=qqqqff] (2.01,7.37) circle (5pt);
\draw [fill=xdxdff] (3.98,14.7) circle (5pt);
\draw [fill=xdxdff] (11.20,20.33) circle (5pt);
\end{scriptsize}
\end{tikzpicture}
\caption{Representation of the points used in the proof of Lemma \ref{lemma_3_dominio_invariante_2}. The invariant domain is the coloured area. If, by contradiction, the inequality $h_\alpha(v_2)>w_2$ held, the point $(\check{\rho}_2,\check{v}_2)$ could be outside the set $\mathcal{D}_{v_1,v_2,w_1,w_2}$.}\label{fig_dim_lemma_3_sec_ris} 
\end{figure}
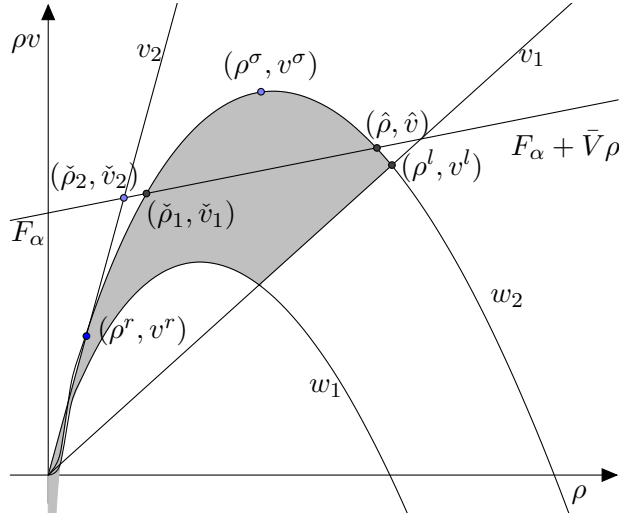
\begin{prop}\label{proposizione_dominio_invariante_R_alfa_2}
Let $\bar{V}\leq v_1<v_2$, $0<w_1<w_2$, $v_2<w_2$ and $\alpha\in(0,1)$ be fixed. Let us suppose that there exists $\bar{v}\in [v_1,v_2]$, such that $h_\alpha(\bar{v})<w_2$. The set $\mathcal{D}_{v_1,v_2,w_1,w_2}$ is invariant for the Riemann solver $\mathcal{RS}^\alpha_2$ if and only if
\begin{equation}\label{condizioni_dominio_invariante_R_alfa_2}
h_\alpha(v_1)\geq w_2, \; \; h_\alpha(v_2)\leq w_2 \; \text{ and } \; h_\alpha(v)\geq w_1\; \text{ for every } \; v\in[v_1,v_2].
\end{equation}
\end{prop}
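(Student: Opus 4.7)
The statement is an ``if and only if''. The ``only if'' direction is already contained in the three lemmas just proved: Lemma \ref{lemma_1_dominio_invariante_RS_2} forces $h_\alpha(v_1) \geq w_2$, Lemma \ref{lemma_2_dominio_invariante_RS2} forces $h_\alpha(v) \geq w_1$ on $[v_1, v_2]$, and Lemma \ref{lemma_3_dominio_invariante_2} forces $h_\alpha(v_2) \leq w_2$. So the work lies in the converse.

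Assume (\ref{condizioni_dominio_invariante_R_alfa_2}) and fix $(\rho^l, v^l), (\rho^r, v^r) \in \mathcal{D}_{v_1,v_2,w_1,w_2}$. In the second and third cases of the definition of $\mathcal{RS}^\alpha_2$ the constrained solver coincides with the classical $\mathcal{RS}$, for which $\mathcal{D}_{v_1,v_2,w_1,w_2}$ is invariant by Theorem \ref{standard_invariant_domain}. So only the first case needs attention, in which the solution equals $\mathcal{RS}((\rho^l, v^l), (\hat\rho, \hat v))$ for $x < y(t)$ and $\mathcal{RS}((\check\rho_2, \check v_2), (\rho^r, v^r))$ for $x \geq y(t)$. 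Invoking Theorem \ref{standard_invariant_domain} once more, it is enough to show that both $(\hat\rho, \hat v)$ and $(\check\rho_2, \check v_2)$ lie in $\mathcal{D}_{v_1,v_2,w_1,w_2}$.

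For $(\hat\rho, \hat v)$ the relation $\hat v + p(\hat\rho) = v^l + p(\rho^l)$ puts it automatically in the strip $w_1 \leq v + p(\rho) \leq w_2$, so only $v_1 \leq \hat v \leq v_2$ is at stake. Let $\tilde v$ denote the minimum of $h_\alpha$ given by Lemma \ref{lemma_monotonia_di_h_alfa}. A short argument combining $h_\alpha(v_1) \geq w_2$ with the existence of $\bar v \in [v_1,v_2]$ satisfying $h_\alpha(\bar v) < w_2$ rules out $\tilde v < v_1$ (otherwise $h_\alpha$ would be strictly increasing on $[v_1, v_2]$, giving $h_\alpha(\bar v) \geq h_\alpha(v_1) \geq w_2$, a contradiction). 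Since $\hat v$ is the smaller root of $h_\alpha(\cdot) = v^l + p(\rho^l)$, it satisfies $\hat v \leq \tilde v$; hence $\hat v$ and $v_1$ both sit on the decreasing branch of $h_\alpha$, and the chain $h_\alpha(\hat v) \leq w_2 \leq h_\alpha(v_1)$ yields $\hat v \geq v_1$. For the upper bound I invoke the remark following the definition of $\mathcal{RS}^\alpha_2$: case 1 can occur only when $\check\rho_2 \leq \hat\rho$, equivalently $v^r = \check v_2 \geq \hat v$, so $\hat v \leq v^r \leq v_2$.

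For $(\check\rho_2, \check v_2)$ one has $\check v_2 = v^r \in [v_1, v_2]$ by construction, so only $w_1 \leq h_\alpha(v^r) \leq w_2$ remains. The lower bound is the third hypothesis evaluated at $v^r$. For the upper bound I split on the position of $v^r$ relative to $\tilde v$: if $v^r \leq \tilde v$, decreasing monotonicity on $[\hat v, v^r]$ (using $\hat v \leq v^r$ from the previous paragraph) gives $h_\alpha(v^r) \leq h_\alpha(\hat v) = v^l + p(\rho^l) \leq w_2$; if $v^r > \tilde v$, then $v_2 \geq v^r > \tilde v$ and increasing monotonicity on $[\tilde v, v_2]$ gives $h_\alpha(v^r) \leq h_\alpha(v_2) \leq w_2$. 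The most delicate step is the inequality $\hat v \leq v_2$: unlike in Proposition \ref{Proposizione_Dominio_invariante_R_alfa1}, the hypothesis $h_\alpha(v_2) \leq w_2$ allows $h_\alpha$ to dip below $w_2$ inside $[v_1,v_2]$, so a naive monotonicity argument does not work and one must genuinely use the structural fact that the non-classical shock is triggered only when $\check\rho_2 \leq \hat\rho$.
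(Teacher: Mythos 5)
Your proof is correct, and for the sufficiency direction it takes a genuinely different route from the paper's. The paper argues by contradiction: it assumes a trace $(\hat\rho,\hat v)$ or $(\check\rho_2,\check v_2)$ escapes $\mathcal{D}_{v_1,v_2,w_1,w_2}$, derives two Claims about the constraint holding on the relevant Lax curves, and then walks through a case-split (contact-discontinuity speed versus $\bar V$, rarefaction versus shock, $v^l$ versus $v^r$, plus Claim~(iii) to exclude $\rho^m<\check\rho_1$) to show the solver would have to be classical, contradicting that we are in case~1. You instead prove the inclusion directly: you import from the remark following the definition of $\mathcal{RS}^\alpha_2$ the fact that case~1 forces $\check\rho_2\le\hat\rho$, hence $\hat v\le\check v_2=v^r\le v_2$, and then hand the rest to the monotonicity structure of $h_\alpha$ from Lemma~\ref{lemma_monotonia_di_h_alfa}. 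This isolates the upper bound $\hat v\le v_2$ as the one place where structural information about the Riemann solver is genuinely needed (since $h_\alpha(v_2)\le w_2$ disables the monotonicity trick that settles the $\mathcal{RS}^\alpha_1$ case in Proposition~\ref{Proposizione_Dominio_invariante_R_alfa1}), while everything else is settled by the same $h_\alpha$-monotonicity arguments already used there. What you gain is a noticeably shorter proof in which the role of each hypothesis in (\ref{condizioni_dominio_invariante_R_alfa_2}) is transparent --- $h_\alpha(v_1)\ge w_2$ gives $\hat v\ge v_1$ on the decreasing branch, $h_\alpha(v_2)\le w_2$ caps $h_\alpha(v^r)$ on the increasing branch, and $h_\alpha\ge w_1$ gives the floor for $\check v_2+p(\check\rho_2)$; what the paper's version gains is self-containment, since it does not lean on that separate remark but re-derives the needed facts inside the proof via its Claims~(i)--(iii).
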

\begin{proof}
The three Lemmas \ref{lemma_1_dominio_invariante_RS_2}, \ref{lemma_2_dominio_invariante_RS2} and \ref{lemma_3_dominio_invariante_2} prove that if the domain $\mathcal{D}_{v_1,v_2,w_1,w_2}$ is invariant for $\mathcal{RS}^\alpha_2$, then the inequalities (\ref{condizioni_dominio_invariante_R_alfa_2}) hold.\\
We have to show the vice versa. We split the proof in two parts and postpone at the end the proofs of the claims that we will do.\\
Let $(\rho^l,v^l)$ and $(\rho^r,v^r)$ be two fixed points in $\mathcal{D}_{v_1,v_2,w_1,w_2}$ and let $(\rho^m,v^m)$ be the intermediate state of the classical solution. Since $\mathcal{D}_{v_1,v_2,w_1,w_2}$ is invariant for $\mathcal{RS}$, we have only to show that the left and right traces of $\mathcal{RS}^\alpha_2((\rho^l,v^l),(\rho^r,v^r))$ in $\lambda=\bar{V}$ belong to $\mathcal{D}_{v_1,v_2,w_1,w_2}$.\\
If $\mathcal{RS}^\alpha_2((\rho^l,v^l),(\rho^r,v^r))$ coincides with $\mathcal{RS}((\rho^l,v^l),(\rho^r,v^r))$, there is nothing to prove. Hence we assume that $\mathcal{RS}^\alpha_2((\rho^l,v^l),(\rho^r,v^r))$ is not the classical solution. In this case $(\hat{\rho},\hat{v})$ and $(\check{\rho}_2,\check{v}_2)$ are respectively the left and the right trace of the solution in $\bar{V}$.
\begin{enumerate}
\item If $(\hat{\rho},\hat{v})$ does not belong to $\mathcal{D}_{v_1,v_2,w_1,w_2}$, let $(\rho^*,v^*)\in  \mathcal{D}_{v_1,v_2,w_1,w_2}$ be a point of $L_1(\rho,\rho^l,v^l)$. We claim (\textit{Claim} (i)) that
\begin{equation}\label{prop_dom_inv_RS2_claim_1}
\rho^* v^*\leq F_\alpha+\bar{V}\rho^*.
\end{equation}
Since for $(\rho^l,v^l)$ and $(\rho^m,v^m)$ the same hypotheses of $(\rho^*,v^*)$ hold, they satisfy the constraint.\\
The points $(\rho^m,v^m)$ and $(\rho^r,v^r)$ are joined with the classical Riemann solver by a contact discontinuity which propagates with speed $v^m=v^r$. Since $\bar{V}\leq v_1\leq v^r$, we find $v^r\geq\bar{V}$. If the equal holds, the left and right traces of $\mathcal{RS}((\rho^l,v^l),(\rho^r,v^r))$ in $\lambda=\bar{V}$ are respectively $(\rho^m,v^m)$ and $(\rho^r,v^r)$. Both these points satisfy the constraint, because $v^m=v^r=\bar{V}$, then for example for $(\rho^r,v^r)$ we have
$$\rho^r v^r=\rho^r \bar{V}<\rho^r \bar{V}+F_\alpha.$$
If the propagation speed is higher then the bus velocity and the classical solution connects $(\rho^l,v^l)$ and $(\rho^m,v^m)$ with a shock, then the trace of $\mathcal{RS}((\rho^l,v^l),(\rho^r,v^r))$ in $\lambda=\bar{V}$ is $(\rho^l,v^l)$ or $(\rho^m,v^m)$ (see Figure \ref{fig_dim_prop_dom_inv_sec_ris_caso_due}). Since both these points satisfy the constraint, the solution given by $\mathcal{RS}^\alpha_2$ coincides with the classical one.\\
If the propagation speed is higher then the bus velocity and the standard Riemann solver links $(\rho^l,v^l)$ and $(\rho^m,v^m)$ with a rarefaction (see Figure \ref{fig_dim_prop_dom_inv_sec_ris_caso_tre}), the trace in $\bar{V}$ is one of the points of the rarefaction. All these points satisfy the constraint, indeed, like $(\rho^*,v^*)$, they belong to the Lax curve of the first family passing through $(\rho^l,v^l)$ and they are inside the set $\mathcal{D}_{v_1,v_2,w_1,w_2}$, because it is invariant for the standard Riemann solver. Hence, even in this case the solution obtained with $\mathcal{RS}^\alpha_2$ coincides with the classical solution. This contradicts the assumption that we have made at the beginning of the proof.
\item If $(\check{\rho}_2,\check{v}_2)$ is not in $\mathcal{D}_{v_1,v_2,w_1,w_2}$, we claim (\textit{Claim} (ii)) that each point $(\rho^*,v^*)\in \mathcal{D}_{v_1,v_2,w_1,w_2}$ belonging to the curve $\rho \to \rho L_2(\rho, \rho^r,v^r)$, is such that
\begin{equation}
\rho^* v^*\leq F_\alpha+\bar{V}\rho^*.
\end{equation}
If $v^r=\bar{V}$, the left and right traces of $\mathcal{RS}((\rho^l,v^l), (\rho^r,v^r))$ in $\lambda=\bar{V}$ are respectively $(\rho^m,v^m)$ and $(\rho^r,v^r)$. Both these points satisfy the constraint, because $v^m=v^r=\bar{V}.$\\
If $v^l=v^r>\bar{V}$, then $\mathcal{RS}$ connects the points $(\rho^l,v^l)$ and $(\rho^r,v^r)$ with a contact discontinuity and there are no intermediate states. Since $v^r>\bar{V}$, the propagation speed of the wave is higher then the bus velocity. Therefore the classical solution in $\bar{V}$ coincides with $(\rho^l,v^l)$ which satisfies the constraint by \textit{Claim} (ii), because it satisfies the same hypotheses of $(\rho^*,v^*)$.\\
If $v^l>v^r$ (see Figure \ref{fig_dim_prop_sec_ris_caso_quattro}), then a shock of the first family propagating with speed
$$s=\dfrac{\rho^l v^l-\rho^m v^m}{\rho^l-\rho^m}$$
appears between $(\rho^l,v^l)$ and $(\rho^m,v^m)$.\\
The trace of $\mathcal{RS}((\rho^l,v^l),(\rho^r,v^r))$ in $\bar{V}$ is given by $(\rho^m,v^m)$ if $\bar{V}>s$, i.e. if the bus is travelling faster then the shock. In this case the solution given by $\mathcal{RS}^\alpha_2$ coincides with the classical solution by \textit{Claim} (ii) applied to $(\rho^m,v^m)$.\\
If $\bar{V}\leq s$, the trace of $\mathcal{RS}((\rho^l,v^l),(\rho^r,v^r))$ is $(\rho^l,v^l)$ (or $(\rho^m,v^m)$ and $(\rho^l,v^l)$ respectively on the right and on the left when the equal holds). The condition $\bar{V}\leq s$, is verified if and only if
$$\dfrac{\rho^m v^m-\rho^l v^l}{\rho^m-\rho^l}\geq \bar{V} \Longleftrightarrow \rho^l v^l \leq \rho^m v^m +\bar{V}(\rho^l-\rho^m)<F_\alpha+\bar{V}\rho^l,$$
because $(\rho^m,v^m)$ satisfies the constraint. Hence $\mathcal{RS}^\alpha_2$ gives the classical solution.\\
If $v^l<v^r$ (see Figure \ref{fig_dim_prop_sec_ris_caso_cinque}), the standard Riemann solver joins $(\rho^l,v^l)$ and $(\rho^m,v^m)$ with a rarefaction. The trace of $\mathcal{RS}((\rho^l,v^l),(\rho^r,v^r))$ in $\bar{V}$ can be one of the points $(\rho^\sigma,v^\sigma)$ connecting $(\rho^l,v^l)$ and $(\rho^m,v^m)$. These points are such that $\rho^m\leq \rho^\sigma \leq \rho^l$. By Lemma \ref{funzione_concava_e_retta} and since $(\rho^m,v^m)$ satisfies the constraint, we have $\hat{\rho}<\rho^m$ or $\rho^m<\check{\rho}_1$. In the first case, even $\rho^\sigma >\hat{\rho}$, then $(\rho^\sigma,v^\sigma)$ satisfies the constraint by Lemma \ref{funzione_concava_e_retta}.\\
The last case is $\rho^m<\check{\rho}_1$. We claim (\textit{Claim} (iii)) that this case never happens.\\
Therefore $(\check{\rho}_2,\check{v}_2)$ belongs to $\mathcal{D}_{v_1,v_2,w_1,w_2}$ which is consequently invariant.
\end{enumerate}
\textit{Proofs of the Claims.}
\begin{enumerate}
\item[(i)] Let us assume that $(\hat{\rho},\hat{v})$ does not belong to $\mathcal{D}_{v_1,v_2,w_1,w_2}$. Let $(\rho^*,v^*) \in \mathcal{D}_{v_1,v_2,w_1,w_2}$ be a point of $L_1(\rho,\rho^l,v^l)$. Then $$\rho^* v^*\leq F_\alpha+\bar{V}\rho^*.$$
\textit{Proof.} By the choice of $(\rho^*,v^*)$, the conditions $v_1\leq v^*\leq v_2$, $w_1\leq v^*+p(\rho^*)\leq w_2$ and $v^*+p(\rho^*)=v^l+p(\rho^l)$ hold. By definition, the last condition is true also for $(\hat{\rho},\hat{v})$, namely $\hat{v}+p(\hat{\rho})=v^l+p(\rho^l)$. Hence $\hat{v}+p(\hat{\rho}) \in [w_1,w_2]$. We are supposing that $(\hat{\rho},\hat{v})$ does not belong to $\mathcal{D}_{v_1,v_2,w_1,w_2}$, hence it must be $\hat{v}\notin [v_1,v_2]$.\\
If $\hat{v}<v_1$, then let $\tilde{v}$ be the minimum of the function $h_\alpha$ and let us assume that $v_1>\tilde{v}$. In this case $h_\alpha$ is increasing in $[v_1,v_2]$, thus $h_\alpha(v)\geq h_\alpha(v_1)\geq w_2$ for every $v\in[v_1,v_2]$ which is a contradiction of the hypothesis on $\bar{v}$. If $v_1<\tilde{v}$, then $h_\alpha(\hat{v})>h_\alpha(v_1)\geq w_2$ which is absurd because $h_\alpha(\hat{v})\in[w_1,w_2]$.\\
The only case remaining is $\hat{v}>v_2$. By definition $v^*$ is in $[v_1,v_2]$.Therefore we have
$$\hat{v}>v^* \Longrightarrow p(\rho^*)=p(\hat{\rho})+\hat{v}-v^*>p(\hat{\rho}) \Longrightarrow \rho^* > \hat{\rho}.$$
Since $(\hat{\rho},\hat{v})$ is the point with maximal density of the strictly concave function $\rho \to \rho L_1(\rho,\rho^l,v^l)$ which satisfies the constraint, by Lemma \ref{funzione_concava_e_retta}, for every $\rho\geq\hat{\rho}$ we find $\rho L_1(\rho,\rho^l,v^l)\leq F_\alpha +\bar{V}\rho$ and this is true in particular for $(\rho^*,v^*)$.
\item[(ii)] Let us assume that $(\check{\rho}_2,\check{v}_2)$ does not belong to $\mathcal{D}_{v_1,v_2,w_1,w_2}$. Let $(\rho^*,v^*)\in \mathcal{D}_{v_1,v_2,w_1,w_2}$ be a point of the curve $\rho \to \rho L_2(\rho, \rho^r,v^r)$. Then
\begin{equation}
\rho^* v^*\leq F_\alpha+\bar{V}\rho^*.
\end{equation}
\textit{Proof.} By the choice of $(\rho^*,v^*)$, we have $v^*=v^r$, $v_1\leq v^* \leq v_2$ and $w_1\leq v^*+p(\rho^*)\leq w_2$. By definition, $\check{v}_2=v^r$ and hence we have $v_1\leq \check{v}_2 \leq v_2$. Since $(\check{\rho}_2,\check{v}_2)$ does not belong to $\mathcal{D}_{v_1,v_2,w_1,w_2}$, it must be $h_\alpha(\check{v}_2)=\check{v}_2+p(\check{\rho}_2)<w_1$ or $h_\alpha(\check{v}_2)>w_2$. By hypothesis $h_\alpha(v)\geq w_1$ for every $v$ in $[v_1,v_2]$. Hence the only possible case is $h_\alpha(\check{v}_2)>w_2$. Recalling the definition of $(\rho^*,v^*)$, we find
\begin{equation*}
\begin{split}
&v^*+p(\rho^*)\leq w_2 \Longrightarrow \check{v}_2+p(\check{\rho}_2)>v^*+p(\rho^*)=v^r+p(\rho^*) \Longrightarrow \check{\rho}_2 >\rho^* \Longrightarrow\\
&\Longrightarrow \dfrac{F_\alpha}{\check{v}_2-\bar{V}}>\rho^*\Longrightarrow \rho^* v^* <F_\alpha+\bar{V}\rho^*.
\end{split}
\end{equation*}
\item[(iii)] If $v^l<v^r$ and $(\check{\rho}_2,\check{v}_2)$ does not belong to $\mathcal{D}_{v_1,v_2,w_1,w_2}$, then the case $\rho^m<\check{\rho}_1$ never appears.\\
\textit{Proof.} Since $\check{v}_2=v^r$ and $v^r \in [v_1,v_2]$, the condition $h_\alpha(\check{v}_2)= \check{v}_2+p(\check{\rho}_2)\notin [w_1,w_2]$ must hold to have $(\check{\rho}_2,\check{v}_2)\notin \mathcal{D}_{v_1,v_2,w_1,w_2}$.\\
The condition $\check{v}_2 \in [v_1,v_2]$ and the hypotheses (\ref{condizioni_dominio_invariante_R_alfa_2}) imply that $h_\alpha(\check{v}_2)\geq w_1$. Therefore we have
\begin{equation}\label{claim_3}
h_\alpha(\check{v}_2)>w_2.
\end{equation}
Let $\tilde{v}$ be the minimum of the function $h_\alpha$.\\
If $\check{v}_2\geq \tilde{v}$, then both $v_2$ and $\check{v}_2$ are in the interval where $h_\alpha$ is increasing. Hence
$$\check{v}_2=v^r\leq v_2 \Longrightarrow h_\alpha(\check{v}_2)\leq h_\alpha(v_2).$$
By the hypotheses (\ref{condizioni_dominio_invariante_R_alfa_2}), $h_\alpha(v_2)\leq w_2$, but this is a contradiction of the condition (\ref{claim_3}), because we would have $h_\alpha(\check{v}_2)\leq w_2$.\\
Let us show that $v^m<\hat{v}$ whenever $\check{v}_2< \tilde{v}$.\\
If $\hat{v}>\tilde{v}$ holds, then we have
$$v^m=\check{v}_2<\hat{v}.$$
If $\hat{v}<\tilde{v}$, then both $\hat{v}$ and $\check{v}_2$ are in the interval where $h_\alpha$ is decreasing. For the part 1 of the proposition, the point $(\hat{\rho},\hat{v})$ belongs to $\mathcal{D}_{v_1,v_2,w_1,w_2}$, hence
$$h_\alpha(\hat{v})=\hat{v}+p(\hat{\rho})\leq w_2<h_\alpha(\check{v}_2)\Longrightarrow v^m=\check{v}_2< \hat{v}.$$
Both these cases are in contradiction with $\rho^m<\check{\rho}_1$, because $\hat{\rho} \geq \check{\rho}_1$ and
$$v^m \leq \hat{v} \Longleftrightarrow \rho^m \geq \hat{\rho}.$$
\end{enumerate}
\end{proof}
\begin{figure}
\centering
\begin{subfigure}[h]{0.7\textwidth}
\definecolor{cqcqcq}{rgb}{0.7529411764705882,0.7529411764705882,0.7529411764705882}
\definecolor{xdxdff}{rgb}{0.49019607843137253,0.49019607843137253,1.}
\definecolor{uququq}{rgb}{0.25098039215686274,0.25098039215686274,0.25098039215686274}
\begin{tikzpicture}[scale =0.25,line cap=round,line join=round,>=triangle 45,x=1.0cm,y=1.0cm]
\draw[->,color=black] (-2.,0.) -- (30.,0.);
\draw[->,color=black] (0.,-1.5) -- (0.,23.5);
\clip(-3,-2) rectangle (30.,23.5);
\draw[line width=0.pt,color=cqcqcq,fill=cqcqcq,fill opacity=1.0] {[smooth,samples=50,domain=7.75:13.73] plot(\x,{0-(-18.41/12.62)*\x-0.0/12.62})} -- (13.73,7.37) {[smooth,samples=50,domain=13.73:7.75] -- plot(\x,{4.24*\x-\x^(1.5)})} -- (7.75,11.3) -- cycle;
\draw[line width=0.pt,color=cqcqcq,fill=cqcqcq,fill opacity=1.0] {[smooth,samples=50,domain=13.64:21.28] plot(\x,{5.16*\x-\x^(1.5)})} -- (21.28,11.68) {[smooth,samples=50,domain=21.28:13.64] -- plot(\x,{0-(-11.8/21.47)*\x-0.0/21.47})} -- (13.64,20.03) -- cycle;
\draw[smooth,samples=50,domain=0.001:30.0] plot(\x,{5.16*(\x)-(\x)^(1.5)});
\draw[smooth,samples=50,domain=0.001:30.0] plot(\x,{4.24*(\x)-(\x)^(1.5)});
\draw [domain=0.0:29.0] plot(\x,{(-0.--18.41*\x)/12.62});
\draw [domain=0.0:29.0] plot(\x,{(-0.--11.8*\x)/21.47});
\draw [domain=-2.:30.] plot(\x,{(--12.63--0.5*\x)/1.});
\draw [domain=0.0:30.0] plot(\x,{(-0.--14.1*\x)/17.71});
\draw[smooth,samples=50,domain=0.001:30.0] plot(\x,{5.*(\x)-(\x)^(1.5)});
\begin{scriptsize}
\draw (27,0.1) node[anchor=north west] {$\rho$};
\draw (20.08,23.2) node[anchor=north west] {$F_\alpha+\bar{V}\rho$};
\draw (-2.,22) node[anchor=north west] {$\rho v$};
\draw (13.5,4.31) node[anchor=north west] {$w_1$};
\draw (22.78,10.14) node[anchor=north west] {$w_2$};
\draw (-2.,14.3) node[anchor=north west] {$F_\alpha$};
\draw (13.,23.47) node[anchor=north west] {$v_2$};
\draw (24.22,13.94) node[anchor=north west] {$v_1$};
\draw (9.7,20.2) node[anchor=north west] {$(\hat{\rho},\hat{v})$};
\draw (13.01,11.4) node[anchor=north west] {$(\rho^r,v^r)$};
\draw (17.5,14.9) node[anchor=north west] {$(\rho^m,v^m)$};
\draw (11,17.) node[anchor=north west] {$(\rho^l,v^l)$};
\end{scriptsize}
\begin{scriptsize}
\draw [fill=uququq] (11.77,18.52) circle (5pt);
\draw [fill=xdxdff] (14.96,17.00) circle (5pt);
\draw [fill=xdxdff] (17.71,14.09) circle (5pt);
\draw [fill=xdxdff] (13.17,10.48) circle (5pt);
\end{scriptsize}
\end{tikzpicture}
\caption{The standard Riemann solver links $(\rho^l,v^l)$ and $(\rho^m,v^m)$ with a shock.}\label{fig_dim_prop_dom_inv_sec_ris_caso_due}
\end{subfigure}
\\
\vspace{0.5cm}
\begin{subfigure}[h]{0.7\textwidth}
\definecolor{cqcqcq}{rgb}{0.7529411764705882,0.7529411764705882,0.7529411764705882}
\definecolor{xdxdff}{rgb}{0.49019607843137253,0.49019607843137253,1.}
\definecolor{uququq}{rgb}{0.25098039215686274,0.25098039215686274,0.25098039215686274}
\begin{tikzpicture}[scale=0.25,line cap=round,line join=round,>=triangle 45,x=1.0cm,y=1.0cm]
\draw[->,color=black] (-2.,0.) -- (30.,0.);
\draw[->,color=black] (0.,-1.5) -- (0.,25.);
\clip(-3.,-2) rectangle (30.,25.);
\draw[line width=0.pt,color=cqcqcq,fill=cqcqcq,fill opacity=1.0] {[smooth,samples=50,domain=8.25:13.64] plot(\x,{0-(-18.11/13.22)*\x-0.0/13.22})} -- (13.64,7.5) {[smooth,samples=50,domain=13.64:8.25] -- plot(\x,{4.24*\x-\x^(1.5)})} -- (8.25,11.3) -- cycle;
\draw[line width=0.pt,color=cqcqcq,fill=cqcqcq,fill opacity=1.0] {[smooth,samples=50,domain=13.62:14.38] plot(\x,{0-(-18.11/13.22)*\x-0.0/13.22})} -- (14.38,7.9) {[smooth,samples=50,domain=14.38:13.62] -- plot(\x,{0-(-11.8/21.47)*\x-0.0/21.47})} -- (13.62,18.65) -- cycle;
\draw[line width=0.pt,color=cqcqcq,fill=cqcqcq,fill opacity=1.0] {[smooth,samples=50,domain=14.35:21.28] plot(\x,{5.16*\x-\x^(1.5)})} -- (21.28,11.68) {[smooth,samples=50,domain=21.28:14.35] -- plot(\x,{0-(-11.8/21.47)*\x-0.0/21.47})} -- (14.35,19.71) -- cycle;
\draw[smooth,samples=50,domain=0.001:30.0] plot(\x,{5.16*(\x)-(\x)^(1.5)});
\draw[smooth,samples=50,domain=0.001:30.0] plot(\x,{4.24*(\x)-(\x)^(1.5)});
\draw [domain=-2.:30.] plot(\x,{(--11.87--0.5*\x)/1.});
\draw [domain=0.0:30.0] plot(\x,{(-0.--18.1*\x)/13.22});
\draw [domain=0.0:30.0] plot(\x,{(-0.--11.8*\x)/21.47});
\draw[smooth,samples=50,domain=0.001:30.0] plot(\x,{5.*(\x)-(\x)^(1.5)});
\draw [domain=0.0:30.0] plot(\x,{(-0.--17.35*\x)/14.5});
\begin{scriptsize}
\draw (27.,0.1) node[anchor=north west] {$\rho$};
\draw (19.9,22.7) node[anchor=north west] {$F_\alpha+\bar{V}\rho$};
\draw (-2.,23.) node[anchor=north west] {$\rho v$};
\draw (14.,4.18) node[anchor=north west] {$w_1$};
\draw (23.75,7.73) node[anchor=north west] {$w_2$};
\draw (-2.,13.7) node[anchor=north west] {$F_\alpha$};
\draw (15.6,24.72) node[anchor=north west] {$v_2$};
\draw (24.81,14.2) node[anchor=north west] {$v_1$};
\draw (10.3,20.3) node[anchor=north west] {$(\hat{\rho},\hat{v})$};
\draw (10.5,13.5) node[anchor=north west] {$(\rho^r,v^r)$};
\draw (14.555,18.2) node[anchor=north west] {$(\rho^m,v^m)$};
\draw (15,12.6) node[anchor=north west] {$(\rho^l,v^l)$};
\draw (17.85,14.5) node[anchor=north west] {$(\rho^*,v^*)$};
\end{scriptsize}
\begin{scriptsize}
\draw [fill=uququq] (12.8,18.26) circle (5pt);
\draw [fill=xdxdff] (18.97,12.3) circle (5pt);
\draw [fill=xdxdff] (14.5,17.35) circle (5pt);
\draw [fill=xdxdff] (10.78,12.9) circle (5pt);
\draw [fill=xdxdff] (17.92,13.81) circle (5pt);
\end{scriptsize}
\end{tikzpicture}
\caption{The standard Riemann solver links $(\rho^l,v^l)$ and $(\rho^m,v^m)$ with a rarefaction.}\label{fig_dim_prop_dom_inv_sec_ris_caso_tre}
\end{subfigure}
\caption{Notations used in the proof of Proposition \ref{proposizione_dominio_invariante_R_alfa_2}. The invariant domain is the coloured region. In both cases the Riemann solver $\mathcal{RS}^\alpha_2$ coincides with $\mathcal{RS}$.}
\end{figure}
\begin{figure}
\centering
\begin{subfigure}[h]{0.7\textwidth}
\definecolor{cqcqcq}{rgb}{0.7529411764705882,0.7529411764705882,0.7529411764705882}
\definecolor{uququq}{rgb}{0.25098039215686274,0.25098039215686274,0.25098039215686274}
\definecolor{xdxdff}{rgb}{0.49019607843137253,0.49019607843137253,1.}
\definecolor{qqqqff}{rgb}{0.,0.,1.}
\begin{tikzpicture}[scale = 0.25,line cap=round,line join=round,>=triangle 45,x=1.0cm,y=1.0cm]
\draw[->,color=black] (-3.,0.) -- (30.,0.);
\draw[->,color=black] (0.,-2.) -- (0.,25.);
\clip(-3.,-2.) rectangle (30.,25.);
\draw[line width=0.pt,color=cqcqcq,fill=cqcqcq,fill opacity=1.0] {[smooth,samples=50,domain=2.98:7.17] plot(\x,{0-(-18.52/7.35)*\x-0.0/7.35})} -- (7.17,11.22) {[smooth,samples=50,domain=7.17:2.98] -- plot(\x,{4.24*\x-\x^(1.5)})} -- (2.98,7.50) -- cycle;
\draw[line width=0.pt,color=cqcqcq,fill=cqcqcq,fill opacity=1.0] {[smooth,samples=50,domain=7.15:12.22] plot(\x,{5.16*\x-\x^(1.5)})} -- (12.22,9.12) {[smooth,samples=50,domain=12.22:7.15] -- plot(\x,{4.24*\x-\x^(1.5)})} -- (7.15,17.79) -- cycle;
\draw[line width=0.pt,color=cqcqcq,fill=cqcqcq,fill opacity=1.0] {[smooth,samples=50,domain=12.2:19.5] plot(\x,{5.16*\x-\x^(1.5)})} -- (19.5,14.55) {[smooth,samples=50,domain=19.5:12.2] -- plot(\x,{0-(-14.84/19.89)*\x-0.0/19.89})} -- (12.2,20.36) -- cycle;
\draw[smooth,samples=50,domain=0.001:30.0] plot(\x,{5.16*(\x)-(\x)^(1.5)});
\draw[smooth,samples=50,domain=0.001:30.0] plot(\x,{4.24*(\x)-(\x)^(1.5)});
\draw [domain=-3.:30.] plot(\x,{(--11.84--0.5*\x)/1.});
\draw[smooth,samples=50,domain=0.001:30.0] plot(\x,{5.00*(\x)-(\x)^(1.5)});
\draw [domain=0.0:30.0] plot(\x,{(-0.--17.36*\x)/14.48});
\draw [domain=0.0:30.0] plot(\x,{(-0.--18.52*\x)/7.35});
\draw [domain=0.0:30.0] plot(\x,{(-0.--14.84*\x)/19.89});
\begin{scriptsize}
\draw (27.5,0.1) node[anchor=north west] {$\rho$};
\draw (22.08,23.75) node[anchor=north west] {$F_\alpha+\bar{V}\rho$};
\draw (-2,23.) node[anchor=north west] {$\rho v$};
\draw (13.6,5.04) node[anchor=north west] {$w_1$};
\draw (22.78,10.19) node[anchor=north west] {$w_2$};
\draw (-2,13.5) node[anchor=north west] {$F_\alpha$};
\draw (8.90,23.64) node[anchor=north west] {$v_2$};
\draw (23.06,17.89) node[anchor=north west] {$v_1$};
\draw (5.7,13.4) node[anchor=north west] {$(\rho^r,v^r)$};
\draw (14.3,18.2) node[anchor=north west] {$(\rho^m,v^m)$};
\draw (7.5,18.5) node[anchor=north west] {$(\rho^l,v^l)$};
\draw (17.41,21.31) node[anchor=north west] {$(\check{\rho}_2,\check{v}_2)$};
\draw (12.10,15.56) node[anchor=north west] {$(\rho^*,v^*)$};
\draw (11.,20.2) node[anchor=north west] {$(\hat{\rho},\hat{v})$};
\end{scriptsize}
\begin{scriptsize}
\draw [fill=qqqqff] (12.86,18.27) circle (5pt);
\draw [fill=xdxdff] (9.55,18.28) circle (5pt);
\draw [fill=xdxdff] (14.48,17.36) circle (5pt);
\draw [fill=xdxdff] (10.01,11.99) circle (5pt);
\draw [fill=uququq] (16.96,20.32) circle (5pt);
\draw [fill=xdxdff] (12.04,14.43) circle (5pt);
\end{scriptsize}
\end{tikzpicture}
\caption{The standard Riemann solver links $(\rho^l,v^l)$ and $(\rho^m,v^m)$ with a shock.}\label{fig_dim_prop_sec_ris_caso_quattro}
\end{subfigure}
\\
\vspace{0.5cm}
\begin{subfigure}[h]{0.7\textwidth}
\definecolor{cqcqcq}{rgb}{0.7529411764705882,0.7529411764705882,0.7529411764705882}
\definecolor{uququq}{rgb}{0.25098039215686274,0.25098039215686274,0.25098039215686274}
\definecolor{xdxdff}{rgb}{0.49019607843137253,0.49019607843137253,1.}
\begin{tikzpicture}[scale=0.25,line cap=round,line join=round,>=triangle 45,x=1.0cm,y=1.0cm]
\draw[->,color=black] (-2.,0.) -- (30.,0.);
\draw[->,color=black] (0.,-1.5) -- (0.,25.);
\clip(-2.,-1.5) rectangle (30.,25.);
\draw[line width=0.pt,color=cqcqcq,fill=cqcqcq,fill opacity=1.0] {[smooth,samples=50,domain=2.42:6.13] plot(\x,{0-(-18.70/6.96)*\x-0.0/6.96})} -- (6.13,10.83) {[smooth,samples=50,domain=6.13:2.42] -- plot(\x,{4.24*\x-\x^(1.5)})} -- (2.42,6.50) -- cycle;
\draw[line width=0.pt,color=cqcqcq,fill=cqcqcq,fill opacity=1.0] {[smooth,samples=50,domain=6.1:12.22] plot(\x,{5.16*\x-\x^(1.5)})} -- (12.22,9.12) {[smooth,samples=50,domain=12.22:6.1] -- plot(\x,{4.24*\x-\x^(1.5)})} -- (6.1,16.42) -- cycle;
\draw[line width=0.pt,color=cqcqcq,fill=cqcqcq,fill opacity=1.0] {[smooth,samples=50,domain=12.2:19.5] plot(\x,{5.16*\x-\x^(1.5)})} -- (19.5,14.55) {[smooth,samples=50,domain=19.5:12.2] -- plot(\x,{0-(-14.84/19.89)*\x-0.0/19.89})} -- (12.2,20.36) -- cycle;
\draw[smooth,samples=50,domain=0.001:30.0] plot(\x,{5.16*(\x)-(\x)^(1.5)});
\draw[smooth,samples=50,domain=0.001:30.0] plot(\x,{4.24*(\x)-(\x)^(1.5)});
\draw [domain=-2.:30.] plot(\x,{(--11.5--0.65*\x)/1.});
\draw [domain=0.0:30.0] plot(\x,{(-0.--18.70*\x)/6.96});
\draw [domain=0.0:30.0] plot(\x,{(-0.--14.84*\x)/19.9});
\draw[smooth,samples=50,domain=0.001:30.0] plot(\x,{5.*(\x)-(\x)^(1.5)});
\draw [domain=0.0:30.0] plot(\x,{(-0.--18.03*\x)/13.36});
\begin{scriptsize}
\draw (27.,0.1) node[anchor=north west] {$\rho$};
\draw (19.51,25.) node[anchor=north west] {$F_\alpha+\bar{V}\rho$};
\draw (-2,23.84) node[anchor=north west] {$\rho v$};
\draw (13.6,4.39) node[anchor=north west] {$w_1$};
\draw (22.79,10.14) node[anchor=north west] {$w_2$};
\draw (-2,12.7) node[anchor=north west] {$F_\alpha$};
\draw (8.42,23.94) node[anchor=north west] {$v_2$};
\draw (23.34,18.15) node[anchor=north west] {$v_1$};
\draw (5.,14.) node[anchor=north west] {$(\rho^r,v^r)$};
\draw (13.6,19.1) node[anchor=north west] {$(\rho^m,v^m)$};
\draw (13.4,15.28) node[anchor=north west] {$(\rho^l,v^l)$};
\draw (16.03,22.78) node[anchor=north west] {$(\check{\rho}_2,\check{v}_2)$};
\draw (15.7,17.) node[anchor=north west] {$(\rho^\sigma,v^\sigma)$};
\draw (9.03,20.42) node[anchor=north west] {$(\hat{\rho},\hat{v})$};
\end{scriptsize}
\begin{scriptsize}
\draw [fill=xdxdff] (17.64,14.2) circle (5pt);
\draw [fill=xdxdff] (13.36,18.03) circle (5pt);
\draw [fill=xdxdff] (9.23,12.45) circle (5pt);
\draw [fill=uququq] (16.44,22.18) circle (5pt);
\draw [fill=xdxdff] (16.,16.07) circle (5pt);
\draw [fill=uququq] (10.86,18.56) circle (5pt);
\end{scriptsize}
\end{tikzpicture}
\caption{The standard Riemann solver links $(\rho^l,v^l)$ and $(\rho^m,v^m)$ with a rarefaction and $\rho^m>\hat{\rho}$.}\label{fig_dim_prop_sec_ris_caso_cinque}
\end{subfigure}
\caption{Notations used in the proof of Proposition \ref{proposizione_dominio_invariante_R_alfa_2}. The invariant domain is the coloured region. In both cases the solution given by the Riemann solver $\mathcal{RS}^\alpha_2$ coincides with the classical one.}
\end{figure}
We have proved point $(ii)$ of Theorem \ref{Proposizione_Dominio_invariante_R_alfa2_definitiva}.
\subsection{The invariant domain intersects both $\mathcal{U}$ and $\mathcal{V}$}
If $v_1 < \bar{V}< v_2$, the domain $\mathcal{D}_{v_1,v_2,w_1,w_2}$ intersects both sets $\mathcal{U}=\lbrace (\rho,v)\in (0,+\infty)\times(0,+\infty)\, : \, v \leq \bar{V}\rbrace$ and $\mathcal{V}=\lbrace (\rho,v)\in (0,+\infty)\times(0,+\infty)\, : \, v \geq \bar{V}\rbrace$. Let us suppose that there exists $\bar{v}\in[\bar{V},v_2]$ such that $h_\alpha(\bar{v})<w_2$.\\
We can summarize the results obtained in the previous sections as follows.
\begin{enumerate}
\item The domain $\mathcal{D}_{v_1,\bar{V},w_1,w_2}$ is invariant for $\mathcal{RS}^\alpha_1$ and $\mathcal{RS}^\alpha_2$.
\item $h_\alpha(v_2)\geq w_2$ if and only if $\mathcal{D}_{\bar{V},v_2,w_1,w_2}$ is invariant for $\mathcal{RS}^\alpha_1$.
\item The conditions $h_\alpha(v) \geq w_1$ for every $v\in [\bar{V},v_2]$ and $h_\alpha(v_2)\leq w_2$ hold if and only if $\mathcal{D}_{\bar{V},v_2,w_1,w_2}$ is invariant for  $\mathcal{RS}^\alpha_2$.
\end{enumerate}
\begin{remark}
The inequality $h_\alpha(\bar{V})\geq w_2$, corresponding to the conditions on $v_1$ of Propositions \ref{Proposizione_Dominio_invariante_R_alfa1} and \ref{proposizione_dominio_invariante_R_alfa_2}, is always satisfied by the definition of $h_\alpha$.
\end{remark}
The next proposition states that if $v_1<\bar{V}<v_2$, then $\mathcal{D}_{v_1,v_2,w_1,w_2}$ is invariant for both $\mathcal{RS}^\alpha_1$ and $\mathcal{RS}^\alpha_2$.
\begin{prop}\label{domini_invarianti_ultima_prop}
Let $0<v_1<\bar{V}<v_2$, $0<w_1<w_2$, $v_2<w_2$ and $\alpha\in(0,1)$ be fixed. If the domains $\mathcal{D}_{v_1,\bar{V},w_1,w_2}$ and $\mathcal{D}_{\bar{V},v_2,w_1,w_2}$ are invariant for $\mathcal{RS}^\alpha_1$ and $\mathcal{RS}^\alpha_2$, then also $\mathcal{D}_{v_1,v_2,w_1,w_2}$ is invariant for both Riemann solvers.
\end{prop}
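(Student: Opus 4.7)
The plan is to reduce the invariance of $\mathcal{D}_{v_1,v_2,w_1,w_2}$ to that of the two smaller sub-domains by analyzing four cases according to the positions of $v^l$ and $v^r$ relative to $\bar V$. Fix $(\rho^l,v^l),(\rho^r,v^r)\in\mathcal{D}_{v_1,v_2,w_1,w_2}$ and let $\mathcal{RS}^\alpha$ denote either $\mathcal{RS}^\alpha_1$ or $\mathcal{RS}^\alpha_2$. Since $\mathcal{D}_{v_1,v_2,w_1,w_2}=\mathcal{D}_{v_1,\bar V,w_1,w_2}\cup\mathcal{D}_{\bar V,v_2,w_1,w_2}$, the two easy cases $v^l,v^r\le\bar V$ and $v^l,v^r\ge\bar V$ reduce immediately to the invariance of $\mathcal{D}_{v_1,\bar V,w_1,w_2}$ or $\mathcal{D}_{\bar V,v_2,w_1,w_2}$, which is guaranteed by hypothesis.

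For the case $v^l>\bar V>v^r$, I would show that the intermediate state $(\rho^m,v^m)$ of the classical solver satisfies $v^m=v^r<\bar V$ and that the connecting $1$-shock, which has speed $s=(\rho^m v^m-\rho^l v^l)/(\rho^m-\rho^l)$, satisfies $s<\bar V$: this follows from the inequality $\rho^m(v^m-\bar V)<0<\rho^l(v^l-\bar V)$. Therefore every wave lies to the left of $x/t=\bar V$, so $\bar v((\rho^l,v^l),(\rho^r,v^r))(\bar V)=v^r\le\bar V$, putting us in the third case of the definition of $\mathcal{RS}^\alpha_i$; hence $\mathcal{RS}^\alpha=\mathcal{RS}$ and Theorem \ref{standard_invariant_domain} concludes.

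The genuinely new case is $v^l<\bar V<v^r$, which I would handle by an auxiliary-point trick. Define $\rho_*$ by $\bar V+p(\rho_*)=v^l+p(\rho^l)$; this is well-defined since the classical middle state gives $v^l+p(\rho^l)=v^m+p(\rho^m)\ge v^m=v^r>\bar V$. Then $(\rho_*,\bar V)\in\mathcal{D}_{\bar V,v_2,w_1,w_2}$, because its $v$-component is $\bar V$ and its $w$-invariant coincides with $v^l+p(\rho^l)\in[w_1,w_2]$. Consider now the auxiliary Riemann problem with data $((\rho_*,\bar V),(\rho^r,v^r))$: both initial states lie in $\mathcal{D}_{\bar V,v_2,w_1,w_2}$, which is invariant for $\mathcal{RS}^\alpha$. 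The key observation is that the original and the auxiliary problems share the same Lax $1$-curve (through $(\rho^l,v^l)$), the same middle state $(\rho^m,v^m)$, and the same classical trace at $x/t=\bar V$ (since $\bar V>\lambda_1(\rho^l,v^l)$, so $(\rho^l,v^l)$ does not contribute to the trace); therefore the activation condition for the non-classical shock and the non-classical states $(\hat\rho,\hat v)$, $(\check\rho_1,\check v_1)$, $(\check\rho_2,\check v_2)$ coincide for the two problems. It follows that for $x\ge y(t)=\bar V t$ the two solutions agree, so by invariance of $\mathcal{D}_{\bar V,v_2,w_1,w_2}$ the right half of $\mathcal{RS}^\alpha((\rho^l,v^l),(\rho^r,v^r))$ lies in $\mathcal{D}_{\bar V,v_2,w_1,w_2}\subseteq\mathcal{D}_{v_1,v_2,w_1,w_2}$. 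For $x<y(t)$ the solution is the classical rarefaction from $(\rho^l,v^l)$ to $(\hat\rho,\hat v)$ (or from $(\rho^l,v^l)$ to $(\rho^m,v^m)$ if no non-classical shock appears); all such states lie on the Lax $1$-curve through $(\rho^l,v^l)$ with $w\in[w_1,w_2]$ and with $v$ ranging between $v^l\ge v_1$ and $\hat v\le v_2$, where the last inequality comes from the fact that $(\hat\rho,\hat v)\in\mathcal{D}_{\bar V,v_2,w_1,w_2}$ by the invariance used above.

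The main obstacle is the mixed case $v^l<\bar V<v^r$: a priori neither sub-domain contains both initial data, so neither hypothesis applies directly. The lifting of $(\rho^l,v^l)$ along its Lax $1$-curve to the auxiliary state $(\rho_*,\bar V)$ is the crucial step, and its success hinges on verifying that the two Riemann problems produce identical solutions to the right of $x=\bar V t$; this requires checking that the middle state, the classical trace at $\bar V$, and the non-classical shock triggering condition all depend only on the Lax $1$-curve and on $(\rho^r,v^r)$, never on the actual position of $(\rho^l,v^l)$ along that curve below $\bar V$.
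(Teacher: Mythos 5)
Your proposal is correct and follows essentially the same route as the paper. In the mixed case $v^l<\bar V<v^r$ your auxiliary state $(\rho_*,\bar V)$ is exactly the paper's $(\rho^*,v^*)$; the paper glues the solution along $x=\lambda_1(\rho^*,v^*)\,t$ and reads off the left piece as a classical rarefaction inside $\mathcal{D}_{v_1,\bar V,w_1,w_2}$ and the right piece as $\mathcal{RS}^\alpha_i((\rho^*,v^*),(\rho^r,v^r))$ inside $\mathcal{D}_{\bar V,v_2,w_1,w_2}$, whereas you glue along $x=\bar V t$ and justify it by noting the traces, middle state and nonclassical states depend only on the Lax $1$-curve through $(\rho^l,v^l)$ and on $(\rho^r,v^r)$—a cosmetic variant of the same argument.
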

\begin{proof}
The domain $\mathcal{D}_{v_1,\bar{V},w_1,w_2}$ and $\mathcal{D}_{\bar{V},v_2,w_1,w_2}$ coincide respectively with
$$\mathcal{D}^\mathcal{U}_{v_1,v_2,w_1,w_2}:=\mathcal{D}_{v_1,v_2,w_1,w_2}\cap \mathcal{U} \; \text{ and } \; \mathcal{D}^\mathcal{V}_{v_1,v_2,w_1,w_2}:=\mathcal{D}_{v_1,v_2,w_1,w_2}\cap \mathcal{V}.$$
If the solutions given by the Riemann solvers $\mathcal{RS}^\alpha_1$ and $\mathcal{RS}^\alpha_2$ coincide with the classic one, then the domain $\mathcal{D}_{v_1,v_2,w_1,w_2}$ is invariant by Proposition \ref{standard_invariant_domain}.\\
We note that, for every $\alpha \in(0,1)$ and for every initial datum $((\rho^l,v^l),(\rho^r,v^r))\in(\mathbb{R}^+\times\mathbb{R}^+)^2$ of the Riemann problem (\ref{problema_vincolato}), the points $(\check{\rho}_1,\check{v}_1)$, $(\check{\rho}_2,\check{v}_2)$ and $(\hat{\rho},\hat{v})$ belong to $\mathcal{D}^\mathcal{V}_{v_1,v_2,w_1,w_2}$. Indeed for $(\hat{\rho},\hat{v})$, since $F_\alpha>0$, we find
$$\hat{\rho}\hat{v}=F_\alpha+\bar{V}\hat{\rho}>\bar{V}\hat{\rho}\Longrightarrow \hat{v}>\bar{V}.$$
Similarly for $(\check{\rho}_1,\check{v}_1)$ and $(\check{\rho}_2,\check{v}_2)$.\\
Therefore if the solutions given by the Riemann solvers $\mathcal{RS}^\alpha_1$ and $\mathcal{RS}^\alpha_2$ are not classic and the points $(\rho^l,v^l)$ and $(\rho^r,v^r)$ are in $\mathcal{D}^\mathcal{V}_{v_1,v_2,w_1,w_2}$, then $\mathcal{RS}^\alpha_1((\rho^l,v^l),(\rho^r,v^r))(\lambda)$ and $\mathcal{RS}^\alpha_2((\rho^l,v^l),(\rho^r,v^r))(\lambda)$ belong to $\mathcal{D}^\mathcal{V}_{v_1,v_2,w_1,w_2}$ for every $\lambda \in \mathbb{R}$.\\
If $v^r<\bar{V}$, the trace of the standard solution $\mathcal{RS}((\rho^l,v^l),(\rho^r,v^r))$ in $\bar{V}$ is $(\rho^r,v^r)$. Indeed the propagation speed of the contact discontinuity joining $(\rho^m,v^m)$ and $(\rho^r,v^r)$ is less than the bus speed.\\
In this case $(\rho^r,v^r)$ belongs to $\mathcal{U}$ and since $v^r=v^m$, the same holds for $(\rho^m,v^m)$. Since the points in $\mathcal{U}$ satisfy the constraint, the solutions given by the Riemann solvers $\mathcal{RS}^\alpha_1$ and $\mathcal{RS}^\alpha_2$ coincide with the classical one.\\
If $v^r=\bar{V}$, then both points $(\rho^l,v^l)$ and $(\rho^r,v^r)$ are either in $\mathcal{D}^\mathcal{U}_{v_1,v_2,w_1,w_2}$ or in $\mathcal{D}^\mathcal{V}_{v_1,v_2,w_1,w_2}$. Hence the solutions $\mathcal{RS}^\alpha_1((\rho^l,v^l),(\rho^r,v^r))$ and $\mathcal{RS}^\alpha_2((\rho^l,v^l),(\rho^r,v^r))$ are entirely contained in one of the two domains and consequently in $\mathcal{D}_{v_1,v_2,w_1,w_2}$.\\
If $v^r>\bar{V}$, the classical solution in $\bar{V}$ can be $(\rho^m,v^m)$, $(\rho^l,v^l)$ or, if $\rho^l\geq \rho^m$, one of the points of the rarefaction joining them. In this case both $(\rho^m,v^m)$ and $(\rho^r,v^r)$ are in $\mathcal{D}^\mathcal{V}_{v_1,v_2,w_1,w_2}$. If also $(\rho^l,v^l)$ belongs to $\mathcal{D}^\mathcal{V}_{v_1,v_2,w_1,w_2}$, the solutions $\mathcal{RS}^\alpha_1((\rho^l,v^l),(\rho^r,v^r))(\lambda)$ and $\mathcal{RS}^\alpha_2((\rho^l,v^l),(\rho^r,v^r))(\lambda)$ are in $\mathcal{D}^\mathcal{V}_{v_1,v_2,w_1,w_2}$ for every $\lambda \in \mathbb{R}$ by Propositions \ref{Proposizione_Dominio_invariante_R_alfa1} and \ref{proposizione_dominio_invariante_R_alfa_2}.\\
If $(\rho^l,v^l)$ belongs to $\mathcal{D}^\mathcal{U}_{v_1,v_2,w_1,w_2}$, we must have $\rho^l\geq\rho^m$, because $v^l\leq \bar{V}\leq v^m$ and $p(\rho^l)=p(\rho^m)+v^m-v^l\geq p(\rho^m)$. In this case the standard Riemann solver connects these two points with a rarefaction. If $(\rho^m,v^m)$ satisfies the constraint, then the same holds for each point $(\rho^\sigma,v^\sigma)$ of the rarefaction, because $\rho^\sigma\geq \rho^m$ and by Lemma \ref{funzione_concava_e_retta}. The trace of $\mathcal{RS}((\rho^l,v^l),(\rho^r,v^r))$ in $\bar{V}$ is one of the point of the rarefaction. Hence $\mathcal{RS}^\alpha_1$ and $\mathcal{RS}^\alpha_2$ coincide with the standard Riemann solver and the solutions are contained in $\mathcal{D}_{v_1,v_2,w_1,w_2}$.\\
If $(\rho^m,v^m)$ does not satisfy the constraint, let $(\rho^*,v^*)$ be the solution to the system
\begin{equation*}
\begin{cases}
v+p(\rho)=v^l+p(\rho^l),\\
v=\bar{V}.
\end{cases}
\end{equation*}
We can obtain the solution given by $\mathcal{RS}^\alpha_1$ to the Riemann problem (\ref{problema_vincolato}), as
\begin{equation}\label{soluzione_da_congiungere_1}
\mathcal{RS}^\alpha_1((\rho^l,v^l),(\rho^r,v^r))(x/t)=\begin{cases}
\mathcal{RS}^\alpha_1((\rho^l,v^l),(\rho^*,v^*))(x/t) & \text{if } x\leq\lambda_1(\rho^*,v^*)t,\\
\mathcal{RS}^\alpha_1((\rho^*,v^*),(\rho^r,v^r))(x/t) & \text{if } x>\lambda_1(\rho^*,v^*)t,
\end{cases}
\end{equation}
and similarly for the second Riemann solver
\begin{equation}\label{soluzione_da_congiungere_2}
\mathcal{RS}^\alpha_2((\rho^l,v^l),(\rho^r,v^r))(x/t)=\begin{cases}
\mathcal{RS}^\alpha_2((\rho^l,v^l),(\rho^*,v^*))(x/t) & \text{if } x\leq\lambda_1(\rho^*,v^*)t,\\
\mathcal{RS}^\alpha_2((\rho^*,v^*),(\rho^r,v^r))(x/t) & \text{if } x>\lambda_1(\rho^*,v^*)t.
\end{cases}
\end{equation}
If the classical solution in $\bar{V}$ does not satisfy the constraint, then the left trace of $\mathcal{RS}^\alpha_i$ for $i=1,2$ in $\lambda=\bar{V}$ is $(\hat{\rho},\hat{v})$. The solutions given by the Riemann solvers $\mathcal{RS}^\alpha_i$ for $i=1,2$ present a rarefaction between $(\rho^l,v^l)$ and $(\rho^*,v^*)$ and another rarefaction joining $(\rho^*,v^*)$ to $(\hat{\rho},\hat{v})$. The right and left propagation speeds along the line $x=\lambda_1(\rho^*,v^*)t$ are both $\lambda_1(\rho^*,v^*)$, hence the solution is well defined. Moreover, since $\mathcal{D}^\mathcal{U}_{v_1,v_2,w_1,w_2}$ and $\mathcal{D}^\mathcal{V}_{v_1,v_2,w_1,w_2}$ are invariant for both Riemann solvers, the point $(\rho^*,v^*)$ belongs to the intersection of the two domains and since $\mathcal{D}_{v_1,v_2,w_1,w_2}=\mathcal{D}^\mathcal{U}_{v_1,v_2,w_1,w_2}\cup \mathcal{D}^\mathcal{V}_{v_1,v_2,w_1,w_2}$, the solution obtained joining (\ref{soluzione_da_congiungere_1}) and (\ref{soluzione_da_congiungere_2}) is entirely contained in the domain $\mathcal{D}_{v_1,v_2,w_1,w_2}$.\\
Therefore the domain $\mathcal{D}_{v_1,v_2,w_1,w_2}$ is invariant.
\end{proof}
This concludes the proofs of Theorems \ref{Proposizione_Dominio_invariante_R_alfa1_definitiva} and \ref{Proposizione_Dominio_invariante_R_alfa2_definitiva}.
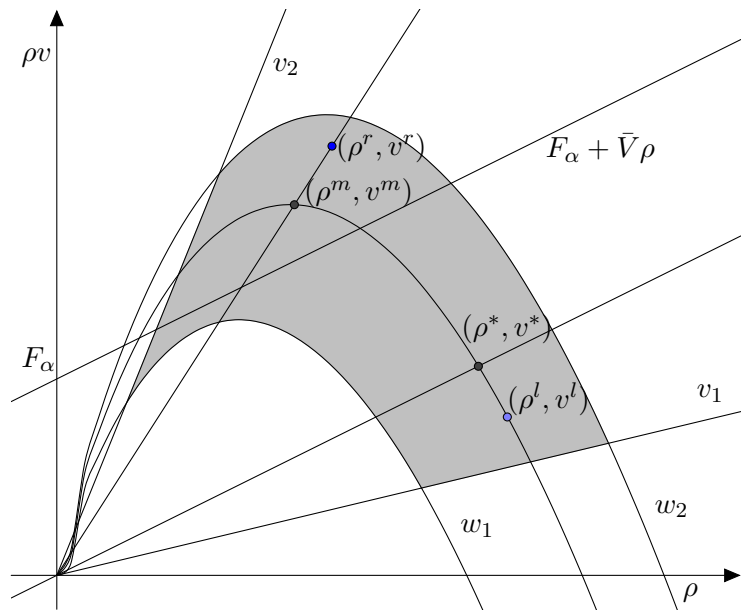
\begin{figure}[h]
\centering
\definecolor{cqcqcq}{rgb}{0.7529411764705882,0.7529411764705882,0.7529411764705882}
\definecolor{qqqqff}{rgb}{0.,0.,1.}
\definecolor{uququq}{rgb}{0.25098039215686274,0.25098039215686274,0.25098039215686274}
\definecolor{xdxdff}{rgb}{0.49019607843137253,0.49019607843137253,1.}
\begin{tikzpicture}[scale =0.3, line cap=round,line join=round,>=triangle 45,x=1.0cm,y=1.0cm]
\draw[->,color=black] (-2.,0.) -- (30.,0.);
\draw[->,color=black] (0.,-1.5) -- (0.,25.);
\clip(-2.,-1.5) rectangle (30.,25.);
\draw[line width=0.pt,color=cqcqcq,fill=cqcqcq,fill opacity=1.0] {[smooth,samples=50,domain=3.07:7.14] plot(\x,{0-(-16.76/6.73)*\x-0.0/6.73})} -- (7.14,11.21) {[smooth,samples=50,domain=7.14:3.07] -- plot(\x,{4.24*\x-\x^(1.5)})} -- (3.07,7.64) -- cycle;
\draw[line width=0.pt,color=cqcqcq,fill=cqcqcq,fill opacity=1.0] {[smooth,samples=50,domain=7.12:16.01] plot(\x,{5.16*\x-\x^(1.5)})} -- (16.01,3.86) {[smooth,samples=50,domain=16.01:7.12] -- plot(\x,{4.24*\x-\x^(1.5)})} -- (7.12,17.75) -- cycle;
\draw[line width=0.pt,color=cqcqcq,fill=cqcqcq,fill opacity=1.0] {[smooth,samples=50,domain=15.97:24.22] plot(\x,{5.16*\x-\x^(1.5)})} -- (24.22,5.84) {[smooth,samples=50,domain=24.22:15.97] -- plot(\x,{0-(-5.03/20.85)*\x-0.0/20.85})} -- (15.97,18.62) -- cycle;
\draw[smooth,samples=50,domain=0.001:30.0] plot(\x,{5.16*(\x)-(\x)^(1.5)});
\draw[smooth,samples=50,domain=0.001:30.0] plot(\x,{4.24*(\x)-(\x)^(1.5)});
\draw [domain=-2.:30.] plot(\x,{(--8.68--0.5*\x)/1.});
\draw (27,0.1) node[anchor=north west] {$\rho$};
\draw (21.,19.98) node[anchor=north west] {$F_\alpha+\bar{V}\rho$};
\draw (-2,23.72) node[anchor=north west] {$\rho v$};
\draw (17.26,2.90) node[anchor=north west] {$w_1$};
\draw (25.78,3.77) node[anchor=north west] {$w_2$};
\draw (-2,10.55) node[anchor=north west] {$F_\alpha$};
\draw [domain=0.0:30.0] plot(\x,{(-0.--16.76*\x)/6.73});
\draw [domain=0.0:30.0] plot(\x,{(-0.--5.03*\x)/20.85});
\draw (9.02,23.3) node[anchor=north west] {$v_2$};
\draw (27.62,8.87) node[anchor=north west] {$v_1$};
\draw [domain=-2.:30.] plot(\x,{(-0.--0.5*\x)/1.});
\draw[smooth,samples=50,domain=0.001:30.0] plot(\x,{4.8*(\x)-(\x)^(1.5)});
\draw [domain=0.0:30.0] plot(\x,{(-0.--18.97*\x)/12.07});
\draw (11.8,20.) node[anchor=north west] {$(\rho^r,v^r)$};
\draw (10.40,18.) node[anchor=north west] {$(\rho^m,v^m)$};
\draw (17.3,12.) node[anchor=north west] {$(\rho^*,v^*)$};
\draw (19.3,8.97) node[anchor=north west] {$(\rho^l,v^l)$};
\begin{scriptsize}
\draw [fill=xdxdff] (19.76,7.00) circle (5pt);
\draw [fill=uququq] (18.49,9.24) circle (5pt);
\draw [fill=qqqqff] (12.07,18.97) circle (5pt);
\draw [fill=uququq] (10.42,16.38) circle (5pt);
\end{scriptsize}
\end{tikzpicture}
\caption{Representation of the points used in the proof of Proposition \ref{domini_invarianti_ultima_prop}. In the represented case the domain is invariant for $\mathcal{RS}^\alpha_2$.}\label{fig_dom_inv_ultima_prop}
\end{figure}
\chapter{Numerical methods}
In this chapter we are going to introduce the Godunov's method; see \cite{randall_leveque}. We will modify it to compute numerical solutions corresponding to the two Riemann solvers $\mathcal{RS}^\alpha_1$ and $\mathcal{RS}^\alpha_2$.
\section{Godunov's method}
Let $h$ and $k$ be two fixed positive constants representing the increments for space and time discretizations and let us define the mesh points $(t^n,x_{j+1/2})$ as
\begin{equation}
t^n=nk \; \text{ for } \; n\in \mathbb{N}\; \text{ and } \; x_{j+1/2}=j h \; \text{ for } \; j\in \mathbb{Z}.
\end{equation}
We divide the $x$-axis in a sequence $\lbrace C_j\rbrace_{j\in\mathbb{Z}}$ of cells such that $C_j=[x_{j-1/2},x_{j+1/2})$.
and we define the centres of the cells:
\begin{equation}
x_j= \left(j-\dfrac{1}{2}\right)h\in C_j \; \text{ for every } \; j\in\mathbb{Z}.
\end{equation}
Fix $N\in\mathbb{N}$. Let us consider the Cauchy problem
\begin{equation}\label{num_methods_Cauchy_problem_general}
\begin{cases}
\partial_t u +\partial_x[f(u)] =0,\\
u(0,x) = u^0(x),
\end{cases}
\end{equation}
where the flux $f:\mathbb{R}^N \to \mathbb{R}^N$ is smooth, the function $u:\mathbb{R}^+\times \mathbb{R} \to \mathbb{R}^N$ is unknown and the initial datum $u^0:\mathbb{R}\to \mathbb{R}^N$ is locally integrable.\\ 
Our aim is to find a sequence of functions $\lbrace \bar{u}^n \rbrace_{n\in \mathbb{N}}$, where $\bar{u}^n$ is an approximation of the exact solution to the Cauchy problem (\ref{num_methods_Cauchy_problem_general}) at time $t^n$.\\
For every $j \in \mathbb{Z}$, we define the approximation $\bar{u}^0_j$ of $u^0$ on the cell $C_j$ as the average
\begin{equation}
\bar{u}^0_j:= \dfrac{1}{h}\int_{x_{j-1/2}}^{x_{j+1/2}}u(0,\xi)\, d\xi=\dfrac{1}{h}\int_{x_{j-1/2}}^{x_{j+1/2}}u^0(\xi)\, d\xi. 
\end{equation}
Moreover, we define the piecewise constant function
\begin{equation}\label{initial_datum}
x\to \bar{u}^0(x) = \sum_{j\in \mathbb{Z}} \left[\bar{u}^0_j\, \mathbf{1}_{C_j}(x)\right],
\end{equation}
where $\mathbf{1}_{C_j}$ is the  characteristic function of the set $C_j$, namely
$$\mathbf{1}_{C_j}(x)=\begin{cases}
1 & \text{if } \; x\in C_j,\\
0 & \text{if } \; x\notin C_j,
\end{cases}\; \text{ for } \; j \in \mathbb{Z}.$$
Since $\bar{u}^0$ is piecewise constant, we have infinitely many Riemann problems centred in $x_{j+1/2}$ for $j\in\mathbb{Z}$, i.e. 
\begin{equation}\label{num_methods_Riemann_problem_general}
\begin{cases}
\partial_t u + \partial_x[f(u)] = 0,\\
u(t^0,x) = \begin{cases}
u^l:= \bar{u}^0_j & \text{if } x\leq x_{j+1/2},\\
u^r:= \bar{u}^0_{j+1} & \text{if } x>x_{j+1/2},
\end{cases}
\end{cases}
\end{equation}
which can be solved exactly over the interval $[0,t^1]$, provided that $t^1$ is ``sufficiently small'' (in a sense that will be clarified later). Once we have the exact solution $\tilde{u}^0(t,x)$ in $[0,t^1]$, we can define a new piecewise constant approximate profile $x\to \bar{u}^1(x)$ at time $t^1$ as
\begin{equation}
\bar{u}^1(x)=\sum_{j\in\mathbb{Z}} \left( \bar{u}^1_j\, \mathbf{1}_{C_j}(x)\right)=\sum_{j\in\mathbb{Z}}\left[\dfrac{1}{h}\int_{x_{j-1/2}}^{x_{j+1/2}}\tilde{u}^0(t^1,\xi)\, d\xi \, \mathbf{1}_{C_j}(x)\right],
\end{equation}
where
$$\bar{u}^1_j= \dfrac{1}{h} \int_{x_{j-1/2}}^{x_{j+1/2}}\tilde{u}^0(t^1,\xi)\, d\xi.$$
The process can be repeated obtaining the sequence $\lbrace \bar{u}^n \rbrace_{n\in \mathbb{N}}$ (see Figure \ref{Godunov_griglia}). Explicitly, given the piecewise constant approximation $\bar{u}^n$ at time $t^n$ for $n\geq 0$, if the time step is small enough, we will obtain an exact entropy weak solution $\tilde{u}^n$ for the Cauchy problem (\ref{num_methods_Cauchy_problem_general}) with initial datum
$$u(t^n,x)= \bar{u}^n(x)$$
between the time steps $t^n$ and $t^{n+1}$. We then define the new approximate solution at time $t^{n+1}$ as
\begin{equation}\label{Godunov_averaged_solution}
\bar{u}^{n+1}(x)=\sum_{j\in\mathbb{Z}} \left( \bar{u}^{n+1}_j\, \mathbf{1}_{C_j}(x)\right)=\sum_{j\in\mathbb{Z}}\left[\dfrac{1}{h}\int_{x_{j-1/2}}^{x_{j+1/2}}\tilde{u}^n(t^{n+1},\xi)\, d\xi\, \mathbf{1}_{C_j}(x)\right],
\end{equation}
where
\begin{equation}\label{mean_process}
\bar{u}_j^{n+1}:=\dfrac{1}{h}\int_{x_{j-1/2}}^{x_{j+1/2}}\tilde{u}^n(t^{n+1},\xi)\, d\xi.
\end{equation}
\begin{figure}[h]
\centering
\definecolor{xdxdff}{rgb}{0.49019607843137253,0.49019607843137253,1.}
\definecolor{uququq}{rgb}{0.25098039215686274,0.25098039215686274,0.25098039215686274}
\definecolor{qqqqff}{rgb}{0.,0.,1.}
\begin{tikzpicture}[line cap=round,line join=round,>=triangle 45,x=1.0cm,y=1.0cm]
\draw[->,color=black] (-4.5,0.) -- (4.5,0.);
\draw[->,color=black] (0.,-0.1) -- (0.,5.5);
\clip(-4.5,-0.5) rectangle (4.5,5.5);
\draw [dash pattern=on 1pt off 1pt,domain=-4.5:4.5] plot(\x,{(-6.-0.*\x)/-1.});
\draw [dash pattern=on 1pt off 1pt,domain=-4.5:4.5] plot(\x,{(-4.-0.*\x)/-1.});
\draw [dash pattern=on 1pt off 1pt,domain=-4.5:4.5] plot(\x,{(-2.-0.*\x)/-1.});
\draw (-6.31,0.10) node[anchor=north west] {$x_{-\frac{5}{2}}$};
\draw [dash pattern=on 1pt off 1pt] (-6.,0.) -- (-6.,5.5);
\draw [dash pattern=on 1pt off 1pt] (-4.,0.) -- (-4.,5.5);
\draw [dash pattern=on 1pt off 1pt] (-2.,0.) -- (-2.,5.5);
\draw [dash pattern=on 1pt off 1pt] (2.,0.) -- (2.,5.5);
\draw [dash pattern=on 1pt off 1pt] (4.,0.) -- (4.,5.5);
\draw [dash pattern=on 1pt off 1pt] (6.,0.) -- (6.,5.5);
\draw (-4.39,0.0) node[anchor=north west] {$x_{-\frac{3}{2}}$};
\draw (-2.37,0.) node[anchor=north west] {$x_{-\frac{1}{2}}$};
\draw (-0.24,0.) node[anchor=north west] {$x_{\frac{1}{2}}$};
\draw (1.71,0.) node[anchor=north west] {$x_{\frac{3}{2}}$};
\draw (6.71,0.21) node[anchor=north west] {$x$};
\draw (-0.5,2.52) node[anchor=north west] {$t^1$};
\draw (-0.5,4.55) node[anchor=north west] {$t^2$};
\draw (-1.20,0.) node[anchor=north west] {$x_0$};
\draw (-3.24,0.) node[anchor=north west] {$x_{-1}$};
\draw (0.85,0.) node[anchor=north west] {$x_1$};
\draw (2.82,0.) node[anchor=north west] {$x_2$};
\draw (-0.4,5.5) node[anchor=north west] {$t$};
\draw (-1.45,0.65) node[anchor=north west] {$\bar{u}^0_0$};
\draw (0.66,0.65) node[anchor=north west] {$\bar{u}^0_1$};
\draw (2.59,0.65) node[anchor=north west] {$\bar{u}^0_2$};
\draw (-3.45,0.65) node[anchor=north west] {$\bar{u}^0_{-1}$};
\draw (-1.23,2.55) node[anchor=north west] {$\bar{u}^1_0$};
\draw (-3.22,2.55) node[anchor=north west] {$\bar{u}^1_{-1}$};
\draw (0.94,2.55) node[anchor=north west] {$\bar{u}^1_1$};
\draw (2.78,2.55) node[anchor=north west] {$\bar{u}^1_2$};
\draw (-1.31,4.6) node[anchor=north west] {$\bar{u}^2_0$};
\draw (-3.24,4.6) node[anchor=north west] {$\bar{u}^2_{-1}$};
\draw (0.83,4.6) node[anchor=north west] {$\bar{u}^2_1$};
\draw (2.85,4.6) node[anchor=north west] {$\bar{u}^2_2$};
\draw (-3.33,1.5) node[anchor=north west] {$\tilde{u}^0_{-1}$};
\draw (-1.24,1.50) node[anchor=north west] {$\tilde{u}^0_0$};
\draw (0.70,1.50) node[anchor=north west] {$\tilde{u}^0_1$};
\draw (2.70,1.49) node[anchor=north west] {$\tilde{u}^0_2$};
\draw (-1.24,3.33) node[anchor=north west] {$\tilde{u}^1_0$};
\draw (-3.20,3.26) node[anchor=north west] {$\tilde{u}^1_{-1}$};
\draw (0.98,3.40) node[anchor=north west] {$\tilde{u}^1_1$};
\draw (2.75,3.33) node[anchor=north west] {$\tilde{u}^1_2$};
\draw (-1.30,5.19) node[anchor=north west] {$\tilde{u}^2_0$};
\draw (-3.20,5.16) node[anchor=north west] {$\tilde{u}^2_{-1}$};
\draw (0.62,5.20) node[anchor=north west] {$\tilde{u}^2_1$};
\draw (2.77,5.25) node[anchor=north west] {$\tilde{u}^2_2$};
\begin{scriptsize}
\draw [color=qqqqff] (-4.,0.)-- ++(-1.5pt,-1.5pt) -- ++(3.0pt,3.0pt) ++(-3.0pt,0) -- ++(3.0pt,-3.0pt);
\draw [color=qqqqff] (-2.,0.)-- ++(-1.5pt,-1.5pt) -- ++(3.0pt,3.0pt) ++(-3.0pt,0) -- ++(3.0pt,-3.0pt);
\draw [fill=uququq] (-3.,0.) circle (1.5pt);
\draw [color=qqqqff] (0.,0.)-- ++(-1.5pt,-1.5pt) -- ++(3.0pt,3.0pt) ++(-3.0pt,0) -- ++(3.0pt,-3.0pt);
\draw [fill=uququq] (-1.,0.) circle (1.5pt);
\draw [color=qqqqff] (2.,0.)-- ++(-1.5pt,-1.5pt) -- ++(3.0pt,3.0pt) ++(-3.0pt,0) -- ++(3.0pt,-3.0pt);
\draw [fill=uququq] (1.,0.) circle (1.5pt);
\draw [color=qqqqff] (4.,0.)-- ++(-1.5pt,-1.5pt) -- ++(3.0pt,3.0pt) ++(-3.0pt,0) -- ++(3.0pt,-3.0pt);
\draw [fill=uququq] (3.,0.) circle (1.5pt);
\draw [color=qqqqff] (6.,0.)-- ++(-1.5pt,-1.5pt) -- ++(3.0pt,3.0pt) ++(-3.0pt,0) -- ++(3.0pt,-3.0pt);
\draw [fill=uququq] (5.,0.) circle (1.5pt);
\draw [color=qqqqff] (-6.,0.)-- ++(-1.5pt,-1.5pt) -- ++(3.0pt,3.0pt) ++(-3.0pt,0) -- ++(3.0pt,-3.0pt);
\draw [fill=uququq] (-5.,0.) circle (1.5pt);
\draw [fill=xdxdff] (0.,2.) circle (1.5pt);
\draw [fill=xdxdff] (0.,4.) circle (1.5pt);
\draw [fill=xdxdff] (0.,6.) circle (1.5pt);
\end{scriptsize}
\end{tikzpicture}
\caption{Representation of a part of the mesh used in Godunov's scheme and of the functions $\bar{u}^n_j$ and $\tilde{u}^{n}_j$ for $n=0,1,2$ and $j=-1,0,1,2$. The functions $\bar{u}^n$ and $\tilde{u}^n$ are defined respectively at $t=t^n$ and in the interval $[t^n,t^{n+1}]$.}\label{Godunov_griglia}
\end{figure}
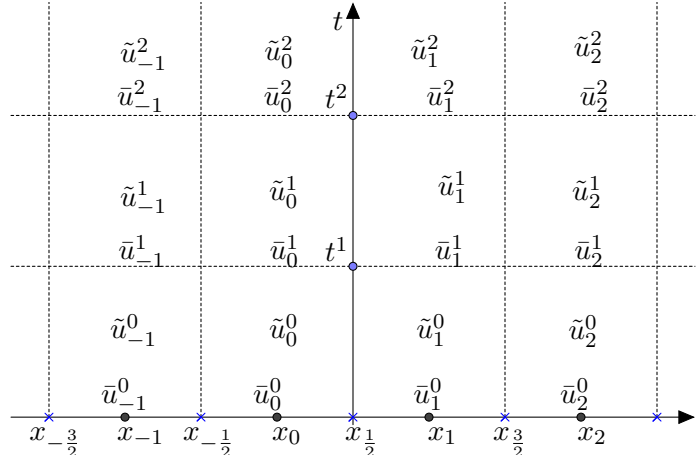
By the integral form of the conservation law, we have
\begin{equation}
\begin{split}
\int_{x_{j-1/2}}^{x_{j+1/2}}\tilde{u}^n(t^{n+1},\xi)\, d\xi & = \int_{x_{j-1/2}}^{x_{j+1/2}}\tilde{u}^n(t^n,\xi)\, d\xi + \int_{t^n}^{t^{n+1}}f(\tilde{u}^n(\tau,x_{j-1/2}))\, d\tau\, +\\
& -\int_{t^n}^{t^{n+1}}f(\tilde{u}^n(\tau,x_{j+1/2}))\,d\tau.
\end{split}
\end{equation}
Dividing both sides of the previous equation by $h$, we obtain
\begin{equation}\label{Godunov_scheme_integral_form_of_cl}
\begin{split}
\dfrac{1}{h}\int_{x_{j-1/2}}^{x_{j+1/2}}\tilde{u}^n(t^{n+1},x)\, dx & = \dfrac{1}{h}\int_{x_{j-1/2}}^{x_{j+1/2}}\tilde{u}^n(t^n,x)\, dx + \dfrac{k}{h}\left[\dfrac{1}{k}\int_{t^n}^{t^{n+1}}f(\tilde{u}^n(\tau,x_{j-1/2}))\, d\tau\right]+\\
& -	\dfrac{k}{h}\left[\dfrac{1}{k}\int_{t^n}^{t^{n+1}}f(\tilde{u}^n(\tau,x_{j+1/2}))\,d\tau\right].
\end{split}
\end{equation}
Recalling the definitions of $\bar{u}_j^n$ and $\bar{u}_j^{n+1}$ and introducing the numerical flux function $F$ defined by
\begin{equation}\label{numerical flux}
F(\bar{u}^n_j,\bar{u}^n_{j+1}):=\dfrac{1}{k}\int_{t^n}^{t^{n+1}}f(\tilde{u}^n(\tau,x_{j+1/2}))\, d\tau,
\end{equation}
the equation (\ref{Godunov_scheme_integral_form_of_cl}) becomes
\begin{equation}\label{Godunov_in_conservation_form}
\begin{split}
\bar{u}^{n+1}_j=\bar{u}^n_j-\dfrac{k}{h}[F(\bar{u}^n_j,\bar{u}^n_{j+1})-F(\bar{u}^n_{j-1},\bar{u}^n_j)].
\end{split}
\end{equation}
Note that the expression (\ref{numerical flux}) is consistent, whether the value of $\tilde{u}(t,x_{j+1/2})$ depends only by the left and right data of the local Riemann problem in $x=x_{j+1/2}$, which are respectively $\bar{u}^n_j$ and $\bar{u}^n_{j+1}$. This happens when the time step $k$ is sufficiently small.\\
The following proposition states a condition on the ratio $k/h$ to avoid that waves arising in the interactions between the two neighbouring Riemann problems centred in $x_{j-1/2}$ and $x_{j+1/2}$ cross the edges of the cell $C_j$ for every $j\in\mathbb{Z}$, so that they do not influence the value of $\tilde{u}$ in the interval $[t^n,t^{n+1}]$ at the points $\lbrace x_{j+1/2}\rbrace_{j\in\mathbb{Z}}$.
\begin{prop}\label{prop_CFL_cond}
Let $n\in \mathbb{N}$ be fixed. The value $\tilde{u}(x_{j+1/2},t)$ is constant over $[t^n,t^{n+1}]$, provided that the following CFL condition holds:
\begin{equation}\label{CFL_condiftion}
\left| \dfrac{k}{h}\lambda^n\right|\leq 1,
\end{equation}
where $\lambda^n=\max\lbrace |\lambda_i(\bar{u}^n_j)|\, : \, i=1,...,N \; \text{ and } \; j\in \mathbb{Z}\rbrace$ and $\lbrace\lambda_i\rbrace_{i=1}^N$ are the eigenvalues of the Jacobian matrix $Df$ of the flux function $f$.
\end{prop}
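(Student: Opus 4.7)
The plan is to exploit the self-similarity of the solutions to the local Riemann problems and to control the speed of propagation of all the waves, so that neighbouring Riemann fans do not interact inside a single time strip.

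First I would fix $n\in\mathbb{N}$ and, for each $j\in\mathbb{Z}$, consider the Riemann problem centred at $x_{j+1/2}$ with left and right states $\bar{u}^n_j$ and $\bar{u}^n_{j+1}$. By the theory of the previous chapter (Theorem \ref{theorem_general_solution_to_RP}), provided the jumps $|\bar{u}^n_{j+1}-\bar{u}^n_j|$ are small enough, each such Riemann problem admits a unique Lax-admissible self-similar solution $U_{j+1/2}\!\left(\tfrac{x-x_{j+1/2}}{t-t^n}\right)$ consisting of at most $N$ elementary waves (rarefactions, shocks, contact discontinuities) emanating from $(t^n,x_{j+1/2})$. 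Since the data $\bar{u}^n(\cdot)$ are piecewise constant, on a neighbourhood of any interface the exact entropy solution $\tilde u^n$ coincides with the corresponding local Riemann solution, up to the time at which the first wave from the adjacent Riemann fans reaches it.

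Next I would estimate the speeds of the elementary waves. Propositions \ref{rarefaction_wave_properties} and \ref{properties_shock_waves}, together with the construction of the general solution (\ref{general_solution_Riemann_problem}), show that every rarefaction, shock or contact discontinuity emitted at $x_{j+1/2}$ travels with a speed bounded by a value of the form $\lambda_i(u)$ for some $u$ on the Lax curves connecting the neighbouring cell averages; for data that stay close to the mesh values this bound is controlled by $\lambda^n$. Therefore all waves issued at the interface $x_{j-1/2}$ remain confined to the cone
\[
\{(t,x):|x-x_{j-1/2}|\le \lambda^n(t-t^n)\},
\]
and analogously for the waves issued at $x_{j+3/2}$. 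Under the CFL condition $|k\lambda^n/h|\le1$, at any $t\in[t^n,t^{n+1}]$ these two cones stay inside the cells $C_{j-1}\cup C_j$ and $C_{j+1}\cup C_{j+2}$ respectively, hence they never reach the line $x=x_{j+1/2}$.

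It then follows that, for every $t\in(t^n,t^{n+1}]$, the value $\tilde u^n(t,x_{j+1/2})$ is determined solely by the Riemann problem at $(t^n,x_{j+1/2})$: the adjacent Riemann fans have not yet arrived. By self-similarity,
\[
\tilde u^n(t,x_{j+1/2})=U_{j+1/2}\!\left(\tfrac{x_{j+1/2}-x_{j+1/2}}{t-t^n}\right)=U_{j+1/2}(0),
\]
which is independent of $t$. I expect the main obstacle to be the subtle point mentioned above: strictly speaking, $\lambda^n$ is defined only as the maximum of $|\lambda_i|$ on the cell averages, while wave speeds depend also on intermediate states produced inside the Riemann fans. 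I would handle this by invoking Proposition \ref{spectral_properties_continuity} to guarantee that, for data with sufficiently small total variation, the eigenvalues along the Lax curves differ from those at the endpoints by an arbitrarily small amount, so that the bound $|k\lambda^n/h|\le 1$ is indeed sufficient to prevent wave interaction across a cell within one time step.
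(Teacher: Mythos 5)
Your argument takes the same route as the paper: bound the speeds of the elementary wave-fronts issuing from each interface, use the CFL condition to keep the Riemann fans of neighbouring interfaces apart, and conclude by self-similarity that $\tilde u^n(\cdot,x_{j+1/2})$ is constant. The obstacle you flag at the end is real, and in fact the paper's own proof skips over it: after placing the wave speeds between $\lambda_i(\mu_i)$ and $\lambda_i(\mu_{i-1})$ for the middle states $\mu_1,\dots,\mu_{N-1}$, the paper simply declares them $\le\lambda^n$, although $\lambda^n$ is defined from the cell averages alone. Be careful, though, that the continuity remedy you propose via Proposition~\ref{spectral_properties_continuity} does not actually close the gap: it yields that the eigenvalues at intermediate states are \emph{close} to those at the endpoints, not that they are dominated by $\lambda^n$, so a wave speed could exceed $\lambda^n$ by a small amount and escape $C_j$. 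A genuine patch is either to demand a strict inequality $|k\lambda^n/h|<1$ with a margin absorbing the perturbation, or to replace $\lambda^n$ by a supremum of $|\lambda_i|$ over a compact set (e.g.\ the invariant domain) guaranteed to contain every state appearing in the Riemann fans.
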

\begin{proof}
Let $j\in\mathbb{Z}$ be fixed and let us consider the Riemann problem (see Figure \ref{Godunov_esempio_soluzione})
\begin{equation}\label{Riemann_problem_dim_cfl_condition}
\begin{cases}
\partial_t u + \partial_x[f(u)] = 0,\\
u(t^n,x) = \begin{cases}
u^l:= \bar{u}^n_j & \text{if } x\leq x_{j+1/2},\\
u^r:= \bar{u}^n_{j+1} & \text{if } x>x_{j+1/2}.
\end{cases}
\end{cases}
\end{equation}
Let us call $\mu_0=u^l$, $\mu_{N}=u^r$ and $\lbrace \mu_i \rbrace_{i=1}^{N-1}$ the middle states given by the standard Riemann solver for the Riemann problem (\ref{Riemann_problem_dim_cfl_condition}).\\
Fix $i\in\lbrace 1,...,N \rbrace$. If the $i$-th characteristic field is genuinely non-linear, then the standard Riemann solver connects $\mu_{i-1}$ and $\mu_{i}$ with a rarefaction wave or with a shock. We know by Proposition \ref{rarefaction_wave_properties} that the propagation speeds of the rarefaction range between $\lambda_i(\mu_{i-1})$ and $\lambda_i(\mu_{i})$, while if a shock appears, its speed is $\lambda_i(\mu_{i-1},\mu_{i})$, where $\lambda_i(\mu_{i-1},\mu_{i})$ is the eigenvalue of the averaged matrix
$$Df(\mu_{i-1},\mu_{i})= \int_0^1 Df(\xi\,  \mu_{i-1}+(1-\xi)\mu_i)\, d\xi.$$
By the Lax entropy condition (\ref{Lax_entropy_condition}), we have
\begin{equation}
\lambda_i(\mu_i)\leq \lambda_i(\mu_{i-1},\mu_{i})\leq \lambda_{i}(\mu_{i-1}).
\end{equation}
If the $i$-th characteristic field is linearly degenerate, a contact discontinuity connecting $\mu_{i-1}$ to $\mu_{i}$ appears. Its propagation speed is $\lambda_i(\mu_{i-1})$.\\
Hence the propagation speed of a wave arising in the Cauchy problem (\ref{num_methods_Cauchy_problem_general}) at the time $t^n$ with initial datum
$$x\to \bar{u}^n(x) = \sum_{j\in \mathbb{Z}} \left[\bar{u}^n_j\, \mathbf{1}_{C_j}(x)\right],$$
is at most equal to
$$\lambda^n=\max\lbrace |\lambda_i(\bar{u}^n_j)|\, : \, i=1,...,N \; \text{ and } \; j\in \mathbb{Z}\rbrace.$$
For every $j$, the two points $x_{j-1/2}$ and $x_{j+1/2}$ satisfy $x_{j-1/2}-x_{j+1/2}= h$ and if the condition (\ref{CFL_condiftion}) holds, we have
$$\left|k\lambda^n\right|\leq h.$$
Therefore the interaction between two waves arising in the local Riemann problems centred in $x=x_{j-1/2}$ and $x=x_{j+1/2}$ with initial datum $\bar{u}^n$, remains in the interval $[x_{j-1/2},x_{j+1/2}]$ for every $t$ in $[t^n,t^{n+1}]$ (Figure \ref{Godunov_esempio_soluzione}).
\end{proof}
\begin{figure}[h]
\centering
\definecolor{uququq}{rgb}{0.25098039215686274,0.25098039215686274,0.25098039215686274}
\definecolor{xdxdff}{rgb}{0.49019607843137253,0.49019607843137253,1.}
\begin{tikzpicture}[scale =0.2,line cap=round,line join=round,>=triangle 45,x=1.0cm,y=1.0cm]
\clip(-4.5,-2.) rectangle (40.,17.);
\draw [dash pattern=on 4pt off 4pt,domain=-3.:40.] plot(\x,{(-14.-0.*\x)/-1.});
\draw [dash pattern=on 4pt off 4pt] (12.,-0.2) -- (12.,17.);
\draw [dash pattern=on 4pt off 4pt] (24.,-0.2) -- (24.,17.);
\draw [dash pattern=on 4pt off 4pt] (36.,-0.2) -- (36.,17.);
\draw [dash pattern=on 4pt off 4pt] (0.,-0.2) -- (0.,17.);
\draw (-4.2,14.2) node[anchor=north west] {$t^{n+1}$};
\draw [dash pattern=on 4pt off 4pt,domain=-3.:40.] plot(\x,{(-0.-0.*\x)/-1.});
\draw (-4.2,2.10) node[anchor=north west] {$t^n$};
\draw (8.89,0.) node[anchor=north west] {$x_{j-1/2}$};
\draw (20.84,0.) node[anchor=north west] {$x_{j+1/2}$};
\draw (33.13,0.) node[anchor=north west] {$x_{j+3/2}$};
\draw (4.31,6.58) node[anchor=north west] {$\bar{u}^n_{j-1}$};
\draw (16.50,6.58) node[anchor=north west] {$\bar{u}^n_{j}$};
\draw (29.10,5.98) node[anchor=north west] {$\bar{u}^n_{j+1}$};
\draw (12.,0.)-- (8.68,14.);
\draw (12.,0.)-- (6.71,14.);
\draw (12.,0.)-- (7.72,14.);
\draw (12.,0.)-- (9.41,14.);
\draw (12.,0.)-- (10.37,14.);
\draw (12.,0.)-- (14.18,14.);
\draw (24.,0.)-- (20.96,14.);
\draw (24.,0.)-- (30.35,10.23);
\draw (36.,0.)-- (30.35,10.23);
\draw (30.35,10.23)-- (32.78,14.);
\draw (30.35,10.23)-- (28.70,14.);
\begin{scriptsize}
\draw [fill=xdxdff] (12.,0.) circle (1.5pt);
\draw [fill=xdxdff] (24.,0.) circle (1.5pt);
\draw [fill=xdxdff] (36.,0.) circle (1.5pt);
\draw [fill=uququq] (0.,0.) circle (1.5pt);
\draw [fill=xdxdff] (0.,14.) circle (1.5pt);
\end{scriptsize}
\end{tikzpicture}
\caption{Example of solutions of neighbouring Riemann problems. For the CFL condition (\ref{CFL_condiftion}), the waves that arise from the problems centred in $x_{j-{1/2}}$ and $x_{j+1/2}$ can interact but they remain in the cell $C_j$. The solution is then constant along each line $x_{j+1/2}$.}\label{Godunov_esempio_soluzione}
\end{figure}
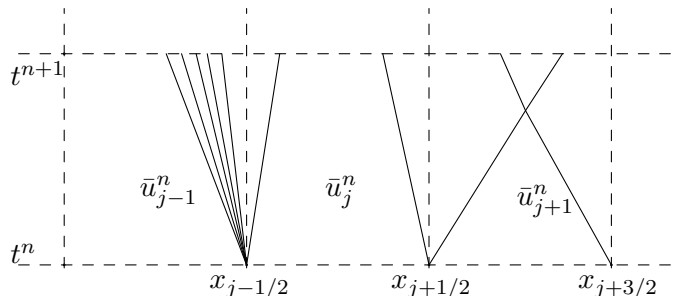
Finally, if the CFL condition holds and by the self-similarity of the solution of the Riemann problem, the integral
$$\int_{t^n}^{t^{n+1}}f(\tilde{u}^n(\tau,x_{j+1/2}))\, d\tau$$
is trivial because  $\tilde{u}^n$ is constant along the line $(t,x_{j+1/2})$ for $t\in [t^n,t^{n+1}]$. Therefore, if we denote this value with $u^*(\bar{u}^n_j,\bar{u}^n_{j+1})$, we find
$$F(\bar{u}^n_j,\bar{u}^n_{j+1})= f(u^*(\bar{u}^n_j,\bar{u}^n_{j+1}))$$
and the Godunov's method becomes
\begin{equation}\label{Godunov_method_final_form}
\bar{u}^{n+1}_j=\bar{u}^n_j-\dfrac{k}{h}[f(u^*(\bar{u}^n_j,\bar{u}^n_{j+1}))-f(u^*(\bar{u}^n_{j-1},\bar{u}^n_{j}))].
\end{equation}

\section{Godunov's method for the Aw-Rascle-Zhang system}
In this section we are going to specialize the Godunov's method for the ARZ system.\\
Let $u^0=(\rho^0,z^0):\mathbb{R}\to \mathbb{R}^2$ be a known function, let
$$ u=(\rho,z):=(\rho,\rho(v+p(\rho)))$$
be the vector of the conserved variables and let $u^n(x)=u(t^n,x)$ be the exact solution profile of the Cauchy problem
\begin{equation}\label{ARZ_Cauchy_probelem_rho_y}
\begin{cases}
\begin{cases}
\partial_t\rho+\partial_x(\rho v)=0,\\
\partial_t z+\partial_x(z v)=0,\\
\end{cases}\\
(\rho,z)(0,x)=(\rho^0,z^0)(x).
\end{cases}
\end{equation}
at time $t^n$. By Proposition \ref{ARZ_properties_rho_z}, the flux function in the coordinates $(\rho,z)$ is
\begin{equation}
f(\rho,z)= \begin{pmatrix}
f_1(\rho,z)\\
f_2(\rho,z)
\end{pmatrix}=
\begin{pmatrix}
\rho v\\
z v
\end{pmatrix}= \begin{pmatrix}
z-\rho\,p(\rho)\\
\frac{z^2}{\rho}-z\,p(\rho)
\end{pmatrix}
\end{equation}
and the eigenvalues of the Jacobian matrix $Df$ are
\begin{equation}
\lambda_1(\rho,z)=-p(\rho)+\dfrac{z}{\rho}-\rho p'(\rho) \; \text{ and } \; \lambda_2(\rho,z) = -p(\rho)+\dfrac{z}{\rho}.
\end{equation}
We define the piecewise constant averaged initial datum as
\begin{equation}
x\to \bar{u}^0(x)=(\bar{\rho}^0,\bar{z}^0)(x) = \sum_{j\in \mathbb{Z}}\left[(\bar{\rho}^0_j,\bar{z}^0_j) \,  \mathbf{1}_{C_j}(x) \right],
\end{equation}
where
\begin{equation}
\bar{\rho}^0_j:= \dfrac{1}{h}\int_{x_{j-1/2}}^{x_{j+1/2}}\rho^0(\xi)\, d\xi \;\; \text{ and } \;\; \bar{z}^0_j:= \dfrac{1}{h}\int_{x_{j-1/2}}^{x_{j+1/2}}z^0(\xi)\, d\xi.
\end{equation}
If the CFL condition
\begin{equation}
\left| \dfrac{k}{h}\lambda^0\right|\leq 1 \; \text{ with } \; \lambda^0=\max\lbrace |\lambda_i(\bar{u}^0_j)|\, : \, i=1,2 \; \text{ and } \; j\in \mathbb{Z}\rbrace
\end{equation}
holds, we can apply the equation (\ref{Godunov_method_final_form}) to find the approximate solution $\bar{u}^1$ of the profile $u^1$ of the exact solution at the new time step, i.e. for every $j\in \mathbb{Z}$ we have
\begin{equation}
\begin{split}
& \bar{\rho}^{1}_j=\bar{\rho}^0_j-\dfrac{k}{h}\left[f_1(u^*(\bar{u}^0_j,\bar{u}^0_{j+1}))-f_1(u^*(\bar{u}^0_{j-1},\bar{u}^0_{j})) \right] \; \text{ and}\\
& \bar{z}^{1}_j=\bar{z}^0_j-\dfrac{k}{h}\left[f_2(u^*(\bar{u}^0_j,\bar{u}^0_{j+1}))-f_2(u^*(\bar{u}^0_{j-1},\bar{u}^0_{j})) \right],
\end{split}
\end{equation}
where $u^*(\bar{u}^0_k,\bar{u}^0_{k+1})$ is the value of the exact solution at $x=x_{k+1/2}$ and for $t\in [t^0,t^1]$ to the local Riemann problem centred in $x_{k+1/2}$, which is
\begin{equation}
\begin{cases}
\begin{cases}
\partial_t\rho+\partial_x(\rho v)=0,\\
\partial_t z+\partial_x(z v)=0,
\end{cases}\\
(\rho,z)(t^0,x)=\begin{cases}
\bar{u}^0_k & \text{if } x\leq x_{k+1/2},\\
\bar{u}^0_{k+1} & \text{if } x>x_{k+1/2}.
\end{cases}
\end{cases}
\end{equation}
Repeating this process, we can compute the approximate solution $\bar{u}^n=(\bar{\rho}^n,\bar{z}^n)$ for every $n\in \mathbb{N}$, provided that the CFL condition 
\begin{equation}
\left| \dfrac{k}{h}\lambda^n\right|\leq 1 \; \text{ with } \; \lambda^n=\max\lbrace |\lambda_i(\bar{u}^n_j)|\, : \, i=1,2 \; \text{ and } \; j\in \mathbb{Z}\rbrace
\end{equation}
holds.\\
Fix $j\in \mathbb{Z}$. Let us consider the Riemann problem at time $t^n$ and centred in $x_{j+1/2}$, namely
\begin{equation}\label{ARZ_riemann_probelem_rho_y}
\begin{cases}
\begin{cases}
\partial_t\rho+\partial_x(\rho v)=0,\\
\partial_t z+\partial_x(z v)=0,
\end{cases}\\
(\rho,z)(t^n,x)=\begin{cases}
\bar{u}^n_j & \text{if } x\leq x_{j+1/2},\\
\bar{u}^n_{j+1} & \text{if } x>x_{j+1/2}.
\end{cases}
\end{cases}
\end{equation}
We can give explicitly the value $\tilde{u}^{n}_{j+1/2}:=u^*(\bar{u}^0_j,\bar{u}^0_{j+1})$ of the solution to the Riemann problem (\ref{ARZ_riemann_probelem_rho_y}) at $x_{j+1/2}$ in the interval $[t^n,t^{n+1}]$.\\
Let us call $u^l = \bar{u}^n_j$ and $u^r =\bar{u}^n_{j+1}$ and let $u^m$ be the middle state of the classical solution. As usual, it is convenient to use the $(\rho,v)$ coordinates, hence we define $(\rho^l,v^l)$, $(\rho^r,v^r)$ and $(\rho^*,v^*)$ respectively as the corresponding points of $u^l$, $u^r$ and $\tilde{u}^{n}_{j+1/2}$ in the $(\rho,v)$ plane. We recall (see Proposition \ref{ARZ_properties_rho_v}) that the eigenvalues in these coordinates are
$$\lambda_1(\rho,v)=v-\rho p'(\rho) \; \text{ and } \; \lambda_2(\rho,v)=v$$
and that the Lax curves passing through a point $(\rho_0,v_0)$ are
$$L_1(\rho,\rho_0,v_0) = v_0+p(\rho_0)-p(\rho) \; \text{ and }\; L_2(\rho,\rho_0,v_0)=v_0.$$

\subsection{First case: $v^r = L_1(\rho^l,v^l,\rho^r)$}
Whenever the case $v^l+p(\rho^l)= v^r+p(\rho^r)$ occurs, there are no intermediate states between $(\rho^l,v^l)$ and $(\rho^r,v^r)$. We have to distinguish two situations.
\begin{enumerate}
\item If $\rho^l<\rho^r$, then $(\rho^l,v^l)$ and $(\rho^r,v^r)$ are linked by the standard Riemann solver with a shock. The propagation speed is given by the Rankine-Hugoniot condition, i.e.
\begin{equation}
\lambda = \dfrac{\rho^l v^l-\rho^r v^r}{\rho^l-\rho^r}.
\end{equation}
If $\lambda \geq 0$, we have $(\rho^*,v^*)=(\rho^l,v^l)$. Otherwise, we have $(\rho^*,v^*)=(\rho^r,v^r)$.
\item If $\rho^l \geq \rho^r$, then a rarefaction joins $(\rho^l,v^l)$ and $(\rho^r,v^r)$. Let $(\rho^M,v^M)$ be the maximum of the function $\rho \to \psi(\rho):= \rho L_1(\rho,\rho^l,v^l)$, which exists because the function is strictly concave and
$$\lim_{\rho\to +\infty}\psi(\rho)=-\infty.$$
Since $\lambda_1(\rho,L_1(\rho,\rho^l,v^l))$ is the derivative of the function $\psi$ in the $(\rho,\rho v)$ plane (see Proposition \ref{eigenvalue_as_slope}), we have
$$\lambda_1(\rho^M,v^M)=0.$$
Let $(\rho^\sigma,v^\sigma)$ be a point of the rarefaction. By Proposition \ref{rarefaction_wave_properties}, the propagation speed of the rarefaction in $(\rho^\sigma,v^\sigma)$ is $\lambda_1(\rho^\sigma,v^\sigma)$.\\
If $\rho^r>\rho^M$, then $\lambda_1(\rho^\sigma,v^\sigma)<0$ for every $\sigma$. Hence $(\rho^*,v^*)=(\rho^r,v^r)$.\\
Else if  $\rho^l<\rho^M$, then $\lambda_1(\rho^\sigma,v^\sigma)>0$ for every $\sigma$. Then $(\rho^*,v^*)=(\rho^l,v^l)$.\\
Otherwise, since the wave on the line $x_{j+1/2}$ travels with propagation speed equal to zero, $(\rho^*,v^*)$ is the point of the rarefaction such that
$$\lambda_1(\rho^*,v^*)=0,$$
which is $(\rho^*,v^*)=(\rho^M,v^M)$.
\end{enumerate}

\subsection{Second case: $v^r \neq L_1(\rho^l,v^l,\rho^r)$}
When $v^r+p(\rho^r) \neq v^l+p(\rho^l)$, we have the following two cases.
\begin{enumerate}
\item If $v^l=v^r$, then $(\rho^l,v^l)$ and $(\rho^r,v^r)$ are connected by a contact discontinuity and no intermediate states appear. The solution is $(\rho^*,v^*)=(\rho^l,v^l)$, because the discontinuity propagates with speed $v^r>0$.
\item If $v^l \neq v^r$, the intermediate state $(\rho^m,v^m)$ appears. We can repeat the discussion of the first case ($v^r = L_1(\rho^l,v^l,\rho^r)$) with $(\rho^m,v^m)$ instead of $(\rho^r,v^r)$.
\end{enumerate}

\section{Numerical method for the Riemann solver $\mathcal{RS}^\alpha_1$}
Our aim is to modify the Godunov's method to find the numerical solutions for the moving constraint problem which correspond to the Riemann solver $\mathcal{RS}^\alpha_1$. See \cite{chalons_delle_monache_goatin} for the scalar case.\\
Assume that the speed of the bus $\dot{y}(t)$ is constant and its value is $\dot{y}(t)=\bar{V}\in[0,V_b]$ for every $t\in\mathbb{R}^+$.
Let us consider the Riemann problem (\ref{ARZ_riemann_probelem_rho_y}) with the constraint
\begin{equation}\label{vincolo}
\rho(t,y(t)) (v(t,y(t))-\bar{V})\leq \rho_\alpha^2 \, p'(\rho_\alpha)=F_\alpha,
\end{equation}
where $\alpha \in (0,1)$ is the reduction rate in the road capacity caused by the bus, $\rho_\alpha$ is the solution of the equation
$$p(\rho_\alpha)+\rho_\alpha\,p'(\rho_\alpha)=\omega_\alpha-\bar{V}$$
and $\omega_\alpha=p(\alpha R)$. Let $y_0$ be the initial position of the bus.\\
We recall the definitions of the points $(\hat{\rho},\hat{v})$, $(\check{\rho}_1,\check{v}_1)$ and $(\rho^m,v^m)$. Let $I$ be the set
\begin{equation*}
\begin{split}
I & =\lbrace \rho \in [0,R]: \, \rho L_1(\rho,\rho^l,v^l)=\rho (v^l+p(\rho^l)-p(\rho))=F_\alpha+\rho \bar{V}\rbrace=\\
&=\lbrace \rho\in [0,R]:\, \rho(L_1(\rho,\rho^l,v^l)-\bar{V})=F_\alpha \rbrace.
\end{split}
\end{equation*}
If $I\neq \emptyset$, $(\hat{\rho},\hat{v})$ and $(\check{\rho}_1,\check{v}_1)$ are the points defined by 
\begin{equation}\label{def_u_hat_u_check_1}
\hat{\rho}=\max I, \; \; \hat{v}=\dfrac{F_\alpha}{\hat{\rho}}+\bar{V}, \; \; \check{\rho}_1 = \min I \; \mbox{ and } \; \check{v}_1=\dfrac{F_\alpha}{\check{\rho}_1}+\bar{V}.
\end{equation}
Finally, let $(\rho^m,v^m)$ be the intermediate state of the classical solution, defined by 
$$v^m=v^r, \; \; \; L_1(\rho^m,\rho^l,v^l)=v^m \; \text{ and } \; \rho^m>0.$$
The corresponding points in the $(\rho,z)$ plane are
\begin{equation}\label{def_punti_risolutori}
\begin{split}
&\hat{u}=(\hat{\rho},\hat{z})= (\hat{\rho},\hat{\rho}(\hat{v}+p(\hat{\rho}))), \; \; \check{u}_1=(\check{\rho}_1,\check{z}_1)= (\check{\rho}_1,\check{\rho}_1(\check{v}_1+p(\check{\rho}_1)))\\
& \text{and } \; u^m=(\rho^m,z^m)= (\rho^m,\rho^m(v^m+p(\rho^m))).
\end{split}
\end{equation}
For the Riemann solver $\mathcal{RS}^\alpha_1$, we recall that both components $(\rho,z)$ are conserved.\\
Let $\mathcal{RS}$ be the classical Riemann solver and let
$$\bar{\rho}(u^l,u^r)(\cdot)$$
be the $\rho$ component of the classical solution $\mathcal{RS}(u^l,u^r)(\cdot)$.\\
\begin{remark}
Let $\mathcal{Z}:\mathbb{R^+}\times \mathbb{R}^+ \to \mathbb{R}^+\times \mathbb{R}^+$ be the map
$$\mathcal{Z}:(\rho,v)\to (\rho,z)=(\rho,\rho\,(v+p(\rho))).$$
The Riemann solvers $\mathcal{RS}^\alpha_1$ and $\mathcal{RS}^\alpha_2$ are defined for the non-conserved variables $(\rho,v)$. We still denote $\mathcal{RS}^\alpha_1$ and $\mathcal{RS}^\alpha_2$ the Riemann solvers
$$\mathcal{Z}\circ \mathcal{RS}^\alpha_1 \; \text{ and } \; \mathcal{Z}\circ \mathcal{RS}^\alpha_2$$
for the conserved variables $(\rho,z)$.
\end{remark}
\subsection{The bus is not influenced by the preceding vehicles}
Fix $n\in\mathbb{N}$. Let us suppose that we have computed the piecewise constant approximate solution $\bar{u}^n$ at the time $t^n$ with the Godunov's method. We follow the ideas of \cite{chalons_delle_monache_goatin, boutin_chalons, aguillon_system}.\\
First, consider a bus not influenced by the preceding vehicles. Its position at the time $t^n$ is $y^n:=y(t^n)=y_0+\bar{V}t^n\in C_m$ for some $m \in \mathbb{Z}$ and the value of the approximate solution at $y^n$ is $\bar{u}^n_m$. Let 
\begin{equation}
\bar{\rho}^n_m \; \text{ and } \; \bar{z}^n_m
\end{equation}
be respectively the $\rho$ and the $z$ component of the approximate solution $\bar{u}^n$ in the $m$-th cell.\\
If the Riemann solver $\mathcal{RS}^\alpha_1$ does not give the classical solution, a non-classical shock appears in $x=y(t)$. Following \cite{chalons_delle_monache_goatin}, a first idea to detect the non-classical shock, could be to check if the inequality
\begin{equation}\label{first_inequality}
f_1(\bar{u}^n_m) > F_\alpha+ \bar{V} \bar{\rho}^n_m
\end{equation}
holds. Since the non-classical shock arises as the solution given by $\mathcal{RS}^\alpha_1$ to the Riemann problem with initial datum
\begin{equation}\label{initial_datum_discontinuity_reconstruction}
u(0,x)=\begin{cases}
\bar{u}^n_{m-1} & \text{if } x\leq x_{m-1/2},\\
\bar{u}^n_{m+1} & \text{if } x>x_{m-1/2},
\end{cases}
\end{equation}
we will make a reconstruction of the discontinuity due to the presence of the non-classical shock, if also the inequality
\begin{equation}\label{second_inequality}
f_1(\mathcal{RS}(\bar{u}^n_{m-1},\bar{u}^n_{m+1})(\bar{V}))>F_\alpha+\bar{V}\bar{\rho}(\bar{u}^n_{m-1},\bar{u}^n_{m+1})(\bar{V})
\end{equation}
holds. In this case we modify the Godunov's scheme as follows.\\
We introduce in the $m$-th cell one left state $u^n_{m,l}=(\rho^n_{m,l},z^n_{m,l})$ and one right state $u^n_{m,r}=(\rho^n_{m,r},z^n_{m,r})$ as
$$u^n_{m,l}= \hat{u} \; \text{ and } \; u^n_{m,r}=\check{u}_1,$$
where $\hat{u}$ and $\check{u}_1$ are the points defined by the relations (\ref{def_punti_risolutori}) for the Riemann problem (\ref{ARZ_riemann_probelem_rho_y}) with initial datum (\ref{initial_datum_discontinuity_reconstruction}). Then we replace the solution $\bar{u}^n_m$ obtained with the Godunov's method in the $m$-th cell, with the function $u^n_\text{rec}=(\rho^n_\text{rec},z^n_\text{rec})$ defined by
\begin{equation}
\begin{split}
&\rho^n_\text{rec}=\rho^n_{m,l}\mathbf{1}_{(x_{m-1/2},\bar{x}^\rho_m)}+\rho^n_{m,r}\mathbf{1}_{(\bar{x}^\rho_m,x_{m+1/2})} \; \text{ and}\\
&z^n_\text{rec}=z^n_{m,l}\mathbf{1}_{(x_{m-1/2},\bar{x}^z_m)}+z^n_{m,r}\mathbf{1}_{(\bar{x}^z_m,x_{m+1/2})},
\end{split}
\end{equation}
where we have used the two points
$$\bar{x}^\rho_m=x_{m-1/2}+h\, d^{n,\rho}_m \; \text{ and } \; \bar{x}^z_m=x_{m-1/2}+h \, d^{n,z}_m$$
defined for two suitable constants $d^{n,\rho}_m$ and $d^{n,z}_m$ in $[0,1]$; see Figure \ref{fig_ricostruzione_discontinuita}.\\
In agreement with the first Riemann solver character, our aim is to preserve conservation. Therefore we require 
\begin{equation}
\begin{split}
&\rho^n_{m,l}d^{n,\rho}_m+\rho^n_{m,r}(1-d^{n,\rho}_m)=\bar{\rho}^n_m \; \text{ and}\\
&z^n_{m,l}d^{n,z}_m+z^n_{m,r}(1-d^{n,z}_m)=\bar{z}^n_m.
\end{split}
\end{equation}
Solving these two equations w.r.t. $d^{n,\rho}_m$ and $d^{n,z}_m$, we find
\begin{equation}\label{costanti_ricostruzione_discontinuita}
d^{n,\rho}_m=\dfrac{\bar{\rho}^n_m-\rho^n_{m,r}}{\rho^n_{m,l}-\rho^n_{m,r}} \; \text{ and } \; d^{n,z}_m=\dfrac{\bar{z}^n_m-z^n_{m,r}}{z^n_{m,l}-z^n_{m,r}}.
\end{equation}
Clearly, the conditions $d^{n,\rho}_m\in [0,1]$ and $d^{n,z}_m\in [0,1]$ are necessary to reconstruct the discontinuity in the cell $C_m$. These two constants are in general different.\\
By Remark \ref{nonclassical_shock}, the non-classical shock travels with the bus speed $\bar{V}$. Assuming that the discontinuity travels with the same speed of the non-classical shock, if we denote
$$\Delta t^\rho_{m+1/2}\; \text{ and } \; \Delta t^z_{m+1/2}$$
respectively the time needed by the $\rho$ and the $z$ component of the discontinuity to reach the interface $x_{m+1/2}$, we have
\begin{equation}
\begin{split}
& \Delta t^\rho_{m+1/2}\bar{V}=h(1-d^{n,\rho}_m) \Longleftrightarrow \Delta t^\rho_{m+1/2}=h\dfrac{1-d^{n,\rho}_m}{\bar{V}} \; \text{ and }\\
& \Delta t^z_{m+1/2}\bar{V}=h(1-d^{n,z}_m) \Longleftrightarrow \Delta t^z_{m+1/2}=h\dfrac{1-d^{n,z}_m}{\bar{V}}.
\end{split}
\end{equation}
Let
$$F_1(\bar{u}^n_m,\bar{u}^n_{m+1}) \; \text{ and } \; F_2(\bar{u}^n_m,\bar{u}^n_{m+1})$$
be the two components of the numerical flux in $x_{m+1/2}$. We have
\begin{equation}
F_1(\bar{u}^n_m,\bar{u}^n_{m+1})=\begin{cases}
f_1(u^n_{m,r}) & \text{if } t\in [t^n,t^n+\Delta t^\rho_{m+1/2}],\\
f_1(u^n_{m,l}) & \text{if } \Delta t^\rho_{m+1/2}<k \; \text{ and } \; t\in (t^n+\Delta t^\rho_{m+1/2},t^{n+1}].
\end{cases}
\end{equation}
Similarly
\begin{equation}
F_2(\bar{u}^n_m,\bar{u}^n_{m+1})=\begin{cases}
f_2(u^n_{m,r}) & \text{if } t\in [t^n,t^n+\Delta t^z_{m+1/2}],\\
f_2(u^n_{m,l}) & \text{if } \Delta t^z_{m+1/2}<k \; \text{ and } \; t\in (t^n+\Delta t^z_{m+1/2},t^{n+1}].
\end{cases}
\end{equation}
We can rewrite these expressions in the compact form
\begin{equation}\label{flux_redefinition}
\begin{split}
& F_1(\bar{u}^n_m,\bar{u}^n_{m+1}) = \dfrac{1}{k}\left[ \min(\Delta t^\rho_{m+1/2},k)f_1(u^n_{m,r}) + \max(k-\Delta t^\rho_{m+1/2},0)f_1(u^n_{m,l})\right] \; \text{ and}\\
& F_2(\bar{u}^n_m,\bar{u}^n_{m+1}) = \dfrac{1}{k}\left[ \min(\Delta t^z_{m+1/2},k)f_2(u^n_{m,r}) + \max(k-\Delta t^z_{m+1/2},0)f_2(u^n_{m,l})\right].
\end{split}
\end{equation}
\begin{remark}
Implementing this method, we also modify the numerical flux in $x_{m-1/2}$ for the left state $\bar{u}^n_{m-1}$ and the new right state $\hat{u}$, i.e.
$$F(\bar{u}^n_{m-1},\hat{u})= f(u^*(\bar{u}^n,\hat{u})).$$
This is done to preserve the consistency of the Godunov's scheme.
\end{remark}
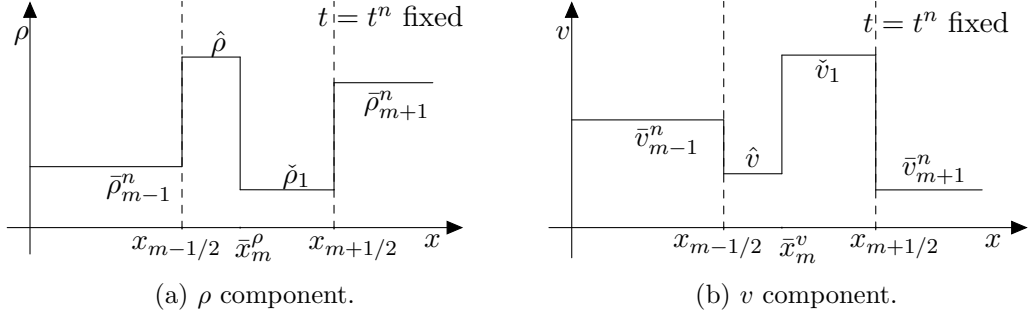
\begin{figure}[h]
\begin{subfigure}[h]{0.45\textwidth}
\definecolor{xdxdff}{rgb}{0.6588235294117647,0.6588235294117647,0.6588235294117647}
\begin{tikzpicture}[scale =0.1,line cap=round,line join=round,>=triangle 45,x=1.0cm,y=1.0cm]
\draw[->,color=black] (-3.,0.) -- (57.,0.);
\draw[->,color=black] (0.,-3.5) -- (0.,30.);
\clip(-3.5,-5) rectangle (57.,30.);
\draw (12,0.2) node[anchor=north west] {$x_{m-1/2}$};
\draw (35.28,0.2) node[anchor=north west] {$x_{m+1/2}$};
\draw (-3.5,28) node[anchor=north west] {$\rho$};
\draw (50.5,0.4) node[anchor=north west] {$x$};
\draw (36.5,30.7) node[anchor=north west] {$t=t^n \text{ fixed}$};
\draw (25.6,0.5) node[anchor=north west] {$\bar{x}^\rho_m$};
\draw [dash pattern=on 3pt off 3pt] (20.,-0.2) -- (20.,30.);
\draw [dash pattern=on 3pt off 3pt] (40.,-0.2) -- (40.,30.);
\draw (0.,8.08)-- (20.,8.08);
\draw (20.,22.61)-- (27.65,22.61);
\draw (20.,22.61)-- (20.,8.08);
\draw (27.65,22.61)-- (27.65,5.02);
\draw (27.65,5.02)-- (40.,5.);
\draw (40.,5.)-- (40.,19.21);
\draw (40.,19.21)-- (53,19.21);
\draw (8.68,8.5) node[anchor=north west] {$\bar{\rho}^n_{m-1}$};
\draw (31.88,9.7) node[anchor=north west] {$\check{\rho}_1$};
\draw (22.45,27.5) node[anchor=north west] {$\hat{\rho}$};
\draw (42.59,19.5) node[anchor=north west] {$\bar{\rho}^n_{m+1}$};
\begin{scriptsize}
\draw [fill=xdxdff] (20.,0.) circle (1.5pt);
\draw [fill=xdxdff] (40.,0.) circle (1.5pt);
\draw [fill=xdxdff] (27.65,0.) circle (1.5pt);
\end{scriptsize}
\end{tikzpicture}
\caption{$\rho$ component.}\label{fig_ricostruzione_discontinuita_rho}
\end{subfigure}
\quad
\begin{subfigure}[h]{0.45\textwidth}
\definecolor{xdxdff}{rgb}{0.6588235294117647,0.6588235294117647,0.6588235294117647}
\begin{tikzpicture}[scale =0.1,line cap=round,line join=round,>=triangle 45,x=1.0cm,y=1.0cm]
\draw[->,color=black] (-3.,0.) -- (60.,0.);
\draw[->,color=black] (0.,-4.) -- (0.,30.);
\clip(-3.5,-5.) rectangle (60.,30.);
\draw (12,0.4) node[anchor=north west] {$x_{m-1/2}$};
\draw (35.22,0.4) node[anchor=north west] {$x_{m+1/2}$};
\draw (-3.5,28) node[anchor=north west] {$v$};
\draw (53,0.4) node[anchor=north west] {$x$};
\draw (37.,30.) node[anchor=north west] {$t=t^n \text{ fixed}$};
\draw (25.95,0.2) node[anchor=north west] {$\bar{x}^v_m$};
\draw [dash pattern=on 3pt off 3pt] (20.,-0.3) -- (20.,30.);
\draw [dash pattern=on 3pt off 3pt] (40.,-0.3) -- (40.,30.);
\draw (0.,14.28)-- (20.,14.28);
\draw (20.,7.14)-- (27.65,7.14);
\draw (20.,7.14)-- (20.,14.28);
\draw (27.65,7.14)-- (27.65,22.87);
\draw (27.65,22.87)-- (39.89,22.87);
\draw (40,22.95)-- (40.,5.);
\draw (40.,5.)-- (54,5.);
\draw (6.91,14.8) node[anchor=north west] {$\bar{v}^n_{m-1}$};
\draw (30.63,23.6) node[anchor=north west] {$\check{v}_1$};
\draw (21.45,12) node[anchor=north west] {$\hat{v}$};
\draw (42.10,11) node[anchor=north west] {$\bar{v}^n_{m+1}$};
\begin{scriptsize}
\draw [fill=xdxdff] (20.,0.) circle (1.5pt);
\draw [fill=xdxdff] (40.,0.) circle (1.5pt);
\draw [fill=xdxdff] (27.65,0.) circle (1.5pt);
\end{scriptsize}
\end{tikzpicture}
\caption{$v$ component.}
\label{fig_ricostruzione_discontinuita_v}
\end{subfigure}
\caption{An example of a discontinuity reconstruction for the Riemann solver $\mathcal{RS}^\alpha_1$.}\label{fig_ricostruzione_discontinuita}
\end{figure}

We have tested our procedure with Matlab with several data.
\begin{remark}
In the definitions of $\mathcal{RS}^\alpha_1$ and $\mathcal{RS}^\alpha_2$, when the constraint is enforced, we have chosen arbitrarily to give respectively the values $\check{u}_1$ and $\check{u}_2$ to the solutions at $x=y(t)$. Therefore in all the figures that will follow, whenever the non-classical shock appears, the bus vertical position will be $(\check{\rho}_1,\check{v}_1)$ or $(\check{\rho}_2,\check{v}_2)$. In general the bus vertical position represents the value of the solution $(\rho,v)$ at the bus position.
\end{remark}
The next proposition ensures that the constants defined in (\ref{costanti_ricostruzione_discontinuita}) are equal when the initial datum of the Riemann problem is a non-classical shock. Moreover the non-classical shock is exactly captured.
\begin{prop}\label{posizione_delle_discontinuita_identica}
Fix $n\in \mathbb{N}$ and let us suppose that $y^n=x_{m-1/2}$. Let $d^{n,\rho}_m$ and $d^{n,z}_m$ be the two constants defined in (\ref{costanti_ricostruzione_discontinuita}). Consider the Riemann problem (\ref{ARZ_riemann_probelem_rho_y}) at the instant $t^n$ with initial datum
\begin{equation}\label{initial_non_classical_shock}
u(t^n,x)=\begin{cases}
\bar{u}^n_{m-1} & \text{if } x<x_{m-1/2},\\
\bar{u}^n_{m+1} & \text{if } x>x_{m-1/2}.
\end{cases}
\end{equation}
If $\bar{u}^n_{m-1}=\hat{u}$, $\bar{u}^n_{m+1}=\check{u}_1$ and there exists $\gamma\in[0,1]$ such that
\begin{equation}\label{Prop_posizione_della_discontinuita_ipotesi}
\bar{u}^n_m = \gamma \, \bar{u}^n_{m-1} + (1-\gamma) \, \bar{u}^n_{m+1},
\end{equation}
then
\begin{equation}
d^{n,\rho}_m=d^{n,z}_m= \gamma.
\end{equation}
Moreover, the reconstruction method cancels the numerical diffusion introduced by the averaging process (\ref{mean_process}); see Figure \ref{fig_nonclassical_shock_RS_1}.
\end{prop}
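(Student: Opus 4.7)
The first claim is essentially a direct substitution. By hypothesis the averaged state satisfies
$\bar{u}^n_m = \gamma\,\hat{u} + (1-\gamma)\,\check{u}_1$, and since $u^n_{m,l}=\hat{u}$, $u^n_{m,r}=\check{u}_1$ by definition of the reconstruction for the initial datum (\ref{initial_non_classical_shock}), I would plug the componentwise identities $\bar{\rho}^n_m = \gamma\hat{\rho} + (1-\gamma)\check{\rho}_1$ and $\bar{z}^n_m = \gamma\hat{z} + (1-\gamma)\check{z}_1$ directly into the formulas (\ref{costanti_ricostruzione_discontinuita}). A one-line simplification yields $d^{n,\rho}_m = d^{n,z}_m = \gamma$. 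The punchline is that the two reconstruction abscissae coincide, so the reconstructed piecewise constant profile in $C_m$ has a single discontinuity at $\bar{x}_m := x_{m-1/2} + h\gamma$.

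For the second part, the exact solution of (\ref{ARZ_riemann_probelem_rho_y}) with initial datum (\ref{initial_non_classical_shock}) is, by construction of $\mathcal{RS}^\alpha_1$, precisely the non-classical shock separating $\hat{u}$ on the left from $\check{u}_1$ on the right, travelling with speed $\bar{V}$. Hence on the reconstructed profile at time $t^n$, the single discontinuity sits at $\bar{x}_m$ and at time $t^{n+1}$ it sits at $\bar{x}_m + k\bar{V}$. My target is therefore to verify that the modified Godunov update produces exactly
\begin{equation*}
\bar{u}^{n+1}_m \;=\; \frac{1}{h}\bigl[(h\gamma + k\bar{V})\,\hat{u} + (h(1-\gamma) - k\bar{V})\,\check{u}_1\bigr],
\end{equation*}
together with $\bar{u}^{n+1}_{m-1} = \hat{u}$ and $\bar{u}^{n+1}_{m+1} = \check{u}_1$ in the adjacent cells, provided the CFL condition is satisfied.

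The crucial ingredient is the Rankine--Hugoniot relation for the non-classical shock, which by the conservativity remark for $\mathcal{RS}^\alpha_1$ holds componentwise with propagation speed $\bar{V}$:
\begin{equation*}
\hat{\rho}\hat{v} - \check{\rho}_1\check{v}_1 \;=\; \bar{V}(\hat{\rho} - \check{\rho}_1), \qquad \hat{z}\hat{v} - \check{z}_1\check{v}_1 \;=\; \bar{V}(\hat{z} - \check{z}_1).
\end{equation*}
I would first handle the sub-case $k\bar{V} \le h(1-\gamma)$, i.e.\ $\Delta t^\rho_{m+1/2} \ge k$, in which the shock remains inside $C_m$ throughout $[t^n,t^{n+1}]$: by (\ref{flux_redefinition}) both numerical fluxes at $x_{m+1/2}$ reduce to $f(\check{u}_1)$, while at $x_{m-1/2}$ the left state $\bar{u}^n_{m-1}=\hat{u}$ equals the reconstructed left value $u^n_{m,l}=\hat{u}$, so both fluxes there equal $f(\hat{u})$. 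Substituting into (\ref{Godunov_in_conservation_form}) and invoking the Rankine--Hugoniot identities above gives precisely the claimed target average; the neighbouring cells are untouched because no wave reaches $x_{m-3/2}$ or $x_{m+3/2}$.

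The main obstacle, and the only place requiring genuine bookkeeping, is the complementary sub-case $k\bar{V} > h(1-\gamma)$, where the shock crosses $x_{m+1/2}$ at time $t^n + \Delta t^\rho_{m+1/2}$. Here the compact formula (\ref{flux_redefinition}) splits the numerical flux at $x_{m+1/2}$ into two pieces (first $f(\check{u}_1)$, then $f(\hat{u})$), and one must check both the update of $\bar{u}^{n+1}_m$ and of $\bar{u}^{n+1}_{m+1}$. Again, inserting the Rankine--Hugoniot jump $f(\hat{u})-f(\check{u}_1)=\bar{V}(\hat{u}-\check{u}_1)$ makes the time-weighted fluxes telescope so that the overshoot of the shock into $C_{m+1}$ is transferred exactly, producing the correct averages in both cells without introducing numerical diffusion. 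This closes the proof.
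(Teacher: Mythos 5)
Your first part is correct and identical to the paper's: substitute $\bar{\rho}^n_m = \gamma\,\hat{\rho}+(1-\gamma)\,\check{\rho}_1$ and $\bar{z}^n_m = \gamma\,\hat{z}+(1-\gamma)\,\check{z}_1$ into the formulas (\ref{costanti_ricostruzione_discontinuita}) and cancel.

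For the second part you part ways with the paper, and a gap opens. The paper's argument is essentially one line: because $y^n=x_{m-1/2}$, the non-classical shock initial datum equals $\check{u}_1$ throughout the entire cell $C_m=[x_{m-1/2},x_{m+1/2})$, so $\bar{u}^n_m = \check{u}_1$ by averaging, hence $\gamma=0$; the reconstruction abscissa is then $x_{m-1/2}$, and $u^n_{\text{rec}}\equiv\check{u}_1 = u(t^n,\cdot)$ on $C_m$. The reconstruction therefore coincides exactly with the profile being averaged, and that is what ``cancels the numerical diffusion'' means here. You never pin down $\gamma$. Instead you place the reconstruction jump at $\bar{x}_m = x_{m-1/2}+h\gamma$ and then verify --- via the componentwise Rankine--Hugoniot relations for the non-classical shock (which are indeed correct here, since $\mathcal{RS}^\alpha_1$ conserves both components across the jump at speed $\bar{V}$) together with a CFL case analysis --- that the modified Godunov update (\ref{Godunov_in_conservation_form}) with the flux redefinition (\ref{flux_redefinition}) reproduces the cell average of a discontinuity sitting at $\bar{x}_m+k\bar{V}$. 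That checks a consistency property of the flux modification, but it does not establish that the reconstructed profile coincides with the exact one: for that you need $\bar{x}_m = y^n$, i.e.\ $\gamma=0$, and this is precisely what the hypothesis $y^n=x_{m-1/2}$ delivers and what your argument never invokes.
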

\begin{proof}
Since the condition (\ref{Prop_posizione_della_discontinuita_ipotesi}) holds, we have
$$\begin{pmatrix}
\bar{\rho}^n_m\\
\bar{z}^n_m
\end{pmatrix} = \begin{pmatrix}
\gamma \, \hat{\rho}+(1-\gamma ) \, \check{\rho}_1\\
\gamma \, \hat{z}+(1-\gamma) \, \check{z}_1
\end{pmatrix}.$$
Hence we find
\begin{equation*}
\begin{split}
& d^{n,\rho}_m=\dfrac{\bar{\rho}^n_m-\rho^n_{m,r}}{\rho^n_{m,l}-\rho^n_{m,r}}=\dfrac{\gamma \, \hat{\rho}+(1-\gamma) \, \check{\rho}_1-\check{\rho}_1}{\hat{\rho}-\check{\rho}_1}= \dfrac{\gamma(\hat{\rho}-\check{\rho}_1)}{\hat{\rho}-\check{\rho}_1}=\gamma \; \text{ and}\\
& d^{n,z}_m=\dfrac{\bar{z}^n_m-z^n_{m,r}}{z^n_{m,l}-z^n_{m,r}}=\dfrac{\gamma \, \hat{z}+(1-\gamma) \, \check{z}_1-\check{z}_1}{\hat{z}-\check{z}_1}= \dfrac{\gamma(\hat{z}-\check{z}_1)}{\hat{z}-\check{z}_1}=\gamma.
\end{split}
\end{equation*}
For the second part, we have only to prove that if the initial datum is the non-classical shock (\ref{initial_non_classical_shock}), for every $x \in [x_{m-1/2},x_{m+1/2})$ we have
$$u^n_\text{rec}(x) = \check{u}_1= u(t^n,x).$$
This equality holds if and only if $\gamma = 0$ and we observe that
\begin{equation*}
\begin{split}
&\gamma = 0 \Longleftrightarrow \bar{\rho}^n_m = \rho^n_{m,r} \Longleftrightarrow \bar{\rho}^n_m = \check{\rho}_1 \Longleftrightarrow \\
& \Longleftrightarrow \dfrac{1}{h}\int_{x_{m-1/2}}^{x_{m+1/2}}\rho(t^n,x) \, dx = \check{\rho}_1 \Longleftrightarrow \dfrac{1}{h}\int_{x_{m-1/2}}^{x_{m+1/2}} \check{\rho}_1\, dx = \check{\rho}_1.
\end{split}
\end{equation*}
The last condition is clearly true.
\end{proof}
\begin{remark}\label{non_classical_shock_for_RS_alfa_1}
Consider the pressure $p(\rho)=\rho$. An initial datum satisfying $(\rho^l,v^l)=(\hat{\rho},\hat{v})$ and $(\rho^r,v^r)=(\check{\rho}_1,\check{v}_1)$ can be obtained as follows.\\
The points $(\rho^l,v^l)$ and $(\rho^r,v^r)$ must satisfy the equations
$$\rho\,v =F_\alpha+\rho\,V_b \; \text{ and } \; v^l+\rho^l=v^r+\rho^r.$$
From the first equation we find
$$v^l=\dfrac{F_\alpha}{\rho}+V_b$$
and from the second we obtain
$$v^r=v^l+\rho^l-\rho^r.$$
Therefore $\rho^l$ and $\rho^r$ are the solutions to the equation
$$\rho\,(v^l+\rho^l-\rho)=F_\alpha+\rho\,V_b\Longleftrightarrow \rho^2 -\rho\,\left(\rho^l+\dfrac{F_\alpha}{\rho^l}\right)+F_\alpha=0.$$
Hence
$$\rho^r =\dfrac{F_\alpha}{\rho^l}$$
which implies
$$v^r=\rho^l+V_b.$$
Therefore, fixed a value $\rho^l$, we have $(\rho^l,v^l) =(\hat{\rho},\hat{v})$ and $(\rho^r,v^r)=(\check{\rho}_1,\check{v}_1)$ if
$$(\rho^l,v^l)=\left(\rho^l,\dfrac{F_\alpha}{\rho^l}+V_b\right) \; \text{ and }\; (\rho^r,v^r)=\left(\dfrac{F_\alpha}{\rho^l},\rho^l+V_b\right).$$
\end{remark}
\begin{figure}[hbtp]
\centering
\includegraphics[width=0.95\linewidth]{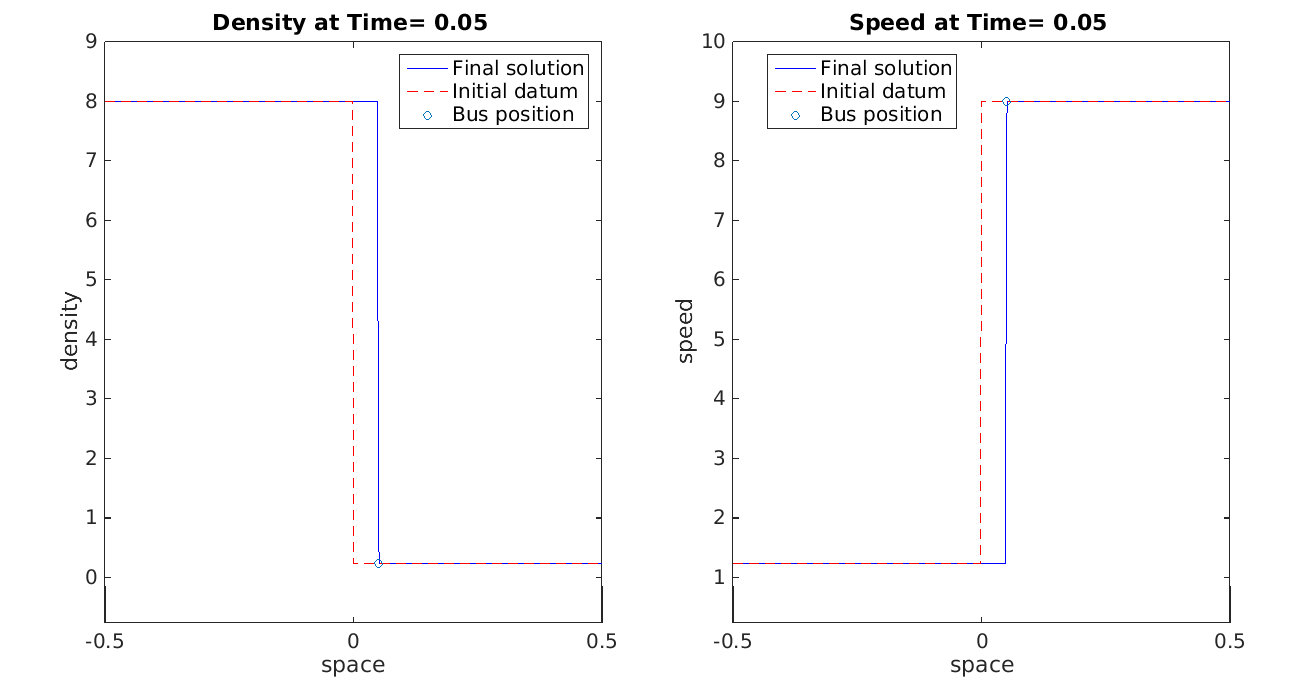}
\caption{Reconstruction of a non-classical shock with initial datum $(\rho^l,v^l)=(8,V_b+F_\alpha/8)$ and $(\rho^r,v^r)=(F_\alpha/8,8+V_b)$, obtained with the discontinuity reconstruction method for $\mathcal{RS}^\alpha_1$. By Remark \ref{non_classical_shock_for_RS_alfa_1}, the initial datum is a non-classical shock. The other parameters are $R_{\max}=15$, $V_b=1$, $y_0=0$ and $\alpha=0.25$. The pressure function is $p(\rho)=\rho$.}\label{fig_nonclassical_shock_RS_1}
\end{figure}
The described procedure (\ref{first_inequality}), (\ref{second_inequality}) to detect whether a non-classical shock appears, in some cases introduces undesirable oscillations. The following counterexample is one of these situations: the exact solution does not satisfy the constraint, while the numerical solution does (at some iterations); see Figure \ref{fig_counterexample_inequalities}.
\begin{exmp}\label{counterexample_inequalities}
Let us consider the Riemann problem (\ref{ARZ_riemann_probelem_rho_y}) with the constraint (\ref{vincolo}) for $x \in K:=[-1/2,1/2]$ and the following data:
\begin{itemize}
\item $\rho \to p(\rho) = \rho$ is the pressure function;
\item the constant for the CFL condition is $1/2$;
\item $\bar{V} = 3/2$ is the bus speed;
\item $y_0=0$ is the bus initial position;
\item $\alpha=0.4$ is the coefficient which gives the reduction rate in the road capacity caused by the bus;
\item $R = 15$ is the maximal density allowed on the road.
\end{itemize}
We take as initial datum for the Riemann problem the function
\begin{equation*}
(\rho_0,v_0)(x)=
\begin{cases}
(\rho^l,v^l)=(7,3) & \text{if } x\leq 0,\\
(\rho^r,v^r) =(6,4) & \text{if } x>0.
\end{cases}
\end{equation*}
First, we recall that
$$F_\alpha= \rho_\alpha^2\, p'(\rho_\alpha),$$
where $\rho_\alpha$ is the solution of the equation
$$p(\rho_\alpha)+\rho_\alpha\,p'(\rho_\alpha)=\omega_\alpha-\bar{V} \; \text{ and }\; \omega_\alpha=p(\alpha R).$$
Since $p'(\rho)=1$ and using the values of the data, we find
$$\omega_\alpha=6, \; \; \rho_\alpha = \dfrac{\omega_\alpha-\bar{V}}{2}= 2.25 \; \text{ and } \; F_\alpha = \rho_\alpha^2 = 5.0625.$$
Since $v^l+p(\rho^l)=v^r+p(\rho^r)$ and $\rho^l>\rho^r$, the standard Riemann solver links the two points $(\rho^l,v^l)$ and $(\rho^r,v^r)$ with a rarefaction. The right propagation speed of the rarefaction is negative, because
$$\lambda_1(\rho^r,v^r)=v^r-\rho^r\,p'(\rho^r)=4-6<0.$$
Hence the trace of $\mathcal{RS}((\rho^l,v^l),(\rho^r,v^r))$ in $\bar{V}$ is $(\rho^r,v^r)$.\\
Now, we note that
$$f_1(\mathcal{RS}(\rho^l,v^l),(\rho^r,v^r))(\bar{V}))=\rho^r v^r=24> 14.06=F_\alpha+\bar{V}\rho^r.$$
Therefore, the constraint is not satisfied by the exact classical solution and a non-classical shock appears in the solution given by $\mathcal{RS}^\alpha_1$.\\
Let us make explicitly one iteration of the algorithm described near the bus position. We will show that the numerical solution at time $t^1$ does not satisfy the first inequality
$$f_1(\bar{u}^1_m) > F_\alpha+ \bar{V} \bar{\rho}^1_m.$$
We divide the interval $K$ in 500 points, so that $h=0.002$ and we call $xh = \lbrace xh(i) \rbrace_{i=1}^{500}$ the mesh points. We take as initial datum for the simulation, the piecewise constant function
$$(\rho,v)(0,x)=\begin{cases}
(\rho^l,v^l) & \text{if } x\in[xh(i),xh(i+1)), \; xh(i)\leq 0 \; \text{ for } i = 1,...,500,\\
(\rho^r,v^r) & \text{otherwise},
\end{cases}$$
which is defined in the interval $[-0.5,0.5]$.
This choice is consistent with the fact that the value of the exact initial datum in $y_0=0$ is $(\rho^l,v^l)$. \\
We note that the value of $k$ that satisfies the CFL condition is
$$k=\dfrac{h}{2\times \lambda^0}= \dfrac{h}{8}\simeq 2.5 \times 10^{-4},$$
because
$$\lambda^0= \max\lbrace |\lambda_i(u^l)|,|\lambda_i(u^r)|\rbrace_{i=1,2}= \lambda_2(u^r) = 4.$$
Since $\bar{u}^0_m=u^l$, we have
$$f_1(\bar{u}^0_m)= 21>15.0625 = F_\alpha+\bar{V}\bar{\rho}^n_m.$$
Hence we have to check the second inequality. Since $\bar{u}^0_{m-1}=u^l$ and $\bar{u}^0_{m+1}=u^r$, reasoning as in the beginning of the example, we find
$$f_1(\mathcal{RS}(\bar{u}^0_{m-1},\bar{u}^0_{m+1})(\bar{V}))=f_1(u^r) = 24>14.0625 = F_\alpha+\bar{V}\bar{\rho}(\bar{u}^0_{m-1},\bar{u}^0_{m+1}).$$
Hence we apply the reconstruction procedure.\\
Solving the equation
$$\rho L_1(\rho,\rho^l,v^l)=F_\alpha+\bar{V} \rho,$$
we obtain
\begin{equation*}
\begin{split}
& (\hat{\rho},\hat{v})=((8.5+\sqrt{52})/2,(11.5-\sqrt{52})/2) \; \text{ and } \\
& (\check{\rho}_1,\check{v}_1) =((8.5-\sqrt{52})/2, (11.5+\sqrt{52})/2).
\end{split}
\end{equation*}
Moreover, $\hat{z}=\hat{\rho}(\hat{v}+p(\hat{\rho}))=\hat{\rho}(v^l+p(\rho^l))=10\times \hat{\rho}$ and $\check{z}_1 = \check{\rho}_1(\check{v}_1+p(\check{\rho}_1)) = 10\times \check{\rho}_1$, because $v^l+p(\rho^l)=v^l+\rho^l = 10$.\\
Since $\hat{\rho}>8.5>\rho^l$, the standard Riemann solver $\mathcal{RS}$ joins $\bar{u}^0_{m-1}=u^l$ with $\hat{u}$ with a shock propagating with speed
$$\lambda=\dfrac{\hat{\rho} \hat{v}-\rho^l v^l}{\hat{\rho}-\rho^l}\simeq -4.85.$$
This speed is negative, then
$$\mathcal{RS}(\bar{u}^0_{m-1},\hat{u})(\bar{V})= \hat{u}.$$
Therefore
$$F(\bar{u}^0_{m-1},\hat{u})=\begin{pmatrix}
\hat{\rho}\hat{v}\\
\hat{z}\hat{v}
\end{pmatrix}=
\begin{pmatrix}
(45.75+3\sqrt{52})/{4}\\
10\times \hat{\rho}\hat{v}
\end{pmatrix}.$$
The two values $d^{0,\rho}_m$ and $d^{0,z}_m$ are
\begin{equation*}
\begin{split}
& d^{0,\rho}_m=\dfrac{\bar{\rho}^0_m-\check{\rho}_1}{\hat{\rho}-\check{\rho}_1}= \dfrac{5.5+\sqrt{52}}{2\sqrt{52}} \; \text{ and }\\
& d^{0,z}_m=\dfrac{\bar{z}^0_m-\check{z}_1}{\hat{z}-\check{z}_1}= \dfrac{10\times (\bar{\rho}^0_m-\check{\rho}_1)}{10\times (\hat{\rho}-\check{\rho}_1)}= d^{0,\rho}_m.
\end{split}
\end{equation*}
Hence we have
$$\Delta t^\rho_m = \Delta t^z_m = h\dfrac{1-d^{0,\rho}_m}{\bar{V}}=h\dfrac{\sqrt{52}-5.5}{3\sqrt{52}}\simeq 1.58 \times 10^{-4}<k.$$
The reconstructed flux is
$$F_1(\bar{u}^0_m,\bar{u}^0_{m+1})=\dfrac{401.25\times \sqrt{52}-2028}{12\times \sqrt{52}}$$
and $F_2(\bar{u}^0_m,\bar{u}^0_{m+1}) = 10\times F_1(\bar{u}^0_m,\bar{u}^0_{m+1})$.\\
We are now ready to compute the solution at the new time step in the $m$-th cell. We find
\begin{equation*}
\begin{split}
\bar{\rho}^1_m & = \bar{\rho}^0_m-\dfrac{k}{h}\left( \dfrac{45.75+3\sqrt{52}}{4}- \dfrac{401.25\times \sqrt{52}-2028}{12\times \sqrt{52}}\right)= \dfrac{26+4.25\times \sqrt{52}}{\sqrt{52}}=\\
& = 4.25+\dfrac{\sqrt{52}}{2}=\hat{\rho}
\end{split}
\end{equation*}
and
$$\bar{v}^1_m = \dfrac{\bar{z}^1_m}{\bar{\rho}^1_m}-\bar{\rho}^1_m= \dfrac{11.5-\sqrt{52}}{2}=\hat{v},$$
because $\bar{z}^1_m =10 \times \bar{\rho}^1_m$.\\
Since $(\bar{\rho}^1_m,\bar{v}^1_m) = (\hat{\rho},\hat{v})$, at the second iteration the first inequality (\ref{first_inequality}) is not satisfied, because by definition
$$\hat{\rho}\hat{v}=F_\alpha+\bar{V}\hat{\rho}.$$
Even if we accept the equal, i.e.
$$ f_1(\bar{u}^n_m) \geq F_\alpha+ \bar{V} \bar{\rho}^n_m,$$
our procedure fails because Matlab makes a numerical error of order $10^{-15}$ for which the right term in (\ref{first_inequality}) results bigger then the left term.
\end{exmp}
In view of Example \ref{counterexample_inequalities}, we propose to remove the first condition (\ref{first_inequality}) and to keep only the inequality (\ref{second_inequality}) as necessary to start the reconstruction procedure.

\begin{figure} 
\centering
\begin{subfigure}[h]{\linewidth}
\includegraphics[width=\textwidth]{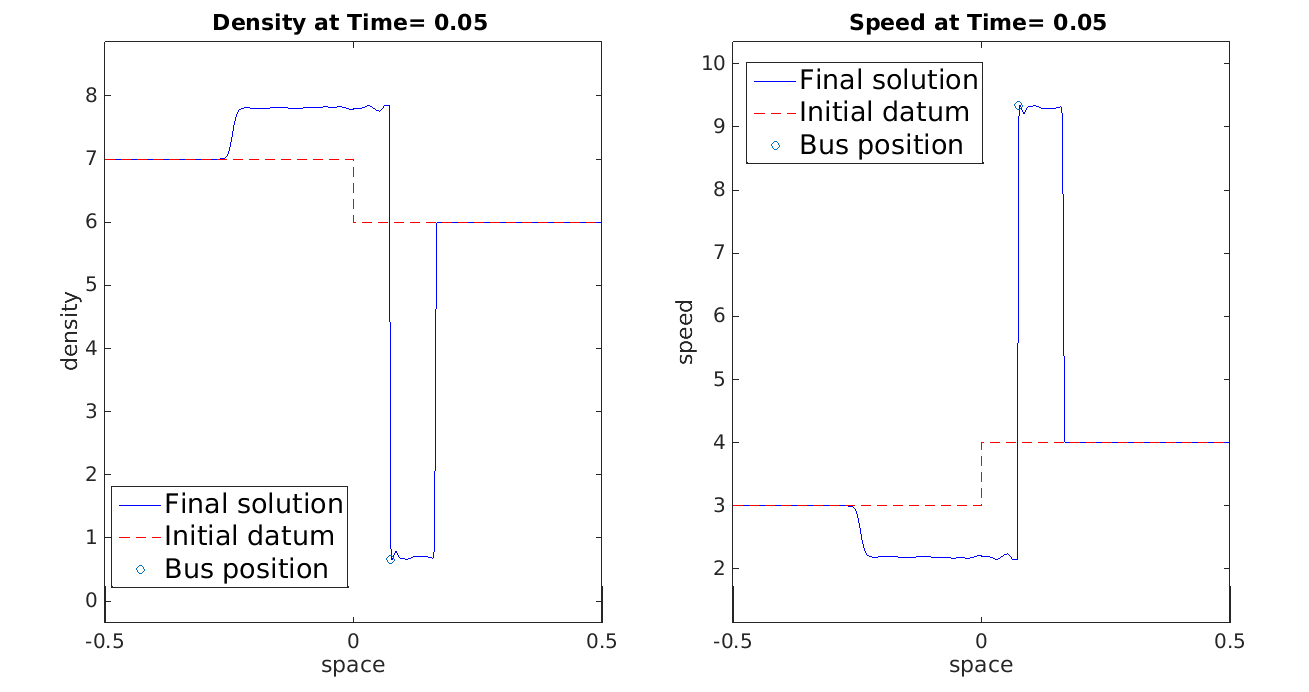}
\caption{The inequality (\ref{first_inequality}) has been required as necessary to the reconstruction procedure. Note that oscillations appear around the non-classical shock.}
\end{subfigure}
\\
\begin{subfigure}[h]{\linewidth}
\includegraphics[width=\textwidth]{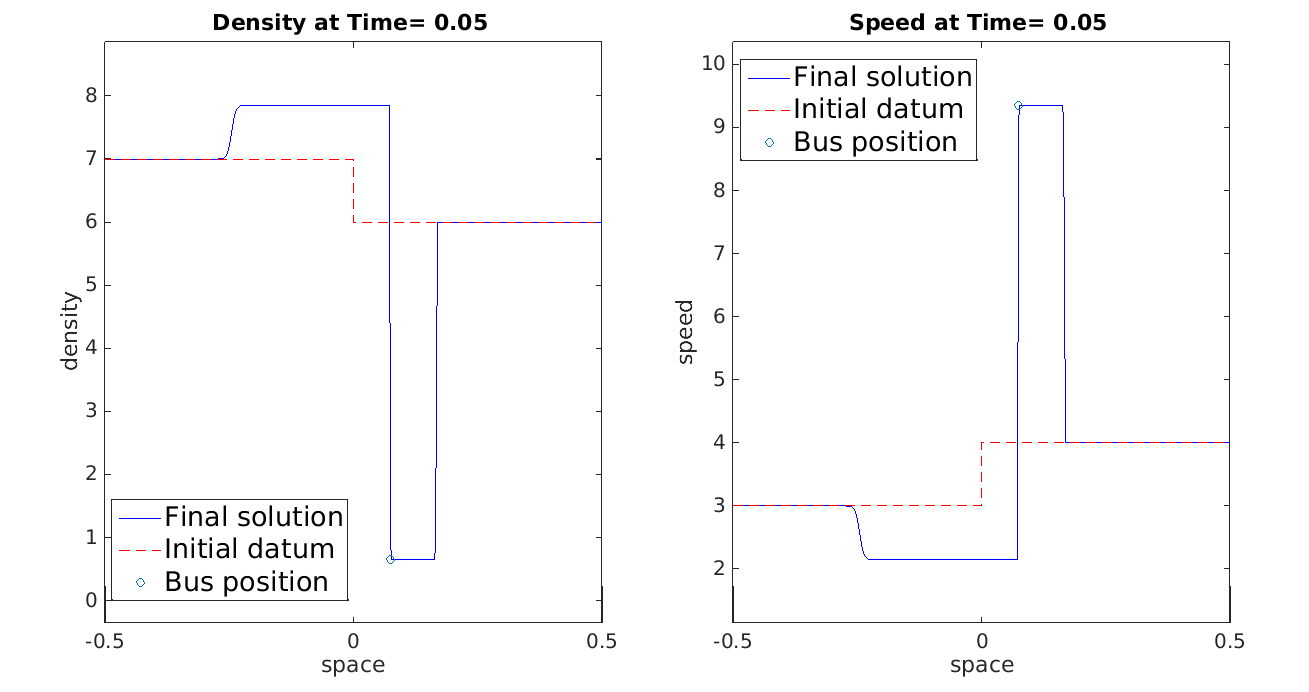}
\caption{The inequality (\ref{first_inequality}) has not been required as necessary to the reconstruction procedure.}
\end{subfigure}
\caption{Numerical solutions obtained with the data of Example \ref{counterexample_inequalities}. We can see the undesired oscillations produced in the case (a): the inequality (\ref{first_inequality}) is not satisfied at some iterations, so that the solution at these steps is obtained with the standard Godunov's scheme, which does not take into account the presence of the non-classical shock. In case (b) the oscillations disappears because the inequality (\ref{first_inequality}) has not been required as necessary to the reconstruction procedure.}\label{fig_counterexample_inequalities}
\end{figure}

\subsection{The bus and the vehicles do not influence each other}
If $\bar{V}\, \bar{\rho}(u^l,u^r)(\bar{V})<f_1(\mathcal{RS}(u^l,u^r)(\bar{V}))\leq F_\alpha+ \bar{V}\, \bar{\rho}(u^l,u^r)(\bar{V})$, then the bus and the vehicles do not influence each other. The corresponding numerical condition is
\begin{equation}\label{case_not_influence}
\bar{V}\bar{\rho}(\bar{u}^n_{m-1},\bar{u}^n_{m+1})(\bar{V})<f_1(\mathcal{RS}(\bar{u}^n_{m-1},\bar{u}^n_{m+1})(\bar{V})) \leq F_\alpha+\bar{V}\bar{\rho}(\bar{u}^n_{m-1},\bar{u}^n_{m+1})(\bar{V}).
\end{equation}
Hence, if condition (\ref{case_not_influence}) holds, the bus position at the time $t^n$ is $y^n=\bar{V}t^n$ and the solution at the new time step $t^{n+1}$ can be computed with the standard Godunov's method.

\subsection{The bus is influenced by the preceding vehicles}
Let us recall the model to describe the bus speed. Let $\lbrace x_i \rbrace _{i=1}^N$ be the bus stops and let $\delta$ be the space needed by the bus to stop starting from the maximal speed $V_b$. The profile of the bus velocity is given by a sufficiently regular function $V(x)$ such that
\begin{equation}\label{bus_speed_profile}
V(x)=\begin{cases}
V_b & \text{if } |x-x_i|>\delta \; \text{ for } \; i=1,...,N,\\
0   & \text{if } x=x_i\; \text{ for } \; i=1,...,N.
\end{cases}
\end{equation}
Now, let us suppose that the bus remains at each stop for a constant time $\tau\in \mathbb{R}^+$ and let $t_i=\inf\lbrace t\in \mathbb{R}^+\,:\, y(t)=x_i\rbrace $ for $i=1,...,N$ be the $i$-th stop instant. The bus speed without traffic is a function $\dot{y}_{F}:\mathbb{R}^+\to [0,V_b]$ defined by
\begin{equation}\label{bus_speed_free}
\dot{y}_F(t)=\begin{cases}
V(y(t)) & \text{if } t \notin [t_i,t_i+\tau) \;\text{ for every } \; i=1,...,N,\\
0 & \text{if } t\in[t_i,t_i+\tau) \;\text{ for every } \; i=1,...,N.
\end{cases}
\end{equation}
If we now introduce the traffic, the bus will travel with velocity $V(y(t))$ when there are no vehicles in front of him or when their speed $v(t,y(t)+)$ is higher than $V(y(t))$. Otherwise, it will adapt its velocity to the one of the traffic, namely
\begin{equation}\label{bus_speed}
\dot{y}(t)=\omega(y(t),v(t,y(t)+))=\begin{cases}
\dot{y}_F(t) & \text{if } \dot{y}_F(t)<v(t,y(t)+),\\
v(t,y(t)+) & \text{if } \dot{y}_F(t)\geq v(t,y(t)+).
\end{cases}
\end{equation}
Let us consider the situation of a bus far from the stops, i.e. $|y(t)-x_i|>\delta$ for every $i\in \lbrace 1,...,N\rbrace$. In this case the bus keeps the maximal speed allowed by the traffic, namely
\begin{equation}
\dot{y}(t)=\begin{cases}
V_b & \text{if } V_b \leq v(t,y(t)+),\\
v(t,y(t)+) & \text{otherwise}.
\end{cases}
\end{equation}
Fix $n\in\mathbb{N}$. Let $y^n=y(t^n)$ be the bus position at time $t^n$ and let us define the number $m\in \mathbb{Z}$ such that $y^n\in C_m= [x_{m-1/2},x_{m+1/2})$.\\
Let us suppose that
\begin{equation}\label{slow_traffic}
\bar{V}^n\, \bar{\rho}(\bar{u}^n_{m-1},\bar{u}^n_{m+1})(\bar{V}^n)\geq f_1(\mathcal{RS}(\bar{u}^n_{m-1},\bar{u}^n_{m+1})(\bar{V}^n)),
\end{equation}
where
\begin{equation}
\bar{V}^n=\dot{y}(t^n)=\begin{cases}
V_b & \text{if } V_b\leq \bar{v}^n_m,\\
\bar{v}^n_m & \text{if } V_b > \bar{v}^n_m.
\end{cases}
\end{equation}
The inequality (\ref{slow_traffic}) implies that
\begin{equation}
\bar{V}^n\geq \bar{v}(\bar{u}^n_{m-1},\bar{u}^n_{m+1})(\bar{V}^n),
\end{equation}
where we recall that $\bar{v}(\bar{u}^n_{m-1},\bar{u}^n_{m+1})$ is the $v$ component of the classical solution $$\mathcal{RS}(\bar{u}^n_{m-1},\bar{u}^n_{m+1})(\bar{V}^n).$$
Since the bus is travelling faster then the preceding vehicles, it has to adapt its speed to the traffic. It will keep this speed until the traffic will not change its velocity. This situation is described by an interaction between the bus trajectory and a wave coming from one of the local Riemann problems centred in $\lbrace x_{j+1/2} \rbrace_{j\in \mathbb{Z}}$. By the CFL condition, this can happen only for waves arisen in the Riemann problems centred in $x_{m-1/2}$ and $x_{m+1/2}$. Therefore we have to distinguish two cases:
\begin{enumerate}
\item[(i)] the bus trajectory interacts with a wave coming from the local Riemann problem centred in $x_{m+1/2}$;
\item[(ii)] the bus trajectory interacts with a wave coming from the local Riemann problem centred in $x_{m-1/2}$. 
\end{enumerate}
We adapt the algorithms introduced in \cite{bretti_piccoli, chalons_delle_monache_goatin}.

\subsubsection{Case (i)}
Let us consider the Riemann problem
\begin{equation}\label{Riemann_problem_centered_x_m+1/2}
\begin{cases}
\begin{cases}
\partial_t \rho + \partial_x(\rho v)=0,\\
\partial_t z + \partial_x(z v) = 0,
\end{cases}\\
(\rho,z)(t^n,x)=\begin{cases}
\bar{u}^n_{m} & \text{if } x\leq x_{m+1/2},\\
\bar{u}^n_{m+1} & \text{if } x> x_{m+1/2}.
\end{cases}
\end{cases}
\end{equation}
Let $u^\text{int}_{m+1/2}$ be the intermediate state of the classical solution $\mathcal{RS}(\bar{u}^n_m,\bar{u}^n_{m+1})(\lambda)$ and $(\rho^\text{int}_{m+1/2},v^\text{int}_{m+1/2})$ the corresponding point in the $(\rho,v)$ plane.\\
In the general case we have
$$\bar{v}^n_{m+1}+p(\bar{\rho}^n_{m+1})\neq \bar{v}^n_m +p(\bar{\rho}^n_m).$$
Hence we have to consider the intermediate state $u^\text{int}_{m+1/2}$ which is connected to $\bar{u}^n_m$ by a contact discontinuity. However, since the speed of the bus $\bar{V}^n=\min\lbrace \bar{v}^n_m,V_b\rbrace$ depends only on the speed of the traffic in front of him and ${v}^\text{int}_{m+1/2}=\bar{v}^n_m$, this contact discontinuity does not influence the bus trajectory.\\
Therefore we can suppose that $\bar{v}^n_{m+1}+p(\bar{\rho}^n_{m+1})=\bar{v}^n_m+p(\bar{\rho}^n_m)$. The general case is obtained substituting $u^\text{int}_{m+1/2}$ to $\bar{u}^n_m$.\\
\begin{enumerate}
\item Let us consider a shock centred in $(t^n,{x_{m+1/2}})$.\\
This case happens whenever $\bar{\rho}^n_m<\bar{\rho}^n_{m+1}$. The propagation speed $\lambda_{m+1/2}$ of the shock is given by the Rankine-Hugoniot condition:
$$\lambda_{m+1/2}=\dfrac{\bar{\rho}^n_{m+1} \, \bar{v}^n_{m+1}-\bar{\rho}^n_m\bar{v}^n_m}{\bar{\rho}^n_{m+1}-\bar{\rho}^n_m}.$$
\begin{remark}
Since $\bar{v}^n_m+p(\bar{\rho}^n_m)=\bar{v}^n_{m+1}+p(\bar{\rho}^n_{m+1})$, the condition $\bar{\rho}^n_m<\bar{\rho}^n_{m+1}$ is equivalent to
$$\bar{v}^n_m>\bar{v}^n_{m+1}.$$
Therefore after the interaction with the shock the bus travels slower than before. 
\end{remark}
Let $(t^*,x^*)$ be the interaction point between the shock and the bus trajectory. Solving in $t^*$ the equation
$$y^n+(t^*-t^n)\bar{V}^n=x_{m+1/2}+\lambda_{m+1/2}(t^*-t^n),$$
we find
\begin{equation}
t^*=\dfrac{x_{m+1/2}-y^n}{\bar{V}^n-\lambda_{m+1/2}}+t^n.
\end{equation}
If $t^*\geq k$, then no interaction between the car and the shock wave occurs within the interval $[t^n,t^{n+1})$. Otherwise we have
$$x^* = y^n+\bar{V}^n t^*$$
and the new speed of the bus is
$$\bar{V}^n_\text{new}=\min(\bar{v}^n_{m+1},V_b).$$
If no other interactions with waves centred in $x_{m-1/2}$ or in $x_{m+3/2}$ happen, the new position of the bus at time $t^{n+1}$ is
\begin{equation}
y^{n+1}=x^*+\bar{V}^n_\text{new}(k-t^*).
\end{equation}
\begin{remark}
If the shock has a positive propagation speed, then the bus crosses the cell $C_m$ before the interaction.
\end{remark}
\item Let us consider the case $\bar{\rho}^n_m\geq \bar{\rho}^n_{m+1}$ in which a rarefaction wave centred in $(t^n,x_{m+1/2})$ joins the states $\bar{u}^n_m$ and $u^n_{m+1}$ and it can interact with the bus trajectory.
Let $(\rho^\sigma,v^\sigma)$ for $\sigma\in [0,1]$ be a point of the rarefaction, i.e.
$$v^\sigma=\bar{v}^n_m+p(\bar{\rho}^n_m)-p(\rho^\sigma) \; \text{ and } \; \bar{\rho}^n_{m+1}\leq \rho^\sigma \leq \bar{\rho}^n_m.$$
The propagation speed of the rarefaction varies in the interval
$$[\lambda_1(\bar{\rho}^n_m,\bar{v}^n_m),\lambda_1(\bar{\rho}^n_{m+1},\bar{v}^n_{m+1})].$$
\begin{remark}
Since $v^\sigma+p(\rho^\sigma)=\bar{v}^n_m+p(\bar{\rho}^n_m)$, the condition $\rho^\sigma \leq \bar{\rho}^n_m$ is equivalent to $v^\sigma\geq \bar{v}^n_m$. Hence the speed of the traffic increases during the rarefaction.
\end{remark}
Let
$$R^\sigma := \lbrace (t,x) \in \mathbb{R}^+\times \mathbb{R}: x -x_{m+1/2} = \lambda_1(\rho^\sigma,v^\sigma)(t-t^n) \rbrace$$
be the line where the rarefaction centred in $(t^n,x_{m+1/2})$ and passing through $(\bar{\rho}^n_m,\bar{v}^n_m)$ takes the value $(\rho^\sigma,v^\sigma)$.\\
Let us call
$$\xi(t,x)= \dfrac{x-x_{m+1/2}}{t-t^n}.$$
Hence $(t,x)\in R^\sigma$ if and only if
$$\xi(t,x)= \lambda_1(\rho^\sigma,v^\sigma).$$
During the travel the bus takes the speed of the vehicles in front of him until their speed is lower then the maximal velocity of the bus, provided that an interaction between the wave and the bus trajectory occurs. This happens for all the points $(\rho^\sigma,v^\sigma)$ such that
\begin{equation}\label{necessary_condition_interaction_case_i}
\dot{y}(t) > \lambda_1(\rho^\sigma,v^\sigma),
\end{equation}
where $\dot{y}$ is the bus speed. Therefore
\begin{equation}\label{rarefaction_from_right_bus_speed}
\dot{y}(t)=\begin{cases}
\bar{V}^n= \min(V_b,\bar{v}^n_m) & \text{if } \xi(t,y(t)) \leq \lambda_1(\bar{\rho}^n_m,\bar{v}^n_m),\\
\min(v^\sigma,V_b) & \text{if } \lambda_1(\bar{\rho}^n_m,\bar{v}^n_m) < \xi(t,y(t)) < \lambda_1(\bar{\rho}^n_{m+1},\bar{v}^n_{m+1}),\\
\min(\bar{v}^n_{m+1},V_b) & \text{if } \xi(t,y(t)) \geq \lambda_1(\bar{\rho}^n_{m+1},\bar{v}^n_{m+1}).
\end{cases}
\end{equation}
\begin{obs}
We don't keep in account the necessary condition (\ref{necessary_condition_interaction_case_i}) for the interaction, because for every $\sigma \in [0,1]$ we have
\begin{itemize}
\item if $\min(V_b,v^\sigma)=v^\sigma$, then $V_b >\lambda_1(\rho^\sigma,v^\sigma)$, because $v^\sigma > \lambda_1(\rho^\sigma,v^\sigma)$. Therefore the condition (\ref{necessary_condition_interaction_case_i}) is satisfied and the interaction occurs;
\item if $\min(V_b,v^\sigma)=V_b$ and $V_b> \lambda_1(\rho^\sigma,v^\sigma)$, then the interaction occurs. In this case the law (\ref{rarefaction_from_right_bus_speed}) gives
$$\dot{y}(t) = V_b,$$
therefore the bus does not exceed its maximal speed;
\item if $V_b\leq \lambda_1(\rho^\sigma,v^\sigma)$, then the interaction does not occur and the bus travels with its maximal speed $V_b$, because the inequalities $v^\sigma>\lambda_1(\rho^\sigma,v^\sigma)$ and $ \lambda_1(\rho^\sigma,v^\sigma)\geq V_b$ imply $\min(V_b,v^\sigma)=V_b$.
\end{itemize}
Hence the bus speed is always well defined.
\end{obs}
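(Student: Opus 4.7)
The plan is to verify directly that the three-case prescription (\ref{rarefaction_from_right_bus_speed}) produces a single, well-defined bus speed at every instant during the interaction with the rarefaction centred at $(t^n,x_{m+1/2})$, and that imposing (\ref{necessary_condition_interaction_case_i}) separately is redundant. The only ingredient I would use is the elementary identity $\lambda_1(\rho,v)=v-\rho\,p'(\rho)$ from (\ref{eigenvalues_rho_v}), which, combined with $p'(\rho)>0$ from the hypotheses (\ref{ipotesi_pressione}), yields the strict inequality
$$v^\sigma>\lambda_1(\rho^\sigma,v^\sigma)$$
for every state $(\rho^\sigma,v^\sigma)$ on the rarefaction curve with $\rho^\sigma>0$.

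First I would split according to whether $\min(V_b,v^\sigma)=v^\sigma$ or $\min(V_b,v^\sigma)=V_b$, i.e.\ whether the bus is limited by the preceding vehicles or by its own maximal velocity. In the first case the displayed inequality shows that $v^\sigma$ automatically exceeds $\lambda_1(\rho^\sigma,v^\sigma)$, so the interaction condition (\ref{necessary_condition_interaction_case_i}) is satisfied and (\ref{rarefaction_from_right_bus_speed}) correctly assigns the value $v^\sigma$. In the second case I would further distinguish according to the sign of $V_b-\lambda_1(\rho^\sigma,v^\sigma)$: if positive, the bus crosses the characteristic, interaction occurs, and the law gives $\dot y(t)=V_b$; if non-positive, the characteristic outruns the bus, no interaction takes place, but the bus was already travelling at $V_b$, which again agrees with (\ref{rarefaction_from_right_bus_speed}). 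Collecting the three subcases shows that the formula never exceeds $V_b$, always coincides with the physically imposed law (\ref{bus_speed}), and is therefore unambiguously defined.

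The main obstacle I anticipate is giving a rigorous definition of ``interaction'' so that the case distinction is unambiguous at every $t$. One wants to identify, for each $t$ in the relevant time window, a unique $\sigma\in[0,1]$ for which the bus position $y(t)$ lies on the characteristic $R^\sigma$. By Proposition \ref{rarefaction_wave_properties} the map $\sigma\mapsto\lambda_1(R_i(\sigma)(\bar u^n_m))$ is strictly increasing, so its inverse is well-defined and $\mathcal{C}^1$; combining this with the CFL bound (Proposition \ref{prop_CFL_cond}), which confines the fan to $C_m\cup C_{m+1}$, one should be able to argue that the ODE $\dot y(t)=\min(V_b,v^{\sigma(t,y(t))})$ admits a unique Lipschitz solution on $[t^n,t^{n+1}]$, making the pointwise prescription (\ref{rarefaction_from_right_bus_speed}) meaningful and justifying the claim that the bus speed is always well defined.
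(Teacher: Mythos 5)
Your proposal follows the same case analysis as the paper's observation: the same elementary inequality $v^\sigma>\lambda_1(\rho^\sigma,v^\sigma)$ (from $\lambda_1=v-\rho p'(\rho)$ and $p'>0$) drives the first bullet, and the splitting on whether $\min(V_b,v^\sigma)$ equals $v^\sigma$ or $V_b$, then on the sign of $V_b-\lambda_1(\rho^\sigma,v^\sigma)$, reproduces the paper's three bullets in a trivially reorganized order. The closing paragraph about parametrizing the fan by the strictly increasing map $\sigma\mapsto\lambda_1$ and invoking the CFL bound is a welcome extra layer of rigour, but it does not change the route, so the argument is correct and essentially identical to the paper's.
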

Let $(t^*,x^*)$ be the first point of interaction between the bus and the rarefaction. Solving with respect to $t^*$ the equation
$$y^n+\bar{V}^n (t^*-t^n)=x_{m+1/2}+\lambda_1(\bar{\rho}^n_m,\bar{v}^n_m)(t^*-t^n),$$
we find
\begin{equation}
t^* = \dfrac{x_{m+1/2}-y^n}{\bar{V}^n-\lambda_1(\bar{\rho}^n_m,\bar{v}^n_m)}+t^n.
\end{equation}
Therefore, we also have
\begin{equation}
x^*=y^n+\bar{V}^n t^*.
\end{equation}
Fix $(t,x)\in R^\sigma$. Let us call $\xi :=\xi(t,x)$ and $\bar{\omega}^n_m=\bar{v}^n_m+p(\bar{\rho}^n_m)$. Then we have
\begin{equation*}
\begin{split}
& \begin{cases}
\lambda_1(\rho^\sigma,v^\sigma)= \xi\\
v^\sigma = \bar{\omega}^n_m -p(\rho^\sigma)
\end{cases} \Longrightarrow 
\begin{cases}
\bar{\omega}^n_m - p(\rho^\sigma) - \rho^\sigma p'(\rho^\sigma) = \xi\\
v^\sigma = \bar{\omega}^n_m -p(\rho^\sigma)
\end{cases} \Longrightarrow \\
& \begin{cases}
\bar{\omega}^n_m -\dfrac{d}{d\rho}(\rho p(\rho)) \Big|_{\rho=\rho^\sigma} = \xi\\
v^\sigma = \bar{\omega}^n_m -p(\rho^\sigma).
\end{cases}
\end{split}
\end{equation*}
Let us define the function
$$\rho \to \varphi(\rho) = \dfrac{d}{d\rho}(\rho p(\rho))$$
which admits an inverse function because it is strictly increasing by the strict convexity of $\rho \to \rho p(\rho)$.\\ 
The first equation of the previous system becomes
\begin{equation}\label{rarefaction_density}
\varphi(\rho^\sigma)=\bar{\omega}^n_m-\xi \Longrightarrow \rho^\sigma = \varphi^{-1}(\bar{\omega}^n_m-\xi).
\end{equation}
Using the equation (\ref{rarefaction_density}) in the second equation of the system we find
\begin{equation}\label{rarefaction_speed}
v^\sigma=\bar{\omega}^n_m-p(\varphi^{-1}(\bar{\omega}^n_m-\xi)).
\end{equation}
Our aim is to determine the bus trajectory along the rarefaction. Hence we give an explicit expression for the pressure function $p$. An usual choice (see \cite{Aw-Rascle}) is
\begin{equation}\label{explicit_pressure}
p(\rho) = \rho^\gamma \; \text{ for } \gamma\geq 1.
\end{equation}
For this function we have
$$ \varphi(\rho) = (\gamma+1)\rho^\gamma = (\gamma+1)\, p(\rho) \; \text{ and } \; \varphi^{-1}(\tau) = \sqrt[\gamma]{\dfrac{\tau}{\gamma+1}}.$$
Therefore
$$\rho^\sigma = \sqrt[\gamma]{\dfrac{\bar{\omega}^n_m-\xi}{\gamma+1}}\; \text{ and } \; v^\sigma= \dfrac{\gamma\,  \bar{\omega}^n_m+\xi}{\gamma+1}.$$
We can use the expression of $v^\sigma$ in the law (\ref{rarefaction_from_right_bus_speed}) to compute the speed of the bus when it is in $(t,y(t))\in R^\sigma$. In the case $v^\sigma<V_b$, we find the Cauchy problem
\begin{equation}\label{rarefaction_from_right_Cauchy_problem_for_bus_trajectory}
\begin{cases}
(\gamma+1)\, \dot{y}(t) = \gamma \, \bar{\omega}^n_m + \dfrac{y(t)-x_{m+1/2}}{t-t^n},\\
y(t^*)=x^*.
\end{cases}
\end{equation}
We are ready to compute the bus trajectory along the rarefaction.
\begin{prop}\label{prop_bus_trajectory}
If the bus interacts with a rarefaction wave centred in $(t^n,x_{m+1/2})$, then its trajectory is
\begin{equation}\label{rarefaction_from_right_bus_trajectory}
y(t)=x_{m+1/2}+\bar{\omega}^n_m(t-t^n)+C^*(t-t^n)^\frac{1}{\gamma+1},
\end{equation}
where
$$C^* = \dfrac{x^*-x_{m+1/2}-\bar{\omega}^n_m(t^*-t^n)}{(t^*-t^n)^\frac{1}{\gamma+1}}$$
which depends on the first point of interaction between the bus and the rarefaction wave, i.e.
\begin{equation}
t^* = \dfrac{x_{m+1/2}-y^n}{\bar{V}^n-\lambda_1(\bar{\rho}^n_m,\bar{v}^n_m)}+t^n \; \text{ and } \; x^*=y^n+\bar{V}^n t^*.
\end{equation}
\end{prop}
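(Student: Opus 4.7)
The plan is to solve the Cauchy problem (\ref{rarefaction_from_right_Cauchy_problem_for_bus_trajectory}) directly, since it is a linear first-order ODE with a non-autonomous coefficient. First I would perform the change of variables $\tau = t - t^n$ and $Y(\tau) = y(t) - x_{m+1/2}$ to eliminate the shift and center the singular point of the coefficient at $\tau = 0$. In these variables the ODE becomes
\begin{equation*}
\dot Y(\tau) - \frac{1}{(\gamma+1)\tau}\, Y(\tau) = \frac{\gamma}{\gamma+1}\, \bar{\omega}^n_m,
\end{equation*}
with initial condition $Y(\tau^*) = x^* - x_{m+1/2}$ at $\tau^* = t^*-t^n > 0$. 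On the interval where $\tau > 0$ the coefficient $-1/((\gamma+1)\tau)$ is continuous, so standard ODE theory applies and the solution is unique.

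Next I would apply the integrating factor method. The integrating factor is $\mu(\tau) = \tau^{-1/(\gamma+1)}$, and multiplying the equation yields
\begin{equation*}
\frac{d}{d\tau}\bigl(\tau^{-1/(\gamma+1)}\,Y\bigr) = \frac{\gamma}{\gamma+1}\,\bar{\omega}^n_m\,\tau^{-1/(\gamma+1)}.
\end{equation*}
Integrating and using that $1 - 1/(\gamma+1) = \gamma/(\gamma+1)$ collapses the constant factor to $\bar{\omega}^n_m$, so that
\begin{equation*}
\tau^{-1/(\gamma+1)}\,Y(\tau) = \bar{\omega}^n_m\,\tau^{\gamma/(\gamma+1)} + C,
\end{equation*}
and multiplying back by $\tau^{1/(\gamma+1)}$ gives $Y(\tau) = \bar{\omega}^n_m\,\tau + C\,\tau^{1/(\gamma+1)}$. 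Undoing the substitution recovers exactly the candidate trajectory (\ref{rarefaction_from_right_bus_trajectory}).

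Finally I would impose the initial condition $y(t^*) = x^*$ to fix the integration constant. Substituting $\tau = t^* - t^n$ into the explicit formula and solving for $C$ yields
\begin{equation*}
C = \frac{x^* - x_{m+1/2} - \bar{\omega}^n_m(t^*-t^n)}{(t^*-t^n)^{1/(\gamma+1)}} = C^*,
\end{equation*}
which is well-defined because $t^* > t^n$ (the interaction occurs strictly after $t^n$, by the formula for $t^*$ in Case~(i)). A consistency check verifies that the claimed $y(t)$ solves (\ref{rarefaction_from_right_Cauchy_problem_for_bus_trajectory}): differentiating gives $\dot y(t) = \bar{\omega}^n_m + \tfrac{C^*}{\gamma+1}(t-t^n)^{-\gamma/(\gamma+1)}$, and substituting into the right-hand side of the ODE reproduces $(\gamma+1)\dot y$ after using $y - x_{m+1/2} = \bar{\omega}^n_m(t-t^n) + C^*(t-t^n)^{1/(\gamma+1)}$. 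There is no real obstacle here: the only delicate point is the singularity of the coefficient at $\tau = 0$, which is harmless because the bus enters the rarefaction at $\tau^* > 0$ and the formula needs only to be valid for $\tau \geq \tau^*$.
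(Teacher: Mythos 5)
Your proof is correct and uses essentially the same method as the paper: both integrate the linear ODE (\ref{rarefaction_from_right_Cauchy_problem_for_bus_trajectory}) via the integrating factor $(t-t^n)^{-1/(\gamma+1)}$ and then fix the constant from the condition $y(t^*)=x^*$. Your preliminary substitution $\tau = t-t^n$, $Y = y - x_{m+1/2}$ is a cosmetic cleanup that streamlines the algebra but does not change the underlying argument.
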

\begin{proof}
We have to solve the Cauchy problem (\ref{rarefaction_from_right_Cauchy_problem_for_bus_trajectory}). The equation
$$\dot{y}(t) -\dfrac{1}{\gamma+1}\dfrac{y(t)}{t-t^n}=\dfrac{\gamma\,\bar{\omega}^n_m}{\gamma+1}-\dfrac{1}{\gamma+1}\dfrac{x_{m+1/2}}{t-t^n}$$
is linear, then to solve it, we introduce the integration factor
\begin{equation*}
\begin{split}
\exp\left(-\dfrac{1}{\gamma+1}\int \dfrac{1}{t-t^n}\, dt\right) & = \exp\left(-\dfrac{1}{\gamma+1}\log(t-t^n)\right)=\\
& = \exp\left(\log\left((t-t^n)^{-\frac{1}{\gamma+1}}\right)\right)=\\
& = \dfrac{1}{(t-t^n)^{\frac{1}{\gamma+1}}}.
\end{split}
\end{equation*}
Multiplying this factor on both sides of the equation, we find
\begin{equation*}
\dfrac{\dot{y}(t)}{(t-t^n)^{\frac{1}{\gamma+1}}}-\dfrac{y(t)}{(\gamma+1)(t-t^n)^{1+\frac{1}{\gamma+1}}} = \dfrac{\gamma\,\bar{\omega}^n_m}{(\gamma+1)(t-t^n)^{\frac{1}{\gamma+1}}}-\dfrac{x_{m+1/2}}{(\gamma+1)(t-t^n)^{1+\frac{1}{\gamma+1}}}.
\end{equation*}
The left term is equal to
$$\dfrac{d}{dt}\left(\dfrac{y(t)}{(t-t^n)^{\frac{1}{\gamma+1}}}\right),$$
hence integrating both sides we find
\begin{equation*}
\begin{split}
\dfrac{y(t)}{(t-t^n)^{\frac{1}{\gamma+1}}} & = \dfrac{\gamma \, \bar{\omega}^n_m}{\gamma+1}\int (t-t^n)^{-\frac{1}{\gamma+1}} \, dt-\dfrac{x_{m+1/2}}{\gamma+1}\int(t-t^n)^{-\left(1+\frac{1}{\gamma+1}\right)}\, dt=\\
& = \bar{\omega}^n_m(t-t^n)^{\frac{\gamma}{\gamma+1}}+x_{m+1/2}(t-t^n)^{-\frac{1}{\gamma+1}}+C,
\end{split}
\end{equation*}
where $C$ is a constant.\\
Multiplying $(t-t^n)^{\frac{1}{\gamma+1}}$ on both sides, we find
$$y(t) = \bar{\omega}^n_m(t-t^n)+x_{m+1/2}+C(t-t^n)^{\frac{1}{\gamma+1}}.$$
Imposing the initial condition $y(t^*)=x^*$, we find the value of $C$ and the trajectory (\ref{rarefaction_from_right_bus_trajectory}).
\end{proof}
Deriving the equation (\ref{rarefaction_from_right_bus_trajectory}), we find
\begin{equation}\label{rarefaction_fom_right_explicit_bus_speed}
\dot{y}(t)=\bar{\omega}^n_m+\dfrac{C^*}{\gamma+1}(t-t^n)^{-\frac{\gamma}{\gamma+1}}.
\end{equation}
Let $(t^{**},x^{**})$ be the last point of interaction between the bus trajectory and the rarefaction wave. By the law (\ref{rarefaction_from_right_bus_speed}), the bus will take the speed $v^\sigma\in[\bar{v}^n_m,\bar{v}^n_{m+1}]$ of the vehicles in front of him, until this speed is lower then its maximal speed $V_b$. Hence, if we call
$$\bar{V}^n_\text{rar}:=\max\lbrace \min(V_b, v^\sigma): v^\sigma \in[\bar{v}^n_{m},\bar{v}^n_{m+1}] \rbrace= \min \lbrace V_b,\bar{v}^n_{m+1} \rbrace,$$ 
the point $(t^{**},x^{**})$ satisfies the equation
$$\dot{y}(t^{**}) = \bar{V}^n_\text{rar}.$$
Substituting in this equation the expression (\ref{rarefaction_fom_right_explicit_bus_speed}), we find
$$t^{**} = \left[(\gamma+1)\dfrac{\bar{V}^n_\text{rar}-\bar{\omega}^n_m}{C^*}\right]^{-\frac{\gamma+1}{\gamma}}$$
and therefore
$$x^{**} = y(t^{**}),$$
where $y(t)$ is given by (\ref{rarefaction_from_right_bus_trajectory}).
After the interaction the bus position is one of the following, provided that no more interactions with other waves happen.\\
If $t^*\geq k$, then no interaction between the car and the rarefaction wave occurs in the interval $[t^n,t^{n+1})$ and the bus position at time $t^{n+1}$ is
$$y^{n+1} = y^n+\bar{V}^n k.$$
If $t^*<k$, then we have to consider two cases: whether $t^{**} \geq k$ the bus position at time $t^{n+1}$ is $$y^{n+1} = y(t^{n+1}),$$
where $y(t)$ is the trajectory (\ref{rarefaction_from_right_bus_trajectory}); otherwise
$$y^{n+1} = x^{**} + (k-t^{**})\bar{V}^n_\text{rar}.$$
\end{enumerate}
In both cases of an interaction between the bus trajectory and a shock or a rarefaction wave centred in $(t^n,x^{m+1/2})$, the bus can cross to the following cell before the instant $t^{n+1}$. The new cell of the bus is $C_{m+1}=[x_{m+1/2},x_{m+3/2})$.\\
A wave centred in $x_{m+3/2}$ could interact with the bus trajectory before time $t^{n+1}$. The next proposition states that we can avoid this situation choosing a sufficiently strong CFL condition.

\begin{prop}
Let  $\lambda^n=\max\lbrace |\lambda_i(\bar{u}^n_j)|\, : \, i=1,2 \; \text{ and } \; j\in \mathbb{Z}\rbrace$ be the maximum of the eigenvalues $\lbrace\lambda_i\rbrace_{i=1,2}$ of the Jacobian matrix $Df$ of the flux function $f$.\\
No interactions occur between the bus trajectory and a wave centred in $(t^n,x_{m+{3/2}})$, provided that the following CFL condition holds:
\begin{equation}\label{strong_CFL_condiftion}
\left| \dfrac{k}{h}\lambda^n\right|\leq \dfrac{1}{2}.
\end{equation}
\end{prop}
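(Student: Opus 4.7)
The plan is to run a straightforward domain-of-dependence argument: the bus moves rightward at speed bounded by $\lambda^n$, the fastest left-moving wave born at $(t^n,x_{m+3/2})$ propagates at speed at most $\lambda^n$, and since the two points are separated by more than $h$, the strengthened CFL bound $k\lambda^n/h \leq 1/2$ is exactly what is needed to keep the two characteristic cones disjoint during $[t^n,t^{n+1}]$.

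First, I would record the three speed bounds. Since $y^n \in C_m$, we have $y^n < x_{m+1/2}$, so $x_{m+3/2}-y^n > h$. A wave emanating from $(t^n,x_{m+3/2})$ — whether a shock, a rarefaction fan, or a contact discontinuity — has, by Proposition \ref{prop_CFL_cond} (and the fact that characteristic speeds are exactly the eigenvalues $\lambda_i$ of $Df$), propagation speed in absolute value at most
\[
\lambda^n = \max\{|\lambda_i(\bar u^n_j)|:i=1,2,\;j\in\mathbb{Z}\}.
\]
Thus at time $t \in [t^n,t^{n+1}]$ the leftmost front produced by the Riemann problem at $x_{m+3/2}$ lies at position at least $x_{m+3/2}-\lambda^n(t-t^n)$.

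Next, I would bound the bus speed. In each of the admissible sub-cases described above the bus velocity $\dot y(s)$ equals either $V_b$, some $\bar v^n_j$ of a neighbouring cell, or a value $v^\sigma$ sampled along a rarefaction linking two such states. Because $\lambda_2(\rho,v)=v$, every $\bar v^n_j$ satisfies $\bar v^n_j\leq \lambda^n$, and along a rarefaction one has $v^\sigma\leq \bar v^n_{m+1}\leq\lambda^n$. The maximal bus velocity $V_b$ is by hypothesis bounded by the maximal traffic velocity and hence by $\lambda^n$ as well. Integrating, $y(t)\leq y^n+\lambda^n(t-t^n)<x_{m+1/2}+\lambda^n(t-t^n)$ for every $t\in[t^n,t^{n+1}]$.

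Finally, I would combine the two estimates. If an interaction occurred at some $t\in[t^n,t^{n+1})$, then
\[
x_{m+1/2}+\lambda^n(t-t^n) \;>\; y(t) \;\geq\; x_{m+3/2}-\lambda^n(t-t^n),
\]
which rearranges to $2\lambda^n(t-t^n)>h$, i.e.\ $t-t^n>h/(2\lambda^n)$. Under the strengthened CFL bound \eqref{strong_CFL_condiftion} we have $k\leq h/(2\lambda^n)$, so this inequality is incompatible with $t-t^n\leq k$, a contradiction. Hence no interaction can take place before $t^{n+1}$.

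The main obstacle, and the only non-mechanical step, is justifying that the bus velocity never exceeds $\lambda^n$ across the time step: the speed is piecewise defined (free speed, adapted to the traffic, sampled along a rarefaction) and a clean case analysis, using $\lambda_2=v$ and the monotonicity of $v^\sigma$ along the rarefaction fan, is required. Once this bound is in place, the rest is a one-line geometric estimate on the separation $x_{m+3/2}-y^n>h$.
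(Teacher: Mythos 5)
Your argument is correct and follows essentially the same route as the paper: bound both the bus velocity and the leftmost front emanating from $(t^n,x_{m+3/2})$ by $\lambda^n$, observe that $x_{m+3/2}-y^n>h$, and conclude from $k\lambda^n\leq h/2$ that the two characteristic cones cannot meet before $t^{n+1}$. Your packaging is a little cleaner than the paper's, which instead tracks the time $\tau^*\geq t^n$ at which the bus crosses $x_{m+1/2}$ and then bounds its travel inside $C_{m+1}$ by $k\lambda^n\leq h/2$; both computations reduce to the same geometric estimate.

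One step in your write-up is mis-justified, though the conclusion it supports is still right. You assert that ``the maximal bus velocity $V_b$ is by hypothesis bounded by the maximal traffic velocity and hence by $\lambda^n$.'' No such hypothesis appears in the model. The correct reason $\dot y\leq\lambda^n$ even in the sub-case $\dot y=V_b$ is structural: the bus velocity law always takes the form $\dot y=\min(V_b,\,v)$ for some traffic speed $v$ (either some $\bar v^n_j$ or a sampled $v^\sigma$), so the value $V_b$ is realised \emph{only when} $V_b\leq v$, and then $V_b\leq v=\lambda_2(\rho,v)\leq\lambda^n$. In other words, the bound $\dot y\leq\lambda^n$ follows from the $\min$-structure of the speed law, not from an assumption on $V_b$; the paper's proof makes this explicit case by case (shock and rarefaction at $x_{m\pm1/2}$) and you should replace the false ``by hypothesis'' with that observation.
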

\begin{proof}
Let us show that after an interaction between the bus trajectory and a shock or a rarefaction wave centred in $(t^n,x_{m+1/2})$, the bus speed $\bar{V}^n_{new}$ at the right edge $x_{m+1/2}$ of the $m$-th cell is almost equal to $\lambda_2(\bar{\rho}^n_m,\bar{v}^n_m)$ or $\lambda_2(\bar{\rho}^n_{m+1},\bar{v}^n_{m+1})$.\\ 
First, let us consider the case of a shock. When the bus reach $x_{m+1/2}$ the bus travels with speed
$$\bar{V}^n_\text{new}=\bar{v}^n_{m+1}=\lambda_2(\bar{\rho}^n_{m+1},\bar{v}^n_{m+1}) \; \text{ or } \; \bar{V}^n=\min \lbrace \bar{v}^n_m,V_b\rbrace \leq \lambda_2(\bar{\rho}^n_m,\bar{v}^n_m).$$
If a rarefaction meets the bus trajectory, the bus speed after the interaction is
$$\bar{V}^n_\text{new}=\bar{V}^n_\text{rar}:=\min ( V_b,v^\sigma),$$
where $v^\sigma$ belongs to the interval $[\bar{v}^n_m,\bar{v}^n_{m+1}]$. Since during the rarefaction the speed of the traffic increases, the bus speed at $x_{m+1/2}$ is at most $\min (V_b,\bar{v}^n_{m+1})$.\\
Whether $\bar{V}^n_\text{rar} = v^\sigma$, we have
$$\bar{V}^n_\text{rar}\leq \bar{v}^n_{m+1}=\lambda_2(\bar{\rho}^n_{m+1},\bar{v}^n_{m+1}),$$
since $v^\sigma\leq \bar{v}^n_{m+1}$. Otherwise $\bar{V}^n_\text{rar} = V_b$, but this happens when $V_b < v^\sigma$. Hence again we find
$$\bar{V}^n_\text{rar}< \lambda_2(\bar{\rho}^n_{m+1},\bar{v}^n_{m+1}).$$
Since $\lambda^n \geq \lambda_2(\bar{\rho}^n_{m+1},\bar{v}^n_{m+1})$, if the condition (\ref{strong_CFL_condiftion}) holds, we have
$$ k\bar{V}^n_\text{new}\leq  k \lambda_2(\bar{\rho}^n_{m+1},\bar{v}^n_{m+1}) \leq k | \lambda^n| \leq \dfrac{h}{2}.$$
Hence before the instant $t^{n+1}$ the bus could have covered at most half the length of a cell $C_{m+1/2}$.\\
By Proposition \ref{rarefaction_wave_properties}, the Lax-entropy condition (\ref{Lax_entropy_condition}) and the condition (\ref{strong_CFL_condiftion}), the same holds for every wave centred in $(t^n,x_{m+3/2})$. Then we have the thesis.
\end{proof}
We will refer to the condition (\ref{strong_CFL_condiftion}) as ``strong CFL condition''.

\subsubsection{Case (ii)}
Let us consider the Riemann problem
\begin{equation}\label{Riemann_problem_centered_x_m-1/2}
\begin{cases}
\begin{cases}
\partial_t \rho + \partial_x(\rho v)=0,\\
\partial_t z + \partial_x(z v) = 0,
\end{cases}\\
(\rho,z)(t^n,x)=\begin{cases}
\bar{u}^n_{m-1} & \text{if } x\leq x_{m-1/2},\\
\bar{u}^n_{m} & \text{if } x> x_{m-1/2}.
\end{cases}
\end{cases}
\end{equation}
Let $u^\text{int}_{m-1/2}$ be the intermediate state of the classical solution $\mathcal{RS}(\bar{u}^n_{m-1},\bar{u}^n_{m})(\lambda)$ and let $(\rho^\text{int}_{m-1/2},v^\text{int}_{m-1/2})$ be the corresponding point in the $(\rho,v)$ plane.\\
Since the bus speed changes according to the variations of the traffic velocity, we are not interested in the case of an interaction between the bus trajectory and a contact discontinuity, because the two states connected by the contact discontinuity have the same speed. Hence we are interested only in rarefaction and shock waves.\\
Let us suppose that
$$\bar{v}^n_m+p(\bar{\rho}^n_m)=\bar{v}^n_{m-1}+p(\bar{\rho}^n_{m-1}).$$
The general case (in which $\bar{v}^n_m+p(\bar{\rho}^n_m)\neq \bar{v}^n_{m-1}+p(\bar{\rho}^n_{m-1})$) is obtained simply replacing $u^\text{int}_{m-1/2}$ to $\bar{u}^n_{m}$.
A solution to the system (\ref{Riemann_problem_centered_x_m-1/2}) can interact with the trajectory of the bus only if its propagation speed is positive, because $y^n\geq x_{m-1/2}$.
\begin{enumerate}
\item Let us suppose that $\bar{\rho}^n_{m-1}< \bar{\rho}^n_m$, so that the wave coming from $x_{m-1/2}$ is a shock with propagation speed $\lambda_{m-1/2}$ given by the Rankine-Hugoniot condition, i.e.
\begin{equation}
\lambda_{m-1/2} = \dfrac{\bar{\rho}^n_m \bar{v}^n_m-\bar{\rho}^n_{m-1}\bar{v}^n_{m-1}}{\bar{\rho}^n_m-\bar{\rho}^n_{m-1}}.
\end{equation}
\begin{remark}
Since $\bar{v}^n_m+p(\bar{\rho}^n_m)=\bar{v}^n_{m-1}+p(\bar{\rho}^n_{m-1})$, the condition $\bar{\rho}^n_{m-1}< \bar{\rho}^n_m$ is equivalent to $\bar{v}^n_{m-1}>\bar{v}^n_m$. Hence after the interaction the bus speed has increased.
\end{remark}
Let us suppose that the bus speed $\bar{V}^n=\min(V_b,\bar{v}^n_m)$ at time $t^n$ is equal to $\bar{v}^n_m$. This means that $\bar{v}^n_m \leq V_b$. Since $y^n\geq x_{m+1/2}$, an interaction with the shock can happen if and only if
$$\bar{v}^n_m \leq \lambda_{m-1/2}.$$
This is absurd, indeed
\begin{equation*}
\begin{split}
& \bar{v}^n_m \leq \lambda_{m-1/2} \Longleftrightarrow \bar{v}^n_m \leq \dfrac{\bar{v}^n_m \bar{\rho}^n_m-\bar{v}^n_{m-1}\bar{\rho}^n_{m-1}}{\bar{\rho}^n_m-\bar{\rho}^n_{m-1}} \Longleftrightarrow \\
& \bar{\rho}^n_m\bar{v}^n_m-\bar{\rho}^n_{m-1}\bar{v}^n_{m}\leq \bar{v}^n_m \bar{\rho}^n_m-\bar{v}^n_{m-1}\bar{\rho}^n_{m-1} \Longleftrightarrow \bar{\rho}^n_{m-1}\bar{v}^n_m\geq \bar{\rho}^n_{m-1}\bar{v}^n_{m-1} \Longleftrightarrow \\ 
& \bar{v}^n_m\geq \bar{v}^n_{m-1}
\end{split}
\end{equation*}
and this is a contradiction with $\bar{\rho}^n_{m-1}< \bar{\rho}^n_m$.\\
On the other hand the case $\bar{V}^n = V_b$ happens when $V_b \leq \bar{v}^n_m$. Since $\bar{v}^n_m<\bar{v}^n_{m-1}$, we have
$$\bar{V}^n_\text{new} = V_b = \min(V_b,\bar{v}^n_{m-1}).$$
Hence in this case the bus speed before and after the interaction does not change and its value is $V_b$.\\
Therefore, provided that no other interactions with waves coming from $x_{m+1/2}$ happen, the bus position at time $t^{n+1}$ is
$$y^{n+1} = y^n+k \bar{V}^n.$$

\item Let us suppose that $\bar{\rho}^n_m \leq \bar{\rho}^n_{m-1}$. In this case $\bar{u}^n_{m-1}$ and $\bar{u}^n_m$ are connected by a rarefaction wave centred in $x_{m-1/2}$.\\
Let $(\rho^\sigma,v^\sigma)$ be a point of the rarefaction, i.e.
$$v^\sigma \in [\bar{v}^n_{m-1},\bar{v}^n_{m}],\; \; \rho^\sigma \in [\bar{\rho}^n_{m},\bar{\rho}^n_{m-1}]\; \text{ and } \; \bar{v}^n_{m-1}+p(\bar{\rho}^n_{m-1})=v^\sigma+p(\rho^\sigma).$$
\begin{remark}
Since $\bar{v}^n_{m-1}\leq v^\sigma \leq \bar{v}^n_{m}$, the traffic speed increases during the rarefaction.
\end{remark}
\begin{prop}
Let us suppose that
$$\bar{V}^n=\min \lbrace \bar{v}^n_m,V_b \rbrace < \lambda_1(\bar{\rho}^n_m,\bar{v}^n_m),$$
so that an interaction occurs between the bus and the rarefaction wave centred in $x_{m-1/2}$.\\
Then the bus speed before the interaction is $\bar{V}^n = V_b$ and the bus keeps its maximal speed in all the points of interaction.
\end{prop}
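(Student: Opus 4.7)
The plan is to exploit the elementary inequality $\lambda_1(\rho,v) = v - \rho\,p'(\rho) < v$, which holds for every $(\rho,v)\in\mathbb{R}^+\times\mathbb{R}^+$ since $p'(\rho)>0$ by the hypotheses (\ref{ipotesi_pressione}). This strict gap between the first eigenvalue and the fluid velocity is the whole content of the proposition.

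First I would rule out the case $\bar{V}^n=\bar{v}^n_m$. If this were the case, the hypothesis $\bar{V}^n<\lambda_1(\bar{\rho}^n_m,\bar{v}^n_m)$ would give
\[
\bar{v}^n_m < \lambda_1(\bar{\rho}^n_m,\bar{v}^n_m)=\bar{v}^n_m-\bar{\rho}^n_m\,p'(\bar{\rho}^n_m)<\bar{v}^n_m,
\]
a contradiction. Therefore $\bar{V}^n=V_b$, and by definition of $\bar{V}^n$ we must have $V_b\leq \bar{v}^n_m$. This establishes the first part of the statement.

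Next I would argue that the bus keeps the speed $V_b$ all along the rarefaction. By the analogue of (\ref{rarefaction_from_right_bus_speed}) for a rarefaction centred at $x_{m-1/2}$, at a point $(t,y(t))$ lying on the ray $R^\sigma$ that carries the value $(\rho^\sigma,v^\sigma)$, the bus law would assign the speed $\min(V_b,v^\sigma)$, provided the interaction takes place, i.e.\ provided the local rarefaction speed overtakes the current bus speed. As long as the bus has been travelling at $V_b$, the interaction condition reads $\lambda_1(\rho^\sigma,v^\sigma)>V_b$, which together with $\lambda_1(\rho^\sigma,v^\sigma)<v^\sigma$ forces $v^\sigma>V_b$; hence $\min(V_b,v^\sigma)=V_b$ and the bus speed does not change. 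This is a bootstrapping argument: starting from the first interaction point (where we already know the bus travels at $V_b$), every subsequent interaction point inherits the same speed.

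The only delicate point is making sure the bootstrap is well posed: once the bus speed stays at $V_b$, the set of $\sigma$'s for which an interaction actually occurs is the set $\{\sigma:\lambda_1(\rho^\sigma,v^\sigma)>V_b\}$, and on this set the assignment $\min(V_b,v^\sigma)=V_b$ is consistent with the trajectory $y(t)=y^n+V_b(t-t^n)$ itself. I would conclude by observing that this constant-speed trajectory indeed satisfies the bus ODE pointwise on the interaction region, so no modification of $\dot{y}$ is triggered and $\bar{V}^n_{\mathrm{new}}=V_b$ at the exit of the rarefaction fan.
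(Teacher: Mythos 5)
Your proposal is correct and follows essentially the same route as the paper: both rule out $\bar{V}^n=\bar{v}^n_m$ by the same contradiction with $\lambda_1(\rho,v)<v$, and both then observe that interaction only occurs at rarefaction points with $\lambda_1(\rho^\sigma,v^\sigma)>V_b$, whence $v^\sigma>\lambda_1(\rho^\sigma,v^\sigma)>V_b$ and $\min(V_b,v^\sigma)=V_b$. The paper phrases the second step by naming the extreme point $(\rho^{V_b+},v^{V_b+})$ of the interacting region while you frame it as a bootstrap, but the underlying argument is identical.
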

\begin{proof}
We have $\bar{V}^n=\bar{v}^n_m$ if and only if $\bar{v}^n_m \leq V_b$. Since we are supposing that $\bar{V}^n<\lambda_1(\bar{\rho}^n_m,\bar{v}^n_m)$, this case cannot happen or we would have
$$\bar{v}^n_m<\lambda_1(\bar{\rho}^n_m,\bar{v}^n_m)$$
and this is absurd.\\
Hence the only possible case is
$$\bar{V}^n=V_b$$
which is equivalent to $V_b\leq \bar{v}^n_m$.\\
Let us call
$$R^\sigma = \lbrace (t,x)\in\mathbb{R}^+ \times \mathbb{R} : x-x_{m-1/2} = \lambda(\rho^\sigma,v^\sigma)(t-t^n)\rbrace$$
and
$$\xi(t,x) = \dfrac{x-x_{m-1/2}}{t-t^n}.$$
We have $\xi(t,x) = \lambda_1(\rho^\sigma,v^\sigma)$ if and only if $(t,x)\in R^\sigma$.\\
Since $y^n \geq x_{m-1/2}$, the case $(t,y(t))\in R^\sigma$ is possible only if the propagation speed of the rarefaction speed in $(\rho^\sigma,v^\sigma)$ is higher then the bus speed, i.e.
$$\lambda_1(\rho^\sigma,v^\sigma)\geq V_b.$$
Let $(\rho^{V_b+},v^{V_b+})$ be the point such that
$$\lambda_1(\rho^{V_b+},v^{V_b+})=V_b+.$$
This point is the last point of the rarefaction which can interact with the bus. Since $v^{V_b+}>\lambda_1(\rho^{V_b+},v^{V_b+})$, when the interaction occurs the bus cannot take the speed of the vehicles but it keeps its maximal speed $V_b$. Moreover the bus interacts only with the points $(\rho^\sigma,v^\sigma)$ of the rarefaction wave such that
$$\lambda_1(\rho^{V_b+},v^{V_b+})\leq \lambda_1(\rho^\sigma,v^\sigma)\leq \lambda_1(\bar{\rho}^n_m,\bar{v}^n_m).$$
Since for all these points we have $v^\sigma> v^{V_b+}>V_b$, the bus keeps its maximal speed $V_b$ during all the interaction.
\end{proof}
Let $(t^*,x^*)$ be the first point of interaction between the bus and the rarefaction wave centred in $x_{m-1/2}$. Since $\lambda_1(\bar{\rho}^n_m,\bar{v}^n_m)=\bar{v}^n_m-\bar{\rho}^n_m \,p'(\bar{\rho}^n_m)$ is the right propagation speed of the rarefaction, we can solve the equation
$$x_{m-1/2}+\lambda_1(\bar{\rho}^n_m,\bar{v}^n_m)(t^*-t^n)=y^n+V_b (t^*-t^n)$$
with respect to $t^*$, finding
\begin{equation}
t^* = \dfrac{y^n-x_{m-1/2}}{\lambda_1(\bar{\rho}^n_m,\bar{v}^n_m)-V_b}+t^n \; \text{ and } \; x^* = y^n+V_bt^*.
\end{equation}
Whether $t^*\geq k$, no interactions between the bus and the wave occur within the interval $[t^n,t^{n+1})$. Therefore the bus position at time $t^{n+1}$ is
$$y^{n+1}=y^n+\bar{V}^n k.$$
Otherwise the bus position at time $t^{n+1}$ is
$$y^{n+1}= y^n+V_b k,$$
provided that no other interactions happen whit waves centred in $x_{m+1/2}$.
\end{enumerate}
The next proposition states that if the bus interacts with a wave coming from the Riemann problem centred in $x_{m-1/2}$, then it cannot interact with a wave centred in $x_{m+1/2}$ and vice versa, provided that the time step is small enough.
\begin{prop}\label{prop_only_left_or_right_interaction_can_happen}
Let us suppose that the strong CFL condition (\ref{strong_CFL_condiftion}) holds. Within the interval $[t^n,t^{n+1})$ the bus can interact only with a wave coming from the local Riemann problem (\ref{Riemann_problem_centered_x_m-1/2}) centred in $x_{m-1/2}$ or with a wave (shock or rarefaction) coming form the local Riemann problem (\ref{Riemann_problem_centered_x_m+1/2}) centred in $x_{m+1/2}$.
\end{prop}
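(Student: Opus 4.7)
The plan is to reduce everything to the two geometric estimates that the strong CFL condition $k\lambda^n\leq h/2$ makes available: an upper bound on the spatial range of the bus, and a bound on the range of any wave emitted from a Riemann problem centred at an interface different from $x_{m\pm 1/2}$. These two ranges will turn out to be disjoint on $[t^n,t^{n+1})$, which forces the conclusion.

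First I would bound the bus speed. By construction, the bus velocity $\dot y(t)$ is either $\bar V^n=\min(V_b,\bar v^n_m)$, or a value of the form $\min(V_b,v^\sigma)$ with $v^\sigma$ belonging to the $v$-component of a rarefaction linking $\bar u^n_m$ to $\bar u^n_{m+1}$, hence bounded by $\max(\bar v^n_m,\bar v^n_{m+1})$. In every one of these cases $\dot y(t)\le \max_{j}\lambda_2(\bar u^n_j)=\max_j \bar v^n_j\le \lambda^n$; note also that $V_b$ itself cannot exceed $\lambda^n$ unless one of the $\bar v^n_j$ does (and then the bound still holds). Together with $\dot y(t)\ge 0$ and the inclusion $y^n\in C_m=[x_{m-1/2},x_{m+1/2})$, this yields
$$
x_{m-1/2}\le y(t) < x_{m+1/2}+(t-t^n)\lambda^n \le x_{m+1/2}+k\lambda^n\le x_{m+1/2}+\tfrac{h}{2}=x_{m+1}
$$
for every $t\in[t^n,t^{n+1}]$.

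Next I would bound the spatial range of any wave issued from a local Riemann problem centred at $x_{j+1/2}$ with $j\notin\{m-1,m\}$. By the proof of Proposition \ref{prop_CFL_cond}, every simple wave (shock, rarefaction, or contact discontinuity) arising from such a problem travels with propagation speed of absolute value at most $\lambda^n$. For $j\ge m+1$, the leftmost point of the wave pattern at time $t\in[t^n,t^{n+1})$ is at least
$$
x_{j+1/2}-(t-t^n)\lambda^n\ge x_{m+3/2}-k\lambda^n\ge x_{m+3/2}-\tfrac{h}{2}=x_{m+1},
$$
while, by the first step, $y(t)<x_{m+1}$. Symmetrically, for $j\le m-2$ the rightmost point reached is at most $x_{m-3/2}+k\lambda^n\le x_{m-1}<x_{m-1/2}\le y(t)$. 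In either case the wave and the bus trajectory have disjoint supports on $[t^n,t^{n+1})$, so no interaction is possible.

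It remains to justify the restriction to shocks and rarefactions on the right. The Riemann problem (\ref{Riemann_problem_centered_x_m+1/2}) produces a contact discontinuity between the intermediate state $u^{\text{int}}_{m+1/2}$ and $\bar u^n_{m+1}$, propagating with speed $v^{\text{int}}_{m+1/2}=\bar v^n_m$. Since $\bar V^n=\min(V_b,\bar v^n_m)\le \bar v^n_m$ and $y^n<x_{m+1/2}$, the gap $x_{m+1/2}+\bar v^n_m(t-t^n)-y(t)\ge (\bar v^n_m-\bar V^n)(t-t^n)+(x_{m+1/2}-y^n)>0$ for all $t\in[t^n,t^{n+1})$, so the contact discontinuity stays strictly ahead of the bus and no interaction occurs within the time step. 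The main difficulty is only the bookkeeping required to match strict and non-strict inequalities so that the conclusion holds on the right-open interval $[t^n,t^{n+1})$; once $y^n<x_{m+1/2}$ (and not $\le$) is used in the first step, the whole argument closes.
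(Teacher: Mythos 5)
Your argument does not prove the proposition as the paper intends it. The sentence immediately preceding the statement says: ``The next proposition states that if the bus interacts with a wave coming from the Riemann problem centred in $x_{m-1/2}$, then it cannot interact with a wave centred in $x_{m+1/2}$ and vice versa,'' and the remark that follows uses it precisely that way (once $t^*_{m-1/2}\le\min(t^*_{m+1/2},k)$, one concludes the \emph{only} interaction in the step is with the left wave). What you establish is different: that the bus trajectory is spatially disjoint from waves emanating from interfaces $x_{j+1/2}$ with $j\notin\{m-1,m\}$. That is true, but it is the content of the preceding unnumbered proposition, not of this one, and it cannot yield the present claim: the waves from both $x_{m-1/2}$ and $x_{m+1/2}$ live entirely inside the strip $[x_{m-1},x_{m+1})$ in which you confine $y(t)$, so your containment estimate cannot distinguish between them and does not rule out hitting both within one step.

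The missing ingredient is a case split on which interface the bus meets first, together with the key observation that for the bus ahead at $y^n\ge x_{m-1/2}$ to be caught by a wave from behind, the bus must be travelling at $V_b$ and the right propagation speed $\overline\lambda_{m-1/2}$ of that wave must satisfy $V_b\le\overline\lambda_{m-1/2}$. With the strong CFL bound $\overline\lambda_{m-1/2}\,k\le h/2$ this gives $x^*+V_b(k-t^*)\le x_{m-1/2}+\overline\lambda_{m-1/2}\,k\le x_m$, i.e.\ the bus stays at or below the cell centre $x_m$ for the rest of the step, while any wave from $x_{m+1/2}$ is confined to $[x_m,x_{m+3/2}]$. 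Conversely, if the bus first meets a wave from $x_{m+1/2}$, then $x^*\ge x_{m+1/2}-|\overline\lambda_{m+1/2}|k\ge x_m$, whereas any wave from $x_{m-1/2}$ stays in $[x_{m-3/2},x_m]$; again the ranges separate. Your bounds $\dot y(t)\le\lambda^n$ and $y(t)<x_{m+1}$ are correct and appear implicitly in these inequalities, but without the case split they do not force the exclusive alternative the proposition asserts.
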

\begin{proof}
Let us recall that the propagation speeds of two shocks centred respectively in $x_{m-1/2}$ and $x_{m+1/2}$ are respectively
$$\lambda_{m-1/2} =\dfrac{\rho^\text{int}_{m-1/2} \, v^\text{int}_{m-1/2}-\bar{\rho}^n_m\bar{v}^n_m}{\rho^\text{int}_{m-1/2}-\bar{\rho}^n_m} \; \text{ and }\;  \lambda_{m+1/2}=\dfrac{\rho^\text{int}_{m+1/2} \, v^\text{int}_{m+1/2}-\bar{\rho}^n_m\bar{v}^n_m}{\rho^\text{int}_{m+1/2}-\bar{\rho}^n_m},$$
where $({\rho}^\text{int}_{m\pm 1/2},{v}^\text{int}_{m\pm 1/2})$ are the intermediate states of the classical solution to the Riemann problems centred in $x_{m\pm 1/2}$.\\
Let us denote
$$\overline{\lambda}_{m-1/2} = \begin{cases}
\lambda_{m-1/2} & \text{if } \bar{\rho}^n_{m-1} < \bar{\rho}^n_m,\\
\lambda_1(\bar{\rho}^n_{m},\bar{v}^n_m) & \text{otherwise,}
\end{cases}$$
the (right) propagation speed of a wave centred in $x_{m-1/2}$ and
$$\overline{\lambda}_{m+1/2} = \begin{cases}
\lambda_{m+1/2} & \text{if } \bar{\rho}^n_{m} < \bar{\rho}^n_{m+1},\\
\lambda_1(\bar{\rho}^n_{m},\bar{v}^n_m) & \text{otherwise,}
\end{cases}$$
the (left) propagation speed of a wave centred in $x_{m+1/2}$.\\
The condition (\ref{strong_CFL_condiftion}) implies that waves coming from the local Riemann problems (\ref{Riemann_problem_centered_x_m+1/2}) and (\ref{Riemann_problem_centered_x_m-1/2}) cannot cover more space then $h/2$ before time $t^{n+1}$, where
$$h = x_{m+1/2}-x_{m-1/2}$$
is the (constant) space length of the cells. Hence we have
$$|\overline{\lambda}_{m \pm 1/2}|k\leq h/2.$$
Let us suppose that the bus interacts with a shock or a rarefaction wave centred in $x_{m-1/2}$ and let $(t^*,x^*)$ be the point of first interaction, i.e.
$$x^*= x_{m-1/2}+\overline{\lambda}_{m-1/2}t^* = y^n+V_bt^*,$$
because in this case the bus speed is $V_b$ on all the interval $[t^n,t^{n+1})$.
The bus and the wave can interact only if
$$t^*\leq k \; \text{ and } V_b \leq \overline{\lambda}_{m-1/2},$$
which imply
$$V_b(k-t^*) \leq \overline{\lambda}_{m-1/2}(k-t^*).$$
Since $\overline{\lambda}_{m-1/2}k\leq h/2$, we find
$$x^*-x_{m-1/2}=\overline{\lambda}_{m-1/2}t^*\leq \overline{\lambda}_{m-1/2}k \leq h/2.$$
Moreover we have
\begin{equation*}
\begin{split}
x^*+ V_b(k-t^*) & \leq x^*+\overline{\lambda}_{m-1/2}(k-t^*) = x_{m-1/2}+\overline{\lambda}_{m-1/2}t^* +\overline{\lambda}_{m-1/2}(k-t^*) =\\
& =  x_{m-1/2}+\overline{\lambda}_{m-1/2}k \leq x_{m-1/2}+\lambda^n k\leq x_{m-1/2}+h/2=\\
& = x_m.
\end{split}
\end{equation*}
Hence no interactions with a wave centred in $x_{m+1/2}$ can happen.\\
On the other hand, let us suppose that the bus interacts first with a wave centred in $x_{m+1/2}$ and let $(t^*,x^*)$ be the first point of interaction. Since $|\overline{\lambda}_{m+1/2}|k\leq h/2$, we must have $x^*\geq x_m = x_{m-1/2} +h/2$. Hence no interactions with waves centred in $x_{m-1/2}$ can happen after $t^*$.
\end{proof}
\begin{remark}
Let $t^*_{m-1/2}$ and $t^*_{m+1/2}$ be respectively the time of first interaction between the bus trajectory and a wave centred in $x_{m-1/2}$ and $x_{m+1/2}$. To recognize if the bus interacts with a wave centred in $x_{m-1/2}$ or with a wave centred in $x_{m+1/2}$, we compute both $t^*_{m-1/2}$ and $t^*_{m+1/2}$.\\
If $t^*_{m-1/2}\leq \min(t^*_{m+1/2},k)$, then Proposition \ref{prop_only_left_or_right_interaction_can_happen} ensures that within $[t^n,t^{n+1})$ the only interaction happens with the wave centred in $x_{m-1/2}$.\\
Similarly for the case $t^*_{m+1/2}\leq \min(t^*_{m-1/2}, k)$.
\end{remark}
\subsection{Example of a reconstructed bus trajectory}\label{section_bus_trajectory}
Fix the following data:
\begin{itemize}
\item the pressure function is $p(\rho)=\rho$;
\item the reduction rate in the road capacity due to the bus is $\alpha=0.5$;
\item the bus initial position is $y_0=-0.1$;
\item the bus maximal speed is $V_b=4$;
\item the maximal density of vehicles allowed on the road is $R_{\text{max}}=15$.
\end{itemize}
Let us consider a bus influenced by the previous vehicles. For example, consider the initial datum
\begin{equation}
(\rho,v)(0,x)=\begin{cases}
(\rho^l,v^l)=(9,1) &\text{if } x \leq 0,\\
(\rho^r,v^r)=(2,8) & \text{if } x>0.
\end{cases}
\end{equation}
Since $\rho^l >\rho^r$, the standard solution is a rarefaction wave with propagation speed ranging within the interval $[\lambda_1(\rho^l,v^l),\lambda_1(\rho^r,v^r)]=[-8,6]$; see Figures \ref{fig_bus_trajectory_flux_1} and \ref{fig_bus_trajectory_rho_v_1}.\\
The speed of the cars at $y_0$ is $v^l=1$ and it is lower than the bus maximal speed. Hence the bus has to adapt its speed to the one of the cars in front of it.\\
The bus will keep this velocity until, at some instant $t^*$, an interaction with the rarefaction wave centred in $x=0$ happens. At time $t^*$ the bus can accelerate to its maximal speed; see Figures \ref{fig_bus_trajectory_flux_2}, \ref{fig_bus_trajectory_flux_3}, \ref{fig_bus_trajectory_rho_v_2} and \ref{fig_bus_trajectory_bus_speed}. Then there is an interval $[t^*,t^{**}]$ in which the bus and the traffic have no influence on each other, until at time $t^{**}$ the constraint is enforced and the non-classical shock appears; see Figures \ref{fig_bus_trajectory_rho_v_3} and \ref{fig_bus_trajectory_rho_v_4}.\\
In Figures \ref{fig_bus_trajectory_bus_speed} and \ref{fig_bus_trajectory_space_time} is represented the evolution of the bus speed and the bus trajectory in the space-time diagram.
\begin{figure}[hbtp]
\centering
\includegraphics[width=0.93\linewidth]{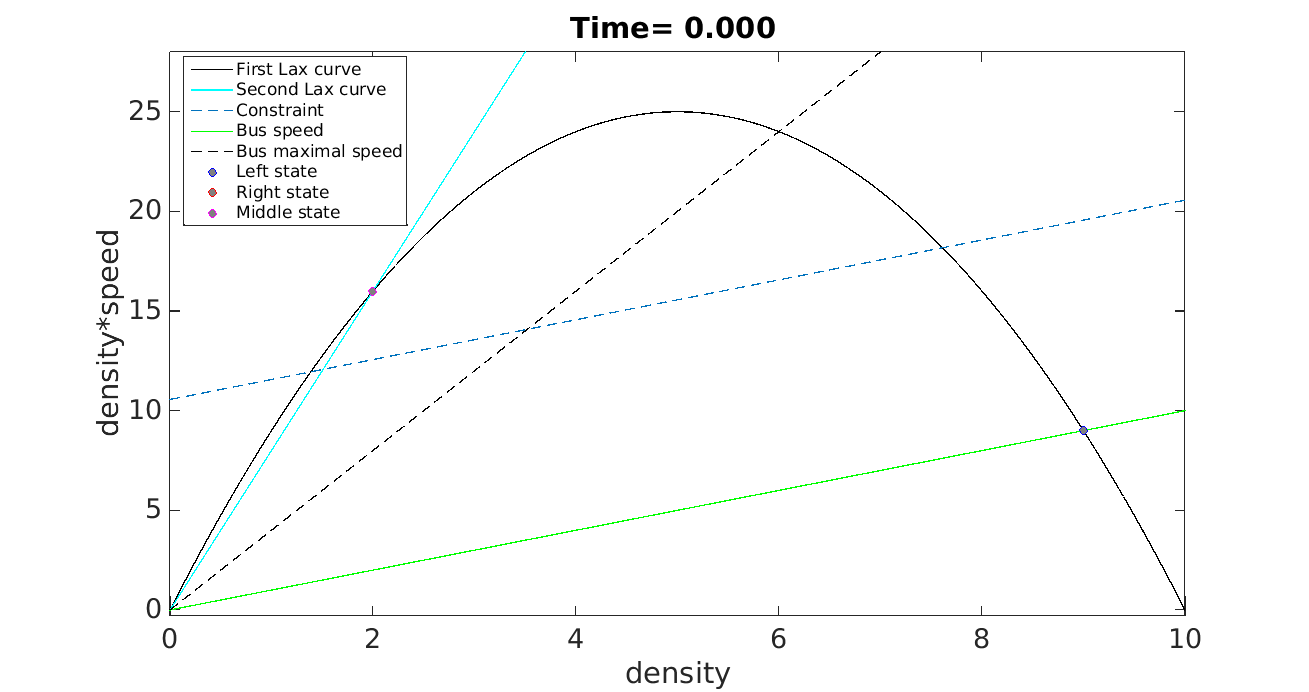}
\caption{Situation described in Subsection \ref{section_bus_trajectory}: initial configuration. The bus takes the speed of the cars in front of it.}\label{fig_bus_trajectory_flux_1}
\end{figure}
\begin{figure}[hbtp]
\centering
\includegraphics[width=0.93\linewidth]{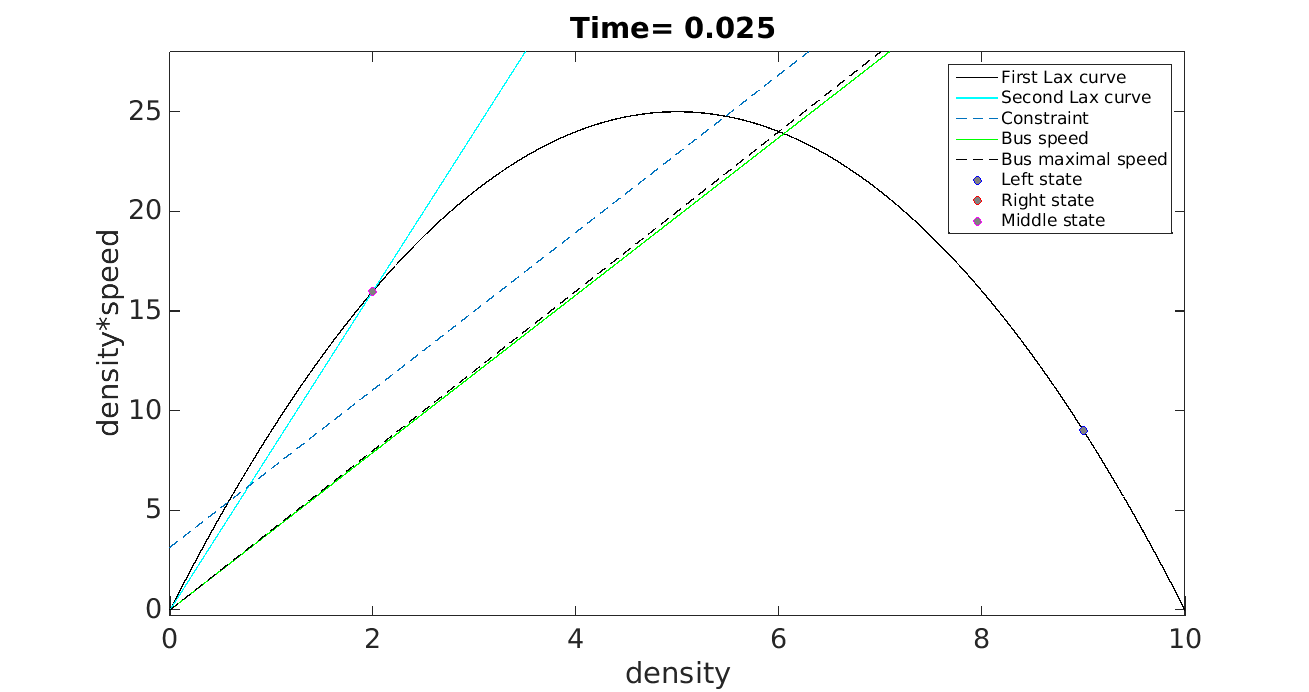}
\caption{Situation described in Subsection \ref{section_bus_trajectory}: the time $t$ is less then the instant $t^*$ for which the bus reaches its maximal speed. The bus is accelerating: it takes the speed of the cars in front of it.}\label{fig_bus_trajectory_flux_2}
\end{figure}
\begin{figure}[hbtp]
\centering
\includegraphics[width=0.93\linewidth]{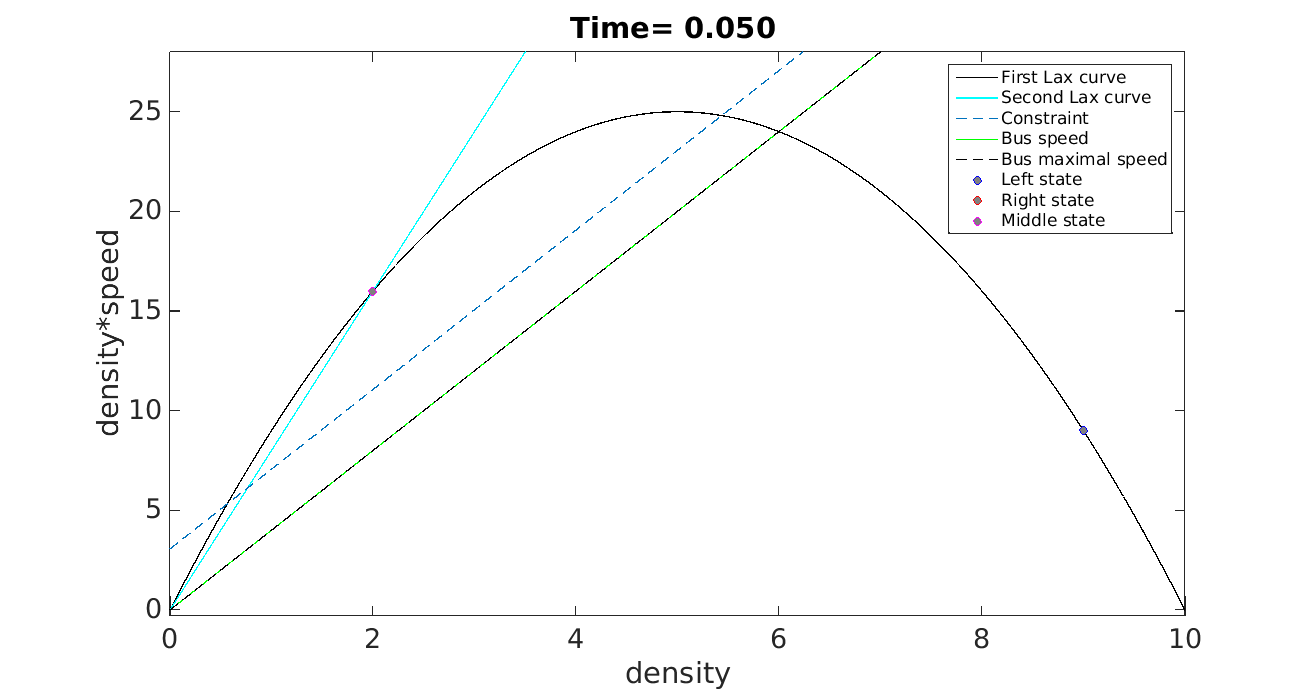}
\caption{Situation described in Subsection \ref{section_bus_trajectory}. The bus has reached its maximal speed.}\label{fig_bus_trajectory_flux_3}
\end{figure}
\begin{figure}[hbtp]
\centering
\includegraphics[width=0.9\linewidth]{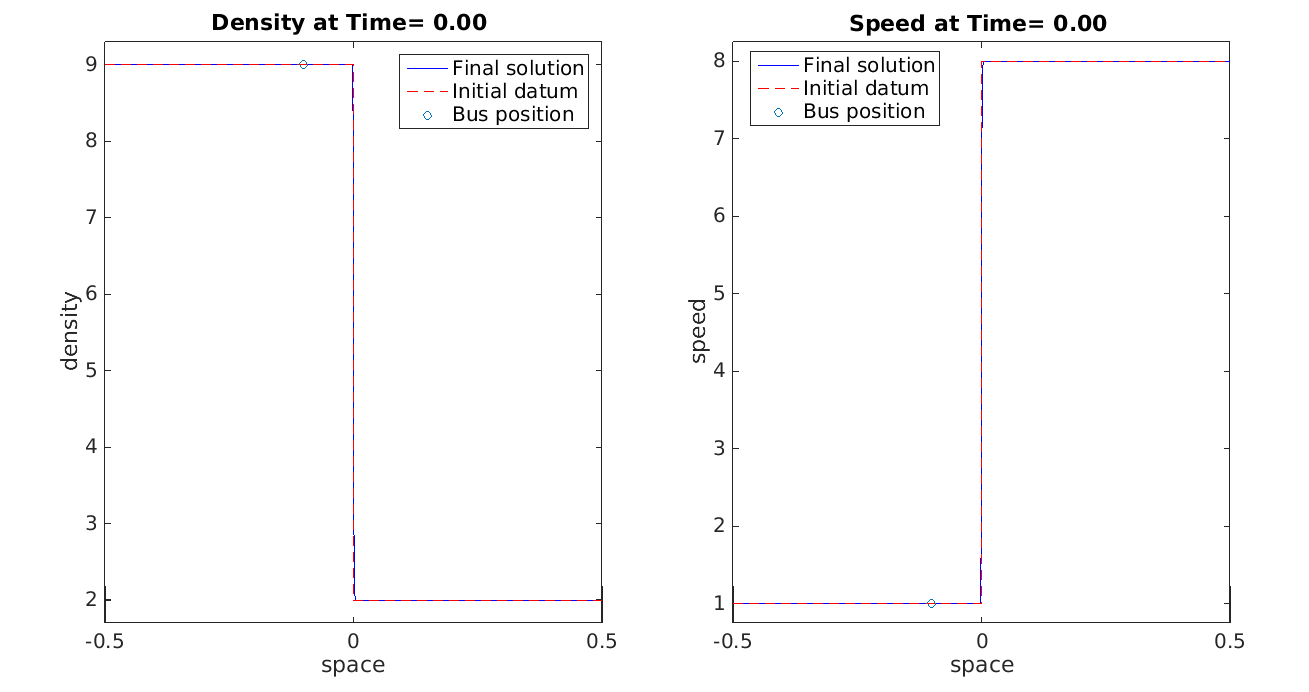}
\caption{Situation described in Subsection \ref{section_bus_trajectory}: initial datum.}\label{fig_bus_trajectory_rho_v_1}
\end{figure}
\begin{figure}[hbtp]
\centering
\includegraphics[width=0.9\linewidth]{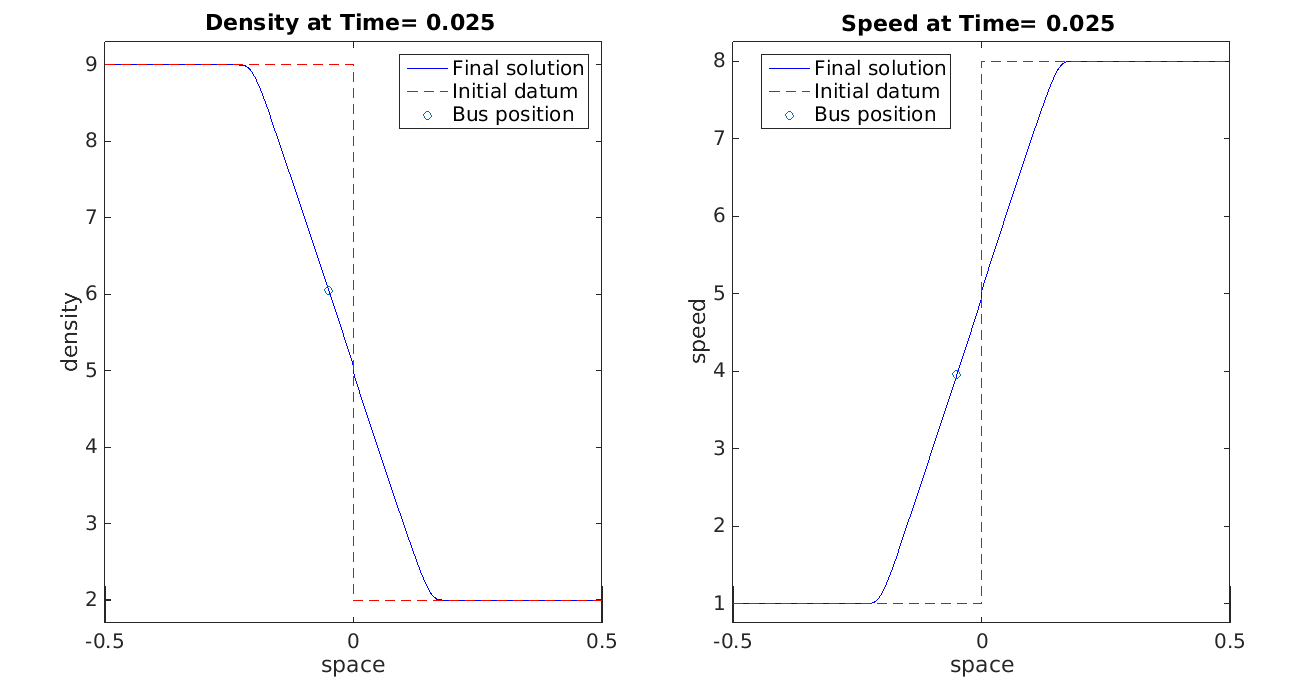}
\caption{Situation described in Subsection \ref{section_bus_trajectory}: the time $t$ is less then the instant $t^*$ for which the bus reaches its maximal speed. The solution given by $\mathcal{RS}^\alpha_1$ is classical.}\label{fig_bus_trajectory_rho_v_2}
\end{figure}
\begin{figure}[hbtp]
\centering
\includegraphics[width=0.9\linewidth]{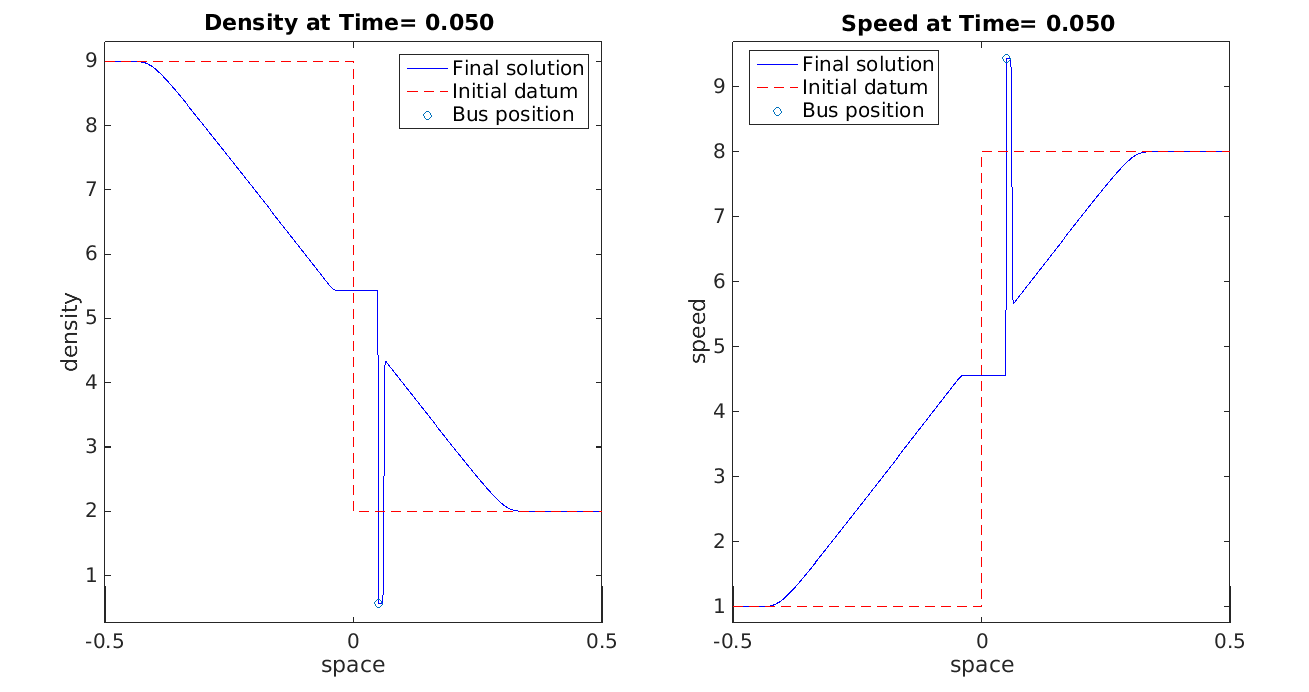}
\caption{Situation described in Subsection \ref{section_bus_trajectory}: the non-classical shock has appeared. }\label{fig_bus_trajectory_rho_v_3}
\end{figure}
\begin{figure}[hbtp]
\centering
\includegraphics[width=0.9\linewidth]{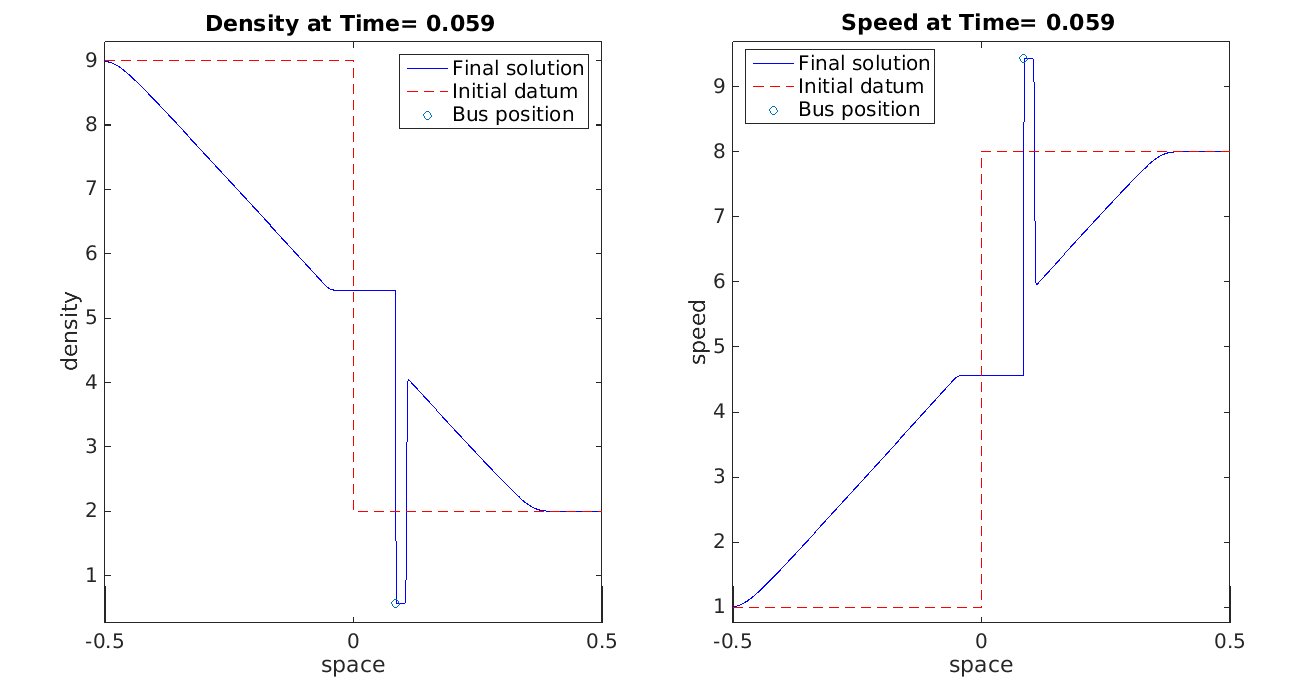}
\caption{Situation described in Subsection \ref{section_bus_trajectory}: the non-classical shock has appeared. }\label{fig_bus_trajectory_rho_v_4}
\end{figure}
\begin{figure}[hbtp]
\centering
\includegraphics[width=0.9\linewidth]{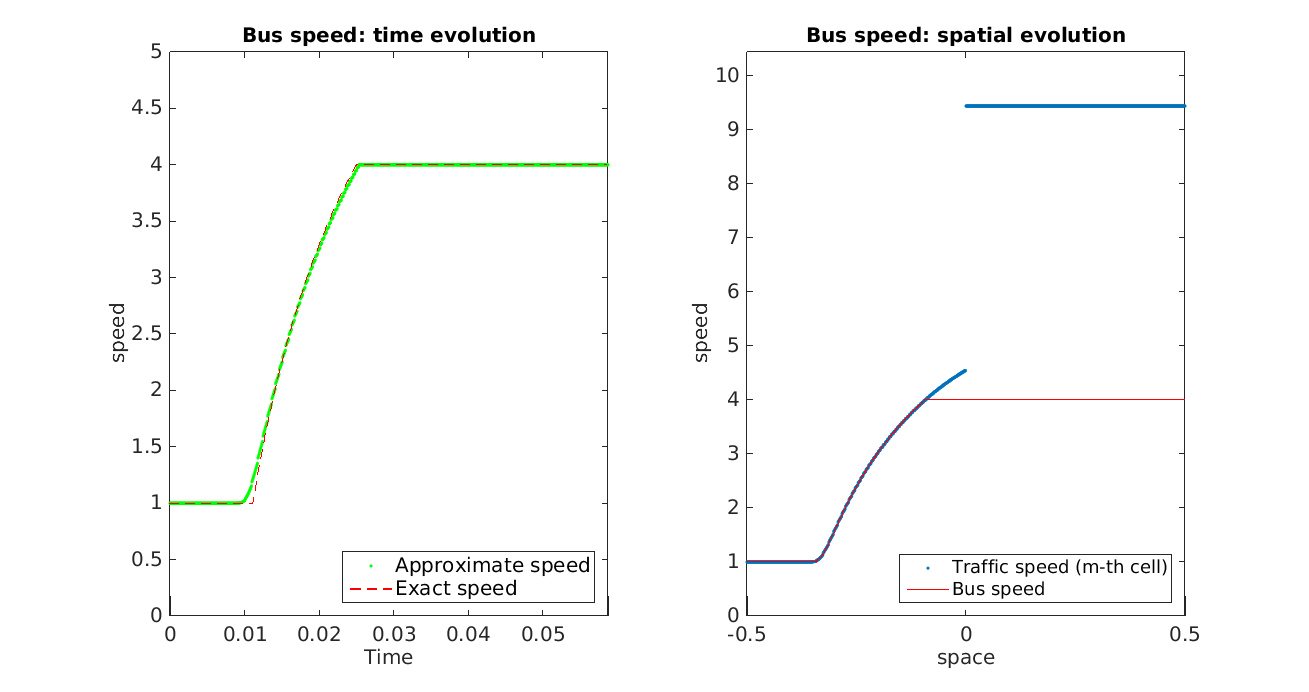}
\caption{Situation described in Subsection \ref{section_bus_trajectory}. The first plot is the time evolution of the bus speed. The bus accelerates until it reaches its maximal speed. The reconstructed bus speed coincides with the one expected.\\
The second plot is the spatial evolution of the bus speed and of the velocity of the traffic in front of the bus. We can see that the bus takes the speed of the cars in front of it until it reaches its maximal speed. When this happens the cars accelerate until the constraint is enforced, then they take the speed $\check{v}_1$.}\label{fig_bus_trajectory_bus_speed}
\end{figure}
\begin{figure}[hbtp]
\centering
\includegraphics[width=\linewidth]{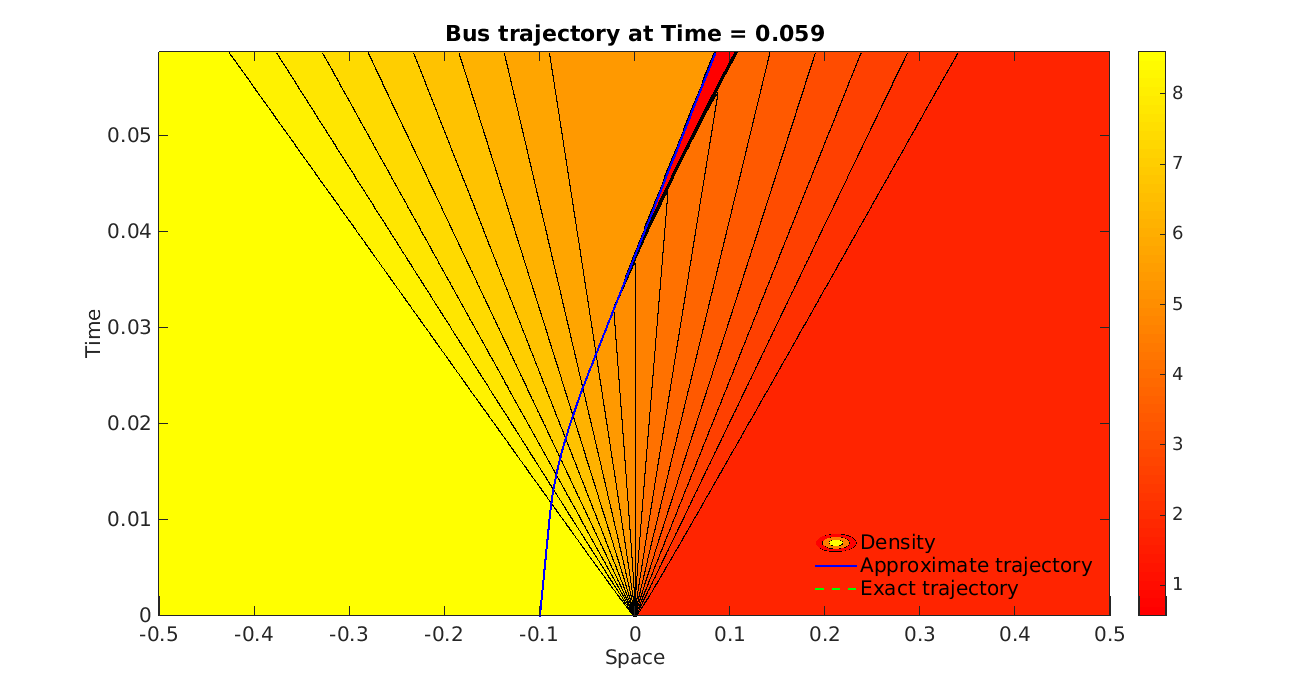}
\caption{Situation described in Subsection \ref{section_bus_trajectory}: the bus trajectory and the density in the $x-t$ plane. The bus trajectory is exactly captured.}\label{fig_bus_trajectory_space_time}
\end{figure}
\newpage
\section{Numerical methods for the Riemann solver $\mathcal{RS}^\alpha_2$}
Let $\bar{V}^n$ be the bus speed at time $t^n$. The Riemann solvers $\mathcal{RS}^\alpha_1$ and $\mathcal{RS}^\alpha_2$ give the same solution whenever the constraint is satisfied. Hence, if
$$f_1(\mathcal{RS}(\bar{u}^n_{m-1},\bar{u}^n_{m+1})(\bar{V}^n))\leq F_\alpha + \bar{V}^n\bar{\rho}(\bar{u}^n_{m-1},\bar{u}^n_{m+1})(\bar{V}^n),$$
we can apply to $\mathcal{RS}^\alpha_2$ the methods introduced for $\mathcal{RS}^\alpha_1$.\\
When the constraint is enforced, we propose two methods to capture the non-classical shock.\\
The following lemma gives a formula to compute the solution at the new time step on a cell of a non-uniform mesh, if the solution at time $t^n$ is known.
\begin{lemma}\label{Green_theorem_on_nonuniform_mesh}
Fix $n\in \mathbb{N}$. Let us consider two time steps $t^n$ and $t^{n+1} = t^n + k$ for a fixed constant $k \in \mathbb{R}^+$ and the $j$-th cell of a non-uniform spatial mesh $\lbrace x^n_{j+1/2} \rbrace_{j\in \mathbb{Z}}$ defined by
$$ x^n_{j+1/2} = x^n_{j-1/2} + h^n_j \; \text{ for } \; h^n_j \in \mathbb{R}^+ \; \text{ and }\; j\in\mathbb{Z}.$$
Let $\bar{u}^n_j$ and $\bar{u}^{n+1}_j$ be approximate solutions for the conservation law
$$\partial_t \, u + \partial_x \, [f(u)] =0$$
on the cell $C^i_j = [x^i_{j-1/2},x^i_{j+1/2})$ for $i=n,n+1$, at time $t^n$ and $t^{n+1}$ respectively. The following formula holds:
\begin{equation}\label{formula_conservation_imposed_nonuniform_mesh}
\begin{split}
h^{n+1}_{j}\bar{u}^{n+1}_j = h^n_j\bar{u}^n_{j}
-k & [ f(\bar{u}^n_{j+1/2}(\lambda^r))-\lambda^r\, \bar{u}^n_{j+1/2}(\lambda^r) +\\
& -f(\bar{u}^n_{j-1/2}(\lambda^l))+\lambda^l \, \bar{u}^n_{j-1/2}(\lambda^l)],
\end{split}
\end{equation}
where
\begin{equation*}
\begin{split}
& \lambda^l = \dfrac{x^{n+1}_{j-1/2}-x^n_{j-1/2}}{k}, \; \; \lambda^r = \dfrac{x^{n+1}_{j+1/2}-x^n_{j+1/2}}{k}, \; \; \bar{u}^n_{j-1/2}(\lambda^l) = \mathcal{RS}(\bar{u}^n_{j-1},\bar{u}^n_j)(\lambda^l) \; \text{ and}\\
& \bar{u}^n_{j+1/2}(\lambda^r) = \mathcal{RS}(\bar{u}^n_j,\bar{u}^n_{j+1})(\lambda^r).
\end{split}
\end{equation*}
See Figure \ref{fig_Green_theorem_on_nonuniform_mesh}.
\end{lemma}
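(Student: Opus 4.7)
The plan is to obtain \eqref{formula_conservation_imposed_nonuniform_mesh} by integrating the conservation law $\partial_t u + \partial_x[f(u)] = 0$ over the (in general non-rectangular) trapezoidal control volume
$$\Omega = \{(t,x)\in\mathbb{R}^2 : t^n\le t\le t^{n+1},\; \xi^l(t)\le x\le \xi^r(t)\},$$
where $\xi^l$ and $\xi^r$ are the straight segments joining $(t^n,x^n_{j-1/2})$ to $(t^{n+1},x^{n+1}_{j-1/2})$ and $(t^n,x^n_{j+1/2})$ to $(t^{n+1},x^{n+1}_{j+1/2})$ respectively. By construction $\dot\xi^l\equiv \lambda^l$ and $\dot\xi^r\equiv \lambda^r$.

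First, I would apply the divergence theorem to the vector field $(u, f(u))$ on $\Omega$, using the same kind of computation carried out in the proof of the Rankine--Hugoniot theorem earlier in the excerpt. The boundary $\partial\Omega$ consists of four pieces: the horizontal top and bottom, and the two lateral slanted sides. On the bottom side (at time $t^n$) the outward normal is $(-1,0)$ and the contribution becomes $-\int_{x^n_{j-1/2}}^{x^n_{j+1/2}} u(t^n,x)\,dx = -h^n_j\,\bar u^n_j$ by the definition of the cell-average. On the top side (at time $t^{n+1}$) the outward normal is $(1,0)$ and we analogously obtain $+h^{n+1}_j\,\bar u^{n+1}_j$. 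This gives the two $h^{n\pm1}_j\bar u^{n\pm 1}_j$ terms in the stated formula.

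Next, the lateral sides contribute the bracketed $f-\lambda u$ terms. On $\xi^r$ the outward unit normal is proportional to $(-\lambda^r,1)$, so the line integral of $(u,f(u))\cdot\nu\,d\sigma$ reduces, after cancelling the arclength normalization, to $\int_{t^n}^{t^{n+1}} [f(u(t,\xi^r(t)))-\lambda^r u(t,\xi^r(t))]\,dt$; analogously for $\xi^l$ with a sign change. Here the key point is that, under the CFL-type assumption implicit in the Godunov construction (same one used in Proposition \ref{prop_CFL_cond}), the line $\xi^r$ stays inside the fan of self-similar waves issued by the Riemann problem centred at $x^n_{j+1/2}$, so the solution along it is constant and equal to $\mathcal{RS}(\bar u^n_j,\bar u^n_{j+1})(\lambda^r)=\bar u^n_{j+1/2}(\lambda^r)$; similarly on $\xi^l$. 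Hence each lateral line integral evaluates trivially to $k\,[f(\bar u^n_{j\pm 1/2}(\lambda^{r,l}))-\lambda^{r,l}\bar u^n_{j\pm 1/2}(\lambda^{r,l})]$, and assembling the four pieces yields exactly \eqref{formula_conservation_imposed_nonuniform_mesh}.

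The main obstacle is the last observation: one must be sure that the slanted edges $\xi^l,\xi^r$ do not cross any of the waves emanating from the neighbouring interfaces $x^n_{j\mp 3/2}$, so that the solution along each lateral side really is a single constant state selected by the self-similar Riemann fan. This is precisely what the CFL restriction on $k$ relative to $\min_j h^n_j$ and the maximum wave speed $\lambda^n$ guarantees (argued as in Proposition \ref{prop_CFL_cond}); once this hypothesis is invoked, the rest of the computation is a direct application of the divergence theorem and carries through routinely.
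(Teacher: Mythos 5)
Your proposal is correct and follows essentially the same route as the paper: integrate the conservation law over the trapezoidal cell and apply Green's theorem (phrased by you via the divergence theorem on $(u,f(u))$ with outward normals, by the paper via direct parametrization of the four boundary edges), and the two bookkeeping conventions produce the identical identity. The only difference is presentational; the paper does not explicitly flag the CFL requirement ensuring the slanted edges stay inside a single Riemann fan, which you correctly point out as the hypothesis making $u$ constant on each lateral edge.
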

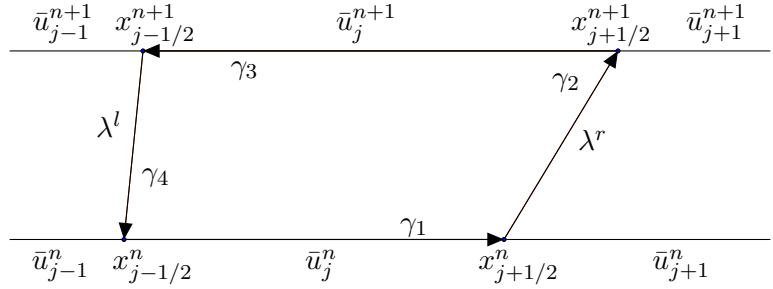
\begin{figure}[hbt]
\centering
\definecolor{zzttqq}{rgb}{0.6,0.2,0.}
\definecolor{qqqqff}{rgb}{0.,0.,1.}
\begin{tikzpicture}[scale =0.5, line cap=round,line join=round,>=triangle 45,x=1.0cm,y=1.0cm]
\clip(-4.,-3.) rectangle (17.,8.);
\draw [color=zzttqq] (0.5,5.)-- (0.,0.);
\draw [color=zzttqq] (0.,0.)-- (10.,0.);
\draw [color=zzttqq] (10.,0.)-- (13.,5.);
\draw [color=zzttqq] (13.,5.)-- (0.5,5.);
\draw [domain=-3.:17.] plot(\x,{(-150.-0.*\x)/-30.});
\draw [domain=-3.:17.] plot(\x,{(-0.-0.*\x)/30.});
\draw (-0.5,6.5) node[anchor=north west] {$x^{n+1}_{j-1/2}$};
\draw (11.5,6.5) node[anchor=north west] {$x^{n+1}_{j+1/2}$};
\draw (-0.57,0.) node[anchor=north west] {$x^n_{j-1/2}$};
\draw (9.06,0.) node[anchor=north west] {$x^n_{j+1/2}$};
\draw (4.50,0.) node[anchor=north west] {$\bar{u}^n_j$};
\draw (5.31,6.5) node[anchor=north west] {$\bar{u}^{n+1}_j$};
\draw (-2.7,0.) node[anchor=north west] {$\bar{u}^n_{j-1}$};
\draw (-2.67,6.5) node[anchor=north west] {$\bar{u}^{n+1}_{j-1}$};
\draw (14.52,6.5) node[anchor=north west] {$\bar{u}^{n+1}_{j+1}$};
\draw (13.64,0.) node[anchor=north west] {$\bar{u}^n_{j+1}$};
\draw (-1,3.66) node[anchor=north west] {$\lambda^l$};
\draw (11.68,3.17) node[anchor=north west] {$\lambda^r$};
\draw (7.,0.83) node[anchor=north west] {$\gamma_1$};
\draw (11.,4.68) node[anchor=north west] {$\gamma_2$};
\draw (2.54,5.) node[anchor=north west] {$\gamma_3$};
\draw (0.20,2.15) node[anchor=north west] {$\gamma_4$};
\draw [->] (0.,0.) -- (10.,0.);
\draw [->] (10.,0.) -- (13.,5.);
\draw [->] (13.,5.) -- (0.5,5.);
\draw [->] (0.5,5.) -- (0.,0.);
\begin{scriptsize}
\draw [fill=qqqqff] (0.,0.) circle (1.5pt);
\draw [fill=qqqqff] (10.,0.) circle (1.5pt);
\draw [fill=qqqqff] (13.,5.) circle (1.5pt);
\draw [fill=qqqqff] (0.5,5.) circle (1.5pt);
\draw [fill=qqqqff] (-10.,0.) circle (1.5pt);
\draw[color=qqqqff] (-4.16,9.71) node {$E$};
\draw [fill=qqqqff] (-10.,5.) circle (1.5pt);
\draw[color=qqqqff] (-4.16,9.71) node {$F$};
\draw [fill=qqqqff] (20.,0.) circle (1.5pt);
\draw[color=qqqqff] (-4.16,9.71) node {$G$};
\draw [fill=qqqqff] (20.,5.) circle (1.5pt);
\draw[color=qqqqff] (-4.16,9.71) node {$H$};
\end{scriptsize}
\end{tikzpicture}
\caption{Notations used in Lemma \ref{Green_theorem_on_nonuniform_mesh}.}\label{fig_Green_theorem_on_nonuniform_mesh}
\end{figure}
\begin{proof}
Integrating the conservation law $\partial_t u +\partial_x[f(u)] = 0$ over $C_{j}$ and applying Green's Theorem, we find
$$0=\int_{C_{j}} \partial_t u +\partial_x[f(u)] \, dt\, dx = \int_{\partial C_{j}} [u \, dx - f(u) \, dt] = \int_{\partial \, C_{j}} \begin{pmatrix}
-f(u)\\
u
\end{pmatrix}\cdot \begin{pmatrix}
dt\\
dx
\end{pmatrix}.$$
Let us split the boundary $\partial C_{j}$ of the cell $C_j$ in the four edges $\lbrace \gamma_i \rbrace_{i=1}^4$ parametrized in the $(t,x)$ plane as follows (see Figure \ref{fig_Green_theorem_on_nonuniform_mesh}).
\begin{itemize}
\item $\gamma_1(s) = \left(t^n, x^n_{j-1/2} + s \, h^n_j\right)$ for $s \in [0,1]$.
\item $\gamma_2(s) = \left(t^n + s\, k,x^n_{j+1/2}+s\left(x^{n+1}_{j+1/2}-x^n_{j+1/2}\right)\right)$ for $s \in [0,1]$.
\item $\gamma_3(s) = \left(t^{n+1}, x^{n+1}_{j+1/2} - s\, h^{n+1}_j\right)$ for $s\in [0,1]$.
\item $\gamma_4(s) = \left(t^{n+1} -s\, k, x^{n+1}_{j-1/2}+s\left(x^n_{j-1/2}-x^{n+1}_{j-1/2}\right)\right)$ for $s\in [0,1]$.
\end{itemize}
We have to integrate the vector field $(-f(u),u)$ on each edge.\\
For the first edge, we find
\begin{equation*}
\begin{split}
\int_{\gamma_1}  \begin{pmatrix}
-f(u)\\
u
\end{pmatrix}\cdot \begin{pmatrix}
dt\\
dx
\end{pmatrix} & = \int_0^1 \begin{pmatrix}
-f(\bar{u}^n_{j})\\
\bar{u}^n_{j}
\end{pmatrix} \cdot \gamma_1'(s)\, ds =\\
& = \int_0^1 \begin{pmatrix}
-f(\bar{u}^n_{j})\\
\bar{u}^n_{j}
\end{pmatrix} \cdot \begin{pmatrix}
0\\
h^n_j
\end{pmatrix} \, ds= \\
& = \, h^n_j\, \bar{u}^n_{j}.
\end{split}
\end{equation*}
Similarly 
\begin{equation*}
\begin{split}
\int_{\gamma_3}  \begin{pmatrix}
-f(u)\\
u
\end{pmatrix}\cdot \begin{pmatrix}
dt\\
dx
\end{pmatrix} & = \int_0^1 \begin{pmatrix}
-f(\bar{u}^{n+1}_{j})\\
\bar{u}^{n+1}_{j}
\end{pmatrix} \cdot \begin{pmatrix}
0\\
-h^{n+1}_j
\end{pmatrix} \, ds= \\
& = -h^{n+1}_j \, \bar{u}^{n+1}_{j}.
\end{split}
\end{equation*}
For the edge $\gamma_2$, we have
\begin{equation*}
\begin{split}
\int_{\gamma_2}  \begin{pmatrix}
-f(u)\\
u
\end{pmatrix}\cdot \begin{pmatrix}
dt\\
dx
\end{pmatrix} & = \int_0^1  \begin{pmatrix}
-f(\bar{u}^n_{j+1/2}(\lambda^r))\\
\bar{u}^n_{j+1/2}(\lambda^r)
\end{pmatrix}\cdot \begin{pmatrix}
k\\
\left(x^{n+1}_{j+1/2}-x^n_{j+1/2}\right)
\end{pmatrix} \, ds = \\
& = \left(x^{n+1}_{j+1/2}-x^n_{j+1/2} \right)\bar{u}^n_{j+1/2}(\lambda^r)-k \, f(\bar{u}^n_{j+1/2}(\lambda^r)) = \\
& = k \, \left[ \lambda^r \bar{u}^n_{j+1/2}(\lambda^r)-f(\bar{u}^n_{j+1/2}(\lambda^r))\right].
\end{split}
\end{equation*}
Finally
\begin{equation*}
\begin{split}
\int_{\gamma_4}  \begin{pmatrix}
-f(u)\\
u
\end{pmatrix}\cdot \begin{pmatrix}
dt\\
dx
\end{pmatrix} 
& = \int_0^1 \begin{pmatrix}
-f(\bar{u}^n_{j-1/2}(\lambda^l))\\
\bar{u}^n_{j-1/2}(\lambda^l)
\end{pmatrix}\cdot \begin{pmatrix}
-k\\
\left(x^n_{j-1/2} -x^{n+1}_{j-1/2} \right)
\end{pmatrix} \, ds = \\
& = k \left[f(\bar{u}^n_{j-1/2}(\lambda^l)) - \lambda^l \, \bar{u}^n_{j-1/2}(\lambda^l) \right]
\end{split}
\end{equation*}
Assembling the results, we find the equation (\ref{formula_conservation_imposed_nonuniform_mesh}).
\end{proof}
\subsection{First method: Discontinuity reconstruction}
The following method is similar to the discontinuity reconstruction procedure that we have used for the Riemann solver $\mathcal{RS}^\alpha_1$.\\
Fix $n\in\mathbb{N}$. Let us suppose that we have computed the piecewise constant approximate solution $\bar{u}^n$ at the time $t^n$ with the Godunov's method.\\
Let $y^n:=y(t^n)$ be the bus position at time $t^n$ and let us fix $m \in \mathbb{Z}$ such that $y^n \in C_m$. Let 
\begin{equation}
\bar{\rho}^n_m \; \text{ and } \; \bar{z}^n_m
\end{equation}
be respectively the $\rho$ and $z$ component of the approximate solution $\bar{u}^n$ in the $m$-th cell.\\
If the Riemann solver $\mathcal{RS}^\alpha_2$ does not give the classical solution, a non-classical shock appears in $x=y(t)$. Since the non-classical shock arises as the solution given by $\mathcal{RS}^\alpha_2$ to the Riemann problem with initial datum
\begin{equation}\label{initial_datum_discontinuity_reconstructio_RS_2}
u(0,x)=\begin{cases}
\bar{u}^n_{m-1} & \text{if } x\leq y^n,\\
\bar{u}^n_{m+1} & \text{if } x> y^n,
\end{cases}
\end{equation}
we will make a reconstruction of the discontinuity, if the inequality
\begin{equation}\label{second_inequality_RS_2}
f_1(\mathcal{RS}(\bar{u}^n_{m-1},\bar{u}^n_{m+1})(\bar{V}^n))>F_\alpha+\bar{V}^n\bar{\rho}(\bar{u}^n_{m-1},\bar{u}^n_{m+1})(\bar{V}^n)
\end{equation}
holds. In this case we modify the Godunov's scheme as follows.\\
We introduce in the $m$-th cell one left state $u^n_{m,l}=(\rho^n_{m,l},z^n_{m,l})$ and one right state $u^n_{m,r}=(\rho^n_{m,r},z^n_{m,r})$ defined by
$$u^n_{m,l}= \hat{u} \; \text{ and } \; u^n_{m,r}=\check{u}_2,$$
where $\hat{u}$ is defined in (\ref{def_u_hat_u_check_1}) and $\check{u}_2= (\check{\rho}_2,\check{z}_2)$ is given by
$$\check{\rho}_2 = \dfrac{F_\alpha}{\bar{v}^n_{m+1}-\bar{V}^n} \; \text{ and } \; \check{z}_2 = \check{\rho}_2(\check{v}_2+p(\check{\rho}_2)) \; \text{ with } \; \check{v}_2 = \bar{v}^n_{m+1}.$$
Then we replace the solution $\bar{u}^n_m$ obtained with the Godunov's method in the $m$-th cell, with the function $u^n_\text{rec}=(\rho^n_\text{rec},z^n_\text{rec})$ defined by
\begin{equation}
\begin{split}
&\rho^n_\text{rec}=\rho^n_{m,l}\mathbf{1}_{(x_{m-1/2},{x}^{\rho,n}_m)}+\rho^n_{m,r}\mathbf{1}_{({x}^{\rho,n}_m,x_{m+1/2})} \; \text{ and}\\
&z^n_\text{rec}=z^n_{m,l}\mathbf{1}_{(x_{m-1/2},{x}^{z,n}_m)}+z^n_{m,r}\mathbf{1}_{({x}^{z,n}_m,x_{m+1/2})},
\end{split}
\end{equation}
where we have used the two points
$${x}^{\rho,n}_m=x_{m-1/2}+h\, d^{n,\rho}_m \; \text{ and } \; {x}^{z,n}_m=x_{m-1/2}+h \, d^{n,z}_m$$
defined for two suitable constants $d^{n,\rho}_m$ and $d^{n,z}_m$ in $[0,1]$.\\
We require
\begin{equation}
\begin{split}
&\rho^n_{m,l}d^{n,\rho}_m+\rho^n_{m,r}(1-d^{n,\rho}_m)=\bar{\rho}^n_m \; \text{ and}\\
&z^n_{m,l}d^{n,z}_m+z^n_{m,r}(1-d^{n,z}_m)=\bar{z}^n_m.
\end{split}
\end{equation}
Solving these two equations for $d^{n,\rho}_m$ and $d^{n,z}_m$, we find
\begin{equation}\label{costanti_ricostruzione_discontinuita_RS2}
d^{n,\rho}_m=\dfrac{\bar{\rho}^n_m-\rho^n_{m,r}}{\rho^n_{m,l}-\rho^n_{m,r}} \; \text{ and } \; d^{n,z}_m=\dfrac{\bar{z}^n_m-z^n_{m,r}}{z^n_{m,l}-z^n_{m,r}}.
\end{equation}
Clearly, the conditions $d^{n,\rho}_m\in [0,1]$ and $d^{n,z}_m\in [0,1]$ are necessary to reconstruct the discontinuity in the cell $C_m$. These two constants are in general different.\\
We assume that the discontinuities propagate at the same speed of the non-classical shock. Therefore their positions at time $t^{n+1}$ are
\begin{equation}
x^{n+1,\rho}_m = x^{n,\rho}_m + \bar{V}^n\, k \; \text{ and } \; x^{n+1,z}_m = x^{n,z}_m + \bar{V}^n\,k.
\end{equation}
Let us introduce the matrix
$$\mathbf{d}= \begin{bmatrix}
d^{n,\rho}_m & 0\\
0 & d^{n,z}_m
\end{bmatrix}$$
and let
$$\mathbf{I}= \begin{bmatrix}
1 & 0\\
0 & 1
\end{bmatrix}$$
be the $2 \times 2$ identity matrix.\\
We have to distinguish the following cases.
\begin{enumerate}
\item[(i)] If $x^{n+1,\rho}_m < x_{m+1/2}$ and $x^{n+1,z}_m < x_{m+1/2}$, then we compute the average solution at the new time step in the $(m-1)$-th and in the $(m+1)$-th cell as:
\begin{equation}\label{disc_rec_RS2_case_i_cells_before_and_after}
\begin{split}
& \bar{u}^{n+1}_{m-1} = \bar{u}^n_{m-1} -\dfrac{k}{h} \big[F(\bar{u}^n_{m-1},\hat{u})-F(\bar{u}^n_{m-2}-\bar{u}^n_{m-1})\big] \; \text{ and }\\
& \bar{u}^{n+1}_{m+1} = \bar{u}^n_{m+1} -\dfrac{k}{h}\big[F(\bar{u}^n_{m+1},\bar{u}^n_{m+2})-F(\check{u}_2,\bar{u}^n_{m+1})\big].
\end{split}
\end{equation}
For the $m$-th cell, we set
\begin{equation}\label{discontinuity_recostruction_second_r_s_case_i_new_solution_in_m}
\bar{u}^{n+1}_m = \dfrac{1}{h}\left[ (h\, \mathbf{d}+k \, \bar{V}^n \mathbf{I}) \, \tilde{u}^{n+1}_{m,l} + (h(\mathbf{I}-\mathbf{d})-k \, \bar{V}^n \mathbf{I}) \, \tilde{u}^{n+1}_{m,r}\right],
\end{equation}
where $\tilde{u}^{n+1}_{m,l}$ and $\tilde{u}^{n+1}_{m,r}$ are given by the equation (\ref{formula_conservation_imposed_nonuniform_mesh}) applied to the two parts in which the $m$-th cell is divided by the travelling discontinuity (see Figure \ref{fig_disc_rec_sec_RS_case_1}), i.e.
\begin{equation}\label{discontinuity_reconstruction_second_riemann_solver_case_i}
\begin{split}
& (h\, \mathbf{d}+\bar{V}^n\,k \, \mathbf{I})\, \tilde{u}^{n+1}_{m,l} = h\, \mathbf{d}\, \hat{u} - k \left[f(\hat{u})-\bar{V}^n \, \hat{u} - F(\bar{u}^n_{m-1},\hat{u})\right] \; \text{ and}\\
& (h(\mathbf{I}-\mathbf{d})-\bar{V}^n \, k \, \mathbf{I})\, \tilde{u}^{n+1}_{m,r} = h(\mathbf{I}-\mathbf{d}) \, \check{u}_2 - k\left[F(\check{u}_2,\bar{u}^n_{m+1})-f(\check{u}_2)+\bar{V}^n \check{u}_2\right].
\end{split}
\end{equation}
\item[(ii)] If $x^{n+1,\rho}_m \geq x_{m+1/2}$ and $x^{n+1,z}_m \geq x_{m+1/2}$, then for the cell $(m-1)$ we set
\begin{equation}\label{disc_rec_RS2_case_ii_cell_before}
\bar{u}^{n+1}_{m-1} = \bar{u}^n_{m-1} -\dfrac{k}{h} \left[F(\bar{u}^n_{m-1},\hat{u})-F(\bar{u}^n_{m-2}-\bar{u}^n_{m-1})\right].
\end{equation}
Applying the formula (\ref{formula_conservation_imposed_nonuniform_mesh}) to the two parts in which the travelling discontinuity divides the cells $m$ and $(m+1)$ (see Figure \ref{fig_disc_rec_sec_RS_case_2}), we define $\tilde{u}^{n+1}_{m}$ and $\tilde{u}^{n+1}_{m+1}$ as the solutions to the equations
\begin{equation}\label{discontinuity_reconstruction_second_riemann_solver_case_ii}
\begin{split}
& (h \, \mathbf{d}+ k \, \bar{V}^n \, \mathbf{I}) \tilde{u}^{n+1}_{m} = h \, \mathbf{d} \, \hat{u} -k \left[ f(\hat{u})-\bar{V}^n \, \hat{u}-F(\bar{u}^n_{m-1},\hat{u})\right] \; \text{ and}\\
& \begin{split}(h(2\mathbf{I}-\mathbf{d}) - k\, \bar{V}^n \, \mathbf{I}) \, \tilde{u}^{n+1}_{m+1} & = h(\mathbf{I}-\mathbf{d}) \, \check{u}_2 + h\, \bar{u}^n_{m+1} + \\
& - k \left[ F(\bar{u}^n_{m+1},\bar{u}^n_{m+2})-f(\check{u}_2)+\bar{V}^n\,  \check{u}_2 \right]
\end{split}
\end{split}
\end{equation}
Then, in order to compute the average solutions $\bar{u}^{n+1}_{m}$ and $\bar{u}^{n+1}_{m+1}$ on the $m$-th and the $(m+1)$-th cell, we make an appropriate combination of $\tilde{u}^{n+1}_{m}$ and $\tilde{u}^{n+1}_{m+1}$, obtaining
\begin{equation}\label{disc_rec_RS2_case_ii_combinations}
\begin{split}
& \bar{u}^{n+1}_m = \tilde{u}^{n+1}_m \; \text{ and}\\
& \bar{u}^{n+1}_{m+1} = \dfrac{1}{h}\left[(h(2\mathbf{I}-\mathbf{d})-k\, \, \bar{V}^n\,  \mathbf{I})\, \tilde{u}^{n+1}_{m+1} + (h(\mathbf{d}-\mathbf{I}) + k\, \bar{V}^n \, \mathbf{I})\, \tilde{u}^{n+1}_m \right].
\end{split}
\end{equation}
\item[(iii)] If at time $t^{n+1}$ one component of the discontinuity is in the $m$-th cell and the other is in the $(m+1)$-th cell, then we apply respectively the case (i) to the former and the case (ii) to the latter.
\end{enumerate}
For every $j \notin \lbrace m-1, \, m, \, m+1 \rbrace$, we apply the standard Godunov's method to compute the solution $\bar{u}^{n+1}_j$ at the new time step.\\

\begin{figure}
\centering
\begin{subfigure}[h]{0.75\linewidth}
\centering
\definecolor{xdxdff}{rgb}{0.49019607843137253,0.49019607843137253,1.}
\definecolor{qqqqff}{rgb}{0.,0.,1.}
\begin{tikzpicture}[scale=0.45,line cap=round,line join=round,>=triangle 45,x=1.0cm,y=1.0cm]
\clip(-4.,-2.) rectangle (16.,7.);
\draw [domain=-4.:16.] plot(\x,{(--70.-0.*\x)/14.});
\draw [domain=-4.:16.] plot(\x,{(-0.-0.*\x)/14.});
\draw (0.,5.)-- (0.,0.);
\draw (7.,5.)-- (7.,0.);
\draw (14.,5.)-- (14.,0.);
\draw (2.2,0.)-- (3.82,5.);
\draw (-2,0.) node[anchor=north west] {$x_{m-1/2}$};
\draw (5.5,0.) node[anchor=north west] {$x_{m+1/2}$};
\draw (12.5,0.) node[anchor=north west] {$x_{m+3/2}$};
\draw (2.57,-0.68) node[anchor=north west] {$C_m$};
\draw (9.70,-0.50) node[anchor=north west] {$C_{m+1}$};
\draw (3.07,3.30) node[anchor=north west] {$\bar{V}^n$};
\draw (1.2,0.) node[anchor=north west] {$x^{n,\rho}_m$};
\draw (0.90,5.2) node[anchor=north west] {$\tilde{\rho}^{n+1}_{m,l}$};
\draw (4.67,5.20) node[anchor=north west] {$\tilde{\rho}^{n+1}_{m,r}$};
\draw (9.19,5.2) node[anchor=north west] {$\bar{\rho}^{n+1}_{m+1}$};
\draw (4.52,1.30) node[anchor=north west] {$\check{\rho}_2$};
\draw (0.8,1.23) node[anchor=north west] {$\hat{\rho}$};
\draw (-2.56,1.49) node[anchor=north west] {$\bar{\rho}^n_{m-1}$};
\draw (-2.78,5.2) node[anchor=north west] {$\bar{\rho}^{n+1}_{m-1}$};
\draw (9.70,1.41) node[anchor=north west] {$\bar{\rho}^n_{m+1}$};
\draw (2.93,6.55) node[anchor=north west] {$x^{n+1,\rho}_m$};
\begin{scriptsize}
\draw [fill=qqqqff] (0.,0.) circle (3pt);
\draw [fill=qqqqff] (14.,0.) circle (3pt);
\draw [fill=qqqqff] (7.,0.) circle (3pt);
\draw [fill=qqqqff] (0.,5.) circle (3pt);
\draw [fill=qqqqff] (7.,5.) circle (3pt);
\draw [fill=qqqqff] (14.,5.) circle (3pt);
\draw [fill=xdxdff] (2.2,0.) circle (3pt);
\draw [fill=xdxdff] (3.82,5.) circle (3pt);
\end{scriptsize}
\end{tikzpicture}
\caption{Case $x^{n+1,\rho}_m < x_{m+1/2}$ for the $\rho$ component. The values $\tilde{\rho}^{n+1}_{m,l}$ and $\tilde{\rho}^{n+1}_{m,r}$, respectively on the left and the right side of the discontinuity at time $t^{n+1}$, are computed imposing the conservation of the solution on both sides of the discontinuity. The approximate solution at time $t^{n+1}$ in the $m$-th cell is obtained averaging $\tilde{\rho}^{n+1}_{m,l}$ and $\tilde{\rho}^{n+1}_{m,r}$.}\label{fig_disc_rec_sec_RS_case_1}
\end{subfigure}
\\
\begin{subfigure}[h]{0.75\linewidth}
\centering
\definecolor{xdxdff}{rgb}{0.49019607843137253,0.49019607843137253,1.}
\definecolor{qqqqff}{rgb}{0.,0.,1.}
\begin{tikzpicture}[scale =0.45,line cap=round,line join=round,>=triangle 45,x=1.0cm,y=1.0cm]
\clip(-4.,-2.) rectangle (15.5,7.);
\draw [domain=-4.:15.5] plot(\x,{(--70.-0.*\x)/14.});
\draw [domain=-4.:15.5] plot(\x,{(-0.-0.*\x)/14.});
\draw (0.,5.)-- (0.,0.);
\draw (7.,5.)-- (7.,0.);
\draw (14.,5.)-- (14.,0.);
\draw (3.64,0.)-- (9.,5.);
\draw (-2,0.) node[anchor=north west] {$x_{m-1/2}$};
\draw (5.,0.) node[anchor=north west] {$x_{m+1/2}$};
\draw (12,0.) node[anchor=north west] {$x_{m+3/2}$};
\draw (1.5,-0.8) node[anchor=north west] {$C_m$};
\draw (9.99,-0.8) node[anchor=north west] {$C_{m+1}$};
\draw (4.5,3.14) node[anchor=north west] {$\bar{V}^n$};
\draw (2.5,0.) node[anchor=north west] {$x^{n,\rho}_m$};
\draw (2.44,5.1) node[anchor=north west] {$\tilde{\rho}^{n+1}_{m}$};
\draw (10.66,5.1) node[anchor=north west] {$\tilde{\rho}^{n+1}_{m+1}$};
\draw (5.12,1.26) node[anchor=north west] {$\check{\rho}_2$};
\draw (1.36,1.26) node[anchor=north west] {$\hat{\rho}$};
\draw (-3.35,1.23) node[anchor=north west] {$\bar{\rho}^n_{m-1}$};
\draw (-3.41,5.1) node[anchor=north west] {$\bar{\rho}^{n+1}_{m-1}$};
\draw (10.22,1.26) node[anchor=north west] {$\bar{\rho}^n_{m+1}$};
\draw (8.18,6.6) node[anchor=north west] {$x^{n+1,\rho}_m$};
\begin{scriptsize}
\draw [fill=qqqqff] (0.,0.) circle (3pt);
\draw [fill=qqqqff] (14.,0.) circle (3pt);
\draw [fill=qqqqff] (7.,0.) circle (3pt);
\draw [fill=qqqqff] (0.,5.) circle (3pt);
\draw [fill=qqqqff] (7.,5.) circle (3pt);
\draw [fill=qqqqff] (14.,5.) circle (3pt);
\draw [fill=xdxdff] (3.64,0.) circle (3pt);
\draw [fill=xdxdff] (9.,5.) circle (3pt);
\end{scriptsize}
\end{tikzpicture}
\caption{Case $x^{n+1,\rho}_m \geq x_{m+1/2}$. The values $\tilde{\rho}^{n+1}_{m}$ and $\tilde{\rho}^{n+1}_{m+1}$, respectively on the left and the right side of the discontinuity at time $t^{n+1}$, are computed imposing the conservation of the solution on both sides of the discontinuity. The approximate solution at time $t^{n+1}$ in the $m$-th and in the $(m+1)$-th cells are obtained with an appropriate combination of $\tilde{\rho}^{n+1}_{m}$ and $\tilde{\rho}^{n+1}_{m+1}$.}\label{fig_disc_rec_sec_RS_case_2}
\end{subfigure}
\caption{Representation of the reconstruction method. Analogous notations are used for the $z$ component.}
\end{figure}
The next proposition states that if the initial datum is a non-classical shock, then the solution given by the discontinuity reconstruction method is the non-classical shock itself.
\begin{prop}\label{prop_disc_rec_RS2_non_classical_shock}
Fix $n\in \mathbb{N}$ and let us suppose that $y^n = x_{m-1/2}$, where $y^n$ is the bus initial position. Let us consider the Riemann problem
\begin{equation}\label{RP_non_classical_shock}
\begin{cases}
\partial_t \, u + \partial_x \, [f(u)] = 0,\\
u(t^n,x) = \begin{cases}
\hat{u} & \text{if } x\leq y^n,\\
\check{u}_2 & \text{if } x>y^n.
\end{cases}
\end{cases}
\end{equation}
 If there exists $\gamma\in [0,1]$ such that
$$\bar{u}^n_m = \gamma\, \bar{u}^n_{m-1}+(1-\gamma) \, \bar{u}^n_{m+1} = \gamma \, \hat{u}+(1-\gamma) \, \check{u}_2,$$
then we have
$$d^{n,\rho}_m = d^{n,z}_m = \gamma.$$
Moreover at time $t^{n+1}$ the solution given by the discontinuity reconstruction procedure for $\mathcal{RS}^\alpha_2$ to (\ref{RP_non_classical_shock}) is
\begin{equation}
u(t^{n+1},x) = \begin{cases}
\hat{u} & \text{if } x \leq x_{j-1/2},\\
\tilde{u} & \text{if } x_{j-1/2}< x < x_{j+1/2},\\
\check{u}_2 & \text{if } x\geq x_{j+1/2},
\end{cases}
\end{equation}
where 
\begin{equation*}
j = \begin{cases}
m & \text{if } \; x^{n+1,\rho}_m<x_{m+1/2} \; \text{ and } \; x^{n+1,z}_m < x_{m+1/2},\\
m+1 & \text{otherwise},
\end{cases}
\end{equation*}
and where $\tilde{u}$ is a convex combination of $\hat{u}$ and $\check{u}_2$; see Figure \ref{fig_nonclassical_shock_RS_2}.
\end{prop}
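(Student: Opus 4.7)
The plan is to verify the claim by direct substitution into the defining formulas of the reconstruction scheme, exploiting two simplifications that the hypothesis produces: the coincidence of $d^{n,\rho}_m$ and $d^{n,z}_m$ with $\gamma$, and the fact that the neighbouring cells already carry the boundary values of the non-classical shock, so that each numerical flux at $x_{m\pm 1/2}$ collapses to $f$ evaluated at a single state.

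First I would establish the identity $d^{n,\rho}_m=d^{n,z}_m=\gamma$. The hypothesis $\bar{u}^n_m=\gamma\,\hat{u}+(1-\gamma)\,\check{u}_2$ holds componentwise, so substituting $\rho^n_{m,l}=\hat{\rho}$, $\rho^n_{m,r}=\check{\rho}_2$, $z^n_{m,l}=\hat{z}$ and $z^n_{m,r}=\check{z}_2$ into \eqref{costanti_ricostruzione_discontinuita_RS2} gives both quotients equal to $\gamma$, as was done in the analogous Proposition~\ref{posizione_delle_discontinuita_identica}. This means the matrix $\mathbf{d}$ in \eqref{discontinuity_reconstruction_second_riemann_solver_case_i} and \eqref{discontinuity_reconstruction_second_riemann_solver_case_ii} equals $\gamma\,\mathbf{I}$, which decouples the two components. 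Moreover, since $\bar{u}^n_{m-1}=\hat{u}$ and $\bar{u}^n_{m+1}=\check{u}_2$, the Godunov fluxes at the cell interfaces reduce to $F(\bar{u}^n_{m-1},\hat{u})=f(\hat{u})$ and $F(\check{u}_2,\bar{u}^n_{m+1})=f(\check{u}_2)$.

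Next I would handle Case~(i), $x^{n+1,\rho}_m,x^{n+1,z}_m<x_{m+1/2}$. Substituting the above simplifications into \eqref{discontinuity_reconstruction_second_riemann_solver_case_i} yields
\[
(h\gamma+k\bar{V}^n)\,\tilde{u}^{n+1}_{m,l}=h\gamma\,\hat{u}-k\bigl[f(\hat{u})-\bar{V}^n\hat{u}-f(\hat{u})\bigr]=(h\gamma+k\bar{V}^n)\,\hat{u},
\]
so $\tilde{u}^{n+1}_{m,l}=\hat{u}$, and an analogous cancellation on the right produces $\tilde{u}^{n+1}_{m,r}=\check{u}_2$. Plugging these into \eqref{discontinuity_recostruction_second_r_s_case_i_new_solution_in_m} gives $\bar{u}^{n+1}_m$ as a convex combination of $\hat{u}$ and $\check{u}_2$ with weight $\gamma':=(h\gamma+k\bar{V}^n)/h$ on $\hat{u}$, while the cells $m\pm 1$ remain unchanged by \eqref{disc_rec_RS2_case_i_cells_before_and_after} (the fluxes on their two sides are identical). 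The reconstructed discontinuity sits at $x_{m-1/2}+h\gamma'=x^{n,\rho}_m+k\bar{V}^n$, exactly the non-classical shock transported with speed $\bar{V}^n$. This gives the stated form with $j=m$.

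For Case~(ii), $x^{n+1,\rho}_m,x^{n+1,z}_m\geq x_{m+1/2}$, the same cancellations in \eqref{discontinuity_reconstruction_second_riemann_solver_case_ii} give $\tilde{u}^{n+1}_m=\hat{u}$ and $\tilde{u}^{n+1}_{m+1}=\check{u}_2$. Combining according to \eqref{disc_rec_RS2_case_ii_combinations} yields $\bar{u}^{n+1}_m=\hat{u}$ and $\bar{u}^{n+1}_{m+1}=\gamma''\hat{u}+(1-\gamma'')\check{u}_2$ where $\gamma''=\gamma'-1\in[0,1]$, and the discontinuity lies at $x_{m+1/2}+h\gamma''$; Case~(iii) reduces to (i) on one component and (ii) on the other. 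The main---and essentially only---obstacle is to keep the matrix bookkeeping straight and to check that the weights $(h\gamma+k\bar{V}^n)/h$ and $(h(2-\gamma)-k\bar{V}^n)/h$ stay in $[0,1]$ under the strong CFL condition \eqref{strong_CFL_condiftion}, which controls the step so that the reconstructed front does not jump two cells in a single iteration.
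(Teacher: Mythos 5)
Your proposal is correct and follows essentially the same route as the paper's proof: establish $d^{n,\rho}_m=d^{n,z}_m=\gamma$ by componentwise substitution, observe that the neighbouring-cell averages already equal $\hat{u}$ and $\check{u}_2$ so the Godunov fluxes collapse to $f(\hat{u})$ and $f(\check{u}_2)$, and then verify by cancellation in \eqref{discontinuity_reconstruction_second_riemann_solver_case_i}--\eqref{disc_rec_RS2_case_ii_combinations} that the exact left and right states are propagated, leaving a single cell with a convex combination. One small inaccuracy in your last paragraph: Case~(iii) is not treated by splitting the components; it simply cannot occur here, because $d^{n,\rho}_m=d^{n,z}_m$ together with a common transport speed $\bar{V}^n$ forces $x^{n+1,\rho}_m=x^{n+1,z}_m$, so both reconstructed fronts are always in the same cell. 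Also, the membership of the convex weights in $[0,1]$ in each of the two remaining cases follows automatically from the case assumption (whether $x^{n+1}_m<x_{m+1/2}$ or not), rather than from a separate CFL check; the CFL condition only enters to ensure the front does not leap beyond cell $m+1$.
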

\begin{proof}
By the definition (\ref{costanti_ricostruzione_discontinuita_RS2}) and the hypothesis, we have
\begin{equation*}
\begin{split}
d^{n,\rho}_m & = \dfrac{\tilde{\rho}-\check{\rho}_2}{\hat{\rho}-\check{\rho}_2}= \dfrac{\gamma \, \hat{\rho}+(1-\gamma) \, \check{\rho}_2-\check{\rho}_2}{\hat{\rho}-\check{\rho}_2} =\\
& = \dfrac{\gamma\, \hat{\rho}-\gamma \, \check{\rho}_2}{\hat{\rho}-\check{\rho}_2} = \gamma,
\end{split}
\end{equation*}
where $\tilde{\rho}$ is the $\rho$ component of $\tilde{u}$. Similarly for $d^{n,z}_m$.\\
In order to apply the Godunov's method, we want a piecewise constant initial datum. Therefore, we compute the average value of the solution in the cell $C_m$ obtaining
$$\bar{u}^n_m = \dfrac{1}{h}\left[(x_{m+1/2}-y^n) \, \check{u}_2 + (y^n-x_{m-1/2}) \, \hat{u} \right].$$
This is a convex combination of $\hat{u}$ and $\check{u}_2$. Therefore the first part of the proposition holds and we have
\begin{equation}\label{position_of_the_discontinuity_proof}
d^{n,\rho}_m = d^{n,z}_m = \gamma = \dfrac{y^n-x_{m-1/2}}{h}.
\end{equation}
If $x^{n+1,\rho}_m < x_{m+1/2}$ and $x^{n+1,z}_m < x_{m+1/2}$, then by the equations (\ref{disc_rec_RS2_case_i_cells_before_and_after}), we have
\begin{equation*}
\begin{split}
& \bar{u}^{n+1}_{m-1} = \hat{u}-\dfrac{k}{h}\left[F(\hat{u},\hat{u})-F(\hat{u},\hat{u})\right] = \hat{u} \; \text{ and}\\
& \bar{u}^{n+1}_{m+1} = \check{u}_2 - \dfrac{k}{h}\left[F(\check{u}_2,\check{u}_2)-F(\check{u}_2,\check{u}_2)\right] = \check{u}_2,
\end{split}
\end{equation*}
because $\bar{u}^n_j = \hat{u}$ for every $j \leq m-1$ and $\bar{u}^n_j = \check{u}_2$ for every $j \geq m+1$.\\
For the cell $m$, by the formula (\ref{discontinuity_reconstruction_second_riemann_solver_case_i}), we have
\begin{equation*}
\begin{split}
& \tilde{u}^{n+1}_{m,l} = \dfrac{h\, \gamma\,\hat{u} - k\left[f(\hat{u})-\bar{V}^n\, \hat{u}-F(\hat{u},\hat{u}) \right]}{h\, \gamma + \bar{V}^n \, k} = \hat{u} \; \text{ and}\\
& \tilde{u}^{n+1}_{m,r} = \dfrac{h(1-\gamma)\check{u}_2-k\,\left[F(\check{u}_2,\check{u}_2)-f(\check{u}_2)+\bar{V}^n\,\check{u}_2) \right]}{h(1-\gamma)-\bar{V}^n\,k}=\check{u}_2.
\end{split}
\end{equation*}
Hence the new solution in the $m$-th cell, by the equation (\ref{discontinuity_recostruction_second_r_s_case_i_new_solution_in_m}), is
$$\bar{u}^{n+1}_m = \dfrac{1}{h}\left[(h \, \gamma+k\bar{V}^n)\hat{u} + (h(1-\gamma)-k\,\bar{V}^n)\check{u}_2 \right]=\tilde{u}.$$
Since
$$\dfrac{h \, \gamma + k\, \bar{V}^n}{h}+ \dfrac{h(1-\gamma) -k\, \bar{V}^n}{h} = 1,$$
$\tilde{u}$ is a convex combination of $\hat{u}$ and $\check{u}_2$.\\
If $x^{n+1,\rho}_m \geq x_{m+1/2}$ and $x^{n+1,z}_m \geq x_{m+1/2}$, then by the formula (\ref{disc_rec_RS2_case_ii_cell_before}), we find
$$\bar{u}^{n+1}_{m-1} = \hat{u}-\dfrac{k}{h}\left[ F(\hat{u},\hat{u})-F(\hat{u},\hat{u})\right] = \hat{u}.$$
Applying the equations (\ref{discontinuity_reconstruction_second_riemann_solver_case_ii}), we obtain
\begin{equation*}
\begin{split}
& \tilde{u}^{n+1}_m = \dfrac{h\,\gamma\, \hat{u}-k\left[f(\hat{u})-\bar{V}^n \, \hat{u}-F(\hat{u},\hat{u})\right]}{h\,\gamma + k\, \bar{V}^n}= \hat{u} \; \text{ and}\\
& \tilde{u}^{n+1}_{m+1} = \dfrac{h(1-\gamma)\check{u}_2+h\, \check{u}_2- k \left[F(\check{u}_2,\check{u}_2)-f(\check{u}_2)+\bar{V}^n\, \check{u}_2 \right]}{h(2-\gamma)-k\, \bar{V}^n} = \check{u}_2.
\end{split}
\end{equation*}
Therefore, by (\ref{disc_rec_RS2_case_ii_combinations}), we have
\begin{equation*}
\begin{split}
& \bar{u}^{n+1}_m = \tilde{u}^{n+1}_m = \hat{u} \; \text{ and}\\
& \bar{u}^{n+1}_{m+1} = \dfrac{1}{h}\left[(h(2-\gamma)-k\, \bar{V}^n)\check{u}_2 + (h(\gamma-1)+k\,\bar{V}^n)\tilde{u}^{n+1}_m \right] = \tilde{u}.
\end{split}
\end{equation*}
Since
$$\dfrac{h(2-\gamma)-k\, \bar{V}^n}{h}+\dfrac{h(\gamma-1)+ k \, \bar{V}^n}{h} = 1,$$
$\tilde{u}$ is a convex combination of $\hat{u}$ and $\check{u}_2$.
\end{proof}
\begin{figure}[hbt]
\centering
\includegraphics[width=0.95\linewidth]{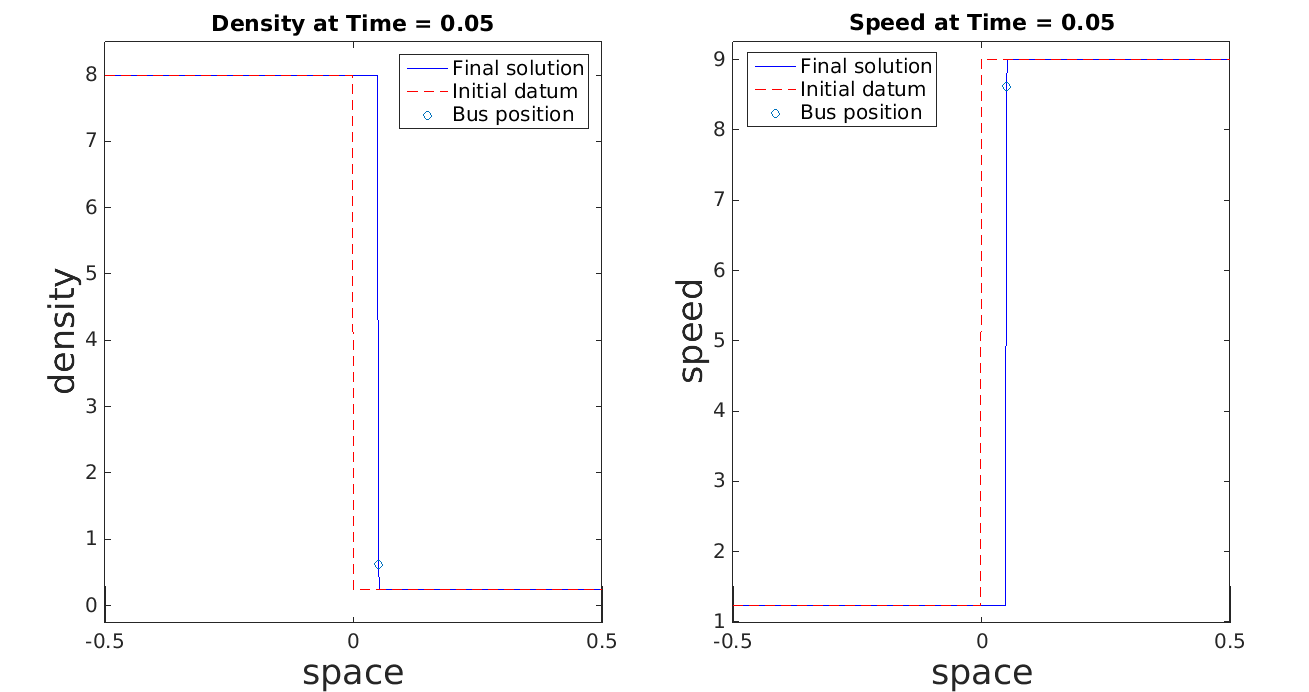}
\caption{Reconstruction of the non-classical shock with initial datum $(\rho^l,v^l)=(8,F_\alpha/8+V_b)$ and $(\rho^r,v^r)=(F_\alpha/8,V_b+8)$ obtained with the discontinuity reconstruction method for $\mathcal{RS}^\alpha_2$. By Remark \ref{non_classical_shock_for_RS_alfa_1}, this initial datum is a non-classical shock: $(\rho^l,v^l)=(\hat{\rho},\hat{v})$ and $(\rho^r,v^r)=(\check{\rho}_2,\check{v}_2)$ (in this particular case we have $(\check{\rho}_1,\check{v}_1)=(\check{\rho}_2,\check{v}_2)$). The other parameters are $R_{\max}=15$, $V_b=1$, $y_0=0$ and $\alpha=0.25$. The pressure function is $p(\rho)=\rho)$.\\
Note that the value of the density and of the velocity in the bus cell, is an average of the left and right state.}\label{fig_nonclassical_shock_RS_2}
\end{figure}
\begin{remark}
If we do not require $y^n = x_{m-1/2}$, whenever $y^n > x^{\rho,n}_m = x^{z,n}_m$ the bus crosses the right interface of the $m$-th cell before the discontinuity. This introduces an error in the solution, because at time $t^{n+1}$ in the $m$-th cell we should have $\hat{u}$, while we will have a convex combination of $\hat{u}$ and $\check{u}_2$.\\
On the contrary, if $y^n = x_{m-1/2}$, then cannot happen that at time $t^{n+1}$ the bus is in the $m$-th cell and the discontinuity in the $(m+1)$-th cell or vice versa. Indeed by the equation (\ref{position_of_the_discontinuity_proof}), we find that the discontinuity at time $t^n$ is in
$$x^{n,\rho}_m = x^{n,\rho}_z = x_{m-1/2}+h\, \dfrac{y^n-x_{m-1/2}}{h} = y^n.$$
This fact and the Proposition \ref{prop_disc_rec_RS2_non_classical_shock}, ensure that a non-classical shock is captured exactly with the reconstruction procedure, except for the cell in which the bus is at time $t^{n+1}$, where we have a convex combination of $\hat{u}$ and $\check{u}_2$.
\end{remark}
For a more general initial datum the right trace of the non-classical shock obtained with the discontinuity reconstruction procedure is overestimated; see Figure \ref{fig_example_overestimation_RS_2_conservation_imposed}.\\
This error is due to the fact that the program does not capture the exact right state, but it takes a point $(\tilde{\rho},\tilde{v})$ with a lower density and momentum and correspondingly, a higher velocity.\\
Moreover the Godunov's method is not effective to capture contact discontinuities.
\begin{figure}
\centering
\begin{subfigure}[h]{0.95\linewidth}
\includegraphics[width=\textwidth]{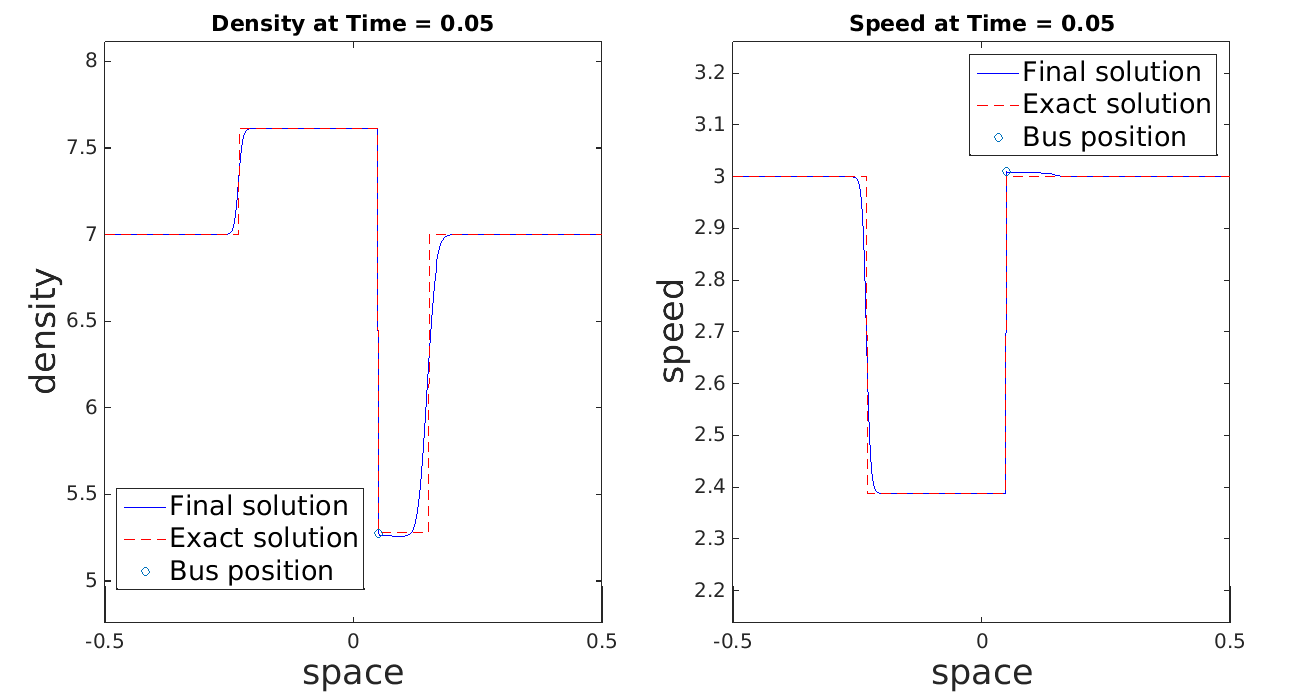}
\caption{The initial datum is constant $(\rho,v)(0,x) = (7,3)$, $\alpha= 0.5$, $y_0=0$ and the bus maximal speed is $V_b = 1$.}
\end{subfigure}
\\
\begin{subfigure}[h]{0.95\linewidth}
\includegraphics[width=\textwidth]{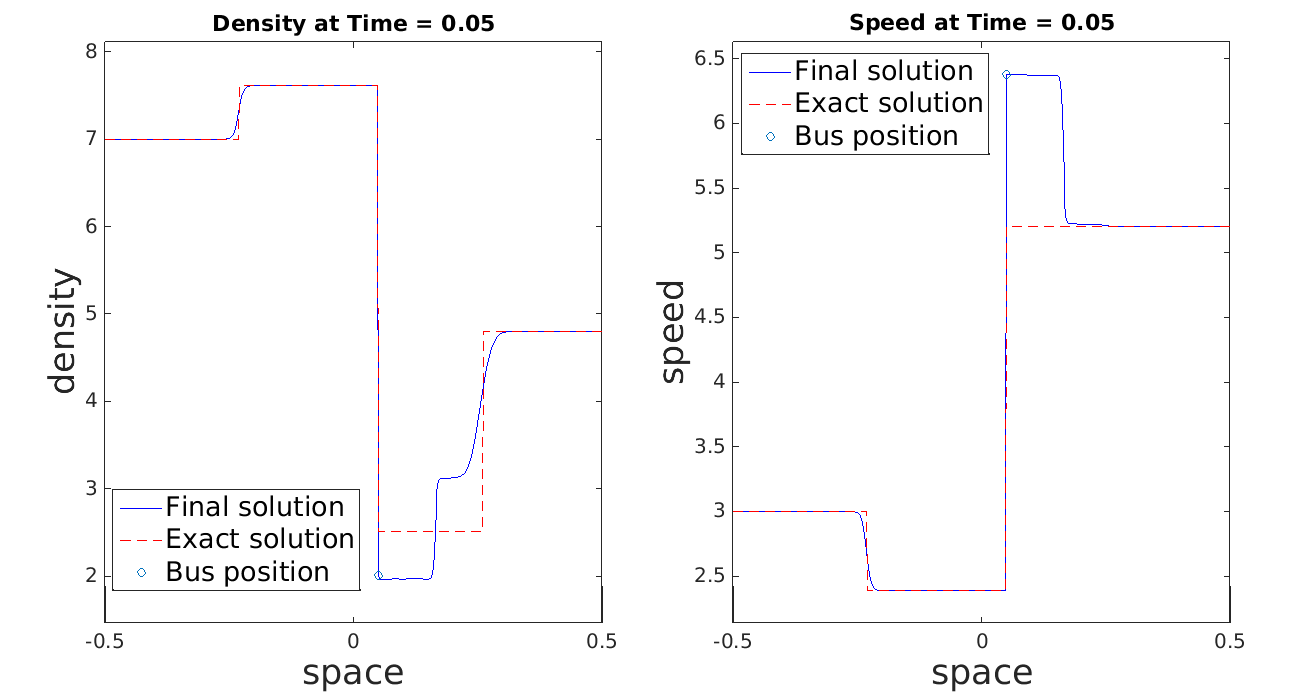}
\caption{The initial datum is $(\rho,v)(0,x) = (7,3)$ if $x<0$ and $(\rho,v) = (4.8,5.2)$ if $x>0$, $\alpha= 0.5$, $y_0=0$ and the bus maximal speed is $V_b = 1$.}
\end{subfigure}
\caption{Numerical solutions obtained with the reconstruction procedure proposed for $\mathcal{RS}^\alpha_2$ for $p(\rho) = \rho$. We can see that the right trace of the non-classical shock is overstimated.}\label{fig_example_overestimation_RS_2_conservation_imposed}
\end{figure}

\subsubsection{Discontinuity reconstruction with fixed value}
The right trace of the non-classical shock obtained with the discontinuity reconstruction procedure is overestimated; see Figure \ref{fig_example_overestimation_RS_2_conservation_imposed}.
Therefore, following \cite{garavello_goatin}, we propose to correct the method as follows.
\begin{enumerate}
\item[(i)] If $x^{n+1,z}_m < x_{m+1/2}$, then we fix at $\check{v}_2$ the value of the velocity in the sector $[x^{n,z}_m+s\, \bar{V}^n,x_{m+1/2})$ for $s\in [0,k]$. In particular for $s=k$ we have
$$\bar{v}^{n+1}_{m,r} = \bar{v}^n_{m+1}$$
and we modify the second component of $\tilde{u}^{n+1}_{m,r}$ as
$$\tilde{z}^{n+1}_{m,r} = \begin{cases}
\tilde{\rho}^{n+1}_{m,r}(\bar{v}^n_{m+1}+p(\tilde{\rho}^{n+1}_{m,r})) & \text{if } x^{n+1,m}_\rho < x_{m+1/2},\\
\tilde{\rho}^{n+1}_{m}(\bar{v}^n_{m+1}+p(\tilde{\rho}^{n+1}_{m})) & \text{if } x^{n+1,m}_\rho \geq x_{m+1/2},
\end{cases}$$
where $\tilde{\rho}^{n+1}_{m,r}$ and $\tilde{\rho}^{n+1}_{m}$ are the first component respectively of the vector defined in (\ref{discontinuity_reconstruction_second_riemann_solver_case_i}) and (\ref{discontinuity_reconstruction_second_riemann_solver_case_ii}). Then we use this value in the equation (\ref{discontinuity_recostruction_second_r_s_case_i_new_solution_in_m}).
\item[(ii)] If $x^{n+1,z}_m \geq x_{m+1/2}$, then we fix the value of the velocity in the cell $(m+1)$, i.e.
$$\bar{v}^{n+1}_{m+1} = \bar{v}^n_{m+1}$$
and we update the second component of $\tilde{u}^{n+1}_{m+1}$ as
$$\tilde{z}^{n+1}_{m+1} = \begin{cases}
\bar{\rho}^{n+1}_{m+1}(\bar{v}^n_{m+1}+p(\bar{\rho}^{n+1}_{m+1})) & \text{if } x^{n+1,m}_\rho < x_{m+1/2},\\
\tilde{\rho}^{n+1}_{m+1}(\bar{v}^n_{m+1}+p(\tilde{\rho}^{n+1}_{m+1})) & \text{if } x^{n+1,m}_\rho \geq x_{m+1/2},
\end{cases}$$
where $\tilde{\rho}^{n+1}_{m+1}$ is the first component of the vector defined in (\ref{discontinuity_reconstruction_second_riemann_solver_case_ii}).
\end{enumerate}
Unfortunately even with these corrections we do not obtain the desired results (see Figure \ref{fig_example_overestimation_RS_2_fixed_value}), although in some cases the results are better than the ones obtained with the discontinuity reconstruction procedure (see Figure \ref{fig_comparison_RS_2_fixed_value_discontinuity reconstruction}).

\begin{figure}
\centering
\begin{subfigure}[h]{0.88\linewidth}
\includegraphics[width=\textwidth]{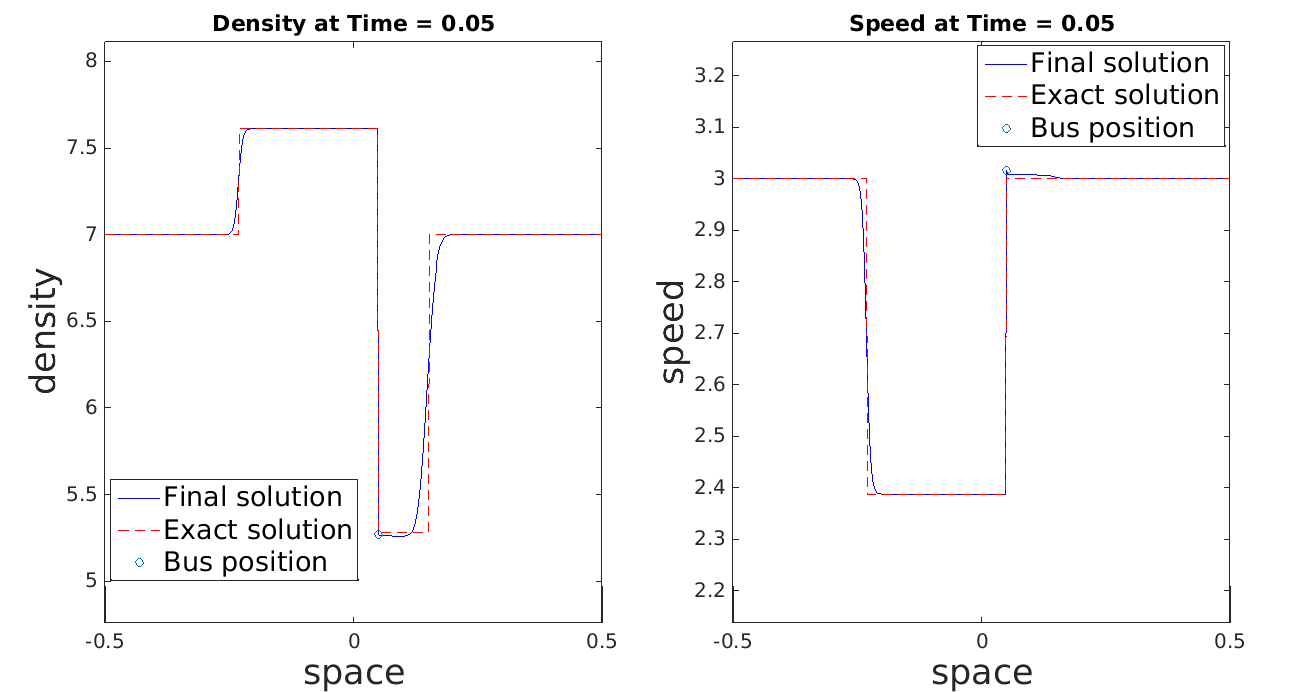}
\caption{The initial datum is constant $(\rho,v)(0,x) = (7,3)$, $F_\alpha = 10.5625$ and the bus maximal speed is $V_b = 1$.}
\end{subfigure}
\\
\begin{subfigure}[h]{0.88\linewidth}
\includegraphics[width=\textwidth]{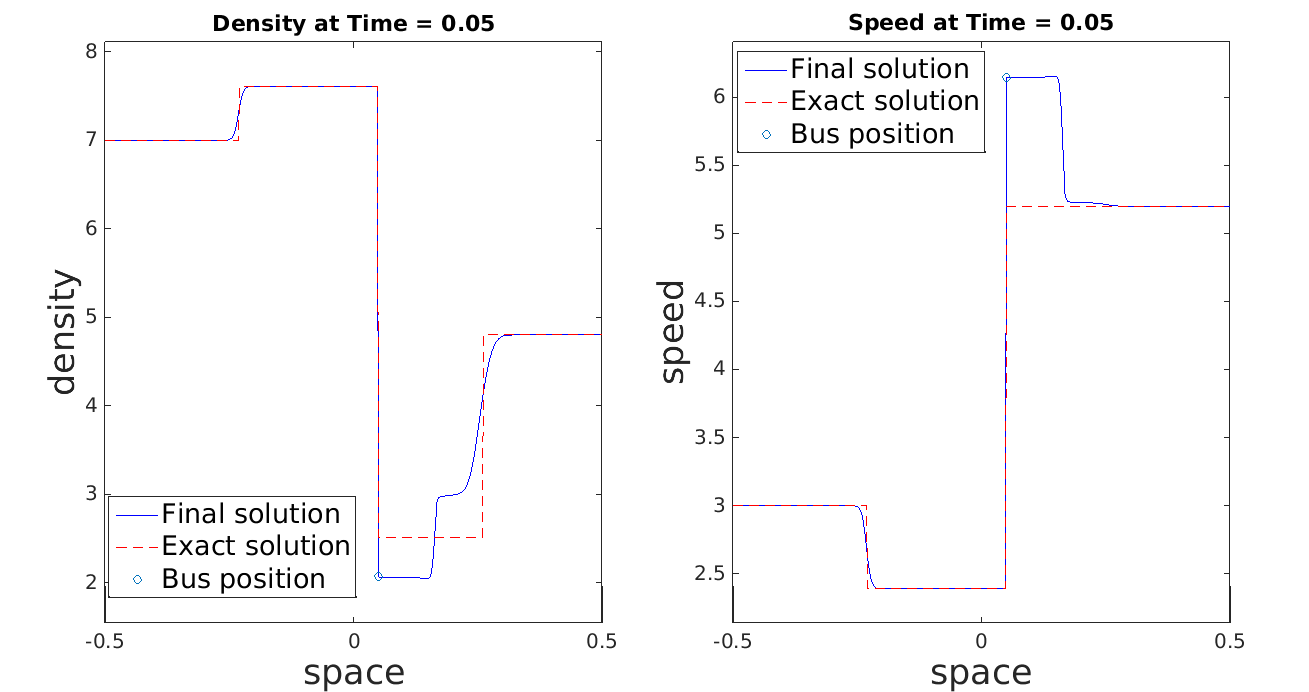}
\caption{The initial datum is $(\rho,v)(0,x) = (7,3)$ if $x<0$ and $(\rho,v) = (4.8,5.2)$ if $x>0$, $F_\alpha = 10.5625$ and the bus maximal speed is $V_b = 1$.}
\end{subfigure}
\caption{Solution obtained with the fixed value method proposed for $\mathcal{RS}^\alpha_2$: the value of the velocity on the right side of the non-classical shock has been fixed to $\check{v}_2$. We can see that the right trace of the non-classical shock is overestimated.}\label{fig_example_overestimation_RS_2_fixed_value}
\end{figure}

\begin{figure}
\centering
\begin{subfigure}[h]{1\linewidth}
\includegraphics[width=\textwidth]{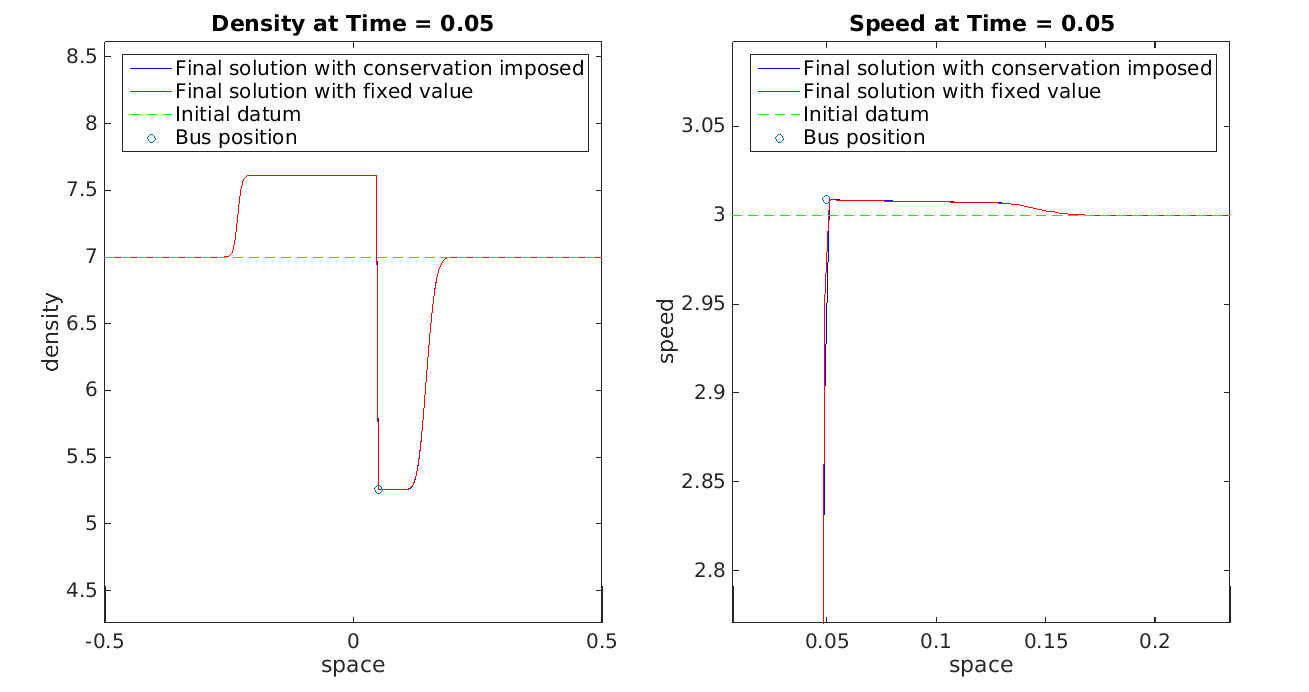}
\caption{The initial datum is constant $(\rho,v)(0,x) = (7,3)$, $F_\alpha = 10.5625$ and the bus maximal speed is $V_b = 1$.}
\end{subfigure}
\\
\begin{subfigure}[h]{1\linewidth}
\includegraphics[width=\textwidth]{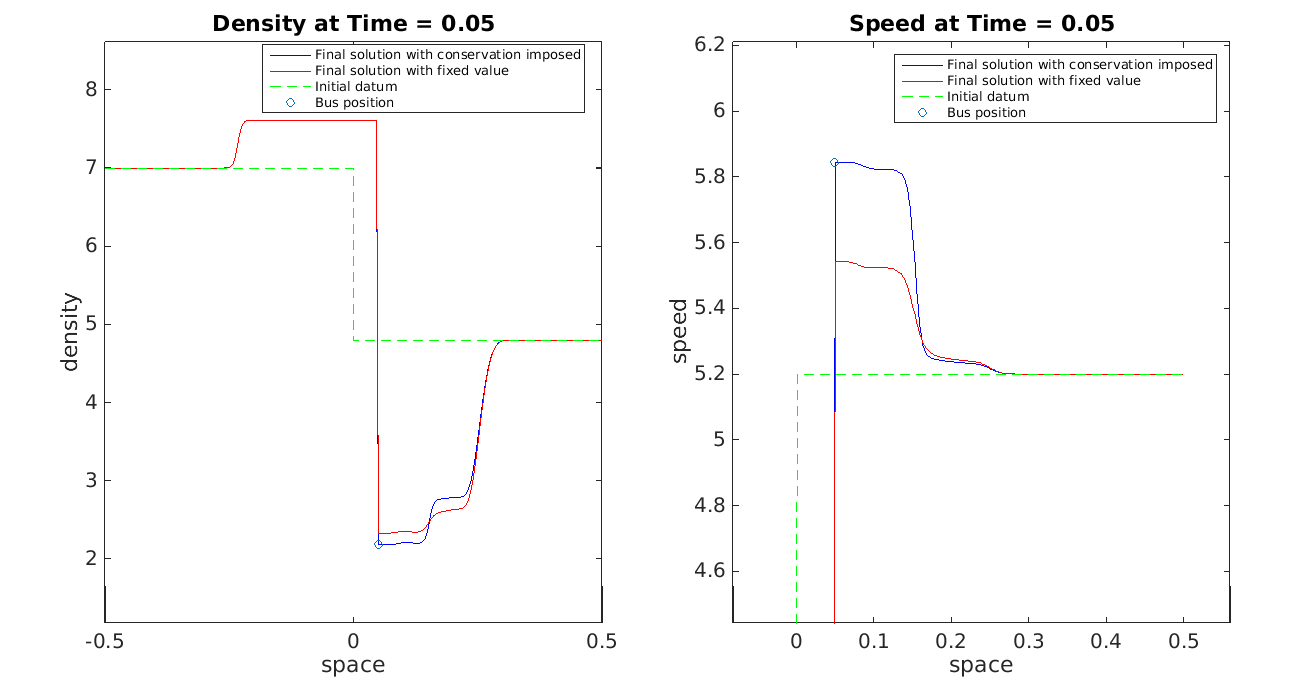}
\caption{The initial datum is $(\rho,v)(0,x) = (7,3)$ if $x<0$ and $(\rho,v) = (4.8,5.2)$ if $x>0$, $F_\alpha = 10.5625$ and the bus maximal speed is $V_b = 1$.}
\end{subfigure}
\caption{Comparison between the solutions obtained with the reconstruction procedure and the fixed value method for $p(\rho) = \rho$. In Figure (a) we see that the two solutions are almost equivalent. In Figure (b) the solution given by the fixed value method is better than the one obtained with the reconstruction procedure, although both the numerical solutions are clearly wrong, because the velocity after the non-classical shock should be constant ($v=v^r=5.2$).}\label{fig_comparison_RS_2_fixed_value_discontinuity reconstruction}
\end{figure}
\subsection{Second method: non-uniform mesh}
The second method is based on a non-uniform mesh: we shift the grid points locally around the bus. See \cite{delle_monache_goatin_mesh_mobile} for the scalar case.\\
Fix $n\in \mathbb{N}$. Let $\lbrace x^n_{j+1/2}\rbrace_{j\in \mathbb{Z}}$ be the mesh at time $t^n$. For every $j\in \mathbb{Z}$, the $j$-th cell is $C^n_j= [x^n_{j-1/2},x^n_{j+1/2})$ and its length is
$$h_j^n= x^n_{j+1/2}-x^n_{j-1/2}.$$
Let $\lbrace k^n \rbrace_{n\in \mathbb{N}}$ be the sequence of time increments, so that
$$t^{n+1}=t^n+k^n \; \text{ for every } \; n=1,2,....$$
These quantities can change at each time step.\\
Let $y^n=y(t^n)$ and $\bar{V}^n$ be respectively the bus position and the bus speed at time $t^n$ and let $m\in \mathbb{Z}$ be such that $y^n \in C^n_m$.\\
Let us suppose that at time $t^0$ the mesh is uniform, namely
$$x^0_{j+1/2}-x^0_{j-1/2} = h^0_j = h \; \text{ for every } \; j \in \mathbb{Z}.$$
\begin{remark}
We choose the initial mesh so that there exists $m \in \mathbb{Z}$ such that $y^0 = x^0_{m-1/2}$. The reason for this choice will be clarified later.
\end{remark}
The idea is to modify always only two cells near the bus and to restore the initial mesh far from the bus.\\
We distinguish two cases.
\begin{enumerate}
\item[(i)] If $ x^n_{m+1/2}-y^n > h/2$, then we introduce
\begin{equation}
x^\text{new}_{m-1/2} = y^n.
\end{equation}
Let us call
$$h^\text{new}_{m}:=x^{n}_{m+1/2}-x^\text{new}_{m-1/2}\; \text{ and } \; h^\text{new}_{m-1}= x^\text{new}_{m-1/2}-x^n_{m-3/2}$$
respectively the length of the new $m$-th cell and of the new $(m-1)$-th cell.
\begin{remark}
We have to adapt $k^n$ to the length of the new cells.
\end{remark}
Since we have modified the mesh, we have to recompute the average solution in the cells $C^\text{new}_{m-1}$ and $C^\text{new}_m$; see Figure \ref{fig_nonunif_mesh_case_a}. For the former, we find
\begin{equation*}
\bar{u}^\text{new}_{m-1} = \dfrac{1}{h^\text{new}_{m-1}}\left[(x^\text{new}_{m-1/2}-x^n_{m-1/2})\, \bar{u}^n_{m}+(x^n_{m-1/2}-x^n_{m-3/2}) \, \bar{u}^n_{m-1}  \right].
\end{equation*}
The average on the $m$-th cell remains unchanged, i.e.
$$\bar{u}^\text{new}_m = \bar{u}^n_m.$$
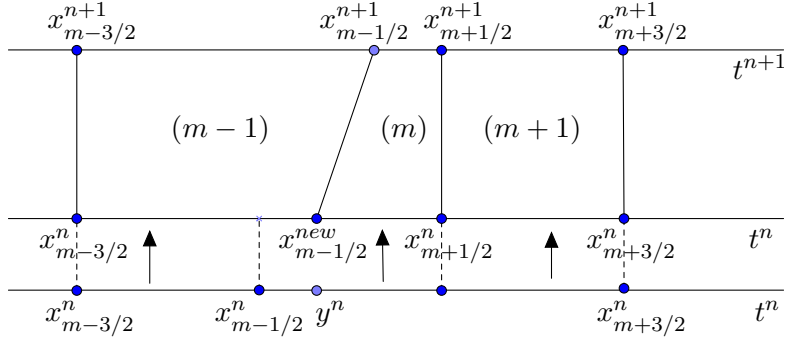
\begin{figure}[hbtp]
\centering
\definecolor{xdxdff}{rgb}{0.49019607843137253,0.49019607843137253,1.}
\definecolor{qqqqff}{rgb}{0.,0.,1.}
\begin{tikzpicture}[scale =0.6,line cap=round,line join=round,>=triangle 45,x=1.0cm,y=1.0cm]
\clip(-1.5,3.) rectangle (15.5,11.);
\draw [domain=-1.5:15.5] plot(\x,{(--70.69-0.*\x)/16.});
\draw [domain=-1.5:15.5] plot(\x,{(--96.-0.*\x)/16.});
\draw [domain=-1.5:15.5] plot(\x,{(--155.63-0.*\x)/16.});
\draw (5.01,4.4) node[anchor=north west] {$y^n$};
\draw (2.8,4.4) node[anchor=north west] {$x^n_{m-1/2}$};
\draw (-0.92,4.4) node[anchor=north west] {$x^n_{m-3/2}$};
\draw (11.18,4.4) node[anchor=north west] {$x^n_{m+3/2}$};
\draw [->] (1.6,4.57) -- (1.6,5.75);
\draw [->] (6.72,4.61) -- (6.70,5.77);
\draw [->] (10.41,4.68) -- (10.41,5.72);
\draw (14.64,4.4) node[anchor=north west] {$t^n$};
\draw (14.51,6.) node[anchor=north west] {$t^n$};
\draw (14.16,9.85) node[anchor=north west] {$t^{n+1}$};
\draw (-1.05,6.) node[anchor=north west] {$x^n_{m-3/2}$};
\draw (4.2,6.) node[anchor=north west] {$x^{new}_{m-1/2}$};
\draw (6.95,6.) node[anchor=north west] {$x^n_{m+1/2}$};
\draw (10.95,6.) node[anchor=north west] {$x^n_{m+3/2}$};
\draw (5.26,6.)-- (6.52,9.72);
\draw (8.,6.)-- (8.,9.72);
\draw (12.,6.)-- (11.98,9.72);
\draw (0.,6.)-- (0.,9.72);
\draw (1.84,8.5) node[anchor=north west] {$(m-1)$};
\draw (6.42,8.5) node[anchor=north west] {$(m)$};
\draw (8.68,8.5) node[anchor=north west] {$(m+1)$};
\draw (-0.87,11) node[anchor=north west] {$x^{n+1}_{m-3/2}$};
\draw (5.1,11) node[anchor=north west] {$x^{n+1}_{m-1/2}$};
\draw (7.38,11) node[anchor=north west] {$x^{n+1}_{m+1/2}$};
\draw (11.21,11) node[anchor=north west] {$x^{n+1}_{m+3/2}$};
\draw [dash pattern=on 2pt off 2pt] (0.,4.41)-- (0.,6.);
\draw [dash pattern=on 2pt off 2pt] (4.,4.42)-- (4.,6.);
\draw [dash pattern=on 2pt off 2pt] (8.,4.41)-- (8.,6.);
\draw [dash pattern=on 2pt off 2pt] (12.,4.46)-- (12.,6.);
\begin{scriptsize}
\draw [fill=qqqqff] (0.,4.41) circle (3pt);
\draw [fill=qqqqff] (16.,4.41) circle (3pt);
\draw [fill=qqqqff] (4.,4.42) circle (3pt);
\draw [fill=qqqqff] (8.,4.41) circle (3pt);
\draw [fill=qqqqff] (12.,4.46) circle (3pt);
\draw [fill=qqqqff] (0.,6.) circle (3pt);
\draw [fill=qqqqff] (5.26,6.) circle (3pt);
\draw [fill=qqqqff] (8.,6.) circle (3pt);
\draw [fill=qqqqff] (12.,6.) circle (3pt);
\draw [fill=qqqqff] (16.,6.) circle (3pt);
\draw [fill=qqqqff] (0.,9.72) circle (3pt);
\draw [fill=qqqqff] (8.,9.72) circle (3pt);
\draw [fill=qqqqff] (11.98,9.72) circle (3pt);
\draw [fill=qqqqff] (16.,9.72) circle (3pt);
\draw [fill=xdxdff] (5.26,4.41) circle (3pt);
\draw [fill=xdxdff] (6.52,9.72) circle (3pt);
\draw [color=xdxdff] (4.,6.)-- ++(-1.5pt,-1.5pt) -- ++(3.0pt,3.0pt) ++(-3.0pt,0) -- ++(3.0pt,-3.0pt);
\end{scriptsize}
\end{tikzpicture}
\caption{Representation of the nonuniform mesh for the case $ x^n_{m+1/2}-y^n > h/2$. From bottom to top: the initial mesh at time $t^n$, the modified mesh at time $t^n$ and the new mesh at time $t^{n+1}$ when the constraint is enforced.}\label{fig_nonunif_mesh_case_a}
\end{figure}
At this point the bus position coincides with the left edge of the $m$-th cell. Therefore the non-classical shock appears as the solution given by $\mathcal{RS}^\alpha_2$ to the Riemann problem
\begin{equation}\label{Riemann_problem_nonunif_mesh_case_1}
\begin{cases}
\partial_t u + \partial_x[f(u)] = 0,\\
u(0,x) = \begin{cases}
u^l = \bar{u}^\text{new}_{m-1} & \text{if } x\leq x^\text{new}_{m-1/2},\\
u^r = \bar{u}^\text{new}_m & \text{if } x>x^\text{new}_{m-1/2}.
\end{cases}
\end{cases}
\end{equation}
When
$$f_1(\mathcal{RS}(\bar{u}^\text{new}_{m-1},\bar{u}^\text{new}_{m})(\bar{V}^n)) \leq F_\alpha + \bar{V}^n \, \bar{\rho}(\bar{u}^\text{new}_{m-1},\bar{u}^\text{new}_{m})(\bar{V}^n),$$
we can apply the standard Godunov's method and compute the bus trajectory with the method introduced for the Riemann solver $\mathcal{RS}^\alpha_1$. At time $t^{n+1}$ we keep the modified mesh of time $t^n$.\\
When the constraint is not satisfied, the non-classical shock appears and propagates at the same speed of the bus. Therefore, in order to follow the non-classical shock, at time $t^{n+1}$ we move the left edge of the $m$-th cell in
\begin{equation}\label{nonuniform_mesh_case_a_time_t_n+1}
x^{n+1}_{m-1/2} = x^\text{new}_{m-1/2} +\bar{V}^n \, k^n.
\end{equation}
Since the $(m-1)$-th and the $m$-th cells change between the time steps $t^n$ and $t^{n+1}$, we have to apply Lemma \ref{Green_theorem_on_nonuniform_mesh} to compute the approximate solution $\bar{u}^{n+1}$; see Figure \ref{fig_nonunif_mesh_case_a}. We obtain
\begin{equation}\label{nonunif_mesh_case_i_new_solution}
\begin{split}
& \bar{u}^{n+1}_{m-1} = \dfrac{ h^\text{new}_{m-1} \bar{u}^\text{new}_{m-1} -k^n\left[f(\hat{u})-\bar{V}^n \, \hat{u}-f(\mathcal{RS}(\bar{u}^n_{m-2},\bar{u}^\text{new}_{m-1})(0))\right]}{h^\text{new}_{m-1}+\bar{V}^n\, k^n} \; \text{ and}\\
& \bar{u}^{n+1}_{m} = \dfrac{h^\text{new}_{m} \bar{u}^\text{new}_m -k^n \, \left[ f(\mathcal{RS}(\bar{u}^\text{new}_m,\bar{u}^n_{m+1})(0))-f(\check{u}_2)+\bar{V}^n \, \check{u}_2\right]}{h^\text{new}_m-\bar{V}^n \, k^n}.
\end{split}
\end{equation}
For the other cells we can use the standard Godunov's method.\\
\item[(ii)] If $ x^n_{m+1/2}-y^n \leq h/2$, then we shift the right interface $x^n_{m+1/2}$ of the $m$-th cell to $y^n$, i.e. we introduce
\begin{equation}\label{nonuniform_mesh_case_b_time_t_n_right_edge_of_Cm}
x^\text{new}_{m+1/2} = y^n
\end{equation}
and we restore the previous cells introducing
\begin{equation}\label{nonuniform_mesh_case_b_time_t_n_left_edge_of_Cm}
x^\text{new}_{m-1/2} = x^n_{m-3/2}+h.
\end{equation}
We move $x^n_{m+1/2}$ to $x^\text{new}_{m+1/2} = y^n$ and $x^n_{m-1/2}$ to $x^\text{new}_{m-1/2}$; see Figure \ref{fig_nonunif_mesh_case_b2}.\\
Let us call
$$h^\text{new}_{m}= x^\text{new}_{m+1/2}-x^\text{new}_{m-1/2} \; \text{ and } \; h^\text{new}_{m+1}=x^n_{m+3/2}-x^\text{new}_{m+1/2}$$
respectively the length of the new $m$-th cell and the length of the $(m+1)$-th cell.
\begin{remark}
Since the lengths of the cells have changed, we have to adapt $k^n$ to satisfy the CFL condition.
\end{remark}
\begin{figure}[ht]
\centering
\definecolor{xdxdff}{rgb}{0.49019607843137253,0.49019607843137253,1.}
\definecolor{qqqqff}{rgb}{0.,0.,1.}
\begin{tikzpicture}[scale=0.6,line cap=round,line join=round,>=triangle 45,x=1.0cm,y=1.0cm]
\clip(-1.5,3.5) rectangle (15.5,11);
\draw [domain=-1.5:15.5] plot(\x,{(--72.14-0.*\x)/16.});
\draw [domain=-1.5:15.5] plot(\x,{(--96.-0.*\x)/16.});
\draw [domain=-1.5:15.5] plot(\x,{(--143.70-0.*\x)/16.});
\draw (4.5,4.5) node[anchor=north west] {$x^n_{m-1/2}$};
\draw (-1.15,4.5) node[anchor=north west] {$x^n_{m-3/2}$};
\draw (7.41,4.5) node[anchor=north west] {$x^n_{m+1/2}$};
\draw (11.,4.5) node[anchor=north west] {$x^n_{m+3/2}$};
\draw [->] (1.98,4.62) -- (1.98,5.79);
\draw [->] (5.41,4.64) -- (5.41,5.75);
\draw [->] (10.56,4.77) -- (10.52,5.82);
\draw (14.60,6.) node[anchor=north west] {$t^n$};
\draw (14.57,4.5) node[anchor=north west] {$t^n$};
\draw (14.20,9.1) node[anchor=north west] {$t^{n+1}$};
\draw (-1.15,6.) node[anchor=north west] {$x^n_{m-3/2}$};
\draw (2.8,6.) node[anchor=north west] {$x^{new}_{m-1/2}$};
\draw (5.8,6.) node[anchor=north west] {$x^{new}_{m+1/2}$};
\draw (11.,6.) node[anchor=north west] {$x^n_{m+3/2}$};
\draw (4.,6.)-- (4.,8.98);
\draw (7.11,6.01)-- (8.43,8.99);
\draw (12.,6.)-- (12.,8.99);
\draw (0.,6.)-- (0.,8.98);
\draw (-1.15,10.10) node[anchor=north west] {$x^{n+1}_{m-3/2}$};
\draw (3.07,10.21) node[anchor=north west] {$x^{n+1}_{m-1/2}$};
\draw (7.50,10.21) node[anchor=north west] {$x^{n+1}_{m+1/2}$};
\draw (11.,10.26) node[anchor=north west] {$x^{n+1}_{m+3/2}$};
\draw (5.50,8.52) node[anchor=north west] {$(m)$};
\draw (0.80,8.59) node[anchor=north west] {$(m-1)$};
\draw (9.32,8.28) node[anchor=north west] {$(m+1)$};
\draw (6.6,4.5) node[anchor=north west] {$y^n$};
\draw [dash pattern=on 1pt off 1pt] (-0.01,4.50)-- (0.,6.);
\draw [dash pattern=on 1pt off 1pt] (6.05,4.50)-- (6.07,6.);
\draw [dash pattern=on 1pt off 1pt] (8.,4.52)-- (8.,6.);
\draw [dash pattern=on 1pt off 1pt] (12.,4.53)-- (12.,6.);
\begin{scriptsize}
\draw [fill=qqqqff] (-0.01,4.50) circle (3pt);
\draw [fill=qqqqff] (15.98,4.50) circle (3pt);
\draw [fill=qqqqff] (8.,4.52) circle (3pt);
\draw [fill=qqqqff] (12.,4.53) circle (3pt);
\draw [fill=qqqqff] (0.,6.) circle (3pt);
\draw [fill=qqqqff] (4.,6.) circle (3pt);
\draw [fill=qqqqff] (7.11,6.01091) circle (3pt);
\draw [fill=qqqqff] (12.,6.) circle (3pt);
\draw [fill=qqqqff] (16.,6.) circle (3pt);
\draw [fill=qqqqff] (0.,8.98) circle (3pt);
\draw [fill=qqqqff] (8.43,8.99) circle (3pt);
\draw [fill=qqqqff] (12.,8.99) circle (3pt);
\draw [fill=qqqqff] (16.,8.98) circle (3pt);
\draw [fill=xdxdff] (7.09,4.5) circle (3pt);
\draw [fill=xdxdff] (4.,8.98) circle (3pt);
\draw [color=xdxdff] (8.05,6.)-- ++(-3pt,-3pt) -- ++(3.0pt,3.0pt) ++(-3.0pt,0) -- ++(3.0pt,-3.0pt);
\draw [fill=xdxdff] (6.05,4.5) circle (3pt);
\draw [color=xdxdff] (6.1,6.)-- ++(-3pt,-3pt) -- ++(3.0pt,3.0pt) ++(-3.0pt,0) -- ++(3.0pt,-3.0pt);
\end{scriptsize}
\end{tikzpicture}
\caption{Representation of the nonuniform mesh for the case $ x^n_{m+1/2}-y^n \leq h/2$. From bottom to top: the initial mesh at time $t^n$, the modified mesh at time $t^n$ and the new mesh at time $t^{n+1}$ when the constraint is enforced.}\label{fig_nonunif_mesh_case_b2}
\end{figure}
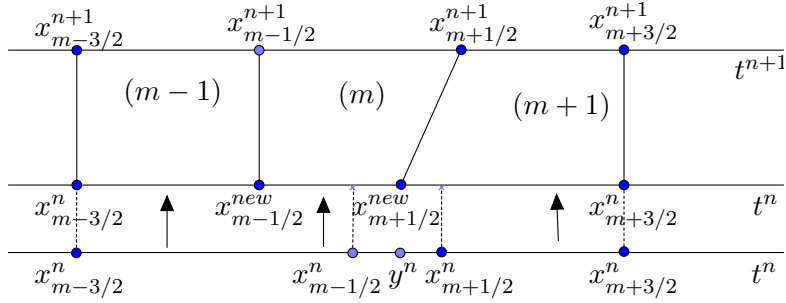

Since we split the $(m-1)$-th cell in two parts, the average solutions at time $t^n$ in the cells $C^n_{m-1}$ and $C^n_{m}$ are:
\begin{equation}\label{nonunif_mesh_modif_sol_time_t^n}
\begin{split}
& \bar{u}^\text{new}_{m-1} = \bar{u}^n_{m-1} \; \text{ and}\\
& \bar{u}^\text{new}_m = \dfrac{(x^\text{new}_{m+1/2}-x^n_{m-1/2})\,\bar{u}^n_m + (x^n_{m-1/2}-x^\text{new}_{m-1/2})\, \bar{u}^n_{m-1}}{h^\text{new}_m}.
\end{split}
\end{equation}
For the $(m+1)$-th cell the new average solution is
\begin{equation}\label{nonunif_mesh_modified_sol_time_t^n_2}
\bar{u}^\text{new}_{m+1} = \dfrac{(x^n_{m+1/2}-x^\text{new}_{m+1/2})\, \bar{u}^n_m + (x^n_{m+3/2}-x^n_{m+1/2})\, \bar{u}^n_{m+1}}{h^\text{new}_{m+1}}.
\end{equation}
At this point the bus is on the right interface of the $m$-th cell. Therefore we have to consider the Riemann problem
\begin{equation}\label{Riemann_problem_nonunif_mesh_case_2}
\begin{cases}
\partial_t u + \partial_x[f(u)] = 0,\\
u(0,x) = \begin{cases}
u^l = \bar{u}^\text{new}_{m} & \text{if } x\leq x^\text{new}_{m+1/2},\\
u^r = \bar{u}^\text{new}_{m+1} & \text{if } x>x^\text{new}_{m+1/2}.
\end{cases}
\end{cases}
\end{equation}
We apply the standard Godunov's method, when
$$f_1(\mathcal{RS}(\bar{u}^\text{new}_{m},\bar{u}^\text{new}_{m+1})(\bar{V}^n)) \leq F_\alpha + \bar{V}^n \, \bar{\rho}(\bar{u}^\text{new}_{m},\bar{u}^\text{new}_{m+1})(\bar{V}^n).$$
In this case we keep the modified mesh within $[t^n,t^{n+1}]$.
Otherwise the position of the right interface of the $m$-th cell at time $t^{n+1}$ is
\begin{equation}\label{nonuniform_mesh_case_b_time_t_n+1}
x^{n+1}_{m+1/2} = x^\text{new}_{m+1/2} +\bar{V}^n \, k^n.
\end{equation}
Since the constraint is violated, the non-classical shock appears. Therefore the new solution can be computed with the standard Godunov's method in all the cells except for the $m$-th and the $(m+1)$-th for which we have to apply Lemma \ref{Green_theorem_on_nonuniform_mesh}. We find
\begin{equation}\label{nonunif_mesh_case_ii_new_sol}
\begin{split}
& \bar{u}^{n+1}_{m} = \dfrac{h^\text{new}_{m} \, \bar{u}^\text{new}_m -k^n \, \left[ f(\hat{u})-\bar{V}^n\, \hat{u}-f(\mathcal{RS}(\bar{u}^n_{m-1},\bar{u}^\text{new}_m)(0)) \right]}{h^\text{new}_m+\bar{V}^n \, k^n} \; \text{ and}\\
& \bar{u}^{n+1}_{m+1} = \dfrac{ h^\text{new}_{m+1} \, \bar{u}^\text{new}_{m+1} -k^n\left[ f(\mathcal{RS}(\bar{u}^\text{new}_{m+1},\bar{u}^n_{m+2})(0))-f(\check{u}_2)+\bar{V}^n \, \check{u}_2 \right]}{h^\text{new}_{m+1}-\bar{V}^n \, k^n}.
\end{split}
\end{equation}
\end{enumerate}
Once we have concluded the previous processes, we compute the new value of $m$. The bus position coincides with the point $x^{n+1}_{m-1/2}$ of the mesh.\\
The next proposition states that this method allows to reconstruct exactly a non classical-shock when the initial datum is the non-classical shock itself; see Figure \ref{fig_non_classical_shock}.
\begin{prop}\label{prop_exact_reconstruction_of_non_classical_shock_with_non_uniform_mesh}
Fix $n \in \mathbb{N}$. Let us consider the Riemann problem (\ref{RP_non_classical_shock}) with initial datum
\begin{equation}
u(t^n,x) =\begin{cases}
\hat{u} & \text{if } x\leq y^n,\\
\check{u}_2 & \text{if } x> y^n.
\end{cases}
\end{equation}
\begin{enumerate}
\item[(i)] If $x^n_{m+1/2}-y^n >h/2$, then
$$\bar{u}^{n+1}_{m-1} = \hat{u} \; \text{ and } \; \bar{u}^{n+1}_m = \check{u}_2.$$
\item[(ii)] If $x^n_{m+1/2}-y^n \leq h/2$, then
$$\bar{u}^{n+1}_{m} = \hat{u} \; \text{ and } \; \bar{u}^{n+1}_{m+1} = \check{u}_2.$$
\end{enumerate}
\end{prop}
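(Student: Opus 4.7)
The plan is to exploit an invariant of the non-uniform mesh algorithm: at every time step the bus position coincides with the left edge of a cell, $y^n = x^n_{m-1/2}$. This is built into the initialization ($y^0 = x^0_{m-1/2}$) and propagated by the updates (\ref{nonuniform_mesh_case_a_time_t_n+1}) and (\ref{nonuniform_mesh_case_b_time_t_n+1}), which shift the tracking interface by exactly $\bar{V}^n k^n$, the same displacement as the bus, since the non-classical shock travels at the bus speed. I would first establish this invariant by induction on $n$, from which it follows that the piecewise constant averages of the non-classical shock initial datum are
$$\bar{u}^n_j = \hat{u} \text{ for } j\leq m-1, \qquad \bar{u}^n_j = \check{u}_2 \text{ for } j\geq m.$$

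In Case (i) the invariant forces $x^\text{new}_{m-1/2} = y^n = x^n_{m-1/2}$, so the local mesh does not move and $\bar{u}^\text{new}_{m-1} = \hat{u}$, $\bar{u}^\text{new}_m = \check{u}_2$. The classical Riemann solvers at the interfaces adjacent to these cells trivialise, $\mathcal{RS}(\hat{u},\hat{u})(0) = \hat{u}$ and $\mathcal{RS}(\check{u}_2,\check{u}_2)(0) = \check{u}_2$. Plugging these into (\ref{nonunif_mesh_case_i_new_solution}) the flux differences vanish and the remaining motion terms combine cleanly with the cell-length factors, yielding
$$\bar{u}^{n+1}_{m-1} = \frac{h^\text{new}_{m-1}\hat{u} + \bar{V}^n k^n \hat{u}}{h^\text{new}_{m-1} + \bar{V}^n k^n} = \hat{u}, \qquad \bar{u}^{n+1}_m = \frac{h^\text{new}_m \check{u}_2 - \bar{V}^n k^n \check{u}_2}{h^\text{new}_m - \bar{V}^n k^n} = \check{u}_2.$$

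In Case (ii) I would first compute the pre-update averages via (\ref{nonunif_mesh_modif_sol_time_t^n}) and (\ref{nonunif_mesh_modified_sol_time_t^n_2}): since $x^\text{new}_{m+1/2} = y^n = x^n_{m-1/2}$, the factor $x^\text{new}_{m+1/2} - x^n_{m-1/2}$ vanishes, so $\bar{u}^\text{new}_m = \bar{u}^n_{m-1} = \hat{u}$, while the second formula reduces to a combination of copies of $\check{u}_2$, giving $\bar{u}^\text{new}_{m+1} = \check{u}_2$. The same telescoping cancellation as in Case (i) then propagates these values through (\ref{nonunif_mesh_case_ii_new_sol}), producing $\bar{u}^{n+1}_m = \hat{u}$ and $\bar{u}^{n+1}_{m+1} = \check{u}_2$.

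The one genuine subtlety will be verifying that the modified cell widths in Case (ii) are strictly positive: $h^\text{new}_m = h^n_{m-1} - h$ is non-negative only because the preceding sequence of Case (i) steps inflates $h^n_{m-1}$ (each iteration adds $\bar{V}^n k^n$ to $h^n_{m-1}$ and subtracts it from $h^n_m$), so by the time the trigger $h^n_m \leq h/2$ is reached one has $h^n_{m-1} \geq 3h/2 > h$. Once this bookkeeping is in place, the strong CFL condition (\ref{strong_CFL_condiftion}) ensures positivity of all the denominators appearing in the update formulas, and the rest is the straightforward cancellation sketched above.
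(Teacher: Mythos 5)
Your proposal follows essentially the same route as the paper's proof: invoke the invariant $y^n = x^n_{m-1/2}$ (which the paper also relies on, stated simply as ``the mesh definition guarantees that the bus is always in $x^n_{m-1/2}$''), observe that the cell averages then coincide exactly with the two shock states, note that the interface Riemann solvers trivialise, and plug into the update formulas to see the cancellation. Your closing remark about positivity of $h^{\text{new}}_m$ is correct and is indeed the content the paper relegates to Proposition~\ref{prop_well_def_of_nonunif_mesh}; noting it here, and proposing to establish the mesh invariant by induction rather than by assertion, makes the argument slightly tighter than the paper's but does not change its substance.
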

\begin{proof}
The mesh definition guarantees that the bus is always in $x^n_{m-1/2}$.\\
Since the initial datum is a non-classical shock, the solution given by $\mathcal{RS}^\alpha_2$ is the non-classical shock itself.\\
In case (i) we introduce $x^\text{new}_{m-1/2}=y^n$. The average solution at time $t^n$ is
$$\bar{u}^n_{j} = \hat{u}\; \text{ if }\; j\leq m-1 \; \text{ and } \; \bar{u}^n_m = \check{u}_2 \; \text{ if } \;j\geq m.$$ Therefore
\begin{equation*}
\begin{split}
& \mathcal{RS}(\bar{u}^n_{m-2},\bar{u}^n_{m-1})(0) = \mathcal{RS}(\hat{u},\hat{u})(0) = \hat{u} \; \text{ and}\\
& \mathcal{RS}(\bar{u}^n_{m},\bar{u}^n_{m+1})(0) = \mathcal{RS}(\check{u}_2,\check{u}_2)(0) = \check{u}_2.
\end{split}
\end{equation*}
Hence at time $t^{n+1}$, applying the formula (\ref{nonunif_mesh_case_i_new_solution}), we obtain
\begin{equation*}
\begin{split}
& \bar{u}^{n+1}_{m-1} = \dfrac{h^n_{m-1}\, \bar{u}^n_{m-1}-k^n \, \left(f(\hat{u})-\bar{V}^n\, \hat{u} - f(\hat{u}) \right)}{h^n_{m-1}+\bar{V}^n \, k^n} = \dfrac{h^n_{m-1}\, \hat{u} + \bar{V}^n \, k^n \,\hat{u}}{h^n_{m-1}+\bar{V}^n \, k^n} = \hat{u} \; \;  \text{ and}\\
& \bar{u}^{n+1}_m = \dfrac{h^n_m \, \bar{u}^n_m -k^n \, \left(f(\check{u}_2)-f(\check{u}_2) +\bar{V}^n \, k^n \right)}{h^n_m - \bar{V}^n\, k^n} = \dfrac{h^n_m \, \check{u}_2-\bar
V^n \,k^n\, \check{u}_2 }{h^n_m+ \bar{V}^n\, k^n} = \check{u}_2. 
\end{split}
\end{equation*}
In case (ii) we introduce the points $x^\text{new}_{m-1/2}$ and $x^\text{new}_{m+1/2}$. The new solution is $\bar{u}^n_{j} = \hat{u}$, if $j\leq m$ and $\bar{u}^n_m = \check{u}_2$, if $j\geq m+1$. Indeed the formulas (\ref{nonunif_mesh_modif_sol_time_t^n}) and (\ref{nonunif_mesh_modified_sol_time_t^n_2}) give
\begin{equation*}
\begin{split}
& \bar{u}^\text{new}_{m-1} = \bar{u}^n_{m-1} = \hat{u},\\
& \bar{u}^\text{new}_m = \bar{u}^n_{m-1} = \hat{u} \; \text{ and}\\
& \bar{u}^\text{new}_{m+1} = \dfrac{h^n_m\, \check{u}_2 + h\, \check{u}_2}{h+h^n_m}=\check{u}_2.
\end{split}
\end{equation*}
Moreover we have
$$\mathcal{RS}(\bar{u}^n_{m-1},\bar{u}^\text{new}_m)(0) = \hat{u} \; \text{ and } \; \mathcal{RS}(\bar{u}^\text{new}_{m+1},\bar{u}^n_{m+2})(0)= \check{u}_2.$$
Therefore at time $t^{n+1}$, applying the formula (\ref{nonunif_mesh_case_ii_new_sol}), we obtain
\begin{equation*}
\begin{split}
& \bar{u}^{n+1}_m= \dfrac{h^\text{new}_m\,\bar{u}^\text{new}_{m} -k^n\, \left(f(\hat{u})-\bar{V}^n\, \hat{u}-f(\hat{u}) \right)}{h^\text{new}_m + \bar{V}^n\,k^n} = \hat{u} \; \text{ and}\\
& \bar{u}^{n+1}_{m+1} = \dfrac{h^n_{m+1}\, \bar{u}^{n+1}_{m+1}-k^n\, \left(f(\check{u}_2)-f(\check{u}_2)+\bar{V}^n \, \check{u}_2 \right)}{h^\text{new}_{m+1}-\bar{V}^n \, k^n} = \check{u}_2.
\end{split}
\end{equation*}
\end{proof}
\begin{figure}[ht]
\centering
\includegraphics[width=1.0\linewidth]{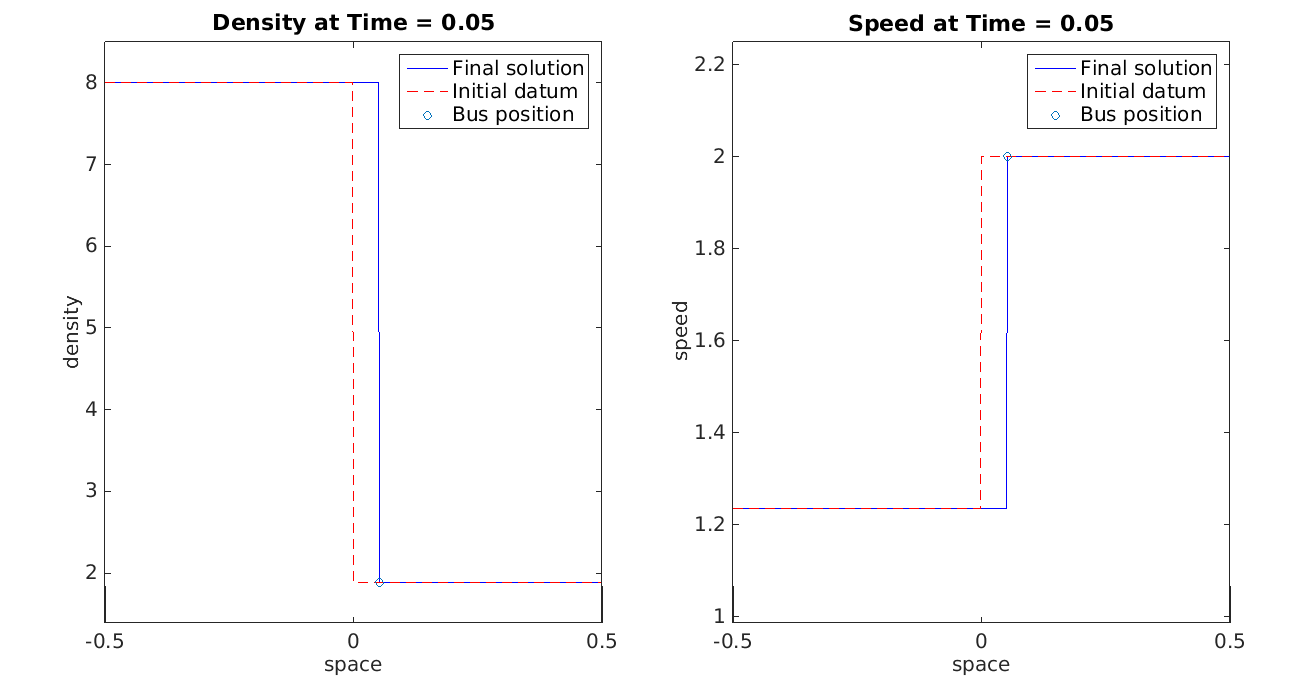}
\caption{Solution obtained with the nonuniform mesh method for $p(\rho) = \rho$. The initial datum is the non-classical shock  $(\rho,v)(0,x)=(8,V_b+F_\alpha/\rho^l)$ if $x \leq y_0$ and $(\rho,v)(0,x) = (F_\alpha/(v^r-V_b),2)$ if $x>y_0$, the reduction rate due to the presence of the bus is $\alpha = 0.25$, the bus initial position is $y_0=0$, and the bus maximal speed is $V_b = 1$.}\label{fig_non_classical_shock}
\end{figure}
\begin{obs}
Let us suppose that the initial position of the bus does not coincide with one of the points of the initial mesh, so that we have
$$x^0_{m-1/2} < y^0 < x^0_{m+1/2},$$
and let us consider the Riemann problem 
\begin{equation}\label{Riemann_problem_non-classical_shock}
\begin{cases}
\partial_t \, u + \partial_x\,[f(u)] = 0,\\
u(0,x) = \begin{cases}
\hat{u} & \text{if } x \leq y^0,\\
\check{u}_2 & \text{if } x>y^0.
\end{cases}
\end{cases}
\end{equation}
Since in the $m$-th cell the initial datum is not constant, we have to do an average in order to apply the nonuniform mesh method for $\mathcal{RS}^\alpha_2$. The average value is given by
\begin{equation*}
\tilde{u}^0_{m} = \dfrac{\left(y^0-x^0_{m-1/2}\right) \hat{u}+\left(x^0_{m+1/2}-y^0\right) \check{u}_2}{h^0_{m}}.
\end{equation*}
$\tilde{u}^0_m$ is a convex combination of $\hat{u}$ and $\check{u}_2$. Therefore, if $(\tilde{\rho}^0_m,\tilde{v}^0_m)$ are the non conservative components of $\tilde{u}^0_m$, then we have
$$\tilde{\rho}^0_m\, \tilde{v}^0_m = F_\alpha + \tilde{\rho}^0_m\, \bar{V}^n.$$\\
Let us suppose that $x^0_{m+1/2}-y^0>h/2$.\\
Applying the nonuniform mesh method, we shift the left side $x^0_{m-1/2}$ of the $m$-th cell to $y^0$, i.e. we introduce
\begin{equation}
x^\text{new}_{m-1/2} = y^0.
\end{equation}
The lengths of the new $m$-th and $(m-1)$-th cells are respectively
$$h^\text{new}_{m}:=x^{0}_{m+1/2}-x^\text{new}_{m-1/2}\; \text{ and } \; h^\text{new}_{m-1}= x^\text{new}_{m-1/2}-x^0_{m-3/2}$$
Since we have modified the mesh, we have to recompute the average solution. For the former we find
\begin{equation*}
\begin{split}
\bar{u}^\text{new}_{m-1} & = \dfrac{1}{h^\text{new}_{m-1}}\left[(x^\text{new}_{m-1/2}-x^n_{m-1/2})\, \bar{u}^n_{m}+(x^n_{m-1/2}-x^n_{m-3/2}) \, \bar{u}^n_{m-1}  \right] =\\
& = \dfrac{1}{h^\text{new}_{m-1}}\left[(x^\text{new}_{m-1/2}-x^n_{m-1/2})\, \tilde{u}+(x^n_{m-1/2}-x^n_{m-3/2}) \, \hat{u}  \right].
\end{split}
\end{equation*}
The average on the $m$-th cell remains unchanged, i.e.
$$\bar{u}^\text{new}_m = \tilde{u}.$$
Since $\bar{u}^\text{new}_{m-1}$ is in turn a convex combination of $\tilde{u}$ and $\hat{u}$, all the points $\hat{u}$, $\check{u}_2$, $\tilde{u}$ and $\bar{u}^\text{new}_{m-1}$ satisfy the constraint with the equal. Therefore the solution to the Riemann problem (\ref{Riemann_problem_nonunif_mesh_case_1}) with initial datum
$$
u(0,x) = \begin{cases}
\bar{u}^\text{new}_{m-1} & \text{if } x\geq 0,\\
\bar{u}^\text{new}_{m} = \tilde{u} & \text{if } x>0,
\end{cases}
$$
does not satisfy the constraint. Therefore we compute the solution at time $t^1$ with the equations (\ref{nonunif_mesh_case_i_new_solution}). The solutions $\mathcal{RS}(\hat{u},\bar{u}^\text{new}_{m-1})(0)$ and $\mathcal{RS}(\tilde{u},\check{u}_2)(0)$ introduce an error that does not allow to capture exactly the non-classical shock.\\
Similarly if $x^0_{m+1/2}-y^0\leq h/2$.
If we choose a mesh for which we have $y^0=x^0_{m-1/2}$, then we do not introduce the average value $\tilde{u}$. Hence the solution is exact.\\
\end{obs}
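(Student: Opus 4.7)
The statement has two assertions to verify: (i) in the misaligned case $x^0_{m-1/2} < y^0 < x^0_{m+1/2}$ the cell-averaging required to apply Godunov's scheme injects a perturbation that the subsequent Riemann-based update cannot undo, so the non-classical shock is not reproduced exactly; (ii) when $y^0$ coincides with a mesh point no averaging is required and exactness is immediate from Proposition~\ref{prop_exact_reconstruction_of_non_classical_shock_with_non_uniform_mesh}. The plan is to make (i) precise by unrolling the nonuniform-mesh procedure one step, then reduce (ii) to a direct invocation of the earlier proposition.

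For (i), I would first fix $\gamma = (y^0 - x^0_{m-1/2})/h^0_m \in (0,1)$ and write the averaged state in conserved variables as
\begin{equation*}
\tilde{u}^0_m = \gamma\,\hat{u} + (1-\gamma)\,\check{u}_2.
\end{equation*}
Since $\gamma\in(0,1)$ and $\hat{u}\neq\check{u}_2$ (their densities satisfy $\hat\rho = \max I$, $\check\rho_2 = F_\alpha/(v^r-\bar V)$ and are distinct whenever $\mathcal{RS}^\alpha_2$ is non-classical), the state $\tilde u^0_m$ is strictly different from both endpoints. A short calculation using $\hat\rho(\hat v-\bar V) = \check\rho_2(\check v_2-\bar V) = F_\alpha$ and the strict convexity of $\rho\mapsto\rho p(\rho)$ shows that $\tilde u^0_m$ does not lie on the Lax-1 curve through $\hat u$: the averaging in conserved variables does not preserve the nonlinear Riemann invariant $w=v+p(\rho)$. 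Applying~(\ref{nonunif_mesh_modif_sol_time_t^n}) then gives $\bar u^\text{new}_{m-1}$ as a further convex combination of $\hat u$ and $\tilde u^0_m$, again strictly distinct from $\hat u$, while $\bar u^\text{new}_m = \tilde u^0_m$.

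The next step is to compute the two boundary traces appearing in~(\ref{nonunif_mesh_case_i_new_solution}): $\mathcal{RS}(\bar u^0_{m-2},\bar u^\text{new}_{m-1})(0) = \mathcal{RS}(\hat u,\bar u^\text{new}_{m-1})(0)$ and $\mathcal{RS}(\bar u^\text{new}_m,\bar u^0_{m+1})(0) = \mathcal{RS}(\tilde u^0_m,\check u_2)(0)$. Each of these classical Riemann problems has data with distinct Riemann invariants $w$ and distinct velocities, so by Theorem~\ref{prop_standard_solution} its solution contains a nontrivial rarefaction or shock of the first family joined to a contact discontinuity, producing a middle state whose trace at $\xi=0$ is neither the left nor the right datum. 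Substituting these traces back into~(\ref{nonunif_mesh_case_i_new_solution}) and comparing with $\hat u$ and $\check u_2$, I would show that the corrections do not vanish: they are driven by the strictly positive Jensen defect $\gamma\,\hat\rho p(\hat\rho) + (1-\gamma)\,\check\rho_2 p(\check\rho_2) - \tilde\rho^0_m p(\tilde\rho^0_m) > 0$ coming from the strict convexity hypothesis~(\ref{ipotesi_pressione}) on $\rho\mapsto\rho p(\rho)$. Hence $\bar u^1_{m-1}\neq\hat u$ and $\bar u^1_m\neq\check u_2$, so the exact non-classical shock is not recovered. The symmetric case $x^0_{m+1/2}-y^0\leq h/2$ is handled identically after using~(\ref{nonunif_mesh_modified_sol_time_t^n_2}) in place of~(\ref{nonunif_mesh_modif_sol_time_t^n}).

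Assertion (ii) is almost immediate: when $y^0 = x^0_{m-1/2}$ the initial datum of problem~(\ref{Riemann_problem_non-classical_shock}) is already piecewise constant on the initial uniform mesh, with $\bar u^0_j = \hat u$ for $j\leq m-1$ and $\bar u^0_j = \check u_2$ for $j\geq m$; no averaging step is invoked, so $\gamma = 0$ above. The hypotheses of Proposition~\ref{prop_exact_reconstruction_of_non_classical_shock_with_non_uniform_mesh} are then met, and its conclusion directly yields the exact reconstruction $\bar u^1_{m-1} = \hat u$, $\bar u^1_m = \check u_2$. I anticipate the main obstacle to be the explicit identification of the discrepancies in part~(i): rather than computing the Godunov updates exhaustively, it suffices to observe that the strict concavity of $\rho\mapsto\rho\,L_1(\rho,\cdot,\cdot)$ (used in Lemma~\ref{funzione_concava_e_retta}) prevents the two interface corrections from cancelling, which is the crucial qualitative fact underlying the numerical overestimation shown in Figure~\ref{fig_example_overestimation_RS_2_conservation_imposed}.
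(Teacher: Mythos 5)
Your proposal follows the same overall route as the paper's own argument: unroll one step of the nonuniform-mesh scheme, observe that the averaging produces the intermediate states $\tilde{u}^0_m$ and $\bar{u}^{\text{new}}_{m-1}$ which differ from the pure states $\hat{u}$, $\check{u}_2$, conclude that the constraint is enforced so the update (\ref{nonunif_mesh_case_i_new_solution}) is used, blame the interface traces $\mathcal{RS}(\hat{u},\bar{u}^{\text{new}}_{m-1})(0)$ and $\mathcal{RS}(\tilde{u},\check{u}_2)(0)$ for the loss of exactness, and reduce the aligned case $y^0=x^0_{m-1/2}$ to Proposition \ref{prop_exact_reconstruction_of_non_classical_shock_with_non_uniform_mesh}. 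The one genuine divergence is your treatment of where the averaged state sits relative to the constraint: you compute a strictly positive Jensen defect and conclude $\tilde{\rho}^0_m\tilde{v}^0_m>F_\alpha+\bar{V}\tilde{\rho}^0_m$, whereas the observation (and the paper's argument) asserts equality and then invokes Lemma \ref{funzione_concava_e_retta} to see that the \emph{classical solution between} two points on the line violates the constraint. The discrepancy comes from which quantities are averaged: averaging the pair $(\rho,\rho v)$, for which the constraint line is affine, gives the paper's exact equality, while averaging the conserved pair $(\rho,z)$ gives your strict excess by convexity of $\rho\mapsto\rho p(\rho)$ in (\ref{ipotesi_pressione}); in either case the classical trace at the bus speed violates the constraint, the non-classical branch of the scheme is triggered, and the rest of the argument is unchanged, but as written your proof contradicts one displayed identity of the statement, so you should either adopt the paper's convention or state explicitly that with conserved-variable averaging the constraint is violated a fortiori. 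A minor caution: your claim that each interface Riemann problem yields a trace at $\xi=0$ that ``is neither the left nor the right datum'' is not guaranteed by Theorem \ref{prop_standard_solution} (the trace can coincide with one of the data depending on the wave speeds) and is not needed; what matters, and what the paper contents itself with, is that the data of these interface problems are no longer the pure states $\hat{u}$ and $\check{u}_2$, so the exact cancellations used in the proof of Proposition \ref{prop_exact_reconstruction_of_non_classical_shock_with_non_uniform_mesh} fail, which is precisely the level of rigor of the original observation.
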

Unfortunately, for more general initial datum, even with this method the non-classical shock is not reconstructed correctly. Indeed, the right trace of the velocity is overestimated or underestimated (and correspondingly the density is underestimated or overestimated), because we should have $\check{v}_2 = v^r$; see Figure \ref{fig_example_solution_nonunif_mesh_ghost_cell_method}.\\
\begin{figure}
\centering
\begin{subfigure}[h]{0.9\linewidth}
\includegraphics[width=\textwidth]{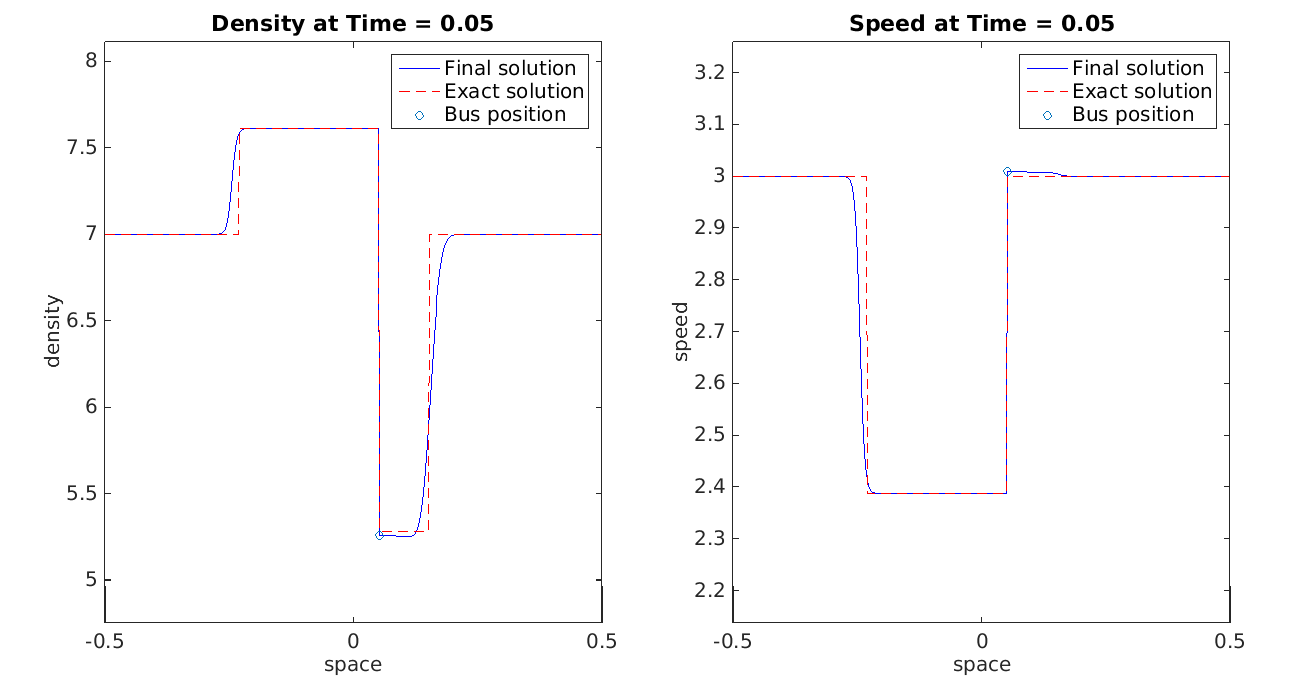}
\caption{Solution obtained for the constant initial datum $(\rho,v)(0,x) = (7,3)$, $\alpha = 0.5$, $y_0=0$ and maximal bus speed $V_b = 1$ (as in case (a) of Figure \ref{fig_example_overestimation_RS_2_conservation_imposed}).}\label{fig_example_solution_nonunif_mesh_ghost_cell_method_a}
\end{subfigure}
\\
\begin{subfigure}[h]{0.9\linewidth}
\includegraphics[width=\textwidth]{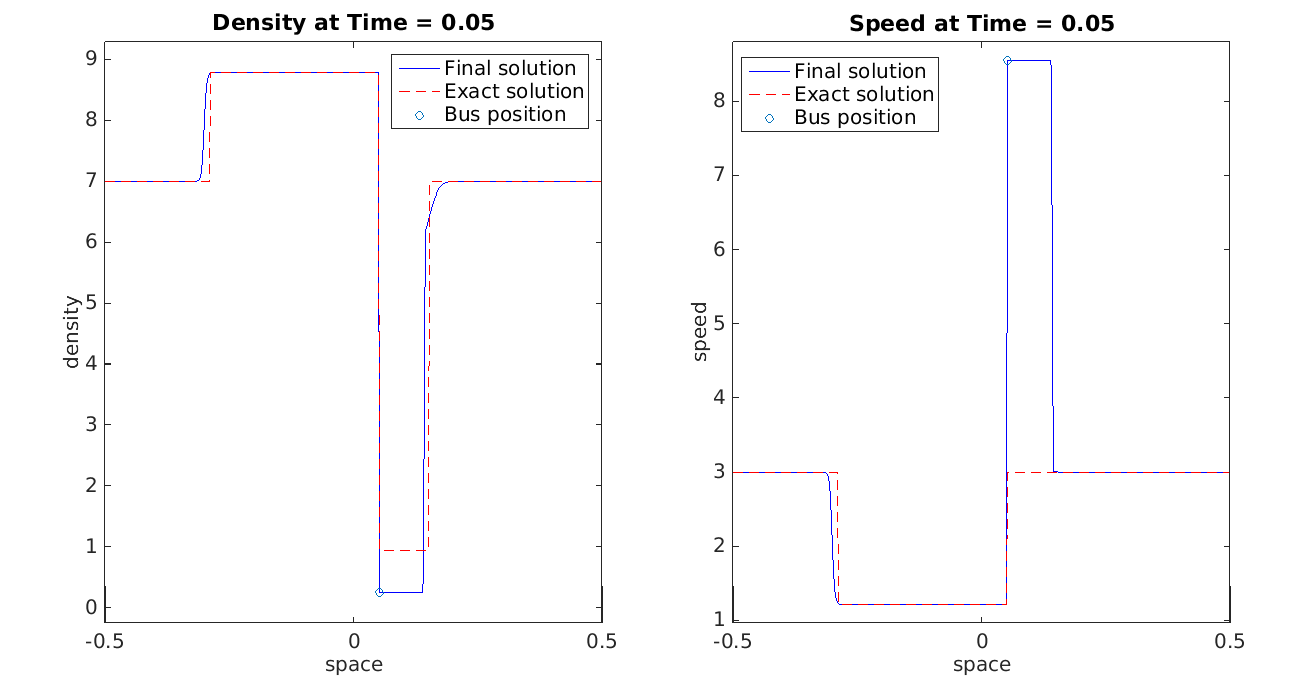} 
\caption{Solution obtained for the constant initial datum $(\rho,v)(0,x) = (7,3)$, $\alpha = 0.25$, $y_0$ and maximal bus speed $V_b = 1$.}\label{fig_example_solution_nonunif_mesh_ghost_cell_method_b}
\end{subfigure}
\caption{Example of solutions obtained with the nonuniform mesh method for $p(\rho) = \rho$. The right trace of the non-classical shock is overestimated. The result gets worse when the value of $F_\alpha$ is lower.}\label{fig_example_solution_nonunif_mesh_ghost_cell_method}
\end{figure}
\subsubsection{Non-uniform mesh with imposed value}
As we have done with the discontinuity reconstruction procedure, one idea to get a better result is to impose the value $\check{v}_2$ in the first cell after the bus.\\
In the case $x^n_{m+1/2}-y^n<h/2$, we do not modify the value of the solution in $C^n_m$ at time $t^n$. Therefore we have only to update the second conservative component of the solution at time $t^{n+1}$ as
\begin{equation}
\bar{z}^{n+1}_m = \bar{\rho}^{n+1}_m(\check{v}_2+p(\bar{\rho}^{n+1}_m)),
\end{equation}
where $\check{v}_2$ is the right trace of $\mathcal{RS}^\alpha_2$ for the Riemann problem (\ref{Riemann_problem_nonunif_mesh_case_1}), i.e.
$$\check{v}_2 = \bar{v}^n_m.$$
The case $x^n_{m+1/2}-y^n<h/2$ is more delicate. At time $t^n$ we recompute the average solution in the cell $C^n_{m+1}$ using the formula (\ref{nonunif_mesh_modified_sol_time_t^n_2}). Let us take $\check{v}_2$ as the right trace of $\mathcal{RS}^\alpha_2$ for the Riemann problem (\ref{Riemann_problem_nonunif_mesh_case_2}), i.e.
$$\check{v}_2 = \bar{v}^\text{new}_{m+1}.$$
If we simply update the second conservative component of the solution in the cell $C^n_{m+1}$ as
\begin{equation}\label{updating_the_solution_case_ii}
\bar{z}^{n+1}_{m+1} = \bar{\rho}^{n+1}_{m+1} (\check{v}_2+p(\bar{\rho}^{n+1}_{m+1}))
\end{equation}
we introduce an error in the solution.\\
Therefore, we propose to fix $\check{v}_2 = \bar{v}^n_m$, which is the value we want to preserve, then we do all the steps (\ref{nonunif_mesh_modif_sol_time_t^n}), (\ref{nonunif_mesh_modified_sol_time_t^n_2}) and (\ref{nonuniform_mesh_case_b_time_t_n+1}) and finally we update the second conservative component with the formula (\ref{updating_the_solution_case_ii}).\\
The result that we obtain is still imperfect, but the right trace of the solution is captured correctly at least in the first cell after the bus (which can be in turn $C_{m}$ or $C_{m+1}$). In the following cells a travelling oscillation appears. Its amplitude depends on the initial datum, the bus speed and the value of $\alpha$; see Figure \ref{fig_example_solution_nonunif_mesh_imposed value}.
\begin{figure}
\centering
\begin{subfigure}[h]{1\linewidth}
\includegraphics[width=1\textwidth]{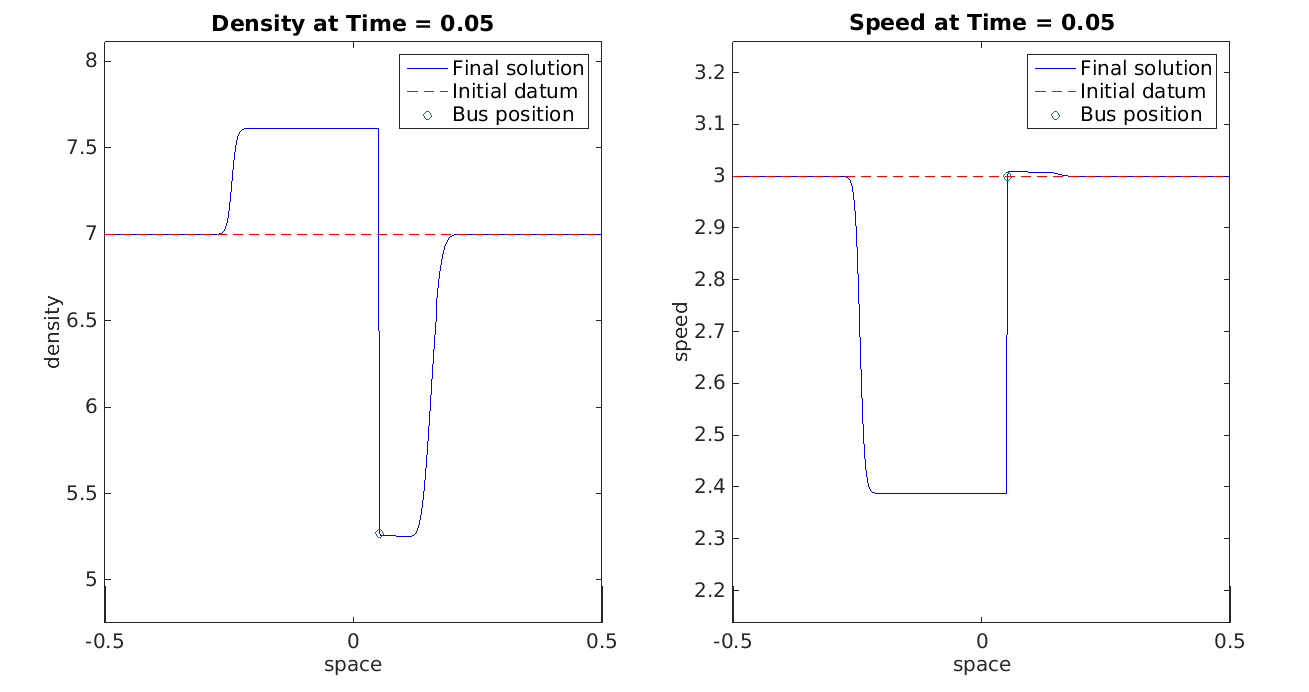}
\caption{Solution obtained for the constant initial datum $(\rho,v)(0,x) = (7,3)$, $\alpha = 0.5$, $y_0=0$ and maximal bus speed $V_b = 1$. The result is essentially equivalent to the one obtained in Figure \ref{fig_example_solution_nonunif_mesh_ghost_cell_method_a}.}
\end{subfigure}
\\
\begin{subfigure}[h]{1\linewidth}
\includegraphics[width=\textwidth]{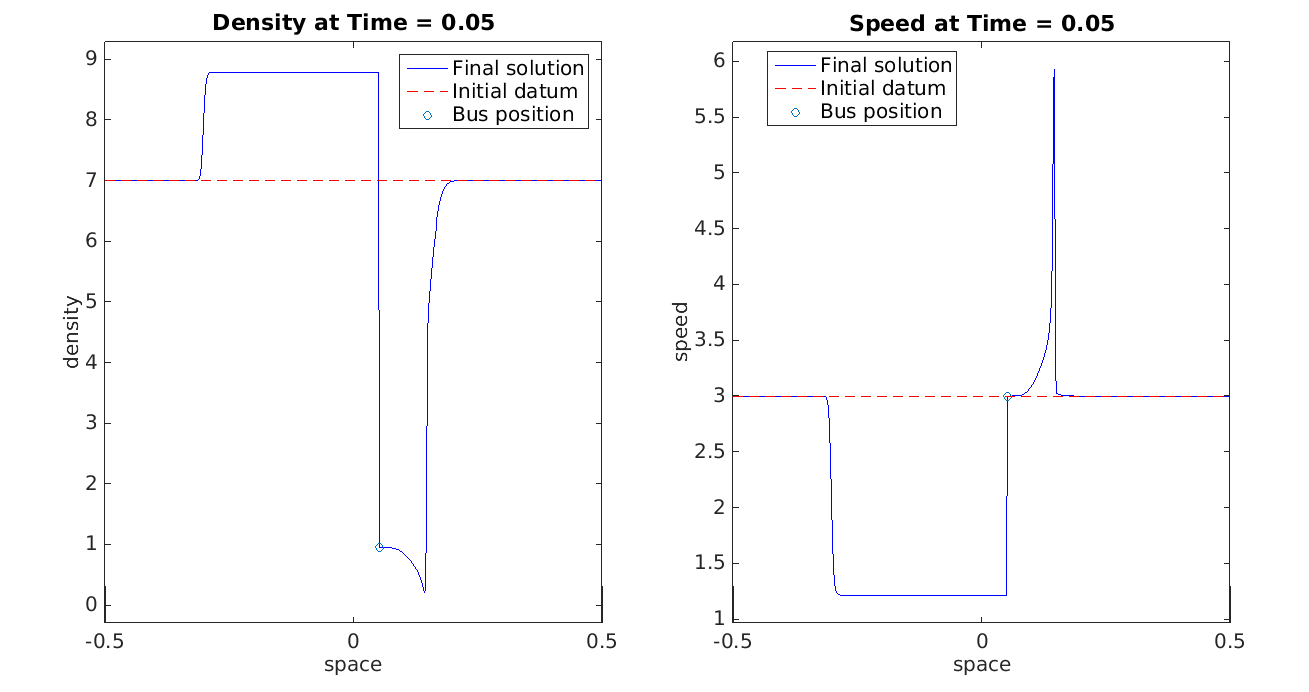} 
\caption{Solution obtained for the constant initial datum $(\rho,v)(0,x) = (7,3)$, $\alpha = 0.25$, $y_0=0$ and maximal bus speed $V_b = 1$. The result is better than the one obtained in Figure \ref{fig_example_solution_nonunif_mesh_ghost_cell_method_b}.}
\end{subfigure}
\caption{Example of solutions obtained with the nonuniform mesh method where we have imposed the desired value of the velocity in the cell after the bus for $p(\rho) = \rho$. The right trace is captured correctly at least in the first cell after the bus.}\label{fig_example_solution_nonunif_mesh_imposed value}
\end{figure}
\subsection{Mathematical details}
Let us fix the constants $v_1$, $v_2$, $w_1$ and $w_2$ in $\mathbb{R}^+$, such that $0<v_1 \leq v_2$, $0< w_1 \leq w_2$ and $v_2<w_2$. Let us suppose that the domain
$$\mathcal{D}_{v_1,v_2,w_1,w_2} = \lbrace (\rho,v)\in \mathbb{R}^+ \times \mathbb{R}^+: v_1 \leq v \leq v_2, \; w_1 \leq v+p(\rho) \leq w_2 \rbrace $$
is invariant for the Riemann solver $\mathcal{RS}^\alpha_2$. The following proposition states that, under an appropriate CFL condition, the nonuniform mesh that we have introduced in the previous section is well defined and that the time step $k^n$ does not converge to 0.
\begin{prop}\label{prop_well_def_of_nonunif_mesh}
The meshes given by (\ref{nonuniform_mesh_case_a_time_t_n+1}) for the case $ x^n_{m+1/2}-y^n > h/2$ or by (\ref{nonuniform_mesh_case_b_time_t_n_right_edge_of_Cm}), (\ref{nonuniform_mesh_case_b_time_t_n_left_edge_of_Cm}) and (\ref{nonuniform_mesh_case_b_time_t_n+1}) for the case $ x^n_{m+1/2}-y^n \leq h/2$ are well defined at every time step, i.e. for every $n\in \mathbb{N}$ we have
\begin{equation}\label{moving_mesh_well_def}
x^{n+1}_{m-1/2} < x^{n+1}_{m+1/2} \; \text{ and } \; x^{n+1}_{m+1/2}< x^{n+1}_{m+3/2},
\end{equation}
provided that the following CFL condition holds:
\begin{equation}\label{CFL_condition_nonuniform_mesh}
\left| k^n\lambda^n\right| = \dfrac{1}{2}\,\min_{j\in \mathbb{Z}}\, h^n_j \; \text{ for every } \; n\in\mathbb{N},
\end{equation}
where $\lambda^n=\max\lbrace |\lambda_i(\bar{u}^n_j)|\, : \, i=1,2 \; \text{ and } \; j\in \mathbb{Z}\rbrace$ and $\lambda_i$ is the $i$-th eigenvalue of the Jacobian matrix $Df$ of the flux function.\\
Moreover there exists a constant $k>0$ which depends only on the invariant domain $\mathcal{D}_{v_1,v_2,w_1,w_2}$ and such that
\begin{equation}\label{time_step_limited_from_below}
k^n \geq k \; \text{ for every } n\in \mathbb{N}.
\end{equation}
\end{prop}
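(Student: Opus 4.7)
The argument rests on two ingredients. First, the invariance of $\mathcal{D}_{v_1,v_2,w_1,w_2}$ together with the continuity of $(\rho,v)\mapsto\lambda_i(\rho,v)$ on this compact set yields a constant $\Lambda>0$, depending only on $v_1,v_2,w_1,w_2$ and $p$, such that $\lambda^n\leq\Lambda$ for every $n$. Second, the CFL condition (\ref{CFL_condition_nonuniform_mesh}) forces the bus displacement to be controlled by the cell sizes: $\bar V^n k^n\leq \lambda^n k^n=\tfrac12\min_j h^n_j$, because $\bar V^n\leq v_2$ while $\lambda_2(\rho,v)=v$ is one of the eigenvalues appearing in $\lambda^n$.

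To establish (\ref{moving_mesh_well_def}) I treat the two cases separately. In case (i) the only moving point is $x^{n+1}_{m-1/2}=y^n+\bar V^n k^n$, while $x^{n+1}_{m+1/2}=x^n_{m+1/2}$ and $x^{n+1}_{m+3/2}=x^n_{m+3/2}$ are unchanged. Since $h^{\mathrm{new}}_m=x^n_{m+1/2}-y^n>h/2$ and, by CFL, $\bar V^n k^n\leq\tfrac12\min_j h^n_j\leq\tfrac12 h^{\mathrm{new}}_m<h^{\mathrm{new}}_m$, the inequality $x^{n+1}_{m-1/2}<x^{n+1}_{m+1/2}$ follows; the second inequality is trivial. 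In case (ii) the new mesh points $x^{\mathrm{new}}_{m-1/2}=x^n_{m-3/2}+h$ and $x^{\mathrm{new}}_{m+1/2}=y^n$ are introduced at $t^n$; the ordering $x^{\mathrm{new}}_{m-1/2}<x^{\mathrm{new}}_{m+1/2}$ comes from $y^n\geq x^n_{m+1/2}-h/2\geq x^n_{m-3/2}+3h/2$, valid because the mesh points $x^n_{m-3/2}$ and $x^n_{m+1/2}$ were never displaced by previous case-(i) modifications and therefore still lie on the uniform mesh at distance $2h$. The remaining bound $x^{n+1}_{m+1/2}=y^n+\bar V^n k^n<x^n_{m+3/2}=x^{n+1}_{m+3/2}$ follows from $\bar V^n k^n\leq h/2$.

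For (\ref{time_step_limited_from_below}), since $k^n=\tfrac{1}{2\lambda^n}\min_j h^n_j\geq \tfrac{1}{2\Lambda}\min_j h^n_j$, it suffices to prove the uniform lower bound $\min_j h^n_j\geq h/4$. I would argue by induction on $n$, using the \emph{two-cell conservation} $h^n_m+h^n_{m-1}=2h$ valid throughout every case-(i) sequence (case (i) only transfers length between $C_m$ and $C_{m-1}$). While in case (i), $h^n_m>h/2$, and one further step gives $h^{n+1}_m\geq h^n_m-\bar V^n k^n\geq h^n_m/2>h/4$. When $h^n_m\leq h/2$ the algorithm switches to case (ii); the conservation forces $h^n_{m-1}\geq 3h/2$, so the reset produces $h^{\mathrm{new}}_m=h^n_{m-1}-h\geq h/2$ and $h^{\mathrm{new}}_{m+1}=h+h^n_m\geq h$. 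The subsequent evolution cannot shrink either of these cells below $h/2$ by CFL. Together with the untouched cells of length $h$ far from the bus, this proves the induction, and therefore $k^n\geq h/(8\Lambda)=:k>0$.

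The main technical difficulty is the bookkeeping across the reindexation step: after case (ii) the new $m$ equals the old $m+1$, and one has to check that the cells $\{C_j:\,j\neq m-1,m,m+1\}$ always lie on the original uniform mesh. This is the recurrent invariant of the induction, and it follows from the strict locality of the modification rules (case (i) moves only $x_{m-1/2}$, case (ii) only $x_{m\pm 1/2}$) combined with the fact that at every instant the bus satisfies $y^n=x^n_{m-1/2}$, which makes the two-cell conservation $h_m+h_{m-1}=2h$ persist from one case-(ii) reset to the next.
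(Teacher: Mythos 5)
Your proof is correct and takes essentially the same approach as the paper's: split into the two mesh configurations $(i)$ and $(ii)$; use the CFL condition to bound the bus displacement $\bar V^n k^n$ by half the minimal cell length; and show that the minimal cell length is bounded below by a fixed fraction of $h$, using the fact that $\mathcal{D}_{v_1,v_2,w_1,w_2}$ is invariant so that $\lambda^n$ is uniformly bounded. Two places where your write-up actually sharpens the paper: (a) you make explicit the two-cell conservation $h_{m-1}+h_m=2h$ that the paper uses only implicitly (it appears silently when the paper writes $x^{\text{new}}_{m-1/2}=x^n_{m+1/2}-h$ in case (ii)); and (b) you actually verify $x^{n+1}_{m-1/2}<x^{n+1}_{m+1/2}$ in case (ii), which the paper omits and which does require the invariance of $h_{m-1}+h_m=2h$ to conclude $y^n\geq x^n_{m-3/2}+3h/2$.

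One small difference of substance: your lower bound $\min_j h^n_j\geq h/4$ is weaker than the paper's $\geq h/2$. The paper bounds the cell lengths at $t^n$ \emph{after} the modification, so $h^{\text{new}}_m>h/2$ is just the case-$(i)$ hypothesis and $h^{\text{new}}_{m-1}\geq h/2$ follows (after a case-$(ii)$ reset this cell has length at least $h/2$ and only grows during subsequent case-$(i)$ steps), giving $h/2$ directly. Your inductive estimate $h^{n+1}_m\geq h^n_m/2>h/4$ tracks the cell length at $t^{n+1}$ \emph{before} the next modification, which loses a factor of two. Since the CFL condition uses the cell lengths of the (modified) mesh at each $t^n$, the paper's $h/2$ is the natural quantity; your $h/4$ is not incorrect, but suboptimal. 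Either constant suffices for the conclusion $k^n\geq k>0$, so the discrepancy is harmless.
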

\begin{proof}
Let us consider the case $x^n_{m+1/2}-y^n > h/2$.\\
When the constraint is satisfied we have
$$x^{n+1}_{m-1/2} = x^\text{new}_{m-1/2} = y^n < x^n_{m+1/2} = x^{n+1}_{m+1/2}.$$
Otherwise, by (\ref{nonuniform_mesh_case_a_time_t_n+1}), we find
$$x^{n+1}_{m-1/2} = x^\text{new}_{m-1/2} + \bar{V}^n \, k^n.$$
By the definition (\ref{bus_speed}), we have
$$\bar{V}^n = \min \lbrace V_b,\bar{v}^n_m\rbrace.$$
Hence
$$\bar{V}^n\leq \bar{v}^n_m = \lambda_2(\bar{\rho}^n_m,\bar{v}^n_m) \leq \lambda^n.$$
By the CFL condition (\ref{CFL_condition_nonuniform_mesh}), we find
\begin{equation*}
\begin{split}
x^{n+1}_{m-1/2} & \leq x^\text{new}_{m-1/2} + \lambda^n \, k^n = x^\text{new}_{m-1/2} + \dfrac{1}{2} \min_{j\in \mathbb{Z}} h^n_j\leq x^\text{new}_{m-1/2} + \dfrac{h}{2} = \\
& = y^n+\dfrac{h}{2} < x^{n}_{m+1/2} = x^{n+1}_{m+1/2},
\end{split}
\end{equation*}
because $x^n_{m+1/2}-y^n > h/2$ and $x^{n+1}_{j-1/2} = x^n_{j-1/2}$ for every $j \neq m$.\\
If $x^n_{m+1/2}-y^n \leq h/2$, then we shift the point $x^n_{m+1/2}$ to the bus position, namely we introduce
$$x^\text{new}_{m+1/2} = y^n.$$
When the constraint is satisfied, we have
$$x^{n+1}_{m+1/2} = x^\text{new}_{m+1/2} = y^n < x^{n}_{m+1/2} < x^{n+1}_{m+3/2},$$
because $x^{n+1}_{j+1/2} = x^n_{j+1/2}$ for every $j \neq m$.\\
Otherwise, proceeding as in the previous case, we find
$$x^{n+1}_{m+1/2} = x^\text{new}_{m+1/2} + \bar{V}^n\, k^n < y^n +\dfrac{h}{2} \leq x^{n}_{m+1/2}+\dfrac{h}{2} < x^{n+1}_{m+3/2}.$$
For the second part of the proposition, let us observe that for every $(\rho,v)\in \mathcal{D}_{v_1,v_2,w_1,w_2}$, we have $v_1\leq \lambda_2(\rho,v)\leq v_2$, because $\lambda_2(\rho,v) = v$. Moreover for every $(\rho_0,v_0) \in \mathcal{D}_{v_1,v_2,w_1,w_2}$, the function $\psi: \rho \to \rho\,L_1(\rho,\rho_0,v_0)$ is Lipschitz (see Lemma \ref{lemma_curve_lipschitz_1}). Then there exist two constants $L_1$ and $L_2$ depending only on the invariant domain and such that for every $(\rho,v)\in\mathcal{D}_{v_1,v_2,w_1,w_2}$ and $v+p(\rho) = v_0 + p(\rho_0)$, we have
$$L_1 \leq \psi'(\rho) \leq L_2.$$
Therefore $L_1 \leq \lambda_1(\rho,L_1(\rho,\rho_0,v_0)) \leq L_2$, because $\lambda_1(\rho,L_1(\rho,\rho_0,v_0)) = \psi'(\rho)$. Hence for every $n\in \mathbb{N}$, we have $\lambda^n \in [\min(L_1,v_1),\max(L_2,v_2)]$.\\
Fix $n\in\mathbb{N}$.
\begin{enumerate}
\item[(i)] If $x^n_{m+1/2}-y^n > h/2$, then, by the mesh definition, we have
$$ h^n_j = h \; \text{ for every } \; j \notin \lbrace m-1, m \rbrace,$$
because we modify only the cell $C^n_{m-1}$ and $C^n_m$.\\
For the $(m-1)$-th cell when the constraint is enforced, we find
$$h^\text{new}_{m-1} = x^\text{new}_{m-1/2}-x^n_{m-3/2} \geq x^n_{m-1/2}-x^n_{m-3/2} = h^n_m \geq h.$$
For the $m$-th cell, since $x^n_{m+1/2}-y^n > h/2$, we easily find
$$h^\text{new}_m = x^n_{m+1/2}-x^\text{new}_{m-1/2} = x^n_{m+1/2}-y^n >\dfrac{h}{2}.$$
\item[(ii)] If $x^n_{m+1/2}-y^n \leq h/2$, then we have
$$ h^n_j = h \; \text{ for every } \; j \notin \lbrace m, m+1 \rbrace.$$
For the cell $C^n_m$, we find
$$h^\text{new}_m = x^\text{new}_{m+1/2}-x^\text{new}_{m-1/2} = y^n - (x^n_{m+1/2}-h) \geq h - \dfrac{h}{2} = \dfrac{h}{2},$$
because $x^\text{new}_{m-1/2} =x^n_{m+1/2} -h$ and $x^n_{m+1/2}-y^n \leq h/2$.\\
For the $(m+1)$-th cell, we obtain
$$h^\text{new}_{m+1} = x^n_{m+3/2}-x^\text{new}_{m+1/2} = x^n_{m+3/2}-y^n\geq x^n_{m+3/2}-x^n_{m+1/2} = h.$$
\end{enumerate}
Therefore
$$h^n_j \geq \dfrac{h}{2} \; \text{ for every } \; j\in \mathbb{Z}.$$
By the CFL condition (\ref{CFL_condition_nonuniform_mesh}) we have
$$k^n = \dfrac{1}{2 \, \lambda^n}\min_{j\in \mathbb{Z}} h^n_j \geq \dfrac{h}{4 \, \max(L_2,v_2)}.$$ 
Taking $k := \dfrac{h}{4 \, \max(L_2,v_2)}$, we obtain the inequality (\ref{time_step_limited_from_below}).
\end{proof}
The next proposition guarantees that if the CFL condition (\ref{CFL_condition_nonuniform_mesh}) holds, then waves centred in different points of the mesh cannot interact within a time step.
\begin{prop}
Let $n\in \mathbb{N}$ be fixed. Waves centred in two neighbouring Riemann problems cannot interact, provided that the CFL condition (\ref{CFL_condition_nonuniform_mesh}) holds. Moreover no wave can cross the bus trajectory within a time step.
\end{prop}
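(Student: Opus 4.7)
The plan is to exploit the CFL condition (\ref{CFL_condition_nonuniform_mesh}) to bound the propagation distance of any wave within a single time step by half the length of the smallest mesh cell, and then to compare this bound with the distance a neighbouring wave or the bus can cover.

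First I would establish the universal bound on wave speeds. By Proposition \ref{properties_shock_waves}, Proposition \ref{rarefaction_wave_properties}, and the Lax entropy condition (\ref{Lax_entropy_condition}), every shock, rarefaction or contact discontinuity arising from the local Riemann problem at an interface $x^n_{j+1/2}$ propagates with speed in $[-\lambda^n,\lambda^n]$, where $\lambda^n$ is the maximum over $i=1,2$ and $j\in\mathbb{Z}$ of $|\lambda_i(\bar u^n_j)|$. Combined with the CFL condition, this yields that no wave may cover more than $\lambda^n k^n = \tfrac{1}{2}\min_{j\in\mathbb{Z}} h^n_j$ in the interval $[t^n,t^{n+1}]$.

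For the first claim, fix $j\in\mathbb{Z}$ and consider a right-going wave emanating from $x^n_{j+1/2}$ with speed $v_R\le\lambda^n$ and a left-going wave emanating from $x^n_{j+3/2}$ with speed $v_L\ge-\lambda^n$. Their respective positions at time $t^n+\tau$ are $x^n_{j+1/2}+v_R\tau$ and $x^n_{j+3/2}+v_L\tau$, so an intersection would require $(v_R-v_L)\tau = h^n_{j+1}$. Since $v_R-v_L\le 2\lambda^n$, this forces
$$\tau\ \ge\ \frac{h^n_{j+1}}{2\lambda^n}\ \ge\ \frac{\min_{i\in\mathbb{Z}} h^n_i}{2\lambda^n}\ =\ k^n,$$
so no interaction occurs in the interior of $[t^n,t^{n+1}]$.

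For the second claim I would use the key structural fact built into the non-uniform mesh construction: after the modification at time $t^n$, the bus position $y^n$ coincides with an interface $x^n_{j_0+1/2}$, and the bus speed satisfies $0\le\bar V^n\le\lambda^n$. Indeed $\bar V^n=\min(V_b,\bar v^n_m)\le\bar v^n_m=\lambda_2(\bar\rho^n_m,\bar v^n_m)\le\lambda^n$. Any wave from the left neighbour $x^n_{j_0-1/2}$ with speed $v\le\lambda^n$ reaches the bus trajectory only when $(v-\bar V^n)\tau=h^n_{j_0}$, which by $v-\bar V^n\le\lambda^n$ and the CFL condition requires $\tau\ge h^n_{j_0}/\lambda^n\ge 2k^n$. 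Symmetrically, a wave from the right neighbour $x^n_{j_0+3/2}$ with speed $v\ge-\lambda^n$ meets the bus when $(\bar V^n-v)\tau=h^n_{j_0+1}$, which by $\bar V^n-v\le 2\lambda^n$ gives $\tau\ge h^n_{j_0+1}/(2\lambda^n)\ge k^n$. Waves emanating from the very interface $x^n_{j_0+1/2}$ where the bus sits either travel strictly away from the bus trajectory (classical waves) or are precisely the non-classical shock, which by construction propagates with the bus speed $\bar V^n$ and therefore coincides with the bus trajectory rather than crossing it.

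The main technical obstacle will be handling the right-hand estimate cleanly, since the bound $(\bar V^n-v)\tau\le 2\lambda^n k^n=\min_j h^n_j\le h^n_{j_0+1}$ is only an equality in the worst case $\bar V^n=\lambda^n$, $v=-\lambda^n$; I would argue that such simultaneous extremality is excluded by strict hyperbolicity of the ARZ system (the two characteristic speeds $\lambda_1,\lambda_2$ are distinct, and $\bar V^n$ is controlled by $\lambda_2$, which is strictly larger than $\lambda_1$), so the meeting point, if any, occurs exactly at $t=t^{n+1}$ and does not constitute a crossing inside the time step.
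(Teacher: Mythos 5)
Your proposal is correct and follows essentially the same route as the paper: the CFL condition $|k^n\lambda^n|=\tfrac12\min_j h^n_j$ bounds the distance any wave and the bus can cover inside $[t^n,t^{n+1}]$ by half the smallest cell width, and since the bus has been moved onto a mesh interface at time $t^n$, neither a wave from the adjacent interface nor the bus itself can travel far enough to meet strictly before $t^{n+1}$. The paper states this more compactly by observing directly that the bus remains in the first half of its cell, but the content of the estimate is the same as your relative-speed computation.

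One remark on your last paragraph: the worry about simultaneous extremality and the appeal to strict hyperbolicity are misplaced. Strict hyperbolicity does not exclude the configuration $\bar V^n=\lambda^n$, $v=-\lambda^n$, since $\lambda^n=\max_{i,j}|\lambda_i(\bar u^n_j)|$ could perfectly well be realised by $\lambda_2$ at the bus cell while a wave from the neighbouring interface propagates at $-\lambda^n$ from a different state. Fortunately you do not need that argument: as you yourself note in the very same sentence, equality in the bound only forces the meeting point to lie at $t=t^{n+1}$, which does not constitute a crossing \emph{within} the time step. That observation alone closes the case, and the parenthetical hyperbolicity argument should be dropped.
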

\begin{proof}
Since, by Proposition \ref{rarefaction_wave_properties} and the Lax-entropy condition (\ref{Lax_entropy_condition}), each wave propagates at a speed lower than $\lambda^n$, the CFL condition (\ref{CFL_condition_nonuniform_mesh}) ensures that no interactions between waves centred in neighbouring Riemann problems can happen, because they can cover no more than half length of a cell within $[t^n,t^{n+1}]$.\\
In the case $x^n_{m+1/2}-y^n > h/2$, at time $t^n$ the bus is in $x^\text{new}_{m-1/2}$. Therefore only waves centred in $x^n_{m-3/2}$ or $x^n_{m+1/2}$ could cross the bus trajectory. Proceeding as in Proposition \ref{prop_well_def_of_nonunif_mesh}, we find
$$\bar{V}^n\leq \lambda^n.$$
Hence also the bus remains in the first half of the new $m$-th cell, indeed
$$y^n + k^n\, \bar{V}^n \leq y^n+\dfrac{1}{2}\min_{j\in \mathbb{Z}}h^n_{j} = x^n_{m-1/2}+\dfrac{1}{2}\min_{j\in \mathbb{Z}}h^n_{j}.$$
\end{proof}
\chapter{Existence of solutions to the Cauchy problem for the Riemann solver $\mathcal{RS}^q_2$}
In this chapter we are going to apply the wave-front tracking method to find a solution to the Cauchy problem for the Aw-Rascle-Zhang system with a fixed constraint and for an initial datum in $L^1$ and with bounded total variation.\\
We assume that the pressure function satisfies the following stronger conditions:
\begin{equation}\label{ipotesi_pressione_forti}
\begin{cases}
p(0)=0,\\
p'(\rho)>0 \, \text{ for every } \, \rho>0,\\
p''(\rho)\geq 0 \; \text{ for every } \; \rho\geq 0.
\end{cases}
\end{equation}
\begin{lemma}\label{convex_concave}
Fix $(\rho_0,v_0) \in \mathbb{R}^+ \times \mathbb{R}^+$. Under the hypotheses (\ref{ipotesi_pressione_forti}), the functions $\rho \to \rho \, p(\rho)$ and $\rho \to \rho\, L_1(\rho,\rho_0,v_0) = \rho \, (v_0+p(\rho_0)-p(\rho))$ are respectively strictly convex and strictly concave for every $\rho>0$.
\end{lemma}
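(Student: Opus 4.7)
The plan is to reduce both claims to a single second-derivative computation. I will compute the second derivative of $\rho \to \rho\, p(\rho)$ directly and show it is strictly positive on $(0,+\infty)$ using the hypotheses (\ref{ipotesi_pressione_forti}); then I will observe that $\rho \to \rho\, L_1(\rho,\rho_0,v_0)$ differs from $-\rho\, p(\rho)$ only by a linear term in $\rho$, which will immediately give strict concavity.

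For the first function, let $f(\rho)=\rho\, p(\rho)$. Since $p \in \mathcal{C}^2$, we have
$$f'(\rho) = p(\rho) + \rho\, p'(\rho), \qquad f''(\rho) = 2\, p'(\rho) + \rho\, p''(\rho).$$
By (\ref{ipotesi_pressione_forti}), $p'(\rho) > 0$ for every $\rho > 0$ and $p''(\rho) \geq 0$ for every $\rho \geq 0$. Hence for every $\rho > 0$,
$$f''(\rho) = 2\, p'(\rho) + \rho\, p''(\rho) > 0,$$
so $f$ is strictly convex on $(0,+\infty)$.

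For the second function, write
$$g(\rho) := \rho\, L_1(\rho,\rho_0,v_0) = (v_0 + p(\rho_0))\,\rho - \rho\, p(\rho) = (v_0 + p(\rho_0))\,\rho - f(\rho).$$
Since the first summand is linear in $\rho$, we get $g''(\rho) = -f''(\rho) < 0$ for every $\rho > 0$ by the previous step, so $g$ is strictly concave on $(0,+\infty)$.

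There is no substantive obstacle: the whole content of the lemma is the elementary identity $(\rho\, p(\rho))'' = 2 p'(\rho) + \rho\, p''(\rho)$ together with the sign assumptions on $p'$ and $p''$. The only thing to keep in mind is that the strict convexity of $\rho\, p(\rho)$ was already asserted in the original hypotheses (\ref{ipotesi_pressione}); the role of the strengthened hypotheses (\ref{ipotesi_pressione_forti}) is to make this second-derivative computation completely transparent and to provide the pointwise bound needed later (in particular in Lemma \ref{lemma_curve_lipschitz_1}) for the Lipschitz estimates on the first Lax curve.
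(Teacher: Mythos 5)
Your proof is correct and follows essentially the same route as the paper: compute $(\rho\,p(\rho))'' = 2p'(\rho)+\rho\,p''(\rho) > 0$ from the sign hypotheses, then deduce strict concavity of $\rho\,L_1(\rho,\rho_0,v_0)$. The only difference is that you make explicit the observation that $\rho\,L_1$ is a linear function minus $\rho\,p(\rho)$, where the paper compresses this to ``Similarly.''
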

\begin{proof}
We have
\begin{equation*}
\begin{split}
& \dfrac{d}{d\rho}\left(\rho\,p(\rho)\right) = p(\rho)+\rho\,p'(\rho) \Longrightarrow \\
& \dfrac{d^2}{d\rho^2}\left(\rho\, p(\rho) \right) = 2p'(\rho)+\rho \, p''(\rho).
\end{split}
\end{equation*}
The function $\rho \to \rho\,p(\rho)$ is strictly convex if and only if
$$\dfrac{d^2}{d\rho^2}\left( \rho\,p(\rho)\right) >0.$$
By the hypotheses (\ref{ipotesi_pressione_forti}) $p'(\rho)>0$ and $p''(\rho)\geq 0$ for every $\rho>0$. Hence we have
$$2p'(\rho)+\rho\,p''(\rho)>0.$$
Similarly for $\rho \to \rho\, L_1(\rho,\rho_0,v_0)$.
\end{proof}
Fix $(\rho^l,v^l)$ and $(\rho^r,v^r)$ in $\mathbb{R}^+\times \mathbb{R}^+$. Consider the Riemann problem for the ARZ system centred in $\bar{x}\in \mathbb{R}$
\begin{equation}\label{Riemann_problem}
\begin{cases}
\partial_t\, \rho + \partial_x \, (\rho v) = 0,\\
\partial_t\, \rho(v+p(\rho))+\partial_x\,[\rho\, v (v+p(\rho))]=0,\\
(\rho,v)(0,x) = \begin{cases}
(\rho^l,v^l) & \text{if } x\leq \bar{x},\\
(\rho^r,v^r) & \text{if } x> \bar{x}.
\end{cases}
\end{cases}
\end{equation}
Let us consider a fixed constraint on the first component of the flux at $x=0$, i.e.
\begin{equation}
f_1((\rho,v)(t,0))=\rho(t,0)\, v(t,0) \leq q \; \text{ for every } \; t\in \mathbb{R}^+,
\end{equation}
where $q \in \mathbb{R}^+$  is fixed.\\
The Riemann solver $\mathcal{RS}^q_2$ is defined as follows; see \cite{garavello_goatin}.\\
\begin{enumerate}
\item If $f_1(\mathcal{RS}((\rho^l,v^l),(\rho^r,v^r))(0))\leq 0$, then
$$\mathcal{RS}^q_2((\rho^l,v^l),(\rho^r,v^r))(\lambda)=\mathcal{RS}((\rho^l,v^l),(\rho^r,v^r))(\lambda) \; \text{ for every } \; \lambda \in \mathbb{R}.$$
\item If $f_1f_1(\mathcal{RS}((\rho^l,v^l),(\rho^r,v^r))(0))> 0$, then
$$\mathcal{RS}^q_2((\rho^l,v^l),(\rho^r,v^r))(\lambda)=\begin{cases}
\mathcal{RS}((\rho^l,v^l),(\hat{\rho},\hat{v}))(\lambda) & \text{if } \lambda <0,\\
\mathcal{RS}((\check{\rho}_2,\check{v}_2),(\rho^r,v^r))(\lambda) & \text{if } \lambda \geq 0.
\end{cases}
$$
\end{enumerate}
\begin{remark}
The Riemann solver $\mathcal{RS}^q_2$ coincides with the Riemann solver $\mathcal{RS}^\alpha_2$ for the moving constraint, when the bus speed $\bar{V}$ is zero and $F_\alpha=q$.\\
Therefore we can apply the results that we have obtained in the previous chapters for this special case.
\end{remark}
\section{Interaction estimates}
Let $v_1$, $v_2$, $w_1$ and $w_2$ be fixed constants such that $0 <v_1<v_2$, $0<w_1<w_2$ and $v_2<w_2$. Let us suppose that
$$ v_1+p\left(\dfrac{q}{v_1}\right)\geq w_2, \;\;\; v_2+p\left(\dfrac{q}{v_2}\right) \leq w_2 \; \text{ and } \; v+p\left(\dfrac{q}{v}\right)\geq w_1 \; \text{ for every } v \in [v_1,v_2],$$
and that there exists $\bar{v}\in [v_1,v_2]$ such that
$$\bar{v}+p\left(\dfrac{q}{\bar{v}}\right)<w_2.$$
By Theorem \ref{Proposizione_Dominio_invariante_R_alfa2_definitiva}, under these hypotheses the set
$$\mathcal{D}_{v_1,v_2,w_1,w_2}=\lbrace (\rho,v)\in \mathbb{R}^+\times \mathbb{R}^+ \; : \; v_1 \leq v \leq v_2, \, w_1 \leq v+p(\rho) \leq w_2 \rbrace$$
is an invariant domain for $\mathcal{RS}^q_2$.
\begin{mydef}\label{def_total_variation}
Let us consider a function
$$f:\mathbb{R}^+\times \mathbb{R}\to \mathbb{R},$$
$$(t,x)\to f(t,x).$$
Let us suppose that $f$ is piecewise constant in the second variable, so that for every $t>0$, there exists a sequence $\chi:=\lbrace x_k \rbrace_{k\in \mathbb{N}}\subset \mathbb{R}$ for which
$$f(t,x) = \sum_{i\in\mathbb{N}} f(t,x_i)\mathbf{1}_{(x_i,x_{i+1})},$$
where $\mathbf{1}_{I}$ is the characteristic function of the interval $I\in \mathbb{R}$. The Total Variation of $f$ at time $t$ is
$$TV_t(f) :=\sum_{i \in\mathbb{N}} |f(t,x^+_{i+1})-f(t,x^+_i)| \; \text{ where } \; x_k \in \chi \; \text{ for every }\; k \in \mathbb{N},$$
where
$$x^+_j:= \lim_{\varepsilon\to 0} (x_j+\varepsilon)\; \text{ for every } \; j \in \mathbb{N}.$$
\end{mydef}
Fix two instants $t^1$ and $t^2$ in $\mathbb{R}^+$ such that $t^2>t^1$. Let us denote
$$\Delta TV_{\tilde{t}} (\rho)=TV_{t^2}(\rho)-TV_{t^1}(\rho) \; \text{ and } \; \Delta TV_{\tilde{t}} (v)=TV_{t^2}(v)-TV_{t^1}(v)$$
respectively the difference of the total variation of the first and the second component of the solution to the Riemann problem (\ref{Riemann_problem}) after an interaction between two waves has happened at time $\tilde{t}\in [t^1,t^2]$. Let
$$\Delta_{\tilde{t}} \mathcal{N}=\mathcal{N}_{t^2}-\mathcal{N}_{t^1}$$
be the variation in the number of waves before and after the interaction.\\
The aim of this section is to give estimates on $\Delta TV_{\tilde{t}}(\rho)$, $\Delta TV_{\tilde{t}}(v)$ and $\Delta_{\tilde{t}}\mathcal{N}$ at each instant $\tilde{t}$ of interaction.\\
The following table contains a list of the possible interactions between a characteristic wave and the constraint.\\
\begin{table}[H]
\centering
\begin{tabular}{|l|c|c|}
\hline Wave type & Classical solution in $x=0$ & Propositions and Case\\
\hline Contact discontinuity & Yes & \ref{prop_contact_discontinuity_from_left_case_1}(i), \ref{prop_contact_discontinuity_from_left_case_2}(i) \\
\hline Contact discontinuity & No & \ref{prop_contact_discontinuity_from_left_case_1}(ii), \ref{prop_contact_discontinuity_from_left_case_2}(ii) \\
\hline Shock & Yes & \ref{prop_shock_from_right_case_1}, \ref{prop_shock_from_right_case_2}(i)\\
\hline Shock & No & \ref{prop_shock_from_right_case_2} (ii) \\
\hline Rarefaction wave & Yes & \ref{prop_rarefaction_from_right_case_1}(i)  \\
\hline Rarefaction wave & No & \ref{prop_rarefaction_from_right_case_1}(ii), \ref{prop_rarefaction_from_right_case_2} \\
\hline
\end{tabular}
\caption{List of the propositions and cases in which the interaction between a characteristic wave and the line $x=0$ is discussed.}
\end{table}
We need some preliminary results.\\
Fix a domain $\mathcal{D}_{v_1,w_2,w_1,w_2}$ invariant for $\mathcal{RS}^q_2$. Let us recall the definitions (\ref{def_rho_v_min_1}) and (\ref{def_rho_v_max_1}) of the points $(\rho_\text{min},v_\text{min})$ and $(\rho_\text{max},v_\text{max})$ which have respectively the minimal and the maximal density in the invariant domain, i.e.
$$\rho_{\min}\leq \rho \leq \rho_{\max} \; \text{ for every }\; (\rho,v) \in \mathcal{D}_{v_1,v_2,w_1,w_2}.$$
The point $(\rho_\text{min},v_\text{min})\in \mathcal{D}_{v_1,v_2,w_1,w_2}$ is the solution to the system
\begin{equation}\label{def_rho_v_min}
\begin{cases}
v+p(\rho)=w_1,\\
v=v_2.
\end{cases}
\end{equation}
Similarly, the point $(\rho_\text{max},v_\text{max})\in \mathcal{D}_{v_1,v_2,w_1,w_2}$ is the solution to the system
\begin{equation}\label{def_rho_v_max}
\begin{cases}
v+p(\rho)=w_2,\\
v=v_1.
\end{cases}
\end{equation}
See Figure \ref{fig_notations_interaction_estimates}.\\
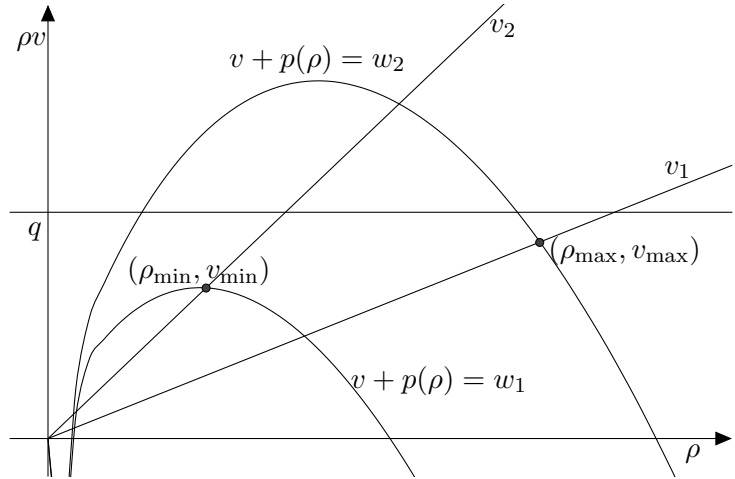
\begin{figure}[hbtp]
\centering
\definecolor{uququq}{rgb}{0.25098039215686274,0.25098039215686274,0.25098039215686274}
\begin{tikzpicture}[scale=0.5,line cap=round,line join=round,>=triangle 45,x=1.0cm,y=1.0cm]
\draw[->,color=black] (-1.,0.) -- (18.,0.);
\draw[->,color=black] (0.,-1.) -- (0.,11.5);
\clip(-1.,-1.) rectangle (18.,11.5);
\draw[smooth,samples=50,domain=0.001:18.0] plot(\x,{3.0*(\x)-(\x)^(1.5)});
\draw[smooth,samples=50,domain=0.001:18.0] plot(\x,{4.0*(\x)-(\x)^(1.5)});
\draw [domain=0.0:18.0] plot(\x,{(-0.--4.5*\x)/11.18});
\draw [domain=0.0:18.0] plot(\x,{(-0.--9.68*\x)/10.08});
\draw [domain=-1.:18.] plot(\x,{(--6.-0.*\x)/1.});
\draw (12.9,5.64) node[anchor=north west] {$(\rho_\text{max},v_\text{max})$};
\draw (1.81,5.1) node[anchor=north west] {$(\rho_\text{min},v_\text{min})$};
\draw (-0.8,6.) node[anchor=north west] {$q$};
\draw (11.35,11.37) node[anchor=north west] {$v_2$};
\draw (15.95,7.62) node[anchor=north west] {$v_1$};
\draw (4.50,10.7) node[anchor=north west] {$v+p(\rho)=w_2$};
\draw (7.7,2.20) node[anchor=north west] {$v+p(\rho) = w_1$};
\draw (-1.1,11.04) node[anchor=north west] {$\rho v$};
\draw (16.5,0.1) node[anchor=north west] {$\rho$};
\begin{scriptsize}
\draw [fill=uququq] (4.16,3.99) circle (3pt);
\draw [fill=uququq] (12.94,5.20) circle (3pt);
\end{scriptsize}
\end{tikzpicture}
\caption{Example of invariant domain for $\mathcal{RS}^q_2$ (the coloured area) and representation of the points $(\rho_\text{min},v_\text{min})$ and $(\rho_\text{max},v_\text{max})$ defined in (\ref{def_rho_v_min}) and (\ref{def_rho_v_max}).}\label{fig_notations_interaction_estimates}
\end{figure}

The next lemma characterizes the points of the invariant domain $\mathcal{D}_{v_1,v_2,w_1,w_2}$ where the Lax curves of the first family are decreasing.
\begin{lemma}\label{lemma_curve_decrescenti}
Let $(\rho_0,v_0)$ be a point of the domain $\mathcal{D}_{v_1,v_2,w_1,w_2}$. The function
$$\psi: \rho \rightarrow \rho L_1(\rho,\rho_0,v_0)$$
is strictly decreasing in the points of $\mathcal{D}_{v_1,v_2,w_1,w_2}$ if and only if
\begin{equation}\label{condizione_curve_decrescenti}
\lambda_1(\rho_\text{min},v_\text{min}) <0,
\end{equation}
where $(\rho_\text{min},v_\text{min})$ is the point defined in (\ref{def_rho_v_min}) and $\lambda_1(\rho,v) = v-\rho\,p'(\rho)$ is the first eigenvalue of the Jacobian matrix $Df$ of the flux function.
\end{lemma}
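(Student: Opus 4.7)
The plan is to reduce the claim to an evaluation of the first eigenvalue at the distinguished point $(\rho_\text{min},v_\text{min})$, using Proposition~\ref{eigenvalue_as_slope} to convert $\psi'(\rho)$ into $\lambda_1$ and Lemma~\ref{lemma_curve_lipschitz_1} to control $\psi'$ uniformly over $\mathcal{D}_{v_1,v_2,w_1,w_2}$.

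First I would rewrite $\psi'$ explicitly. By Proposition~\ref{eigenvalue_as_slope}, for any $(\rho_0,v_0)\in\mathcal{D}_{v_1,v_2,w_1,w_2}$ and for every $\rho$ such that $(\rho,L_1(\rho,\rho_0,v_0))\in\mathcal{D}_{v_1,v_2,w_1,w_2}$, one has
$$\psi'(\rho)=\frac{d}{d\rho}\bigl(\rho\, L_1(\rho,\rho_0,v_0)\bigr)=\lambda_1\bigl(\rho,L_1(\rho,\rho_0,v_0)\bigr).$$
Hence the strict monotonicity of $\psi$ at each point of $\mathcal{D}_{v_1,v_2,w_1,w_2}$ is equivalent to the pointwise negativity of $\lambda_1$ along every first Lax curve inside the domain.

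For the implication $(\Leftarrow)$, assume $\lambda_1(\rho_\text{min},v_\text{min})<0$. Since the hypotheses~(\ref{ipotesi_pressione_forti}) include $p''(\rho)\geq 0$ for every $\rho\geq 0$, the assumptions of Lemma~\ref{lemma_curve_lipschitz_1} are fulfilled, and that lemma yields the bi-Lipschitz estimate
$$\lambda_1(\rho_\text{max},v_\text{max})\leq \psi'(\rho)\leq \lambda_1(\rho_\text{min},v_\text{min})<0$$
for every $(\rho,L_1(\rho,\rho_0,v_0))\in\mathcal{D}_{v_1,v_2,w_1,w_2}$. This gives $\psi'<0$ throughout the domain, hence $\psi$ is strictly decreasing there.

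For the converse $(\Rightarrow)$, I would choose the base point $(\rho_0,v_0)=(\rho_\text{min},v_\text{min})$, which lies in $\mathcal{D}_{v_1,v_2,w_1,w_2}$ by Proposition~\ref{prop_min_and_max_density_invariant_domain_RS_q_2}. Along the Lax curve through it, at $\rho=\rho_\text{min}$ the identity from the first step reduces to $\psi'(\rho_\text{min})=\lambda_1(\rho_\text{min},v_\text{min})$; the strict monotonicity hypothesis then forces $\lambda_1(\rho_\text{min},v_\text{min})<0$. There is no substantial obstacle: the only delicate step is checking that one is allowed to invoke Lemma~\ref{lemma_curve_lipschitz_1} in the $(\Leftarrow)$ direction, which is immediate once the stronger pressure hypotheses~(\ref{ipotesi_pressione_forti}) are recalled.
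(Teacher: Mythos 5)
Your proposal is correct, and the $(\Rightarrow)$ direction is essentially the same as the paper's: both set $(\rho_0,v_0)=(\rho_\text{min},v_\text{min})$ and evaluate $\psi'$ at $\rho=\rho_\text{min}$. For $(\Leftarrow)$ you diverge: you delegate the uniform sign of $\psi'=\lambda_1(\rho,L_1(\rho,\rho_0,v_0))$ over $\mathcal{D}_{v_1,v_2,w_1,w_2}$ to the bi-Lipschitz bound of Lemma~\ref{lemma_curve_lipschitz_1}, namely $\psi'\le\lambda_1(\rho_\text{min},v_\text{min})<0$, whereas the paper runs a self-contained argument: it translates $\psi'(\rho)<0$ into $\varphi(\rho)>v_0+p(\rho_0)$ with $\varphi(\rho)=\tfrac{d}{d\rho}(\rho p(\rho))$, observes that $\lambda_1(\rho_\text{min},v_\text{min})<0$ is exactly $\varphi(\rho_\text{min})>w_1$, and concludes by monotonicity of $\varphi$ together with $\rho\ge\rho_\text{min}$ from Proposition~\ref{prop_min_and_max_density_invariant_domain_RS_q_2}. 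Both are valid: your route is more modular and shorter, since $p''\ge 0$ is already a standing hypothesis in~(\ref{ipotesi_pressione_forti}) and Lemma~\ref{lemma_curve_lipschitz_1} has no hidden dependence on the present lemma; the paper's route keeps the proof elementary and makes the convexity of $\rho\mapsto\rho p(\rho)$ visible as the driving mechanism, which is perhaps more instructive at the cost of a little repetition.
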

\begin{proof}
Let us call
$$\varphi(\rho) := \dfrac{d}{d\rho}(\rho p(\rho))= p(\rho)+\rho\, p'(\rho).$$
The inequality (\ref{condizione_curve_decrescenti}) is equivalent to
\begin{equation}\label{cond_curve_decrescenti_2}
\varphi(\rho_\text{min})>w_1,
\end{equation}
indeed
\begin{equation*}
\begin{split} \varphi(\rho_\text{min}) & = \dfrac{d}{d\rho}(\rho p(\rho))\Big|_{\rho=\rho_\text{min}}=\rho_\text{min}\, p'(\rho_\text{min})+p(\rho_\text{min})= \\
& = v_\text{min}-\lambda_1(\rho_\text{min},v_\text{min}) +p(\rho_\text{min}) = w_1 -\lambda_1(\rho_\text{min},v_\text{min}),
\end{split}
\end{equation*}
because $v_\text{min}+p(\rho_\text{min}) = w_1$. Then
\begin{equation*}
\lambda_1(\rho_\text{min},v_\text{min})<0 \Longleftrightarrow w_1-\lambda_1(\rho_\text{min},v_\text{min})>w_1 \Longleftrightarrow \varphi(\rho_\text{min}) >w_1.
\end{equation*}
Let $(\rho_0,v_0)$ be a point in $\mathcal{D}_{v_1,v_2,w_1,w_2}$. The function
$$\psi: \rho \rightarrow \rho L_1(\rho,\rho_0,v_0)=\rho(v_0+p(\rho_0)-p(\rho))$$
is strictly decreasing if and only if the inequality
\begin{equation}\label{dim_condizione_curve_decrescenti}
\dfrac{d}{d\rho}(\rho L_1(\rho,\rho_0,v_0))<0
\end{equation}
holds. We have
\begin{equation*}
\begin{split}
& \dfrac{d}{d\rho}(\rho L_1(\rho,\rho_0,v_0))<0 \Longleftrightarrow v_0+p(\rho_0)-p(\rho)-\rho p'(\rho) <0 \Longleftrightarrow\\
& p(\rho)+\rho p'(\rho) > v_0+p(\rho_0) \Longleftrightarrow \varphi(\rho)>v_0+p(\rho_0).
\end{split}
\end{equation*}
By Lemma \ref{convex_concave}, the function $\rho \rightarrow \rho p(\rho)$ is strictly convex, hence its derivative $\varphi$ is strictly increasing. Therefore, if the inequality (\ref{cond_curve_decrescenti_2}) holds, the condition (\ref{dim_condizione_curve_decrescenti}) is satisfied for every point $(\rho,v)\in \mathcal{D}_{v_1,v_2,w_1,w_2}$ such that $v+p(\rho) = v_0+p(\rho_0)$, because $\rho>\rho_\text{min}$ by Proposition \ref{prop_min_and_max_density_invariant_domain_RS_q_2}.\\
Vice versa, if the inequality (\ref{dim_condizione_curve_decrescenti}) holds for every $(\rho,v)\in\mathcal{D}_{v_1,v_2,w_1,w_2}$, then it holds in particular for $(\rho_0,v_0)=(\rho_\text{min},v_\text{min})$, i.e.
$$\dfrac{d}{d\rho}\rho\,L_1(\rho,\rho_\text{min},v_\text{min})<0 \;\text{ for every } \; (\rho,v)\in\mathcal{D}_{v_1,v_2,w_1,w_2}.$$
Since $(\rho_\text{min},v_\text{min})\in \mathcal{D}_{v_1,v_2,w_1,w_2}$, we have
$$\dfrac{d}{d\rho}\rho\,L_1(\rho,\rho_\text{min},v_\text{min})\Big|_{\rho=\rho_\text{min}}<0$$
which is equivalent to
$$\lambda_1(\rho_\text{min},v_\text{min}) <0,$$
because, by Proposition \ref{eigenvalue_as_slope}, we have
$$\dfrac{d}{d\rho}\rho\,L_1(\rho,\rho_\text{min},v_\text{min})\Big|_{\rho=\rho_\text{min}} = \lambda_1(\rho_\text{min},v_\text{min})$$.
\end{proof}
Let us recall two properties of concave functions.
\begin{lemma}\label{lemma_concave_functions_properties}
Let us fix a constant $q\in \mathbb{R}^+$ and a point $(\rho^\gamma,v^\gamma)$ in $\mathbb{R}^+\times \mathbb{R}^+$ such that
\begin{equation}
\rho^\gamma\,v^\gamma >q.
\end{equation}
Let us consider a point $(\rho^\delta,v^\delta)\in \mathbb{R}^+\times\mathbb{R}^+$ such that $v^\delta+p(\rho^\delta)=v^\gamma+p(\rho^\gamma)$.
Then the following statements hold.
\begin{enumerate}
\item[(i)] $\rho^\delta\, v^\delta \leq q$ if and only if
$$\rho^\delta\leq \check{\rho}_1 \; \text{ or } \; \rho^\delta \geq \hat{\rho}.$$
\item[(ii)] Suppose that $\rho^\delta\, v^\delta \leq q$. Then $\rho^\delta \leq \check{\rho}_1$ if and only if $\lambda_1(\rho^\delta,v^\delta)>0$, while $\rho^\delta \geq \hat{\rho}$ if and only if $\lambda_1(\rho^\delta,v^\delta)<0$.
\end{enumerate}
\end{lemma}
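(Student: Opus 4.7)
The plan is to view both assertions as geometric properties of the strictly concave function $\psi(\rho) := \rho\, L_1(\rho,\rho^\gamma,v^\gamma) = \rho(v^\gamma + p(\rho^\gamma) - p(\rho))$, whose strict concavity on $(0,+\infty)$ is provided by Lemma \ref{convex_concave}. The key identifications are the two following: since $v^\delta + p(\rho^\delta) = v^\gamma + p(\rho^\gamma)$, one has $\rho^\delta v^\delta = \rho^\delta(v^\gamma + p(\rho^\gamma) - p(\rho^\delta)) = \psi(\rho^\delta)$, and by Proposition \ref{eigenvalue_as_slope} the eigenvalue $\lambda_1(\rho^\delta,v^\delta)$ equals $\psi'(\rho^\delta)$.

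For part (i), I would first observe that $\psi(0) = 0 < q$ and, by hypothesis, $\psi(\rho^\gamma) = \rho^\gamma v^\gamma > q$. Strict concavity of $\psi$ then forces the super-level set $\{\rho > 0 : \psi(\rho) > q\}$ to be an open interval (possibly bounded), and by the definitions in (\ref{def_u_hat_u_check_1}) its endpoints are exactly $\check\rho_1 < \rho^\gamma < \hat\rho$. Consequently $\psi(\rho^\delta) \leq q$ if and only if $\rho^\delta \leq \check\rho_1$ or $\rho^\delta \geq \hat\rho$, which, in view of the identity $\rho^\delta v^\delta = \psi(\rho^\delta)$, gives the claim.

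For part (ii), the strict concavity of $\psi$ means $\psi'$ is strictly decreasing. Because $\psi$ crosses the value $q$ from below at $\check\rho_1$ and from above at $\hat\rho$ (both times strictly, by concavity and the fact that $\psi(\rho^\gamma) > q$ lies strictly between), it follows that $\psi'(\check\rho_1) > 0 > \psi'(\hat\rho)$. Therefore, whenever $\rho^\delta \leq \check\rho_1$ one has $\lambda_1(\rho^\delta,v^\delta) = \psi'(\rho^\delta) \geq \psi'(\check\rho_1) > 0$, and whenever $\rho^\delta \geq \hat\rho$ one has $\lambda_1(\rho^\delta,v^\delta) \leq \psi'(\hat\rho) < 0$. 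For the converses, I would combine these sign determinations with part (i): if $\rho^\delta v^\delta \leq q$ and $\lambda_1(\rho^\delta,v^\delta) > 0$, then by (i) either $\rho^\delta \leq \check\rho_1$ or $\rho^\delta \geq \hat\rho$; the second alternative is excluded because it would force $\lambda_1 \leq \psi'(\hat\rho) < 0$. The symmetric argument handles $\lambda_1 < 0$.

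I do not expect any substantive obstacle, since everything reduces to the behaviour of a strictly concave real function around the two points where it meets a horizontal level. The only subtle step is ensuring strict (not merely weak) inequalities $\psi'(\check\rho_1) > 0$ and $\psi'(\hat\rho) < 0$, which I would justify by noting that if, say, $\psi'(\check\rho_1) = 0$ then by strict concavity $\psi$ would attain its global maximum at $\check\rho_1$, contradicting $\psi(\rho^\gamma) > \psi(\check\rho_1) = q$ with $\rho^\gamma > \check\rho_1$.
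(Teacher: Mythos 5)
Your argument is correct and follows essentially the same route as the paper: part (i) rests on the strict concavity of $\psi(\rho)=\rho\,L_1(\rho,\rho^\gamma,v^\gamma)$ (the content of Lemma \ref{funzione_concava_e_retta}), and part (ii) rests on the strict monotonicity of $\psi'=\lambda_1$ from Proposition \ref{eigenvalue_as_slope}, with part (i) used to settle the converses. The only cosmetic difference is in establishing the strict sign $\psi'(\check{\rho}_1)>0>\psi'(\hat{\rho})$: you argue by contradiction with the global maximum, whereas the paper produces an interior critical point $\rho^{\mathrm{sup}}\in(\check{\rho}_1,\hat{\rho})$ where $\psi'=0$ via the mean value theorem applied to $\psi(\check{\rho}_1)=\psi(\hat{\rho})=q$ and then compares slopes; both are immediate from the same monotonicity.
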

\begin{proof}
By Lemma \ref{esistenza_rho_hat_rho_check_1}, the hypothesis $\rho^\gamma\,v^\gamma >q$ ensures that the points $(\check{\rho}_1,\check{v}_1)$ and $(\hat{\rho},\hat{v})$ exist.\\
To prove (i) is sufficient to apply Lemma \ref{funzione_concava_e_retta} with $\bar{V}=0$ and $F_\alpha=q$.\\
To prove (ii), let us observe that by Lagrange theorem there exists $\rho^\text{sup}\in (\check{\rho}_1,\hat{\rho})$ such that
$$\lambda_1(\rho^\text{sup},L_1(\rho^\text{sup},\rho^\gamma,v^\gamma))=\dfrac{d}{d\rho}(\rho\,L_1(\rho,\rho^\gamma,v^\gamma))\Big|_{\rho=\rho^\text{sup}}=\dfrac{\hat{\rho}\,\hat{v}-\check{\rho}_1\,\check{v}_1}{\hat{\rho}-\check{\rho}_1}=0,$$
because $\hat{\rho}\, \hat{v}=\check{\rho}_1\,\check{v}_1=q$.\\
Assume $\rho^\delta \leq \check{\rho}_1$. By Proposition \ref{eigenvalue_as_slope}, the function $\rho\to \lambda_1(\rho,L_1(\rho,\rho^\gamma,v^\gamma)$ is strictly decreasing. Hence if $\rho^\alpha \leq \check{\rho}_1$, then
$$\lambda_1(\rho^\delta,v^\delta) \geq \lambda_1(\check{\rho}_1,\check{v}_1) > \lambda_1(\rho^\text{sup},L_1(\rho^\text{sup},\rho^\gamma,v^\gamma))=0.$$
Similarly for the case $\rho^\delta\geq \hat{\rho}$, we find $\lambda_1(\rho^\delta,v^\delta)<0$.\\
For the vice versa, since $\rho^\delta\, v^\delta \leq q$, we find
$$\rho^\delta\leq \check{\rho}_1 \; \text{ or } \; \rho^\delta \geq \hat{\rho}$$
by point (i). If $\lambda_1(\rho^\delta,v^\delta)>0$ and it was $\rho^\delta\geq \hat{\rho}$, then $\lambda_1(\hat{\rho},\hat{v}) \geq \lambda_1(\rho^\delta,v^\delta)>0$. This contradicts $\hat{\rho}>\rho^\text{sup}$.
\end{proof}
The next lemma gives a relation between the order of the densities of two points on the line $\rho v=q$ and the corresponding values of the Riemann invariant $w$.
\begin{lemma}\label{lemma_riemann_invariant_property}
Fix $q\in \mathbb{R}^+$ and let us suppose $\lambda_1(\rho_\text{min},v_\text{min})<0$. Fix $(\rho^\alpha,v^\alpha)\in \mathcal{D}_{v_1,v_2,w_1,w_2}$ and $(\rho^\beta,v^\beta)\in \mathcal{D}_{v_1,v_2,w_1,w_2}$ such that
$$\rho^\alpha \, v^\alpha=\rho^\beta\,v^\beta=q.$$
We have
$$v^\alpha+p(\rho^\alpha)\leq v^\beta+p(\rho^\beta) \; \text{ if and only if }  \; \rho^\alpha \leq \rho^\beta.$$
\end{lemma}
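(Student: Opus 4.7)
The plan is to reduce the claim to the monotonicity of a single-variable function. On the constraint curve $\rho v = q$ we may parametrize points by the density alone, writing $v = q/\rho$, so that the Riemann invariant $w = v + p(\rho)$ becomes
\begin{equation*}
g(\rho) := \frac{q}{\rho} + p(\rho), \qquad g'(\rho) = p'(\rho) - \frac{q}{\rho^{2}}.
\end{equation*}
Since $\rho^\alpha v^\alpha = q = \rho^\beta v^\beta$, we have $w^\alpha = g(\rho^\alpha)$ and $w^\beta = g(\rho^\beta)$, so the lemma is equivalent to the statement that $g$ is non-decreasing (indeed strictly increasing) on the set $\{\rho>0 : (\rho,q/\rho)\in\mathcal{D}_{v_1,v_2,w_1,w_2}\}$.

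The key observation linking $g'$ to the hypothesis is that for a point $(\rho,v)$ on the curve $\rho v = q$,
\begin{equation*}
\lambda_1(\rho,v) = v - \rho\, p'(\rho) = \frac{q}{\rho} - \rho\, p'(\rho) = -\rho\, g'(\rho),
\end{equation*}
so $g'(\rho) > 0$ is equivalent to $\lambda_1(\rho, q/\rho) < 0$. Thus it suffices to prove that $\lambda_1 < 0$ throughout $\mathcal{D}_{v_1,v_2,w_1,w_2}$; the lemma then follows from strict monotonicity of $g$.

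The propagation of negativity from $(\rho_\text{min},v_\text{min})$ to the whole domain will be the heart of the proof. For any $(\rho,v)\in \mathcal{D}_{v_1,v_2,w_1,w_2}$, Proposition \ref{prop_min_and_max_density_invariant_domain_RS_q_2} gives $\rho \geq \rho_\text{min}$, and by definition $v \leq v_2 = v_\text{min}$. Since the strengthened hypotheses (\ref{ipotesi_pressione_forti}) give $p''\geq 0$, the function $p'$ is non-decreasing, and in fact $\rho \to \rho\, p'(\rho)$ is non-decreasing as well. Combining these,
\begin{equation*}
\rho\, p'(\rho) \;\geq\; \rho_\text{min}\, p'(\rho_\text{min}) \;>\; v_\text{min} \;\geq\; v,
\end{equation*}
where the strict middle inequality is precisely $\lambda_1(\rho_\text{min},v_\text{min})<0$. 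Hence $\lambda_1(\rho,v)<0$ on all of $\mathcal{D}_{v_1,v_2,w_1,w_2}$.

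Combining these steps: $g'(\rho) > 0$ for every $\rho$ with $(\rho,q/\rho)\in\mathcal{D}_{v_1,v_2,w_1,w_2}$, so $g$ is strictly increasing on this set. Therefore $\rho^\alpha \leq \rho^\beta$ if and only if $g(\rho^\alpha) \leq g(\rho^\beta)$, which is the statement $w^\alpha \leq w^\beta$. I do not anticipate any serious obstacle: the only point that requires care is the transfer of the one-point inequality $\lambda_1(\rho_\text{min},v_\text{min})<0$ to every point of the invariant domain, and this rests entirely on the monotonicity of $\rho\, p'(\rho)$, which is guaranteed by (\ref{ipotesi_pressione_forti}).
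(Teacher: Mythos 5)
Your proof is correct, and it takes a genuinely different route from the paper's. The paper introduces an auxiliary point $(\rho^*,v^*)$ lying on the Lax curve through $(\rho^\alpha,v^\alpha)$ at the level $v=v^\beta$, and then combines Lemma \ref{lemma_concave_functions_properties}, Lemma \ref{lemma_curve_decrescenti} and the monotonicity of $p$ in a geometric comparison argument. You instead restrict the Riemann invariant $w$ to the constraint curve $\rho v=q$, obtaining the explicit one-variable function $g(\rho)=q/\rho+p(\rho)$, and observe the identity $\lambda_1(\rho,q/\rho)=-\rho\,g'(\rho)$, which immediately reduces the lemma to the sign of $\lambda_1$ along that curve. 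You then propagate the pointwise hypothesis $\lambda_1(\rho_\text{min},v_\text{min})<0$ to the rest of the domain directly from the fact that $\rho\mapsto\rho\,p'(\rho)$ is increasing (since $p'+\rho\,p''>0$ under (\ref{ipotesi_pressione_forti})); this amounts to reproving the content of Lemma \ref{lemma_curve_decrescenti} inline rather than invoking it. What your approach buys is transparency: the lemma becomes the bare monotonicity of an explicitly computable scalar function, with a clean interpretation of the hypothesis as the sign of $g'$. One small point worth tightening: you assert $g$ is strictly increasing on the set $\{\rho:(\rho,q/\rho)\in\mathcal{D}_{v_1,v_2,w_1,w_2}\}$ after establishing $g'>0$ only on that set, but monotonicity requires $g'>0$ on the whole interval between $\rho^\alpha$ and $\rho^\beta$. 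This is easily repaired: any intermediate $\rho'$ satisfies $\rho'\geq\rho_\text{min}$ and $q/\rho'\leq v_2=v_\text{min}$, so the same chain $\rho'\,p'(\rho')\geq\rho_\text{min}\,p'(\rho_\text{min})>v_\text{min}\geq q/\rho'$ gives $g'(\rho')>0$ without needing the full $w$-constraint at $\rho'$. With that remark inserted, the proof is complete.
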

\begin{proof}
Let us call $(\rho^*,v^*)$ the solution to the system (see Figure \ref{fig_lemma_riemann_invariant_property})
$$\begin{cases}
v=v^\beta,\\
v+p(\rho)=v^\alpha+p(\rho^\alpha).
\end{cases}
$$
By Lemma \ref{lemma_concave_functions_properties}, we find $\rho^\alpha\leq\rho^*$, because $\lambda_1(\rho^\alpha,v^\alpha)<0$ by Lemma \ref{lemma_curve_decrescenti}.\\
Since $v^* =v^\beta$ and $v^*+p(\rho^*)=v^\alpha+p(\rho^\alpha)$, we have
$$v^\alpha+p(\rho^\alpha)=v^*+p(\rho^*)\leq v^\beta+p(\rho^\beta) \Longleftrightarrow p(\rho^*) \leq p(\rho^\beta)\Longleftrightarrow \rho^* \leq \rho^\beta,$$
because $\rho \to p(\rho)$ is increasing by the hypotheses (\ref{ipotesi_pressione_forti}). Therefore $\rho^*\,v^*\leq \rho^\beta\, v^\beta =q$.
\end{proof}
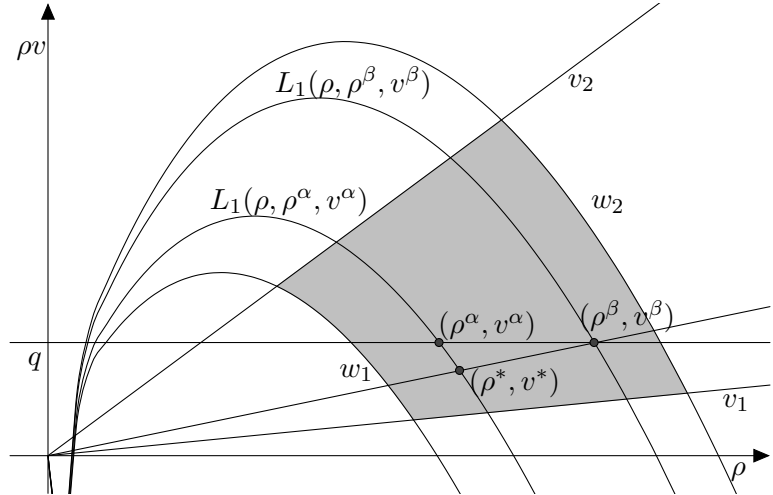
\begin{figure}[hbtp]
\centering
\definecolor{cqcqcq}{rgb}{0.7529411764705882,0.7529411764705882,0.7529411764705882}
\definecolor{uququq}{rgb}{0.25098039215686274,0.25098039215686274,0.25098039215686274}
\begin{tikzpicture}[scale=0.5,line cap=round,line join=round,>=triangle 45,x=1.0cm,y=1.0cm]
\draw[->,color=black] (-1.,0.) -- (19.,0.);
\draw[->,color=black] (0.,-1.) -- (0.,12.);
\clip(-1.,-1.) rectangle (19.,12.);
\draw[line width=0.pt,color=cqcqcq,fill=cqcqcq,fill opacity=1.0] {[smooth,samples=50,domain=6.02:9.62] plot(\x,{0-(-6.52/8.74)*\x-0.0/8.74})} -- (9.62,0.94) {[smooth,samples=50,domain=9.62:6.02] -- plot(\x,{3.2*\x-\x^(1.5)})} -- (6.02,4.49) -- cycle;
\draw[line width=0.pt,color=cqcqcq,fill=cqcqcq,fill opacity=1.0] {[smooth,samples=50,domain=9.6:11.93] plot(\x,{0-(-6.52/8.74)*\x-0.0/8.74})} -- (11.93,1.17) {[smooth,samples=50,domain=11.93:9.6] -- plot(\x,{0-(-1.50/15.22)*\x-0.0/15.22})} -- (9.6,7.16) -- cycle;
\draw[line width=0.pt,color=cqcqcq,fill=cqcqcq,fill opacity=1.0] {[smooth,samples=50,domain=11.9:16.82] plot(\x,{4.2*\x-\x^(1.5)})} -- (16.82,1.66) {[smooth,samples=50,domain=16.82:11.9] -- plot(\x,{0-(-1.50/15.22)*\x-0.0/15.22})} -- (11.9,8.92) -- cycle;
\draw[smooth,samples=50,domain=0.001:19.0] plot(\x,{4.0*(\x)-(\x)^(1.5)});
\draw[smooth,samples=50,domain=0.001:19.0] plot(\x,{3.5*(\x)-(\x)^(1.5)});
\draw [domain=-1.:19.] plot(\x,{(--3.-0.*\x)/1.});
\draw [domain=0.0:19.0] plot(\x,{(-0.--2.99*\x)/14.373863542464552});
\draw (-1.1,11.26) node[anchor=north west] {$\rho v$};
\draw (17.7,0.1) node[anchor=north west] {$\rho$};
\draw (10.00,4.10) node[anchor=north west] {$(\rho^\alpha,v^\alpha)$};
\draw (13.7,4.4) node[anchor=north west] {$(\rho^\beta,v^\beta)$};
\draw (10.8,2.6) node[anchor=north west] {$(\rho^*,v^*)$};
\draw (3.99,7.4) node[anchor=north west] {$L_1(\rho,\rho^\alpha,v^\alpha)$};
\draw (5.7,10.59) node[anchor=north west] {$L_1(\rho,\rho^\beta,v^\beta)$};
\draw [domain=0.0:19.0] plot(\x,{(-0.--1.50*\x)/15.22});
\draw [domain=0.0:19.0] plot(\x,{(-0.--6.52*\x)/8.74});
\draw[smooth,samples=50,domain=0.001:19.0] plot(\x,{3.2*(\x)-(\x)^(1.5)});
\draw[smooth,samples=50,domain=0.001:19.0] plot(\x,{4.2*(\x)-(\x)^(1.5)});
\draw (13.41,10.38) node[anchor=north west] {$v_2$};
\draw (17.48,1.9) node[anchor=north west] {$v_1$};
\draw (-0.8,3.) node[anchor=north west] {$q$};
\draw (14.02,7.16) node[anchor=north west] {$w_2$};
\draw (7.4,2.7) node[anchor=north west] {$w_1$};
\begin{scriptsize}
\draw [fill=uququq] (10.29,3.) circle (3pt);
\draw [fill=uququq] (14.37,3) circle (3pt);
\draw [fill=uququq] (10.83,2.26) circle (3pt);
\end{scriptsize}
\end{tikzpicture}
\caption{Notation used in the proof of Lemma \ref{lemma_riemann_invariant_property}. The coloured area is the invariant domain.}\label{fig_lemma_riemann_invariant_property}
\end{figure}
Next, we state some useful relations between the points of interaction between the Lax curves of the first and the second family and the line of the constraint.
\begin{lemma}\label{lemma_disuguaglianze_triangolo}
Assume that $\lambda_1(\rho_\text{min},v_\text{min})<0$ and fix $q\in \mathbb{R}^+$. Let us consider three points $(\rho^\alpha,v^\alpha)$, $(\rho^\beta,v^\beta)$ and $(\rho^\gamma,v^\gamma)$ in the invariant domain $\mathcal{D}_{v_1,v_2,w_1,w_2}$ such that
$$v^\beta+p(\rho^\beta) > v^\alpha+p(\rho^\alpha), \; \; v^\gamma=v^\alpha \; \text{ and } \; v^\beta+p(\rho^\beta)=v^\gamma+p(\rho^\gamma).$$
Moreover let us suppose that
$$\rho^\alpha\,v^\alpha =\rho^\beta \, v^\beta= q.$$
Then there exist three positive constants $c_1$, $c_2$ and $c_3$ depending only on $q$ and the invariant domain $\mathcal{D}_{v_1,v_2,w_1,w_2}$ for which:
\begin{enumerate}
\item[(i)] $\rho^\beta-\rho^\gamma\leq c_1\,(\rho^\gamma-\rho^\alpha)$;
\item[(ii)] $\rho^\gamma-\rho^\alpha\leq c_2\, (\rho^\beta-\rho^\gamma)$\;
\item[(iii)] $v^\gamma-v^\beta\leq c_3\,(\rho^\gamma-\rho^\alpha)$.
\end{enumerate}
\end{lemma}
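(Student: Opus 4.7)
The plan is to reduce all three inequalities to a single master identity obtained by comparing two expressions for $v^\alpha-v^\beta$. First, I will fix the geometry: since $v^\beta+p(\rho^\beta)>v^\alpha+p(\rho^\alpha)=v^\gamma+p(\rho^\alpha)$ and $v^\gamma+p(\rho^\gamma)=v^\beta+p(\rho^\beta)$, the strict monotonicity of $p$ forces $\rho^\alpha<\rho^\gamma$; similarly, $v^\beta<v^\alpha=v^\gamma$ (from $\rho^\alpha v^\alpha=\rho^\beta v^\beta=q$ and Lemma \ref{lemma_riemann_invariant_property}, or directly), combined with $v^\beta+p(\rho^\beta)=v^\gamma+p(\rho^\gamma)$, forces $\rho^\gamma<\rho^\beta$. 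In particular all three densities lie in $[\rho_{\min},\rho_{\max}]$ by Proposition \ref{prop_min_and_max_density_invariant_domain_RS_q_2}.

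Next, I compute $v^\alpha-v^\beta$ in two ways. From the constraint,
\begin{equation*}
v^\alpha-v^\beta=\frac{q}{\rho^\alpha}-\frac{q}{\rho^\beta}=\frac{q(\rho^\beta-\rho^\alpha)}{\rho^\alpha\rho^\beta}.
\end{equation*}
From $v^\gamma=v^\alpha$ and $v^\gamma+p(\rho^\gamma)=v^\beta+p(\rho^\beta)$, combined with the mean value theorem,
\begin{equation*}
v^\alpha-v^\beta=p(\rho^\beta)-p(\rho^\gamma)=p'(\xi)(\rho^\beta-\rho^\gamma)\qquad\text{for some }\xi\in(\rho^\gamma,\rho^\beta).
\end{equation*}
Writing $\rho^\beta-\rho^\alpha=(\rho^\beta-\rho^\gamma)+(\rho^\gamma-\rho^\alpha)$ and equating gives the master identity
\begin{equation*}
\bigl(p'(\xi)\,\rho^\alpha\rho^\beta-q\bigr)(\rho^\beta-\rho^\gamma)=q(\rho^\gamma-\rho^\alpha).
\end{equation*}
Parts (i) and (ii) reduce to bounding the coefficient $p'(\xi)\rho^\alpha\rho^\beta-q$ above and below by strictly positive constants depending only on $q$ and $\mathcal{D}_{v_1,v_2,w_1,w_2}$. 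The upper bound is immediate: since $\xi\in[\rho_{\min},\rho_{\max}]$ and $p'$ is continuous, $p'(\xi)\rho^\alpha\rho^\beta\le p'(\rho_{\max})\rho_{\max}^{2}$, yielding (ii) with $c_2=p'(\rho_{\max})\rho_{\max}^{2}/q$.

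The main obstacle is the positive lower bound on $p'(\xi)\rho^\alpha\rho^\beta-q$, which is where the hypothesis $\lambda_1(\rho_{\min},v_{\min})<0$ enters. The plan is: since $p''\ge 0$ and $\xi\ge\rho^\alpha$, $\rho^\beta\ge\rho^\alpha$, I estimate
\begin{equation*}
p'(\xi)\rho^\alpha\rho^\beta-q\;\ge\;p'(\rho^\alpha)(\rho^\alpha)^{2}-q\;=\;\rho^\alpha\bigl(\rho^\alpha p'(\rho^\alpha)-v^\alpha\bigr)\;=\;-\rho^\alpha\,\lambda_1(\rho^\alpha,v^\alpha).
\end{equation*}
Now Lemma \ref{lemma_curve_decrescenti} (applied with the hypothesis $\lambda_1(\rho_{\min},v_{\min})<0$) says that $\lambda_1(\rho,L_1(\rho,\rho_0,v_0))<0$ on the whole domain; but every point $(\rho,v)\in\mathcal{D}_{v_1,v_2,w_1,w_2}$ lies on such a Lax curve, so $\lambda_1<0$ everywhere on $\mathcal{D}_{v_1,v_2,w_1,w_2}$. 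By continuity and compactness there exists $\mu>0$ with $-\lambda_1\ge\mu$ throughout the domain, giving $p'(\xi)\rho^\alpha\rho^\beta-q\ge\rho_{\min}\mu>0$. This proves (i) with $c_1=q/(\rho_{\min}\mu)$. Finally, (iii) follows by combining the mean value identity $v^\gamma-v^\beta=p'(\xi)(\rho^\beta-\rho^\gamma)\le p'(\rho_{\max})(\rho^\beta-\rho^\gamma)$ with the bound from (i), yielding $c_3=p'(\rho_{\max})c_1$.
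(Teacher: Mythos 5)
Your proof is correct, and it takes a genuinely different route from the paper's. The paper argues geometrically in the $(\rho,\rho v)$-plane: it introduces auxiliary intersection points $(\rho^*,v^*)$, $(\rho^{**},v^{**})$, $(\rho^o,v^o)$, $(\rho^{oo},v^{oo})$ between the constraint line $\rho v=q$ and lines through $(\rho^\gamma,\rho^\gamma v^\gamma)$ of slopes $v_1$, $v_2$, $k_1=\lambda_1(\rho_{\max},v_{\max})$, $k_2=\lambda_1(\rho_{\min},v_{\min})$, and compares each of these against the secant slope $\mu$ through $(\rho^\gamma,\rho^\gamma v^\gamma)$ and $(\rho^\beta,\rho^\beta v^\beta)$ via Lemma~\ref{lemma_curve_lipschitz_1} and monotonicity in $\epsilon=\rho^\gamma v^\gamma-q$. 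You instead equate two closed-form expressions for $v^\alpha-v^\beta$ to get a single algebraic identity
\begin{equation*}
\bigl(p'(\xi)\rho^\alpha\rho^\beta-q\bigr)(\rho^\beta-\rho^\gamma)=q(\rho^\gamma-\rho^\alpha),
\end{equation*}
and reduce (i), (ii), (iii) to bounding the one coefficient $p'(\xi)\rho^\alpha\rho^\beta-q$ above and below; the identity $p'(\rho^\alpha)(\rho^\alpha)^2-q=-\rho^\alpha\lambda_1(\rho^\alpha,v^\alpha)$ is the crux, and it makes the role of the hypothesis $\lambda_1(\rho_{\min},v_{\min})<0$ completely explicit. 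Your argument is shorter, avoids the extra geometric constructions, produces all three constants from one line, and yields (iii) in one step rather than by composing (i) and (ii) as the paper does (where, incidentally, the paper's final bookkeeping for $c_3$ appears to drop the $v_2/\rho_{\min}$ factor). One small streamlining: the compactness step to get $\mu>0$ with $-\lambda_1\ge\mu$ on $\mathcal{D}_{v_1,v_2,w_1,w_2}$ is unnecessary. Lemma~\ref{lemma_curve_lipschitz_1} already gives $\lambda_1(\rho,v)\le\lambda_1(\rho_{\min},v_{\min})$ for every $(\rho,v)\in\mathcal{D}_{v_1,v_2,w_1,w_2}$, so you can take $\mu=-\lambda_1(\rho_{\min},v_{\min})$ directly.
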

\begin{figure}[ht]
\centering
\definecolor{xdxdff}{rgb}{0.49019607843137253,0.49019607843137253,1.}
\definecolor{uququq}{rgb}{0.25098039215686274,0.25098039215686274,0.25098039215686274}
\definecolor{qqqqff}{rgb}{0.,0.,1.}
\begin{tikzpicture}[scale =30,line cap=round,line join=round,>=triangle 45,x=1.0cm,y=1.0cm]
\clip(1.285,1.08) rectangle (1.57,1.21);
\draw[smooth,samples=50,domain=1.285:1.57] plot(\x,{3.0*(\x)-(\x)^(3.0)});
\draw[smooth,samples=50,domain=1.285:1.57] plot(\x,{2.0*(\x)-(\x)^(3.0)});
\draw (0.,1.087) -- (0.,1.21);
\draw (0.,1.087) -- (0.,1.21);
\draw [domain=1.285:1.57] plot(\x,{(--1.1-0.*\x)/1.});
\draw (0.,1.087) -- (0.,1.21);
\draw[smooth,samples=50,domain=1.285:1.57] plot(\x,{2.85*(\x)-(\x)^(3.0)});
\draw (1.37,1.19) node[anchor=north west] {$ (\rho^\gamma,v^\gamma) $};
\draw (1.40,1.10) node[anchor=north west] {$(\rho^\beta,v^\beta)$};
\draw (1.29,1.1) node[anchor=north west] {$(\rho^\alpha,v^\alpha)$};
\draw (1.45,1.10) node[anchor=north west] {$(\rho^{**},v^{**})$};
\draw (1.44,1.16) node[anchor=north west] {$k_2$};
\draw (1.42,1.14) node[anchor=north west] {$\mu$};
\draw (1.385,1.14) node[anchor=north west] {$v_2$};
\draw (1.34,1.10) node[anchor=north west] {$(\rho^*,v^*)$};
\draw (1.36,1.1)-- (1.42,1.18);
\draw (1.42,1.18)-- (1.47,1.1);
\draw (1.32,1.1) --(1.42,1.18);
\draw (1.52,1.10) node[anchor=north west] {$\rho v=q$};
\begin{scriptsize}
\draw [fill=qqqqff] (1.2,1.) circle (0.05pt);
\draw [fill=uququq] (0.,0.) circle (0.05pt);
\draw [fill=xdxdff] (1.42,1.18) circle (0.05pt);
\draw [fill=uququq] (1.22,0.61) circle (0.05pt);
\draw [fill=uququq] (1.,1.) circle (0.05pt);
\draw [fill=uququq] (1.58,0.79) circle (0.05pt);
\draw [fill=uququq] (1.445,1.1) circle (0.05pt);
\draw [fill=uququq] (1.32,1.1) circle (0.05pt);
\draw [fill=xdxdff] (1.47,1.1) circle (0.05pt);
\draw [fill=xdxdff] (1.36,1.1) circle (0.05pt);
\end{scriptsize}
\end{tikzpicture}
\caption{Notations used in the proof of Lemma \ref{lemma_disuguaglianze_triangolo} for the inequality $\rho^\beta-\rho^\gamma\leq c_1\,(\rho^\gamma-\rho^\alpha)$.}\label{fig_proof_stime_caso_1}
\end{figure}
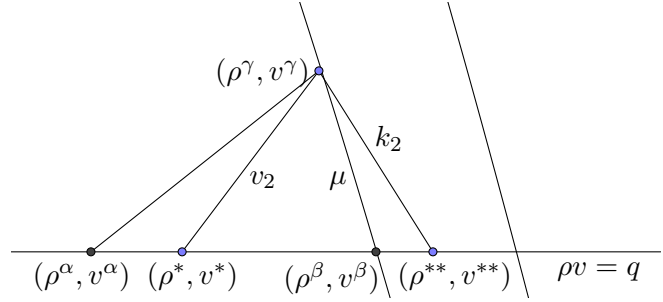
\begin{proof}
Since $v^\gamma+p(\rho^\gamma)=v^\beta+p(\rho^\beta)>v^\alpha+p(\rho^\alpha)$ and $v^\gamma=v^\alpha$, we find
$$\rho^\gamma>\rho^\alpha \Longrightarrow \rho^\gamma v^\gamma >\rho^\alpha v^\alpha=q.$$
By Lemma \ref{lemma_riemann_invariant_property} and since $\rho^\alpha v^\alpha=\rho^\beta v^\beta$, we obtain
$$\rho^\beta >\rho^\alpha.$$
Moreover since $\rho^\gamma\,v^\gamma>q=\rho^\beta\,v^\beta$ and $v^\beta+p(\rho^\beta)=v^\gamma+p(\rho^\gamma)$, by Lemma \ref{lemma_concave_functions_properties} we have
$$\rho^\beta>\rho^\gamma.$$
Let us denote
$$\epsilon= \rho^\gamma v^\gamma-q, \; \; k_2 = \lambda_1(\rho_\text{min},v_\text{min})\; \text{ and } \; k_1= \lambda_1(\rho_\text{max},v_\text{max}).$$
Let $(\rho^*,v^*)$ be the solution to the system
\begin{equation*}
\begin{cases}
\rho v=\rho^\gamma\, v^\gamma + v_2(\rho-\rho^\gamma),\\
\rho v=q,
\end{cases}
\end{equation*}
i.e. the intersection between the line passing through $(\rho^\gamma,v^\gamma)$ with slope $v_2$ and the line $\rho v =q$; see Figure \ref{fig_proof_stime_caso_1}. The definition implies:
\begin{equation}\label{caratteristiche_u_star}
\begin{split}
& \rho^\gamma v^\gamma = q-v_2 \, (\rho^*-\rho^\gamma)\Longrightarrow \\
& \rho^\gamma-\rho^*=\dfrac{\rho^\gamma\, v^\gamma-q}{v_2} = \dfrac{\epsilon}{v_2}\Longleftrightarrow v_2\,(\rho^\gamma-\rho^*)=\epsilon.
\end{split}
\end{equation}
In particular we obtain
$$\rho^\gamma >\rho^*,$$
because $\epsilon/v_2>0$. Moreover
$$\rho^*\geq \rho^\alpha,$$
because, by (\ref{caratteristiche_u_star}) and $\epsilon= \rho^\gamma \,v^\gamma -q = \rho^\gamma \, v^\alpha - \rho^\alpha \, v^\alpha$, we have
\begin{equation*}
v_2\geq v^\alpha \Rightarrow \dfrac{\epsilon}{v^\alpha}\geq\dfrac{\epsilon}{v_2} \Rightarrow \rho^\alpha=\rho^\gamma-\dfrac{\epsilon}{v^\alpha}\leq \rho^\gamma-\dfrac{\epsilon}{v_2}=\rho^*.
\end{equation*}
Therefore
\begin{equation}\label{rocheck}
v_2(\rho^\gamma-\rho^\alpha) \geq v_2(\rho^\gamma-\rho^*)=\epsilon,
\end{equation}
where we have used again the condition (\ref{caratteristiche_u_star}).\\
Let $(\rho^{**},v^{**})$ be the solution to the system
\begin{equation*}
\begin{cases}
\rho v=\rho^\gamma\, v^\gamma+k_2(\rho-\rho^\gamma),\\
\rho v=q,
\end{cases}
\end{equation*}
i.e. the intersection between the line passing through $(\rho^\gamma,v^\gamma)$ with slope $k_2$; see Figure \ref{fig_proof_stime_caso_1}.\\
By the definition we have
$$\rho^{**}=\rho^\gamma+\dfrac{\epsilon}{-k_2}.$$
Let $\mu=\dfrac{\rho^\gamma\, v^\gamma -q}{\rho^\gamma -\rho^\beta}$ be the slope of the line passing through the points $(\rho^\gamma, v^\gamma)$ and $(\rho^\beta,v^\beta)$. By Lemma \ref{lemma_curve_lipschitz_1}, we have
$$\mu \leq k_2<0.$$
Hence $\rho^\beta\leq \rho^{**}$, indeed
$$-\mu \geq -k_2 \Rightarrow \dfrac{\epsilon}{-k_2} \geq \dfrac{\epsilon}{-\mu}\Rightarrow \rho^{**}=\rho^\gamma+\dfrac{\epsilon}{-k_2} \geq \rho^\gamma+\dfrac{\epsilon}{-\mu}=\rho^\beta.$$
Therefore
\begin{equation}\label{rohat}
\epsilon = -k_2(\rho^{**}-\rho^\gamma)= -\mu (\rho^\beta-\rho^\gamma)\geq -k_2\, (\rho^\beta-\rho^\gamma).
\end{equation}
The inequalities (\ref{rocheck}) and (\ref{rohat}), imply
\begin{equation*}
\begin{split}
&-k_2(\rho^\beta-\rho^\gamma)\leq \epsilon \leq v_2(\rho^\gamma-\rho^\alpha) \Rightarrow\\
& \Rightarrow \rho^\beta-\rho^\gamma\leq c_1(\rho^\gamma-\rho^\alpha),
\end{split}
\end{equation*}
where $c_1=v_2/(-k_2) > 0$.\\
\begin{figure}[hbtp]
\centering
\definecolor{xdxdff}{rgb}{0.49019607843137253,0.49019607843137253,1.}
\definecolor{uququq}{rgb}{0.25098039215686274,0.25098039215686274,0.25098039215686274}
\definecolor{qqqqff}{rgb}{0.,0.,1.}
\begin{tikzpicture}[scale=30,line cap=round,line join=round,>=triangle 45,x=1.0cm,y=1.0cm]
\clip(1.245,1.08) rectangle (1.53,1.2);
\draw[smooth,samples=50,domain=1.245:1.53] plot(\x,{2.0*(\x)-(\x)^(3.0)});
\draw (0.,1.08) -- (0.,1.2);
\draw (0.,1.08) -- (0.,1.2);
\draw [domain=1.245:1.53] plot(\x,{(--1.1-0.*\x)/1.});
\draw (0.,1.08) -- (0.,1.2);
\draw[smooth,samples=50,domain=1.245:1.53] plot(\x,{2.85*(\x)-(\x)^(3.0)});
\draw (1.372,1.195) node[anchor=north west] {$(\rho^\gamma,v^\gamma)$};
\draw (1.445,1.101) node[anchor=north west] {$(\rho^\beta,v^\beta)$};
\draw (1.31,1.10) node[anchor=north west] {$(\rho^\alpha,v^\alpha)$};
\draw (1.401,1.10) node[anchor=north west] {$(\rho^{o},v^{o})$};
\draw (1.41,1.13) node[anchor=north west] {$k_1$};
\draw (1.435,1.14) node[anchor=north west] {$\mu$};
\draw (1.32,1.15) node[anchor=north west] {$v_1$};
\draw (1.24,1.10) node[anchor=north west] {$(\rho^{oo},v^{oo})$};
\draw (1.26,1.1)-- (1.42,1.18);
\draw (1.42,1.18)-- (1.43,1.1);
\draw (1.32,1.1) -- (1.42,1.18);
\draw (1.47,1.115) node[anchor=north west] {$\rho v=q$};
\begin{scriptsize}
\draw [fill=qqqqff] (1.2,1.) circle (0.05pt);
\draw [fill=uququq] (0.,0.) circle (0.05pt);
\draw [fill=xdxdff] (1.42,1.18) circle (0.05pt);
\draw [fill=uququq] (1.22,0.61) circle (0.05pt);
\draw [fill=uququq] (1.445,1.1) circle (0.05pt);
\draw [fill=uququq] (1.32,1.1) circle (0.05pt);
\draw [fill=xdxdff] (1.43,1.1) circle (0.05pt);
\draw [fill=xdxdff] (1.26,1.1) circle (0.05pt);
\end{scriptsize}
\end{tikzpicture}
\caption{Notations used in the proof of Lemma \ref{lemma_disuguaglianze_triangolo} for the inequality $(\rho^\gamma-\rho^\alpha) \leq c_2\,(\rho^\beta-\rho^\gamma)$.}\label{fig_dim_stime_interazione_caso_1_sec_dis}
\end{figure}
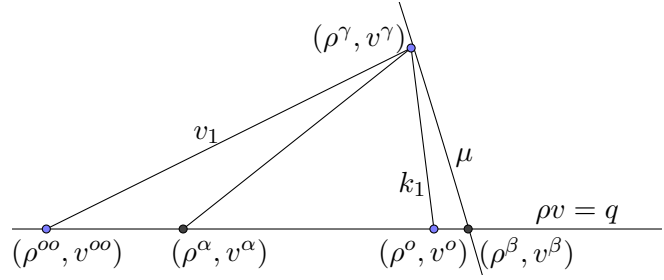

For the inequality
$$(\rho^\gamma-\rho^\alpha) \leq c_2\,(\rho^\beta-\rho^\gamma)$$
let us define the point $(\rho^o,v^o)$ as the solution to the system
\begin{equation*}
\begin{cases}
\rho v=\rho^\gamma v^\gamma+k_1(\rho-\rho^\gamma)\\
\rho v=q,
\end{cases}
\end{equation*}
i.e. $(\rho^o,v^o)$ is the intersection between the line $\rho v=q$ and the line passing through $(\rho^\gamma, v^\gamma)$ with slope $k_1$; see Figure \ref{fig_dim_stime_interazione_caso_1_sec_dis}.\\
By the definition, we have
$$ \rho^o = \rho^\gamma+\dfrac{\epsilon}{-k_1}>\rho^\gamma$$
because $-\epsilon/k_1>0$. Therefore
$$\rho^o-\rho^\gamma=\dfrac{\epsilon}{-k_1}\Rightarrow \epsilon = -k_1(\rho^o-\rho^\gamma).$$
Moreover, by Lemma \ref{lemma_curve_lipschitz_1}, we have $k_1\leq \mu=\epsilon/(\rho^\gamma-\rho^\beta)$. Hence
\begin{equation*}
\begin{split}
& -k_1 \geq -\mu \Rightarrow \dfrac{\epsilon}{-\mu} \geq \dfrac{\epsilon}{-k_1} \Rightarrow \rho^\beta-\rho^\gamma\geq \rho^o-\rho^\gamma \Rightarrow\\
&\Rightarrow -k_1(\rho^\beta-\rho^\gamma)\geq -k_1(\rho^o-\rho^\gamma)=\epsilon.
\end{split}
\end{equation*}
Let us now define the point $(\rho^{oo}, v^{oo})$ as the solution to the system
\begin{equation*}
\begin{cases}
\rho v=q\\
\rho v=\rho^\gamma v^\gamma+v_1(\rho-\rho^\gamma),
\end{cases}
\end{equation*}
i.e. the point of intersection between the line $\rho v=q$ and the line passing through $(\rho^\gamma, v^\gamma)$ with slope $v_1$; see Figure \ref{fig_dim_stime_interazione_caso_1_sec_dis}.\\
By the definition, we obtain
$$\rho^{oo}=\rho^\gamma-\dfrac{\epsilon}{v_1}.$$
Since $v^\alpha \geq v_1$, we find
$$\dfrac{\epsilon}{v_1} \geq \dfrac{\epsilon}{v^\alpha} \Rightarrow \rho^\gamma-\dfrac{\epsilon}{v^\alpha}\geq \rho^\gamma -\dfrac{\epsilon}{v_1} \Rightarrow \rho^\alpha \geq \rho^{oo}$$
because $\rho^\alpha=\rho^\gamma-\dfrac{\epsilon}{v^\alpha}$. Therefore $v_1(\rho^\gamma-\rho^\alpha) \leq v_1(\rho^\gamma-\rho^{oo})=\epsilon$.\\
Hence
\begin{equation*}
\begin{split}
& v_1(\rho^\gamma-\rho^\alpha) \leq  -k_1(\rho^\beta-\rho^\gamma) \Rightarrow \\
&\Rightarrow \rho^\gamma-\rho^\alpha \leq c_2(\rho^\beta-\rho^\gamma),
\end{split}
\end{equation*}
where $c_2=-k_1/v_1$.\\
For the last inequality, since $\rho^\alpha\, v^\alpha=\rho^\beta\, v^\beta=q$ and $v^\alpha=v^\gamma$, we find
$$v^\alpha-v^\beta=v^\alpha-\dfrac{q}{\rho^\beta}=v^\alpha\left(1-\dfrac{q}{\rho^\beta\, v^\alpha}\right)=\dfrac{v^\alpha}{\rho^\beta}\left(\rho^\beta-\dfrac{q}{v^\alpha}\right)=\dfrac{v^\alpha}{\rho^\beta}\,(\rho^\beta-\rho^\alpha).$$
Since $(\rho^\gamma,v^\gamma)$ and $(\rho^\alpha,v^\alpha)$ are in the invariant domain $\mathcal{D}_{v_1,v_2,w_1,w_2}$, we have $v^\alpha=v^\gamma\leq v_2$. Moreover $(\rho^\beta,v^\beta)\in \mathcal{D}_{v_1,v_2,w_1,w_2}$ implies $\rho^\beta\geq \rho_\text{min}$ by Proposition \ref{prop_min_and_max_density_invariant_domain_RS_q_2}. Therefore
$$\dfrac{v^\alpha}{\rho^\beta}\leq \dfrac{v_2}{\rho_\text{min}}.$$
Since $\rho^\alpha<\rho^\gamma <\rho^\beta$, we find
$$\rho^\beta-\rho^\alpha\leq (\rho^\beta-\rho^\gamma)+(\rho^\gamma-\rho^\alpha).$$
By the inequalities $(i)$ and $(ii)$, we find $(\rho^\beta-\rho^\gamma) \leq c_1\, (\rho^\gamma-\rho^\alpha)$ and
$$(\rho^\gamma-\rho^\alpha)\leq c_2\,(\rho^\beta-\rho^\gamma)\leq \dfrac{c_1}{c_2}\,(\rho^\gamma-\rho^\alpha).$$
Therefore
$$\rho^\beta-\rho^\alpha\leq c_1\,(\rho^\gamma-\rho^\alpha)+\dfrac{c_1}{c_2}(\rho^\gamma-\rho^\alpha)\leq c_3 \, (\rho^\gamma-\rho^\alpha),$$
where
$$c_3 =c_1\, \left(1+\dfrac{1}{c_2}\right).$$
\end{proof}
Finally, let us show that the pressure function in Lipschitz in every bounded interval of $\mathbb{R}$.
\begin{lemma}\label{pressure_Lipschitz}
Consider a set $[\rho_1,\rho_2]\subset \mathbb{R}$. The function $\rho\to p(\rho)$ is bi-Lipschitz for every $\rho \in [\rho_1,\rho_2]$, i.e. there exist two positive constants $C_1$ and $C_2$ for which
$$C_1 \leq |p'(\rho)|\leq C_2.$$
\end{lemma}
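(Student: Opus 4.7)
The plan is to exploit the continuity of $p'$ on a compact interval, combined with the monotonicity coming from the convexity of $p$ and the strict positivity of $p'$ on $(0,+\infty)$.

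First I would observe that, since $p \in \mathcal{C}^2([0,+\infty),[0,+\infty))$, the derivative $p'$ is continuous on $[\rho_1,\rho_2]$, and by Weierstrass's theorem it attains its minimum and maximum on this compact set. Moreover, by the hypotheses (\ref{ipotesi_pressione_forti}), we have $p''(\rho)\geq 0$ for every $\rho\geq 0$, so $p'$ is non-decreasing on $[\rho_1,\rho_2]$. Hence the minimum and maximum are given explicitly by
$$C_1:=p'(\rho_1), \qquad C_2:=p'(\rho_2),$$
and $C_1\leq p'(\rho)\leq C_2$ for every $\rho\in[\rho_1,\rho_2]$.

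To conclude, I would note that by (\ref{ipotesi_pressione_forti}) the derivative satisfies $p'(\rho)>0$ for every $\rho>0$, so under the (implicit) assumption $\rho_1>0$ we have $C_1=p'(\rho_1)>0$, and also $C_2=p'(\rho_2)>0$ since $\rho_2\geq \rho_1>0$. Since $p'$ is positive on $[\rho_1,\rho_2]$, we have $|p'(\rho)|=p'(\rho)$ throughout, so the double inequality
$$C_1\leq |p'(\rho)|\leq C_2$$
follows directly.

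The only subtle point, which I would flag explicitly at the start of the proof, is the assumption $\rho_1>0$: without it the hypotheses (\ref{ipotesi_pressione_forti}) do not guarantee $p'(0)>0$ (they only give $p'(\rho)>0$ for $\rho>0$), and the lower bound $C_1>0$ could fail, for instance when $p(\rho)=\rho^\gamma$ with $\gamma>1$. No other obstacle is expected, since the argument reduces to continuity on a compact set plus monotonicity of $p'$.
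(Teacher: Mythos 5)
Your proof is correct and mirrors the paper's own argument: both use convexity ($p''\geq 0$) to get monotonicity of $p'$, and both take $C_1=p'(\rho_1)$, $C_2=p'(\rho_2)$. Your remark that $\rho_1>0$ is implicitly needed for $C_1>0$ is a legitimate clarification the paper omits (it holds in all the paper's uses, since the lemma is always invoked on $[\rho_{\min},\rho_{\max}]\subset(0,\infty)$), but the core argument is identical.
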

\begin{proof}
By the hypotheses (\ref{ipotesi_pressione_forti}), the function $\rho \to p(\rho)$ is convex. Therefore
$$p'(\rho_1)\leq |p'(\rho)| \leq p'(\rho_2) \; \text{ for every } \; \rho \in [\rho_1,\rho_2].$$
We have the thesis with
$$C_1:=p'(\rho_1) \; \text{ and }\; C_2:=p'(\rho_2).$$
\end{proof}
In the following we fix an invariant domain $\mathcal{D}_{v_1,v_2,w_1,w_2}$ for the Riemann solver $\mathcal{RS}^q_2$.
\subsection{A contact discontinuity interacts with $x=0$}
Fix $\bar{x}<0$ and three points $(\rho^l,v^l)$, $(\rho^k,v^k)$ and $(\rho^r,v^r)$ in the invariant domain $\mathcal{D}_{v_1,v_2,w_1,w_2}$ such that
$$v^l=v^k.$$
Assume that the hypotheses of Lemmas \ref{lemma_curve_decrescenti} and \ref{lemma_curve_lipschitz_1} are satisfied, so that for every $(\rho_0,v_0)\in \mathcal{D}_{v_1,v_2,w_1,w_2}$ the function $\rho \to \rho\,L_1(\rho,\rho_0,v_0)$ is strictly decreasing and bi-Lipschitz inside the domain $\mathcal{D}_{v_1,v_2,w_1,w_2}$.\\
Let us consider the Riemann problems (\ref{Riemann_problem}) centred in $\bar{x}$ and $x=0$ with initial data
\begin{equation}\label{initial_data_contact_discontinuity_form_left}
(\rho,v)(0,x) = \begin{cases}
(\rho^l,v^l) & \text{if } x\leq \bar{x},\\
(\rho^k,v^k) & \text{if } x> \bar{x}.
\end{cases}
\; \text{ and } \;
(\rho,v)(0,x)= \begin{cases}
(\rho^k,v^k) & \text{if } x\leq 0,\\
(\rho^r,v^r) & \text{if } x>0.
\end{cases}
\end{equation}
Since $v^l=v^k$, the solution to the Riemann problem centred in $\bar{x}$ is a contact discontinuity travelling with speed $v^k$.
\subsubsection{Case $(\rho^k,v^k)=(\rho^r,v^r)$}
Let us suppose that $(\rho^k,v^k)=(\rho^r,v^r)$ and that $(\rho^r,v^r)$ satisfies the constraint, i.e.
$$\rho^r\, v^r \leq q.$$
In this case the solution given by the Riemann solver $\mathcal{RS}^q_2$ to the Riemann problem centred in $x=0$ is classical (constant).\\
At time $\tilde{t}= \vert \bar{x} \vert /v^r$ the contact discontinuity reaches the constraint and we have to solve the new Riemann problem (\ref{Riemann_problem}) with initial datum
\begin{equation*}
(\rho,v)(\tilde{t},x) = \begin{cases}
(\rho^l,v^l) & \text{if } x\leq 0,\\
(\rho^r,v^r) & \text{if } x>0.
\end{cases}
\end{equation*}
Since $v^l = v^k =v^r$, the standard solution is a new contact discontinuity, so that
$$RS((\rho^l,v^l),(\rho^r,v^r)(0)=(\rho^l,v^l).$$
Hence if $\rho^l \, v^l \leq q$, then the constraint is satisfied and the solution is classical. In this case the total variation before and after the interaction remains unchanged. Otherwise the constraint is enforced and the non-classical shock appears in $x=0$; see Figure \ref{fig_interaction_1}.\\

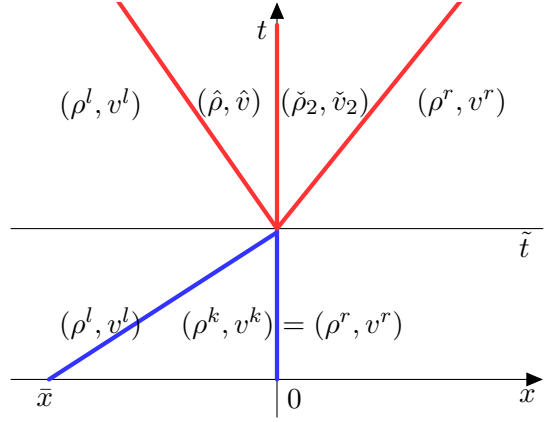
\begin{figure}[ht]
\centering
\definecolor{fftttt}{rgb}{1.,0.2,0.2}
\definecolor{ttttff}{rgb}{0.2,0.2,1.}
\begin{tikzpicture}[line cap=round,line join=round,>=triangle 45,x=1.0cm,y=1.0cm]
\draw[->,color=black] (-3.5,0.) -- (3.5,0.);
\draw[->,color=black] (0.,-0.5) -- (0.,5.);
\clip(-3.5,-0.5) rectangle (3.5,5.);
\draw [domain=-3.5:3.5] plot(\x,{(--2.-0.*\x)/1.});
\draw [line width=1.6pt,color=ttttff] (-3.,0.)-- (0.,1.95);
\draw [line width=1.6pt,color=fftttt,domain=-3.5:0.0] plot(\x,{(-7.32--5.28*\x)/-3.66});
\draw [line width=1.6pt,color=fftttt,domain=0.0:3.5] plot(\x,{(--6.88--4.3*\x)/3.44});
\draw (3.05,2.09) node[anchor=north west] {$\tilde{t}$};
\draw (-3,4.) node[anchor=north west] {$(\rho^l,v^l)$};
\draw (-3,1.09) node[anchor=north west] {$(\rho^l,v^l)$};
\draw (-0.1,4.) node[anchor=north west] {$(\check{\rho}_2,\check{v}_2)$};
\draw (-1.2,4.) node[anchor=north west] {$(\hat{\rho},\hat{v})$};
\draw (1.70,4) node[anchor=north west] {$(\rho^r,v^r)$};
\draw (-1.4,1.09) node[anchor=north west] {$(\rho^k,v^k)=(\rho^r,v^r)$};
\draw [line width=1.6pt,color=ttttff] (0.,2.)-- (0.,0.);
\draw [line width=1.6pt,color=fftttt] (0.,2.) -- (0.,4.7);
\draw (-0.4,4.88) node[anchor=north west] {$t$};
\draw (3.05,0.) node[anchor=north west] {$x$};
\draw (-3.3,0.) node[anchor=north west] {$\bar{x}$};
\draw (0.,0.) node[anchor=north west] {$0$};
\end{tikzpicture}
\caption{A contact discontinuity reaches the constraint and after the interaction the constraint is not satisfied.}\label{fig_interaction_1}
\end{figure}
\begin{figure}[hbtp]
\centering
\definecolor{cqcqcq}{rgb}{0.7529411764705882,0.7529411764705882,0.7529411764705882}
\definecolor{xdxdff}{rgb}{0.49019607843137253,0.49019607843137253,1.}
\definecolor{uququq}{rgb}{0.25098039215686274,0.25098039215686274,0.25098039215686274}
\definecolor{qqqqff}{rgb}{0.,0.,1.}
\begin{tikzpicture}[scale=0.5,line cap=round,line join=round,>=triangle 45,x=1.0cm,y=1.0cm]
\draw[->,color=black] (-1.5,0.) -- (18.,0.);
\draw[->,color=black] (0.,-1.5) -- (0.,11.5);
\clip(-1.5,-1.5) rectangle (18.,12);
\draw[line width=0.pt,color=cqcqcq,fill=cqcqcq,fill opacity=1.0] {[smooth,samples=50,domain=4.16:6.75] plot(\x,{0-(-9.68/10.08)*\x-0.0/10.08})} -- (6.75,2.71) {[smooth,samples=50,domain=6.75:4.16] -- plot(\x,{3.0*\x-\x^(1.5)})} -- (4.16,4) -- cycle;
\draw[line width=0.pt,color=cqcqcq,fill=cqcqcq,fill opacity=1.0] {[smooth,samples=50,domain=6.74:9.24] plot(\x,{0-(-9.68/10.08)*\x-0.0/10.08})} -- (9.24,3.71) {[smooth,samples=50,domain=9.24:6.74] -- plot(\x,{0-(-4.5/11.18)*\x-0.0/11.18})} -- (6.74,6.47) -- cycle;
\draw[line width=0.pt,color=cqcqcq,fill=cqcqcq,fill opacity=1.0] {[smooth,samples=50,domain=9.23:12.94] plot(\x,{4.0*\x-\x^(1.5)})} -- (12.94,5.20) {[smooth,samples=50,domain=12.94:9.23] -- plot(\x,{0-(-4.5/11.18)*\x-0.0/11.18})} -- (9.23,8.87) -- cycle;
\draw[smooth,samples=50,domain=0.001:18.0] plot(\x,{3.0*(\x)-(\x)^(1.5)});
\draw[smooth,samples=50,domain=0.001:18.0] plot(\x,{4.0*(\x)-(\x)^(1.5)});
\draw [domain=0.0:18.0] plot(\x,{(-0.--4.5*\x)/11.18});
\draw [domain=0.0:18.0] plot(\x,{(-0.--9.68*\x)/10.08});
\draw [domain=-3.:18.] plot(\x,{(--6.-0.*\x)/1.});
\draw (-0.8,6.2) node[anchor=north west] {$q$};
\draw (10.48,11.86) node[anchor=north west] {$v_2$};
\draw (15.94,7.69) node[anchor=north west] {$v_1$};
\draw (4.48,10.55) node[anchor=north west] {$v+p(\rho)=w_2$};
\draw (7.68,2.46) node[anchor=north west] {$v+p(\rho) = w_1$};
\draw (-1.2,11.) node[anchor=north west] {$\rho v$};
\draw (16.5,0.1) node[anchor=north west] {$\rho$};
\draw [domain=0.0:18.0] plot(\x,{(-0.--7.6*\x)/9.1});
\draw[smooth,samples=50,domain=0.001:18.0] plot(\x,{3.85*(\x)-(\x)^(1.5)});
\draw (6.7,6.1) node[anchor=north west] {$(\check{\rho_2},\check{v}_2)$};
\draw (9.3,6.1) node[anchor=north west] {$(\hat{\rho},\hat{v})$};
\draw (5.6,5.2) node[anchor=north west] {$(\rho^r,v^r)$};
\draw (9.23,8.3) node[anchor=north west] {$(\rho^l,v^l)$};
\begin{scriptsize}
\draw [fill=qqqqff] (9.1,7.6) circle (3pt);
\draw [fill=uququq] (7.18,6.) circle (3pt);
\draw [fill=xdxdff] (5.75,4.80) circle (3pt);
\draw [fill=uququq] (10.88,5.99) circle (3pt);
\end{scriptsize}
\end{tikzpicture}
\caption{Representation of the case $v^l=v^r$, $(\rho^k,v^k)=(\rho^r,v^r)$, $(\rho^r,v^r)$ satisfies the constraint and $(\rho^l,v^l)$ does not. The coloured area is the invariant domain.}\label{fig_interazioni_posizione_punti_1}
\end{figure}
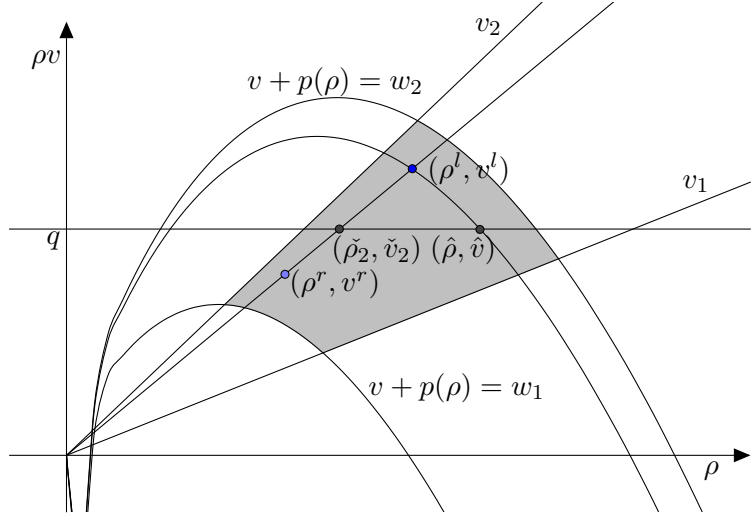
\begin{prop}\label{prop_contact_discontinuity_from_left_case_1}
Assume $\lambda_1(\rho_\text{min},v_\text{min})<0$. Let us consider two points $(\rho^l,v^l)$ and $(\rho^r,v^r)$ in the invariant domain $\mathcal{D}_{v_1,v_2,w_1,w_2}$ such that $\rho^r\,v^r \leq q$. Assume that a wave joining $(\rho^l,v^l)$ to $(\rho^r,v^r)$ with positive speed interacts with $x=0$ at time $\tilde{t}>0$.\\
Then the wave is a contact discontinuity, i.e. $v^r=v^l$.\\
Moreover the following statements hold.
\begin{enumerate}
\item[(i)] If $\rho^l\,v^l\leq q$, then for every $(t,x) \in \mathbb{R}^+\times \mathbb{R}$ we have
\begin{equation}
\mathcal{RS}^q_2((\rho^l,v^l),(\rho^r,v^r))\left(\frac{x}{t-\tilde{t}}\right)=\begin{cases}
(\rho^l,v^l) & \text{if }\;  \frac{x}{t-\tilde{t}}\leq v^l,\\
(\rho^r,v^r) & \text{if }\;  \frac{x}{t-\tilde{t}} >v^l.
\end{cases}
\end{equation}
Therefore 
$$\Delta TV_{\tilde{t}}\,(\rho)=\Delta TV_{\tilde{t}}(v)=0 \; \text{ and } \; \Delta_{\tilde{t}}\,\mathcal{N}=0.$$
In the coordinates $(v,w)$ of the Riemann invariants, we have
$$\Delta TV_{\tilde{t}}\,(v)=0 \; \text{ and } \; \Delta TV_{\tilde{t}}\, (w)=0.$$
\item[(ii)] If $\rho^l\,v^l >q$, then for every $(t,x) \in \mathbb{R}^+\times \mathbb{R}$ we have
\begin{equation}
\mathcal{RS}^q_2((\rho^l,v^l),(\rho^r,v^r))\left(\frac{x}{t-\tilde{t}}\right)=\begin{cases}
(\rho^l,v^l) & \text{if }\;  \frac{x}{t-\tilde{t}}\leq \lambda,\\
(\hat{\rho},\hat{v}) & \text{if }\;  \lambda<\frac{x}{t-\tilde{t}} \leq 0,\\
(\check{\rho}_2,\check{v}_2) & \text{if } \; 0\leq \frac{x}{t-\tilde{t}}\leq v^r,\\
(\rho^r,v^r) & \text{if } \; \frac{x}{t-\tilde{t}}>v^r,
\end{cases}
\end{equation}
where by the Rankine-Hugoniot condition 
$$\lambda=\dfrac{\rho^l\,v^l-\hat{\rho}\, \hat{v}}{\rho^l-\hat{\rho}}$$
is the speed of the shock joining $(\rho^l,v^l)$ to $(\hat{\rho},\hat{v})$. Furthermore
\begin{equation*}
\Delta TV_{\tilde{t}}(\rho) \leq C_1\, (\rho^l-\rho^r), \; \; \Delta TV_{\tilde{t}} (v) \leq C_2\,(\rho^l-\rho^r) \; \text{ and } \; \Delta_{\tilde{t}}\, \mathcal{N}=2,
\end{equation*}
where $C_1$ and $C_2$ are positive constants depending only on $q$ and $\mathcal{D}_{v_1,v_2,w_1,w_2}$.\\
In the coordinates $(v,w)$ of the Riemann invariants, we have
$$\Delta TV_{\tilde{t}}\,(v)\leq C_3\,|w^r-w^l| \; \text{ and } \; \Delta TV_{\tilde{t}}\, (w)=0,$$
where $C_3$ is a positive constant depending only on $q$ and $\mathcal{D}_{v_1,v_2,w_1,w_2}$.
\end{enumerate}
\end{prop}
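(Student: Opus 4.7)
The first task is to identify the wave. Lemma \ref{lemma_curve_decrescenti} applied under the hypothesis $\lambda_1(\rho_\text{min},v_\text{min})<0$ gives $\lambda_1(\rho,v)<0$ on the whole of $\mathcal{D}_{v_1,v_2,w_1,w_2}$, because, by Proposition \ref{eigenvalue_as_slope}, $\lambda_1$ equals the derivative of the strictly decreasing function $\rho\mapsto \rho L_1(\rho,\rho_0,v_0)$. Consequently every 1-shock (whose speed lies between the eigenvalues by Lax) and every 1-rarefaction (whose speeds lie in $[\lambda_1(\rho^l,v^l),\lambda_1(\rho^r,v^r)]$ by Proposition \ref{rarefaction_wave_properties}) has negative speed. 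A wave with positive speed must therefore be a 2-contact discontinuity, and by Proposition \ref{prop_waves_conditions}(iii) this forces $v^l=v^r$.

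Next I would set up the Riemann problem centred at $(\tilde t,0)$. Since $v^l=v^r$, the intermediate state of the classical solution coincides with $(\rho^l,v^l)$, so Theorem \ref{prop_standard_solution} reduces $\mathcal{RS}((\rho^l,v^l),(\rho^r,v^r))$ to a single contact discontinuity with positive speed $v^l$; the trace at $x=0$ is therefore $(\rho^l,v^l)$, and the constraint-test $f_1(\mathcal{RS}(\cdot)(0))\leq q$ becomes exactly $\rho^l v^l\leq q$. Case (i) is then immediate: the classical solution satisfies the constraint, so $\mathcal{RS}^q_2=\mathcal{RS}$, no new wave is produced, and the TV (both in $(\rho,v)$ and in $(v,w)$) is unchanged.

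Case (ii) is the substantive one. From $\rho^l v^l>q\geq\rho^r v^r$ together with $v^l=v^r$ I obtain $\rho^r\leq\check\rho_2=q/v^r<\rho^l$, and from Lemma \ref{lemma_concave_functions_properties}(i) applied to the 1-Lax curve through $(\rho^l,v^l)$ I get $\check\rho_1<\rho^l<\hat\rho$, hence the wave from $(\rho^l,v^l)$ to $(\hat\rho,\hat v)$ is a Lax-admissible 1-shock with negative speed $\lambda=(\rho^l v^l-q)/(\rho^l-\hat\rho)$. The ordering $\rho^r\leq\check\rho_2<\rho^l<\hat\rho$ together with $\hat v<v^l=\check v_2$ (from $\hat v=v^l+p(\rho^l)-p(\hat\rho)$) lets me compute explicitly
\begin{equation*}
\Delta TV_{\tilde t}(\rho)=2(\hat\rho-\rho^l),\qquad \Delta TV_{\tilde t}(v)=2(v^l-\hat v),\qquad \Delta_{\tilde t}\mathcal{N}=2.
\end{equation*}
The main technical step is to apply Lemma \ref{lemma_disuguaglianze_triangolo} with $(\rho^\alpha,v^\alpha)=(\check\rho_2,\check v_2)$, $(\rho^\beta,v^\beta)=(\hat\rho,\hat v)$ and $(\rho^\gamma,v^\gamma)=(\rho^l,v^l)$. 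The hypotheses are verified one by one: $\rho^\alpha v^\alpha=\rho^\beta v^\beta=q$ holds by definition of the constraint; $v^\gamma=v^\alpha$ reduces to $v^l=v^r$; $v^\beta+p(\rho^\beta)=v^\gamma+p(\rho^\gamma)$ is the 1-Lax curve relation; and the strict inequality $v^\beta+p(\rho^\beta)>v^\alpha+p(\rho^\alpha)$ follows from Lemma \ref{lemma_riemann_invariant_property} applied to the two points $(\check\rho_2,\check v_2)$, $(\hat\rho,\hat v)$ on $\rho v=q$. Conclusion (i) of Lemma \ref{lemma_disuguaglianze_triangolo} then yields $\hat\rho-\rho^l\leq c_1(\rho^l-\check\rho_2)\leq c_1(\rho^l-\rho^r)$, and conclusion (iii) gives $v^l-\hat v\leq c_3(\rho^l-\rho^r)$, so $C_1=2c_1$ and $C_2=2c_3$.

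For the Riemann-invariant formulation I would use that $w$ is preserved by both the 1-shock and every contact discontinuity, so the only contribution to $TV_{\tilde t}(w)$ comes from the non-classical shock and equals $w^l-(v^r+p(\check\rho_2))$; combining this with the preserved contact jump $v^r+p(\check\rho_2)-w^r$ (both non-negative because $\rho^r\leq\check\rho_2$ and by Lemma \ref{lemma_riemann_invariant_property}) gives $TV_{\tilde t}(w)_\text{after}=w^l-w^r=TV_{\tilde t}(w)_\text{before}$, hence $\Delta TV_{\tilde t}(w)=0$. Finally, to control $\Delta TV_{\tilde t}(v)$ by $|w^r-w^l|=p(\rho^l)-p(\rho^r)$, I invoke the bi-Lipschitz estimate of Lemma \ref{pressure_Lipschitz} on $[\rho_{\min},\rho_{\max}]$: $v^l-\hat v=p(\hat\rho)-p(\rho^l)\leq C_2(\hat\rho-\rho^l)\leq C_2 c_1(\rho^l-\rho^r)\leq (C_2 c_1/C_1)(p(\rho^l)-p(\rho^r))$, producing the constant $C_3$. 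The chief obstacle is the careful bookkeeping of signs and the verification of the hypotheses of Lemma \ref{lemma_disuguaglianze_triangolo}; once those are in place the estimates follow mechanically.
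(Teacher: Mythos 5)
Your proof is correct and follows essentially the same route as the paper: identification of the positive-speed wave as a 2-contact discontinuity via Lemma \ref{lemma_curve_decrescenti}; the orderings $\rho^r\leq\check{\rho}_2<\rho^l<\hat{\rho}$ via Lemma \ref{lemma_concave_functions_properties}; the explicit TV computations $\Delta TV_{\tilde t}(\rho)=2(\hat{\rho}-\rho^l)$, $\Delta TV_{\tilde t}(v)=2(v^l-\hat{v})$, $\Delta_{\tilde t}\mathcal{N}=2$; and Lemma \ref{lemma_disuguaglianze_triangolo} with the same choice $(\rho^\alpha,v^\alpha)=(\check{\rho}_2,\check{v}_2)$, $(\rho^\beta,v^\beta)=(\hat{\rho},\hat{v})$, $(\rho^\gamma,v^\gamma)=(\rho^l,v^l)$ for the bounds, plus the telescoping argument for $\Delta TV_{\tilde t}(w)=0$. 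One minor notational slip: in the final paragraph you re-use the symbol $C_2$ both for the constant in the $(\rho,v)$-estimate and for the upper Lipschitz bound of $p$ from Lemma \ref{pressure_Lipschitz}; rename one of them.
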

\begin{proof}
By Lemma \ref{lemma_curve_decrescenti}, the waves of the first family have negative propagation speed. Therefore a wave with positive speed belongs to the second family and hence $v^r=v^l$.\\
In case $(i)$ the solution is classical and at time $\tilde{t}$ do not arise new waves. Then the total variation and the number of waves remain unchanged.\\
In case $(ii)$ the constraint is not satisfied and the non-classical shock appears.\\
We have (see Figure \ref{fig_interazioni_posizione_punti_1})
\begin{equation}\label{disuguaglianze_interazioni_posizione_punti_1}
\rho^r\leq \check{\rho}_2<\rho^l<\hat{\rho},
\end{equation}
indeed
$$\rho^r\, v^r \leq q =\check{\rho}_2\, \check{v}_2 \Rightarrow \rho^r\leq \check{\rho}_2,$$
because $v^r=\check{v}_2$.
Similarly $\check{\rho}_2 < \rho^l$, because $\rho^l \, v^l >q$ and $v^r=v^l$. The inequality $\rho^l<\hat{\rho}$ holds by Lemma \ref{lemma_concave_functions_properties}.\\
The condition $\rho^l<\hat{\rho}$ and Lemma \ref{lemma_curve_decrescenti}, imply that $\mathcal{RS}((\rho^l,v^l),(\hat{\rho},\hat{v}))$ is a shock of the first family with negative propagation speed.\\
By (\ref{disuguaglianze_interazioni_posizione_punti_1}), the solution is given by a shock joining $(\rho^l,v^l)$ to $(\hat{\rho},\hat{v})$, followed by the non-classical shock connecting $(\hat{\rho},\hat{v})$ to $(\check{\rho}_2,\check{v}_2)$ and finally a contact discontinuity joining $(\check{\rho}_2,\check{v}_2)$ to $(\rho^r,v^r)$. This implies
$$\Delta_{\tilde{t}}\,\mathcal{N}= 3-1 =2.$$
By the inequalities (\ref{disuguaglianze_interazioni_posizione_punti_1}), we obtain
$$\Delta TV_{\tilde{t}}(\rho)=|\rho^r-\check{\rho}_2 |+|\check{\rho}_2-\hat{\rho}| + |\hat{\rho}-\rho^l|-|\rho^r-\rho^l|=2(\hat{\rho}-\rho^l).$$
By the first inequality of Lemma \ref{lemma_disuguaglianze_triangolo} and $\rho^r\leq \check{\rho_2}$, we find
$$\hat{\rho}-\rho^l \leq c_1(\rho^l-\check{\rho}_2)\leq c_1\,(\rho^l-\rho^r),$$
where $c_1$ is a positive constant depending only on $q$ and the invariant domain. Therefore  
$$\Delta TV_{\tilde{t}}(\rho) \leq C_1\,(\rho^l-\rho^r),$$
where $C_1 = 2\,c_1$.\\
For the second component, we find
\begin{equation*}
\begin{split}
\Delta TV_{\tilde{t}}(v)& =|v^r-\check{v}_2 |+|\check{v}_2-\hat{v}| + |\hat{v}-v^l|-|v^r-v^k|-|v^k-v^l| =\\
& = 0+\check{v}_2-\hat{v}+v^l-\hat{v}-0=2(\check{v}_2-\hat{v}),
\end{split}
\end{equation*}
because $\check{v}_2 = v^r = v^l = v^k$ and $\check{v}_2 > \hat{v}$.\\
By the third inequality in Lemma \ref{lemma_disuguaglianze_triangolo}, we obtain
$$\Delta TV_{\tilde{t}}(v)\leq 2\,c_3\,(\rho^l-\check{\rho}_2)\leq C_2\,(\rho^l-\rho^r),$$
where $c_3$ depends only on $q$ and the invariant domain and $C_2 = 2\, c_3$.\\
By Lemma \ref{pressure_Lipschitz}, there exists a positive constant $M$ depending only on the invariant domain for which
$$\rho^l-\rho^r \leq M\,(p(\rho^l)-p(\rho^r)).$$
Since $v^l=v^r$, we find
$$\rho^l-\rho^r\leq M\, (v^l+p(\rho^l)-v^r-p(\rho^r)) =M\,(w^l-w^r).$$
Therefore 
$$\Delta TV_{\tilde{t}}(v)\leq C_2\,M\,(w^l-w^r).$$
For the Riemann invariant $w$, we have:
\begin{itemize}
\item $\hat{w}=w^l$;
\item $w^r\leq \check{w}_2$, because $\rho^r\leq \check{\rho}_2$ and $v^r=\check{v}_2$;
\item $\check{w}_2\leq \hat{w}$, because $\check{\rho}_2 \leq \hat{\rho}$ and by Lemma \ref{lemma_riemann_invariant_property};
\item $w^r \leq w^l$, because $w^r\leq \check{w}_2\leq \hat{w}=w^l$.
\end{itemize}
Hence we obtain
\begin{equation*}
\begin{split}
\Delta TV_{\tilde{t}}(w)&=|w^r-\check{w}_2|+|\check{w}_2-\hat{w}|+|\hat{w}-w^l|-|w^r-w^l|=\\
& = \check{w}_2 -w^r+\hat{w}-\check{w}_2-w^l+w^r=0.
\end{split}
\end{equation*}
\end{proof}
\subsubsection{Case $(\rho^k,v^k)\neq (\rho^r,v^r)$}
The points $(\rho^k,v^k)$ and $(\rho^r,v^r)$ can be joined by a wave with zero propagation speed if:
\begin{itemize}
\item the wave is a classical shock;
\item the wave is a non-classical shock.
\end{itemize}
The following proposition states that the case of a classical shock is not possible in the invariant domain.
\begin{prop}\label{prop_classical_shock_not_possible}
Let us suppose that $\lambda_1(\rho_\text{min},v_\text{min})<0$. The solution to the Riemann problem (\ref{Riemann_problem}) with initial datum
$$(\rho,v)(0,x)= \begin{cases}
(\rho^k,v^k) & \text{if } x\leq 0,\\
(\rho^r,v^r) & \text{if } x>0,
\end{cases}
$$
cannot be a classical shock with zero speed.
\end{prop}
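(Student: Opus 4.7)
The plan is to argue by contradiction and reduce the claim to the monotonicity established in Lemma \ref{lemma_curve_decrescenti}. Suppose there existed a classical shock with zero propagation speed joining $(\rho^k,v^k)$ to $(\rho^r,v^r)$. In the ARZ system the second characteristic field is linearly degenerate, so every admissible shock must belong to the first family; by Proposition \ref{prop_waves_conditions}, such a shock is characterized by
\begin{equation*}
v^r+p(\rho^r)=v^k+p(\rho^k),\qquad \rho^r>\rho^k,
\end{equation*}
and its speed, by the Rankine--Hugoniot condition (\ref{shock_speed}), equals $\lambda=(\rho^r v^r-\rho^k v^k)/(\rho^r-\rho^k)$. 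The hypothesis $\lambda=0$ therefore forces $\rho^r v^r=\rho^k v^k$.

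Next I would observe that both endpoints lie on the Lax curve of the first family through $(\rho^k,v^k)$, i.e. $v^r=L_1(\rho^r,\rho^k,v^k)$, so that
\begin{equation*}
\rho^k v^k=\rho^k L_1(\rho^k,\rho^k,v^k),\qquad \rho^r v^r=\rho^r L_1(\rho^r,\rho^k,v^k).
\end{equation*}
Since we work inside the invariant domain $\mathcal{D}_{v_1,v_2,w_1,w_2}$ and the standing assumption $\lambda_1(\rho_{\min},v_{\min})<0$ holds, Lemma \ref{lemma_curve_decrescenti} applies: the map $\rho\mapsto \rho L_1(\rho,\rho^k,v^k)$ is strictly decreasing on the portion of the curve contained in $\mathcal{D}_{v_1,v_2,w_1,w_2}$. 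Combined with $\rho^r>\rho^k$, this yields the strict inequality $\rho^r v^r<\rho^k v^k$, contradicting the equality forced by $\lambda=0$.

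Equivalently, one can phrase the final step via Proposition \ref{eigenvalue_as_slope}: the derivative of $\rho\mapsto \rho L_1(\rho,\rho^k,v^k)$ at any point of the curve equals $\lambda_1$ at that point, so Lemma \ref{lemma_curve_decrescenti} says $\lambda_1<0$ everywhere on the curve inside the invariant domain. In particular $\lambda_1(\rho^k,v^k)<0$ and $\lambda_1(\rho^r,v^r)<0$, and the Lax entropy condition $\lambda_1(\rho^r,v^r)\le\lambda\le\lambda_1(\rho^k,v^k)$ then forces $\lambda<0$, so $\lambda=0$ is impossible. There is no genuine obstacle here — the statement is essentially a corollary of Lemma \ref{lemma_curve_decrescenti} — but the key conceptual point to make explicit is that the invariance of $\mathcal{D}_{v_1,v_2,w_1,w_2}$ for $\mathcal{RS}^q_2$ is what legitimizes applying Lemma \ref{lemma_curve_decrescenti} to the states $(\rho^k,v^k)$ and $(\rho^r,v^r)$ arising in the wave-front tracking algorithm.
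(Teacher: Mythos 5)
Your proof is correct and rests on the same key ingredient as the paper, namely Lemma \ref{lemma_curve_decrescenti}, so the overall approach is essentially the same. Where you differ is in the execution: the paper extracts the contradiction via a Rolle-type argument — it locates an interior point $\rho^{\text{sup}}\in(\rho^k,\rho^r)$ where $\psi(\rho)=\rho L_1(\rho,\rho^k,v^k)$ attains its maximum, deduces $\lambda_1(\rho^{\text{sup}},\cdot)=0$, and then uses the strict monotonicity of $\rho\mapsto\lambda_1(\rho,L_1(\rho,\rho^k,v^k))$ to conclude $\lambda_1(\rho^k,v^k)>0$, contradicting the sign guaranteed by Lemma \ref{lemma_curve_decrescenti}. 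You skip the Rolle step entirely: since $\psi$ is strictly decreasing on the arc joining $\rho^k$ to $\rho^r$ (and that arc does remain in $\mathcal{D}_{v_1,v_2,w_1,w_2}$, since $v$ and $w=v+p(\rho)$ stay between their endpoint values), $\rho^r>\rho^k$ gives $\rho^r v^r<\rho^k v^k$ outright, contradicting the Rankine--Hugoniot relation $\rho^r v^r=\rho^k v^k$ forced by $\lambda=0$. Your second variant — noting that $\lambda_1<0$ at both endpoints, so the Lax entropy condition $\lambda_1(\rho^r,v^r)\le\lambda\le\lambda_1(\rho^k,v^k)$ immediately forces $\lambda<0$ — is even slicker and is perhaps the sharpest way to see the result. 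Both of your formulations are a genuine streamlining, though not a conceptually different argument, and your closing remark correctly flags the (otherwise implicit) role of the standing assumption that the states lie in the fixed invariant domain.
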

\begin{proof}
A classical shock with zero speed appears when
$$\rho^k<\rho^r, \; \; \; v^k = L_1(\rho^k,\rho^r,v^r) \; \text{ and } \; \lambda = \dfrac{\rho^k\, v^k- \rho^r\,v^r}{\rho^k-\rho^r}= 0 \Longleftrightarrow \rho^r\, v^r = \rho^k\, v^k,$$
where $\lambda$ is the propagation speed of the shock obtained with the Rankine-Hugoniot condition; see Figure \ref{fig_stime_int_caso_1_shock}.
Let $\rho^\text{sup}$ be the point of maximum of the function $\rho \to \rho\, L_1(\rho,\rho^k,v^k)$, i.e.
$$\lambda_1(\rho^\text{sup},L_1(\rho^\text{sup},\rho^k,v^k))=0.$$
Applying Lagrange theorem as in the proof of Lemma \ref{lemma_concave_functions_properties}, we obtain
$$\rho^k<\rho^\text{sup}<\rho^r.$$
By Lemma \ref{eigenvalue_as_slope}, the function $\rho \to \lambda_1(\rho,L_1(\rho,\rho^k,v^k))$ is strictly decreasing. Hence we obtain
$$\lambda_1(\rho^k,v^k)>\lambda_1(\rho^\text{sup},L_1(\rho^\text{sup},\rho^k,v^k))=0$$
which is a contradiction of the hypothesis $(\rho^k,v^k)\in \mathcal{D}_{v_1,v_2,w_1,w_2}$, because by Lemma \ref{lemma_curve_decrescenti} we have
$$\lambda_1(\rho,v) <0 \; \text{ for every } \; (\rho,v) \in \mathcal{D}_{v_1,v_2,w_1,w_2}.$$
\end{proof}
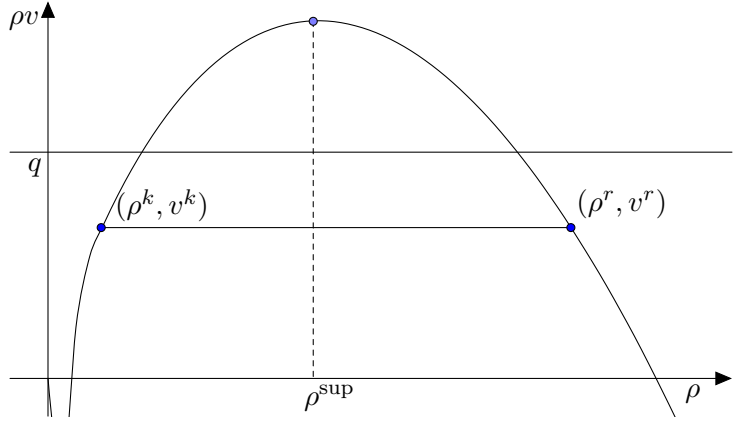
\begin{figure}[hbtp]
\centering
\definecolor{xdxdff}{rgb}{0.49019607843137253,0.49019607843137253,1.}
\definecolor{qqqqff}{rgb}{0.,0.,1.}
\begin{tikzpicture}[scale=0.5,line cap=round,line join=round,>=triangle 45,x=1.0cm,y=1.0cm]
\draw[->,color=black] (-1.,0.) -- (18.,0.);
\draw[->,color=black] (0.,-1.) -- (0.,10.);
\clip(-1.,-1.) rectangle (18.,10.);
\draw[smooth,samples=50,domain=0.001:18.0] plot(\x,{4.0*(\x)-(\x)^(1.5)});
\draw [domain=-1.:18.] plot(\x,{(--6.-0.*\x)/1.});
\draw (1.4,4.)-- (13.76,4.);
\draw [dash pattern=on 2pt off 2pt] (6.98,9.47)-- (6.98,0.);
\draw (1.5,5.3) node[anchor=north west] {$(\rho^k,v^k)$};
\draw (13.64,5.3) node[anchor=north west] {$(\rho^r,v^r)$};
\draw (6.47,0.1) node[anchor=north west] {$\rho^\text{sup}$};
\draw (-1.3,10.) node[anchor=north west] {$\rho v$};
\draw (16.5,0.1) node[anchor=north west] {$\rho$};
\draw (-0.8,6.1) node[anchor=north west] {$q$};
\begin{scriptsize}
\draw [fill=qqqqff] (1.4,4.) circle (3pt);
\draw [fill=qqqqff] (13.76,4.) circle (3pt);
\draw [fill=xdxdff] (6.98,9.47) circle (3pt);
\end{scriptsize}
\end{tikzpicture}
\caption{Representation of a shock with zero propagation speed joining the points $(\rho^k,v^k)$ and $(\rho^r,v^r)$. The density $\rho^\text{sup}$ is the point of maximum of the function $\rho \to \rho \, L_1(\rho,\rho^k,v^k)$.}\label{fig_stime_int_caso_1_shock}
\end{figure}
Next, we give the necessary conditions to have a non-classical shock in $x=0$.
\begin{prop}\label{when_non_classical_shock}
A non-classical shock appears as the the solution given by $\mathcal{RS}^q_2$ to the Riemann problem with initial datum
\begin{equation}\label{initial_datum_non_classical_shock}
(\rho,v)(0,x)= \begin{cases}
(\rho^k,v^k) & \text{if } x\leq 0,\\
(\rho^r,v^r) & \text{if } x>0,
\end{cases}
\end{equation}
when $\rho^r<\rho^k$ and $\rho^k\,v^k = \rho^r\,v^r=q$. Moreover in this case
$$(\rho^k,v^k)=(\hat{\rho},\hat{v}) \; \text{ and } \; (\rho^r,v^r)=(\check{\rho}_2,\check{v}_2).$$
If $\rho^k<\rho^r$ the solution $\mathcal{RS}^q_2((\rho^k,v^k),(\rho^r,v^r))$ is classical. 
\end{prop}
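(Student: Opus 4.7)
My plan is to treat the two cases separately, both by computing the classical middle state $(\rho^m,v^m)$ given by the standard Riemann solver $\mathcal{RS}$ and then testing whether $\rho^m v^m$ exceeds $q$, since only in that case $\mathcal{RS}^q_2$ activates the non-classical shock construction.

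\textbf{Case $\rho^r<\rho^k$, $\rho^k v^k=\rho^r v^r=q$.} From $\rho^k v^k=\rho^r v^r$ with $\rho^r<\rho^k$ I get $v^r>v^k$, and by Lemma \ref{lemma_riemann_invariant_property} (applied with $\bar V=0$, $F_\alpha=q$) the corresponding Riemann invariants satisfy $v^r+p(\rho^r)<v^k+p(\rho^k)$. The middle state $(\rho^m,v^m)$ of $\mathcal{RS}((\rho^k,v^k),(\rho^r,v^r))$ satisfies $v^m=v^r$ and $v^m+p(\rho^m)=v^k+p(\rho^k)$. Comparing with $v^r+p(\rho^r)<v^k+p(\rho^k)$ gives $p(\rho^m)>p(\rho^r)$, hence $\rho^m>\rho^r$ and therefore $\rho^m v^m=\rho^m v^r>\rho^r v^r=q$, so the constraint is violated on the classical solution. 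Since $v^m=v^r>v^k$ one also has $p(\rho^m)<p(\rho^k)$, whence $\rho^m<\rho^k$. By Lemma \ref{esistenza_rho_hat_rho_check_1} the set $I$ consists of exactly two points $\check\rho_1<\hat\rho$ lying on opposite sides of $\rho^m$. Since $(\rho^k,v^k)$ itself lies on the Lax curve $L_1(\cdot,\rho^k,v^k)$ and on the curve $\rho v=q$, it must be an element of $I$; the inequality $\rho^k>\rho^m$ then forces $\rho^k=\hat\rho$ and $v^k=\hat v$. Similarly, $\check\rho_2=q/v^r$ and $\check v_2=v^r$ by definition, and $\rho^r v^r=q$ immediately yields $\rho^r=\check\rho_2$, $v^r=\check v_2$.

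\textbf{Case $\rho^k<\rho^r$, $\rho^k v^k=\rho^r v^r=q$.} Now $v^k>v^r$ and Lemma \ref{lemma_riemann_invariant_property} gives $v^k+p(\rho^k)<v^r+p(\rho^r)$. The middle state again satisfies $v^m=v^r$ and $v^m+p(\rho^m)=v^k+p(\rho^k)$, which (together with $v^m=v^r$) forces $p(\rho^m)<p(\rho^r)$, so $\rho^m<\rho^r$; on the other hand, $v^m<v^k$ gives $p(\rho^m)>p(\rho^k)$, so $\rho^k<\rho^m<\rho^r$. The waves of the first family have negative speed in $\mathcal{D}_{v_1,v_2,w_1,w_2}$ by Lemma \ref{lemma_curve_decrescenti}, and an elementary Rankine–Hugoniot check confirms this: $\rho^m v^m=\rho^m v^r<\rho^r v^r=q=\rho^k v^k$, whereas $\rho^k-\rho^m<0$, so the shock speed $(\rho^k v^k-\rho^m v^m)/(\rho^k-\rho^m)$ is negative. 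Hence $\mathcal{RS}((\rho^k,v^k),(\rho^r,v^r))(0)=(\rho^m,v^m)$, and $\rho^m v^m<q$ shows that the constraint is satisfied. Therefore $\mathcal{RS}^q_2$ coincides with $\mathcal{RS}$ and the solution is classical.

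\textbf{Main obstacle.} No single step is genuinely hard: all the ingredients (strict concavity of $\rho\mapsto\rho L_1(\rho,\rho_0,v_0)$, the monotonicity of Riemann invariants along the constraint curve $\rho v=q$, and the sign of $\lambda_1$ on the invariant domain) are already encapsulated in Lemmas \ref{lemma_curve_decrescenti}, \ref{lemma_concave_functions_properties}, \ref{lemma_riemann_invariant_property} and Proposition \ref{prop_classical_shock_not_possible}. The only delicate point is making sure that, in the first case, one rigorously identifies $\rho^k$ with $\hat\rho$ (rather than $\check\rho_1$): this uses both that $\rho^k\in I$ and the ordering $\check\rho_1<\rho^m<\hat\rho$ together with $\rho^k>\rho^m$, which is the strategic step of the argument.
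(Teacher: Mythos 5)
Your proof is correct and follows essentially the same strategy as the paper: compute the classical middle state $(\rho^m,v^m)$, test whether $\rho^m v^m$ exceeds $q$, and then identify $(\rho^k,v^k)=(\hat\rho,\hat v)$. Where the paper tersely invokes Lemma~\ref{lemma_concave_functions_properties} and the sign of $\lambda_1(\check\rho_1,\check v_1)$, you instead use Lemma~\ref{lemma_riemann_invariant_property} to compare $\rho^m$ directly with $\rho^r$ and the ordering $\check\rho_1<\rho^m<\rho^k$ to pin down $\rho^k=\hat\rho$, which is equivalent and in fact spells out the inequality $\rho^m v^m>q$ more explicitly than the paper does.
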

\begin{proof}
Let $(\rho^m,v^m)$ be the middle state of the classical solution to the Riemann problem with initial datum (\ref{initial_datum_non_classical_shock}). Since $v^m=v^r$ and $v^k<v^r$ (because $\rho^k\,v^k=\rho^r\,v^r=q$ and $\rho^r<\rho^k$), we find
$$p(\rho^m)=p(\rho^k)+v^k-v^m<p(\rho^k) \Longrightarrow \rho^m< \rho^k.$$
Hence by Lemma \ref{lemma_concave_functions_properties}, we have
$$\rho^m\,v^m >q.$$
The classical solution to the problem is a rarefaction joining $(\rho^k,v^k)$ to $(\rho^m,v^m)$ and with negative propagation speeds, followed by a contact discontinuity connecting $(\rho^m,v^m)$ to $(\rho^r,v^r)$ and with positive speed. Therefore
$$RS((\rho^k,v^k),(\rho^r,v^r))(0)=(\rho^m,v^m)$$
and the solution given by $\mathcal{RS}^q_2$ is the non-classical shock. Since $\rho^k\,v^k=q$, we could have
$$(\rho^k,v^k)=(\check{\rho}_1,\check{v}_1)\; \text{ or }\; (\rho^k,v^k)=(\hat{\rho},\hat{v}).$$
The first case is not possible, because by Lemma \ref{lemma_concave_functions_properties} we have $\lambda_1(\check{\rho}_1,\check{v}_1)>0$, while $\lambda_1(\rho^k,v^k)$ is negative because $(\rho^k,v^k)$ is in $\mathcal{D}_{v_1,v_2,w_1,w_2}$. Therefore we have
$$(\rho^k,v^k)=(\hat{\rho},\hat{v}) \; \text{ and }\; (\rho^r,v^r)=(\check{\rho}_2,\check{v}_2)$$
and the solution given by $\mathcal{RS}^q_2$ to the Riemann problem with initial datum (\ref{initial_datum_non_classical_shock}) is constant, i.e.
\begin{equation*}
(\rho,v)(t,x)=
\begin{cases}
(\rho^k,v^k)=(\hat{\rho},\hat{v}) & \text{if } x/t \leq 0,\\
(\rho^r,v^r)=(\check{\rho}_2,\check{v}_2) &\text{if } x/t>0.
\end{cases}
\end{equation*}
On the contrary, if $\rho^r>\rho^k$, then $v^m=v^r<v^k$. Therefore
$$v^r-v^k<0 \Longrightarrow p(\rho^m)=p(\rho^k)+v^k-v^r >p(\rho^k)\Longrightarrow \rho^m >\rho^k.$$
By Lemma \ref{lemma_concave_functions_properties}, the middle state $(\rho^m,v^m)$ satisfies the constraint and the non-classical shock does not appear.
\end{proof}
\begin{prop}\label{prop_contact_discontinuity_from_left_case_2}
Assume $\lambda_1(\rho_\text{min},v_\text{min})<0$. Fix $(\rho^l,v^l)$, $(\rho^k,v^k)$ and $(\rho^r,v^r)$ in the domain $\mathcal{D}_{v_1,v_2,w_1,w_2}$ such that
$$v^l=v^k, \; \; \; v^r+p(\rho^r)<v^k+p(\rho^k)\; \text{ and }\; \rho^k\,v^k = \rho^r\,v^r=q.$$
Assume that a wave joining $(\rho^l,v^l)$ to $(\rho^k,v^k)$ with positive propagation speed interacts with $x=0$ at time $\tilde{t}>0$. Let $(\rho^m,v^m)$ be the middle state for the classical solution to the Riemann problem with initial datum
\begin{equation}\label{initial_datum_t_tilde_contact_discontinuity}
(\rho,v)(\tilde{t},x) = \begin{cases}
(\rho^l,v^l) & \text{if } x\leq 0,\\
(\rho^r,v^r) & \text{if } x>0.
\end{cases}
\end{equation}
\begin{itemize}
\item[(i)] Let $(\rho^\sigma,v^\sigma)$ for $\sigma \in [0,1]$ be a point satisfying $\rho^\sigma\in [\rho^m,\rho^l]$ and $v^\sigma= v^l+p(\rho^l)-p(\rho)$. If $\rho^m\,v^m\leq q$, then
\begin{equation*}
\mathcal{RS}^q_2((\rho^l,v^l),(\rho^r,v^r))\left(\frac{x}{t-\tilde{t}}\right) = \begin{cases}
\vspace{0.1cm}
(\rho^l,v^l) & \text{if }\; \lambda_1(\rho^l,v^l)\leq \frac{x}{t-\tilde{t}},\\
\vspace{0.1cm}
(\rho^\sigma,v^\sigma) &\text{if }\; \frac{x}{t-\tilde{t}} = \lambda_1(\rho^\sigma,v^\sigma) \;\text{ for } \; \sigma \in (0,1),\\
\vspace{0.1cm}
(\rho^m,v^m) & \text{if } \; \lambda_1(\rho^m,v^m)\leq \frac{x}{t-\tilde{t}}\leq v^r,\\
(\rho^r,v^r) & \text{if } \; \frac{x}{t-\tilde{t}}>v^r.
\end{cases}
\end{equation*}
Moreover
$$\Delta TV_{\tilde{t}}(\rho) \leq 0, \; \; \; \Delta TV_{\tilde{t}}(v) =0 \; \text{ and } \; \Delta_{\tilde{t}} \mathcal{N}=0.$$
For the Riemann invariant $w$ we have
$$\Delta TV_{\tilde{t}}(w)\leq 0.$$
\item[(ii)] If $\rho^m\, v^m>q$, then
\begin{equation*}
\mathcal{RS}^q_2((\rho^l,v^l),(\rho^r,v^r))\left(\frac{x}{t-\tilde{t}}\right) = \begin{cases}
\vspace{0.1cm}
(\rho^l,v^l) & \text{if }\; \frac{x}{t-\tilde{t}}\leq \lambda,\\
\vspace{0.1cm}
(\hat{\rho},\hat{v}) & \text{if } \; \lambda\leq \frac{x}{t-\tilde{t}}\leq 0,\\
(\rho^r,v^r) & \text{if } \; \frac{x}{t-\tilde{t}}>0,
\end{cases}
\end{equation*}
where by the Rankine-Hugoniot condition 
$$\lambda=\dfrac{\rho^l\,v^l-\hat{\rho}\, \hat{v}}{\rho^l-\hat{\rho}}$$
is the speed of the shock joining $(\rho^l,v^l)$ to $(\hat{\rho},\hat{v})$. Furthermore
\begin{equation*}
\Delta TV_{\tilde{t}}(\rho) \leq C_1\, (\rho^l-\rho^k), \; \; \Delta TV_{\tilde{t}} (v) \leq C_2\,(\rho^l-\rho^k) \; \text{ and } \; \Delta_{\tilde{t}}\, \mathcal{N}=0,
\end{equation*}
where $C_1$ and $C_2$ are positive constants depending only on $q$ and $\mathcal{D}_{v_1,v_2,w_1,w_2}$.\\
In the coordinates $(v,w)$ of the Riemann invariants, we have
$$\Delta TV_{\tilde{t}}(v)=C_3\,|w^l-w^k|\; \text{ and } \; \Delta TV_{\tilde{t}}(w)\leq 0,$$
where $C_3$ is a positive constant depending only on the invariant domain and $q$.
\end{itemize}
\end{prop}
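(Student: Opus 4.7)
The plan is to first identify the wave structure at $x=0$ immediately before $\tilde t$, then resolve the new Riemann problem at time $\tilde t$, and finally perform the total variation analysis case by case. I would begin by invoking Lemma \ref{lemma_curve_decrescenti}: since $\lambda_1(\rho_\text{min},v_\text{min})<0$, every wave of the first family in $\mathcal D_{v_1,v_2,w_1,w_2}$ has negative propagation speed, so the positive-speed wave joining $(\rho^l,v^l)$ to $(\rho^k,v^k)$ must be a contact discontinuity, forcing $v^l=v^k$. Because $\rho^k v^k=\rho^r v^r=q$ and $w^r<w^k$, Proposition \ref{when_non_classical_shock} gives $(\rho^k,v^k)=(\hat\rho,\hat v)$ and $(\rho^r,v^r)=(\check\rho_2,\check v_2)$ relative to the Lax curve of the first family through $(\rho^k,v^k)$; in particular $v^r>v^l$. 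I would then obtain the classical middle state $(\rho^m,v^m)$ of the Riemann problem (\ref{initial_datum_t_tilde_contact_discontinuity}) from $v^m=v^r$ and $v^m+p(\rho^m)=v^l+p(\rho^l)$, yielding $\rho^m<\rho^l$; by Theorem \ref{prop_standard_solution} the classical solution is a first-family rarefaction to $(\rho^m,v^m)$ followed by a contact discontinuity to $(\rho^r,v^r)$, and since $\lambda_1<0$ throughout the invariant domain, the classical trace at $x=0$ is exactly $(\rho^m,v^m)$.

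In case (i), where $\rho^m v^m\le q$, the trace at the constraint satisfies it, so $\mathcal{RS}^q_2$ coincides with $\mathcal{RS}$, producing the displayed formula. Counting the rarefaction as a single wave there are two waves both before and after the interaction, so $\Delta_{\tilde t}\mathcal N=0$. For the velocity variation I would exploit $v^l=v^k$, $v^m=v^r$, and the derived identity $v^l-v^m=\hat v-\check v_2$ (from $v^l+p(\rho^l)=v^m+p(\rho^m)$ together with $\hat v+p(\hat\rho)=\check v_2+p(\check\rho_2)$) to produce the exact cancellation $\Delta TV_{\tilde t}(v)=0$. For the density variation I would use the orderings $\rho^r=\check\rho_2<\hat\rho=\rho^k$ provided by Lemma \ref{lemma_concave_functions_properties}, together with $\rho^m<\rho^l$, to cancel pre- and post-interaction jumps pairwise, yielding $\Delta TV_{\tilde t}(\rho)\le 0$. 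Finally, since $w$ is preserved across first-family waves, only the post-interaction contact discontinuity contributes to $TV(w)$; the triangle inequality on $w^l,w^k,w^r$ then gives $\Delta TV_{\tilde t}(w)\le 0$.

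In case (ii), where $\rho^m v^m>q$, the middle state violates the constraint and $\mathcal{RS}^q_2$ installs a non-classical shock at $x=0$. Because the new $\check v_2=v^r$ and $\check\rho_2=q/v^r$ coincide with the old $(\check\rho_{2,\text{old}},\check v_{2,\text{old}})=(\rho^r,v^r)$, no contact discontinuity remains on the right. The new $(\hat\rho,\hat v)$ is the larger root of $\rho L_1(\rho,\rho^l,v^l)=q$, and satisfies $\hat\rho>\rho^l$ by Lemma \ref{lemma_concave_functions_properties}; hence $\mathcal{RS}$ connects $(\rho^l,v^l)$ to $(\hat\rho,\hat v)$ by a classical shock of Rankine-Hugoniot speed $\lambda=(\rho^l v^l-\hat\rho\hat v)/(\rho^l-\hat\rho)$, which is negative because $\lambda_1<0$ in the invariant domain; this produces the displayed wave pattern with two waves both before and after, so $\Delta_{\tilde t}\mathcal N=0$. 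The main obstacle will be the quantitative estimates $\Delta TV_{\tilde t}(\rho)\le C_1(\rho^l-\rho^k)$ and $\Delta TV_{\tilde t}(v)\le C_2(\rho^l-\rho^k)$; I would obtain these by applying Lemma \ref{lemma_disuguaglianze_triangolo} to the triangle $(\rho^\alpha,v^\alpha)=(\rho^k,v^k)$, $(\rho^\beta,v^\beta)=(\hat\rho,\hat v)$, $(\rho^\gamma,v^\gamma)=(\rho^l,v^l)$, thereby controlling both $\hat\rho-\rho^l$ and $\check v_2-\hat v$ by constant multiples of $\rho^l-\rho^k$, with constants depending only on $q$ and $\mathcal D_{v_1,v_2,w_1,w_2}$. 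Converting $\rho^l-\rho^k$ to $|w^l-w^k|$ via Lemma \ref{pressure_Lipschitz} (using $v^l=v^k$) then gives $\Delta TV_{\tilde t}(v)\le C_3|w^l-w^k|$; and since $w$ is preserved across the classical shock (so $\hat w=w^l$) while $\check w_2=w^r$ by the identification above, the post-interaction jump in $w$ reduces to $|w^l-w^r|$, which by the triangle inequality on $w^l,w^k,w^r$ does not exceed $|w^l-w^k|+|w^k-w^r|$, giving $\Delta TV_{\tilde t}(w)\le 0$.
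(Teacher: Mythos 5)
Your argument only covers part of case (ii). The paper splits the analysis according to whether $\rho^l\le\rho^k$ or $\rho^l>\rho^k$. When $\rho^l>\rho^k$, the reasoning you give (via Lemma \ref{lemma_concave_functions_properties} and $\rho^l\,v^l>q$) correctly yields $\hat\rho>\rho^l$ and a classical shock; Lemma \ref{lemma_disuguaglianze_triangolo} then applies because $w^l>w^k$. But when $\rho^l\le\rho^k$ (and still $\rho^m v^m>q$), we have $\rho^l v^l\le q$, and Lemma \ref{lemma_concave_functions_properties}(i)--(ii) gives $\rho^l\ge\hat\rho$, not $\hat\rho>\rho^l$. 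The wave from $(\rho^l,v^l)$ to $(\hat\rho,\hat v)$ is then a \emph{rarefaction}, not a shock, and Lemma \ref{lemma_disuguaglianze_triangolo} cannot be invoked with $(\rho^\gamma,v^\gamma)=(\rho^l,v^l)$ because its hypothesis $w^\gamma>w^\alpha$ (i.e.\ $w^l>w^k$) fails. In that subcase the paper instead obtains the much simpler $\Delta TV_{\tilde t}(\rho)=2(\rho^l-\rho^k)\le 0$ directly from the orderings $\rho^r<\hat\rho\le\rho^l\le\rho^k$. Your proposal implicitly assumes you are always in the $\rho^l>\rho^k$ regime, so a whole branch of the argument is missing.

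Two smaller issues in your case (i). First, ``cancel pre- and post-interaction jumps pairwise'' does not actually establish $\Delta TV_{\tilde t}(\rho)\le 0$: with $\rho^m<\rho^l$, $\rho^m\le\rho^r$, $\rho^r<\rho^k$, $\rho^l\le\rho^k$ one computes $\Delta TV_{\tilde t}(\rho)=2(\rho^r-\rho^m+\rho^l-\rho^k)$, and the sign of this requires the genuinely nontrivial inequality $\rho^r-\rho^m\le\rho^k-\rho^l$. The paper proves it via a separate monotonicity argument on the function $v\mapsto p^{-1}(w^\beta-v)-p^{-1}(w^\alpha-v)$; you would need to supply something of this kind. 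Second, your stated justification for $\Delta TV_{\tilde t}(v)=0$, the ``identity $\hat v+p(\hat\rho)=\check v_2+p(\check\rho_2)$'', is false: that equality would say $w^k=w^r$, exactly what the hypothesis rules out. The conclusion is fine, but the correct (and simpler) route is just $v^l=v^k$ and $v^m=v^r$, which makes the four contributions cancel directly.
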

\begin{proof}
The hypothesis $w^r=v^r+p(\rho^r)<v^k+p(\rho^k)=w^k$ and Lemma \ref{lemma_riemann_invariant_property} imply $\rho^k>\rho^r$.\\
We have to distinguish different cases.\\
\textit{Case (i):} $\rho^l \leq \rho^k$ and $\rho^m\,v^m\leq q$; see Figure \ref{fig_stime_cont_disc_vincolo_soddisfatto}.\\
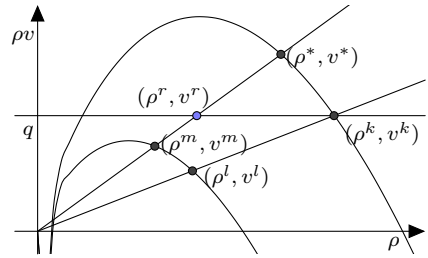
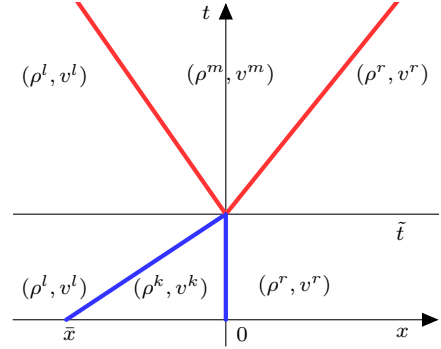
\begin{figure}[hbtp]
\centering
\begin{subfigure}[hbtp]{0.47\linewidth}
\definecolor{uququq}{rgb}{0.25098039215686274,0.25098039215686274,0.25098039215686274}
\definecolor{xdxdff}{rgb}{0.49019607843137253,0.49019607843137253,1.}
\begin{tikzpicture}[scale=0.3,line cap=round,line join=round,>=triangle 45,x=1.0cm,y=1.0cm]
\draw[->,color=black] (-1.,0.) -- (17.,0.);
\draw[->,color=black] (0.,-1.) -- (0.,10.);
\clip(-1.5,-1.) rectangle (17.,10.);
\draw[smooth,samples=50,domain=0.001:17.0] plot(\x,{4.0*(\x)-(\x)^(1.5)});
\draw[smooth,samples=50,domain=0.001:17.0] plot(\x,{3.0*(\x)-(\x)^(1.5)});
\draw [domain=-1.:17.] plot(\x,{(--5.110-0.*\x)/1.});
\draw [domain=0.0:17.0] plot(\x,{(-0.--5.11*\x)/13.00});
\draw [domain=0.0:17.0] plot(\x,{(-0.--5.11*\x)/6.98});
\begin{scriptsize}
\draw (6.9,3.3) node[anchor=north west] {$(\rho^l,v^l)$};
\draw (13.1,5.2) node[anchor=north west] {$(\rho^k,v^k)$};
\draw (4.,6.73) node[anchor=north west] {$(\rho^r,v^r)$};
\draw (5.,4.66) node[anchor=north west] {$(\rho^m,v^m)$};
\draw (10.5,8.5) node[anchor=north west] {$(\rho^*,v^*)$};
\draw (-1.5,9.2) node[anchor=north west] {$\rho v$};
\draw (15.,0.1) node[anchor=north west] {$\rho$};
\draw (-1,5) node[anchor=north west] {$q$};
\end{scriptsize}
\begin{scriptsize}
\draw [fill=xdxdff] (6.98,5.11) circle (5pt);
\draw [fill=uququq] (13.00,5.11) circle (5pt);
\draw [fill=uququq] (5.14,3.76) circle (5pt);
\draw [fill=uququq] (6.79,2.67) circle (5pt);
\draw [fill=uququq] (10.67,7.82) circle (5pt);
\end{scriptsize}
\end{tikzpicture}
\caption{Notations used in the case $\rho^l \leq \rho^k$ and $\rho^m\,v^m\leq q$.}
\end{subfigure}
\quad
\begin{subfigure}[hbtp]{0.47\linewidth}
\definecolor{fftttt}{rgb}{1.,0.2,0.2}
\definecolor{ttttff}{rgb}{0.2,0.2,1.}
\begin{tikzpicture}[scale=0.7,line cap=round,line join=round,>=triangle 45,x=1.0cm,y=1.0cm]
\draw[->,color=black] (-4.,0.) -- (4.,0.);
\draw[->,color=black] (0.,-0.5) -- (0.,6.);
\clip(-4.,-0.5) rectangle (4.,6.);
\draw [domain=-4.:4.] plot(\x,{(--2.-0.*\x)/1.});
\draw [line width=1.6pt,color=ttttff] (-3.,0.)-- (0.,2.);
\draw [line width=1.6pt,color=fftttt,domain=-4.0:0.0] plot(\x,{(-7.32--5.28*\x)/-3.66});
\draw [line width=1.6pt,color=fftttt,domain=0.0:4.0] plot(\x,{(--6.88--4.3*\x)/3.44});
\draw [line width=1.6pt,color=ttttff] (0.,2.)-- (0.,0.);
\begin{scriptsize}
\draw (3.05,2.) node[anchor=north west] {$\tilde{t}$};
\draw (-4,5) node[anchor=north west] {$(\rho^l,v^l)$};
\draw (-4,1.) node[anchor=north west] {$(\rho^l,v^l)$};
\draw (2.29,5) node[anchor=north west] {$(\rho^r,v^r)$};
\draw (0.45,1.) node[anchor=north west] {$(\rho^r,v^r)$};
\draw (-1.9,1.) node[anchor=north west] {$(\rho^k,v^k)$};
\draw (-0.6,6.09) node[anchor=north west] {$t$};
\draw (3.05,0.) node[anchor=north west] {$x$};
\draw (-3.21,0.) node[anchor=north west] {$\bar{x}$};
\draw (0.05,0.) node[anchor=north west] {$0$};
\draw (-0.9,5) node[anchor=north west] {$(\rho^m,v^m)$};
\end{scriptsize}
\end{tikzpicture}
\caption{The solution after the interaction is classical.}
\end{subfigure}
\caption{Interaction between a contact discontinuity centred in  $x=\bar{x}$ and a non-classical shock centred in $x=0$. In the represented case, after the interaction the constraint is satisfied.}\label{fig_stime_cont_disc_vincolo_soddisfatto}
\end{figure}

We have:
\begin{itemize}
\item $\rho^r \geq \rho^m$, because $\rho^m\,v^m\leq q =\rho^r\,v^r$ and $v^m=v^r$;
\item $\rho^m \leq \rho^l$, because $v^k=v^l<v^m=v^r$ and $\rho^k>\rho^r$.
\end{itemize}
Hence
\begin{equation*}
\begin{split}
\Delta TV_{\tilde{t}}(\rho)& =|\rho^r-\rho^m|+|\rho^m-\rho^l|-|\rho^r-\rho^k|-|\rho^k-\rho^l|=\\
& = \rho^r-\rho^m+\rho^l-\rho^m-\rho^k+\rho^r-\rho^k+\rho^l=\\
& = 2\,(\rho^r-\rho^m+\rho^l-\rho^k).
\end{split}
\end{equation*}
We \textit{claim} that
$$\rho^r-\rho^m\leq \rho^k-\rho^l.$$
We postpone the proof of this claim.\\
Hence $\Delta TV_{\tilde{t}}(\rho)\leq 0$.\\
For the $v$ component we find
\begin{equation*}
\begin{split}
\Delta TV_{\tilde{t}}(v) & =|v^r-v^m|+|v^m-v^l|-|v^r-v^k|-|v^k-v^l|=\\
& = 0+v^m-v^l-v^r+v^k+0= 0,
\end{split}
\end{equation*}
because $v^r=v^m$ and $v^l=v^k$.\\
For the Riemann invariant $w$, since $w^m=w^l$ and by the triangular inequality, we find
\begin{equation*}
\begin{split}
\Delta TV_{\tilde{t}}(w) & =|w^r-w^m|+|w^m-w^l|-|w^r-w^k|-|w^k-w^l| =\\
& = |w^r-w^l|-|w^r-w^k|-|w^k-w^l| \leq 0.
\end{split}
\end{equation*}
\textit{Case (ii):} $\rho^l \leq \rho^k$ and $\rho^m\,v^m >q$; see Figure \ref{fig_stime_cont_disc_vincolo_non_soddisfatto}.\\
\begin{figure}[hbtp]
\centering
\begin{subfigure}[hbtp]{0.47\linewidth}
\definecolor{uququq}{rgb}{0.25098039215686274,0.25098039215686274,0.25098039215686274}
\definecolor{xdxdff}{rgb}{0.49019607843137253,0.49019607843137253,1.}
\begin{tikzpicture}[scale=0.3,line cap=round,line join=round,>=triangle 45,x=1.0cm,y=1.0cm]
\draw[->,color=black] (-1.,0.) -- (17.5,0.);
\draw[->,color=black] (0.,-1.) -- (0.,10.);
\clip(-1.5,-1.) rectangle (17.5,10.);
\draw[smooth,samples=50,domain=0.001:17.5] plot(\x,{4.0*(\x)-(\x)^(1.5)});
\draw[smooth,samples=50,domain=0.001:17.5] plot(\x,{3.5*(\x)-(\x)^(1.5)});
\draw [domain=-1.:17.5] plot(\x,{(--4.-0.*\x)/1.});
\draw [domain=0.0:17.5] plot(\x,{(-0.--4.00*\x)/13.75});
\draw [domain=0.0:17.5] plot(\x,{(-0.--4.*\x)/6.});
\begin{scriptsize}
\draw (10.3,3.3) node[anchor=north west] {$(\rho^l,v^l)$};
\draw (13.8,4.2) node[anchor=north west] {$(\rho^k,v^k)$};
\draw (5.5,4.1) node[anchor=north west] {$(\rho^r,v^r)$};
\draw (7.8,6.3) node[anchor=north west] {$(\rho^m,v^m)$};
\draw (-1.5,9.) node[anchor=north west] {$\rho v$};
\draw (15.5,0.1) node[anchor=north west] {$\rho$};
\draw (-1.,4.2) node[anchor=north west] {$q$};
\draw (9.,5.3) node[anchor=north west] {$(\hat{\rho},\hat{v})$};
\end{scriptsize}
\begin{scriptsize}
\draw [fill=xdxdff] (6.,4.) circle (5pt);
\draw [fill=uququq] (13.75,4.00) circle (5pt);
\draw [fill=xdxdff] (10.29,2.99) circle (5pt);
\draw [fill=uququq] (8.02,5.35) circle (5pt);
\draw [fill=uququq] (9.47,3.99) circle (5pt);
\end{scriptsize}
\end{tikzpicture}
\caption{Notations used in the case $\rho^l \leq \rho^k$ and $\rho^m\,v^m> q$. Here $(\rho^r,v^r)=(\check{\rho}_2,\check{v}_2)$.}
\end{subfigure}
\quad
\begin{subfigure}[hbtp]{0.47\linewidth}
\definecolor{fftttt}{rgb}{1.,0.2,0.2}
\definecolor{ttttff}{rgb}{0.2,0.2,1.}
\begin{tikzpicture}[scale=0.7,line cap=round,line join=round,>=triangle 45,x=1.0cm,y=1.0cm]
\draw[->,color=black] (-4.,0.) -- (3.5,0.);
\draw[->,color=black] (0.,-0.5) -- (0.,5.5);
\clip(-4.,-0.5) rectangle (3.5,5.5);
\draw [domain=-4.:3.5] plot(\x,{(--2.-0.*\x)/1.});
\draw [line width=1.6pt,color=ttttff] (-3.,0.)-- (0.,2.);
\draw [line width=1.6pt,color=fftttt,domain=-4.0:0.0] plot(\x,{(-7.32--5.28*\x)/-3.66});
\draw [line width=1.6pt,color=ttttff] (0.,2.)-- (0.,0.);
\draw [line width=2.pt,color=fftttt] (0.,2.)-- (0.,5.);
\begin{scriptsize}
\draw (2.6,2.) node[anchor=north west] {$\tilde{t}$};
\draw (-3.6,4.7) node[anchor=north west] {$(\rho^l,v^l)$};
\draw (-3.9,1.) node[anchor=north west] {$(\rho^l,v^l)$};
\draw (0.,4.7) node[anchor=north west] {$(\rho^r,v^r)=(\check{\rho}_2,\check{v}_2)$};
\draw (0.76,1.) node[anchor=north west] {$(\rho^r,v^r)$};
\draw (-1.7,1.) node[anchor=north west] {$(\rho^k,v^k)$};
\draw (-0.5,5.2) node[anchor=north west] {$t$};
\draw (2.6,0.) node[anchor=north west] {$x$};
\draw (-3.12,0) node[anchor=north west] {$\bar{x}$};
\draw (0.04,0.) node[anchor=north west] {$0$};
\draw (-1.35,4.7) node[anchor=north west] {$(\hat{\rho},\hat{v})$};
\end{scriptsize}
\end{tikzpicture}
\caption{After the interaction the non-classical shock appears.}
\end{subfigure}
\caption{Interaction between a contact discontinuity centred in  $x=\bar{x}$ and a non-classical shock centred in $x=0$. In the represented case, after the interaction the constraint is not satisfied.}\label{fig_stime_cont_disc_vincolo_non_soddisfatto}
\end{figure}
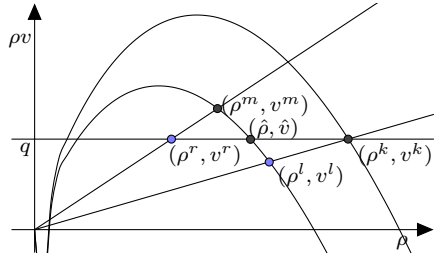
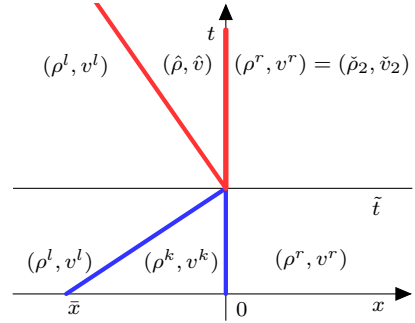

At time $\tilde{t}=|\bar{x}|/v^r$ the solution given by $\mathcal{RS}^q_2$ to the Riemann problem with initial datum (\ref{initial_datum_t_tilde_contact_discontinuity}) in non-classical. Therefore the states $(\hat{\rho},\hat{v})$ and $(\check{\rho}_2,\check{v}_2)$ appear. Since $\rho^r\,v^r=q$, we have
$$(\check{\rho}_2,\check{v}_2)=(\rho^r,v^r).$$
We have:
\begin{itemize}
\item $\rho^r <\hat{\rho}$, because: by Lemma \ref{lemma_concave_functions_properties}, the condition $\rho^m\,v^m> q$ implies $\rho^m <\hat{\rho}$ and $v^m=v^r$ implies $\rho^m\,v^m>q=\rho^r\,v^r \Rightarrow \rho^m >\rho^r$;
\item $\rho^l > \hat{\rho}$, applying Lemma \ref{lemma_concave_functions_properties} to $(\rho^l,v^l)$ (which satisfies $\rho^l\,v^l \leq q$).
\end{itemize}
Therefore
\begin{equation*}
\begin{split}
\Delta TV (\rho)& =|\rho^r-\hat{\rho}|+|\hat{\rho}-\rho^l|-|\rho^r-\rho^k|-|\rho^k-\rho^l|=\\
& = \hat{\rho}-\rho^r+\rho^l-\hat{\rho}+\rho^r-\rho^k+\rho^l-\rho^k=\\
& = 2(\rho^l-\rho^k)\leq 0.
\end{split}
\end{equation*}
For the second component, we find
\begin{equation*}
\begin{split}
\Delta TV_{\tilde{t}}(v)& =|v^r-\hat{v}|+|\hat{v}-v^l|-|v^r-v^k|-|v^k-v^l|=\\
&= v^r-\hat{v}+\hat{v}-v^l-v^r+v^k+0 =0,
\end{split}
\end{equation*}
because $v^l=v^k$.\\
For the Riemann invariant $w$, we have:
\begin{itemize}
\item $w^r<w^k$ and $w^l=\hat{w}$ respectively by hypothesis and definition;
\item $w^r<\hat{w}$, because $\rho^r<\hat{\rho}$ and by Lemma \ref{lemma_riemann_invariant_property};
\item $w^k\geq w^l$, because $v^k=v^l$ and $\rho^k \geq \rho^l$.
\end{itemize}
Therefore
\begin{equation*}
\begin{split}
\Delta TV_{\tilde{t}}(w) & = |w^r-\hat{w}|+|\hat{w}-w^l|-|w^r-w^k|-|w^k-w^l| =\\
& = \hat{w}-w^r -w^k+w^r-w^k+w^l =\\
& = 2\, (w^l-w^k) \leq 0.
\end{split}
\end{equation*}
\textit{Case (iii):} $\rho^l>\rho^k$; see Figure \ref{fig_stime_cont_disc_vincolo_non_soddisfatto_caso_3}.\\
\begin{figure}[hbtp]
\centering
\begin{subfigure}[hbtp]{0.47\linewidth}
\definecolor{uququq}{rgb}{0.25098039215686274,0.25098039215686274,0.25098039215686274}
\definecolor{xdxdff}{rgb}{0.49019607843137253,0.49019607843137253,1.}
\begin{tikzpicture}[scale=0.3,line cap=round,line join=round,>=triangle 45,x=1.0cm,y=1.0cm]
\draw[->,color=black] (-1.,0.) -- (17.,0.);
\draw[->,color=black] (0.,-0.5) -- (0.,10.);
\clip(-1.5,-1) rectangle (17.,10.);
\draw[smooth,samples=50,domain=0.001:17.0] plot(\x,{4.0*(\x)-(\x)^(1.5)});
\draw[smooth,samples=50,domain=0.001:17.0] plot(\x,{3.5*(\x)-(\x)^(1.5)});
\draw [domain=-1.:17.] plot(\x,{(--4.-0.*\x)/1.});
\draw [domain=0.0:17.0] plot(\x,{(-0.--4.*\x)/5.72});
\draw [domain=0.0:17.0] plot(\x,{(-0.--4.00*\x)/9.47});
\begin{scriptsize}
\draw (9.5,4.2) node[anchor=north west] {$(\rho^k,v^k)$};
\draw (12.6,6.1) node[anchor=north west] {$(\rho^l,v^l)$};
\draw (4.8,4.1) node[anchor=north west] {$(\rho^r,v^r)$};
\draw (11.03,8.21) node[anchor=north west] {$(\rho^m,v^m)$};
\draw (-1.5,9.) node[anchor=north west] {$\rho v$};
\draw (15.,0.1) node[anchor=north west] {$\rho$};
\draw (-1,4.) node[anchor=north west] {$q$};
\draw (13.66,4.1) node[anchor=north west] {$(\hat{\rho},\hat{v})$};
\end{scriptsize}
\begin{scriptsize}
\draw [fill=xdxdff] (5.72,4.) circle (5pt);
\draw [fill=uququq] (9.47,4.00) circle (5pt);
\draw [fill=uququq] (12.80,5.40) circle (5pt);
\draw [fill=uququq] (13.75,4.00) circle (5pt);
\draw [fill=uququq] (10.89,7.61) circle (5pt);
\end{scriptsize}
\end{tikzpicture}
\caption{Notations used in the case $\rho^l > \rho^k$. In this case we always have $\rho^m\,v^m> q$ and $(\rho^r,v^r)=(\check{\rho}_2,\check{v}_2)$.}
\end{subfigure}
\quad
\begin{subfigure}[hbtp]{0.47\linewidth}
\definecolor{fftttt}{rgb}{1.,0.2,0.2}
\definecolor{ttttff}{rgb}{0.2,0.2,1.}
\begin{tikzpicture}[scale=0.7,line cap=round,line join=round,>=triangle 45,x=1.0cm,y=1.0cm]
\draw[->,color=black] (-4.,0.) -- (3.5,0.);
\draw[->,color=black] (0.,-0.5) -- (0.,5.5);
\clip(-4.,-0.5) rectangle (3.5,5.5);
\draw [domain=-4.:3.5] plot(\x,{(--2.-0.*\x)/1.});
\draw [line width=1.6pt,color=ttttff] (-3.,0.)-- (0.,2.);
\draw [line width=1.6pt,color=fftttt,domain=-4.0:0.0] plot(\x,{(-7.32--5.28*\x)/-3.66});
\draw [line width=2.pt,color=fftttt] (0.,2.)-- (0.,5.);
\draw [line width=1.6pt,color=ttttff] (0.,2.)-- (0.,0.);
\begin{scriptsize}
\draw (2.6,2.) node[anchor=north west] {$\tilde{t}$};
\draw (-3.39,4.5) node[anchor=north west] {$(\rho^l,v^l)$};
\draw (-3.79,1.) node[anchor=north west] {$(\rho^l,v^l)$};
\draw (0.1,4.5) node[anchor=north west] {$(\rho^r,v^r)=(\check{\rho}_2,\check{v}_2)$};
\draw (0.76,1.) node[anchor=north west] {$(\rho^r,v^r)$};
\draw (-1.6,1.) node[anchor=north west] {$(\rho^k,v^k)$};
\draw (-0.5,5.1) node[anchor=north west] {$t$};
\draw (2.6,0.0) node[anchor=north west] {$x$};
\draw (-3.12,0.) node[anchor=north west] {$\bar{x}$};
\draw (0.04,0.) node[anchor=north west] {$0$};
\draw (-1.35,4.5) node[anchor=north west] {$(\hat{\rho},\hat{v})$};
\end{scriptsize}
\end{tikzpicture}
\caption{After the interaction the non-classical shock appears.}
\end{subfigure}
\caption{Interaction between a contact discontinuity centred in  $x=\bar{x}$ and a non-classical shock centred in $x=0$. After the interaction the constraint is not satisfied.}\label{fig_stime_cont_disc_vincolo_non_soddisfatto_caso_3}
\end{figure}
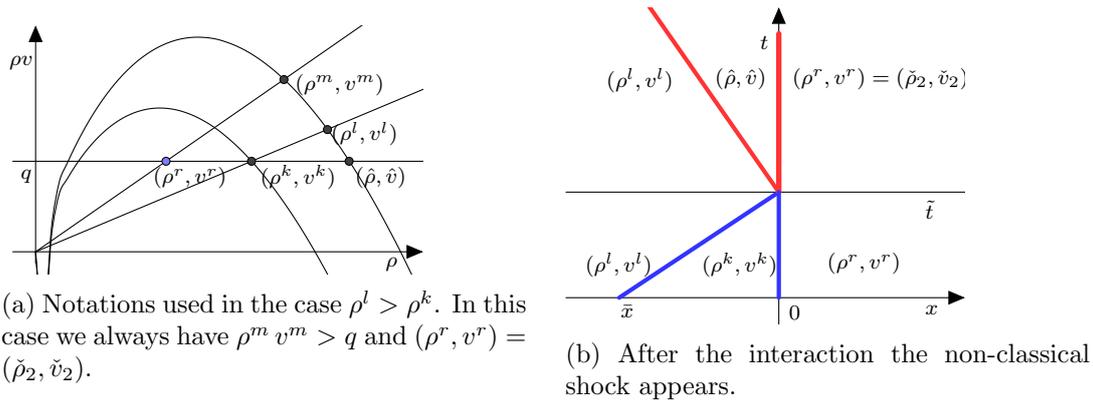

Since $\rho^l >\rho^k$ and $v^l=v^k$, we have $\rho^l\,v^l>q=\rho^k\,v^k$. Moreover, since $v^k+p(\rho^k)>v^r+p(\rho^r)$, by Lemma \ref{lemma_riemann_invariant_property}, we have $\rho^k>\rho^r$, which implies $v^k<v^r$ because $q=\rho^k\,v^k=\rho^r\,v^r$. Therefore for the middle state $(\rho^m,v^m)$ we find
\begin{equation*}
\begin{split}
& v^m=v^r>v^k=v^l \Longrightarrow p(\rho^l)=p(\rho^m)+v^m-v^l >p(\rho^m) \Longrightarrow\\
& \Longrightarrow \rho^l >\rho^m \Longrightarrow \rho^m\,v^m >\rho^l \, v^l >q,
\end{split}
\end{equation*}
by Lemma \ref{lemma_concave_functions_properties} applied to $(\rho^m,v^m)$ and $(\rho^l,v^l)$. Hence after the interaction the non-classical shock appears and as in the previous case, $(\check{\rho}_2,\check{v}_2)=(\rho^r,v^r)$.\\
We have:
\begin{itemize}
\item $\rho^l<\hat{\rho}$, by Lemma \ref{lemma_concave_functions_properties} applied to $(\rho^l,v^l)$ (which satisfies $\rho^l \, v^l >q$);
\item $\hat{\rho}>\rho^r$, because $\hat{\rho}>\rho^l >\rho^k >\rho^r$.
\end{itemize}
Hence
\begin{equation*}
\begin{split}
\Delta TV_{\tilde{t}}(\rho) & =|\rho^r-\hat{\rho}|+|\hat{\rho}-\rho^l|-|\rho^r-\rho^k|-|\rho^k-\rho^l|=\\
& = \hat{\rho}-\rho^r +\hat{\rho}-\rho^l -\rho^k+\rho^r-\rho^l+\rho^k=\\
& = 2(\hat{\rho}-\rho^l)\leq C_1\,(\rho^l-\rho^k),
\end{split}
\end{equation*}
by the first inequality of Lemma \ref{lemma_disuguaglianze_triangolo}.\\
For the second component, we have
\begin{equation*}
\begin{split}
\Delta TV_{\tilde{t}}(v)&= |v^r-\hat{v}|+|\hat{v}-v^l|-|v^r-v^k|-|v^k-v^l|=\\
& = v^r-\hat{v}+v^l-\hat{v}-v^r+v^k-0=\\
& =2\,(v^k-\hat{v}) \leq C_2\,(\rho^l-\rho^k),
\end{split}
\end{equation*} 
by the third inequality of Lemma \ref{lemma_disuguaglianze_triangolo}.\\
By Lemma \ref{pressure_Lipschitz} and since $v^l=v^k$, there exists a constant $M$ depending only on the invariant domain for which
$$\rho^l-\rho^k\leq M\,(p(\rho^l)-p(\rho^k))=M\,(v^l+p(\rho^l)-v^k-p(\rho^k))=M\,(w^l-w^k).$$
Therefore
$$\Delta TV_{\tilde{t}}(v) \leq C_2\,M\,(w^l-w^k).$$
For the Riemann invariant $w$ we have:
\begin{itemize}
\item $w^r<w^k$ and $w^l=\hat{w}$ respectively by hypothesis and definition;
\item $w^k< w^l$, because $v^k=v^l$ and $\rho^k < \rho^l$.
\end{itemize}
Therefore
\begin{equation*}
\begin{split}
\Delta TV_{\tilde{t}}(w) & = |w^r-\hat{w}|+|\hat{w}-w^l|-|w^r-w^k|-|w^k-w^l|=\\
& = \hat{w}-w^r-w^k+w^r-w^l+w^k=0.
\end{split}
\end{equation*}

\textit{Proof of the claim.}\\
\textit{Claim:} If $v^r+p(\rho^r)<v^k+p(\rho^k)$, $\rho^l \leq \rho^k$ and $\rho^m\,v^m\leq q$, then
$$\rho^r-\rho^m\leq \rho^k-\rho^l.$$
\textit{Proof.} Let us call $(\rho^*,v^*)$ the solution to the system (see Figure \ref{fig_stime_cont_disc_vincolo_soddisfatto})
$$
\begin{cases}
v=v^r,\\
v+p(\rho)=v^k+p(\rho^k).
\end{cases}
$$
Let $w^\alpha$ and $w^\beta$ be two positive constants such that $w^\alpha< w^\beta$ and let $(\rho^\alpha,v^\alpha)$ and $(\rho^\beta,v^\beta)$ be two points such that $v^\alpha+p(\rho^\alpha)=w^\alpha$, $v^\beta+p(\rho^\beta)=w^\beta$ and $v^\alpha=v^\beta=v$ for some $v$. We have
$$\rho^\alpha =p^{-1}(w^\alpha-v) \; \text{ and }\; \rho^\beta = p^{-1}(w^\beta-v).$$
Consider the function
$$v \to \varphi(v)=p^{-1}(w^\beta-v)-p^{-1}(w^\alpha-v).$$
We can write
$$\rho^\beta-\rho^\alpha=\varphi(v^\beta).$$
We have
$$w^\alpha<w^\beta \Longrightarrow w^\alpha -v <w^\beta-v.$$
The function $\rho \to p(\rho)$ is convex, hence
$$p'(p^{-1}(w^\alpha-v))\leq p'(p^{-1}(w^\beta-v)) \Longrightarrow \dfrac{1}{p'(p^{-1}(w^\beta-v))}-\dfrac{1}{p'(p^{-1}(w^\alpha-v))}\leq 0,$$
because the functions $\rho \to p'(\rho)$ and $\gamma \to p^{-1}(\gamma)$ are increasing. Since
$$\dfrac{d}{d v}(p^{-1}(v))=\dfrac{1}{p'(\rho(v))},$$
we find
$$\dfrac{d}{dv}\varphi(v)=-\dfrac{1}{p'(p^{-1}(w^\beta-v))}+\dfrac{1}{p'(p^{-1}(w^\alpha-v))}\leq 0,$$
which means that the function $v\to \varphi(v)$ is decreasing.\\
Taking $w^\alpha=v^l+p(\rho^l)$ and $w^\beta = v^k+p(\rho^k)$, since $v^l= v^k\leq v^r=v^*$, we obtain
$$\varphi(v^l)\geq \varphi(v^*) \Longleftrightarrow \rho^* -\rho^m \leq \rho^k-\rho^l.$$
Since $v^*=v^r$ and $v^*+p(\rho^*)=v^k+p(\rho^k)>v^r+p(\rho^r)$, we have $\rho^*>\rho^r$ and then
$$\rho^*-\rho^m>	\rho^r-\rho^m \Longrightarrow \rho^k-\rho^l > \rho^r-\rho^l.$$
\end{proof}
\subsection{A shock interacts with $x=0$}
Fix $\bar{x}>0$. Let us consider the Riemann problems (\ref{Riemann_problem}) centred in $\bar{x}$ and $x=0$ with initial data 
\begin{equation}\label{stime_interazione_shock_da_destra_dato iniziale}
(\rho,v)(0,x) = \begin{cases}
(\rho^l,v^l) & \text{if } x\leq 0,\\
(\rho^k,v^k) & \text{if } x> 0.
\end{cases}
\; \text{ and } \;
(\rho,v)(0,x)= \begin{cases}
(\rho^k,v^k) & \text{if } x\leq \bar{x},\\
(\rho^r,v^r) & \text{if } x>\bar{x}.
\end{cases}
\end{equation}
Let us assume that the solution given by $\mathcal{RS}^q_2$ to the Riemann problem centred in $x=0$ is a wave propagating with  zero speed and that
\begin{equation}\label{stime_shock_da_destra_condizione_shock}
\rho^k<\rho^r\; \text{ and } \; v^r=L_1(\rho^r,\rho^k,v^k),
\end{equation}
so that the standard solution to the Riemann problem centred in $\bar{x}$ is a shock. Under the hypothesis $\lambda_1(\rho_\text{min},v_\text{min})<0$, the propagation speed of the shock centred in $\bar{x}$ is negative, namely
$$\lambda=\dfrac{\rho^r\,v^r -\rho^k \,v^k}{\rho^r-\rho^k}<0.$$
Hence at time $\tilde{t}= -\bar{x}/\lambda$, the shock reaches the line $x=0$ and we have to solve the new Riemann problem (\ref{Riemann_problem}) with initial datum
$$(\rho,v)(\tilde{t},x)=\begin{cases}
(\rho^l,v^l) &\text{if } x\leq 0,\\
(\rho^r,v^r) &\text{if } x>0.
\end{cases}$$
\subsubsection{Case $(\rho^l,v^l)=(\rho^k,v^k)$}
\begin{prop}\label{prop_shock_from_right_case_1}
Assume $\lambda_1(\rho_\text{min},v_\text{min})<0$. Fix $(\rho^l,v^l)$ and $(\rho^r,v^r)$ in the domain $\mathcal{D}_{v_1,v_2,w_1,w_2}$ such that
$$\rho^l <\rho^r.$$
Assume that a wave joining $(\rho^l,v^l)$ to $(\rho^r,v^r)$ with negative speed interacts with $x=0$ at time $\tilde{t}>0$.\\
Therefore the wave is a classical shock. Moreover
$$\mathcal{RS}^q_2((\rho^l,v^l),(\rho^r,v^r))\left(\dfrac{x}{t-\tilde{t}} \right)=\begin{cases}
(\rho^l,v^l) & \text{if } \frac{x}{t-\tilde{t}}\leq \lambda,\\
(\rho^r,v^r) & \text{if } \frac{x}{t-\tilde{t}}>\lambda,
\end{cases}$$
where $\lambda$ is the speed of the shock given by the Rankine-Hugoniot condition.\\
Finally
$$\Delta TV_{\tilde{t}} (\rho)=\Delta TV_{\tilde{t}} (v)=\Delta TV_{\tilde{t}}(w)=0, \; \text{ and } \; \Delta_{\tilde{t}}\, \mathcal{N}=0.$$
\end{prop}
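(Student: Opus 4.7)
The plan is to proceed in three main steps: first identify that the wave must be a classical first-family shock, then verify that the non-classical shock cannot appear after the interaction, and finally compute the differences in total variation and wave count.

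First I would identify the wave type. Since the wave is assumed to have negative propagation speed and since contact discontinuities of the second family travel at speed $v>0$ for points in $\mathcal{D}_{v_1,v_2,w_1,w_2}$, the wave must belong to the first family. By Proposition \ref{prop_waves_conditions}, a first-family wave joining $(\rho^l,v^l)$ to $(\rho^r,v^r)$ with $\rho^l<\rho^r$ must be a shock, satisfying $v^r+p(\rho^r)=v^l+p(\rho^l)$; in particular $w^l=w^r$. Its speed $\lambda$ is given by the Rankine-Hugoniot condition, and the Lax-entropy condition combined with Lemma \ref{lemma_curve_decrescenti} guarantees $\lambda<0$.

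The key step is verifying that after the interaction the classical solution already satisfies the constraint, so that $\mathcal{RS}^q_2$ coincides with $\mathcal{RS}$. Just before $\tilde t$ the state at $x=0^-=0^+$ is $(\rho^l,v^l)$, which therefore must satisfy $\rho^l v^l\leq q$ (otherwise the non-classical shock would already have been in place). Since $(\rho^l,v^l)\in\mathcal{D}_{v_1,v_2,w_1,w_2}$, Lemma \ref{lemma_curve_decrescenti} gives $\lambda_1(\rho^l,v^l)<0$, and then Lemma \ref{lemma_concave_functions_properties}(ii) forces $\rho^l\geq\hat{\rho}$ (when $\hat\rho,\check\rho_1$ are defined on the Lax curve through $(\rho^l,v^l)$; otherwise the whole curve lies below $\rho v=q$ and the next inclusion is automatic). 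Since $(\rho^r,v^r)$ lies on the same Lax curve $L_1(\cdot,\rho^l,v^l)$ with $\rho^r>\rho^l\geq\hat\rho$, Lemma \ref{lemma_concave_functions_properties}(i) yields $\rho^r v^r\leq q$. Because $\lambda<0$, the trace $\mathcal{RS}((\rho^l,v^l),(\rho^r,v^r))(0)$ equals $(\rho^r,v^r)$, so the constraint is fulfilled and the full Riemann solution given by $\mathcal{RS}^q_2$ is simply the shock translated through $x=0$.

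Finally I would read off the bookkeeping. Before $\tilde t$ the profile around $x=0$ consists of $(\rho^l,v^l)$ on the left and the unchanged state $(\rho^l,v^l)$ to the immediate right of $x=0$, with the shock producing a single jump to $(\rho^r,v^r)$ at $x=\bar x+\lambda t$; after $\tilde t$ there is still exactly one jump, from $(\rho^l,v^l)$ to $(\rho^r,v^r)$. Hence $\Delta_{\tilde t}\mathcal{N}=0$, and since the shock preserves the Riemann invariant $w$ and produces the same jumps $|\rho^r-\rho^l|$, $|v^r-v^l|$, $|w^r-w^l|=0$ before and after, one obtains $\Delta TV_{\tilde t}(\rho)=\Delta TV_{\tilde t}(v)=\Delta TV_{\tilde t}(w)=0$. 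The main obstacle of the argument is the second step, where the geometric interplay between the invariant domain, the sign of $\lambda_1$, and the concavity of $\rho\mapsto\rho L_1(\rho,\rho^l,v^l)$ (encoded in Lemmas \ref{lemma_curve_decrescenti} and \ref{lemma_concave_functions_properties}) must be used carefully to rule out the non-classical shock.
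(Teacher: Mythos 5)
Your proof is correct and proceeds in the same overall structure as the paper: identify the wave as a classical first-family shock, observe $\rho^l v^l\leq q$ from the fact that the state at $x=0$ before $\tilde t$ is constant, and deduce that $(\rho^r,v^r)$ still satisfies the constraint, so $\mathcal{RS}^q_2$ coincides with $\mathcal{RS}$ after the interaction. The only real difference is at the key step. The paper avoids $\hat\rho$ altogether: by Lemma~\ref{lemma_curve_decrescenti} the map $\rho\mapsto\rho\,L_1(\rho,\rho^l,v^l)$ is strictly decreasing in $\mathcal{D}_{v_1,v_2,w_1,w_2}$, so $\rho^r>\rho^l$ directly gives $\rho^r v^r<\rho^l v^l\leq q$, with no case analysis. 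Your route through Lemma~\ref{lemma_concave_functions_properties} (first $\rho^l v^l\leq q$ and $\lambda_1(\rho^l,v^l)<0$ force $\rho^l\geq\hat\rho$, then concavity gives $\rho^r v^r\leq q$) reaches the same conclusion but is longer and needs the parenthetical case where $\hat\rho$, $\check\rho_1$ are not defined on that Lax curve; both ultimately rest on the same geometric fact encoded in the hypothesis $\lambda_1(\rho_{\min},v_{\min})<0$, so the monotonicity argument buys you one fewer step and no case split.
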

\begin{proof}
Since the solution $\mathcal{RS}^q_2((\rho^l,v^l),(\rho^k,v^k))$ is classical (constant), we must have $\rho^l\,v^l\leq q$, otherwise the non-classical shock would appear. Since
$$\rho^l=\rho^k<\rho^r, \;\;\; v^r=L_1(\rho^r,\rho^l,v^l)$$
and the function $\rho \to \rho\, L_1(\rho,\rho^l,v^l)$ is strictly decreasing in the invariant domain, we have
$$\rho^r\,v^r =\rho^r \, L_1(\rho^r,\rho^l,v^l) \leq \rho^l\,v^l <q,$$
which means that the constraint is satisfied also by $(\rho^r,v^r)$.\\
The classical solution to the Riemann problem at time $\tilde{t}$ is a shock centred in $(\tilde{t},0)$ and with propagation speed $\lambda$. Therefore the classical solution in $x=0$ for $t>\tilde{t}$ is $(\rho^r,v^r)$ and consequently the solution given by $\mathcal{RS}^q_2$ is classical. Hence the number of waves and the total variation before and after the interaction does not change, i.e.
$$\Delta TV_{\tilde{t}}(\rho)=0, \; \; \; \Delta TV_{\tilde{t}}(v)=0, \; \; \; \Delta TV_{\tilde{t}}(w)=0 \; \text{ and } \; \Delta_{\tilde{t}}\, \mathcal{N}=0.$$
\end{proof}
\subsubsection{Case $(\rho^l,v^l)\neq (\rho^k,v^k)$}
We have to distinguish two cases:
\begin{itemize}
\item the wave centred in $x=0$ is a classical shock;
\item the wave centred in $x=0$ is a non-classical shock.
\end{itemize}
The case of a classical shock is not possible by Proposition \ref{prop_classical_shock_not_possible} applied to the Riemann problem (\ref{Riemann_problem}) with initial datum
$$(\rho,v)(0,x)=
\begin{cases}
(\rho^l,v^l) & \text{if } x\leq 0,\\
(\rho^k,v^k) & \text{if } x>0.
\end{cases}
$$
By Proposition \ref{when_non_classical_shock}, the non-classical shock appears when $\rho^k<\rho^l$ and $\rho^l\,v^l=\rho^k\,v^k=q$. In this case we have
\begin{equation}\label{stime_interazione_shock_non_classico}
(\rho^l,v^l)=(\hat{\rho},\hat{v}) \; \text{ and } \; (\rho^k,v^k)=(\check{\rho}_2,\check{v}_2).
\end{equation}
\begin{prop}\label{prop_shock_from_right_case_2}
Assume $\lambda_1(\rho_\text{min},v_\text{min})<0$. Fix $(\rho^l,v^l)$, $(\rho^k,v^k)$ and $(\rho^r,v^r)$ in the domain $\mathcal{D}_{v_1,v_2,w_1,w_2}$ such that
$$v^k+p(\rho^k)<v^l+p(\rho^l), \; \; \; \rho^l\,v^l=\rho^k\,v^k=q \; \text{ and } \; \rho^k<\rho^r.$$
Assume that a wave joining $(\rho^k,v^k)$ to $(\rho^r,v^r)$ with negative propagation speed interacts with $x=0$ at time $\tilde{t}>0$. Therefore the wave with negative speed is a shock. Furthermore, let us call $(\rho^m,v^m)$ the middle state of the classical solution to the Riemann problem with initial datum
$$
(\rho,v)(\tilde{t},x)=\begin{cases}
(\rho^l,v^l) & \text{if } x\leq 0,\\
(\rho^r,v^r) & \text{if } x>0.
\end{cases}
$$
\begin{itemize}
\item[(i)] If $\rho^m\, v^m \leq q$, then
\begin{equation*}
\mathcal{RS}^q_2((\rho^l,v^l),(\rho^r,v^r))\left(\frac{x}{t-\tilde{t}}\right) = \begin{cases}
(\rho^l,v^l) & \text{if } \; \frac{x}{t-\tilde{t}}\leq \lambda,\\
(\rho^m,v^m) & \text{if } \; \lambda < \frac{x}{t-\tilde{t}}\leq v^r,\\
(\rho^r,v^r) & \text{if } \; \frac{x}{t-\tilde{t}}>v^r,
\end{cases}
\end{equation*}
where by the Rankine-Hugoniot condition
$$\lambda=\dfrac{\rho^m\,v^m-\rho^l\,v^l}{\rho^m-\rho^l}$$
is the propagation speed of the shock joining $(\rho^l,v^l)$ to $(\rho^m,v^m)$.\\
Moreover
$$\Delta TV_{\tilde{t}}(\rho) <0,\; \; \; \Delta TV_{\tilde{t}}(v) <0 \; \text{ and } \; \Delta_{\tilde{t}}\, \mathcal{N}=1.$$
For the Riemann invariant $w$, we have
$$\Delta TV_{\tilde{t}}(w)=0.$$
\item[(ii)] If $\rho^m\, v^m>q$, then
\begin{equation*}
\mathcal{RS}^q_2((\rho^l,v^l),(\rho^r,v^r))\left(\frac{x}{t-\tilde{t}}\right) = \begin{cases}
(\rho^l,v^l) & \text{if } \; \frac{x}{t-\tilde{t}}\leq 0,\\
(\check{\rho}_2,\check{v}_2) & \text{if } \; 0 < \frac{x}{t-\tilde{t}}\leq v^r,\\
(\rho^r,v^r) & \text{if } \; \frac{x}{t-\tilde{t}}>v^r.
\end{cases}
\end{equation*}
Moreover
$$\Delta TV_{\tilde{t}}(\rho) <0,\;\;\; \Delta TV_{\tilde{t}}(v)<0 \; \text{ and } \; \Delta_{\tilde{t}} \, \mathcal{N} = 0.$$
For the Riemann invariant $w$, we have
$$\Delta TV_{\tilde{t}}(w)=0.$$
\end{itemize}
\end{prop}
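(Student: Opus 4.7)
The plan is to first reconstruct the geometry of the configuration just before the interaction. Since the wave at $x=0$ has zero speed, Proposition \ref{prop_classical_shock_not_possible} rules out a classical shock, so by Proposition \ref{when_non_classical_shock} we must have $(\rho^l,v^l)=(\hat{\rho},\hat{v})$ and $(\rho^k,v^k)=(\check{\rho}_2,\check{v}_2)$ with respect to the common Lax curve of the first family through $(\rho^l,v^l)$, forcing in particular $\rho^k<\rho^l$ and $v^k>v^l$. The wave between $(\rho^k,v^k)$ and $(\rho^r,v^r)$ is then classical: the conditions $\rho^k<\rho^r$ and $v^r=L_1(\rho^r,\rho^k,v^k)$ give a shock by Proposition \ref{prop_waves_conditions}, and its propagation speed is negative by Lemma \ref{lemma_curve_decrescenti} applied to the strictly concave function $\rho\to\rho\,L_1(\rho,\rho^k,v^k)$.

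Next, I would set up the new Riemann problem at time $\tilde{t}$ with left state $(\rho^l,v^l)$ and right state $(\rho^r,v^r)$ and characterise the middle state $(\rho^m,v^m)$ by $v^m=v^r$ and $v^m+p(\rho^m)=v^l+p(\rho^l)$. Using $v^r+p(\rho^r)=v^k+p(\rho^k)<v^l+p(\rho^l)$ combined with $v^m=v^r$, one gets $p(\rho^m)>p(\rho^r)$, hence $\rho^m>\rho^r$.

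In case (i), $\rho^m v^m\leq q$. I would first rule out $\rho^m\leq\check{\rho}_1$: by Lemma \ref{lemma_concave_functions_properties}(ii) this would force $\lambda_1(\rho^m,v^m)>0$, contradicting Lemma \ref{lemma_curve_decrescenti} on the invariant domain. Hence $\rho^m\geq\hat{\rho}=\rho^l$, and the first-family wave is a classical shock (strictly, in the non-degenerate case) whose Rankine–Hugoniot speed $\lambda=(\rho^m v^m-\rho^l v^l)/(\rho^m-\rho^l)$ is non-positive, while the contact discontinuity propagates at $v^r>0$; consequently the classical solution evaluated at $x=0$ equals $(\rho^m,v^m)$, which satisfies the constraint, so $\mathcal{RS}^q_2$ coincides with $\mathcal{RS}$. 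For the total variations, I would use the ordering $\rho^k<\rho^r<\rho^m$ and $\rho^k<\rho^l\leq\rho^m$ and invoke Lemma \ref{lemma_disuguaglianze_triangolo} (with $(\rho^\alpha,v^\alpha)=(\rho^k,v^k)$, $(\rho^\beta,v^\beta)=(\rho^l,v^l)$ and $(\rho^\gamma,v^\gamma)$ chosen appropriately on the constraint line) to telescope the absolute values in $\Delta TV_{\tilde{t}}(\rho)$ and $\Delta TV_{\tilde{t}}(v)$ and bound them by a negative multiple of $\rho^l-\rho^k$. For the Riemann invariant $w$, the identities $w^m=w^l$ and $w^r=w^k$ (both from the first-family Lax-curve structure) collapse all the absolute values and yield $\Delta TV_{\tilde{t}}(w)=0$.

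In case (ii), $\rho^m v^m>q$, the constraint is enforced: $(\rho^l,v^l)=(\hat{\rho},\hat{v})$ is preserved, while the new $(\check{\rho}_2,\check{v}_2)$ is determined by $\check{v}_2=v^r$ and $\check{\rho}_2\,v^r=q$, and the solution consists of the non-classical shock at $x=0$ followed by a contact discontinuity of speed $v^r$. Writing $\rho^k=\check{\rho}_2^{\text{old}}$ satisfying $\rho^k v^k=q=\check{\rho}_2 v^r$, Lemma \ref{lemma_riemann_invariant_property} together with $w^r=w^k<w^l=\hat{w}$ gives $\rho^k<\check{\rho}_2<\hat{\rho}=\rho^l$, from which straightforward algebraic manipulation yields $\Delta TV_{\tilde{t}}(\rho)=2(\rho^k-\check{\rho}_2)<0$ and $\Delta TV_{\tilde{t}}(v)=2(v^k-\check{v}_2)<0$, while $\Delta TV_{\tilde{t}}(w)=0$ because $w^l=\hat{w}=\check{w}_2^{\text{new}}=w^r$ up to the usual cancellations. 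The main obstacle I anticipate is book-keeping: verifying the correct ordering of the five densities $\rho^k,\rho^r,\check{\rho}_2,\rho^m,\rho^l=\hat{\rho}$ in each subcase, and selecting the right reference triangle for Lemma \ref{lemma_disuguaglianze_triangolo} so that the bounds come out with the stated signs and with strict inequality in the TV variations.
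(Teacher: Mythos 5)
Your overall architecture — ruling out a classical shock at $x=0$ via Proposition \ref{prop_classical_shock_not_possible}, identifying $(\rho^l,v^l)=(\hat\rho,\hat v)$ and $(\rho^k,v^k)=(\check\rho_2,\check v_2)$ via Proposition \ref{when_non_classical_shock}, establishing $\rho^k<\rho^l$ (hence $v^k>v^l$), classifying the incoming wave as a classical shock with negative speed, and reading off the new Riemann problem at $\tilde t$ — matches the paper's proof. But there are two concrete gaps.

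\textbf{Case (ii): the final TV expressions are wrong.} You write $\Delta TV_{\tilde t}(\rho)=2(\rho^k-\check\rho_2)$ and $\Delta TV_{\tilde t}(v)=2(v^k-\check v_2)$. These are obtained by tacitly identifying $(\rho^r,v^r)$ with $(\check\rho_2,\check v_2)$. That identification is an honest hypothesis in Proposition \ref{prop_contact_discontinuity_from_left_case_2} (where $\rho^r v^r=q$ is assumed), but it fails here: since $\rho^r>\rho^k$ and $\rho\mapsto\rho\,L_1(\rho,\rho^k,v^k)$ is strictly decreasing across the invariant domain (Lemma \ref{lemma_curve_decrescenti}), one has $\rho^r v^r<\rho^k v^k=q$ strictly, hence $\rho^r<q/v^r=\check\rho_2$. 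The correct ordering of the five densities is $\rho^k<\rho^r<\check\rho_2<\rho^m<\rho^l$, and the telescoped sums give $\Delta TV_{\tilde t}(\rho)=2(\rho^k-\rho^r)$ and $\Delta TV_{\tilde t}(v)=2(v^r-v^k)$; the hypothesis $\rho^k<\rho^r$ and the consequent $v^r<v^k$ then deliver both negative signs. Independently of the conflation, the expression $2(v^k-\check v_2)$ in your draft is \emph{positive} (on the constraint line $\rho v=q$, $\rho^k<\check\rho_2$ forces $v^k>\check v_2$), so as written the sign claim is self-contradictory.

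\textbf{Case (i): the sign of $\Delta TV_{\tilde t}(\rho)$ needs a separate argument.} After telescoping with $\rho^k<\rho^r<\rho^m$ and $\rho^k<\rho^l\le\rho^m$, the negativity of $\Delta TV_{\tilde t}(\rho)=2(\rho^m-\rho^r+\rho^k-\rho^l)$ reduces to $\rho^m-\rho^r<\rho^l-\rho^k$. You propose to extract this from Lemma \ref{lemma_disuguaglianze_triangolo} by picking a suitable reference triangle, but that lemma relates two points on $\rho v=q$ to a third point sharing the velocity of one and the Riemann invariant $w$ of the other. The pair $(\rho^m,\rho^r)$ lies on two distinct first-family Lax curves at the same (sub-constraint) flux level, not on $\rho v=q$, so no choice of triangle in Lemma \ref{lemma_disuguaglianze_triangolo} subsumes them. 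The paper handles this by a dedicated monotonicity argument: it defines $(\rho^*,v^*)$ as the intersection of $\rho v=\rho^r v^r$ with the Lax curve through $(\rho^m,v^m)$, shows the gap $\pi\mapsto\rho^\alpha(\pi)-\rho^\beta(\pi)$ between the two Lax curves at a given flux level $\pi$ is strictly increasing in $\pi$, and concludes $\rho^m-\rho^r<\rho^*-\rho^r\le\rho^l-\rho^k$. You would need to supply an argument of this type; the stated lemma does not do it for you.
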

\begin{proof}
The hypothesis $v^k+p(\rho^k)<v^l+p(\rho^l)$ implies that $\rho^k<\rho^l$ by Lemma \ref{lemma_riemann_invariant_property}.\\
After the interaction the solution $\mathcal{RS}^q_2((\rho^l,v^l),(\rho^r,v^r))$ can be classical or non-classical.\\
We have to distinguish two cases.\\
\textit{Case (i):} $\rho^m\,v^m \leq q$; see Figure \ref{fig_shock_da_destra_vincolo_soddisfatto}.\\
\begin{figure}[hbtp]
\centering
\begin{subfigure}[h]{0.48\linewidth}
\definecolor{xdxdff}{rgb}{0.49019607843137253,0.49019607843137253,1.}
\definecolor{uququq}{rgb}{0.25098039215686274,0.25098039215686274,0.25098039215686274}
\begin{tikzpicture}[scale=0.35,line cap=round,line join=round,>=triangle 45,x=1.0cm,y=1.0cm]
\draw[->,color=black] (-1.,0.) -- (16.,0.);
\draw[->,color=black] (0.,-0.5) -- (0.,9.);
\clip(-1.5,-1) rectangle (16.,9.);
\draw [domain=-1.:16.] plot(\x,{(--6.-0.*\x)/1.});
\draw (12.331,4.16)-- (9.32,4.16);
\draw[smooth,samples=50,domain=0.001:16.0] plot(\x,{3.85*(\x)-(\x)^(1.5)});
\draw [dash pattern=on 2pt off 2pt] (11.58,5.17)-- (11.58,4.16);
\draw [domain=0.0:16.0] plot(\x,{(-0.--5.17*\x)/11.58});
\draw[smooth,samples=50,domain=0.001:16.0] plot(\x,{3.5*(\x)-(\x)^(1.5)});
\draw [domain=0.0:16.0] plot(\x,{(-0.--6*\x)/6.95});
\begin{scriptsize}
\draw (-0.8,6.) node[anchor=north west] {$q$};
\draw (-1.5,8.) node[anchor=north west] {$\rho v$};
\draw (14,0.1) node[anchor=north west] {$\rho$};
\draw (10.4,7.4) node[anchor=north west] {$(\rho^l,v^l)$};
\draw (6.2,7.4) node[anchor=north west] {$(\rho^k,v^k)$};
\draw (6.,4.90) node[anchor=north west] {$(\rho^r,v^r)$};
\draw (11.5,5.8) node[anchor=north west] {$(\rho^m,v^m)$};
\draw (12.2,4.66) node[anchor=north west] {$(\rho^*,v^*)$};
\end{scriptsize}
\begin{scriptsize}
\draw [fill=uququq] (10.88,6) circle (5pt);
\draw [fill=xdxdff] (11.58,5.17) circle (5pt);
\draw [fill=uququq] (9.32,4.16) circle (5pt);
\draw [fill=uququq] (6.95,6) circle (5pt);
\draw [fill=uququq] (12.33,4.16) circle (5pt);
\draw [fill=uququq] (11.58,4.16) circle (5pt);
\end{scriptsize}
\end{tikzpicture}
\caption{Notations used in the case $\rho^m\,v^m \leq q$. }\label{fig_shock_da_destra_vincolo_soddisfatto_a}
\end{subfigure}
\quad
\begin{subfigure}[h]{0.48\linewidth}
\definecolor{fftttt}{rgb}{1.,0.2,0.2}
\definecolor{ttttff}{rgb}{0.2,0.2,1.}
\begin{tikzpicture}[scale=0.7,line cap=round,line join=round,>=triangle 45,x=1.0cm,y=1.0cm]
\draw[->,color=black] (-3.5,0.) -- (4.,0.);
\draw[->,color=black] (0.,-0.5) -- (0.,5.5);
\clip(-3.5,-0.5) rectangle (4.,5.5);
\draw [dash pattern=on 3pt off 3pt,domain=-3.5:4.] plot(\x,{(--2.-0.*\x)/1.});
\draw [line width=1.6pt,color=ttttff] (3.,0.)-- (0.,2.);
\draw [line width=1.6pt,color=fftttt,domain=-3.5:0.0] plot(\x,{(-7.32--5.28*\x)/-3.66});
\draw [line width=1.6pt,color=fftttt,domain=0.0:4.0] plot(\x,{(--6.88--4.3*\x)/3.44});
\draw [line width=1.6pt,color=ttttff] (0.,2.)-- (0.,0.);
\begin{scriptsize}
\draw (3.4,2.) node[anchor=north west] {$\tilde{t}$};
\draw (0.05,1) node[anchor=north west] {$(\rho^k,v^k)$};
\draw (-1.58,1.) node[anchor=north west] {$(\rho^l,v^l)$};
\draw (2.3,4.6) node[anchor=north west] {$(\rho^r,v^r)$};
\draw (2.3,1.) node[anchor=north west] {$(\rho^r,v^r)$};
\draw (-0.9,4.6) node[anchor=north west] {$(\rho^m,v^m)$};
\draw (-0.4,5.2) node[anchor=north west] {$t$};
\draw (3.4,0.) node[anchor=north west] {$x$};
\draw (2.72,0.) node[anchor=north west] {$\bar{x}$};
\draw (0.05,0.) node[anchor=north west] {$0$};
\draw (-3.25,4.6) node[anchor=north west] {$(\rho^l,v^l)$};
\end{scriptsize}
\end{tikzpicture}
\caption{The solution after the interaction is classical.}\label{fig_shock_da_destra_vincolo_soddisfatto_b}
\end{subfigure}
\caption{Interaction between a non-classical shock centred in $x=0$ and a classical shock centred in $x=\bar{x}$. In the represented case, after the interaction the constraint is satisfied.}\label{fig_shock_da_destra_vincolo_soddisfatto}
\end{figure}
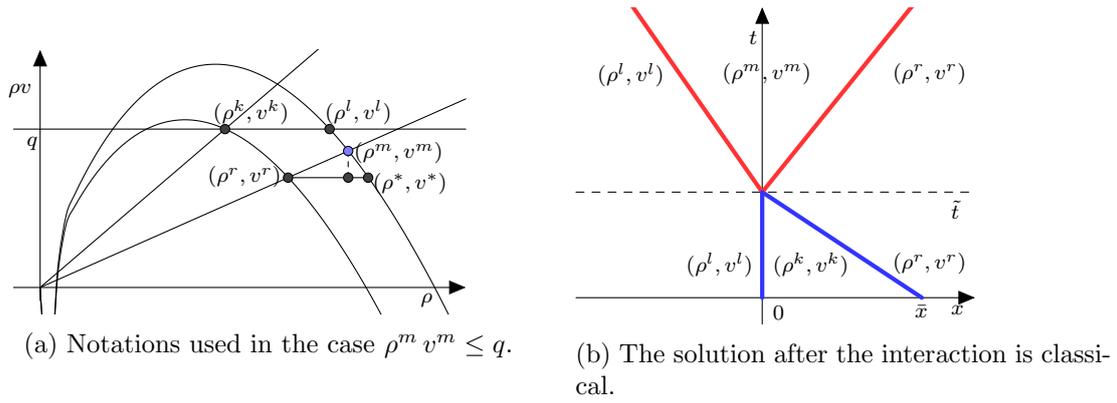

The solution $RS^q_2((\rho^l,v^l),(\rho^r,v^r))$ is a shock joining $(\rho^l,v^l)$ to $(\rho^m,v^m)$ followed by a contact discontinuity connecting $(\rho^m,v^m)$ to $(\rho^r,v^r)$. Since the shock and the contact discontinuity have respectively negative and positive propagation speed, we have
$$\mathcal{RS}((\rho^l,v^l),(\rho^r,v^r))(0)= (\rho^m,v^m)$$
and hence the solution given by $\mathcal{RS}^q_2$ after the interaction is classical.\\
We find:
\begin{itemize}
\item $\rho^m>\rho^r$, because the conditions $v^m=v^r$ and
$$v^m+p(\rho^m)=v^l+p(\rho^l)>v^k+p(\rho^k) =v^r+p(\rho^r),$$
imply $\rho^m>\rho^r$.
\item $\rho^m\geq\rho^l$, applying Lemma \ref{lemma_concave_functions_properties} to $(\rho^l,v^l)$ and $(\rho^m,v^m)$ (which satisfy $\rho^m\,v^m \leq q=\rho^l\,v^l$).
\end{itemize}
Hence
\begin{equation*}
\begin{split}
\Delta TV_{\tilde{t}} (\rho) &=|\rho^r-\rho^m|+|\rho^m-\rho^l|-|\rho^r-\rho^k|-|\rho^k-\rho^l|=\\
&= \rho^m-\rho^r+\rho^m-\rho^l-\rho^r+\rho^k-\rho^l+\rho^k=\\
&= 2\, (\rho^m-\rho^r+\rho^k-\rho^l).
\end{split}
\end{equation*}
We \textit{claim} that
$$\rho^m-\rho^r<\rho^l-\rho^k.$$
We postpone the proof of this claim.\\
Therefore $\Delta TV_{\tilde{t}}(\rho) < 0$.\\
Since $v^r=v^m$, for the second component we have
\begin{equation*}
\begin{split}
\Delta TV_{\tilde{t}}(v) & = |v^r-v^m|+|v^m-v^l|-|v^r-v^k|-|v^k-v^l|=\\
& = 0+v^l-v^m-v^k+v^r-v^k+v^l=\\
& = 2\,(v^l-v^k)< 0,
\end{split}
\end{equation*}
because the conditions $\rho^l\,v^l=\rho^k\,v^k=q$ and $\rho^l>\rho^k$ imply $v^l<v^k$.\\
For the Riemann invariant $w$, since $w^m=w^l$ and $w^r=w^k$, we find
\begin{equation*}
\begin{split}
\Delta TV_{\tilde{t}}(w)& =|w^r-w^m|+|w^m-w^l|-|w^r-w^k|-|w^k-w^l|=\\
& = |w^r-w^m|-|w^k-w^l|=0.
\end{split}
\end{equation*}
\textit{Case (ii):} $\rho^m\,v^m >q$; see Figure \ref{fig_shock_da_destra_vincolo_non_soddisfatto}.\\
\begin{figure}[hbtp]
\centering
\begin{subfigure}[h]{0.48\linewidth}
\definecolor{xdxdff}{rgb}{0.49019607843137253,0.49019607843137253,1.}
\definecolor{uququq}{rgb}{0.25098039215686274,0.25098039215686274,0.25098039215686274}
\begin{tikzpicture}[scale=0.35,line cap=round,line join=round,>=triangle 45,x=1.0cm,y=1.0cm]
\draw[->,color=black] (-1.,0.) -- (16.,0.);
\draw[->,color=black] (0.,-0.5) -- (0.,9.);
\clip(-1.5,-1) rectangle (16.,10.);
\draw [domain=-1.:16.] plot(\x,{(--6.-0.*\x)/1.});
\draw[smooth,samples=50,domain=0.001:16.0] plot(\x,{3.85*(\x)-(\x)^(1.5)});
\draw [domain=0.0:16.0] plot(\x,{(-0.--6.98*\x)/9.87});
\draw[smooth,samples=50,domain=0.001:16.0] plot(\x,{3.5*(\x)-(\x)^(1.5)});
\draw [domain=0.0:16.0] plot(\x,{(-0.--5.99*\x)/6.95});
\begin{scriptsize}
\draw (-0.8,6.) node[anchor=north west] {$q$};
\draw (-1.5,8.3) node[anchor=north west] {$\rho v$};
\draw (14,0.1) node[anchor=north west] {$\rho$};
\draw (10.7,7.2) node[anchor=north west] {$(\rho^l,v^l)$};
\draw (4.5,7.5) node[anchor=north west] {$(\rho^k,v^k)$};
\draw (5.5,5.5) node[anchor=north west] {$(\rho^r,v^r)$};
\draw (9.3,8.4) node[anchor=north west] {$(\rho^m,v^m)$};
\draw (7.9,6.) node[anchor=north west] {$(\check{\rho}_2,\check{v}_2)$};
\end{scriptsize}
\begin{scriptsize}
\draw [fill=uququq] (10.88,5.99) circle (5pt);
\draw [fill=xdxdff] (9.87,6.98) circle (5pt);
\draw [fill=uququq] (7.80,5.51) circle (5pt);
\draw [fill=uququq] (6.95,5.99) circle (5pt);
\draw [fill=uququq] (8.48,6.) circle (5pt);
\end{scriptsize}
\end{tikzpicture}
\caption{Notations used in the case $\rho^m v^m >q$.}
\end{subfigure}
\quad
\begin{subfigure}[h]{0.48\linewidth}
\definecolor{fftttt}{rgb}{1.,0.2,0.2}
\definecolor{qqqqff}{rgb}{0.,0.,1.}
\definecolor{ttttff}{rgb}{0.2,0.2,1.}
\begin{tikzpicture}[scale=0.7,line cap=round,line join=round,>=triangle 45,x=1.0cm,y=1.0cm]
\draw[->,color=black] (-4.,0.) -- (4.,0.);
\draw[->,color=black] (0.,-0.5) -- (0.,5.5);
\clip(-4.,-0.5) rectangle (4.,5.5);
\draw [dash pattern=on 3pt off 3pt,domain=-4.:4.] plot(\x,{(--2.-0.*\x)/1.});
\draw [line width=1.6pt,color=ttttff] (3.,0.)-- (0.,2.);
\draw [line width=1.6pt,color=fftttt] (0.,2.) -- (0.,4.8);
\draw [line width=1.6pt,color=fftttt,domain=0.0:4.0] plot(\x,{(--6.88--4.3*\x)/3.44});
\draw [line width=1.6pt,color=ttttff] (0.,2.)-- (0.,0.);
\begin{scriptsize}
\draw (-0.5,2.) node[anchor=north west] {$\tilde{t}$};
\draw (0.05,1.) node[anchor=north west] {$(\rho^k,v^k)$};
\draw (-1.76,1.) node[anchor=north west] {$(\rho^l,v^l)$};
\draw (2.4,4.49) node[anchor=north west] {$(\rho^r,v^r)$};
\draw (2.4,1.) node[anchor=north west] {$(\rho^r,v^r)$};
\draw (0.,4.54) node[anchor=north west] {$(\check{\rho}_2,\check{v}_2)$};
\draw (-0.4,5.3) node[anchor=north west] {$t$};
\draw (3.4,-0.1) node[anchor=north west] {$x$};
\draw (2.6,0.) node[anchor=north west] {$\bar{x}$};
\draw (0.05,0.) node[anchor=north west] {$0$};
\draw (-3.37,4.52) node[anchor=north west] {$(\rho^l,v^l)=(\hat{\rho},\hat{v})$};
\end{scriptsize}
\begin{scriptsize}
\draw [fill=qqqqff] (0.,7.28) circle (1.5pt);
\draw[color=qqqqff] (-5.60,6.69) node {$C$};
\end{scriptsize}
\end{tikzpicture}
\caption{After the interaction the non-classical shock appears.}
\end{subfigure}
\caption{Interaction between a non-classical shock centred in $x=0$ and a classical shock centred in $x=\bar{x}$. In the represented case, after the interaction the constraint is not satisfied.}\label{fig_shock_da_destra_vincolo_non_soddisfatto}
\end{figure}
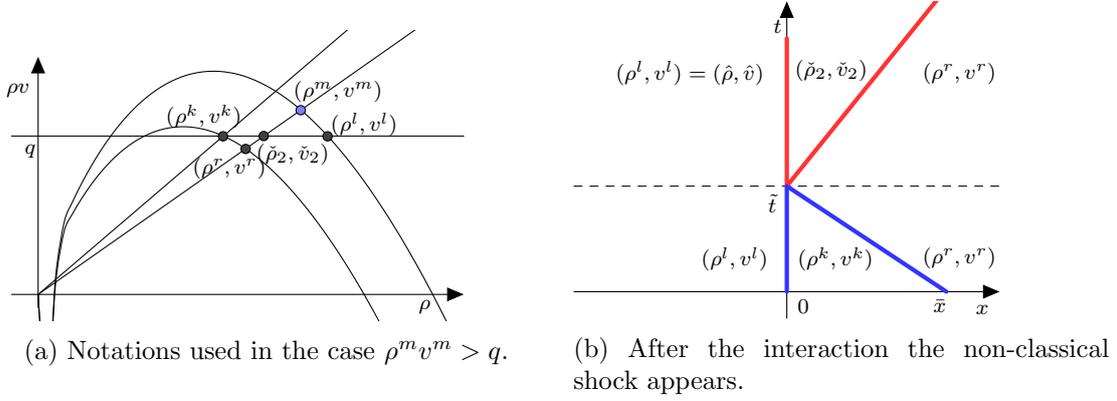

If the constraint is not satisfied the non-classical shock appears. Since $\rho^l\,v^l=q$, we find $(\rho^l,v^l)=(\hat{\rho},\hat{v})$.\\
We have:
\begin{itemize}
\item $\rho^r<\check{\rho}_2$, because $v^r=\check{v}_2$ and $\rho^r\,v^r<q=\check{\rho}_2\,\check{v}_2$;
\item $\rho^l>\check{\rho}_2$, indeed: applying Lemma \ref{lemma_concave_functions_properties} to $(\rho^m,v^m)$ (which satisfies $\rho^m\,v^m >q=\rho^l\,v^l$), we find $\rho^m < \rho^l$ and since $\check{v}_2=v^m$ and $\rho^m\,v^m >q=\check{\rho}_2\,\check{v}_2$, we have $\rho^m >\check{\rho}_2$.
\end{itemize}
Hence
\begin{equation*}
\begin{split}
\Delta TV_{\tilde{t}}(\rho) & = |\rho^r-\check{\rho}_2|+|\check{\rho}_2-\rho^l|-|\rho^r-\rho^k|-|\rho^k-\rho^l|=\\
& = \check{\rho}_2-\rho^r+\rho^l-\check{\rho}_2-\rho^r+\rho^k-\rho^l+\rho^k=\\
& = 2\, (\rho^k-\rho^r)< 0,
\end{split}
\end{equation*}
because by the hypothesis we have $\rho^r>\rho^k$.\\
For the second component, we find
\begin{equation*}
\begin{split}
\Delta TV_{\tilde{t}}(v)& =|v^r-\check{v}_2|+|\check{v}_2-v^l|-|v^r-v^k|-|v^k-v^l|=\\
&= 0+\check{v}_2-v^l-v^k+v^r-v^k+v^l=\\
& = 2\,(v^r-v^k)<0,
\end{split}
\end{equation*}
because the conditions $v^r+p(\rho^r)=v^k+p(\rho^k)$ and $\rho^r>\rho^k$ imply $\check{v}_2=v^r<v^k$.\\
For the Riemann invariant $w$, we have:
\begin{itemize}
\item $w^k<w^l$ by hypothesis;
\item $w^l>\check{w}_2$, because $\rho^l>\check{\rho}_2$ and by Lemma \ref{lemma_riemann_invariant_property};
\item $w^r<\check{w}_2$, because $\rho^r<\check{\rho}_2$ and $v^r=\check{v}_2$.
\end{itemize}
Hence we obtain:
\begin{equation*}
\begin{split}
\Delta TV_{\tilde{t}}(w) & =|w^r-\check{w}_2|+|\check{w}_2-w^l|-|w^r-w^k|-|w^k-w^l|=\\
& = \check{w}_2-w^r+w^l-\check{w}_2-w^l+w^k=0.
\end{split}
\end{equation*}

\textit{Proof of the claim.}\\
\textit{Claim:} If $v^k+p(\rho^k)<v^l+p(\rho^l)$, $\rho^r<\rho^m$ and $\rho^m>\rho^l$, then
$$\rho^m-\rho^r<\rho^l-\rho^k.$$
\textit{Proof.} Let us call $(\rho^*,v^*)\in \mathcal{D}_{v_1,v_2,w_1,w_2}$ the solution to the system (see Figure \ref{fig_shock_da_destra_vincolo_soddisfatto})
$$
\begin{cases}
\rho v= \rho^r\,v^r,\\
v+p(\rho)=v^m+p(\rho^m).
\end{cases}
$$
Fix $\pi\in \mathbb{R}^+$ and consider two points $(\rho^\alpha,v^\alpha)$ and $(\rho^\beta,v^\beta)$ such that
$$\rho^\alpha\,v^\alpha =\rho^\beta\, v^\beta = \pi, \; \; v^\alpha=L_1(\rho^\alpha,\rho^l,v^l) \; \text{ and } \; v^\beta= L_1(\rho^\beta,\rho^k,v^k).$$
Since $v^l+p(\rho^l)>v^k+p(\rho^k)$, by Lemma \ref{lemma_riemann_invariant_property} we obtain $\rho^\beta<\rho^\alpha$. Moreover $v^\beta >v^\alpha$, because $\rho^\alpha\,v^\alpha = \rho^\beta\, v^\beta =\pi$.\\
Consider the function
$$\pi \to d_\pi(\rho^\alpha,\rho^\beta)=\rho^\alpha-\rho^\beta=\dfrac{\pi}{v^\alpha}-\dfrac{\pi}{v^\beta}=\pi\left(\dfrac{1}{v^\alpha}-\dfrac{1}{v^\beta}\right).$$
We observe that
$$\dfrac{d}{d\pi}d_\pi(\rho^\alpha,\rho^\beta)=\dfrac{1}{v^\alpha}-\dfrac{1}{v^\beta}>0,$$
because $v^\beta >v^\alpha$. Hence the function $\pi \to d_\pi(\rho^\alpha,\rho^\beta)$ is strictly increasing.\\
Applying this result to $(\rho^\alpha,v^\alpha)=(\rho^*,v^*)$ and $(\rho^\beta,v^\beta)=(\rho^r,v^r)$ and then to $(\rho^\alpha,v^\alpha)=(\rho^l,v^l)$ and $(\rho^\beta,v^\beta)=(\rho^k,v^k)$, we obtain
$$\rho^*-\rho^r\leq \rho^l-\rho^k,$$
because $\rho^k\,v^k = q\geq \rho^r\,v^r$. Since $\rho^r< \rho^m$ and $v^r=v^m$, we have
$$\rho^r\,v^r <\rho^m\,v^m.$$
Therefore $\rho^*>\rho^m$ by Lemma \ref{lemma_concave_functions_properties}. Hence
$$\rho^m-\rho^r<\rho^*-\rho^r \leq \rho^l-\rho^k.$$
\end{proof}
\subsection{A rarefaction wave reaches the constraint}
Fix $\bar{x}>0$ and let us consider the Riemann problem (\ref{Riemann_problem}) with initial datum
$$
(\rho,v)(0,x)= \begin{cases}
(\rho^k,v^k) & \text{if } x\leq \bar{x},\\
(\rho^r,v^r) & \text{if } x>\bar{x}.
\end{cases}
$$
Suppose that $(\rho^k,v^k)$ and $(\rho^r,v^r)$ are connected by a rarefaction wave, i.e.
$$\rho^k \geq \rho^r \; \text{ and } \; v^r=v^k+p(\rho^k)-p(\rho^r),$$
and let $\sigma \to (\rho^\sigma,v^\sigma)$ for $\sigma \in [0,1]$ be a parametrization of the points of the rarefaction.\\
Fix $\delta>0$ and let us define the number $N=N(\delta)\in \mathbb{N}$ as
$$ N(\delta)=\Big\lfloor\dfrac{1}{\delta} \Big\rfloor,$$
where $\lfloor r \rfloor\in \mathbb{Z}$ is the integer part of the number $r$, i.e. the closest integer less or equal than $r$.
For $i=0,...,N(\delta)$, let us define the numbers
$$\sigma_i=\dfrac{i}{N}\in [0,1]$$
and the points
\begin{equation}\label{points_of_the_rarefaction_fan}
\lbrace (\rho^{\sigma_i},v^{\sigma_i})\rbrace_{i=0}^N\subset \lbrace (\rho^\sigma,v^\sigma)\rbrace_{\sigma \in [0,1]}.
\end{equation}
\begin{mydef}\label{def_rarefaction_fan}
A rarefaction fan for the rarefaction wave $\mathcal{RS}((\rho^k,v^k),(\rho^r,v^r))$ centred in $\bar{x}$ is (see Figure \ref{fig_rarefaction_fan})
\begin{equation}\label{rarefaction_fan}
(\rho,v)(t,x)=\begin{cases}
(\rho^k,v^k) & \text{if } \; \dfrac{x-\bar{x}}{t}\leq \lambda_1(\rho^k,v^k),\\
(\rho^{\sigma_i},v^{\sigma_i}) & \text{if }\;  \lambda_1(\rho^{\sigma_{i-1}},v^{\sigma_{i-1}})<\dfrac{x-\bar{x}}{t}\leq \lambda_1(\rho^{\sigma_i},v^{\sigma_i}) \; \text{ for } \, i=1,...,N(\delta),\\
(\rho^r,v^r) & \text{if } \; \dfrac{x-\bar{x}}{t}>\lambda_1(\rho^r,v^r).
\end{cases}
\end{equation}
\end{mydef}
\begin{figure}[hbtp]
\centering
\begin{subfigure}[hbtp]{0.48\linewidth}
\definecolor{uququq}{rgb}{0.25098039215686274,0.25098039215686274,0.25098039215686274}
\definecolor{xdxdff}{rgb}{0.49019607843137253,0.49019607843137253,1.}
\definecolor{qqqqff}{rgb}{0.,0.,1.}
\begin{tikzpicture}[scale=0.3,line cap=round,line join=round,>=triangle 45,x=1.0cm,y=1.0cm]
\draw[->,color=black] (-1.,0.) -- (17.,0.);
\draw[->,color=black] (0.,-0.5) -- (0.,10.);
\clip(-1.5,-1) rectangle (18.5,11.);
\draw[smooth,samples=50,domain=0.001:17.0] plot(\x,{4.0*(\x)-(\x)^(1.5)});
\begin{scriptsize}
\draw (7.8,10.7) node[anchor=north west] {$(\rho^r,v^r)=(\rho^{\sigma_N},v^{\sigma_N})$};
\draw (-1.5,9.) node[anchor=north west] {$\rho v$};
\draw (15.,0.1) node[anchor=north west] {$\rho$};
\draw (5.6,2.48) node[anchor=north west] {$(\rho^k,v^k)=(\rho^{\sigma_0},v^{\sigma_0})$};
\draw (13.6,5) node[anchor=north west] {$(\rho^{\sigma_1},v^{\sigma_1})$};
\draw (11.9,7.4) node[anchor=north west] {$(\rho^{\sigma_i},v^{\sigma_i})$};
\draw (9.85,9.4) node[anchor=north west] {$(\rho^{\sigma_{N-1}},v^{\sigma_{N-1}})$};
\end{scriptsize}
\begin{scriptsize}
\draw [fill=qqqqff] (9.,9.) circle (5pt);
\draw [fill=xdxdff] (14.00,3.60) circle (5pt);
\draw [fill=uququq] (10.,8.37) circle (5pt);
\draw [fill=uququq] (11.,7.51) circle (5pt);
\draw [fill=uququq] (12.,6.43) circle (5pt);
\draw [fill=uququq] (13.,5.12) circle (5pt);
\draw [fill=uququq] (15.,1.90) circle (5pt);
\end{scriptsize}
\end{tikzpicture}
\end{subfigure}
\quad
\begin{subfigure}[hbtp]{0.48\linewidth}
\definecolor{xdxdff}{rgb}{0.49019607843137253,0.49019607843137253,1.}
\begin{tikzpicture}[scale=0.95,line cap=round,line join=round,>=triangle 45,x=1.0cm,y=1.0cm]
\draw[->,color=black] (-0.2,0.) -- (5.,0.);
\draw[->,color=black] (0.,-0.2) -- (0.,2.7);
\clip(-1,-1) rectangle (5.3,2.7);
\draw (4.,0.)-- (-5.62,4.86);
\draw (4.,0.)-- (-2,6.00);
\draw (4.,0.)-- (0.45,6);
\draw (4.,0.)-- (1.,5.86);
\draw (-3.22,5.98)-- (4.,0.);
\draw [domain=-0.2:4.0] plot(\x,{(-12.--3.*\x)/-0.5});
\begin{scriptsize}
\draw (1.1,1.8) node[anchor=north west] {$u^{\sigma_1}$};
\draw (3.8,0.) node[anchor=north west] {$\bar{x}$};
\draw (0.5,0.6) node[anchor=north west] {$u^k=u^{\sigma_0}$};
\draw (1.9,2.54) node[anchor=north west] {$u^{\sigma_i}$};
\draw (3.65,2.4) node[anchor=north west] {$u^r=u^{\sigma_N}$};
\draw (-0.3,2.5) node[anchor=north west] {$t$};
\draw (4.4,0.) node[anchor=north west] {$x$};
\draw (2.6,2.8) node[anchor=north west] {$u^{\sigma_{N-1}}$};
\end{scriptsize}
\begin{scriptsize}
\draw [fill=xdxdff] (4.,0.) circle (1.5pt);
\end{scriptsize}
\end{tikzpicture}
\end{subfigure}
\caption{Representation of a rarefaction fan.}\label{fig_rarefaction_fan}
\end{figure}
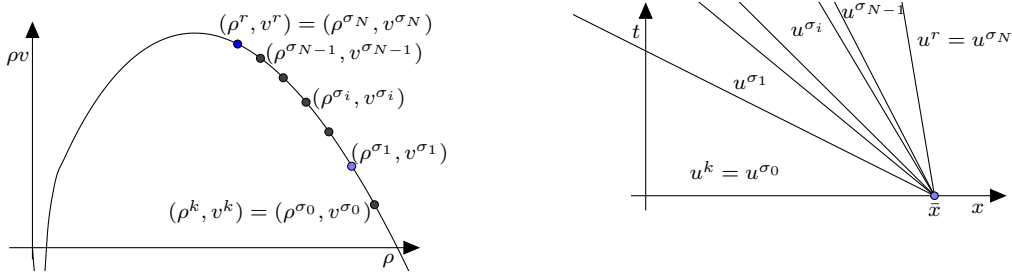
The next proposition states that the presence of a rarefaction fan does not have any influence on the total variation, but only the left and the right states of the initial datum count.
\begin{prop}\label{prop_rarefaction_fan_properties}
Let $t\in\mathbb{R}^+$ be a fixed instant. Fix $(\rho^k,v^k)$ and $(\rho^r,v^r)$ in the invariant domain $\mathcal{D}_{v_1,v_2,w_1,w_2}$ such that
$$\rho^k \geq \rho^r \; \text{ and } \; v^r=v^k+p(\rho^k)-p(\rho^r).$$
Consider the rarefaction fan (\ref{rarefaction_fan}). For every $i=1,...,N$, we have
$$\rho^{\sigma_{i-1}}\geq \rho^{\sigma_i}\; \text{ and } \; v^{\sigma_{i-1}}\leq v^{\sigma_i}.$$
Moreover for every $t\geq 0$ the total variation of the rarefaction fan is
\begin{equation}
TV_t(\rho)=\rho^k-\rho^r \; \text{ and } \; TV_t(v)= v^r-v^k.
\end{equation}
\end{prop}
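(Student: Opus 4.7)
The plan is to exploit the fact that the states $(\rho^{\sigma_i},v^{\sigma_i})$ all lie on the same first Lax curve, i.e.\ they satisfy the invariant relation $v^{\sigma_i}+p(\rho^{\sigma_i}) = v^k+p(\rho^k)$, and then use the strict monotonicity of $p$ guaranteed by the hypotheses (\ref{ipotesi_pressione_forti}) to transport an ordering of the parameters $\sigma_i$ into an ordering of $\rho^{\sigma_i}$ and $v^{\sigma_i}$.

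First I would recall that by Definition \ref{def_rarefaction_wave} and Proposition \ref{rarefaction_wave_properties} (specialized to the ARZ system as in Proposition \ref{ARZ_properties_rho_v}), the rarefaction curve $\sigma\mapsto (\rho^\sigma,v^\sigma)$ passing through $(\rho^k,v^k)$ is obtained by solving $\dot w = r_1(w)$ with $r_1 = (-1,p'(\rho))^T$. Hence $\rho^\sigma$ is strictly decreasing in $\sigma$ and $v^\sigma$ is strictly increasing in $\sigma$ (since $p'>0$). Because $\sigma_0<\sigma_1<\cdots<\sigma_N$ by construction of the rarefaction fan in (\ref{points_of_the_rarefaction_fan}), this immediately gives
\[
\rho^{\sigma_{i-1}}\geq \rho^{\sigma_i}\qquad\text{and}\qquad v^{\sigma_{i-1}}\leq v^{\sigma_i}\qquad\text{for every }i=1,\ldots,N.
\]

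Next, to compute $TV_t(\rho)$, I would apply Definition \ref{def_total_variation} directly. The rarefaction fan (\ref{rarefaction_fan}) is piecewise constant in $x$ at every fixed $t\geq 0$, with ordered constant values $\rho^{\sigma_0},\rho^{\sigma_1},\ldots,\rho^{\sigma_N}$ on consecutive intervals. Since by the previous step this sequence is monotone non-increasing, the absolute values in the sum defining $TV_t$ collapse by telescoping:
\[
TV_t(\rho)=\sum_{i=1}^{N}\bigl|\rho^{\sigma_i}-\rho^{\sigma_{i-1}}\bigr|=\sum_{i=1}^{N}\bigl(\rho^{\sigma_{i-1}}-\rho^{\sigma_i}\bigr)=\rho^{\sigma_0}-\rho^{\sigma_N}=\rho^k-\rho^r.
\]
The identical argument applied to the monotone non-decreasing sequence $v^{\sigma_0},\ldots,v^{\sigma_N}$ yields $TV_t(v)=v^{\sigma_N}-v^{\sigma_0}=v^r-v^k$.

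There is no real obstacle here: the only subtlety is to observe that the discretization points $(\rho^{\sigma_i},v^{\sigma_i})$ inherit the monotonicity of the underlying smooth rarefaction curve, so that the sum in Definition \ref{def_total_variation} telescopes rather than accumulating contributions from each intermediate jump. The computation is independent of $t$, so the conclusion holds for every $t\geq 0$; in particular, introducing arbitrarily many intermediate states in the fan leaves the total variation unchanged, which is the property that will be used later to control the BV norm in the wave-front tracking scheme.
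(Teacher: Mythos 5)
Your argument is correct and follows essentially the same route as the paper's proof: deduce monotonicity of $\sigma\mapsto\rho^\sigma$ from the first component of $r_1$ being $-1$, deduce monotonicity of $v^\sigma$ (the paper does this via the invariant relation $v^\sigma+p(\rho^\sigma)=v^k+p(\rho^k)$ and monotonicity of $p$, you via the second component $p'(\rho)>0$ of $r_1$ — these are equivalent integrations of the same ODE), and then telescope the sum in the definition of total variation. No gaps.
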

\begin{proof}
We have
$$\rho^{\sigma_{i-1}}\geq \rho^{\sigma_i},$$
because $\sigma_{i-1}\leq \sigma_i$ and the function $\sigma \to \rho^\sigma$ is decreasing, because its derivative is the first component of the eigenvalue $r_1$ of the flux function which is $-1$. Moreover, since $v^{\sigma_{i-1}}+p(\rho^{\sigma_{i-1}})=v^k+p(\rho^k)$, we obtain:
$$v^{\sigma_{i}}=v^k+p(\rho^k)-p(\rho^{\sigma_{i}})=v^{\sigma_{i-1}}+p(\rho^{\sigma_{i-1}})-p(\rho^{\sigma_{i}})\geq v^{\sigma_{i-1}},$$
because $p(\rho^{\sigma_{i-1}})\geq p(\rho^{\sigma_{i}})$.\\
Therefore
\begin{equation*}
\begin{split}
TV_t(\rho) & = \sum_{j=1}^N |\rho^{\sigma_j}-\rho^{\sigma_{j-1}}|=\sum_{j=1}^N \left(\rho^{\sigma_{j-1}}-\rho^{\sigma_j} \right)=\\
&= \rho^{\sigma_0}-\rho^{\sigma_1}+\rho^{\sigma_1}-\rho^{\sigma_2}+...-\rho^{\sigma_N}=\\
&= \rho^{\sigma_0}-\rho^{\sigma_N}=\rho^k-\rho^r.
\end{split}
\end{equation*}
Similarly for the velocity.
\end{proof}
\begin{remark}\label{remark_rarefaction_fan_as_shocks}
By Proposition \ref{prop_rarefaction_fan_properties}, we can approximate a rarefaction wave with a rarefaction fan without affecting the total variation and we can consider each discontinuity of the fan as a single (non-classical) shock which is prolonged forward in time (rarefaction fans appear only at $t=0$ or $x=0$).
\end{remark}
Fix $\bar{x}>0$. Let us consider the Riemann problems (\ref{Riemann_problem}) centred in $\bar{x}$ and $x=0$ with initial data 
\begin{equation}\label{stime_interazione_rarefazione_da_destra_dato iniziale}
(\rho,v)(0,x) = \begin{cases}
(\rho^l,v^l) & \text{if } x\leq 0,\\
(\rho^k,v^k) & \text{if } x> 0.
\end{cases}
\; \text{ and } \;
(\rho,v)(0,x)= \begin{cases}
(\rho^k,v^k) & \text{if } x\leq \bar{x},\\
(\rho^r,v^r) & \text{if } x>\bar{x}.
\end{cases}
\end{equation}
Let us suppose that $(\rho^l,v^l)$ and $(\rho^k,v^k)$ are joined by a wave with zero propagation speed and that
$(\rho^k,v^k)$ and $(\rho^r,v^r)$ are connected by a non-classical shock of the first family with negative speed, i.e.
$$\rho^k \geq \rho^r \; \text{ and } \; v^r=v^k+p(\rho^k)-p(\rho^r).$$
The speed of the shock wave is given by the Rankine-Hugoniot condition in order to preserve conservation.
\subsubsection{Case $(\rho^l,v^l)=(\rho^k,v^k)$}
\begin{prop}\label{prop_rarefaction_from_right_case_1}
Assume $\lambda_1(\rho_\text{min},v_\text{min})<0$. Fix $(\rho^l,v^l)$ and $(\rho^r,v^r)$ in the domain $\mathcal{D}_{v_1,v_2,w_1,w_2}$ and let $N \in \mathbb{N}$ be the number of wave-fronts of a rarefaction fan.\\
Assume that $\rho^r< \rho^l$ and that a wave joining $(\rho^l,v^l)$ to $(\rho^r,v^r)$ with negative propagation speed interacts with $x=0$ at time $\tilde{t}>0$.\\
Then the wave is a rarefaction, i.e. $v^r=L_1(\rho^r,\rho^l,v^l)$.\\
Moreover the following statements hold.
\begin{itemize}
\item[(i)] If $\rho^r\,v^r \leq q$, then for every $(t,x)\in \mathbb{R}^+\times \mathbb{R}$ we have
\begin{equation*}
\begin{split}
\mathcal{RS}^q_2 & ((\rho^l,v^l),(\rho^r,v^r))\left(\dfrac{x}{t-\tilde{t}}\right) =\\
& = \begin{cases}
(\rho^l,v^l) & \text{if } \frac{x}{t-\tilde{t}}\leq \lambda_1(\rho^l,v^l),\\
(\rho^{\sigma_{i}},v^{\sigma_{i}}) & \text{if } \lambda_1(\rho^{\sigma{i-1}},v^{\sigma_{i-1}})< \frac{x}{t-\tilde{t}}<\lambda_1(\rho^{\sigma_i},v^{\sigma_i})\;\text{ for } \; i=1,...,N,\\
(\rho^r,v^r) & \text{if } \frac{x}{t-\tilde{t}}>\lambda_1(\rho^r,v^r), 
\end{cases}
\end{split}
\end{equation*}
where $\lbrace (\rho^{\sigma_i},v^{\sigma_i}) \rbrace_{i=1}^N$ are the points defined in (\ref{points_of_the_rarefaction_fan}). Therefore
$$ \Delta TV_{\tilde{t}}(\rho)=\Delta TV_{\tilde{t}}(v)=0 \; \text{ and } \; \Delta_{\tilde{t}} \mathcal{N}=N-1.$$
For the Riemann invariant $w$, we have
$$\Delta TV_{\tilde{t}}(w)=0.$$
\item[(ii)] If $\rho^r\,v^r >q$, then
\begin{equation*}
\begin{split}
\mathcal{RS}^q_2 & ((\rho^l,v^l),(\rho^r,v^r))\left(\dfrac{x}{t-\tilde{t}}\right) =\\
& = \begin{cases}
(\rho^l,v^l) & \text{if } \frac{x}{t-\tilde{t}}\leq \lambda_1(\rho^l,v^l),\\
(\rho^{\sigma_i},v^{\sigma_i}) & \text{if } \lambda_1(\rho^{\sigma_{i-1}},v^{\sigma_{i-1}})<\frac{x}{t-\tilde{t}}\leq \lambda_1(\rho^{\sigma_i},v^{\sigma_i})\; \text{ for } \; i=1,...,N,\\
(\hat{\rho},\hat{v}) & \text{if } \lambda_1(\hat{\rho},\hat{v}) < \frac{x}{t-\tilde{t}}\leq 0,\\
(\check{\rho}_2,\check{v}_2) & \text{if } 0<\frac{x}{t-\tilde{t}}\leq v^r,\\
(\rho^r,v^r) & \text{if } \frac{x}{t-\tilde{t}}>v^r,
\end{cases}
\end{split}
\end{equation*}
where $\lbrace (\rho^{\sigma_i},v^{\sigma_i}) \rbrace_{i=1}^N$ are the points defined in (\ref{points_of_the_rarefaction_fan}) for the rarefaction connecting $(\rho^l,v^l)$ to $(\hat{\rho},\hat{v})$. Furthermore
$$\Delta TV_{\tilde{t}}(\rho)\leq C_1\,|\rho^l-\rho^r|,\;\; \Delta TV_{\tilde{t}}(v)=0 \; \text{ and } \; \Delta_{\tilde{t}}\, \mathcal{N}=N+1,$$
where $C_1$ is a constant depending only by $q$ and the domain $\mathcal{D}_{v_1,v_2,w_1,w_2}$.\\
For the Riemann invariant $w$, we have
$$\Delta TV_{\tilde{t}}(w)\leq C_2\,|v^r-v^l|,$$
where $C_2$ is a constant depending only by $q$ and the domain $\mathcal{D}_{v_1,v_2,w_1,w_2}$.
\end{itemize}
\end{prop}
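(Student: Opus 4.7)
The plan is to first identify the incoming wave as a rarefaction, then split the analysis by whether the right state satisfies the constraint. By Lemma~\ref{lemma_curve_decrescenti} every wave of the first family inside the invariant domain has negative speed, so a wave with negative speed belongs to the first family, and combined with $\rho^r<\rho^l$ and Proposition~\ref{prop_waves_conditions} this forces it to be a rarefaction with $v^r=L_1(\rho^r,\rho^l,v^l)$. A crucial consequence of the same lemma is that $\rho\mapsto \rho\,L_1(\rho,\rho^l,v^l)$ is strictly decreasing on the invariant domain, so the product $\rho^\sigma v^\sigma$ is increasing in $\sigma$, growing from $\rho^l v^l$ at $\sigma=0$ to $\rho^r v^r$ at $\sigma=1$.

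In case (i) this monotonicity gives $\rho^{\sigma_i} v^{\sigma_i}\leq\rho^r v^r\leq q$ for every $i$, so no intermediate state of the rarefaction fan violates the constraint and $\mathcal{RS}^q_2$ returns the classical solution. The incoming single wave-front is replaced by a fan with $N$ fronts having the same endpoints, so by Proposition~\ref{prop_rarefaction_fan_properties} the total variations of $\rho$ and $v$ are preserved, while $w$ is constant along the first-family Lax curve so $TV(w)$ remains zero; the front count increases by $N-1$.

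In case (ii) the first task is to pin down the geometry. Since $(\rho^l,v^l)$ lies in the invariant domain, $\lambda_1(\rho^l,v^l)<0$ by Lemma~\ref{lemma_curve_decrescenti}, so Lemma~\ref{lemma_concave_functions_properties}(ii) applied to $(\rho^l,v^l)$ (which satisfies $\rho^l v^l\leq q$, there being no non-classical shock at $x=0$ before time $\tilde t$) gives $\rho^l\geq\hat\rho$; together with $\rho^r v^r>q$ and the monotonicity above one obtains the ordering $\check\rho_2<\rho^r<\hat\rho\leq\rho^l$ and $v^l\leq\hat v<\check v_2=v^r$. The solution produced by $\mathcal{RS}^q_2$ then consists of a rarefaction fan from $(\rho^l,v^l)$ to $(\hat\rho,\hat v)$, the non-classical shock from $(\hat\rho,\hat v)$ to $(\check\rho_2,\check v_2)$ at $x=0$, and a contact discontinuity from $(\check\rho_2,\check v_2)$ to $(\rho^r,v^r)$, giving $\Delta_{\tilde t}\mathcal{N}=N+1$. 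A direct telescoping computation using these orderings yields $\Delta TV_{\tilde t}(v)=0$ and, using $w^l=\hat w=w^r$, also $\Delta TV_{\tilde t}(w)=2(p(\rho^r)-p(\check\rho_2))$.

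The main obstacle is the $\rho$-estimate $\Delta TV_{\tilde t}(\rho)=2(\rho^r-\check\rho_2)\leq C_1|\rho^l-\rho^r|$. I plan to obtain it by applying Lemma~\ref{lemma_disuguaglianze_triangolo}(ii) with the identification $(\rho^\alpha,v^\alpha)=(\check\rho_2,\check v_2)$, $(\rho^\beta,v^\beta)=(\hat\rho,\hat v)$, $(\rho^\gamma,v^\gamma)=(\rho^r,v^r)$; the hypotheses follow from the orderings just established (in particular $v^\beta+p(\rho^\beta)>v^\alpha+p(\rho^\alpha)$ reduces to $\rho^r>\check\rho_2$). That lemma delivers $\rho^r-\check\rho_2\leq c_2(\hat\rho-\rho^r)\leq c_2(\rho^l-\rho^r)$, closing the bound; combining this with Lemma~\ref{pressure_Lipschitz} converts the $\rho$-difference into the $v$-difference $|v^l-v^r|$ needed for the constant $C_2$ in the estimate of $\Delta TV_{\tilde t}(w)$.
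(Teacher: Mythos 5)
Your proof is correct and follows essentially the same approach as the paper: the same case split, the same geometric orderings $\check\rho_2<\rho^r<\hat\rho\leq\rho^l$, the same identification of the three points when invoking Lemma~\ref{lemma_disuguaglianze_triangolo}, and the same reliance on Lemmas~\ref{lemma_curve_decrescenti}, \ref{lemma_concave_functions_properties} and \ref{pressure_Lipschitz}. The only cosmetic difference is the route taken for $\Delta TV_{\tilde t}(w)$: you go $\rho^r-\check\rho_2\leq c_2(\hat\rho-\rho^r)\leq c_2(\rho^l-\rho^r)\leq c_2 M\,|v^r-v^l|$, while the paper passes through $v^r-\hat v$ and then $v^r-v^l$; both end at the same bound up to the choice of constant.
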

\begin{proof}
Let us observe that $\rho^l\,v^l \leq q $, otherwise the non-classical shock would appear as the solution given by $\mathcal{RS}^q_2$ to the Riemann problem with constant initial datum $(\rho^l,v^l)$.\\
The hypotheses $\rho^l> \rho^r$ ensure that the wave with negative speed is a rarefaction which we approximate with a rarefaction fan. By Remark \ref{remark_rarefaction_fan_as_shocks}, we can consider each discontinuity of the rarefaction fan as a single (non-classical) shock.\\
\begin{figure}[hbtp]
\centering
\begin{subfigure}[hbtp]{0.48\linewidth}
\definecolor{xdxdff}{rgb}{0.49019607843137253,0.49019607843137253,1.}
\begin{tikzpicture}[scale=0.3,line cap=round,line join=round,>=triangle 45,x=1.0cm,y=1.0cm]
\draw[->,color=black] (-1.,0.) -- (17.,0.);
\draw[->,color=black] (0.,-0.5) -- (0.,10.);
\clip(-1.,-1) rectangle (18.,10.);
\draw[smooth,samples=50,domain=0.001:17.0] plot(\x,{4.0*(\x)-(\x)^(1.5)});
\draw [domain=-1.:17.] plot(\x,{(--5.-0.*\x)/1.});
\draw [domain=0.0:17.0] plot(\x,{(-0.--4.28*\x)/13.5712});
\begin{scriptsize}
\draw (-0.9,5.2) node[anchor=north west] {$q$};
\draw (-1.4,9) node[anchor=north west] {$\rho v$};
\draw (15.,0.1) node[anchor=north west] {$\rho$};
\draw (10.1,3.3) node[anchor=north west] {$(\rho^l,v^l)=(\rho^k,v^k)$};
\draw (13.6,4.70) node[anchor=north west] {$(\rho^r,v^r)$};
\end{scriptsize}
\begin{scriptsize}
\draw [fill=xdxdff] (13.5712,4.28) circle (5pt);
\draw [fill=xdxdff] (14.5,2.78) circle (5pt);
\end{scriptsize}
\end{tikzpicture}
\caption{Position of the points in the plane $(\rho,\rho\,v)$.}
\end{subfigure}
\quad
\begin{subfigure}[hbtp]{0.48\linewidth}
\begin{tikzpicture}[scale=0.4,line cap=round,line join=round,>=triangle 45,x=1.0cm,y=1.0cm]
\draw[->,color=black] (-6.5,0.) -- (6.,0.);
\draw[->,color=black] (0.,-0.5) -- (0.,8.5);
\clip(-7,-0.5) rectangle (6.,8.5);
\draw (4.,0.)-- (0.,4.);
\draw [domain=-6.5:6.] plot(\x,{(--4.-0.*\x)/1.});
\draw [domain=-6.5:0.0] plot(\x,{(-15.84--4.6*\x)/-3.96});
\draw [domain=-6.5:0.0] plot(\x,{(-19.04--4.76*\x)/-4.76});
\draw [domain=-6.5:0.0] plot(\x,{(-12.4--4.18*\x)/-3.1});
\draw [domain=-6.5:0.0] plot(\x,{(-11.76--4.9*\x)/-2.94});
\draw [domain=-6.5:0.0] plot(\x,{(-9.04--4.68*\x)/-2.26});
\begin{scriptsize}
\draw (-7,2.8) node[anchor=north west] {$(\rho^l,v^l)=(\rho^k,v^k)$};
\draw (2.5,2.8) node[anchor=north west] {$(\rho^r,v^r)$};
\draw (-7,5.4) node[anchor=north west] {$(\rho^l,v^l)=(\rho^k,v^k)$};
\draw (2.5,5.4) node[anchor=north west] {$(\rho^r,v^r)$};
\draw (-0.7,8.) node[anchor=north west] {$t$};
\draw (4.6,4.) node[anchor=north west] {$\tilde{t}$};
\draw (4.5,0.1) node[anchor=north west] {$x$};
\end{scriptsize}
\end{tikzpicture}
\caption{After the interaction a new rarefaction wave (approximated with a rarefaction fan) appears.}
\end{subfigure}
\caption{Interaction between a wave with negative speed and $x=0$ for the case $(\rho^l,v^l)=(\rho^k,v^k)$ and $\rho^l > \rho^r$. After the interaction the constraint is satisfied.}\label{fig_rarefaction_from_right_caso_1}
\end{figure}
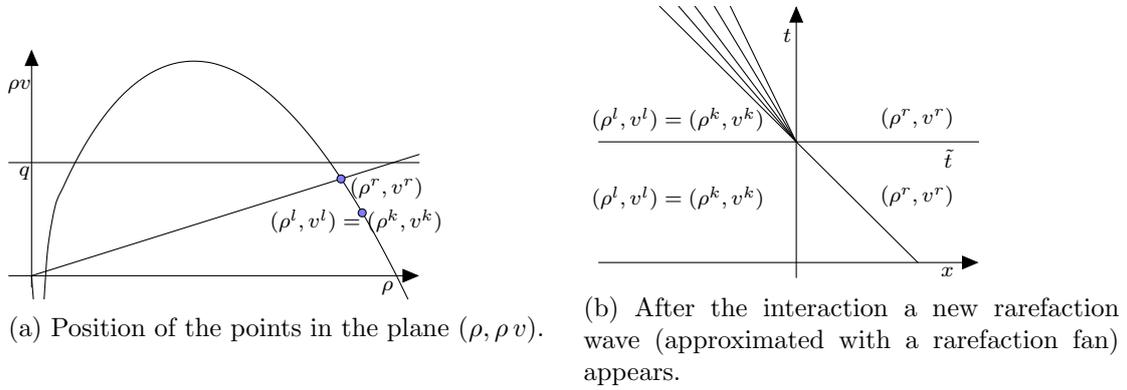

In case $(i)$ the constraint is satisfied (see Figure \ref{fig_rarefaction_from_right_caso_1}), indeed the solution to the Riemann problem with initial datum
$$(\rho,v)(\tilde{t},x) = \begin{cases}
(\rho^l,v^l) & \text{if } x\leq 0,\\
(\rho^r,v^r) & \text{if } x>0,
\end{cases}
$$
is a rarefaction wave which can be approximated by the rarefaction fan (\ref{rarefaction_fan}). Therefore it has negative speeds that vary between $\lambda_1(\rho^l,v^l)$ and $\lambda_1(\rho^r,v^r)$ and
$$\mathcal{RS}^q_2((\rho^l,v^l),(\rho^r,v^r))(0)=(\rho^r,v^r)$$
which satisfies the constraint by hypothesis.\\
Therefore
\begin{equation*}
\begin{split}
&\Delta TV_{\tilde{t}}(\rho)= |\rho^r-\rho^l|-|\rho^r-\rho^l| =0,\\
& \Delta TV_{\tilde{t}}(v) = |v^r-v^l|-|v^r-v^l|=0,\\
& \Delta TV_{\tilde{t}}(w) =|w^r-w^l|-|w^r-w^l|=0 \; \text{ and}\\
& \Delta_{\tilde{t}}\, \mathcal{N}=N-1.
\end{split}
\end{equation*}
In case $(ii)$ the constraint is not satisfied (see Figure \ref{fig_rarefaction_from_right_caso_2}).\\
\begin{figure}[hbtp]
\centering
\begin{subfigure}[hbtp]{0.48\linewidth}
\definecolor{uququq}{rgb}{0.25098039215686274,0.25098039215686274,0.25098039215686274}
\definecolor{xdxdff}{rgb}{0.49019607843137253,0.49019607843137253,1.}
\begin{tikzpicture}[scale =0.3,line cap=round,line join=round,>=triangle 45,x=1.0cm,y=1.0cm]
\draw[->,color=black] (-1.,0.) -- (17.,0.);
\draw[->,color=black] (0.,-0.7) -- (0.,10.);
\clip(-1.5,-1) rectangle (17.,10.);
\draw[smooth,samples=50,domain=0.001:17.0] plot(\x,{4.0*(\x)-(\x)^(1.5)});
\draw [domain=-1.:17.] plot(\x,{(--5.-0.*\x)/1.});
\draw [domain=0.0:17.0] plot(\x,{(-0.--8.17*\x)/10.26});
\begin{scriptsize}
\draw (-0.9,5.) node[anchor=north west] {$q$};
\draw (-1.5,9.3) node[anchor=north west] {$\rho v$};
\draw (15,0.1) node[anchor=north west] {$\rho$};
\draw (6.5,3.34) node[anchor=north west] {$(\rho^l,v^l)=(\rho^k,v^k)$};
\draw (10.3,9) node[anchor=north west] {$(\rho^r,v^r)$};
\draw (6.,5.) node[anchor=north west] {$(\check{\rho}_2,\check{v}_2)$};
\draw (10.5,5) node[anchor=north west] {$(\hat{\rho},\hat{v})$};
\end{scriptsize}
\begin{scriptsize}
\draw [fill=xdxdff] (10.26,8.17) circle (5pt);
\draw [fill=xdxdff] (14.5,2.78) circle (5pt);
\draw [fill=uququq] (6.27,5.) circle (5pt);
\draw [fill=uququq] (13.09,5) circle (5pt);
\end{scriptsize}
\end{tikzpicture}
\caption{Position of the points in the plane $(\rho,\rho\,v)$.}
\end{subfigure}
\quad
\begin{subfigure}[hbtp]{0.48\linewidth}
\begin{tikzpicture}[scale=0.4,line cap=round,line join=round,>=triangle 45,x=1.0cm,y=1.0cm]
\draw[->,color=black] (-6.,0.) -- (6.5,0.);
\draw[->,color=black] (0.,-0.7) -- (0.,8.5);
\clip(-6.,-0.7) rectangle (6.5,8.5);
\draw (4.,0.)-- (0.,4.);
\draw [domain=-6.:6.5] plot(\x,{(--4.-0.*\x)/1.});
\draw [domain=-6.0:0.0] plot(\x,{(-15.84--4.6*\x)/-3.96});
\draw [domain=-6.0:0.0] plot(\x,{(-19.04--4.76*\x)/-4.76});
\draw [domain=-6.0:0.0] plot(\x,{(-12.4--4.18*\x)/-3.1});
\draw [domain=-6.0:0.0] plot(\x,{(-11.76--4.9*\x)/-2.94});
\draw [domain=0.0:6.5] plot(\x,{--10.33/2.58--4.22*\x/4.});
\begin{scriptsize}
\draw (-4.80,2.5) node[anchor=north west] {$(\rho^l,v^l)=(\rho^k,v^k)$};
\draw (2.72,2.5) node[anchor=north west] {$(\rho^r,v^r)$};
\draw (-4.8,6.2) node[anchor=north west] {$(\rho^l,v^l)$};
\draw (2.72,6.2) node[anchor=north west] {$(\rho^r,v^r)$};
\draw (-0.7,8.2) node[anchor=north west] {$t$};
\draw (5.2,4.) node[anchor=north west] {$\tilde{t}$};
\draw (5.2,0.) node[anchor=north west] {$x$};
\draw (-1.9,7.5) node[anchor=north west] {$(\hat{\rho},\hat{v})$};
\draw (0.,7.5) node[anchor=north west] {$(\check{\rho}_2,\check{v}_2)$};
\end{scriptsize}
\end{tikzpicture}
\caption{After the interaction a new rarefaction joining $(\rho^l,v^l)$ and $(\hat{\rho},\hat{v})$ and the non-classical shock appear.}
\end{subfigure}
\caption{Interaction between a wave with negative speed and $x=0$ for the case $(\rho^l,v^l)=(\rho^k,v^k)$ and $\rho^l > \rho^r$. After the interaction the constraint is not satisfied.}\label{fig_rarefaction_from_right_caso_2}
\end{figure}

By Lemma \ref{lemma_concave_functions_properties} applied to $(\rho^l,v^l)$ (which satisfies $\rho^l\,v^l \leq q$), we find $\hat{\rho}\leq\rho^l$. Then $\mathcal{RS}^q_2$ connects $(\rho^l,v^l)$ to $(\hat{\rho},\hat{v})$ with a rarefaction wave which can be approximated with a rarefaction fan. Therefore the difference in the number of waves before and after the interaction is 
$$\Delta_{\tilde{t}} \, \mathcal{N} = (N+2)-1 = N+1.$$
We have:
\begin{itemize}
\item $\check{\rho}_2<\rho^r$, because $\rho^r\,v^r>q=\check{\rho}_2\,\check{v}_2$ and $v^r=\check{v}_2$;
\item $\check{\rho}_2<\hat{\rho}$, because $v^r+p(\rho^r)=\hat{v}+p(\hat{\rho})=v^l+p(\rho^l)$ and, since $\rho^r\,v^r>q$, by Lemma \ref{lemma_concave_functions_properties} we have $\rho^r<\hat{\rho}$. Therefore $\check{\rho}_2<\rho^r<\hat{\rho}$.\end{itemize}
Hence we find
\begin{equation*}
\begin{split}
\Delta TV_{\tilde{t}}(\rho) & = |\rho^r-\check{\rho}_2|+|\check{\rho}_2-\hat{\rho}|+|\hat{\rho}-\rho^l|-|\rho^r-\rho^l|=\\
& = \rho^r-\check{\rho}_2+\hat{\rho}-\check{\rho}_2+\rho^l-\hat{\rho}-\rho^l+\rho^r=\\
& = 2\,(\rho^r-\check{\rho}_2) \leq C\,(\rho^l-\rho^r),
\end{split}
\end{equation*}
where we have applied Lemma \ref{lemma_disuguaglianze_triangolo}.\\
For the second component we have
\begin{equation*}
\begin{split}
\Delta TV_{\tilde{t}} (v) &= |v^r-\check{v}_2|+|\check{v}_2-\hat{v}|+|\hat{v}-v^l|-|v^r-v^l|=\\
& = 0+ \check{v}_2-\hat{v}+\hat{v}-v^l-v^r+v^l=\\
& = 0,
\end{split}
\end{equation*}
because $v^r=\check{v}_2$, $\hat{v}<\check{v}_2$ (because $\hat{\rho}>\check{\rho}_2$) and $v^r>\hat{v}\geq v^l$ (because $\rho^r<\hat{\rho}\leq\rho^l$ and $v^r+p(\rho^r)=\hat{v}+p(\hat{\rho})=v^l+p(\rho^l)$).\\
For the Riemann invariant $w$, we have:
\begin{itemize}
\item $w^l=w^r=\hat{w}$;
\item $w^r>\check{w}_2$, because $\rho^r>\check{\rho}_2$, $v^r=\check{v}_2$;
\item $\check{w}_2<w^l$, by Lemma \ref{lemma_riemann_invariant_property} and the inequalities $\rho^l<\hat{\rho}<\check{\rho}_2$.
\end{itemize} 
Therefore
\begin{equation*}
\begin{split}
\Delta_{\tilde{t}} TV (w) & =|w^r-\check{w}_2|+|\check{w}_2-\hat{w}|-|\hat{w}-w^l|-|w^r-w^l|=\\
& = w^r-\check{w}_2+\hat{w}-\check{w}_2=2\,(w^r-\check{w}_2)=\\
& = 2\,(v^r+p(\rho^r)-\check{v}_2-p(\check{\rho}_2)) \leq\\
& \leq 2\,M\,(\rho^r-\check{\rho}_2),
\end{split}
\end{equation*}
where $M$ is a positive constant depending on the invariant domain, because the function $\rho \to p(\rho)$ is bi-Lipschitz by Lemma \ref{pressure_Lipschitz}.\\
By Lemma \ref{lemma_disuguaglianze_triangolo}, there exists a constant $c$ depending on the invariant domain, for which
$$\rho^r-\check{\rho}_2\leq\dfrac{1}{c}(v^r-\hat{v})\leq \dfrac{1}{c}(v^r-v^l),$$
because $\rho^l>\hat{\rho}$ if and only if $v^l<\hat{v}$.\\
Therefore
$$\Delta TV_{\tilde{t}}(w)\leq C \, (v^r-v^l),$$
where $C$ is a positive constant depending only on the invariant domain.
\end{proof}
\subsubsection{Case $(\rho^l,v^l)\neq (\rho^k,v^k)$}
\begin{prop}\label{prop_rarefaction_from_right_case_2}
Assume $\lambda_1(\rho_\text{min},v_\text{min})<0$. Fix $(\rho^l,v^l)$, $(\rho^k,v^k)$ and $(\rho^r,v^r)$ in the invariant domain $\mathcal{D}_{v_1,v_2,w_1,w_2}$ such that
$$v^k+p(\rho^k)<v^l+p(\rho^l).$$
Assume that $\rho^r < \rho^k$ and that a wave joining $(\rho^k,v^k)$ to $(\rho^r,v^r)$ with negative speed interacts in $x=0$ at time $\tilde{t}>0$ with a wave with zero speed which connects $(\rho^l,v^l)$ to $(\rho^k,v^k)$.\\
Then the wave with negative speed is a rarefaction wave and the wave with zero speed is a non-classical shock. Moreover
$$\mathcal{RS}^q_2((\rho^l,v^l),(\rho^r,v^r))\left(\dfrac{x}{t-\tilde{t}}\right) = \begin{cases}
(\rho^l,v^l) & \text{if } \frac{x}{t-\tilde{t}}\leq 0,\\
(\check{\rho}_2,\check{v}_2) & \text{if } 0<\frac{x}{t-\tilde{t}}\leq v^r,\\
(\rho^r,v^r) & \text{if } \frac{x}{t-\tilde{t}}>v^r.
\end{cases}
$$
Finally
$$\Delta TV_{\tilde{t}} (\rho) \leq C_1 \, (\rho^k-\rho^r), \; \; \Delta TV_{\tilde{t}}(v) = 0 \; \text{ and } \; \Delta_{\tilde{t}}\, \mathcal{N}=0,$$
where $C_1$ is a constant depending only by $q$ and the domain $\mathcal{D}_{v_1,v_2,w_1,w_2}$.\\
For the Riemann invariant $w$, we have
$$\Delta TV_{\tilde{t}}(w)\leq C_2\,|v^r-v^l|,$$
where $C_2$ is a constant depending only by $q$ and the domain $\mathcal{D}_{v_1,v_2,w_1,w_2}$.
\end{prop}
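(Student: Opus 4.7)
The plan has three stages: identify the wave types and the key inequalities, show that the Riemann problem born at time $\tilde t$ again violates the constraint so that a fresh non-classical shock appears, and then read off the variations. First I would argue that the zero-speed wave at $x=0$ must be a non-classical shock: by Proposition~\ref{prop_classical_shock_not_possible} a classical shock of zero speed is impossible, so by the definition of $\mathcal{RS}^q_2$ and Proposition~\ref{when_non_classical_shock} one has $(\rho^l,v^l)=(\hat\rho,\hat v)$ and $(\rho^k,v^k)=(\check\rho_2,\check v_2)$ for the old Riemann problem; in particular $\rho^l v^l=\rho^k v^k=q$ and $\rho^l>\rho^k$. The negative-speed wave between $(\rho^k,v^k)$ and $(\rho^r,v^r)$ cannot be a contact discontinuity (whose speed would be $v^r>0$), so it belongs to the first family, and since $\rho^r<\rho^k$ it is a rarefaction, giving $v^r+p(\rho^r)=v^k+p(\rho^k)$. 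Two short calculations then produce the chain $v^l<v^k<v^r$, which I would use repeatedly.

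The central step, and the one I expect to be the main obstacle, is to show that the classical middle state $(\rho^m,v^m)$ of the new Riemann problem between $(\rho^l,v^l)$ and $(\rho^r,v^r)$ satisfies $\rho^m v^m>q$. From $v^m=v^r$ and $w^m=w^l$ together with $v^r>v^l$ one immediately gets $\rho^m<\rho^l$. To force $\rho^m v^m>q$, I would examine the Lax curve $\rho\mapsto\rho L_1(\rho,\rho^l,v^l)$, whose two intersections with $\rho v=q$ are $(\rho^l,v^l)$ and some $(\check\rho_{1,l},\check v_{1,l})$ necessarily satisfying $\lambda_1>0$. By Lemma~\ref{lemma_curve_decrescenti} this second point lies outside $\mathcal{D}_{v_1,v_2,w_1,w_2}$; since its Riemann invariant equals $w^l\in[w_1,w_2]$ and $\check v_{1,l}>v^l\geq v_1$, the only way to be outside the domain is $\check v_{1,l}>v_2$. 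Because $v^m=v^r\leq v_2<\check v_{1,l}$, monotonicity of $v$ in $\rho$ along the Lax curve places $\rho^m$ strictly between $\check\rho_{1,l}$ and $\rho^l$, and the concavity of $\rho\mapsto\rho L_1$ (Lemma~\ref{lemma_concave_functions_properties}) then yields $\rho^m v^m>q$. Consequently the definition of $\mathcal{RS}^q_2$ produces the stated non-classical shock from $(\rho^l,v^l)$ to the new $(\check\rho_2,\check v_2)$ followed by a contact discontinuity of speed $v^r$.

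For the estimates, the same Lax-curve argument applied around $(\rho^k,v^k)$ gives $\rho^r v^r>q$, so $\check\rho_2=q/v^r<\rho^r$; combined with $v^r>v^l$ and $\rho^r<\rho^k<\rho^l$ this pins down the ordering $\check\rho_2<\rho^r<\rho^k<\rho^l$. The wave count is two both before (non-classical shock plus rarefaction, the latter treated as a single wave per Remark~\ref{remark_rarefaction_fan_as_shocks}) and after (non-classical shock plus contact discontinuity), so $\Delta_{\tilde t}\mathcal{N}=0$. The $\rho$-variation collapses to $2(\rho^r-\check\rho_2)$, which Lemma~\ref{lemma_disuguaglianze_triangolo}(ii) applied with $\alpha=\check u_2$, $\beta=u^k$, $\gamma=u^r$ bounds by $C_1(\rho^k-\rho^r)$. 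The $v$-variation telescopes to zero using $\check v_2=v^r$ and the velocity ordering. For the Riemann invariant, $\Delta TV_{\tilde t}(w)=2(p(\rho^r)-p(\check\rho_2))$, and combining Lemma~\ref{pressure_Lipschitz}, Lemma~\ref{lemma_disuguaglianze_triangolo}(ii), and the identity $v^r-v^k=p(\rho^k)-p(\rho^r)\leq v^r-v^l$ yields the required bound $C_2|v^r-v^l|$. Once the geometric argument for $\rho^m v^m>q$ is in place, everything else is bookkeeping.
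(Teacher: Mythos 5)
Your proposal is correct, and the overall structure (wave identification, checking that the new Riemann problem violates the constraint, then accounting the variations) matches the paper's. There is, however, a genuine difference in the \emph{central step}, and your version takes an unnecessarily long detour that is worth flagging.

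To prove $\rho^m v^m>q$ you pass through the Lax curve $\rho\mapsto\rho L_1(\rho,\rho^l,v^l)$, identify its second intersection $(\check\rho_{1,l},\check v_{1,l})$ with $\rho v=q$, argue via Lemma~\ref{lemma_curve_decrescenti} that this point has left the invariant domain and therefore $\check v_{1,l}>v_2$, and conclude by the strict concavity of $\psi$. That argument is correct, but notice that you already admit, one paragraph later, that ``the same Lax-curve argument applied around $(\rho^k,v^k)$ gives $\rho^r v^r>q$,'' and that fact is required anyway for the ordering $\check\rho_2<\rho^r$. The paper proves $\rho^r v^r>q$ \emph{first} (applying Lemma~\ref{lemma_concave_functions_properties} on the curve through $(\rho^k,v^k)=(\check\rho_2,\check v_2)_{\text{old}}$, where $\rho^k v^k=q$ and $\rho^r<\rho^k$), and then obtains $\rho^m v^m>q$ in a single line: $v^m=v^r$ and $w^m=w^l>w^k=w^r$ give $\rho^m>\rho^r$, hence $\rho^m v^m>\rho^r v^r>q$. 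Reordering your argument this way eliminates the detour through $(\check\rho_{1,l},\check v_{1,l})$ entirely.

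Your estimate for $\Delta TV_{\tilde t}(w)$ is also a mild departure from the paper's, and arguably cleaner: you transit through $\rho^r-\check\rho_2\leq c_2(\rho^k-\rho^r)$ (Lemma~\ref{lemma_disuguaglianze_triangolo}(ii) on the triangle $\alpha=\check u_2$, $\beta=u^k$, $\gamma=u^r$, whose hypotheses you should verify hold since $\check w_2<w^k=w^r$) and then use the bi-Lipschitz bound of Lemma~\ref{pressure_Lipschitz} together with $p(\rho^k)-p(\rho^r)=v^r-v^k\le v^r-v^l$. The paper instead invokes Lemma~\ref{lemma_disuguaglianze_triangolo} in a somewhat looser form to write $\rho^r-\check\rho_2\le\tfrac{1}{c}(v^r-\hat v)$ directly; your chain uses only the explicitly stated inequalities of that lemma, so it is a small improvement in precision. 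Everything else (wave-type identification, the ordering $\check\rho_2<\rho^r<\rho^k<\rho^l$, the $v$-variation telescoping to zero, the wave count $2-2=0$) matches the paper.
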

\begin{proof}
Proposition \ref{when_non_classical_shock} ensures that $\rho^l\,v^l=\rho^k\,v^k=q$. The hypothesis $v^k+p(\rho^k)<v^l+p(\rho^l)$ and Lemma \ref{lemma_riemann_invariant_property} imply that $\rho^k<\rho^l$. By Propositions \ref{prop_classical_shock_not_possible} and \ref{when_non_classical_shock} the wave with zero speed is a non-classical shock. Since $\rho^r< \rho^k$, the wave with negative speed is a rarefaction. By Remark \ref{remark_rarefaction_fan_as_shocks}, we can consider it as a single shock.\\
Let us call $(\rho^m,v^m)$ the middle state of the classical solution to the Riemann problem with initial datum
$$
(\rho,v)(\tilde{t},x)=\begin{cases}
(\rho^l,v^l) & \text{if } x\leq 0,\\
(\rho^r,v^r) & \text{if } x>0.
\end{cases}
$$
Since $v^r+p(\rho^r)=v^k+p(\rho^k)$, $\rho^k\,v^k=q$ and $\rho^r< \rho^k$, by Lemma \ref{lemma_concave_functions_properties} we find $\rho^r\, v^r >q$. Moreover $\rho^m\,v^m >\rho^r\,v^r>q$, because the conditions $v^m=v^r$ and
$$v^m+p(\rho^m)=v^l+p(\rho^l) >v^k+p(\rho^k) = v^r+p(\rho^r)$$
imply $\rho^m>\rho^r$.\\
Therefore the constraint is not satisfied and the solution $\mathcal{RS}^q_2((\rho^l,v^l),(\rho^r,v^r))(0)$ is the non-classical shock; see Figure \ref{fig_rarefaction_from_right_caso_3}.\\
\begin{figure}[hbtp]
\centering
\begin{subfigure}[hbtp]{0.48\linewidth}
\centering
\definecolor{uququq}{rgb}{0.25098039215686274,0.25098039215686274,0.25098039215686274}
\definecolor{xdxdff}{rgb}{0.49019607843137253,0.49019607843137253,1.}
\begin{tikzpicture}[scale=0.35,line cap=round,line join=round,>=triangle 45,x=1.0cm,y=1.0cm]
\draw[->,color=black] (-1.,0.) -- (17.,0.);
\draw[->,color=black] (0.,-0.5) -- (0.,10.);
\clip(-1.5,-1) rectangle (18.,10.);
\draw[smooth,samples=50,domain=0.001:17.0] plot(\x,{4.0*(\x)-(\x)^(1.5)});
\draw [domain=-1.:17.] plot(\x,{(--5.-0.*\x)/1.});
\draw [domain=0.0:17.0] plot(\x,{(-0.--8.43*\x)/9.92});
\draw[smooth,samples=50,domain=0.001:17.0] plot(\x,{3.7*(\x)-(\x)^(1.5)});
\begin{scriptsize}
\draw (-0.8,5.) node[anchor=north west] {$q$};
\draw (-1.4,9.1) node[anchor=north west] {$\rho v$};
\draw (15,0.1) node[anchor=north west] {$\rho$};
\draw (10.,9.05) node[anchor=north west] {$(\rho^m,v^m)$};
\draw (5.5,5.14) node[anchor=north west] {$(\check{\rho}_2,\check{v}_2)$};
\draw (12.,6.5) node[anchor=north west] {$(\rho^l,v^l)=(\hat{\rho},\hat{v})$};
\draw (9.7,5.14) node[anchor=north west] {$(\rho^k,v^k)$};
\draw (8.1,7.7) node[anchor=north west] {$(\rho^r,v^r)$};
\end{scriptsize}
\begin{scriptsize}
\draw [fill=xdxdff] (9.92,8.43) circle (5pt);
\draw [fill=uququq] (5.87,5.) circle (5pt);
\draw [fill=uququq] (13.09,5) circle (5pt);
\draw [fill=uququq] (10.34,5.00) circle (5pt);
\draw [fill=uququq] (8.12,6.9) circle (5pt);
\end{scriptsize}
\end{tikzpicture}
\caption{Position of the points in the plane $(\rho,\rho\,v)$.}
\end{subfigure}
\quad
\begin{subfigure}[hbtp]{0.48\linewidth}
\centering
\definecolor{fftttt}{rgb}{1.,0.2,0.2}
\definecolor{qqqqff}{rgb}{0.,0.,1.}
\definecolor{ttttff}{rgb}{0.2,0.2,1.}
\begin{tikzpicture}[scale=0.6,line cap=round,line join=round,>=triangle 45,x=1.0cm,y=1.0cm]
\draw[->,color=black] (-4.,0.) -- (4.,0.);
\draw[->,color=black] (0.,-0.5) -- (0.,5.5);
\clip(-4.,-0.5) rectangle (4.,5.5);
\draw [dash pattern=on 3pt off 3pt,domain=-4.:4.] plot(\x,{(--2.-0.*\x)/1.});
\draw [line width=1.6pt,color=ttttff] (3.,0.)-- (0.,2.);
\draw [line width=1.6pt,color=fftttt] (0.,2.) -- (0.,5.);
\draw [line width=1.6pt,color=fftttt,domain=0.0:4.0] plot(\x,{(--6.88--4.3*\x)/3.44});
\draw [line width=1.6pt,color=ttttff] (0.,2.)-- (0.,0.);
\begin{scriptsize}
\draw (-0.6,2.) node[anchor=north west] {$\tilde{t}$};
\draw (-0.1,1.1) node[anchor=north west] {$(\rho^k,v^k)$};
\draw (-2,1.1) node[anchor=north west] {$(\rho^l,v^l)$};
\draw (2.1,4.8) node[anchor=north west] {$(\rho^r,v^r)$};
\draw (2.1,1.1) node[anchor=north west] {$(\rho^r,v^r)$};
\draw (-0.1,4.8) node[anchor=north west] {$(\check{\rho}_2,\check{v}_2)$};
\draw (-0.6,5.30) node[anchor=north west] {$t$};
\draw (3.1,-0.1) node[anchor=north west] {$x$};
\draw (2.6,0.) node[anchor=north west] {$\bar{x}$};
\draw (0.,0.) node[anchor=north west] {$0$};
\draw (-3.5,4.8) node[anchor=north west] {$(\rho^l,v^l)=(\hat{\rho},\hat{v})$};
\end{scriptsize}
\begin{scriptsize}
\draw [fill=qqqqff] (0.,7.28) circle (5pt);
\end{scriptsize}
\end{tikzpicture}
\caption{After the interaction the non-classical shock appears.}
\end{subfigure}
\caption{Interaction between a wave with negative speed and $x=0$ for the case $(\rho^l,v^l)\neq(\rho^k,v^k)$ and $\rho^k> \rho^r$. After the interaction the constraint is not satisfied.}\label{fig_rarefaction_from_right_caso_3}
\end{figure}
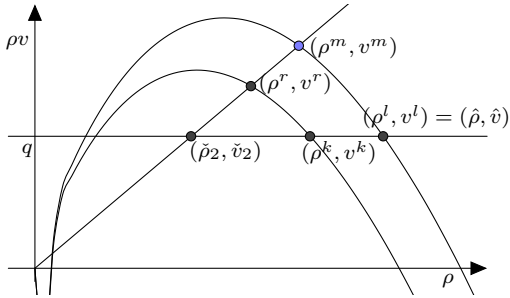
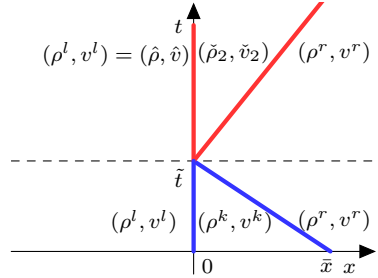

Since $\rho^l\,v^l=q$ the solution for $x\leq 0$ is constant. Therefore
$$\Delta _{\tilde{t}} \, \mathcal{N}=2-2=0.$$
We have:
\begin{itemize}
\item $\rho^r >\check{\rho}_2$, because $\rho^r\,v^r >q= \check{\rho}_2\, \check{v}_2$ and $v^r=\check{v}_2$;
\item $\check{\rho}_2 <\rho^l$, because $\rho^l >\rho^k >\rho^r >\check{\rho}_2$.
\end{itemize}
Therefore
\begin{equation*}
\begin{split}
\Delta TV_{\tilde{t}}(\rho) & =|\rho^r-\check{\rho}_2|+|\check{\rho}_2-\rho^l|-|\rho^r-\rho^k|-|\rho^k-\rho^l|=\\
& = \rho^r-\check{\rho}_2+\rho^l-\check{\rho}_2-\rho^k+\rho^r-\rho^l+\rho^k=\\
&= 2\,(\rho^r-\check{\rho}_2)\leq C\,(\rho^k-\rho^r),
\end{split}
\end{equation*}
by Lemma \ref{lemma_disuguaglianze_triangolo}.\\
For the second component, we obtain
\begin{equation*}
\begin{split}
\Delta TV_{\tilde{t}} (v) & = |v^r-\check{v}_2|+|\check{v}_2-v^l|-|v^r-v^k|-|v^k-v^l|=\\
&= 0+\check{v}_2-v^l-v^r+v^k-v^k+v^l=0,
\end{split}
\end{equation*}
because $\check{v}_2=v^r$.\\
For the Riemann invariant $w$, we have $w^r=w^k$ and $w^k<w^l$ by hypothesis. Moreover $\check{w}_2<w^r$, because $\rho^r>\check{\rho}_2$ and $v^r=\check{v}_2$. Therefore
\begin{equation*}
\begin{split}
\Delta TV_{\tilde{t}} (w) & =|w^r-\check{w}_2|+|\check{w}_2-\hat{w}|-|w^r-w^k|-|w^k-w^l|=\\
& = w^r-\check{w}_2+w^l-\check{w}_2-w^l+w^k=2\,(w^r-\check{w}_2)=\\
& = 2\,(v^r+p(\rho^r)-\check{v}_2-p(\check{\rho}_2)) \leq\\
& \leq 2\,M\,(\rho^r-\check{\rho}_2),
\end{split}
\end{equation*}
where $M$ is a positive constant depending on the invariant domain, because the function $\rho \to p(\rho)$ is bi-Lipschitz by Lemma \ref{pressure_Lipschitz}.\\
By Lemma \ref{lemma_disuguaglianze_triangolo}, there exists a constant $c$ depending on the invariant domain, for which
$$\rho^r-\check{\rho}_2\leq\dfrac{1}{c}(v^r-\hat{v}) = \dfrac{1}{c}(v^r-v^l),$$
because $(\rho^l,v^l)=(\hat{\rho},\hat{v})$.\\
Therefore
$$\Delta TV_{\tilde{t}}(w)\leq C \, (v^r-v^l),$$
where $C$ is a positive constant depending only on the invariant domain.
\end{proof}
\subsection{Other interactions}
Let us now consider the case of two waves interacting at time $\tilde{t}$, ``far'' from the line $x=0$.
\subsubsection{Two waves with positive speed}
The next proposition states that two waves with positive speed cannot interact, provided that the functions $\rho \to \rho\, L_1(\rho,\rho_0,v_0)$ passing through some point $(\rho_0,v_0)$ in $\mathbb{R}^+\times \mathbb{R}^+$ are decreasing in the invariant domain $\mathcal{D}_{v_1,v_2,w_1,w_2}$.
\begin{prop}\label{prop_interacting_waves_pos_speed}
Assume $\lambda_1(\rho_\text{min},v_\text{min})<0$. Fix three points $(\rho^l,v^l)$, $(\rho^k,v^k)$ and $(\rho^r,v^r)$ in the invariant domain $\mathcal{D}_{v_1,v_2,w_1,w_2}$ and suppose that two waves with positive speed join respectively $(\rho^l,v^l)$ to $(\rho^k,v^k)$ and $(\rho^k,v^k)$ to $(\rho^r,v^r)$. Then:
\begin{enumerate}
\item[(i)] the waves are contact discontinuities;
\item[(ii)] the waves do not interact.
\end{enumerate}
\end{prop}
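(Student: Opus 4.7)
The plan is to use Lemma \ref{lemma_curve_decrescenti} to rule out waves of the first family, and then to observe that contact discontinuities of the second family between adjacent states share the same propagation speed.

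For part (i), I would argue as follows. By hypothesis $\lambda_1(\rho_\text{min},v_\text{min})<0$, so Lemma \ref{lemma_curve_decrescenti} applies and tells us that for every $(\rho_0,v_0)\in \mathcal{D}_{v_1,v_2,w_1,w_2}$ the function $\rho \to \rho\, L_1(\rho,\rho_0,v_0)$ is strictly decreasing inside $\mathcal{D}_{v_1,v_2,w_1,w_2}$. By Proposition \ref{eigenvalue_as_slope}, this derivative equals $\lambda_1(\rho,L_1(\rho,\rho_0,v_0))$, so every point of the first Lax curve inside the invariant domain has $\lambda_1<0$. Now a wave of the first family connecting two states in $\mathcal{D}_{v_1,v_2,w_1,w_2}$ has propagation speed either equal to $\lambda_1$ at one of the endpoints (rarefaction wave, by Proposition \ref{rarefaction_wave_properties}) or, for a shock, to the slope of the chord joining the two endpoints on the graph of $\rho\to \rho L_1(\rho,\rho_0,v_0)$; in both cases it is negative. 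Hence a wave with positive speed joining two states of $\mathcal{D}_{v_1,v_2,w_1,w_2}$ cannot belong to the first family and must therefore be a contact discontinuity of the second family, proving (i).

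For part (ii), apply Proposition \ref{prop_waves_conditions}(iii) to each contact discontinuity: the one joining $(\rho^l,v^l)$ to $(\rho^k,v^k)$ satisfies $v^l=v^k$, and the one joining $(\rho^k,v^k)$ to $(\rho^r,v^r)$ satisfies $v^k=v^r$. By Remark \ref{speed_of_the_waves}, both discontinuities propagate with the speed $\lambda_2$, which is respectively $v^k=v^l$ and $v^r=v^k$. Thus the two waves propagate with the same speed $v^k$, and therefore the distance between them remains constant in time; in particular they cannot interact.

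The proof is essentially immediate once Lemma \ref{lemma_curve_decrescenti} and Proposition \ref{prop_waves_conditions} are in hand, so there is no substantial obstacle. The only point to be a little careful about is that when we say a wave ``with positive speed'' we allow both shocks and rarefactions of the first family and contact discontinuities of the second family; the argument in (i) handles both first-family cases uniformly because the shock speed lies between the values of $\lambda_1$ at the two endpoints by the Lax entropy condition (\ref{Lax_entropy_condition}), all of which are strictly negative on $\mathcal{D}_{v_1,v_2,w_1,w_2}$ under the standing hypothesis.
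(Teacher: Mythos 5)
Your proposal is correct and follows essentially the same route as the paper: invoke Lemma \ref{lemma_curve_decrescenti} to force positive-speed waves to be of the second family, then observe that the resulting contact discontinuities both travel at speed $v^k$ and so cannot meet. The extra detail you add in part (i) about shock speeds being chord slopes bounded by $\lambda_1$ at the endpoints is a fair unpacking of what the paper leaves implicit, but it does not change the argument.
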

\begin{proof}
By Lemma \ref{lemma_curve_decrescenti}, the hypothesis $\lambda_1(\rho_\text{min},v_\text{min})<0$ implies that waves with positive speed are of the second family, i.e. contact discontinuities. Hence $v^l=v^k=v^r$.\\
The wave joining $(\rho^l,v^l)$ to $(\rho^k,v^k)$ has propagation speed $v^k$ and the wave connecting $(\rho^k,v^k)$ to $(\rho^r,v^r)$ has speed $v^r$; see Figure \ref{fig_onde_positive_parallele}. Therefore no interactions happen between the waves.
\end{proof}
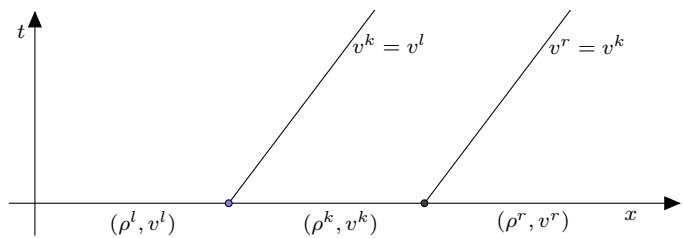
\begin{figure}[hbtp]
\centering
\definecolor{uququq}{rgb}{0.25098039215686274,0.25098039215686274,0.25098039215686274}
\definecolor{xdxdff}{rgb}{0.49019607843137253,0.49019607843137253,1.}
\begin{tikzpicture}[scale = 0.85, line cap=round,line join=round,>=triangle 45,x=1.0cm,y=1.0cm]
\draw[->,color=black] (-0.4,0.) -- (10.,0.);
\draw[->,color=black] (0.,-0.5) -- (0.,3.);
\clip(-0.4,-0.5) rectangle (10.,3.);
\draw [domain=3.0:10.0] plot(\x,{(-12.--4.*\x)/3.});
\draw [domain=6.02:10.0] plot(\x,{(-13.266--2.2*\x)/1.65});
\begin{scriptsize}
\draw (-0.4,2.9) node[anchor=north west] {$t$};
\draw (9.,0.) node[anchor=north west] {$x$};
\draw (1.03,0.) node[anchor=north west] {$(\rho^l,v^l)$};
\draw (4.02,0) node[anchor=north west] {$(\rho^k,v^k)$};
\draw (7.02,0.) node[anchor=north west] {$(\rho^r,v^r)$};
\draw (4.79,2.76) node[anchor=north west] {$v^k=v^l$};
\draw (7.83,2.75) node[anchor=north west] {$v^r=v^k$};
\end{scriptsize}
\begin{scriptsize}
\draw [fill=xdxdff] (3.,0.) circle (1.5pt);
\draw [fill=uququq] (6.03,0.) circle (1.5pt);
\end{scriptsize}
\end{tikzpicture}
\caption{Under the hypotheses $\lambda_1(\rho_\text{min},v_\text{min})<0$, two waves with positive speed cannot interact, because they travel with the same speed.}\label{fig_onde_positive_parallele}
\end{figure}
\subsubsection{Two waves with negative speed}
The next proposition shows that two waves with negative speed can interact. After the interaction a new wave with negative speed arises and the total variation decreases.
\begin{prop}\label{prop_interacting_waves_with_neg_speed}
Fix three points $(\rho^l,v^l)$, $(\rho^k,v^k)$ and $(\rho^r,v^r)$ in the invariant domain $\mathcal{D}_{v_1,v_2,w_1,w_2}$ and suppose that two waves with negative speed connect respectively $(\rho^l,v^l)$ to $(\rho^k,v^k)$ and $(\rho^k,v^k)$ to $(\rho^r,v^r)$. If at time $\tilde{t}>0$ the waves interact at some point $\bar{x}\in \mathbb{R}$, then:
\begin{enumerate}
\item[(i)] after the interaction only a new wave of the first family appears;
\item[(ii)] the total variation decreases or remains constant:
$$\Delta TV_{\tilde{t}}(\rho) \leq 0, \;\;\; \Delta TV_{\tilde{t}}(v) \leq 0 \; \text{ and } \; \Delta TV_{\tilde{t}} (w)=0;$$
\item[(iii)] the number of waves before and after the interaction decreases:
$$\Delta_{\tilde{t}} \mathcal{N}= -1.$$
\end{enumerate} 
\end{prop}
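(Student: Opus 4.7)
The plan is to reduce the whole statement to the observation that, under the standing hypothesis making $\mathcal{D}_{v_1,v_2,w_1,w_2}$ invariant (in particular $\lambda_1(\rho_\text{min},v_\text{min})<0$), every wave with negative propagation speed must belong to the first characteristic family. Indeed, by Lemma \ref{lemma_curve_decrescenti} the function $\rho \to \rho\,L_1(\rho,\rho_0,v_0)$ is strictly decreasing throughout $\mathcal{D}_{v_1,v_2,w_1,w_2}$, so $\lambda_1<0$ on the invariant domain, while any contact discontinuity propagates with speed $v \geq v_1 > 0$. Hence both interacting waves are of the first family, so the Riemann invariant $w = v + p(\rho)$ is constant across each, giving $w^l = w^k = w^r$.

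Next, I would observe that $(\rho^l,v^l)$ and $(\rho^r,v^r)$ therefore lie on the same Lax curve of the first family. Consequently, the Riemann problem generated at $(\tilde t, \bar x)$ by the interaction has no nontrivial contact-discontinuity component: its middle state coincides with $(\rho^r,v^r)$, and the solution reduces to a single first-family wave — a shock if $\rho^l < \rho^r$ (by Proposition \ref{prop_waves_conditions}(ii)) or a rarefaction if $\rho^l \geq \rho^r$, which in our setting is handled as a single wavefront according to Remark \ref{remark_rarefaction_fan_as_shocks}. This yields (i), and counting wavefronts gives $\Delta_{\tilde t}\mathcal{N} = 1-2 = -1$, proving (iii).

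For (ii), the equality $w^l = w^k = w^r$ implies $TV(w) = 0$ both before and after the interaction, so $\Delta TV_{\tilde t}(w) = 0$. For the density and velocity, the triangle inequality directly yields
\[
|\rho^l - \rho^r| \leq |\rho^l - \rho^k| + |\rho^k - \rho^r|, \qquad |v^l - v^r| \leq |v^l - v^k| + |v^k - v^r|,
\]
and since $TV_{t^2}(\rho) = |\rho^l - \rho^r|$ while $TV_{t^1}(\rho) = |\rho^l - \rho^k| + |\rho^k - \rho^r|$ (and analogously for $v$), subtraction gives $\Delta TV_{\tilde t}(\rho) \leq 0$ and $\Delta TV_{\tilde t}(v) \leq 0$.

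The only subtle point — which I do not expect to be a real obstacle — is making sure that the outgoing first-family wave is treated as a single wavefront in the wave-front tracking bookkeeping, even when it is a rarefaction; this is precisely the role of Remark \ref{remark_rarefaction_fan_as_shocks}, where rarefactions born from interactions are replaced by a single (non-classical) discontinuity, so that the count $\Delta_{\tilde t}\mathcal{N} = -1$ is consistent. Everything else is a direct consequence of the decreasing character of the first-family Lax curves inside the invariant domain and of the conservation of $w$ across first-family waves.
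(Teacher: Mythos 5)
Your proposal is correct and follows essentially the same route as the paper's proof: both identify the incoming waves as first-family (so $w^l=w^k=w^r$), note the outgoing Riemann problem therefore has no contact-discontinuity component and produces a single first-family wavefront (shock, or rarefaction treated as one front via Remark \ref{remark_rarefaction_fan_as_shocks}), giving $\Delta_{\tilde t}\mathcal{N}=-1$, and then apply the triangle inequality for $\Delta TV_{\tilde t}(\rho),\Delta TV_{\tilde t}(v)\le 0$ and the constancy of $w$ for $\Delta TV_{\tilde t}(w)=0$. The only cosmetic difference is that you cite Lemma \ref{lemma_curve_decrescenti} and the hypothesis $\lambda_1(\rho_\text{min},v_\text{min})<0$ to rule out second-family waves, whereas the paper asserts this step directly (it in fact only needs that contact discontinuities travel with speed $v\ge v_1>0$), so the logic is the same.
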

\begin{proof}
Waves with negative speed are of the first family, i.e. shock or rarefaction waves. Therefore
$$v^l+p(\rho^l)=v^k+p(\rho^k)=v^r+p(\rho^r).$$
At time $\tilde{t}$ we have to solve the Riemann problem with initial datum
$$(\rho,v)(\tilde{t},x)=
\begin{cases}
(\rho^l,v^l) & \text{if } x\leq \bar{x},\\
(\rho^r,v^r) & \text{if } x >\bar{x}.
\end{cases}
$$
Since $v^l+p(\rho^l)=v^r+p(\rho^r)$, the solution is a new wave of the first family. If the solution is a shock, then the number of waves decreases:
$$\Delta_{\tilde{t}} \mathcal{N}=-1.$$
If the solution is a rarefaction wave, by Remark \ref{remark_rarefaction_fan_as_shocks}, we approximate it with a single shock. Hence
$$\Delta_{\tilde{t}} \mathcal{N}=-1.$$
Therefore we find (see Figure \ref{fig_onde_vel_neg})
$$\Delta TV_{\tilde{t}}(\rho)=|\rho^r-\rho^l|-|\rho^r-\rho^k|-|\rho^k-\rho^l| \; \text{ and}$$
$$\Delta TV_{\tilde{t}}(v) =|v^r-v^l|-|v^r-v^k|-|v^k-v^l|.$$
By the triangular inequality, we have
$$|\rho^r-\rho^l| \leq |\rho^r-\rho^k|+|\rho^k-\rho^l|.$$
Hence
$$\Delta TV_{\tilde{t}}(\rho) \leq 0.$$
Similarly for the velocity.\\
For the Riemann invariant $w$, we have $w^r=w^k=w^l$. Hence
$$\Delta TV_{\tilde{t}}(w)=|w^r-w^l|-|w^r-w^k|-|w^k-w^l|=0.$$
\end{proof}
\begin{figure}[hbtp]
\centering
\definecolor{qqqqff}{rgb}{0.,0.,1.}
\definecolor{xdxdff}{rgb}{0.49019607843137253,0.49019607843137253,1.}
\begin{tikzpicture}[line cap=round,line join=round,>=triangle 45,x=1.0cm,y=1.0cm]
\draw[->,color=black] (-0.5,0.) -- (11.,0.);
\draw[->,color=black] (0.,-0.5) -- (0.,5.);
\clip(-0.5,-0.5) rectangle (11.,5.);
\draw [domain=-0.5:11.] plot(\x,{(--2.32-0.*\x)/1.});
\draw (5.55,0.)-- (3.25,2.32);
\draw (10.,0.)-- (3.25,2.32);
\draw (3.25,2.32)-- (1.21,7.37);
\draw [dash pattern=on 1pt off 1pt] (3.25,2.32)-- (3.2562,0.);
\draw (1.51,1.35) node[anchor=north west] {$(\rho^l,v^l)$};
\draw (4.84,1.35) node[anchor=north west] {$(\rho^k,v^k)$};
\draw (7.83,1.35) node[anchor=north west] {$(\rho^r,v^r)$};
\draw (-0.5,4.9) node[anchor=north west] {$t$};
\draw (10.3,0.) node[anchor=north west] {$x$};
\draw (7.73,4.45) node[anchor=north west] {$(\rho^r,v^r)$};
\draw (0.93,4.45) node[anchor=north west] {$(\rho^l,v^l)$};
\draw (-0.5,2.3) node[anchor=north west] {$\tilde{t}$};
\draw (3.04,0.) node[anchor=north west] {$\bar{x}$};
\begin{scriptsize}
\draw [fill=xdxdff] (5.55,0.) circle (1.5pt);
\draw [fill=xdxdff] (3.25,2.32) circle (1.5pt);
\draw [fill=xdxdff] (10.,0.) circle (1.5pt);
\draw [fill=qqqqff] (1.21,7.37) circle (1.5pt);
\draw[color=qqqqff] (-1.83,5.61) node {$D$};
\end{scriptsize}
\end{tikzpicture}
\caption{Two waves with negative speed interact at time $\tilde{t}$. After the interaction a new wave with negative speed appears.}\label{fig_onde_vel_neg}
\end{figure}
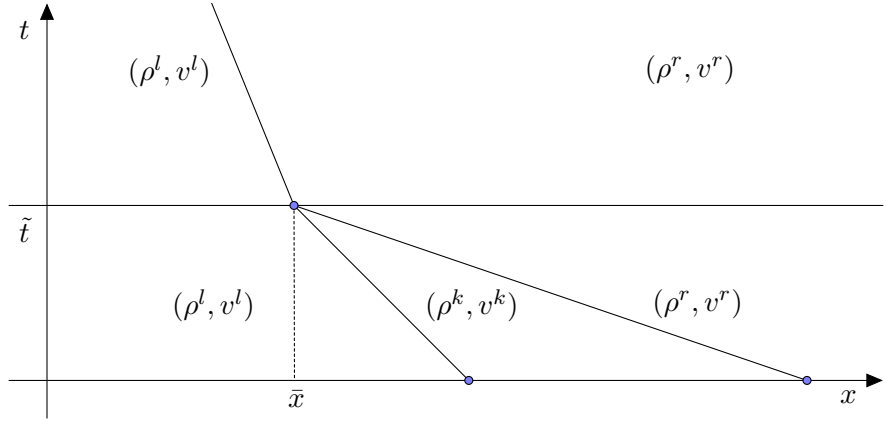
\subsubsection{A wave has negative speed and a wave has positive speed}
\begin{prop}\label{prop_interacting_wave_neg_speed_pos_speed}
Assume $\lambda_1(\rho_{\text{min}},v_\text{min})<0$. Fix three points $(\rho^l,v^l)$, $(\rho^k,v^k)$ and $(\rho^r,v^r)$ in the invariant domain $\mathcal{D}_{v_1,v_2,w_1,w_2}$. Assume that a wave with positive speed joins $(\rho^l,v^l)$ to $(\rho^k,v^k)$ and a wave with negative speed connects $(\rho^k,v^k)$ to $(\rho^r,v^r)$ and that the two waves interacts at time $\tilde{t}>0$ at a point $\bar{x}\in \mathbb{R}$. Let $(\rho^m,v^m)$ be the middle state of the classical solution to the Riemann problem with initial datum
\begin{equation}\label{initial_datum_prop_onda_neg_onda_pos}
(\rho,v)(\tilde{t},x)=\begin{cases}
(\rho^l,v^l) & \text{if } x\leq \bar{x},\\
(\rho^r,v^r) & \text{if } x> \bar{x}.
\end{cases}
\end{equation}
Then:
\begin{enumerate}
\item[(i)] the solution after the interaction is given by a wave of the first family joining $(\rho^l,v^l)$ to $(\rho^m,v^m)$ and a wave of the second family connecting $(\rho^m,v^m)$ to $(\rho^r,v^r)$ (the middle state always appears);
\item[(ii)] for the total variation we have: 
$$\Delta TV_{\tilde{t}}(\rho)\leq C\, |v^r-v^l| \; \; \; \Delta TV_{\tilde{t}}(v) \leq 0 \; \text{ and } \; \Delta TV_{\tilde{t}}(w)=0,$$
where $C$ is a positive constant depending only on the invariant domain $\mathcal{D}_{v_1,v_2,w_1,w_2}$;
\item[(iii)] the number of waves before and after the interaction remains unchanged:
$$\Delta_{\tilde{t}} \mathcal{N}= 0.$$
\end{enumerate}
\end{prop}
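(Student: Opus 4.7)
The plan is to identify the types of the two incoming waves from the sign of their speeds, build the middle state via the standard Riemann solver, and then verify each claim of the proposition by direct algebraic comparison.

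First I would use the hypothesis $\lambda_1(\rho_{\text{min}},v_{\text{min}})<0$: by Lemma \ref{lemma_curve_decrescenti} every wave of the first family in $\mathcal{D}_{v_1,v_2,w_1,w_2}$ travels with strictly negative speed, so the wave with positive speed is necessarily a contact discontinuity and $v^l=v^k$, while the wave with negative speed is necessarily of the first family and $v^k+p(\rho^k)=v^r+p(\rho^r)$. Theorem \ref{prop_standard_solution} applied to (\ref{initial_datum_prop_onda_neg_onda_pos}) then characterises the middle state by $v^m=v^r$ and $v^m+p(\rho^m)=v^l+p(\rho^l)$. To show that $(\rho^m,v^m)$ really is intermediate I would observe $w^m-w^r=p(\rho^l)-p(\rho^k)\neq 0$ (since the contact discontinuity is non-trivial) and $v^m-v^l=v^r-v^k\neq 0$ (since the first-family wave is non-trivial); combined with $w^m=w^l$ and $v^m=v^r$ this excludes $(\rho^m,v^m)=(\rho^l,v^l)$ and $(\rho^m,v^m)=(\rho^r,v^r)$. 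The outgoing fan therefore consists of a first-family wave from $(\rho^l,v^l)$ to $(\rho^m,v^m)$ (with negative speed again by Lemma \ref{lemma_curve_decrescenti}) followed by a second-family wave from $(\rho^m,v^m)$ to $(\rho^r,v^r)$. Treating any rarefaction as a single front using Remark \ref{remark_rarefaction_fan_as_shocks}, this gives two outgoing waves against two incoming, i.e.\ $\Delta_{\tilde{t}}\mathcal{N}=0$, proving (i) and part of (iii).

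The identities $v^l=v^k$, $v^m=v^r$, $w^m=w^l$, $w^r=w^k$ make the $v$- and $w$-estimates immediate: a direct expansion of the four absolute values gives $\Delta TV_{\tilde{t}}(v)=|v^r-v^l|-|v^r-v^l|=0$ and $\Delta TV_{\tilde{t}}(w)=|w^l-w^r|-|w^l-w^r|=0$. For the density bound the Riemann-invariant structure yields $p(\rho^l)-p(\rho^m)=v^r-v^l$ and $p(\rho^k)-p(\rho^r)=v^r-v^l$, so Lemma \ref{pressure_Lipschitz} applied on the compact $\rho$-projection of the invariant domain produces a constant $C_1>0$ depending only on $\mathcal{D}_{v_1,v_2,w_1,w_2}$ with $|\rho^l-\rho^m|\leq C_1^{-1}|v^r-v^l|$ and $|\rho^k-\rho^r|\leq C_1^{-1}|v^r-v^l|$. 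I would then decompose
$$\Delta TV_{\tilde{t}}(\rho)=\bigl(|\rho^l-\rho^m|-|\rho^k-\rho^r|\bigr)+\bigl(|\rho^m-\rho^r|-|\rho^l-\rho^k|\bigr)$$
and bound the second bracket using the triangle inequality $|\rho^m-\rho^r|\leq|\rho^m-\rho^l|+|\rho^l-\rho^k|+|\rho^k-\rho^r|$, obtaining $\Delta TV_{\tilde{t}}(\rho)\leq 3C_1^{-1}|v^r-v^l|$.

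The subtle step, and where I expect to be most careful, is precisely this density estimate: depending on the signs of $v^r-v^l$ and $w^l-w^k$, the four values $\rho^l,\rho^k,\rho^m,\rho^r$ can be arranged on the real line in several different orders, so one cannot drop absolute values naively, and a crude Lipschitz bound on all four terms would produce an unwanted $|w^l-w^k|$ contribution. The reorganisation above sidesteps the case analysis and exploits the fact that both ``problematic'' distances $|\rho^l-\rho^m|$ and $|\rho^k-\rho^r|$ are controlled by the same quantity $|v^r-v^l|$, which is exactly what the Riemann-invariant geometry guarantees.
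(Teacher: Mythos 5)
Your proposal is correct, and the route you take for the density estimate is genuinely different from the paper's. The paper carries out a four‑way case analysis on the sign of $v^r-v^l$ and the relative position of $w^l$ and $w^k$: in two of the sub‑cases the densities happen to be linearly ordered and an exact cancellation gives $\Delta TV_{\tilde t}(\rho)=0$; in a third sub‑case monotonicity of the map $v\mapsto p^{-1}(w^l-v)-p^{-1}(w^r-v)$ gives $\Delta TV_{\tilde t}(\rho)\le 0$; and only in the remaining sub‑case does it invoke the Lipschitz continuity of $v\mapsto p^{-1}(w^r-v)-p^{-1}(w^l-v)$ to obtain the bound $C|v^r-v^l|$. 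You, by contrast, observe that the two Riemann‑invariant relations $w^m=w^l$, $v^m=v^r$ and $w^k=w^r$, $v^k=v^l$ force $p(\rho^l)-p(\rho^m)=p(\rho^k)-p(\rho^r)=v^r-v^l$, so that \emph{both} $|\rho^l-\rho^m|$ and $|\rho^k-\rho^r|$ are directly controlled by $|v^r-v^l|$ via the bi‑Lipschitz pressure of Lemma~\ref{pressure_Lipschitz}; the triangle inequality then disposes of the residual term without any case analysis. This is shorter and conceptually cleaner. What the paper's argument buys is sharpness: it shows $\Delta TV_{\tilde t}(\rho)\le 0$ except in one of the four configurations, information that is lost in your uniform $3C_1^{-1}|v^r-v^l|$ bound (and which, incidentally, after the cancellation $-|\rho^k-\rho^r|+|\rho^k-\rho^r|$ your argument actually yields $2C_1^{-1}|v^r-v^l|$ rather than $3C_1^{-1}$). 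Since the proposition only asserts a bound of the form $C|v^r-v^l|$, the weaker estimate is sufficient, and both proofs are valid. Your treatment of (i), (iii), and the $v$‑ and $w$‑variations coincides with the paper's.
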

\begin{proof}
By Lemma \ref{lemma_curve_decrescenti}, the hypothesis $\lambda_1(\rho_\text{min},v_\text{min})<0$ implies that waves of the first family, i.e. shock or rarefaction waves, have negative speed. Hence waves with positive speed are of the second family, i.e. contact discontinuities. Therefore
$$v^l=v^k \; \text{ and } \; v^k+p(\rho^k)=v^r+p(\rho^r).$$
We have
$$v^l+p(\rho^l) \neq v^r+p(\rho^r),$$
otherwise it would be $v^l+p(\rho^l)=v^k+p(\rho^k)$, which implies $(\rho^l,v^l)=(\rho^k,v^k)$. Moreover, we have
$$v^l \neq v^r,$$
otherwise we find $v^k=v^l=v^r$, which implies that a wave of the second should join $(\rho^k,v^k)$ to $(\rho^r,v^r)$. Hence the middle state $(\rho^m,v^m)$ appears in the solution to the Riemann problem with initial datum (\ref{initial_datum_prop_onda_neg_onda_pos}) and by the definition of $(\rho^m,v^m)$, it is connected to $(\rho^l,v^l)$ and $(\rho^r,v^r)$ respectively by a wave of the first and of the second family; see Figure \ref{fig_onda_neg_onda_pos}.\\
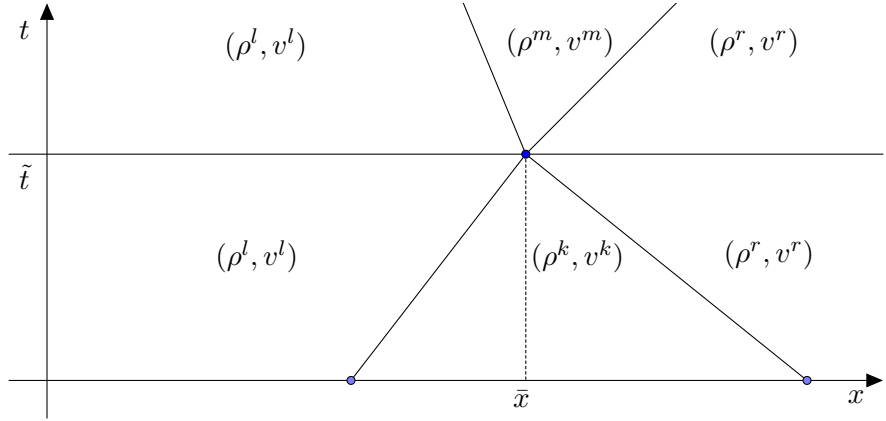
\begin{figure}[hbtp]
\centering
\definecolor{qqqqff}{rgb}{0.,0.,1.}
\definecolor{xdxdff}{rgb}{0.49019607843137253,0.49019607843137253,1.}
\begin{tikzpicture}[line cap=round,line join=round,>=triangle 45,x=1.0cm,y=1.0cm]
\draw[->,color=black] (-0.5,0.) -- (11.,0.);
\draw[->,color=black] (0.,-0.5) -- (0.,5.);
\clip(-0.5,-0.5) rectangle (11.,5.);
\draw (4.,0.)-- (6.3,3.);
\draw (10.,0.)-- (6.3,3.);
\draw [domain=-0.5:11.] plot(\x,{(--3.-0.*\x)/1.});
\draw (6.3,3.)-- (3.92,8.8);
\draw (6.3,3.)-- (11.36,8.12);
\draw (2.08,2.) node[anchor=north west] {$(\rho^l,v^l)$};
\draw (6.23,2.) node[anchor=north west] {$(\rho^k,v^k)$};
\draw (8.77,2) node[anchor=north west] {$(\rho^r,v^r)$};
\draw (2.20,4.8) node[anchor=north west] {$(\rho^l,v^l)$};
\draw (8.57,4.8) node[anchor=north west] {$(\rho^r,v^r)$};
\draw (5.9,4.8) node[anchor=north west] {$(\rho^m,v^m)$};
\draw (-0.5,4.9) node[anchor=north west] {$t$};
\draw (-0.5,3.) node[anchor=north west] {$\tilde{t}$};
\draw (10.4,0.) node[anchor=north west] {$x$};
\draw [dash pattern=on 1pt off 1pt] (6.3,3.)-- (6.3,0.);
\draw (6.,0.) node[anchor=north west] {$\bar{x}$};
\begin{scriptsize}
\draw [fill=xdxdff] (4.,0.) circle (1.5pt);
\draw [fill=qqqqff] (6.3,3.) circle (1.5pt);
\draw [fill=xdxdff] (10.,0.) circle (1.5pt);
\draw [fill=qqqqff] (3.92,8.8) circle (1.5pt);
\draw[color=qqqqff] (-0.79,6.52) node {$D$};
\draw [fill=qqqqff] (11.36,8.12) circle (1.5pt);
\draw[color=qqqqff] (-0.79,6.52) node {$E$};
\end{scriptsize}
\end{tikzpicture}
\caption{One wave with positive speed interacts with another one with negative speed. After the interaction two new waves appear: one has negative speed and one has positive speed.}\label{fig_onda_neg_onda_pos}
\end{figure}

If the wave joining $(\rho^l,v^l)$ to $(\rho^m,v^m)$ is a shock, then the number of waves remains unchanged. If the wave is a rarefaction, we approximate it with a single non-classical shock. Hence
$$\Delta_{\tilde{t}}\mathcal{N}=0.$$
For the speed we find:
\begin{equation*}
\begin{split}
\Delta TV_{\tilde{t}}(v) & =|v^r-v^m|+|v^m-v^l|-|v^r-v^k|-|v^k-v^l|=\\
& = 0+|v^r-v^l|-|v^r-v^l|-0=\\
& = 0,
\end{split}
\end{equation*}
because $v^r=v^m$ and $v^k=v^l$.\\
For the Riemann invariant $w$, since $w^r=w^k$ and $w^m=w^l$, we find
\begin{equation*}
\begin{split}
\Delta TV_{\tilde{t}}(w)& = |w^r-w^m|+|w^m-w^l|-|w^r-w^k|-|w^k-w^l|=\\
& = |w^r-w^m|-|w^k-w^l|=\\
& = |w^r-w^l|-|w^r-w^l|=0.
\end{split}
\end{equation*}
For the density we have
$$\Delta TV_{\tilde{t}}(\rho)=|\rho^r-\rho^m|+|\rho^m-\rho^l|-|\rho^r-\rho^k|-|\rho^k-\rho^l|.$$
We have to distinguish different cases.\\
First, suppose that
$$v^l+p(\rho^l)< v^k+p(\rho^k).$$
If $v^r < v^k$, then (see Figure \ref{dim_onda_pos_onda_neg_caso_1})
$$\Delta TV_{\tilde{t}}(\rho)=\rho^r-\rho^m+\rho^m-\rho^l-\rho^r+\rho^k-\rho^k+\rho^l=0,$$
because:
\begin{itemize}
\item $v^m=v^r$ and $v^r+p(\rho^r)=v^k+p(\rho^k)> v^l+p(\rho^l)=v^m+p(\rho^m)$ imply $\rho^r > \rho^m$;
\item $v^m+p(\rho^m)=v^l+p(\rho^l)$ and $v^m=v^r < v^k=v^l$ imply $\rho^m > \rho^l$;
\item $v^k+p(\rho^k)=v^r+p(\rho^r)$ and $v^r < v^k$ imply $\rho^r > \rho^k$;
\item $v^k=v^l$ and $v^k+p(\rho^k)> v^l+p(\rho^l)$ imply $\rho^k > \rho^l$.
\end{itemize}
If $v^r >v^k$, then (see Figure \ref{dim_onda_pos_onda_neg_caso_2})
$$\Delta TV_{\tilde{t}}(\rho)=\rho^r-\rho^m+\rho^l-\rho^m-\rho^k+\rho^r-\rho^k+\rho^l=2\,(\rho^r-\rho^m+\rho^l-\rho^k).$$
We claim (\textit{Claim (i)}) that
$$\rho^r-\rho^m\leq C|v^r-v^l|+|\rho^k-\rho^l|=C(v^r-v^l)+(\rho^k-\rho^l),$$
where $C$ is a constant depending only on the invariant domain $\mathcal{D}_{v_1,v_2,w_1,w_2}$. We postpone the proof of this claim. Hence
$$\Delta TV_{\tilde{t}}(\rho)\leq 2\,C\,(v^r-v^l).$$

\begin{figure}[hbtp]
\centering
\begin{subfigure}[hbtp]{0.48\linewidth}
\centering
\definecolor{uququq}{rgb}{0.25098039215686274,0.25098039215686274,0.25098039215686274}
\begin{tikzpicture}[scale=0.35,line cap=round,line join=round,>=triangle 45,x=1.0cm,y=1.0cm]
\draw[->,color=black] (-1.,0.) -- (17.,0.);
\draw[->,color=black] (0.,-0.5) -- (0.,10.);
\clip(-1.5,-1) rectangle (17.,10.);
\draw[smooth,samples=50,domain=0.001:17.0] plot(\x,{4.0*(\x)-(\x)^(1.5)});
\draw[smooth,samples=50,domain=0.001:17.0] plot(\x,{3.5*(\x)-(\x)^(1.5)});
\draw [domain=0.0:17.0] plot(\x,{(-0.--7.3*\x)/7.74});
\draw [domain=0.0:17.0] plot(\x,{(-0.--4.82*\x)/10.34});
\begin{scriptsize}
\draw (-1.4,9.35) node[anchor=north west] {$\rho v$};
\draw (15,0.1) node[anchor=north west] {$\rho$};
\draw (9.4,9.6) node[anchor=north west] {$(\rho^k,v^k)$};
\draw (12.5,6.4) node[anchor=north west] {$(\rho^r,v^r)$};
\draw (9.25,4.92) node[anchor=north west] {$(\rho^m,v^m)$};
\draw (6.6,7.) node[anchor=north west] {$(\rho^l,v^l)$};
\end{scriptsize}
\begin{scriptsize}
\draw [fill=uququq] (6.53,6.16) circle (5pt);
\draw [fill=uququq] (9.34,8.81) circle (5pt);
\draw [fill=uququq] (12.48,5.82) circle (5pt);
\draw [fill=uququq] (9.20,4.29) circle (5pt);
\end{scriptsize}
\end{tikzpicture}
\caption{Case $v^k> v^r$.}\label{dim_onda_pos_onda_neg_caso_1}
\end{subfigure}
\quad
\begin{subfigure}[hbtp]{0.48\linewidth}
\centering
\definecolor{uququq}{rgb}{0.25098039215686274,0.25098039215686274,0.25098039215686274}
\begin{tikzpicture}[scale=0.35,line cap=round,line join=round,>=triangle 45,x=1.0cm,y=1.0cm]
\draw[->,color=black] (-1.,0.) -- (17.,0.);
\draw[->,color=black] (0.,-0.5) -- (0.,10.);
\clip(-1.5,-1) rectangle (17.,10.);
\draw[smooth,samples=50,domain=0.001:17.0] plot(\x,{4.0*(\x)-(\x)^(1.5)});
\draw[smooth,samples=50,domain=0.001:17.0] plot(\x,{3.5*(\x)-(\x)^(1.5)});
\draw [domain=0.0:17.0] plot(\x,{(-0.--7.3*\x)/7.74});
\draw [domain=0.0:17.0] plot(\x,{(-0.--4.82*\x)/10.34});
\begin{scriptsize}
\draw (-1.4,9.3) node[anchor=north west] {$\rho v$};
\draw (15,0.1) node[anchor=north west] {$\rho$};
\draw (12.45,6.45) node[anchor=north west] {$(\rho^k,v^k)$};
\draw (9.45,9.6) node[anchor=north west] {$(\rho^r,v^r)$};
\draw (6.5,7) node[anchor=north west] {$(\rho^m,v^m)$};
\draw (9.35,4.86) node[anchor=north west] {$(\rho^l,v^l)$};
\end{scriptsize}
\begin{scriptsize}
\draw [fill=uququq] (6.53,6.16) circle (5pt);
\draw [fill=uququq] (9.34,8.81) circle (5pt);
\draw [fill=uququq] (12.48,5.82) circle (5pt);
\draw [fill=uququq] (9.20,4.29) circle (5pt);
\end{scriptsize}
\end{tikzpicture}
\caption{Case $v^k<v^r$.}\label{dim_onda_pos_onda_neg_caso_2}
\end{subfigure}
\caption{Interaction between a wave with negative speed and a wave with positive speed: flux diagram. Case $v^l+p(\rho^l)< v^k+p(\rho^k)$.}
\end{figure}
Now, suppose that
$$v^l+p(\rho^l)> v^k+p(\rho^k).$$
If $v^r> v^l$, then (see Figure \ref{dim_onda_pos_onda_neg_caso_3})
$$\Delta TV_{\tilde{t}}(\rho)=\rho^m-\rho^r+\rho^l-\rho^m-\rho^k+\rho^r-\rho^l+\rho^k=0,$$
because:
\begin{itemize}
\item $v^m=v^r$ and $v^m+p(\rho^m)=v^l+p(\rho^l)> v^k+p(\rho^k)=v^r+p(\rho^r)$ imply $\rho^m>\rho^r$;
\item $v^l<v^r=v^m$ and $v^m+p(\rho^m)=v^l+p(\rho^l)$ imply $\rho^l>\rho^m$;
\item $v^k=v^l<v^r$ and $v^r+p(\rho^r)=v^k+p(\rho^k)$ imply $\rho^k>\rho^r$;
\item $v^l=v^k$ and $v^l+p(\rho^l)>v^k+p(\rho^k)$ imply $\rho^l>\rho^k$.
\end{itemize}
If $v^r <v^l$, then (see Figure \ref{dim_onda_pos_onda_neg_caso_4})
$$\Delta TV_{\tilde{t}}(\rho)=\rho^m-\rho^r+\rho^m-\rho^l-\rho^r+\rho^k-\rho^l+\rho^k=2\, (\rho^m-\rho^r+\rho^k-\rho^l).$$
We claim (\textit{Claim (ii)}) that
$$\rho^l-\rho^k \geq \rho^m-\rho^r.$$
We postpone the proof of this claim.\\
Hence
$$\Delta TV_{\tilde{t}}(\rho)\leq 0.$$
\begin{figure}[hbtp]
\centering
\begin{subfigure}[hbtp]{0.48\linewidth}
\definecolor{uququq}{rgb}{0.25098039215686274,0.25098039215686274,0.25098039215686274}
\begin{tikzpicture}[scale=0.35,line cap=round,line join=round,>=triangle 45,x=1.0cm,y=1.0cm]
\draw[->,color=black] (-1.,0.) -- (17.,0.);
\draw[->,color=black] (0.,-0.5) -- (0.,10.);
\clip(-1.5,-1) rectangle (17.,10.);
\draw[smooth,samples=50,domain=0.001:17.0] plot(\x,{4.0*(\x)-(\x)^(1.5)});
\draw[smooth,samples=50,domain=0.001:17.0] plot(\x,{3.5*(\x)-(\x)^(1.5)});
\draw [domain=0.0:17.0] plot(\x,{(-0.--7.3*\x)/7.74});
\draw [domain=0.0:17.0] plot(\x,{(-0.--4.82*\x)/10.34});
\begin{scriptsize}
\draw (-1.4,9.1) node[anchor=north west] {$\rho v$};
\draw (15,0.10) node[anchor=north west] {$\rho$};
\draw (9.40,4.76) node[anchor=north west] {$(\rho^k,v^k)$};
\draw (6.7,7.) node[anchor=north west] {$(\rho^r,v^r)$};
\draw (9.49,9.45) node[anchor=north west] {$(\rho^m,v^m)$};
\draw (12.61,6.28) node[anchor=north west] {$(\rho^l,v^l)$};
\end{scriptsize}
\begin{scriptsize}
\draw [fill=uququq] (6.53,6.16) circle (5pt);
\draw [fill=uququq] (9.34,8.81) circle (5pt);
\draw [fill=uququq] (12.48,5.82) circle (5pt);
\draw [fill=uququq] (9.20,4.29) circle (5pt);
\end{scriptsize}
\end{tikzpicture}
\caption{Case $v^k< v^r$.}\label{dim_onda_pos_onda_neg_caso_3}
\end{subfigure}
\quad
\begin{subfigure}[hbtp]{0.48\linewidth}
\definecolor{uququq}{rgb}{0.25098039215686274,0.25098039215686274,0.25098039215686274}
\begin{tikzpicture}[scale=0.35,line cap=round,line join=round,>=triangle 45,x=1.0cm,y=1.0cm]
\draw[->,color=black] (-1.,0.) -- (17.,0.);
\draw[->,color=black] (0.,-0.5) -- (0.,10.);
\clip(-1.5,-1) rectangle (17.,10.);
\draw[smooth,samples=50,domain=0.001:17.0] plot(\x,{4.0*(\x)-(\x)^(1.5)});
\draw[smooth,samples=50,domain=0.001:17.0] plot(\x,{3.5*(\x)-(\x)^(1.5)});
\draw [domain=0.0:17.0] plot(\x,{(-0.--7.3*\x)/7.74});
\draw [domain=0.0:17.0] plot(\x,{(-0.--4.82*\x)/10.34});
\begin{scriptsize}
\draw (-1.4,9.1) node[anchor=north west] {$\rho v$};
\draw (15,0.1) node[anchor=north west] {$\rho$};
\draw (6.7,6.95) node[anchor=north west] {$(\rho^k,v^k)$};
\draw (9.38,4.70) node[anchor=north west] {$(\rho^r,v^r)$};
\draw (12.60,6.27) node[anchor=north west] {$(\rho^m,v^m)$};
\draw (9.4,9.55) node[anchor=north west] {$(\rho^l,v^l)$};
\end{scriptsize}
\begin{scriptsize}
\draw [fill=uququq] (6.53,6.16) circle (5pt);
\draw [fill=uququq] (9.34,8.81) circle (5pt);
\draw [fill=uququq] (12.48,5.82) circle (5pt);
\draw [fill=uququq] (9.20,4.29) circle (5pt);
\end{scriptsize}
\end{tikzpicture}
\caption{Case $v^k>v^r$.}\label{dim_onda_pos_onda_neg_caso_4}
\end{subfigure}
\caption{Interaction between a wave with negative speed and a wave with positive speed: flux diagram. Case $v^l+p(\rho^l)> v^k+p(\rho^k)$.}
\end{figure}
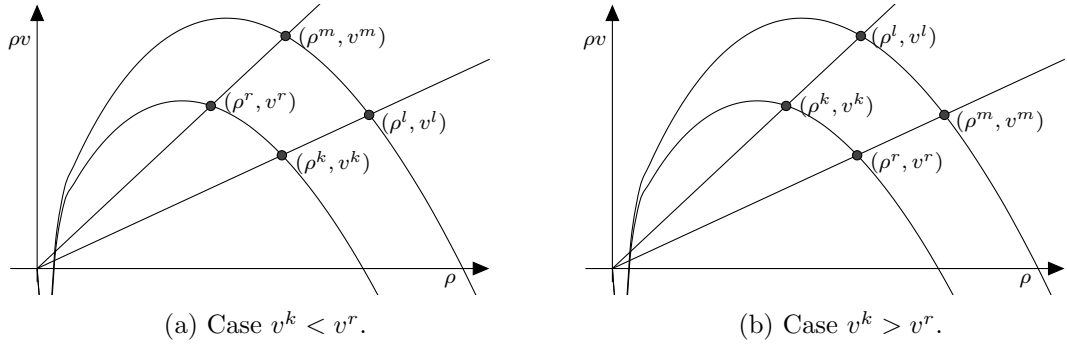

\textit{{Proof of the claims.}}\\

\textit{Claim (i)}: If $w^r=v^r+p(\rho^r)> v^l+p(\rho^l)=w^l$ and $v^r>v^k=v^l$, then
$$\rho^r-\rho^m\leq C|v^r-v^l|+|\rho^k-\rho^l|.,$$
where $C$ is a constant depending only on the invariant domain $\mathcal{D}_{v_1,v_2,w_1,w_2}$.\\
\textit{Proof.} Observe that
$$\rho^r-\rho^m=p^{-1}(w^r-v^r)-p^{-1}(w^l-v^m),$$
because $w^m=w^l$. Similarly
$$\rho^k-\rho^l=p^{-1}(w^r-v^k)-p^{-1}(w^l-v^l).$$
Since $v^l=v^k$ and $v^m=v^r$, we find
$$\rho^r-\rho^m=p^{-1}(w^r-v^r)-p^{-1}(w^l-v^r)\; \text{ and } \; \rho^k-\rho^l=p^{-1}(w^r-v^l)-p^{-1}(w^l-v^l).$$
Consider the function
$$v \to \varphi(v)=p^{-1}(w^r-v)-p^{-1}(w^l-v).$$
We can write
$$\rho^r-\rho^m=\varphi(v^r) \; \text{ and } \; \rho^k-\rho^l=\varphi(v^l).$$
We find
$$\varphi'(v)=\dfrac{1}{p'(p^{-1}(w^l-v))}-\dfrac{1}{p'(p^{-1}(w^r-v))}.$$
Suppose that $\varphi$ is Lipschitz continuous and let $C$ be the Lipschitz constant, i.e.
$$|\varphi'(v)|\leq C \; \text{ for every } \; v\in [v_1,v_2].$$
Hence we find
$$|\varphi(v^r)-\varphi(v^l)|\leq C|v^r-v^l|,$$
which, by the triangular inequality, implies
\begin{equation*}
\begin{split}
|\rho^r-\rho^m| & = |\rho^r-\rho^m+\rho^k-\rho^k+\rho^l-\rho^l|\leq \\
& \leq |\rho^r-\rho^m-\rho^k+\rho^l|+|\rho^k-\rho^l|\leq |\varphi(v^r)-\varphi(v^l)|+|\rho^k-\rho^l|\leq\\
& \leq C\,|v^r-v^l|+|\rho^k-\rho^l|.
\end{split}
\end{equation*}
We have only to show that $\varphi$ is Lipschitz continuous.\\
Since $w^r-v>w^l-v$, $w^l >w_1$ and $p^{-1}$ and $p'$ are increasing functions, we find
\begin{equation*}
\begin{split}
|\varphi'(v)| & \leq \Big|\dfrac{1}{p'(p^{-1}(w^l-v))}\Big|+\Big|\dfrac{1}{p'(p^{-1}(w^r-v))}\Big| \leq\\
& \leq 2\, \Big| \dfrac{1}{p'(p^{-1}(w^l-v))}\Big| \leq 2\, \sup_{v\in[v_1,v_2]}\Big|\dfrac{1}{p'(p^{-1}(w^l-v))}\Big|\leq \\
& \leq \dfrac{2}{\inf_{v\in[v_1,v_2]}p'(p^{-1}(w^l-v))}\leq \\
& \leq \dfrac{2}{p'(p^{-1}(w_1-v_2))}=:C.
\end{split}
\end{equation*}
Finally, let us recall the Definition \ref{def_rho_v_min} of the point $(\rho_{\min},v_\text{min})$, which is the solution to the system
$$
\begin{cases}
v+p(\rho)=w_1,\\
v=v_2.
\end{cases}
$$
We have $p(\rho_\text{min})=w_1-v_2$, which implies $w_1-v_2>0$. Hence the constant $C$ depends only on the invariant domain $\mathcal{D}_{v_1,v_2,w_1,w_2}$, is positive and is finite.\\

\textit{Claim (ii)}: If $w^r=v^r+p(\rho^r)<v^l+p(\rho^l)=w^l$ and $v^l>v^r$, then
$$\rho^l-\rho^k \geq \rho^m-\rho^r.$$
\textit{Proof.} Observe that
$$\rho^l-\rho^k=p^{-1}(w^l-v^l)-p^{-1}(w^r-v^l),$$
because $v^l=v^k$ and $w^r=w^k$. Similarly
$$\rho^m-\rho^r=p^{-1}(w^l-v^r)-p^{-1}(w^r-v^r).$$
Hence, if we show that the function
$$v \to \psi(v)=p^{-1}(w^l-v)-p^{-1}(w^r-v)$$
is increasing, we find the thesis, because $v^l>v^r$.\\
We have
$$\psi'(v)=-\dfrac{1}{p'(p^{-1}(w^l-v))}+\dfrac{1}{p'(p^{-1}(w^r-v))}.$$
Since $w^l>w^r$, we obtain:
$$w^l-v >w^r-v \Longrightarrow p^{-1}(w^l-v)\geq p^{-1}(w^r-v) \Longrightarrow \dfrac{1}{p'(p^{-1}(w^r-v))}\geq \dfrac{1}{p'(p^{-1}(w^l-v))}.$$
Therefore $\psi$ is increasing.
\end{proof}
\section{Wave-Front Tracking}
Let us denote $BV(\mathbb{R}, \mathbb{R}^d)$ the set of the functions with bounded total variation defined in $\mathbb{R}$ and with values in $\mathbb{R}^d$. In this section we are going to find a solution to the Cauchy problem
\begin{equation}\label{existence_Cauchy_problem_general}
\begin{cases}
\partial_t \rho+\partial_x(\rho v)=0,\\
\partial_t(\rho(v+p(\rho))+\partial_x[\rho v(v+p(\rho))]=0,\\
\rho(t,0)\,v(t,0)\leq q,\\
(\rho,v)(0,x)=(\rho_0,v_0)(x),
\end{cases}
\end{equation}
where $q\in \mathbb{R}^+$ is a fixed constant and $(\rho_0,v_0)$ is a function in $L^1(\mathbb{R}) \cap BV(\mathbb{R})$.\\
\subsection{The number of waves and interactions}
In Table \ref{table_number_of_waves} are summarized the results obtained in the previous section on the difference $\Delta_{\tilde{t}}\mathcal{N}$ of the number of waves before and after an interaction between two waves, happened at time $\tilde{t}$.
\begin{table}[H]
\centering
\begin{tabular}{|l|c|c|c|}
\hline Position & Interacting waves & $\Delta_{\tilde{t}}\mathcal{N}$ & Propositions\\
\hline $x=0$ & Contact discontinuity and $x=0$ & $\leq 2$ & \ref{prop_contact_discontinuity_from_left_case_1}, \ref{prop_contact_discontinuity_from_left_case_2} \\
\hline $x=0$ & Shock and $x=0$ & $\leq 1$ & \ref{prop_shock_from_right_case_1}, \ref{prop_shock_from_right_case_2}\\
\hline $x=0$ & Rarefaction wave and $x=0$ & $\leq N+1$ & \ref{prop_rarefaction_from_right_case_1}, \ref{prop_rarefaction_from_right_case_2}  \\
\hline $x\neq 0$ & Two waves with positive speed & Never occurs & \ref{prop_interacting_waves_pos_speed}\\
\hline $x\neq 0$ & Two waves with negative speed & $\leq 0$ & \ref{prop_interacting_waves_with_neg_speed}\\
\hline $x \neq 0$ & One wave has speed $>0$ and one $<0$ & $=0$ & \ref{prop_interacting_wave_neg_speed_pos_speed}\\
\hline
\end{tabular}
\caption{List of the difference in the number of waves before and after an interaction.}\label{table_number_of_waves}
\end{table}
The next proposition gives two estimates of the total number of waves and the total number of interactions that appear solving the sequence of Riemann problems for a piecewise constant initial datum, with a finite number of discontinuities.
\begin{prop}\label{prop_number_of_waves_and_interactions}
Assume $\lambda_1(\rho_\text{min},v_\text{min})<0$. Consider the Cauchy problem (\ref{existence_Cauchy_problem_general}) and assume that the initial datum $(\rho_0,v_0)$ is piecewise constant with a finite number of discontinuities. Let $k_1\in \mathbb{N}$ and $k_2\in \mathbb{N}$ be respectively the number of discontinuities for $x<0$ and for $x>0$. Let $N$ be the number of discontinuities of a rarefaction fan.
\begin{enumerate}
\item The total number $\mathcal{N}$ of waves that appear solving each Riemann problem centred in a discontinuity of the initial datum and at a point of interaction between two waves or between a wave and the line $x=0$, with the Riemann solvers $\mathcal{RS}^q_2$ in $x=0$ and $\mathcal{RS}$ in $x\neq 0$, satisfies
$$\mathcal{N}\leq N+2+(N+1)(k_1+k_2)+2\,k_1+N\,(N+1)\,k_2.$$
\item The total number $\mathcal{I}$ of interactions between the waves that appear solving each Riemann problem centred in a discontinuity of the initial datum and at a point of interaction between two waves or between one wave and the line $x=0$ with the Riemann solvers $\mathcal{RS}^q_2$ in $x=0$ and $\mathcal{RS}$ in $x\neq 0$, satisfies
\begin{equation*}
\begin{split}
\mathcal{I} \leq & k_1+N\,k_2+N^2[(k_1+1)^2+2\,k_2^2]+N[k_1^2+k_2^2+(k_1+1)\,k_2]+\\
& +(k_1+N^2\,k_2)(2k_1+N^2\,k_2+N\,(k_1+1)].
\end{split}
\end{equation*}
\end{enumerate}
\end{prop}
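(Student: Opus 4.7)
The plan is to separate the count of waves into the initial contribution at $t=0$ plus the increase produced at each subsequent interaction with the constraint at $x=0$, and then to bound the number of interactions by pairing up, in a combinatorial fashion, the fronts that can ever meet.

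First, I would compute the number of waves generated at $t=0$ by the $k_1+k_2+1$ Riemann problems. By Proposition \ref{prop_standard_solution} and Definition \ref{def_rarefaction_fan}, at each of the $k_1+k_2$ jumps of $(\rho_0,v_0)$ the solver $\mathcal{RS}$ produces at most one first-family wave (with rarefactions split into $N$ fronts per Remark \ref{remark_rarefaction_fan_as_shocks}) and one contact discontinuity, for a total of at most $N+1$ waves. At $x=0$ the solver $\mathcal{RS}^q_2$ can additionally insert the non-classical shock, giving at most $N+2$ waves. Summing yields the seed $N+2+(N+1)(k_1+k_2)$.

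Next, I would use Propositions \ref{prop_interacting_waves_pos_speed}, \ref{prop_interacting_waves_with_neg_speed} and \ref{prop_interacting_wave_neg_speed_pos_speed} (combined with Lemma \ref{lemma_curve_decrescenti}, which ensures that first-family waves have negative speed throughout the invariant domain) to conclude that no interaction away from $x=0$ can increase $\mathcal{N}$. Consequently every increase of $\mathcal{N}$ is localized on the line $x=0$. The only waves ever reaching $x=0$ are contact discontinuities travelling from $\{x<0\}$, whose number is preserved by interior interactions and is therefore at most $k_1$, and first-family fronts travelling from $\{x>0\}$, whose number is at most the $Nk_2$ arising from the fanning of the $k_2$ initial first-family waves on the right. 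Applying Propositions \ref{prop_contact_discontinuity_from_left_case_1}--\ref{prop_contact_discontinuity_from_left_case_2} to the contact-discontinuity events ($\Delta_{\tilde t}\mathcal{N}\le 2$ each) and Propositions \ref{prop_shock_from_right_case_1}--\ref{prop_shock_from_right_case_2} and \ref{prop_rarefaction_from_right_case_1}--\ref{prop_rarefaction_from_right_case_2} to each of the first-family fronts ($\Delta_{\tilde t}\mathcal{N}\le N+1$ per front in the worst case, since a front may trigger a fresh rarefaction fan of $N$ waves in $\{x<0\}$ together with the non-classical shock and a new contact discontinuity), one obtains the two extra contributions $2k_1$ and $N(N+1)k_2$, whose sum with the seed is exactly the announced bound on $\mathcal{N}$.

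For the bound on $\mathcal{I}$ I would classify every interaction according to whether it is an interaction with $x=0$ (at most $k_1$ of contact--constraint type plus $Nk_2$ of first-family--constraint type per primary generation) or a collision in $\{x\neq 0\}$; for the latter the number is bounded by the product of the cardinalities of the sets of first-family fronts and contact discontinuities that are simultaneously present in the same half-plane, since two positive-speed waves cannot interact by Proposition \ref{prop_interacting_waves_pos_speed}. The polynomial structure of the announced estimate then appears naturally as the product of the populations of these two families of fronts in each of $\{x<0\}$ and $\{x>0\}$, each of which grows at most linearly at every activation of the constraint, accounting for the quadratic and higher-order terms in the bound.

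The main obstacle will be controlling the cascade of secondary fans produced at $x=0$. Every activation of the constraint sends a brand-new rarefaction of $N$ shock fronts into $\{x<0\}$ and a brand-new contact discontinuity into $\{x>0\}$; these in turn interact with the pre-existing fronts, possibly triggering further activations of the constraint. One must therefore verify by induction on the generation number that the total populations of fronts in each half-plane, and hence the number of pairwise interactions, remain bounded by the claimed polynomials in $k_1$, $k_2$ and $N$. A clean way to organize this is to close first the count of activations of the constraint (which is itself controlled by the initial number of contact discontinuities from the left and first-family fronts from the right) and only afterwards to apply the product-of-populations estimate half-plane by half-plane, verifying that no interaction is counted twice.
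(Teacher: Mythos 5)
Your decomposition for part 1 — seed count $N+2+(N+1)(k_1+k_2)$ at $t=0$, then $2k_1$ from contact discontinuities crossing $x=0$ (each $\Delta_{\tilde t}\mathcal{N}\le 2$) and $N(N+1)k_2$ from the $Nk_2$ first-family fronts crossing from the right (each $\Delta_{\tilde t}\mathcal{N}\le N+1$) — is exactly the paper's argument, and your plan for part 2 (no new constraint activations beyond the $k_1+Nk_2$ primary crossings, then a half-plane-by-half-plane product-of-populations count using that positive-speed fronts never collide) is likewise the paper's strategy. The proposal is correct and follows essentially the same route as the paper's proof.
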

\begin{proof}
First, let us consider the total number of waves.\\
The number of waves that appear at each Riemann problem at $t=0$ is at most $N+2$. Indeed, let $x_\nu$ be a discontinuity far from $x=0$ and let $(\rho_0^{\nu-},v_0^{\nu-})$ and $(\rho_0^{\nu+},v_0^{\nu+})$ be respectively the left and right value of the initial datum at $x=x_\nu$. Let $(\rho_0^{m,\nu},v_0^{m,\nu})$ be the middle state of the classical solution to the Riemann problem with initial datum
$$
(\rho,v)(0,x)=\begin{cases}
(\rho_0^{\nu-},v_0^{\nu-}) & \text{if } x\leq x_{\nu},\\
(\rho_0^{\nu+},v_0^{\nu+}) & \text{if } x> x_{\nu}.
\end{cases}
$$
The highest number of wave-fronts centred in $x=x_\nu$ appears when a rarefaction fan joins $(\rho_0^{\nu-},v_0^{\nu-})$ to $(\rho_0^{m,\nu},v_0^{m,\nu})$, followed by a contact discontinuity connecting $(\rho_0^{m,\nu},v_0^{m,\nu})$ to $(\rho_0^{\nu+},v_0^{\nu+})$. In this case the total number of waves is
$$\mathcal{N}_{x_\nu}=N+1.$$
At $x=0$, if the solution given by $\mathcal{RS}^q_2$ coincides with the classical one and if the initial datum has a discontinuity, we can repeat the same calculation. Otherwise we have the maximum number of waves when a rarefaction fan joins $(\rho_0^{0-},v_0^{0-})$ to $(\hat{\rho},\hat{v})$ and a contact discontinuity joins $(\check{\rho}_2,\check{v}_2)$ to $(\rho_0^{0+},v_0^{0+})$. In this case the number of waves is
$$\mathcal{N}_0= N+2,$$
because the non-classical shock between $(\hat{\rho},\hat{v})$ and $(\check{\rho}_2,\check{v}_2)$ appears.\\
The number of waves increases only when a contact discontinuity or a rarefaction wave reaches the line $x=0$ and the increment is respectively of $2$ and $N+1$ waves; see Table \ref{table_number_of_waves}.
Since $\lambda_1(\rho_\text{min},v_\text{min})<0$, the waves of the first family have negative speed. Therefore, only the contact discontinuities centred in the points $x_{\nu}<0$ can reach the constraint. Their contribution to the increment of the total number of waves is $2\, k_1$. On the contrary, only waves of the first family can reach the constraint from the right. Their maximal number is $k_2\,N$, because at each discontinuity $x_\nu>0$ can arise $N$ (non-classical) shocks with negative speed due to the presence of a rarefaction fan. When these shocks meet the constraint, at most $N+1$ new waves arise.\\
Therefore the total number of waves is at most
$$N+2+(N+1)(k_1+k_2)+2\,k_1+N\,(N+1)\,k_2.$$
Now, let us turn our attention to the number of interactions.\\
Waves of the second family do not interact together; see Table \ref{table_number_of_waves}. Therefore we have to consider only the interactions between waves of the first family, between waves of the first and the second family and between waves and $x=0$. In the first case the number of waves decreases and in the second the number remains unchanged; see Table \ref{table_number_of_waves}. For $t>0$, new waves can arise only as the result of an interaction between a wave of the first or of the second family and the line $x=0$.\\
At most $k_1$ contact discontinuities and $N\, k_2$ waves of the first family can reach the constraint respectively from left and from right.\\
In the region $x<0$ the possible interactions are the following.
\begin{itemize}
\item \textit{Interactions between waves of the first family}: the number of waves of the first family in the considered region at time $t=0^+$ is less than
$$N\,k_1+N=N(k_1+1),$$
because from each of the $k_1$ discontinuities of the initial datum and at $x=0$ there can be a rarefaction fan. If all these waves interact together, the number of interactions is
$$[N(k_1+1)]^2.$$
When a wave of the first family coming from a point of discontinuity $x_\nu>0$ reaches the constraint, a number $N$ of new waves of the first family can arise. Similarly when a contact discontinuity reaches the constraint from left a new wave of the first family can appear. If all these waves interact with each other and with the waves of the first family centred in the discontinuities $x_\nu<0$, the number of interactions is
$$(k_1+N^2\, k_2)(k_1+N^2\, k_2+N(k_1+1)).$$
\item \textit{Interactions between waves of the first and the second family}: at each discontinuity $x_\nu<0$ of the initial datum arise at most $k_1$ waves of the second family and $N\,k_1$ waves of the first family. If all these waves interact, the number of interactions is
$$k_1\, Nk_1.$$
From $x=0$ come at most $k_1+N^2k_2$ waves of the first family which can interact with the $k_1$ waves of the second family arisen in $x_\nu<0$. Therefore the number of interactions between these waves is less than
$$k_1(k_1+N^2\,k_2).$$
\end{itemize}
In the region $x>0$ the possible interactions are the following.
\begin{itemize}
\item \textit{Interactions between waves of the first family}: in the region $x>0$ there are at most $N\,k_2$ waves of the first family. Therefore the maximum number of interactions is
$$(N\,k_2)^2.$$
\item \textit{Interactions between waves of the first and the second family}: there are $k_2$ contact discontinuities coming from the discontinuities $x_\nu>0$ of the initial datum which can interact with the $N k_2$ waves of the first family of the rarefaction fans. The total number of interactions between these waves is less than
$$N\, k_2^2.$$
The number of waves of the second family that can arise at $x=0$ is $1+k_1+N\,k_2$. If all these waves interact with waves of the first family centred in the points $x_\nu>0$, then the number of possible interactions is
$$(k_1+1+N\,k_2)\,N\,k_2.$$ 
\end{itemize}
Summing all the contributions and reordering the terms, we find
\begin{equation*}
\begin{split}
\mathcal{I} \leq & k_1+N\,k_2+N^2[(k_1+1)^2+2\,k_2^2]+N[k_1^2+k_2^2+(k_1+1)\,k_2]+\\
& +(k_1+N^2\,k_2)(2k_1+N^2\,k_2+N\,(k_1+1)].
\end{split}
\end{equation*}
\end{proof}
\subsection{The total variation}
In Tables \ref{table_total_variation} and \ref{table_total_variation_v_w} are summarized the results obtained in the previous section on the differences of the total variation respectively in the $(\rho,v)$ and in the $(v,w)$ plane, before and after an interaction  happened at time $\tilde{t}$. The points $(\rho^l,v^l)$ and $(\rho^r,v^r)$ are respectively the left and the right state of the Riemann problem which originates the contact discontinuity or the rarefaction wave at $t=0$, for the case $x=0$ and for the Riemann problem in $\tilde{t}$ in the case $x\neq 0$. The positive constant $C$ depends only on the invariant domain $\mathcal{D}_{v_1,v_2,w_1,w_2}$.
\begin{table}[H]
\centering
\begin{tabular}{|l|c|c|c|}
\hline Position & Interacting waves & $\Delta TV_{\tilde{t}}(\rho)\leq $ & $\Delta TV_{\tilde{t}}(v)\leq $\\
\hline $x=0$ & Contact discontinuity and $x=0$ & $C|\rho^r-\rho^l|$ & $C|\rho^r-\rho^l|$ \\
\hline $x=0$ & Shock and $x=0$ & $0$ & $0$\\
\hline $x=0$ & Rarefaction wave and $x=0$ & $C|\rho^r-\rho^l|$ & $0$  \\
\hline $x\neq 0$ & Two waves with positive speed & Never occurs & Never occurs\\
\hline $x\neq 0$ & Two waves with negative speed & $0$ & $0$\\
\hline $x \neq 0$ & One wave has speed $>0$ and one $<0$ & $C|v^r-v^l|$ & $0$\\
\hline
\end{tabular}
\caption{List of the difference of the total variation before and after an interaction in the $(\rho,v)$ plane. The points $(\rho^l,v^l)$ and $(\rho^r,v^r)$ are respectively the left and the right state of the Riemann problem which originates the contact discontinuity or the rarefaction wave at $t=0$, for the case $x=0$ and for the Riemann problem in $\tilde{t}$ in the case $x\neq 0$. See Table \ref{table_number_of_waves} for a list of the propositions in which each case is discussed.}\label{table_total_variation}
\end{table}
\begin{table}[H]
\centering
\begin{tabular}{|l|c|c|c|}
\hline Position & Interacting waves & $\Delta TV_{\tilde{t}}(v)\leq $ & $\Delta TV_{\tilde{t}}(w)\leq $\\
\hline $x=0$ & Contact discontinuity and $x=0$ & $C|w^r-w^l|$ & $0$ \\
\hline $x=0$ & Shock and $x=0$ & $0$ & $0$\\
\hline $x=0$ & Rarefaction wave and $x=0$ & $0$ & $C|v^r-v^l|$  \\
\hline $x\neq 0$ & Two waves with positive speed & Never occurs & Never occurs\\
\hline $x\neq 0$ & Two waves with negative speed & $0$ & $0$\\
\hline $x \neq 0$ & One wave has speed $>0$ and one $<0$ & $0$ & $0$\\
\hline
\end{tabular}
\caption{List of the difference of the total variation before and after an interaction in the $(v,w)$ plane. The points $(\rho^l,v^l)$ and $(\rho^r,v^r)$ are respectively the left and the right state of the Riemann problem which originates the contact discontinuity or the rarefaction wave at $t=0$, for the case $x=0$ and for the Riemann problem in $\tilde{t}$ in the case $x\neq 0$. See Table \ref{table_number_of_waves} for a list of the propositions in which each case is discussed.}\label{table_total_variation_v_w}
\end{table}
The next theorem states that a function with bounded variation can be approximated with a piecewise constant function having a finite number of discontinuities. See \cite{bressan} for the proof.
\begin{theorem}\label{theorem_piecewise_constant_approxim_of_a_bv_function}
Let $u$ be a function in $BV(\mathbb{R},\mathbb{R}^d)$. For every $\varepsilon>0$, there exists a piecewise constant function $\tilde{u}$ in $BV(\mathbb{R},\mathbb{R}^d)$ having a finite number of discontinuities and such that
$$TV(\tilde{u})\leq TV(u) \; \text{ and } \; \parallel u -\tilde{u} \parallel_\infty <\varepsilon.$$
\end{theorem}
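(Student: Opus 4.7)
The plan is to partition $\mathbb{R}$ into finitely many intervals on each of which the total variation of $u$ is at most $\varepsilon$, and then approximate $u$ by a constant on each such interval. The finiteness of the partition comes from $TV(u)<+\infty$, and the $L^\infty$ error on each interval is bounded by the total variation on that interval, hence by $\varepsilon$.

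First, I would use the fact that since $u\in BV(\mathbb{R},\mathbb{R}^d)$, the cumulative total variation function $V:\mathbb{R}\to[0,TV(u)]$ defined by
\[
V(x)=TV_{(-\infty,x]}(u)
\]
is monotone nondecreasing with $\lim_{x\to-\infty}V(x)=0$ and $\lim_{x\to+\infty}V(x)=TV(u)$. A standard consequence is that the limits $u(\pm\infty)=\lim_{x\to\pm\infty}u(x)$ both exist in $\mathbb{R}^d$, which will be needed to control the behavior on the two unbounded tail intervals.

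Next, I would pick $N\in\mathbb{N}$ with $TV(u)/N<\varepsilon$ and define, for $i=1,\dots,N-1$,
\[
x_i=\inf\bigl\{x\in\mathbb{R}\,:\,V(x)\geq i\cdot TV(u)/N\bigr\},
\]
together with the conventions $x_0=-\infty$ and $x_N=+\infty$. Monotonicity of $V$ gives $x_0\leq x_1\leq\cdots\leq x_N$, and on each half-open interval $[x_{i-1},x_i)$ the total variation of $u$ is at most $TV(u)/N<\varepsilon$. I would then set $\tilde u(x)=c_i$ for $x\in[x_{i-1},x_i)$, choosing $c_i=u(x_{i-1}^+)$ for $i\geq 2$, $c_1=u(-\infty)$, and $c_N=u(+\infty)$ to handle the tails. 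Since the oscillation of $u$ on any interval is controlled by the total variation on that interval, one gets $|u(x)-\tilde u(x)|<\varepsilon$ pointwise, hence $\|u-\tilde u\|_\infty\leq\varepsilon$. For the total variation bound, the value $TV(\tilde u)=\sum_{i=1}^{N-1}|c_{i+1}-c_i|$ is a particular sum of the type entering Definition \ref{def_total_variation}, so it is automatically dominated by $TV(u)$.

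The main obstacle I anticipate is the bookkeeping around the partition points: the function $V$ may jump at some $x_i$ (a jump of $u$ itself), so the sloppy identity $TV_{[x_{i-1},x_i)}(u)=V(x_i)-V(x_{i-1})$ must be replaced by $V(x_i^-)-V(x_{i-1})\leq TV(u)/N$, which still gives what is needed; and the tail intervals $(-\infty,x_1)$ and $[x_{N-1},+\infty)$ must be handled using the existence of $u(\pm\infty)$ to ensure the $L^\infty$ bound is uniform, rather than only a pointwise-a.e.\ bound. Once these technicalities are dispatched, both inequalities $TV(\tilde u)\leq TV(u)$ and $\|u-\tilde u\|_\infty<\varepsilon$ follow, completing the proof.
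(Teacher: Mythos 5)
The paper does not prove this theorem — it cites \cite{bressan} and moves on — so you are supplying an argument the author chose to omit. Your approach is essentially the one in Bressan's book: use the cumulative variation $V(x)=TV_{(-\infty,x]}(u)$ to cut $\mathbb{R}$ into finitely many pieces on which the oscillation of $u$ is at most $TV(u)/N<\varepsilon$, then define $\tilde u$ on each piece by a right limit of $u$ (and by $u(\pm\infty)$ on the two unbounded tails, which you correctly note is needed). The estimate $TV(\tilde u)\leq TV(u)$ also follows as you indicate, though strictly speaking the values $c_i=u(x_{i-1}^+)$ are one-sided limits rather than point values; one approximates each $c_i$ by $u(y_i)$ with $y_i$ slightly to the right of $x_{i-1}$, so that $\sum_i|c_{i+1}-c_i|$ is a limit of ordered partition sums and hence bounded by $TV(u)$.

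There is one gap you do not address. You claim the \emph{pointwise} bound $|u(x)-\tilde u(x)|<\varepsilon$ for every $x$, but this can fail at a jump point $\bar x$ where $u(\bar x)$ is not equal to either one-sided limit $u(\bar x^\pm)$. Even if $\bar x$ coincides with some partition point, the quantity $|u(\bar x)-u(\bar x^+)|$ is part of the mass $V(\bar x^+)-V(\bar x^-)$, and that atom may far exceed $TV(u)/N$; indeed the level-set definition of the $x_i$ forces multiple $x_i$ to \emph{collapse} onto a large atom of $V$, but the value $u(\bar x)$ itself can still lie far from the $c_i$ you have chosen. The standard fix — the one Bressan uses — is to work with a one-sided-continuous (say right-continuous) representative of $u$, so that $u(\bar x)=u(\bar x^+)$ identically and the bad set is empty; alternatively one reads $\|\cdot\|_\infty$ as the essential supremum, for which a finite exceptional set is harmless. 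Either convention repairs the argument, but as written the step from "small variation on each piece" to the uniform pointwise bound is not justified. Once you fix the representative (or state the norm convention), the rest of your plan goes through.
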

\begin{cor}\label{cor_piecewise_constant_approxim_of_a_bv_function}
Let $u$ be a function in $BV(\mathbb{R},\mathbb{R}^d)$. There exists a sequence of piecewise constant functions $\lbrace
u_\nu \rbrace_\nu \in \mathbb{N}$ satisfying:
\begin{itemize}
\item[(i)] for every $\nu$, the function $u_\nu$ has a finite number of discontinuities;
\item[(ii)] for every $\nu$, we have
$$TV(u_\nu)\leq TV(u);$$
\item[(iii)] $u_\nu \to u$ when $\nu \to +\infty$ in $L^1_\text{loc}(\mathbb{R})$.
\end{itemize}
\end{cor}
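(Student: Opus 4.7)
The plan is to construct the desired sequence by repeatedly invoking Theorem~\ref{theorem_piecewise_constant_approxim_of_a_bv_function} with a vanishing sequence of tolerances, and then to verify that uniform closeness on the line forces $L^1_{\text{loc}}$ convergence.

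First, for each $\nu \in \mathbb{N}$ I would set $\varepsilon_\nu = 1/\nu$ and apply Theorem~\ref{theorem_piecewise_constant_approxim_of_a_bv_function} to produce a piecewise constant function $u_\nu \in BV(\mathbb{R},\mathbb{R}^d)$ with finitely many discontinuities such that
\[
TV(u_\nu) \leq TV(u) \quad \text{and} \quad \|u - u_\nu\|_\infty < \frac{1}{\nu}.
\]
This immediately gives properties (i) and (ii) of the corollary by construction, with no further work required.

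The remaining task is to establish property (iii), namely convergence in $L^1_{\text{loc}}(\mathbb{R})$. Fix a bounded interval $[a,b]\subset \mathbb{R}$. Then
\[
\int_a^b |u(x) - u_\nu(x)|\,dx \;\leq\; (b-a)\,\|u - u_\nu\|_\infty \;<\; \frac{b-a}{\nu},
\]
which tends to $0$ as $\nu\to\infty$. Since $[a,b]$ was arbitrary, this shows $u_\nu \to u$ in $L^1_{\text{loc}}(\mathbb{R})$, completing the proof.

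There is essentially no obstacle here: the corollary is a direct consequence of the theorem, once one notices that uniform approximation implies $L^1$ approximation on bounded sets (which requires finite Lebesgue measure on the interval, hence the restriction to $L^1_{\text{loc}}$ rather than $L^1$ globally). The only point worth flagging is that one cannot hope to upgrade (iii) to convergence in $L^1(\mathbb{R})$, because a piecewise constant function with finitely many discontinuities that is $\varepsilon$-close in sup norm to $u$ need not be integrable even when $u$ is; this is why the statement is phrased in terms of $L^1_{\text{loc}}$.
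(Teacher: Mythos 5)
Your proof is correct and follows essentially the same route as the paper: apply Theorem~\ref{theorem_piecewise_constant_approxim_of_a_bv_function} with $\varepsilon=1/\nu$ to obtain (i) and (ii), and then bound $\int_K |u-u_\nu|\,dx$ by $|K|\,\|u-u_\nu\|_\infty$ to deduce (iii). The closing remark about why one cannot expect global $L^1(\mathbb{R})$ convergence is a useful observation the paper does not spell out, but the core argument is identical.
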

\begin{proof}
For the points (i) and (ii) we have only to apply Theorem \ref{theorem_piecewise_constant_approxim_of_a_bv_function} with
$$\varepsilon=\dfrac{1}{\nu}.$$
To prove point (iii), let us consider a compact $K\subset \mathbb{R}$. We find
$$\int_K |u_\nu(x)-u(x)|\,dx\leq \int_K \parallel u_\nu-u \parallel_{\infty} \, dx \leq \dfrac{1}{\nu}|K| \xrightarrow{\nu \to +\infty} 0.$$
\end{proof}
Consider the Cauchy problem (\ref{existence_Cauchy_problem_general}). The initial datum $(\rho_0,v_0)$ has bounded variation, hence applying Corollary \ref{cor_piecewise_constant_approxim_of_a_bv_function} we find a sequence $(\rho_{0,\nu},v_{0,\nu})$ of piecewise constant functions with bounded variation, with a finite number of discontinuities and such that
\begin{equation}\label{total_var_of_the_pc_initial_datum}
TV(\rho_{0,\nu})+TV(v_{0,\nu})\leq TV(\rho_0)+TV(v_0) \; \text{ for every } \; \nu \in \mathbb{N} \; \text{ and}
\end{equation}
\begin{equation}\label{convergence_of_the_pc_initial_datum}
\lim_{\nu\to +\infty}\parallel (\rho_{0,\nu},v_{0,\nu})-(\rho_0,v_0)\parallel_{L^1_\text{loc}(\mathbb{R})}= 0.
\end{equation}
Let $k_1^\nu$ and $k_2^\nu$ be the finite numbers of discontinuities of the function $(\rho_{0,\nu},v_{0,\nu})$ respectively before and after $x=0$. Let us define the sequence of the points of discontinuity of $(\rho_{0,\nu},v_{0,\nu})$ as
\begin{equation}\label{points_of_discontinuity_of_the_approximant_initial_datum}
\lbrace x_i^\nu \rbrace_{i=-k_1^\nu}^{k_2^\nu}, \; \; x_i^\nu < x_{i+1}^\nu \; \text{ for every } i \in \mathbb{N},
\end{equation}
where $x_0^\nu=0$ (even if $x=0$ is not a point of discontinuity of $(\rho_{0,\nu},v_{0,\nu})$).
Therefore there exists a sequence of points $\lbrace(\rho^i_{0,\nu},v^i_{0,\nu})\rbrace_{i=-k_1^\nu-1}^{k_2^\nu}$ for which we can write
\begin{equation}\label{piecewise_constant_initial_datum}
\begin{split}
(\rho_{0,\nu},v_{0,\nu}) & =(\rho^{-k_1^\nu-1}_{0,\nu},v^{-k_1^\nu-1}_{0,\nu})\mathbf{1}_{(-\infty,x^\nu_{-k_1^\nu})}+\sum_{i=-k_1^\nu}^{k_2^\nu-1}(\rho_{0,\nu}^i,v_{0,\nu}^i)\mathbf{1}_{[x^\nu_i,x^\nu_{i+1})}+\\
&+(\rho_{0,\nu}^{k_2^\nu},v_{0,\nu}^{k_2^\nu})\mathbf{1}_{[x^\nu_{k_2^\nu},+\infty)}.
\end{split}
\end{equation}
For a fixed $\nu$ the function $(\rho_{0,\nu},v_{0,\nu})$ is piecewise constant. Hence solving the sequence of Riemann problems centred in the points $\lbrace x_n^\nu \rbrace_{n=-k_1^\nu}^{k_2^\nu}$ with $\mathcal{RS}$ at $x\neq 0$ and $\mathcal{RS}^q_2$ at $x=0$, we obtain a solution to the Cauchy problem with initial datum $(\rho_{0,\nu},v_{0,\nu})$ until an interaction happens at time $t^1$. Solving the new Riemann problem at the point of interaction, with either $\mathcal{RS}$ at $x\neq 0$ or $\mathcal{RS}^q_2$ at $x=0$, we find a solution until there is a new interaction at time $t^2$. Repeating the process we obtain a solution $(\rho_\nu,v_\nu)$ to the Cauchy problem (\ref{existence_Cauchy_problem_general}) with initial datum $(\rho_{0,\nu},v_{0,\nu})$. We call this solution ``wave-front tracking approximation''.\\
Proposition \ref{prop_number_of_waves_and_interactions} ensures that the number of waves and interactions that appear with this procedure is finite for every $\nu$ and depends on the number of points of discontinuity of the initial datum and of a rarefaction fan. Hence the function $(\rho_\nu,v_\nu)$ is defined for every $t>0$.\\
Let us denote
$$\Gamma_\nu^w(t)=TV_t(w_\nu) \; \text{ and } \; \Gamma_\nu^v(t)=TV_t(v_\nu)$$
the functions which give the total variation respectively of $w_\nu(t,\cdot)=v_\nu(t,\cdot)+p(\rho_\nu(t,\cdot))$ and $v_\nu=v_\nu(\cdot,t)$ at time $t$ and
$$\Gamma^w_0=TV(w_0)=TV(v_0+p(\rho_0)) \; \text{ and } \; \Gamma^v_0=TV(v_0).$$
The next proposition gives an uniform estimate of the total variation for the $v$ and $w$ components of the functions of the sequence $\lbrace (\rho_\nu,v_\nu)\rbrace_{\nu \in \mathbb{N}}$.
\begin{prop}\label{prop_uniform_bounds_TV}
Assume $\lambda_1(\rho_\text{min},v_\text{min})<0$. Let $(\rho_\nu,v_\nu)$ be a wave-front tracking approximation for the Cauchy problem (\ref{existence_Cauchy_problem_general}) with initial datum $(\rho_{0,\nu},v_{0,\nu})$ satisfying the conditions (\ref{total_var_of_the_pc_initial_datum}) and (\ref{convergence_of_the_pc_initial_datum}). Let  $\xi^\nu=\lbrace x_i^\nu \rbrace_{i=-k_1^\nu}^{k_2^\nu}$ be the sequence of the points of discontinuity of the initial datum $(\rho_{0,\nu},v_{0,\nu})$ defined in (\ref{points_of_discontinuity_of_the_approximant_initial_datum}) and let
$$\lbrace (\rho^i_{0,\nu},v^i_{0,\nu}) \rbrace_{i=-k_1^\nu-1}^{k_2^\nu}\subset \mathcal{D}_{v_1,v_2,w_1,w_2}$$
be the sequence of values of the initial datum. There exists a positive constant $M$ such that
$$\Gamma_\nu^v(t)\leq M \; \text{ and } \; \Gamma_\nu^w(t)\leq M\; \text{ for every } \; \nu \in \mathbb{N} \; \text{ and } \; t>0.$$
\end{prop}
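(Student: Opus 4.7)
The plan is to bound the growth of $V_\nu(t):=TV(v_\nu(t,\cdot))$ and $W_\nu(t):=TV(w_\nu(t,\cdot))$ via a Glimm-type Lyapunov functional built from the interaction estimates of the previous section. Between consecutive interaction times both $V_\nu$ and $W_\nu$ are constant, so the task reduces to controlling their jumps at the finitely many interaction events. By Table \ref{table_total_variation_v_w} every interaction at $x\neq 0$, and every interaction at $x=0$ involving a shock or a classical rarefaction, has $\Delta V_\nu\leq 0$ and $\Delta W_\nu\leq 0$. The only growth-producing events are: \emph{(a)} absorption at $x=0$ of a contact discontinuity from $x<0$, with $\Delta V_\nu\leq C|w^r-w^l|$ and $\Delta W_\nu\leq 0$; and \emph{(b)} absorption at $x=0$ of a first-family wave from $x>0$ in the non-classical regime, with $\Delta V_\nu=0$ and $\Delta W_\nu\leq C|v^r-v^l|$.

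To quantify the total charge I would introduce two auxiliary functionals: $S^w_L(t)$, the sum of the $|w|$-jumps of all contact discontinuities lying in $(-\infty,0)$ at time $t$, and $S^v_R(t)$, the sum of the $|v|$-jumps of all first-family waves lying in $(0,+\infty)$. A case analysis using Propositions \ref{prop_interacting_waves_pos_speed}--\ref{prop_interacting_wave_neg_speed_pos_speed} shows that interior interactions preserve or shrink these strengths by the triangle inequality (in a first+second crossing the individual family strengths are in fact preserved), while by Propositions \ref{prop_contact_discontinuity_from_left_case_1}--\ref{prop_rarefaction_from_right_case_2} interactions at $x=0$ emit new first-family waves only into $x<0$ and new contact discontinuities only into $x>0$. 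Hence both $S^w_L$ and $S^v_R$ are non-increasing in $t$. Evaluating them just after the initial Riemann fans gives $S^w_L(0^+)\leq TV(w_{0,\nu})\leq TV(w_0)$ and $S^v_R(0^+)\leq TV(v_{0,\nu})\leq TV(v_0)$, uniformly in $\nu$. In particular, the total $w$-strength ever absorbed at $x=0$ from the left and the total $v$-strength ever absorbed from the right are bounded by $TV(w_0)$ and $TV(v_0)$ respectively, so the cumulative $V_\nu$-growth from type \emph{(a)} events is $\leq C\,TV(w_0)$, and the cumulative $W_\nu$-growth from type \emph{(b)} events in the subcase of Proposition \ref{prop_rarefaction_from_right_case_1}(ii) (no pre-existing non-classical shock) is $\leq C\,TV(v_0)$.

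The main obstacle is the remaining subcase of Proposition \ref{prop_rarefaction_from_right_case_2}, in which the first-family wave hits a non-classical shock already present at $x=0$: here $|v^r-v^l|=|v^k-\hat v|+|v^k-v^r|$ (using the strict ordering $\hat v<v^k<v^r$ established inside the proof of that proposition) contains, besides the strength $|v^k-v^r|$ of the incoming wave, the pre-existing $v$-jump $N(\tilde t^-)=|v^k-\hat v|$ of the non-classical shock. To dominate this extra term I would track $N(t)$ as a separate quantity and exploit two facts: in a type \emph{(b)} event of this kind $N(\tilde t^+)=N(\tilde t^-)+|v^k-v^r|$, so the cumulative increase of $N$ from such events is at most $TV(v_0)$; the only other source of change in $N$ is absorption at $x=0$ of a contact discontinuity from the left, which alters $N$ only through the Riemann invariant $w_\nu(t,0^-)$, and this dependence is locally Lipschitz by Lemma \ref{lemma_monotonia_di_h_alfa} (with $\bar V=0$ and $F_\alpha=q$), hence contributes at most $C\,TV(w_0)$ in total. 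Summing all charges across events, the total growth of $V_\nu+K W_\nu$ is bounded by a constant times $TV(v_0)+TV(w_0)$, yielding the desired uniform bound $V_\nu(t)+W_\nu(t)\leq M$ with $M$ depending only on $TV(v_0)$, $TV(w_0)$, $q$ and the invariant domain.
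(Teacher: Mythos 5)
Your decomposition of the growth sources and your mechanism for bounding the total charge (the functionals $S^w_L$, $S^v_R$ and the labeling of waves by their initial discontinuity) match the paper's strategy. You have also put your finger on a genuine imprecision: taken literally, the stated estimate $\Delta TV_{\tilde t}(w)\leq C|v^r-v^l|$ in Proposition~\ref{prop_rarefaction_from_right_case_2} measures the jump across the line $x=0$ \emph{together with} the incoming wave, and does not a priori split off the pre-existing $v$-jump $|v^k-\hat v|$ of the non-classical shock.

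However, your workaround fails. First, even granting a uniform bound on $N(t)$, what you actually need to control is the \emph{cumulative} growth $\sum_i\big(|v^k_i-v^r_i|+N(\tilde t_i^-)\big)$; the number of absorption events is of order $N\,k_2^\nu$, which grows with $\nu$, so ``$N(t)\leq N(0^+)+TV(v_0)+\dots$'' by itself gives no $\nu$-uniform bound on the sum. Second, the appeal to Lemma~\ref{lemma_monotonia_di_h_alfa} does not furnish the Lipschitz dependence of $\hat v$ on $w_\nu(t,0^-)$: that lemma says $h_\alpha$ is convex with an interior minimum $\bar v$, and on the branch containing $\hat v$ the derivative $h_\alpha'$ tends to $0$ as one approaches $\bar v$, so the inverse map $w\mapsto\hat v$ can have an \emph{unbounded} local Lipschitz constant in the invariant domain (the hypothesis $h_\alpha(v)\geq w_1$ allows equality at the minimum). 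One-sided monotonicity of $w\mapsto\hat v$ does not rescue a cumulative bound either, because $w_\nu(t,0^-)$ is not monotone in $t$.

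The correct repair is much simpler and is the one the paper uses implicitly when it replaces $C|v^r-v^l|$ by $C|v^{i-1}_{0,\nu}-v^i_{0,\nu}|$: the \emph{proof} of Proposition~\ref{prop_rarefaction_from_right_case_2} actually delivers the sharper estimate $\Delta TV_{\tilde t}(w)\leq C'|v^r-v^k|$, controlled by the strength of the incoming wave alone. Indeed, the displayed $\rho$-estimate there is $\Delta TV_{\tilde t}(\rho)=2(\rho^r-\check\rho_2)\leq C(\rho^k-\rho^r)$ via Lemma~\ref{lemma_disuguaglianze_triangolo}(ii), and since $v^r=\check v_2$, one has $w^r-\check w_2=p(\rho^r)-p(\check\rho_2)\leq M(\rho^r-\check\rho_2)\leq MC(\rho^k-\rho^r)$ by the bi-Lipschitz property of $p$ (Lemma~\ref{pressure_Lipschitz}); finally, the incoming wave lies on a $1$-Lax curve so $w^k=w^r$, whence $\rho^k-\rho^r$ is comparable to $p(\rho^k)-p(\rho^r)=v^r-v^k$. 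Substituting this sharper bound makes type~\emph{(b)} events charge only the (non-increasing, initially $\leq\mathrm{const}\cdot TV(v_0)$) functional $S^v_R$, and the Glimm induction closes without any auxiliary tracking of $N(t)$.
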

\begin{proof}
Let us consider an interaction between a wave of the first family joining a point $(\rho^l,v^l)$ to a point $(\rho^k,v^k)$ and a wave of the second family connecting $(\rho^k,v^k)$ to $(\rho^r,v^r)$. By Proposition \ref{prop_interacting_wave_neg_speed_pos_speed}, after the interaction the solution is given by a new wave of the first family and a new wave of the second family. Consider the value of the Riemann invariant $w$ on the left and on the right of the wave of the second family, respectively $w^l$ and $w^k=w^r$. After the interaction the middle state $(\rho^m,v^m)$ appears. The left and the right values of the Riemann invariant $w$ on the left and on the right of the new contact discontinuity remain $w^l$ and $w^r$, because $w^m=w^l$.\\
Similarly, since $v^m=v^r$, the left and the right velocity of the wave of the first family before and after the interaction are always $v^l$ and $v^r$.\\ 
If the interaction is between two waves of the first family joining respectively $(\rho^l,v^l)$ to $(\rho^k,v^k)$ and $(\rho^k,v^k)$ to $(\rho^r,v^r)$, only a new wave of the first family appears and the left and right velocity are $v^l$ and $v^r$. Therefore the values of the left and right speed change. However, since
$$\Delta TV_{\tilde{t}}(v) \leq 0\; \text{ and } \; \Delta TV_{\tilde{t}}(w)\leq 0,$$
without loss of generality, we can assume that the left and the right traces of the speed of a wave of the first family are constant.\\ 
If $p_\nu \leq k_1^\nu$ is the number of contact discontinuities that arise from the Riemann problem with initial datum
$$
(\rho,v)(0,x) =\begin{cases}
(\rho^{i-1}_{0,\nu},v^{i-1}_{0,\nu}) & \text{if } x\leq x^\nu_{i},\\
(\rho^{i}_{0,\nu},v^{i}_{0,\nu}) & \text{if } x>x^\nu_{i},
\end{cases}
$$ 
the number of waves which reach the constraint remains $p_\nu$. We can assign to the wave of the second family centred in $x^\nu_i$ the number $i$.\\
The $v$ component of the total variation changes only when a contact discontinuity reaches the constraint (see Table \ref{table_total_variation_v_w}) and the contribution is
$$C\,|w^l-w^r|,$$
where $C$ is a constant which depends only on the invariant domain $\mathcal{D}_{v_1,v_2,w_1,w_2}$ and $(\rho^l,v^l)$ and $(\rho^r,v^r)$ are the left and the right states of the Riemann problem from which the wave is born.\\
Since the left and the right values of the Riemann invariant $w$ remain constant, when the $i$-th contact discontinuity interacts with the line $x=0$, the left and the right traces of the component $w$ of the wave are respectively $w_{\nu,0}^{i-1}$ and $w_{\nu,0}^{i}$.\\
Assume that $p_\nu=k_1^\nu$ and let $t^i$, for $i\in \lbrace 1,...,k_1^\nu\rbrace$, be the instant in which the $i$-th contact discontinuity reaches the constraint. Since $x_i^\nu<x_{i+1}^\nu$ and waves of the second family do not interact together, these instants increase with $i$, i.e.
$$t^{i+1}>t^i.$$
Therefore we find
\begin{equation*}
\begin{split}
\Gamma^v_\nu(t^{k_1^\nu}+) & \leq \Gamma^v_\nu(t^{k_1^\nu}-)+C|w_{0,\nu}^{-k_1^\nu-1}-w_{0,\nu}^{-k_1^\nu}| = \\
& =\Gamma^v_\nu(t^{k_1^\nu-1}+)+C|w_{0,\nu}^{-k_1^\nu-1}-w_{0,\nu}^{-k_1^\nu}| \leq \\
& \leq \Gamma^v_\nu(t^{k_1^\nu-1}-)+C(|w_{0,\nu}^{-k_1^\nu}-w_{0,\nu}^{-k_1^\nu+1}| +|w_{0,\nu}^{-k_1^\nu-1}-w_{0,\nu}^{-k_1^\nu}|) \leq \\
& \leq \cdots \leq \Gamma^v_\nu(0)+C\sum_{j=-k_1^\nu}^{-1}|w_{0,\nu}^{j-1}-w_{0,\nu}^{j}|\leq \\
& \leq \Gamma^v_\nu(0)+C\, \Gamma^w_\nu(0).
\end{split}
\end{equation*} 
By Corollary \ref{cor_piecewise_constant_approxim_of_a_bv_function}, we have
$$\Gamma^v_\nu(0)\leq \Gamma^v_0+\Gamma^w_0\; \text{ and } \; \Gamma^w_\nu(0)\leq \Gamma^v_0+\Gamma^w_0.$$
After $t^{k_1^\nu}+$ the $v$ component of the total variation remains constant. Hence for every $t>0$, we find
\begin{equation*}
\Gamma^v_\nu(t)\leq (C+1)(\Gamma^v_0+\Gamma^w_0)=:M.
\end{equation*}
Similarly, the $w$ component of the total variation increases only when a discontinuity of a rarefaction fan born at $t=0$ reaches the constraint. These waves can interact with $x=0$ only if they are centred in the positive points of discontinuity of the initial datum $(\rho_{0,\nu},v_{0,\nu})$, because they have negative speed. Let $q_\nu$ be their number. We have
$$q_\nu\leq N\, k_2^\nu,$$
where $N$ is the number of discontinuities of a rarefaction fan. In the region $x>0$ this number remains constant or decreases. By Proposition \ref{prop_rarefaction_fan_properties}, when we look to the total variation, we can consider a rarefaction fan as a single non-classical shock.\\
Assume that the number $q_\nu$ remains constant and let us assign to each wave a number $i\in \lbrace 1,...,q_\nu\rbrace$. When at time $t^i$ the $i$-th wave reaches the line $x=0$, the contribution to the $w$ component of the total variation is at most
$$C|v^r-v^l|,$$
where $C$ is a constant depending only on the invariant domain and $(\rho^l,v^l)$ and $(\rho^r,v^r)$ are the left and the right states of the Riemann problem from which the wave is born.\\
The contribution of the $i$-th wave to the $w$ component of the total variation is at most
$$C\,|v^{i-1}_{0,\nu}-v^i_{0,\nu}|.$$
Therefore, with a computation similar to the one for the $v$ component, we find
\begin{equation*}
\begin{split}
\Gamma^w_\nu(t^{k_2^\nu}+) & \leq \Gamma^w_\nu(t^{k_2^\nu}-)+C\,|v^{k_2^\nu+1}_{0,\nu}-v^{k_2^\nu}_{0,\nu}|\leq \\
& \leq \cdots \leq \Gamma^w_\nu(0)+C\,\Gamma^v_\nu(0)\leq \\
& \leq (C+1)(\Gamma^w_0+\Gamma^v_0)=M.
\end{split}
\end{equation*}
\end{proof}
\begin{cor}
Under the same assumptions of Proposition \ref{prop_uniform_bounds_TV}, there exists a positive constant $N$ depending only on the invariant domain $\mathcal{D}_{v_1,v_2,w_1,w_2}$ for which
$$\Gamma_\nu^\rho(t)\leq N\; \text{ for every } \; \nu \in \mathbb{N} \; \text{ and } \; t>0.$$
\end{cor}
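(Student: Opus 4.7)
The plan is to reduce the bound on $TV_t(\rho_\nu)$ to the already-established bounds on $TV_t(v_\nu)$ and $TV_t(w_\nu)$ by exploiting the algebraic relation $\rho = p^{-1}(w - v)$.

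First, I would observe that since $w_\nu = v_\nu + p(\rho_\nu)$, at every point we have $p(\rho_\nu(t,x)) = w_\nu(t,x) - v_\nu(t,x)$. Because $p$ is strictly increasing by (\ref{ipotesi_pressione_forti}), its inverse $p^{-1}$ is well defined on the range of $p$, so
\[
\rho_\nu(t,x) \;=\; p^{-1}\bigl(w_\nu(t,x) - v_\nu(t,x)\bigr).
\]
Since the solution at each time stays in the invariant domain $\mathcal{D}_{v_1,v_2,w_1,w_2}$, the argument $w_\nu - v_\nu = p(\rho_\nu)$ takes values in the compact interval $[p(\rho_{\min}),p(\rho_{\max})]$, and $\rho_\nu$ itself takes values in $[\rho_{\min},\rho_{\max}]$.

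Next I would invoke Lemma \ref{pressure_Lipschitz}: on $[\rho_{\min},\rho_{\max}]$ there exists a positive constant $C_1$ (depending only on $\mathcal{D}_{v_1,v_2,w_1,w_2}$) such that $p'(\rho) \geq C_1$. By the inverse function theorem, $p^{-1}$ is then Lipschitz on $[p(\rho_{\min}),p(\rho_{\max})]$ with constant $L = 1/C_1$, which depends only on the invariant domain. Consequently, for any two points $(t,x_1)$, $(t,x_2)$,
\[
|\rho_\nu(t,x_1) - \rho_\nu(t,x_2)| \leq L\bigl(|w_\nu(t,x_1)-w_\nu(t,x_2)| + |v_\nu(t,x_1)-v_\nu(t,x_2)|\bigr).
\]

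Finally, summing this inequality along any finite partition used in Definition \ref{def_total_variation} for $\rho_\nu(t,\cdot)$ yields
\[
\Gamma^\rho_\nu(t) = TV_t(\rho_\nu) \;\leq\; L\bigl(TV_t(w_\nu) + TV_t(v_\nu)\bigr) = L\bigl(\Gamma^w_\nu(t) + \Gamma^v_\nu(t)\bigr) \leq 2\,L\,M,
\]
where $M$ is the constant provided by Proposition \ref{prop_uniform_bounds_TV}. Setting $N := 2LM$ gives the desired uniform bound, and $N$ depends only on the invariant domain $\mathcal{D}_{v_1,v_2,w_1,w_2}$ (through $L$ and through the dependence of $M$ on the initial total variation and the domain). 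There is no real obstacle here: the only subtlety is to confirm that the Lipschitz constant of $p^{-1}$ can be taken uniform in $\nu$, which follows because all approximants $(\rho_\nu,v_\nu)$ take values in the same invariant domain.
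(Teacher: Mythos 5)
Your proof is correct and takes essentially the same route as the paper's: both rely on Lemma \ref{pressure_Lipschitz} to control the $\rho$-increments by the increments of $p(\rho)=w-v$, split by the triangle inequality into $w$- and $v$-contributions, and then invoke Proposition \ref{prop_uniform_bounds_TV}. The only cosmetic difference is that you phrase the estimate via the Lipschitz constant of $p^{-1}$, while the paper works directly with the lower Lipschitz bound on $p$; these are the same statement.
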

\begin{proof}
By Lemma \ref{pressure_Lipschitz}, the pressure function is bi-Lipschitz continuous and the Lipschitz constant $C$ depends only on the invariant domain. Therefore, there exists a sequence $\lbrace (\rho_j,v_j)\rbrace_{j\in I}$, for which
\begin{equation*}
\begin{split}
\Gamma^\rho_\nu(t) & =\sum_{j\in I}|\rho_{j+1}-\rho_j|\leq C\,\sum_{j\in I}|p(\rho_{j+1})-p(\rho_j)|= \\
&= C\, \sum_{j\in I}|v_{j+1}+p(\rho_{j+1})-v_j-p(\rho_j)-v_{j+1}+v_{j}|\leq\\
& \leq C\, \sum_{j\in I} (|w_{j+1}-w_j|+|v_{j+1}-v_{j}|)\leq\\
& \leq C\, (TV^w_\nu(t)+TV^v_\nu(t)).
\end{split}
\end{equation*}
We find the thesis applying Proposition \ref{prop_uniform_bounds_TV}.
\end{proof}

\subsection{Existence of solutions to the constrained Cauchy problem}
The next theorem gives a compactness property of the functions with bounded variation. See \cite{bressan} for the proof.
\begin{theorem}\label{theorem_helly_tempo}
Consider a sequence of functions $\lbrace u_\nu \rbrace_{\nu\in\mathbb{N}}$ defined in $\mathbb{R}^+\times \mathbb{R}$ and valued in $\mathbb{R}^n$. Assume that there exist two positive constants $C$ and $M$, for which
$$TV(u_\nu(t,\cdot))\leq C\; \text{ and } \; |u_\nu(t,x)| \leq M \; \text{ for every } \; \nu\in \mathbb{N}, \; \; (t,x) \in \mathbb{R}^+\times \mathbb{R}.$$
Moreover suppose that there exists another constant $L$ for which for every $t$ and $s$ in $\mathbb{R}^+$, the following inequality holds:
$$\int_{-\infty}^{+\infty}|u_\nu(t,x)-u_\nu(s,x)|\, dx \leq L\,|t-s|.$$
Then there exists a subsequence $\lbrace u_\mu \rbrace_{\mu}$ which converges to some function $u$ in $L^1_{loc}(\mathbb{R}^+\times \mathbb{R},\mathbb{R}^n)$. The limit function $u$ satisfies
$$\int_{-\infty}^{+\infty}|u(t,x)-u(s,x)|\, dx \leq L\,|t-s| \; \text{ for every } \; t,s \geq 0.$$
The point values of the limit function $u$ can be uniquely determined by requiring that
$$u(t,x)=u(t,x+):=\lim_{y \to x+}u(t,y) \; \text{ for all } \; t,x.$$
In this case
$$TV(u(t,\cdot))\leq C \; \text{ and } \; |u(t,x)|\leq M \; \text{ for every } \; t,x.$$
\end{theorem}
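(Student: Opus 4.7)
The plan is to combine the classical Helly selection principle (for one-variable BV functions at a fixed time) with a diagonal argument over a dense set of times, and then use the time-Lipschitz estimate in $L^1$ to extend convergence to all $t\geq 0$.

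First I would enumerate $\mathbb{Q}^+=\lbrace t_1,t_2,\ldots\rbrace$. For each fixed $t_k$ the sequence $\lbrace u_\nu(t_k,\cdot)\rbrace_\nu$ is uniformly bounded in $L^\infty(\mathbb{R})$ by $M$ and has total variation bounded by $C$, so the classical Helly compactness theorem (for BV functions of one real variable) furnishes a subsequence converging in $L^1_{\text{loc}}(\mathbb{R})$ to some limit. A Cantor diagonal extraction yields a single subsequence $\lbrace u_\mu\rbrace_\mu$ such that $u_\mu(t_k,\cdot)\to u(t_k,\cdot)$ in $L^1_{\text{loc}}(\mathbb{R})$ for every rational $t_k\geq 0$.

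Second I would extend to arbitrary $t\geq 0$ via a three-epsilon argument built on equicontinuity in time. The hypothesis
$$\int_{-\infty}^{+\infty}|u_\nu(t,x)-u_\nu(s,x)|\,dx\leq L\,|t-s|$$
makes the maps $t\mapsto u_\nu(t,\cdot)\in L^1(\mathbb{R})$ equi-Lipschitz uniformly in $\nu$. Given $t\geq 0$ and $\varepsilon>0$, pick a rational $t_k$ with $L|t-t_k|<\varepsilon/3$; then for $\mu,\mu'$ large enough, $\|u_\mu(t_k,\cdot)-u_{\mu'}(t_k,\cdot)\|_{L^1(K)}<\varepsilon/3$ on any compact $K$, and the Lipschitz bound controls the other two pieces. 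Hence $\lbrace u_\mu(t,\cdot)\rbrace_\mu$ is Cauchy in $L^1_{\text{loc}}(\mathbb{R})$ for every $t$, and we may define $u(t,\cdot)$ as its limit. Passing to the limit in the Lipschitz estimate and using Fatou yields the claimed time-Lipschitz property for $u$. Convergence in $L^1_{\text{loc}}(\mathbb{R}^+\times\mathbb{R})$ on any slab $[0,T]\times[-R,R]$ then follows from the pointwise-in-$t$ $L^1$ convergence together with the uniform bound $|u_\mu|\leq M$ via dominated convergence (after Fubini).

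Third I would verify that $u$ inherits the bounds $TV(u(t,\cdot))\leq C$ and $|u(t,x)|\leq M$: the former by lower semicontinuity of total variation under $L^1_{\text{loc}}$ convergence, the latter by extracting a further a.e.-pointwise convergent subsequence. For the uniqueness of point values, I would invoke the standard fact that a BV function of one real variable has well-defined right and left limits at every point, so imposing $u(t,x)=u(t,x+)$ pins down a unique representative in each equivalence class.

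The main obstacle I anticipate is not any single step but the careful bookkeeping of the two successive extractions — the diagonal selection over rationals and then the verification that the three-epsilon argument gives a limit \emph{independent of the chosen rational approximant} $t_k$ — so that the same subsequence works simultaneously for all $t\geq 0$ and for $L^1_{\text{loc}}$ convergence on compact sets of $\mathbb{R}^+\times\mathbb{R}$.
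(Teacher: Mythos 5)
The paper does not actually prove this theorem: it states it and defers to Bressan's monograph (the citation \cite{bressan}) for the proof. Your proposal reconstructs precisely the standard argument given there — Helly compactness at each rational time, a Cantor diagonal extraction, a three-epsilon argument exploiting the equi-Lipschitz dependence on $t$ in $L^1$ to handle all real times, and then lower semicontinuity of total variation together with the choice of the right-continuous representative — so it is correct and consistent with the cited source.
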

The next lemma states that a sequence of weak solutions to a system of conservation laws is compact. See \cite{bressan}.
\begin{lemma}\label{lemma_convergence_weak_solutions}
Let $\lbrace u_\nu \rbrace_\nu$ be a sequence of weak solutions of the system of conservation laws
\begin{equation}\label{existence_system_of_conservation_laws}
\partial_t u + \partial_x f(u)=0.
\end{equation}
Assume that there exists a function $u$ such that
$$\lim_{\nu \to +\infty}\parallel u_\nu-u\parallel_{L^1_\text{loc}(\mathbb{R})}=0 \; \text{ and } \; \lim_{\nu \to +\infty}\parallel f(u_\nu)-f(u)\parallel_{L^1_\text{loc}(\mathbb{R})}=0.$$
Then $u$ is a weak solution to the system (\ref{existence_system_of_conservation_laws}).
\end{lemma}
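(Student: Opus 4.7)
The plan is to verify the definition of weak solution directly, namely to show that for every test function $\phi \in \mathcal{C}^1_c(\mathbb{R}^+\times \mathbb{R}, \mathbb{R}^n)$, the identity
$$\int_{\mathbb{R}^+\times \mathbb{R}} [u \cdot \partial_t \phi + f(u)\cdot \partial_x \phi]\, dt\, dx = 0$$
holds. Fix such a $\phi$ and let $K = \operatorname{supp}(\phi) \subset \mathbb{R}^+\times \mathbb{R}$, which is compact by hypothesis.

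First, since each $u_\nu$ is by assumption a weak solution of the system, for every $\nu \in \mathbb{N}$ we have
$$\int_K [u_\nu \cdot \partial_t \phi + f(u_\nu)\cdot \partial_x \phi]\, dt\, dx = 0.$$
Second, I would pass to the limit $\nu \to +\infty$ inside the integral by bounding each piece using the compactness of $K$ and the continuity of the derivatives of $\phi$:
\begin{align*}
\biggl| \int_K (u_\nu - u)\cdot \partial_t \phi \, dt\, dx\biggr| &\leq \| \partial_t \phi \|_{L^\infty(K)} \, \| u_\nu - u \|_{L^1(K)},\\
\biggl| \int_K (f(u_\nu) - f(u))\cdot \partial_x \phi \, dt\, dx \biggr| &\leq \| \partial_x \phi \|_{L^\infty(K)} \, \| f(u_\nu) - f(u) \|_{L^1(K)}.
\end{align*}
Since $K$ is compact, the hypotheses of $L^1_{\text{loc}}$ convergence of $u_\nu \to u$ and $f(u_\nu) \to f(u)$ imply that both $\| u_\nu - u \|_{L^1(K)}$ and $\| f(u_\nu) - f(u) \|_{L^1(K)}$ tend to $0$ as $\nu \to +\infty$. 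Therefore
$$\int_K [u_\nu \cdot \partial_t \phi + f(u_\nu)\cdot \partial_x \phi]\, dt\, dx \xrightarrow{\nu \to +\infty} \int_K [u \cdot \partial_t \phi + f(u)\cdot \partial_x \phi]\, dt\, dx.$$

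Since the left-hand side equals zero for every $\nu$, the limit on the right-hand side is also zero. Because $\phi$ was an arbitrary $\mathcal{C}^1_c$ test function, this is precisely the definition of $u$ being a weak solution to $\partial_t u + \partial_x f(u) = 0$, concluding the proof. There is no real obstacle here: the lemma is essentially a stability statement for the distributional formulation of a conservation law, and the compactness of the support of $\phi$ together with the $L^1_{\text{loc}}$ convergence hypothesis does all the work in a single dominated-convergence-style estimate.
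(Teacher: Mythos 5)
Your proof is correct and in fact cleaner than the paper's. The paper verifies the same distributional identity but does so by extracting a subsequence along which $u_\nu\to u$ pointwise a.e., using continuity of $f$ to get $f(u_\nu)\to f(u)$ a.e., and then invoking dominated convergence with a bound supplied by $\sup_\nu\|u_\nu\|_{L^1_\text{loc}}$ and $\sup_\nu\|f(u_\nu)\|_{L^1_\text{loc}}$. You instead bypass subsequence extraction and dominated convergence entirely: the single estimate $\bigl|\int_K(u_\nu-u)\cdot\partial_t\phi\bigr|\leq\|\partial_t\phi\|_{L^\infty(K)}\|u_\nu-u\|_{L^1(K)}$ (and its analogue for $f$) yields convergence of the whole sequence directly from the $L^1_\text{loc}$ hypothesis. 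This is a more elementary argument — it only uses that the test function's derivatives are bounded on their compact support — and it avoids the slightly subtle point in the paper of why a subsequential argument suffices to conclude for the full limit. Both are valid, but your route is the one most analysts would write.
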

\begin{proof}
Let $\phi$ be a function in $\mathcal{C}^1_c(\mathbb{R}^+\times \mathbb{R})$ and let $\Omega \subset \mathbb{R}^+\times \mathbb{R}$ be its support. For every $\nu \in \mathbb{N}$ the function $u_\nu$ is a weak solution to (\ref{existence_system_of_conservation_laws}), hence $u_\nu$ and $f(u_\nu)$ are in $L^1_\text{loc}(\mathbb{R}^+\times \mathbb{R})$ and
$$\int_{\mathbb{R}^+\times\mathbb{R}}\left(\partial_t\phi \, u_\nu +\partial_x \phi \, f(u_\nu) \right) \,dx\,dt=0.$$
Since $u_\nu \to u$ in $L^1_\text{loc}$ when $\nu \to +\infty$, we can extract a subsequence, which we still denote $u_\nu$, such that for a.e. $(t,x) \in \Omega$, we have
$$\partial_t \phi (t,x) \, u_\nu (t,x) \xrightarrow{\nu \to +\infty} \partial_t \phi (t,x) \, u (t,x).$$
Since $f$ is continuous, on the same subsequence we have
$$\partial_t \phi (t,x) \, f(u_\nu (t,x)) \xrightarrow{\nu \to +\infty} \partial_t \phi (t,x) \, f(u (t,x)).$$
The limits $\partial_t \phi \,u$ and $\partial_x\phi \, f(u)$ are in $L^1_\text{loc}$, because $\phi$ has compact support, $\partial_t \phi$ is continuous (hence limited on $\Omega$) and $u$ and $f(u)$ are in $L^1_\text{loc}$.\\
Finally, let us define
$$c_1=\sup_{\nu \in \mathbb{N}}\parallel u_\nu \parallel_{L^1_\text{loc}}\; \text{ and } \; c_2=\sup_{\nu \in \mathbb{N}}\parallel f(u_\nu) \parallel_{L^1_\text{loc}}.$$
We have:
$$|\partial_t\phi \, u_\nu +\partial_x \phi \, f(u_\nu)|\leq |\partial_t\phi \, u_\nu| +|\partial_x \phi \, f(u_\nu)|$$
and for every $\nu$ we find:
$$|\partial_t\phi \, u_\nu|\leq c_1 \cdot \sup_{(t,x)\in \Omega} |\partial_t \phi|\; \text{ and } \; |\partial_x \phi \, f(u_\nu)|\leq c_2 \cdot \sup_{(t,x)\in \Omega} |\partial_x \phi|.$$
Since the right terms of these inequalities are integrable, we can apply Lebesgue's Theorem and we have the thesis.
\end{proof}
We are now ready to prove that the constrained Cauchy problem (\ref{existence_Cauchy_problem_general}) admits a solution.
\begin{theorem}
Assume $\lambda_1(\rho_{\min},v_{\min})<0$. Let $(\rho_0,v_0)\subset \mathcal{D}_{v_1,v_2,w_1,w_2}$ be a fixed function. The constrained Cauchy problem (\ref{existence_Cauchy_problem_general}) has a weak solution $(\rho,v)$ defined for every $t \geq 0$.
\end{theorem}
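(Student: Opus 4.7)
The plan is to construct the solution as the limit of the wave-front tracking approximations $(\rho_\nu,v_\nu)$ built in the previous subsection, applying in turn Helly's compactness theorem and the stability Lemma \ref{lemma_convergence_weak_solutions}.

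First, I would invoke Corollary \ref{cor_piecewise_constant_approxim_of_a_bv_function} to produce a sequence of piecewise constant initial data $(\rho_{0,\nu},v_{0,\nu})\subset \mathcal{D}_{v_1,v_2,w_1,w_2}$ with finitely many discontinuities, total variation bounded by $TV(\rho_0)+TV(v_0)$, and converging to $(\rho_0,v_0)$ in $L^1_{\mathrm{loc}}$. Solving each of the finitely many Riemann problems (with $\mathcal{RS}$ for $x\neq 0$ and $\mathcal{RS}^q_2$ at $x=0$), approximating every rarefaction by a fan of $N$ non-classical shocks as in Definition \ref{def_rarefaction_fan}, and iterating at each interaction, yields a wave-front tracking approximation $(\rho_\nu,v_\nu)$. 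By Proposition \ref{prop_number_of_waves_and_interactions} the total number of waves and of interaction points is finite, so $(\rho_\nu,v_\nu)$ is well defined and piecewise constant for all $t\ge 0$. Since $\mathcal{D}_{v_1,v_2,w_1,w_2}$ is invariant for $\mathcal{RS}$ (Theorem \ref{standard_invariant_domain}) and for $\mathcal{RS}^q_2$ (Theorem \ref{Proposizione_Dominio_invariante_R_alfa2_definitiva} in the stationary case $\bar V=0$), we have $(\rho_\nu(t,x),v_\nu(t,x))\in \mathcal{D}_{v_1,v_2,w_1,w_2}$ for every $\nu$, $t$, $x$, which provides a uniform $L^\infty$ bound.

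Second, I would verify the three hypotheses of Theorem \ref{theorem_helly_tempo} applied to $u_\nu=(\rho_\nu,v_\nu)$. The uniform $L^\infty$ bound follows from invariance. The uniform BV bound is Proposition \ref{prop_uniform_bounds_TV} (for $v_\nu$ and $w_\nu$) together with its corollary (for $\rho_\nu$) and the Lipschitz bound on the pressure (Lemma \ref{pressure_Lipschitz}), which also translates BV bounds in $(v,w)$ back into BV bounds in $(\rho,v)$. For the $L^1$-Lipschitz estimate in time, I would use that between two consecutive interaction times $(\rho_\nu,v_\nu)$ is piecewise constant and each front travels at a speed bounded by the maximum of $|\lambda_i(\rho,v)|$ over $\mathcal{D}_{v_1,v_2,w_1,w_2}$, call it $\Lambda$; for each piecewise constant profile a standard computation gives
\begin{equation*}
\int_{-\infty}^{+\infty}|u_\nu(t,x)-u_\nu(s,x)|\,dx \leq \Lambda\, TV_s(u_\nu)\, |t-s| \leq \Lambda\, M\, |t-s|,
\end{equation*}
with $M$ the uniform BV bound. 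Helly then provides a subsequence, still denoted $(\rho_\nu,v_\nu)$, converging in $L^1_{\mathrm{loc}}(\mathbb{R}^+\times\mathbb{R})$ to some $(\rho,v)$; since the flux $f(\rho,v)=(\rho v,\rho v(v+p(\rho)))$ is continuous on $\mathcal{D}_{v_1,v_2,w_1,w_2}$ (hence bounded and uniformly continuous there), dominated convergence yields also $f(\rho_\nu,v_\nu)\to f(\rho,v)$ in $L^1_{\mathrm{loc}}$.

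Third, each $(\rho_\nu,v_\nu)$ is a weak solution of the ARZ system on $\mathbb{R}^+\times\mathbb{R}$: within each cell of constancy the system is trivially satisfied, and across each front the Rankine--Hugoniot conditions hold by construction (including the non-classical shock at $x=0$, which was designed to satisfy Rankine--Hugoniot on the first component; the density component of the ARZ system is conservative even across non-classical shocks). Lemma \ref{lemma_convergence_weak_solutions} then passes the weak formulation to the limit, so $(\rho,v)$ solves the two conservation laws in the distributional sense. The initial condition $(\rho,v)(0,\cdot)=(\rho_0,v_0)$ follows from $(\rho_{0,\nu},v_{0,\nu})\to(\rho_0,v_0)$ in $L^1_{\mathrm{loc}}$ combined with the $L^1$-Lipschitz-in-time bound above.

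The most delicate point, which I expect to be the main obstacle, is the verification that the constraint
\begin{equation*}
\rho(t,0)\,v(t,0)\leq q
\end{equation*}
survives in the limit. For each $\nu$ the approximation satisfies $\rho_\nu(t,0)\,v_\nu(t,0)\leq q$ because on every open time interval between interactions the value at $x=0$ is either a classical trace satisfying the constraint or the left/right state of the non-classical shock $(\hat\rho,\hat v)$ or $(\check\rho_2,\check v_2)$, all of which lie on the line $\rho v=q$. However the pointwise trace at $x=0$ does not pass directly through $L^1_{\mathrm{loc}}$ convergence; one must interpret the constraint in the sense of strong $L^1$-traces of BV functions (which exist thanks to the uniform BV bound) and check, using the lower-semicontinuity of $\mathrm{essup}$ with respect to $L^1$ convergence of BV traces, that the inequality is inherited by the limit. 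Once this is done, $(\rho,v)$ is the desired weak solution to (\ref{existence_Cauchy_problem_general}) defined for every $t\geq 0$.
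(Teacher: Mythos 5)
Your proposal is correct and follows essentially the same strategy as the paper's own proof: piecewise constant approximation of the initial datum via Corollary \ref{cor_piecewise_constant_approxim_of_a_bv_function}, wave-front tracking using $\mathcal{RS}$ away from $x=0$ and $\mathcal{RS}^q_2$ at $x=0$, the uniform BV bound of Proposition \ref{prop_uniform_bounds_TV} and its corollary, the $L^\infty$ bound from invariance of $\mathcal{D}_{v_1,v_2,w_1,w_2}$, the $L^1$-in-time Lipschitz estimate, Helly's theorem (Theorem \ref{theorem_helly_tempo}), and Lemma \ref{lemma_convergence_weak_solutions} to pass to the weak limit. The one place you go beyond the paper is your explicit discussion of the need for strong $L^1$ BV traces to carry the pointwise constraint $\rho(t,0)v(t,0)\le q$ through the limit; the paper only asserts this briefly without developing it, so your caution there is well placed rather than a divergence.
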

\begin{proof}
By Corollary \ref{cor_piecewise_constant_approxim_of_a_bv_function}, there exists a sequence $(\rho_{0,\nu},v_{0,\nu})$ of piecewise constant functions with bounded variation, with a finite number of discontinuities and satisfying (\ref{total_var_of_the_pc_initial_datum}) and (\ref{convergence_of_the_pc_initial_datum}). For every $\nu$ we can define a wave-front tracking approximation $(\rho_\nu,v_\nu)$ which is a weak solution of the ARZ system by construction.\\
For every $t>0$ and for every $\nu\in \mathbb{N}$, there exists constant $M>0$ such that
$$TV_t(\rho_\nu)\leq M \; \text{ and } \; TV_t(v_\nu)\leq M.$$
Since the initial datum $(\rho_0,v_0)$ is in  $\mathcal{D}_{v_1,v_2,w_1,w_2}$, there exists a positive constant for which for every $\nu$, we have
$$|(\rho_\nu,v_\nu)(t,x)|\leq N \;\text{ for every } \; (t,x) \in \mathbb{R}^+ \times \mathbb{R}.$$
Moreover for two instants $t>s$, denoting
$$\bar{\lambda}=\sup \lbrace |\lambda_i(\rho,v)|: i=1,2, \; (\rho,v)\in \mathcal{D}_{v_1,v_2,w_1,w_2}\rbrace,$$
we find
\begin{equation*}
\begin{split}
& \int_{-\infty}^{+\infty} |\rho_\nu(t,x)-\rho_\nu(s,x)| \, dx \leq (t-s)\, \bar{\lambda} \sup_{\tau \in [s,t]} TV_\tau(\rho) \; \text{ and}\\
& \int_{-\infty}^{+\infty} |v_\nu(t,x)-v_\nu(s,x)| \, dx \leq (t-s)\, \bar{\lambda} \sup_{\tau \in [s,t]} TV_\tau(v).
\end{split}
\end{equation*}
Indeed, consider for example the $\rho$ component. If we assume that the function $\rho_\nu$ has a single discontinuity between two states $\rho^l$ and $\rho^r$, having propagation speed $\lambda$, we find 
$$\int_\mathbb{R}|\rho_\nu(t,x)-\rho_\nu(s,x)| \, dx = (t-s)\, \lambda |\rho^r-\rho^l|,$$
i.e. the integral is equal to the product between the space covered by the discontinuity within the interval $[s,t]$ and the size of the jump. If there are more discontinuities and we assume that in the interval $[s,t]$ does not happen any interaction, then we can repeat the same idea to each discontinuity and we find
$$\int_\mathbb{R}|\rho_\nu(t,x)-\rho_\nu(s,x)| \, dx \leq (t-s)\, \bar{\lambda} \, TV_t(\rho).$$
Finally, if an interaction happens at time $\tilde{t}\in [s,t]$, we find
\begin{equation*}
\begin{split}
\int_\mathbb{R}|\rho_\nu(t,x)-\rho_\nu(s,x)| \, dx & \leq\int_\mathbb{R}(|\rho_\nu(t,x)-\rho_\nu(\tilde{t},x)|+|\rho_\nu(\tilde{t},x)-\rho_\nu(s,x)|) \, dx \leq\\
& \leq (t-s)\, \bar{\lambda}\, (TV_{\tilde{t}}(\rho)+TV_t(\rho))\leq \\
& \leq 2\,(t-s)\, \bar{\lambda} \sup_{\tau \in [s,t]} TV_\tau(\rho).
\end{split}
\end{equation*}
Similarly if there are more interactions and for the $v$ component.\\
Therefore there exists a constant $L>0$, for which
\begin{equation*}
\begin{split}
& \int_{-\infty}^{+\infty} |\rho_\nu(t,x)-\rho_\nu(s,x)| \, dx \leq L\,|t-s| \; \text{ and}\\
& \int_{-\infty}^{+\infty} |v_\nu(t,x)-v_\nu(s,x)| \, dx \leq L\,|t-s|.
\end{split}
\end{equation*}
Hence we can apply Theorem \ref{theorem_helly_tempo}: for every $t\geq 0$, there exists a subsequence of $(\rho_\nu,v_\nu)$ and a function $(\rho,v)$ such that
$$(\rho_\nu,v_\nu)(t,x) \xrightarrow{\nu \to +\infty} (\rho,v)(t,x) \; \text{ for every } \; x \in \mathbb{R},$$
$$TV_t(\rho)\leq M, \; \; \; TV_t(v) \leq M \; \text{ and } \; |(\rho,v)(t,x)|\leq N \; \text{ for every } \; (t,x)\in \mathbb{R}^+\times \mathbb{R}.$$
Since $f$ is continuous, even $f(\rho,v)$ is bounded. Hence $(\rho,v)$ and $f(\rho,v)$ are in $L^1_\text{loc}$. Applying Lemma \ref{lemma_convergence_weak_solutions}, $(\rho,v)$ is a weak solution to (\ref{existence_system_of_conservation_laws}).\\
Finally, we have to show that for a.e. $x\in \mathbb{R}$, we have
\begin{equation}\label{existence_proof_initial_datum}
(\rho,v)(0,x)=(\rho_0,v_0)(x).
\end{equation}
Since at $x=0$ we use the Riemann solver $\mathcal{RS}^q_2$, for almost every $t>0$ the left and the right traces of the solution at $x=0$ satisfy the constraint:
$$\rho(t,0\pm)\,v(t,0\pm)\leq q, $$
where
$$(\rho,v)(t,0+):=\lim_{x \to 0+} (\rho,v)(t,x) \; \text{ and } \; (\rho,v)(t,0-):=\lim_{x\to 0-} (\rho,v)(t,x)$$
are respectively the right and left traces of the solution at $x=0$.\\
Fix $\varepsilon>0$. There exist two numbers $\tilde{N}$ and $\tilde{t}$ such that for every $\nu >\tilde{N}$ and $t\in [0,\tilde{t})$, we have
\begin{equation*}
\begin{split}
& a(\nu,t):= \parallel (\rho,v)(t,\cdot)-(\rho_\nu,v_\nu)(t,\cdot) \parallel_{L^1_\text{loc}}<\varepsilon,\\
& b(\nu,t):=\parallel (\rho_\nu,v_\nu)(t,\cdot)-(\rho_{0,\nu},v_{0,\nu})(\cdot) \parallel_{L^1_\text{loc}}<\varepsilon \; \text{ and}\\
&c(\nu):= \parallel (\rho_{0,\nu},v_{0,\nu})(\cdot)-(\rho_0,v_0)(\cdot) \parallel_{L^1_\text{loc}}<\varepsilon.
\end{split}
\end{equation*}
Fix $\nu>\tilde{N}$. By the triangular inequality, for every $t\in [0,\tilde{t})$ we obtain
\begin{equation*}
\begin{split}
\parallel (\rho,v)(t,\cdot)-(\rho_0,v_0)(\cdot) \parallel_{L^1_\text{loc}} & \leq a(\nu,t)+b(\nu,t)+c(\nu) <\\
& <3\, \varepsilon.
\end{split}
\end{equation*}
Hence the condition (\ref{existence_proof_initial_datum}) holds.
\end{proof}

\bibliography{mybib}{}
\addcontentsline{toc}{chapter}{Bibliography}
\bibliographystyle{plain}

\end{document}